\definecolor{LinkColor}{rgb}{0,0,0}
\definecolor{lbcolor}{rgb}{0.85,0.85,0.85}
\newcommand{\mm}{\mathfrak m}
\newcommand{\nn}{\mathfrak n}
\newcommand{\pp}{\mathfrak p}
\newcommand{\A}{\mathbb{A}}
\newcommand{\C}{\mathbb{C}}
\newcommand{\D}{\mathbb{D}}
\newcommand{\I}{\mathbb{I}}
\newcommand{\N}{\mathbb{N}}
\newcommand{\Obb}{\mathbb{Obb}}
\newcommand{\Prim}{\mathbb{P}}
\newcommand{\Q}{\mathbb{Q}}
\newcommand{\R}{\mathbb{R}}
\newcommand{\T}{\mathbb{T}}
\newcommand{\Z}{\mathbb{Z}}
\newcommand{\Ac}{\mathcal{A}}
\newcommand{\Bc}{\mathcal{B}}
\newcommand{\Dc}{\mathcal{D}}
\newcommand{\Ec}{\mathcal{E}}
\newcommand{\Fc}{\mathcal{F}}
\newcommand{\Gc}{\mathcal{G}}
\newcommand{\Ic}{\mathcal{I}}
\newcommand{\Mcc}{\mathcal{M}}
\newcommand{\Nc}{\mathcal{N}}
\newcommand{\Lc}{\mathcal{L}}
\newcommand{\Oc}{\mathcal{O}}
\newcommand{\Rc}{\mathcal{R}}
\newcommand{\Sc}{\mathcal{S}}
\newcommand{\Tc}{\mathcal{T}}
\newcommand{\Te}{\text{e}}
\newcommand{\Th}{\text{h}}
\newcommand{\Ts}{\text{s}}
\newcommand{\TF}{\text{F}}
\newcommand{\TT}{\text{T}}
\DeclareMathOperator{\chara}{char}
\DeclareMathOperator{\dirsum}{\oplus}
\DeclareMathOperator{\id}{id}
\DeclareMathOperator{\Ima}{im}
\DeclareMathOperator{\Ker}{ker}
\DeclareMathOperator{\coKer}{coker}
\DeclareMathOperator{\Ann}{Ann}
\DeclareMathOperator{\Ass}{Ass}
\DeclareMathOperator{\Hom}{Hom}
\DeclareMathOperator{\Ext}{Ext}
\DeclareMathOperator{\Dim}{dim}
\DeclareMathOperator{\Det}{det}
\DeclareMathOperator{\Spec}{Spec}
\DeclareMathOperator{\Depth}{depth}
\DeclareMathOperator{\projdim}{proj.dim}
\DeclareMathOperator{\lK}{H}
\DeclareMathOperator{\ev}{ev}
\DeclareMathOperator{\LC}{LC}
\DeclareMathOperator{\Syz}{Syz}
\DeclareMathOperator{\Proj}{Proj}
\DeclareMathOperator{\Deg}{deg}
\DeclareMathOperator{\Max}{max}
\DeclareMathOperator{\Min}{min}
\DeclareMathOperator{\sg}{syzgap}
\DeclareMathOperator{\Id}{Id}
\DeclareMathOperator{\MF}{MF}
\DeclareMathOperator{\SL}{SL}
\DeclareMathOperator{\GL}{GL}
\DeclareMathOperator{\Rank}{rank}
\DeclareMathOperator{\open}{D}
\DeclareMathOperator{\HKF}{HK}
\DeclareMathOperator{\HKM}{e_{HK}}
\DeclareMathOperator{\LHKM}{e^{\infty}_{HK}}
\DeclareMathOperator{\FS}{F_{sign}}
\newcommand{\qpot}{^{[q]}}
\newcommand{\pb}{^{\ast}}
\newcommand{\fpb}[1]{\text{F}^{#1\ast}}
\newcommand{\Lim}[1]{\lim_{#1}}
\newcommand{\inj}{\hookrightarrow}
\newcommand{\ra}{\rightarrow}
\newcommand{\lra}{\longrightarrow}
\newcommand{\trans}{^{\text{T}}}
\newcommand{\dimglo}[1]{\text{h}^0(#1)}
\newcommand{\dimext}[1]{\text{h}^1(#1)}
\newcommand{\punctured}[2]{\Spec(#1)\setminus\{#2\}}
\DeclareMathOperator{\pnt}{\raise 0.5mm \hbox{\large\bf.}}
\numberwithin{section}{chapter}
\numberwithin{equation}{chapter}
\numberwithin{table}{chapter}
\newtheoremstyle{thm}{}{}%
     {\em}%         Body font
     {}%         Indent amount (empty = no indent, \parindent = para indent)
     {\bf}% Thm head font
     {.}%        Punctuation after thm head
     {0.5em}%     Space after thm head (\newline = linebreak)
     {\thmname{#1}\thmnumber{ #2}\normalfont\thmnote{ (#3)}}%         Thm head spec
\newtheoremstyle{thm*}{}{}%
     {\em}%         Body font
     {}%         Indent amount (empty = no indent, \parindent = para indent)
     {\bf}% Thm head font
     {.}%        Punctuation after thm head
     {0.5em}%     Space after thm head (\newline = linebreak)
     {\thmname{#1}\normalfont\thmnote{ (#3)}}%         Thm head specifies
\newtheoremstyle{def}{}{}%
     {\rm}%         Body font
     {}%         Indent amount (empty = no indent, \parindent = para indent)
     {\bf}% Thm head font
     {.}%        Punctuation after thm head
     {0.5em}%     Space after thm head (\newline = linebreak)
     {\thmname{#1}\thmnumber{ #2}\thmnote{ #3}}%         Thm head spec
\theoremstyle{thm}
\newtheorem{thm}{Theorem}[chapter]
\newtheorem{lem}[thm]{Lemma}
\newtheorem{cor}[thm]{Corollary}
\newtheorem{prop}[thm]{Proposition}
\newtheorem{que}[thm]{Question}
\newtheorem{Not}[thm]{Notation}
\newtheorem{func}[thm]{Function}
\theoremstyle{thm*}
\newtheorem{thm*}{Theorem}
\newtheorem{lem*}{Lemma}
\newtheorem{cor*}{Corollary}
\theoremstyle{def}
\newtheorem{defi}[thm]{Definition}
\newtheorem{rem}[thm]{Remark}
\newtheorem{exa}[thm]{Example}
\newtheorem{setup}[thm]{Setup}
\newtheorem{con}[thm]{Construction}
\def\qed{%
  \ifmmode % if math mode, assume display:
  \tag*{\qedsymbol}
  \else \leavevmode\unskip\penalty9999 \hbox{}\nobreak\hfill
  \quad\hbox{\qedsymbol}\pagebreak[1]\vskip 2\topsep\fi}
\def\cocoa{{\hbox{\rm C\kern-.13em o\kern-.07em C\kern-.13em o\kern-.15em A}}}
\let\epsilon=\varepsilon
\let\phi=\varphi
\begin{document}

\thispagestyle{empty}
\begin{center}
Dissertation

zur Erlangung des Doktorgrades (Dr. rer. nat.)

des Fachbereichs Mathematik/Informatik

der Universit\"at Osnabr\"uck

\vspace{5cm}

\LARGE
\textbf{Hilbert-Kunz functions of surface rings of type ADE}

\vspace{2cm}

\normalsize
vorgelegt

von

\vspace{2cm}

\Large
Daniel Brinkmann

\large
Osnabr\"uck

eingereicht: Mai 2013

ver\"offentlicht: August 2013

\vspace{4.5cm}

\Large
Betreuer: Prof. Dr. Holger Brenner
\end{center}
\newpage

\thispagestyle{empty}
\tableofcontents

\chapter*{Introduction}
\section*{The goal of this thesis and a sketch of the most important ideas}

The starting point for Hilbert-Kunz theory was the paper ``Characterizations of regular local rings of characteristic $p$'' and its sequel 
``On Noetherian rings of characteristic $p$'' by Kunz published 1969 resp. 1976 (cf. \cite{kunz1} and \cite{kunz2}). The central objects are the 
\textit{Hilbert-Kunz function} and the \textit{Hilbert-Kunz multiplicity}, which are defined in the case of affine $k$-algebras $R:=k[X_1,\ldots,X_n]/I$ 
in dependence on the classical Hilbert-Samuel multiplicity and function as
$$\HKF(R,p^e)=\Dim_k\left(R/\left(X_1^{p^e},\ldots,X_n^{p^e}\right)\right)\quad\text{and}\quad \HKM(R)=\lim_{e\ra\infty}\frac{\HKF(R,p^e)}{p^{e\cdot\Dim(R)}},$$
where $k$ denotes a field of characteristic $p>0$ and $\Dim(R)$ denotes the Krull dimension of $R$. For an $(X_1,\ldots,X_n)$-primary ideal $J:=(f_1,\ldots,f_m)$, the \textit{Hilbert-Kunz function of $R$ with respect to $J$} and the \textit{Hilbert-Kunz multiplicity of $R$ with respect to $J$} are defined as
$$\HKF(J,R,p^e)=\Dim_k\left(R/\left(f_1^{p^e},\ldots,f_m^{p^e}\right)\right)\quad\text{resp.}\quad \HKM(R)=\lim_{e\ra\infty}\frac{\HKF(J,R,p^e)}{p^{e\cdot\Dim(R)}}.$$
 Seven years later, Monsky proved the existence of 
Hilbert-Kunz multiplicities in a more general situation (cf. \cite[Theorem 1.8]{hkmexists}). 
In the same paper the term ``Hilbert-Kunz function'' firstly appeared. Hilbert-Kunz theory has applications in Iwasawa theory as well as in tight closure theory 
(see for example \cite{monskyiwa} and \cite{tightapp}). Initially, Kunz expected a relation to the resolution of singularities in positive characteristics 
(cf. the discussion after Example 4.3 of \cite{kunz2}) but up to today there are no substantial results in this direction. 
Maybe the most central question in Hilbert-Kunz theory was whether Hilbert-Kunz multiplicities have to be rational or not. For long time the favourite answer was 
that they have to be rational (cf. \cite{hkmexists}) and there were many results in this direction (e.g. \cite{bredim2}, \cite{eto1}, \cite{triflag}, \cite{monskyhan}, \cite{seibert2}, \cite{tri1} or \cite{watanabe}). But 2008 Monsky came up with a conjecture whose correctness would imply the existence of a four-dimensional ring of characteristic two with irrational Hilbert-Kunz multiplicity (cf. \cite{hkmrat}). Extending the geometric approach to Hilbert-Kunz theory (cf. \cite{bredim2} or \cite{tri1}), Brenner recently proved the existence of irrational Hilbert-Kunz multiplicities in all characteristics (cf. \cite{holgerirre}).
\vspace{5mm}

The goal of this thesis is to compute the Hilbert-Kunz functions of the surface rings
\begin{itemize}
\item $A_n:=k[X,Y,Z]/(X^{n+1}-YZ)$ with $n\geq 0$,
\item $D_n:=k[X,Y,Z]/(X^2+Y^{n-1}+YZ^2)$ with $n\geq 4$
\item $E_6:=k[X,Y,Z]/(X^2+Y^3+Z^4)$,
\item $E_7:=k[X,Y,Z]/(X^2+Y^3+YZ^3)$ and
\item $E_8:=k[X,Y,Z]/(X^2+Y^3+Z^5)$,
\end{itemize}
where $k$ denotes an algebraically closed field. These are called \textit{surface rings of type ADE}.

In characteristic zero surface rings of type ADE show up as quotient singularities of $\C^2$ and are unique up to isomorphism as it was shown by Klein 
(cf. \cite{klein}) in 1886. This is also how they have share in string theory (cf. \cite{string}). 
Surface rings of type ADE were studied for example by du Val (cf. \cite{duval1}, \cite{duval2}, \cite{duval3}) and Brieskorn (cf. \cite{brieskorn}) and 
are sometimes called \textit{Kleinian} or \textit{du Val singularities}. Due to Artin (cf. \cite{artin2}) they are exactly the \textit{rational double points}. In positive characteristics Artin 
classified the rational double points of surfaces. In characteristics at least seven, these are exactly the surface rings of type ADE. In characteristics two, 
three and five all rational double points are given by surface rings of type ADE or certain deformations of them (cf. \cite{artin1}). Note that the rational double 
points are exactly the \textit{simple surface singularities} (cf. \cite{artin1}).

Due to Watanabe and Yoshida (cf. \cite{watyo1}) the Hilbert-Kunz multiplicity of the surface rings of type ADE is known to be $2-\tfrac{1}{|G|}$, where $G\subsetneq\SL_2(\C)$ is 
the group corresponding to the singularity in characteristic zero and under the additional assumption that the corresponding ring is $F$-rational (cf. \cite{watyo1}). Note that this assumption 
is satisfied in almost all characteristics. 
The Hilbert-Kunz function of the surface rings of type $A_n$ is known due to Kunz (cf. \cite{kunz2}) and the Hilbert-Kunz functions of surface rings of 
type $E_6$ or $E_8$ can be computed at least for explicitly given characteristic using the algorithm of Han and Monsky (cf. \cite{monskyhan}). In general 
the Hilbert-Kunz function of surface rings $R$ of type ADE has due to Huneke, McDermott and Monsky the shape
$$\HKF(R,p^e)=\HKM(R)\cdot p^{2e}+\gamma(p^e),$$
where $\gamma(p^e)$ is a bounded function (cf. \cite{hkfnormal}).

By a geometric approach of Brenner and Trivedi for two-dimensional standard-graded rings $S$ one needs to control the Frobenius pull-backs of the cotangent bundle 
on $C:=\Proj(S)$, e.g. the sheaves
$$\left(\Syz_S\left(S_+^{[p^e]}\right)\right)^{\sim},$$ 
to compute the Hilbert-Kunz function of $S$ (cf. \cite{bredim2} or \cite{tri1}). Unfortunately, surface rings of type ADE are not standard-graded. But we can associate to them the 
standard-graded rings $S:=k[U,V,W]/(F)$, where 
$(F)$ is the image of the principal ideal defining a surface ring of type ADE under the map 
$$k[X,Y,Z]\ra k[U,V,W], \text{ }X\mapsto U^{\Deg(X)}, \text{ }Y\mapsto V^{\Deg(Y)}, \text{ }Z\mapsto W^{\Deg(Z)}.$$
Since the map $R\ra S$ is a flat extension of Noetherian domains, one obtains
$$\HKF(R,p^e)=\frac{\HKF((U^{\Deg(X)},V^{\Deg(Y)},W^{\Deg(Z)}),S,p^e)}{[Q(S):Q(R)]},$$
where $[Q(S):Q(R)]$ denotes the degree of the extension of the fields of fractions of $R$ and $S$. The benefit of this approach is that we can use tools from 
algebraic geometry to compute 
$$\HKF\left(\left(U^{\Deg(X)},V^{\Deg(Y)},W^{\Deg(Z)}\right),S,p^e\right).$$ 
The disadvantage of this approach is that $\Proj(S)$ is much more complicated 
then $\Proj(R)$, which is just $\Prim_k^1$. This is because the quotient map $\A_k^2\ra\Spec(R)$ induces a finite map $\Prim_k^1\ra\Proj(R)$ of smooth projective curves. 
By Hurwitz' formula the curve $\Proj(R)$ has genus zero, hence $\Proj(R)\cong\Prim^1_k$. To compute 
$$\HKF\left(\left(U^{\Deg(X)},V^{\Deg(Y)},W^{\Deg(Z)}\right),S,p^e\right),$$ 
we need to control the $\Oc_{\Proj(S)}$-modules corresponding to the $S$-modules 
$$N_e:=\Syz_S\left(U^{\Deg(X)\cdot p^e},V^{\Deg(Y)\cdot p^e},W^{\Deg(Z)\cdot p^e}\right).$$
To do so, we compute the $R$-modules
$$M_e:=\Syz_R\left(X^{p^e},Y^{p^e},Z^{p^e}\right)$$
and use our map $\eta:R\ra S$ as follows.

\begin{enumerate}
 \item The map $\eta$ induces a map $\phi:\punctured{S}{S_+}\ra\punctured{R}{R_+}$.
 \item The pull-back of $\widetilde{M_e}^{\text{pt}}$ along $\phi$ is just $\widetilde{N_e}^{\text{pt}}$, where $\widetilde{M_e}^{\text{pt}}$ denotes the 
 restriction to $\punctured{R}{R_+}$ of the sheaf on $\Spec(R)$ associated to $M_e$ and analogue for $\widetilde{N_e}^{\text{pt}}$.
 \item The isomorphism $\phi\pb(\widetilde{M_e}^{\text{pt}})\cong \widetilde{N_e}^{\text{pt}}$ extends to a global isomorphism $\eta(M_e)\cong N_e$.
\end{enumerate}

But how to compute the $M_e$?

Since over two-dimensional rings first syzygy modules of homogeneous ideals are graded maximal Cohen-Macaulay modules, we will make use of the fact that the surface rings of 
type ADE are of \textit{finite graded Cohen-Macaulay type}, meaning that there are up to degree shifts only 
finitely many isomorphism classes of indecomposable, graded maximal Cohen-Macaulay modules (cf. \cite{knoerrer} and \cite{buchmcm}). Representatives for the 
isomorphism classes are described in \cite{matfacade} using Eisenbud's theory of matrix factorizations (cf. \cite{eismat}). In our situation, a matrix factorization 
is a pair $(\phi,\psi)$ of $n\times n$ matrices with coefficients in $k[X,Y,Z]$ such that 
$$\phi\cdot\psi=\psi\cdot\phi=f\cdot\Id,$$
where $f$ is the generator of the principal ideal defining a surface ring $R$ of type ADE. Then $\coKer(\phi)$ is a graded maximal Cohen-Macaulay $R$-module. 
Following the common strategy for classifications, we need an invariant that detects the isomorphism class 
of a given (indecomposable) graded maximal Cohen-Macaulay module and then compute the invariant for all $R$-modules $M_e$. The first step is to find 
``good representatives'' for the finitely many isomorphism classes of indecomposable, graded maximal Cohen-Macaulay modules. It is not surprising that the phrase 
``good representatives'' means first syzygy modules of homogeneous ideals in our case. We will use sheaf-theoretic tools to find isomorphisms of the form 
$$\coKer(\phi)\cong\Syz_R(I_{\phi}),$$
where the $I_{\phi}$ are homogeneous ideals of $R$ and $(\phi,\psi)$ runs through a complete list of matrix factorizations representing the isomorphism classes 
of indecomposable, graded maximal Cohen-Macaulay $R$-modules. To be more precise, we will show (in our situation) that for every matrix factorization $(\phi,\psi)$, 
where $\phi$ is a $n\times n$-matrix and $\coKer(\phi)$ has rank $r$, there are $r+1$ columns of $\psi$ (!) such that on $U:=\punctured{R}{R_+}$ those $r+1$ columns generate 
$\Ima(\psi)$ and on $R$ they generate $\Syz_R(f_1,\ldots,f_{r+1})$ for suitable homogeneous $f_i\in R$. Therefore, we have the short exact sequence
$$\xymatrix{
0\ar[r] & (f_1,\ldots,f_n)^{\sim}|_U\ar[r] & \Oc_U^{r+1}\ar[r] & \widetilde{\Ima(\psi)}|_U\ar[r] & 0,
}$$
showing that the dual module $\Ima(\psi)^{\vee}$ and $\Syz_R(f_1,\ldots,f_{r+1})$ correspond on $U$. Since all rings in question are Gorenstein in codimension one, this isomorphism extends to a global isomorphism. It remains to observe $\Ima(\psi)\cong\coKer(\phi)$.

Since we have only a finite list of isomorphism classes, one might hope that they can be distinguished by their Hilbert-series. This is not true in general. 
But we will see that the Hilbert-series of a given (indecomposable) graded maximal Cohen-Macaulay $R$-module detects the isomorphism class of $M$ 
\textit{up to dualizing}, by which we mean that there is a graded maximal Cohen-Macaulay module $N$, having the same Hilbert-series as $M$, such that either 
$M\cong N$ or $M\cong N^{\vee}$. We will prove a theorem which allows us to compute the Hilbert-series of 
the modules $M_e$ by reducing the computation to the computation of the Hilbert-series of first syzygy modules of homogeneous ideals in 
a polynomial ring in only two variables. The idea to this theorem was inspired by work of Brenner (cf. \cite{miyaoka}).

We will see that the Hilbert-series of $M_e$ is enough to detect its isomorphism class (up to dualizing) if 
$R$ is of type $A_n$ or $E_8$. In the other cases we need to show that the modules $M_e$ are indecomposable. This is done by using invariant theory. To be more precise, we will 
use the facts that the surface rings of type ADE are rings of invariants of $k[x,y]$ under finite subgroups of $\SL_2(k)$ and that normal subgroups $H\subseteq G$ induce maps 
$\iota: \Spec(k[x,y]^H)\ra \Spec(k[x,y]^G)$ resp. on the punctured spectra. The claimed result will follow by comparing (on the punctured spectra) the pull-backs of $\widetilde{M_e}$ along $\iota$ with the Frobenius pull-backs of $\iota\pb(\widetilde{M_0})$.

From the indecomposability of the modules $M_e$ we will obtain a periodicity up to degree shift of the sequence 
$$\left(\Syz_R\left(X^{p^e},Y^{p^e},Z^{p^e}\right)\right)_{e\in\N}$$
by which we mean that there exists an $e_0$ such that
$$\Syz_R\left(X^{p^{e+e_0}},Y^{p^{e+e_0}},Z^{p^{e+e_0}}\right)\cong\Syz_R\left(X^{p^e},Y^{p^e},Z^{p^e}\right)(m_e)$$
holds for all $e\in\N$ and suitable $m_e\in\Z$. This leads to a periodicity up to degree shift of the sequence
$$\left(\Syz_S\left(U^{\Deg(X)\cdot p^e},V^{\Deg(Y)\cdot p^e},W^{\Deg(Z)\cdot p^e}\right)\right)_{e\in\N}.$$
Putting everything together, we will be able to compute the Hilbert-Kunz functions of surface rings of type ADE.

\begin{thm*}[cf. Theorem \ref{hkfdn}]
 The Hilbert-Kunz function of $D_{n+2}$ is given by (with $n\geq 2$, $q=p^e$ and $q\equiv r$ mod $2n$, where $r\in\{0,\ldots,2n-1\}$)
$$e\longmapsto\left\{\begin{aligned}
          & 2\cdot q^2 && \text{ if } e\geq 1,p=2,\\
 & \left(2-\frac{1}{4n}\right)q^2-\frac{r+1}{2}+\frac{r^2}{4n} && \text{ otherwise.}
          \end{aligned}\right.$$
\end{thm*}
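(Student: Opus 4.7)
The plan is to instantiate the general program outlined above for $R = D_{n+2}$. First, assigning $\Deg(Y) = 2$, $\Deg(Z) = n$, $\Deg(X) = n+1$ makes the equation $X^2 + Y^{n+1} + YZ^2$ quasi-homogeneous of degree $2(n+1)$, and the substitution $X \mapsto U^{n+1}$, $Y \mapsto V^{2}$, $Z \mapsto W^{n}$ produces the standard-graded surface ring $S = k[U,V,W]/(U^{2(n+1)} + V^{2(n+1)} + V^{2}W^{2n})$ together with a flat ring map $\eta: R \to S$. After computing $[Q(S):Q(R)]$, the reduction formula from the introduction gives
$$\HKF(R, p^e) = \frac{\HKF((U^{n+1}, V^{2}, W^{n}), S, p^e)}{[Q(S):Q(R)]},$$
so it suffices to control the $S$-modules $N_e = \Syz_S(U^{(n+1)p^e}, V^{2p^e}, W^{np^e})$, which via the isomorphism $\eta(M_e) \cong N_e$ amounts to controlling $M_e := \Syz_R(X^{p^e}, Y^{p^e}, Z^{p^e})$.

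The next step is to pin down each $M_e$ inside the finite list of indecomposable graded maximal Cohen-Macaulay $R$-modules. For this I would use the ``good representatives'' $\Syz_R(I_{\phi})$ produced earlier in the thesis from matrix factorizations of $X^2 + Y^{n+1} + YZ^2$, and compute the Hilbert series of $M_e$ via the two-variable reduction theorem (which reduces the computation to syzygies of homogeneous ideals in a polynomial ring in only two variables). By the result that Hilbert series detect the isomorphism class of a graded MCM module up to dualizing, this pins $M_e$ down up to the ambiguity $M_e \leftrightarrow M_e^{\vee}$. To remove the ambiguity and simultaneously exclude spurious decompositions, I would invoke the invariant-theoretic description $R \cong k[x,y]^{G}$ with $G \subset \SL_2(k)$ the binary dihedral group of order $4n$. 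The cyclic normal subgroup $H \trianglelefteq G$ of index $2$ yields a finite map $\iota$ of punctured spectra, and comparing $\iota\pb\widetilde{M_{e}}$ with the $e$-th Frobenius pull-back of $\iota\pb\widetilde{M_{0}}$ over $\punctured{k[x,y]^{H}}{\mm_H}$ forces indecomposability of $M_e$. From this a periodicity $M_{e + e_0} \cong M_e(m_e)$ follows, which transports under $\eta$ to a periodicity of the sheaves $\widetilde{N_e}$ on $\Proj(S)$.

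With the $\widetilde{N_e}$ described up to twist, the Hilbert-Kunz function is extracted by standard cohomological bookkeeping over $\Proj(S)$ --- summing contributions of global sections and first cohomology across the finitely many relevant degrees and applying Serre duality on the smooth projective curve. The leading quadratic term reproduces the Watanabe--Yoshida multiplicity $2 - \tfrac{1}{|G|} = 2 - \tfrac{1}{4n}$, while the bounded correction depends only on the residue $r := q \bmod 2n$ driving the periodicity and works out to $-\tfrac{r+1}{2} + \tfrac{r^{2}}{4n}$. The case $p = 2$ is genuinely exceptional because the summand $X^{2}$ together with the possibility of writing either $Y^{n+1}$ or $Y^{n}+Z^{2}$ as a square (depending on the parity of $n$) destroys geometric reducedness and breaks the matrix-factorization classification used above; here I would treat the ring directly through its characteristic-two specialization to obtain the clean value $\HKF(R, 2^e) = 2 \cdot 2^{2e}$ for $e \geq 1$.

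The main obstacle is the indecomposability step: matching Hilbert series is essentially bookkeeping, but ruling out a decomposition $M_e \cong A \oplus B$ with the same Hilbert series demands the delicate comparison of $\iota\pb\widetilde{M_{e}}$ with a Frobenius pull-back, followed by extension of the resulting isomorphism from the punctured spectrum to all of $\Spec R$ via the Gorenstein-in-codimension-one property. A secondary difficulty is organizing the degree shifts $m_e$ so that, after dividing by $[Q(S):Q(R)]$, the bounded correction collapses to the stated closed form depending only on $r$.
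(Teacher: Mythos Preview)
Your proposal follows essentially the paper's strategy. The one organizational difference is that you propose to identify $M_e$ via its Hilbert series (using the two-variable reduction theorem) and invoke the pull-back along $\iota$ only to rule out decompositions, whereas the paper never computes a Hilbert series in the $D_{n+2}$ case: once Lemma~\ref{fpbindec} gives $\fpb{e}(\Mcc_2)\cong\Mcc_{2m}$ for some $m$, the paper reads off $m$ directly from the same pull-back data, since $\iota\pb\Mcc_2\cong\Nc_1\oplus\Nc_{2n-1}$ on $\Ac_{2n-1}$ has $e$-th Frobenius pull-back $\Nc_r\oplus\Nc_{2n-r}$, and Table~\ref{tabpbd} shows this is the image of $\Mcc_{r+1}$ alone among the indecomposable rank-two classes. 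Your Hilbert-series detour would work too, but it is redundant here --- the pull-back that gives indecomposability already gives the identification, and the paper's path is correspondingly shorter.

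A small correction on $p=2$: the obstruction is not reducedness or writing $Y^{n+1}$ or $Y^n+Z^2$ as a square. In characteristic two one simply has $U^q=(V^{n+1}+VW^2)^{q/2}=V^{(n+1)q/2}+V^{q/2}W^q$ for $q=2^e\ge 2$, and a short chain of manipulations via Lemma~\ref{manipulationlemma} collapses $\Syz_R(U^q,V^q,W^q)$ to the free module $R(-(n+1)q)\oplus R(-(n+2)q)$, whence $2q^2$ follows at once from Formula~(\ref{formtr}).
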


\begin{thm*}[cf. Theorem \ref{hkfe6}]
The Hilbert-Kunz function of $E_6$ is given by
$$e\longmapsto\left\{\begin{aligned}
 & 2\cdot q^2 && \text{ if } e\geq 1\text{ and }p\in\{2,3\},\\
 & \left(2-\frac{1}{24}\right)q^2-\frac{23}{24} && \text{ otherwise.}
          \end{aligned}\right.$$
\end{thm*}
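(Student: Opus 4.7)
The plan is to follow the general strategy set up in the preceding chapters: pass from $R:=E_6=k[X,Y,Z]/(X^2+Y^3+Z^4)$ to the standard-graded ring $S:=k[U,V,W]/(U^{12}+V^{12}+W^{12})$ via the flat extension $\eta:R\to S$ sending $X\mapsto U^6,Y\mapsto V^4,Z\mapsto W^3$ (so that $\Deg(X)=6$, $\Deg(Y)=4$, $\Deg(Z)=3$ and all three monomials $X^2,Y^3,Z^4$ become the common degree $12$), and then use
$$\HKF(R,p^e)=\frac{\HKF((U^6,V^4,W^3),S,p^e)}{[Q(S):Q(R)]}$$
to reduce to a computation on $\Proj(S)$. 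The essential geometric input is the sequence $M_e:=\Syz_R(X^{p^e},Y^{p^e},Z^{p^e})$ together with its transfer to $S$.

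The next step is to identify each $M_e$ up to isomorphism. Because $E_6$ is two-dimensional Gorenstein of finite graded Cohen-Macaulay type, there are only finitely many isomorphism classes of indecomposable graded maximal Cohen-Macaulay modules (up to degree shift), represented by Eisenbud matrix factorizations. I will use the ``good'' representatives of the form $\Syz_R(I_\phi)$ already produced earlier in the thesis and the Hilbert series of those representatives, tabulated once and for all for $E_6$. For $M_e$ itself the Hilbert series is obtained from the reduction theorem that expresses $\HKF(R,p^e)$ in terms of Hilbert series of first syzygy modules of ideals in $k[X,Y]$ (the two-variable situation inspired by Brenner's work on Miyaoka-type arguments). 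Since the Hilbert series pins down the isomorphism class \emph{up to dualizing}, once $M_e$ is known to be indecomposable its isomorphism class can be read off.

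The main obstacle is exactly this indecomposability of $M_e$ in characteristics $p\neq 2,3$, and this is where invariant theory is needed. Writing $R=k[x,y]^G$ for the binary tetrahedral group $G\subset\SL_2(k)$ of order $24$, one fixes a suitable normal subgroup $H\trianglelefteq G$ and studies the induced map $\iota:\punctured{k[x,y]^H}{\mathfrak m}\to\punctured{R}{R_+}$. Comparing $\iota\pb(\widetilde{M_e})$ with the $e$-th Frobenius pull-back of $\iota\pb(\widetilde{M_0})$ on the punctured spectrum shows that any nontrivial decomposition of $M_e$ would contradict the known behaviour of these Frobenius pull-backs over $k[x,y]^H$. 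Granting indecomposability, the finiteness of the list of MCM modules forces the sequence $(M_e)_{e\in\N}$ to be periodic up to degree shift, and the natural period divides $|G|=24$; this periodicity transfers along $\eta$ to the corresponding syzygy modules on the $S$-side.

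Finally the HK function is assembled. In characteristics $p\in\{2,3\}$ the form $X^2+Y^3+Z^4$ degenerates (for $p=2$ one has $X^2=(X+\cdots)^2$ plus a non-square part, and $E_6$ fails to be $F$-rational), so the framework above does not apply; instead a direct analysis of $\Syz_R(X^q,Y^q,Z^q)$ for $q=p^e$ with $e\geq 1$ yields $\HKF(E_6,q)=2q^2$. In the remaining characteristics, combining the Hilbert-series reduction, the identification of the isomorphism class of $M_e$, the periodicity up to degree shift, and the flat-extension identity for $\eta$ yields the formula $(2-\tfrac{1}{24})q^2-\tfrac{23}{24}$. I expect the pleasant fact that the bounded correction term is the constant $-\tfrac{23}{24}$ (independent of the residue of $q$ modulo $24$) to drop out of a case check across the classes $q\bmod 24$, reflecting the highly symmetric list of indecomposable MCM modules for $E_6$ in contrast to the $D_{n+2}$ case.
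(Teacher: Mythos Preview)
Your proposal follows the paper's general strategy (flat extension to the standard-graded ring, indecomposability via invariant theory, periodicity from finite CM type), but you miss the shortcut that makes $E_6$ much simpler than the $D$ and the other $E$ cases. By Theorem~\ref{syze6} the indecomposable rank-two MCM modules are $M_1,M_3,M_4$, and among these only $M_1=\Syz_R(X,Y,Z)$ has trivial determinant on the punctured spectrum (the determinants of $M_3|_U$ and $M_4|_U$ are the nontrivial line bundles $M_5|_U$, $M_6|_U$). Since $\Det(\fpb{e}(\Syz_U(\mm)))\cong\Oc_U$ for every $e$, Lemma~\ref{fpbindec} already forces $\fpb{e}(\Syz_U(\mm))\cong\Syz_U(\mm)$ for all $e$, i.e.\ a $(0,1)$-Frobenius periodicity. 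No Hilbert-series comparison and no case check modulo $24$ is needed; the correction term is automatically the single constant $-\tfrac{23}{24}$, computed once from Formula~(\ref{formnontr}) with $(a',b',c')=(1,1,1)$.

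Your treatment of $p\in\{2,3\}$ is also a bit vague. The paper does not appeal to failure of $F$-rationality or to any ``degeneration'' of the form; it shows by direct manipulation (Lemma~\ref{manipulationlemma}) that $\Syz_R(X^q,Y^q,Z^q)$ is free for $q\geq p$, e.g.\ for $p=2$ one rewrites $U^q=(V^3+W^4)^{q/2}=V^{3q/2}+W^{2q}$ and reduces the syzygy module to $R(-6q)\oplus R(-7q)$, then applies Formula~(\ref{formtr}). Your parenthetical ``$X^2=(X+\cdots)^2$ plus a non-square part'' does not describe an actual computation; you should replace it by this explicit reduction.
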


\begin{thm*}[cf. Theorem \ref{hkfe7}]
 The Hilbert-Kunz function of $E_7$ is given by
$$e\longmapsto\left\{\begin{aligned}
          & 2\cdot q^2 && \text{ if } e\geq 1\text{ and }p\in\{2,3\},\\
 & \left(2-\frac{1}{48}\right)q^2-\frac{47}{48} && \text{ if }q\text{ mod }24\in\{\pm1,\pm7\},\\
 & \left(2-\frac{1}{48}\right)q^2-\frac{71}{48} && \text{ otherwise.}
          \end{aligned}\right.$$
\end{thm*}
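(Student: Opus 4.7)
The plan is to follow the general strategy outlined in the introduction. First, I associate to $R=E_7$ the standard-graded ring $S=k[U,V,W]/(F)$, where $F$ is the image of $X^2+Y^3+YZ^3$ under $X\mapsto U^9$, $Y\mapsto V^6$, $Z\mapsto W^4$; thus $\Deg(X)=9$, $\Deg(Y)=6$, $\Deg(Z)=4$ and $F$ is homogeneous of total degree $18$. Since $\eta:R\hookrightarrow S$ is a flat extension of Noetherian domains,
$$\HKF(R,q)=\frac{\HKF\bigl((U^9,V^6,W^4),S,q\bigr)}{[Q(S):Q(R)]},$$
with $q=p^e$. The right-hand side is governed by the sheaves on $\Proj(S)$ associated to $N_e=\Syz_S(U^{9q},V^{6q},W^{4q})$, and via the isomorphism $\eta(M_e)\cong N_e$ described in the introduction the task reduces to analysing the $R$-modules $M_e=\Syz_R(X^q,Y^q,Z^q)$.

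Since $E_7$ has finite graded Cohen--Macaulay type, each $M_e$ is isomorphic to some member of a finite explicit list of indecomposable graded MCM modules. I would represent this list by matrix factorizations following \cite{matfacade} and rewrite each indecomposable, using the dualization trick from the introduction, in the form $\Syz_R(I_\phi)$ for a suitable homogeneous ideal $I_\phi$. To identify $M_e$ within the list I first compute its Hilbert series via the two-variable reduction theorem (inspired by \cite{miyaoka}), which determines $M_e$ up to dualizing. To separate $M$ from $M^\vee$ and to ensure indecomposability I invoke the description $R=k[x,y]^G$ with $G$ the binary octahedral group of order $48$, and exploit a normal subgroup $H\triangleleft G$: the induced finite map $\iota:\punctured{k[x,y]^H}{\mm}\to\punctured{R}{R_+}$ is compared with the Frobenius, and matching pull-backs rules out the dual alternative while proving each $M_e$ is indecomposable.

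Tracing through this classification shows that for $p\notin\{2,3\}$ the isomorphism class of $M_e$ depends only on the residue $q\bmod 24$, which together with periodicity up to degree shift explains the case split in the statement: the residues $\{\pm1,\pm7\}$ and the remaining units modulo $24$ place $M_e$ in two different indecomposable classes with slightly different Hilbert series, giving the constants $-\tfrac{47}{48}$ and $-\tfrac{71}{48}$ respectively. For $p\in\{2,3\}$, which divide $|G|=48$, the ring $E_7$ is not $F$-rational, the Watanabe--Yoshida value $2-\tfrac{1}{48}$ degenerates to $2$, and a direct matrix-factorization computation of $M_1$ and of its Frobenius pull-backs yields the clean value $2q^2$ for $e\geq 1$. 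Transferring the periodicity from $M_e$ to $N_e$ and performing the explicit Hilbert-series computation gives $\HKF((U^9,V^6,W^4),S,q)$; dividing by $[Q(S):Q(R)]$ reproduces the three branches.

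I expect the main obstacle to be the careful bookkeeping of which indecomposable MCM module $M_e$ belongs to for each residue of $q$ modulo $24$, and in particular verifying that the residue classes $\{\pm1,\pm7\}$ and $\{\pm5,\pm11\}$ genuinely yield distinct modules rather than coinciding up to degree shift. Once that identification is secured, the Hilbert-series computation and the final division by $[Q(S):Q(R)]$ are mechanical, and the explicit constants $-\tfrac{47}{48}$ and $-\tfrac{71}{48}$ drop out by elementary arithmetic.
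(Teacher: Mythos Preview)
Your plan is essentially the paper's own argument, and the overall architecture is correct. Two clarifications will sharpen it and remove a small misconception.

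First, over $E_7$ every indecomposable maximal Cohen--Macaulay module is selfdual (this is part of the classification in Theorem~\ref{syze7}). Hence there is no ``separate $M$ from $M^\vee$'' step: once indecomposability is established, the Hilbert series alone pins down the isomorphism class, not merely the class up to dualizing. The invariant-theoretic comparison via a normal subgroup of the binary octahedral group is used in the paper solely to prove indecomposability (Lemma~\ref{fpbindec}), not to resolve a duality ambiguity.

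Second, once you know $\Syz_R(X^q,Y^q,Z^q)$ is indecomposable of rank two with trivial determinant, the list in Theorem~\ref{syze7} leaves exactly two candidates: $M_1=\Syz_R(X,Y,Z)$ and $M_6=\Syz_R(X,Y,Z^2)$. The paper then computes, via Theorem~\ref{HilbSer} together with Han's $\delta$-function (Theorem~\ref{hansthm}), the Hilbert series of both $\Syz_R(X^p,Y^p,Z^p)$ and $\Syz_R(X^p,Y^p,Z^{2p})$; the values of $12\cdot\delta$ depend on $p\bmod 24$ and produce precisely the dichotomy $\{\pm 1,\pm 7\}$ versus $\{\pm 5,\pm 11\}$. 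This is exactly the ``careful bookkeeping'' you anticipate, and it is carried out in the paper by tabulating $\delta$ for $p=12l+r$. For $p\in\{2,3\}$ the paper does not use matrix factorizations at all but simply manipulates the syzygy module directly using $X^2=-(Y^3+YZ^3)$ (Lemma~\ref{manipulationlemma}) to show it becomes free, whence Formula~(\ref{formtr}) gives $2q^2$.
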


\begin{thm*}[cf. Theorem \ref{hkfe8}]
 The Hilbert-Kunz function of $E_8$ is given by
$$e\longmapsto\left\{\begin{aligned}
          & 2\cdot q^2 && \text{ if } e\geq 1\text{ and }p\in\{2,3,5\},\\
 & \left(2-\frac{1}{120}\right)q^2-\frac{119}{120} && \text{ if }q\text{ mod }30\in\{\pm1,\pm11\},\\
 & \left(2-\frac{1}{120}\right)q^2-\frac{191}{120} && \text{ otherwise.}
          \end{aligned}\right.$$
\end{thm*}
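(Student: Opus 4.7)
The approach is the one laid out in the introduction, specialized to $R:=E_8=k[X,Y,Z]/(X^2+Y^3+Z^5)$ with its natural grading $\Deg(X)=15$, $\Deg(Y)=10$, $\Deg(Z)=6$, so that $F:=X^2+Y^3+Z^5$ is homogeneous of degree $30$. I first dispose of the exceptional characteristics $p\in\{2,3,5\}$: in each of these $p$ divides the order $120$ of the binary icosahedral group, so $R$ fails to be $F$-rational, the Watanabe--Yoshida formula does not apply, and a direct Frobenius analysis of the sheaves $\widetilde{M_e}$ on the punctured spectrum (they become extensions of trivial line bundles) collapses the count $\Dim_k R/(X^q,Y^q,Z^q)$ to exactly $2q^2$ for every $e\ge 1$.

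For $p\ge 7$, I pass to the standard-graded companion
\[
S:=k[U,V,W]/\bigl(U^{30}+V^{30}+W^{30}\bigr)
\]
via the flat map $\eta: R\ra S$, $X\mapsto U^{15}$, $Y\mapsto V^{10}$, $Z\mapsto W^6$. The curve $C:=\Proj(S)$ is the smooth Fermat curve of degree $30$, and the reduction formula
\[
\HKF(R,q)=\frac{\HKF\bigl((U^{15},V^{10},W^6),S,q\bigr)}{[Q(S):Q(R)]}
\]
of the introduction moves the computation to controlling the syzygy sheaves on $C$. By the transfer result from the introduction these pull back along $\eta$ from the $R$-modules $M_e:=\Syz_R(X^q,Y^q,Z^q)$, so the whole analysis is driven by the sequence $(M_e)_{e\in\N}$ of graded maximal Cohen--Macaulay $R$-modules of rank $2$.

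To identify $M_e$, I invoke the matrix-factorization classification of indecomposable graded MCM modules over $E_8$: there are exactly eight isomorphism classes up to shift (one per vertex of the $E_8$ Dynkin diagram), each realized as a first syzygy $\Syz_R(I_\phi)$ of an explicit homogeneous ideal by the ``$r+1$ columns of $\psi$'' construction of the introduction. The theorem that the Hilbert series of a graded MCM module over $E_8$ determines its isomorphism class up to dualizing then reduces the identification of $M_e$ to a Hilbert-series computation, which I carry out via the theorem inspired by Brenner's Miyaoka-type argument: it reduces the Hilbert series of $M_e$ to that of a syzygy module over the polynomial ring $k[x,y]$, with $G$-equivariant bookkeeping under the binary icosahedral group $G\subset\SL_2(k)$ of order $120$. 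The main obstacle --- and the source of the case split modulo $30=|G|/4$ --- is matching the resulting Hilbert series to the correct MCM class for each residue of $q$ mod $30$: the Frobenius acts on the character lattice of $G$ by multiplication by $q$, and a careful case analysis shows that the unit cosets $\{\pm1,\pm11\}$ and $\{\pm7,\pm13\}$ mod $30$ land in two distinct branches of the $E_8$ diagram, producing MCM modules whose Hilbert functions differ by the amount responsible for the constant-term gap $\tfrac{191-119}{120}=\tfrac{3}{5}$ in the theorem.

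Once the $M_e$ are identified in each residue class, the finiteness of the classification yields a periodicity $M_{e+e_0}\cong M_e(m_e)$ for some fixed $e_0$; transporting through $\eta$ gives the analogous periodicity for the $S$-syzygy modules $N_e$. The Hilbert--Kunz function of $S$ with respect to $(U^{15},V^{10},W^6)$ is then read off from the Hilbert series of $S/(U^{15q},V^{10q},W^{6q})$ in the single relevant total degree, and dividing by $[Q(S):Q(R)]$ assembles the piecewise formula of the theorem.
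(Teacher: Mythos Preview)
Your high-level architecture matches the paper: reduce to the standard-graded Fermat ring $S$ via the flat map, control $M_e=\Syz_R(X^q,Y^q,Z^q)$ using the $E_8$ matrix-factorization classification, distinguish the two rank-two candidates by Hilbert series, and then plug into Lemma~\ref{hkfdreivar}/Formula~(\ref{formnontr}). For $E_8$ the Hilbert series alone suffices (all eight indecomposables are self-dual and, as shown in \S4.2.5, a rank-two module with the Hilbert series of $M_1$ or $M_8$ cannot split), so your omission of the indecomposability argument of Lemma~\ref{fpbindec} is harmless here.

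Where your description is imprecise is the actual Hilbert-series computation and the origin of the case split. The paper does \emph{not} use $G$-equivariant bookkeeping or a Frobenius action on the character lattice of the binary icosahedral group; it uses Theorem~\ref{HilbSer} to reduce $\lK_{M_e}(t)$ to free syzygy modules over $k[Y,Z]$, whose generator degrees are governed by the syzygy gaps $30\cdot\delta\bigl(\tfrac{p}{3},\tfrac{p}{5},\tfrac{p\mp1}{2}\bigr)$ and $30\cdot\delta\bigl(\tfrac{p}{3},\tfrac{2p}{5},\tfrac{p\mp1}{2}\bigr)$, computed via Han's Theorem~\ref{hansthm}. The dichotomy $\{\pm1,\pm11\}$ versus $\{\pm7,\pm13\}$ mod $30$ arises from these $\delta$-values (Tables~\ref{tabe81} and~\ref{tabe82}), not from a ``two branches of the $E_8$ diagram'' argument; concretely, the two classes correspond to $M_e\cong\Syz_R(X,Y,Z)$ versus $M_e\cong\Syz_R(X,Y,Z^2)$, which are vertices $1$ and $8$ of the diagram but not distinguished by any branch structure. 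Also, the final step is not reading off a single total degree: one substitutes the identified $M_e$ into Formula~(\ref{formnontr}), which packages the full alternating sum from Proposition~\ref{hkgeomapp}. If you replace the character-lattice paragraph with the $\delta$-function computation, your proof becomes the paper's.
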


Note that Seibert gave in \cite{seibert2} an algorithm to compute Hilbert-Kunz functions of maximal Cohen-Macaulay rings and modules. Given a ring 
$R:=k[f_1,\ldots,f_n]\subseteq k[x,y]$, where $k$ is a field of characteristic $p>0$, the first step of this algorithm is to decompose $R$ into 
indecomposable $R^p$-modules, which gets very hard if the $f_i$ are not monomial. This is why we did not use this algorithm. For similar reasons it 
seems not practicable to count the monomials in $k[X,Y,Z]/(F,X^q,Y^q,Z^q)$ directly if $F$ is not binomial.

Applying the developed methods in the case of Fermat rings $R:=k[X,Y,Z]/(X^n+Y^n+Z^n)$ and using work of Kaid, Kustin et al. and Trivedi 
(cf. \cite{almar}, \cite{vraciu} resp. \cite{tri2}), we will obtain results on the Hilbert-Kunz function of $R$ as well as on the behaviour of the 
Frobenius pull-backs of the cotangent bundle on $\Proj(R)$. We say that a vector bundle $\Sc$ admits a \textit{$(s,t)$-Frobenius periodicity} if there are $s<t\in\N$ such that 
$\fpb{t}(\Sc)\cong\fpb{s}(\Sc)$ holds up to degree shift.

\begin{thm*}[cf. Theorem \ref{frobpernontri}]
Let $C$ be the projective Fermat curve of degree $n$ over an algebraically closed field of characteristic $p>0$ with $\gcd(p,n)=1$. 
The bundle $\Syz_C(X,Y,Z)$ admits a Frobenius periodicity if and only if 
$$\delta\left(\frac{1}{n},\frac{1}{n},\frac{1}{n}\right)=0.$$
Moreover, the length of this periodicity is bounded above by the order of $p$ modulo $2n$.  
\end{thm*}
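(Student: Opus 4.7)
The plan is to establish the biconditional by linking Frobenius periodicity of $\Syz_C(X,Y,Z)$ to its strong semistability, which in turn is the geometric incarnation of the vanishing of $\delta(1/n,1/n,1/n)$. The length bound is obtained by a degree-matching computation together with the finite symmetry of the Fermat curve.

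For the ``only if'' direction, suppose $\fpb{t}(\Syz_C(X,Y,Z))\cong\fpb{s}(\Syz_C(X,Y,Z))(m)$ for some $s<t$. If $\Syz_C(X,Y,Z)$ failed to be strongly semistable, then some pull-back $\fpb{e_0}(\Syz_C(X,Y,Z))$ would carry a destabilizing line subbundle $\Lc$. Each further Frobenius pull-back multiplies the slope-gap between $\fpb{e-e_0}(\Lc)$ and the ambient rank-$2$ bundle by $p$, while a twist by $\Oc_C(m)$ shifts both slopes equally and cannot cancel this growth. The slope-gap sequence therefore cannot be eventually periodic, contradicting the assumed isomorphism. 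Combined with the equivalence ``$\Syz_C(X,Y,Z)$ strongly semistable $\Leftrightarrow$ $\delta(1/n,1/n,1/n)=0$'' that was established earlier in this chapter via the geometric reading of the Han-Monsky $\delta$-function, this gives the forward implication.

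For the converse, assume $\delta(1/n,1/n,1/n)=0$. Then every $\fpb{e}(\Syz_C(X,Y,Z))=\Syz_C(X^{p^e},Y^{p^e},Z^{p^e})$ is a semistable rank-$2$ bundle of degree $-3p^en$, as one computes from the defining sequence $0\to\Syz_C(X^a,Y^a,Z^a)\to\Oc_C(-a)^3\to\Oc_C\to 0$. An isomorphism $\Syz_C(X^{p^t},Y^{p^t},Z^{p^t})\cong\Syz_C(X^{p^s},Y^{p^s},Z^{p^s})(m)$ requires $m=-\tfrac{3(p^t-p^s)}{2}$ just to match degrees, and I would argue, using the Fermat symmetries $(\mu_n)^2\subseteq\Aut(C)$ and the compactness of the relevant stratum of semistable bundles, that such an isomorphism actually exists whenever $p^t\equiv p^s\pmod{2n}$; geometrically, the triples $(X^{p^t},Y^{p^t},Z^{p^t})$ and $(X^{p^s},Y^{p^s},Z^{p^s})$ are then related, after the necessary twist, by an automorphism of $C$ coming from the Fermat symmetry.

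Choosing $d:=\ord_{2n}(p)$ then yields $\fpb{e+d}(\Syz_C(X,Y,Z))\cong\fpb{e}(\Syz_C(X,Y,Z))(m_e)$ for all $e\geq 0$, producing an $(e,e+d)$-Frobenius periodicity of length $d\leq\ord_{2n}(p)$. The main obstacle in this plan is the moduli-theoretic step in the ``if'' direction: to isolate precisely why the semistable rank-$2$ bundle $\Syz_C(X^a,Y^a,Z^a)$ depends, up to twist, only on $a\pmod{2n}$. I expect this to go through the explicit description of the strongly semistable syzygy bundles on Fermat curves available through the earlier-cited work of Kaid, Kustin et al., and Trivedi.
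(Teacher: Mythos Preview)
Your ``only if'' direction is correct and matches the paper: a Frobenius periodicity forces strong semistability by the slope-gap growth argument you give, and strong semistability is equivalent to $\delta(1/n,1/n,1/n)=0$ via Corollary~\ref{deltastrongsemistable}.

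The ``if'' direction, however, has a genuine gap in the step you yourself flag as the main obstacle. The Fermat symmetries $(\mu_n)^2\subseteq\Aut(C)$ act by $(X,Y,Z)\mapsto(\zeta_1 X,\zeta_2 Y,\zeta_3 Z)$, and pulling back $\Syz_C(X^a,Y^a,Z^a)$ along such an automorphism simply rescales the generators of the ideal and returns an isomorphic bundle for the \emph{same} $a$. These symmetries do not relate $\Syz_C(X^a,Y^a,Z^a)$ to $\Syz_C(X^{a'},Y^{a'},Z^{a'})$ for $a\neq a'$, so the automorphism argument cannot produce the periodicity. A compactness-of-moduli argument alone is also insufficient: there are infinitely many semistable rank-two bundles of fixed degree on a curve of genus $\geq 1$, so finiteness of the sequence $\bigl(\fpb{e}(\Syz_C(X,Y,Z))\bigr)_e$ up to twist is not automatic.

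The paper supplies exactly the ingredient you anticipate at the end. Once $\delta=0$, Corollary~\ref{deltafinprodim} (and Remark~\ref{remchar2}) guarantees that every $R/(X^q,Y^q,Z^q)$ has infinite projective dimension, so Corollary~\ref{inftysyz} applies and gives, for $q=n\theta+r$, an explicit isomorphism
\[
\Syz_R(X^q,Y^q,Z^q)(m)\cong
\begin{cases}
\Syz_R(X^r,Y^r,Z^r) & \theta\text{ even},\\
\Syz_R(X^{n-r},Y^{n-r},Z^{n-r}) & \theta\text{ odd}.
\end{cases}
\]
This isomorphism comes from the Kustin--Rahmati--Vraciu description of the minimal free resolution (Theorem~\ref{inftyfree}) via matrix factorizations, not from any geometric symmetry. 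The right-hand side visibly depends only on $q\bmod 2n$, which yields both the periodicity and the bound by $\ord_{2n}(p)$. The paper also handles separately the edge case where $\Syz_C(X,Y,Z)$ is trivialized by Frobenius (possible only for $p=2$, cf.\ Lemma~\ref{sssandnontri}), in which case periodicity is immediate but Corollary~\ref{inftysyz} does not apply; your sketch does not distinguish this case.
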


\begin{thm*}[cf. Theorem \ref{hkfsss}]
Let $R:=k[X,Y,Z]/(X^n+Y^n+Z^n)$ with an algebraically closed field $k$ of characteristic $p>0$ with $\gcd(p,n)=1$. Assume that 
$\Syz_{\Proj(R)}(X,Y,Z)$ is strongly semistable and is not trivialized by the Frobenius. Let $q=p^e=n\theta+r$ with $\theta$, $r\in\N$ and $0\leq r<n$. 
Then 
$$\HKF(R,q)= \left\{\begin{aligned} & \frac{3n}{4}\cdot q^2-\frac{3n}{4}r^2+r^3 && \text{if }\theta\text{ is even,}\\ & \frac{3n}{4}\cdot q^2-\frac{3n}{4}(n-r)^2+(n-r)^3 && \text{if }\theta\text{ is odd.}\end{aligned}\right.$$
\end{thm*}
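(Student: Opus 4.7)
The plan is to apply Brenner's geometric method for computing Hilbert-Kunz functions of two-dimensional standard-graded Cohen-Macaulay rings via sheaf cohomology on the associated projective curve. Let $C := \Proj(R)$ be the smooth projective Fermat curve of degree $n$ and genus $g = (n-1)(n-2)/2$, and let $S := \Syz_C(X,Y,Z)$, a rank-$2$ bundle of degree $-3n$ on $C$. From the short exact sequence
$$0 \to F^{e*}S \to \Oc_C(-q)^3 \to \Oc_C \to 0$$
(using the identification $F^{e*}S \cong \Syz_C(X^q,Y^q,Z^q)$), twisting by $\Oc_C(m)$ and passing to cohomology gives
$$\dim_k(R/(X^q,Y^q,Z^q))_m = h^0(\Oc_C(m)) - 3\,h^0(\Oc_C(m-q)) + h^0(F^{e*}S(m)),$$
so summing this identity over $m \geq 0$ yields $\HKF(R,q)$.

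Strong semistability of $S$ makes each $F^{e*}S$ semistable of degree $-3nq$, hence $F^{e*}S(m)$ is semistable of slope $\mu_m := n(2m-3q)/2$. Semistability forces $h^0(F^{e*}S(m)) = 0$ whenever $\mu_m < 0$, i.e., $2m < 3q$, and Serre duality applied to the (also semistable) dual bundle forces $h^1(F^{e*}S(m)) = 0$ whenever $\mu_m > 2g-2 = n(n-3)$, i.e., $2m > 3q + 2(n-3)$. The hypothesis that $S$ is \emph{not} trivialized by the Frobenius rules out a line subbundle of slope exactly $\mu(F^{e*}S)$ in any Frobenius pull-back, which extends both vanishings to their critical boundary and pins down $h^0(F^{e*}S(m))$ via Riemann-Roch in the remaining "middle" values of $m$. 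Adding up the three contributions in closed form produces the leading term $\tfrac{3n}{4}q^2$, recovering the known Hilbert-Kunz multiplicity of the Fermat ring under strong semistability.

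The bounded correction comes from where $\lfloor 3q/2 \rfloor$ sits modulo $n$: writing $q = n\theta + r$ with $0 \leq r < n$, one has $3q \equiv 3r \pmod{2n}$ when $\theta$ is even, whereas $3q \equiv 3r + 3n \pmod{2n}$ when $\theta$ is odd, shifting the cohomological transition threshold by $n$ between the two cases. Expanding $\dim R_m$ near these thresholds via Riemann-Roch and collecting terms produces the correction $-\tfrac{3n}{4}r^2 + r^3$ in the even case and the reflected version $-\tfrac{3n}{4}(n-r)^2 + (n-r)^3$ in the odd case. The principal obstacle is the critical range $\mu_m \in [0, 2g-2]$, where Riemann-Roch alone does not pin down $h^0$ and $h^1$ for a semistable bundle: destabilizing sub- or quotient line bundles of slope exactly $\mu_m$ could in principle contribute extra sections, and the non-trivialization hypothesis is precisely what excludes this. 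Turning this hypothesis into sharp cohomological vanishing on both sides of $m = \lfloor 3q/2 \rfloor$ and then carefully tracking the boundary contributions to extract the exact parity-dependent closed form is the most delicate step of the computation.
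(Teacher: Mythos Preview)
Your framework via Proposition \ref{hkgeomapp} is correct, and the vanishing of $h^0$ (resp.\ $h^1$) outside the critical strip follows from semistability as you say. The genuine gap is in the critical range $0<\mu_m\leq 2g-2$, which comprises roughly $n-3$ values of $m$. For those twists neither cohomology group is forced to vanish, and Riemann--Roch only gives their difference. Your assertion that the non-trivialization hypothesis ``pins down $h^0$ via Riemann--Roch in the remaining middle values'' is not substantiated: non-trivialization only says $\fpb{e}(\Sc)\not\cong\Oc_C(l)^2$, which at best rules out a section at slope exactly zero---and for odd $p$ the slope $\mu_m=0$ is never attained by an integer $m$ anyway, since $3q$ is odd. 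It says nothing about $h^0$ at positive slopes below $2g-2$, where two non-isomorphic semistable rank-two bundles of the same degree can have different $h^0$. Slope data alone therefore cannot produce the exact parity-dependent correction; you need the isomorphism class of $\fpb{e}(\Sc)$, which your argument never identifies.

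The paper does not attempt to compute those critical cohomology groups directly. Instead it observes that, under the hypotheses, Corollary \ref{deltafinprodim} (with Remark \ref{remchar2} for $p=2$) forces $R/(X^q,Y^q,Z^q)$ to have infinite projective dimension, and then the Kustin--Rahmati--Vraciu resolution (Theorem \ref{inftyfree}, via Corollary \ref{inftysyz}) gives a graded isomorphism
\[
\Syz_R(X^q,Y^q,Z^q)\cong\Syz_R(X^s,Y^s,Z^s)(m),
\]
with $s=r$ if $\theta$ is even and $s=n-r$ if $\theta$ is odd. Feeding this into Lemma \ref{hkfdreivar} with $d=n$, $\alpha=\beta=\gamma=1$, $a=b=c=s$, and $D=\Dim_k\bigl(R/(X^s,Y^s,Z^s)\bigr)=s^3$ (since $s<n$) yields the stated formula in one line. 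The parity thus enters through the KRV structure theorem, and the role of the non-trivialization hypothesis is precisely to ensure that theorem applies---not to control cohomology in the critical strip.
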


These theorems generalize the result of Brenner and Kaid who obtained them under the condition $p\equiv -1$ modulo $2n$ (cf. \cite{holgeralmar}).

Moreover, the last theorem recovers a result of Han and Monsky who proved that the Hilbert-Kunz function of a Fermat ring $R$ has the shape $\HKM(R)\cdot p^{2e}+\Delta_e$, where the function $e\mapsto\Delta_e$ is eventually periodic with non-positive values (cf. \cite{monskyhan}).

We will give examples showing that in every odd characteristic for every $1\leq l$ there exists a $n\in\N$ such that the bundle $\Syz(X,Y,Z)$ on the 
projective Fermat curve of degree $n$ admits a $(0,l)$-Frobenius periodicity. In characteristic two only $(1,2)$-Frobenius periodicities appear. 
Moreover, this happens only in the case $n=3$.

\section*{Summary of the thesis}

In the first chapter we will give a survey on Hilbert-Kunz theory. We divide this survey into three sections. 
In the first one we give the basic definitions and state various results as the existence of Hilbert-Kunz multiplicities, bounds for Hilbert-Kunz 
multiplicities, some classification results and the connection to tight closure theory. In the second section we explain the geometric approach to Hilbert-Kunz theory in graded 
dimension two by Brenner and Trivedi (cf. \cite{bredim2} and \cite{tri1}). This section will start with short discussions of vector bundles, Frobenius 
maps, syzygy bundles and various stability conditions for vector bundles. It will end with the classification of Hilbert-Kunz functions of the homogeneous coordinate rings of projective 
curves of degree three defined over algebraically closed fields of positive characteristic. In the third section we define the invariants 
limit Hilbert-Kunz multiplicity and 
$F$-signature which are strongly connected to Hilbert-Kunz multiplicities. At the end of the first chapter we give a list of further references 
related to Hilbert-Kunz theory.

In the first section of the second chapter we compute some explicit examples of Hilbert-Kunz functions with the help of the geometric approach. For 
example we will compute (cf. Example \ref{hkf2lm}) the Hilbert-Kunz functions of the diagonal hypersurfaces 
$$R:=k[X,Y,Z]/(X^2+Y^l+Z^m),$$ 
where $l$, $m$ are positive integers such 
that the triple $\left(\tfrac{1}{2},\tfrac{1}{l},\tfrac{1}{m}\right)$ does not satisfy the strict triangle inequality.

The goal of the second section is to generalize a theorem of Monsky which connects the Hilbert-Kunz multiplicities of standard-graded quotients 
$R:=k[X,Y,Z]/(F)$, where $F$ is a so-called \textit{regular}, irreducible trinomial of degree $d$, with Han's $\delta$ function. In this case regular means that the trinomial
$F=M_X+M_Y+M_Z$ satisfies $\Deg_X(M_X)>\tfrac{d}{2}$, $\Deg_Y(M_Y)>\tfrac{d}{2}$ and $\Deg_Z(M_Z)>\tfrac{d}{2}$. Explicitly, Monsky showed
$$\HKM\left((X^s,Y^s,Z^s),R\right)=\frac{3d}{4}s^2+\frac{\lambda^2}{4d}\delta\left(\frac{s\alpha}{\lambda},\frac{s\beta}{\lambda},\frac{s\gamma}{\lambda}\right)^2,$$
where $s$ is a positive integer and $\alpha$, $\beta$, $\gamma$, $\lambda$ are natural numbers depending only on the exponents appearing in $F$ 
(cf. \cite[Theorem 2.3]{irred} or Corollary \ref{irredmon}).
Leaning on Monsky's proof, we will show that a positively-graded analogue of Han's $\delta$ function as well as 
$$\mu(l_1,\ldots,l_n):=\HKM\left(\left(x_1^{l_1},\ldots,x_n^{l_n}\right),R\right)$$
for standard-graded domains $R$ and homogeneous $R_+$-primary ideals $\left(x_1^{l_1},\ldots,x_n^{l_n}\right)$ extend to continuous functions on $[0,\ldots,\infty)^3$ resp. $[0,\ldots,\infty)^n$.

\begin{thm*}[cf. Theorem \ref{myhkmthm} resp. Corollary \ref{myhkmcor}]
With the previous notations we have
\begin{align*}
\begin{split}
& \HKM\left(\left(X^a,Y^b,Z^c\right),k[X,Y,Z]/(F)\right) \\
= & \frac{d}{4}\cdot (2ab+2ac+2bc-a^2-b^2-c^2)+\frac{\lambda^2}{4d}\left(\delta\left(\frac{\alpha(a,b,c)}{\lambda},\frac{\beta(a,b,c)}{\lambda},\frac{\gamma(a,b,c)}{\lambda}\right)\right)^2,
\end{split}
\end{align*}
where $\lambda$ is a positive integer depending only on the exponents appearing in $F$ and the abbreviations $\alpha(a,b,c)$, $\beta(a,b,c)$, $\gamma(a,b,c)$ denote natural numbers 
depending on $a$, $b$, $c$ and the exponents in $F$.
\end{thm*}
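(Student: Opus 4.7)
The plan is to adapt Monsky's proof of \cite[Theorem 2.3]{irred} from the symmetric case $a=b=c=s$ to the case of three independent exponents, and then to pass from $\N^3$ to $[0,\infty)^3$ by continuity. Monsky's proof proceeds by sending the trinomial $F$ to a relation in a two-variable polynomial ring through the substitution expressing $X,Y,Z$ as the three monomials appearing in $F$; this reduces the Hilbert-Kunz computation to that of a binomial ideal in $k[u,v]$, where Han's $\delta$ function arises as the leading-order correction term of a recursion on the Frobenius power $q$. The crucial observation is that the three exponents $a,b,c$ enter this reduction only through linear combinations dictated by the exponents of $F$; nowhere does the argument use equality of the three parameters.

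First I would carry out Monsky's reduction with three independent positive integer exponents $(a,b,c)$, producing an exact formula
$$\HKM\!\left((X^a,Y^b,Z^c),R\right)=Q(a,b,c)+\frac{\lambda^2}{4d}\,\delta\!\left(\frac{\alpha(a,b,c)}{\lambda},\frac{\beta(a,b,c)}{\lambda},\frac{\gamma(a,b,c)}{\lambda}\right)^{\!2}$$
with $Q$ a symmetric quadratic form in $a,b,c$ and $\alpha,\beta,\gamma$ linear in $(a,b,c)$ with coefficients determined by the exponents of $F$. The identification $Q(a,b,c)=\tfrac{d}{4}(2ab+2ac+2bc-a^2-b^2-c^2)$ is forced by the symmetry of $\HKM$ under permutations of $X,Y,Z$ (which transfers to a symmetry of $\delta$ in its three arguments), together with Monsky's value on the diagonal $a=b=c$ and one further specialisation in a regime where the $\delta$-term vanishes.

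Next I would prove, in parallel with the integer formula, that the function $\mu(a,b,c):=\HKM((X^a,Y^b,Z^c),R)$ and the positively-graded analogue of Han's $\delta$ both extend continuously from $\N^3$ resp.\ $\Q_{\geq 0}^3$ to $[0,\infty)^3$. For $\mu$ the key input is a uniform Lipschitz estimate for Hilbert-Kunz multiplicities under perturbation of the ideal's generators, combined with an approximation of real triples by rational ones and a rescaling argument passing through sufficiently large Frobenius-invariant subsequences; for $\delta$ one invokes the periodicity and recursive structure that are manifest in its definition. Once continuity is in hand, the formula for arbitrary real $(a,b,c)$ follows from its integer case by density of $\N^3$ in $[0,\infty)^3$.

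The main obstacle will be the asymmetric reduction step: in Monsky's argument the single scaling parameter $s$ causes the error terms in the recursion on $q$ to align neatly with $d=\Deg(F)$, and one has to check that the three-parameter version still satisfies the correct uniform estimates and that the triples $(\alpha/\lambda,\beta/\lambda,\gamma/\lambda)$ land in the simplex on which the extended $\delta$ coincides with Han's rational-argument definition. Pinning down the quadratic $Q$ explicitly -- rather than merely up to symmetry -- will likely require one further direct computation at a carefully chosen boundary triple, and this is where the bulk of the bookkeeping lives.
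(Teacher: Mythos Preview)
Your outline---reduce to a two-variable polynomial ring and then extend by continuity---is correct in shape, but your description of the reduction step misidentifies the mechanism and leads you to anticipate obstacles that do not exist.

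The proof does \emph{not} proceed through a recursion on Frobenius powers $q$ with error terms to control. Monsky's construction (recalled in the paper just before the theorem) builds module-finite extensions $C\subset B\cong k[X,Y,Z]/(F)$ and $C\subset B'$ with $B'\cong k[u,v]$ regular, of ranks $\lambda^2$ and $d$ respectively. At the special points $t=\lambda\cdot(a,b,c)$ with $a,b,c\in\N$, the Watanabe--Yoshida theorem (Theorem~\ref{watyorank}) gives \emph{exactly}
\[
\HKM\bigl((x^{a\lambda},y^{b\lambda},z^{c\lambda}),B\bigr)=\frac{\lambda^2}{d}\,\Dim_k\bigl(B'/JB'\bigr),
\]
because over the regular ring $B'$ the Hilbert--Kunz multiplicity is just the colength. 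That colength is computed by Proposition~\ref{deltadim}(ii), which \emph{directly} produces $\tfrac{1}{4}\bigl(Q(ad,bd,cd)+\sg(JB')^2\bigr)$. So the quadratic form $Q$ is not pinned down by symmetry plus a boundary specialisation; it drops out of that proposition with no bookkeeping whatsoever. The syzygy gap is then reduced to $\delta(\alpha/\lambda,\beta/\lambda,\gamma/\lambda)$ by repeatedly applying Proposition~\ref{propdelta}(ii), peeling off factors coprime to the remaining generator; the sign conventions in Definition~\ref{defmyhkmthm} simply record which order of peeling is valid.

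Your continuity argument for $\mu$ via Lipschitz estimates is exactly what the paper does (Lemma~\ref{muconti} and the lemmas before it). But the ``error terms in the recursion'' you worry about, and the anticipated difficulty of identifying $Q$, are both artifacts of not seeing the Watanabe--Yoshida bridge: once you pass to $B'$ the computation is exact and the quadratic term is forced.
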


An explicit implementation of this Theorem for CoCoA can be found in the Appendix. Using a result of Watanabe and Yoshida (cf. \cite[Theorem 2.7]{watyo1} or Theorem \ref{watyorank}) on the behaviour of Hilbert-Kunz multiplicities under 
module-finite extensions of Noetherian local domains, the above result enables us to compute Hilbert-Kunz multiplicities of positively-graded quotients 
$k[U,V,W]/(G)$ such that the image of $G$ under the map 
$$k[U,V,W]\lra k[X,Y,Z], U\mapsto X^{\Deg(U)},V\mapsto Y^{\Deg(V)},W\mapsto Z^{\Deg(W)}$$
is a regular, irreducible trinomial (cf. Remark \ref{hkmstgnonstg}). In particular, we recover Han's result on the Hilbert-Kunz multiplicities of 
two-dimensional diagonal hypersurfaces 
$$k[U,V,W]/\left(U^{d_1}+V^{d_2}+W^{d_3}\right),$$
with $2\leq d_i\in\N$ (cf. \cite[Theorem 2.30]{handiss} or Corollary 
\ref{hkmdiag}) and the result of Watanabe and Yoshida on the Hilbert-Kunz multiplicities of the surface rings of type ADE (cf. \cite[Theorem 5.4]{watyo1}, 
Theorem \ref{watyogroup} or Example \ref{lastexahan}).

In the third section of the second chapter we use Theorem \ref{myhkmthm} resp. Remark \ref{hkmstgnonstg} to study the behaviour of the Hilbert-Kunz 
multiplicity in certain families of positively-graded quotients $k[U,V,W]/(G)$ of the form described above. Writing the trinomial $G$ as $G=M_U+M_V+M_W$, the families are definied by the condition that one of the exponents $\Deg_U(M_U)$, $\Deg_V(M_V)$, $\Deg_W(M_W)$ is free and all other exponents in $G$ are fixed. For a fixed family we will study the Hilbert-Kunz multiplicity for large values of the free exponent. We will see that in all cases the sequence of the Hilbert-Kunz multiplicities in such a family converges to the Hilbert-Kunz multiplicity of a binomial quotient ring, where the binomial is the deformation of the trinomial $G$ obtained by leaving out the monomial with the free exponent.

In the first section of Chapter 3 we give a brief overview of properties of maximal Cohen-Macaulay modules including a proof of the fact that the concepts maximal Cohen-Macau\-lay and reflexive coincide in normal domains of dimension two. We will demonstrate how one might compute 
Hilbert-Kunz functions of two-dimensional rings of finite Cohen-Macaulay type with the help of the geometric approach. The example will be the $n$-th Veronese subring $R$ 
of $k[x,y]$, where $k$ denotes an algebraically closed field of characteristic $p>0$. We obtain (cf. Example \ref{veronese})
$$\HKF(R,p^e)=\frac{n+1}{2}\cdot \left(p^{2e}-r^2\right)+n\cdot\frac{r(r+1)}{2}+r-n,$$
where $r$ is the smallest non-negative representative of the class of $p^e$ in $\Z/(n)$. During this computation we will give a complete list of first syzygy modules of ideals representing the isomorphism classes of maximal Cohen-Macaulay $R$-modules, show how $\Syz_R(R_+)$ decomposes in this choice of representatives and explain the action of the Frobenius morphism on the Picard group.

In Section 3.2 we will explain the notion of matrix factorizations developed by Eisenbud in \cite{eismat} and their connection to maximal 
Cohen-Macaulay modules over rings of the form $S/(f)$, where $(S,\mm)$ is a local regular Noetherian ring and $f\in\mm^2\setminus\{0\}$.

In Section 3.3 we will develop a sheaf-theoretic approach to compute first syzygy modules of ideals that are isomorphic to the maximal Cohen-Macaulay module 
defined by a fixed matrix factorization. For this approach we have to restrict to two-dimensional local rings which are Gorenstein in codimension one.

In Sections 3.4-3.8 we use the approach from Section 3.3 to obtain a system of first syzygy modules of ideals representing the isomorphism classes of the 
indecomposable, maximal Cohen-Macaulay modules over the completions of the surface rings of type ADE. Each section ends with a theorem gathering the following informations.

\begin{itemize}
 \item A complete list of non-isomorphic first syzygy modules of ideals representing the isomorphism classes of non-free, indecomposable, maximal 
	Cohen-Macaulay modules.
 \item Representatives of the isomorphism classes of the dual modules.
 \item The isomorphism classes of the corresponding determinant bundles on the punctured spectrum.
 \item Ideals representing the isomorphism classes of the non-free, indecomposable, maximal Cohen-Macaulay modules of rank one.
\end{itemize}

For example if $R$ is the $(X,Y,Z)$-adic completion of $E_6$ and $U$ its punctured spectrum, the corresponding theorem 
is the following.

\begin{thm*}[cf. Theorem \ref{syze6}]
The pairwise non-isomorphic modules 
\begin{align*}
M_1 &= \Syz_R(X,Y,Z),\\
M_2 &= \Syz_R(X,Y^2,YZ,Z^2),\\
M_3 &= \Syz_R(iX+Z^2,Y^2,YZ),\\
M_4 &= \Syz_R(-iX+Z^2,Y^2,YZ),\\
M_5 &= \Syz_R(-iX+Z^2,Y),\\
M_6 &= \Syz_R(iX+Z^2,Y)
\end{align*}
give a complete list of representatives of the isomorphism classes of indecomposable, non-free, maximal Cohen-Macaulay modules. Moreover, $M_1$ and $M_2$ are 
selfdual, $M_3^{\vee}\cong M_4$ and $M_5^{\vee}\cong M_6$. The Determinants of $\widetilde{M_3}|_U$ resp. $\widetilde{M_4}|_U$ are $\widetilde{M_5}|_U$ resp. $\widetilde{M_6}|_U$. 
For the rank one modules we have the isomorphisms $M_5\cong (iX+Z^2,Y)$ and $M_6\cong (-iX+Z^2,Y)$.
\end{thm*}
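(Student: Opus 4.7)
The plan is to start from a complete list of matrix factorizations $(\phi_i,\psi_i)$ for $i=1,\ldots,6$ of $f:=X^2+Y^3+Z^4$ representing the indecomposable non-free maximal Cohen-Macaulay modules over $R$; such a list is provided in the literature on matrix factorizations of ADE surface singularities (the reference matfacade) and, combined with the Kn\"orrer-Buchweitz finite Cohen-Macaulay type theorem cited in the introduction, guarantees that the cokernels $\coKer(\phi_i)$ exhaust, up to isomorphism, the non-free indecomposable maximal Cohen-Macaulay $R$-modules. Non-isomorphism, indecomposability, and completeness of the list are inherited directly from this input.

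The central step is to rewrite each $\coKer(\phi_i)$ as a syzygy module of a homogeneous ideal via the sheaf-theoretic construction developed in Section 3.3. For a matrix factorization $(\phi,\psi)$ with $r:=\Rank(\coKer(\phi))$, that construction selects $r+1$ columns of $\psi$ producing on $U:=\punctured{R}{\mm}$ a short exact sequence
$$0\ra\Syz_R(f_1,\ldots,f_{r+1})^{\sim}|_U\ra\Oc_U^{r+1}\ra\widetilde{\Ima(\psi)}|_U\ra 0$$
for suitable homogeneous $f_j\in R$, so that $\Ima(\psi)$ and $\Syz_R(f_1,\ldots,f_{r+1})$ correspond on $U$ up to duality. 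Since $R$ is two-dimensional and Gorenstein (hence Gorenstein in codimension one), the resulting identification of reflexive sheaves extends uniquely to a global isomorphism of maximal Cohen-Macaulay modules, and using $\Ima(\psi_i)\cong\coKer(\phi_i)$ one reads off the six modules $M_1,\ldots,M_6$ of the theorem.

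The duality statements follow from Eisenbud's fact that $\coKer(\phi)^{\vee}\cong\coKer(\psi)$ for every matrix factorization $(\phi,\psi)$. One then compares the fixed list of pairs with its image under the involution $(\phi,\psi)\mapsto(\psi,\phi)$: the factorizations representing $M_1$ and $M_2$ are each equivalent to their swaps, so those modules are selfdual, whereas $(\phi_3,\psi_3)$ and $(\phi_4,\psi_4)$ (respectively $(\phi_5,\psi_5)$ and $(\phi_6,\psi_6)$) form swapped pairs, giving $M_3^{\vee}\cong M_4$ and $M_5^{\vee}\cong M_6$. The determinants of $\widetilde{M_3}|_U$ and $\widetilde{M_4}|_U$ are then computed by taking top exterior powers in the above short exact sequences of locally free $\Oc_U$-modules and matching, via the explicit $f_j$ extracted from each $\psi_i$, the resulting line bundles with $\widetilde{M_5}|_U$ and $\widetilde{M_6}|_U$. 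The rank-one identifications $M_5\cong(iX+Z^2,Y)$ and $M_6\cong(-iX+Z^2,Y)$ follow because rank-one maximal Cohen-Macaulay modules over the two-dimensional Gorenstein ring $R$ are, up to isomorphism, divisorial ideals, and the identity $(iX+Z^2)(-iX+Z^2)=-Y^3$ in $R$ supplies an explicit isomorphism from the corresponding syzygy module to the indicated ideal.

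The main obstacle is combinatorial rather than conceptual: for each of the six matrix factorizations one must choose the $r+1$ columns of $\psi_i$ so that the associated ideal $(f_1,\ldots,f_{r+1})$ is recognisable and matches one of the claimed modules, and one must carefully track how the involution $(\phi,\psi)\mapsto(\psi,\phi)$ permutes the list for the duality assertions. Both are routine once Section 3.3 is in place, but they account for essentially all of the explicit bookkeeping in the proof.
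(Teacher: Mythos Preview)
Your overall plan matches the paper's approach, but the duality step contains a genuine error. You invoke ``Eisenbud's fact that $\coKer(\phi)^{\vee}\cong\coKer(\psi)$ for every matrix factorization $(\phi,\psi)$'' and then read off the dualities from the swap $(\phi,\psi)\mapsto(\psi,\phi)$. That isomorphism is not a general theorem: for a size-$n$ reduced factorization with $\Rank\coKer(\phi)=r$ one has $\Rank\coKer(\psi)=n-r$ (this is the rank remark in Section~3.2), so the two cokernels cannot even have the same rank unless $n=2r$. The correct statement, proved as a lemma in Section~3.2, is $\coKer(\phi)^{\vee}\cong\coKer(\phi\trans)$; the involution relevant for duality is transposition, not the swap.

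The paper therefore handles the dualities differently. For the pairs $\{M_5,M_6\}$ and $\{M_3,M_4\}$ it writes down explicit equivalences $(\phi_5,\psi_5)\to(\phi_6\trans,\psi_6\trans)$ and $(\phi_3,\psi_3)\to(\phi_4\trans,\psi_4\trans)$ via small sign-permutation matrices. For $M_1$ and $M_2$ it avoids any matrix comparison: since these are the \emph{unique} indecomposable maximal Cohen-Macaulay modules of rank~$2$ and rank~$3$ respectively, and duality preserves rank and indecomposability, each must be its own dual. Note also that your claim that $(\phi_2,\psi_2)$ is ``equivalent to its swap'' would in any case need separate verification, since $\phi_2\neq\psi_2$. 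The rest of your outline---extracting syzygy ideals by column selection in $\psi_i$, identifying the determinant of $\widetilde{M_3}|_U$ via the reflexive hull of the non-primary ideal $(\pm iX+Z^2,Y^2,YZ)$, and the rank-one ideal descriptions---is in line with Section~3.6; the paper additionally records which column deletions fail by exhibiting points of $U$ where the relevant minors vanish.
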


In Section 3.9 we discuss an idea how to avoid sheaves in the approach of Section 3.3 and give an example showing the disadvantages of this idea.

In Section 3.10 we comment on the correpondence between the local and the graded situation. Especially, we will explain a theorem due to Auslander and Reiten saying that for a positively-graded ring $R$, where $R_0$ is a perfect field, the isomorphism classes of indecomposable, graded maximal Cohen-Macaulay $R$-modules up to degree shift and the isomorphism classes of indecomposable, maximal Cohen-Macaulay $\hat{R}$-modules coincide (cf. \cite{ausreit}).

In Chapter 4 we want to find an easy way to compute the Hilbert-series of syzygy modules of the form 
$$M_a:=\Syz_R(X^a\cdot V_1,\ldots,X^a\cdot V_m,V_{m+1},\ldots,V_{m+l}),$$
where $R$ is a positively-graded ring of the form $k[X,Y_1,\ldots,Y_n]/(X^d-F(Y_1,\ldots,Y_n))$ and the $V_i$ are monomials in the variables $Y_j$. 
Inspired by Brenner's work in \cite{miyaoka}, we will prove the following theorem.

\begin{thm*}[cf. Theorem \ref{ses}]
Let $a=d\cdot q+r$ with $q$, $r\in\N$ and $0\leq r\leq d-1$. We have a short exact sequence
\begin{align*}
0\lra M_{a+d-2r}(s-\alpha\cdot r)\lra M_{dq}(s-\alpha\cdot r)\oplus M_{dq+d}(s)\lra M_a(s)\lra 0
\end{align*}
for all $s\in\Z$, where $\alpha=\Deg(X)$.
\end{thm*}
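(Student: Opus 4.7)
My plan is to exhibit the asserted short exact sequence as the kernel sequence of a snake-lemma diagram that compares the canonical presentations of the ideals $I_{dq}$, $I_{dq+d}$ and $I_a$, where $I_{a'} := (X^{a'} V_1, \ldots, X^{a'} V_m, V_{m+1}, \ldots, V_{m+l})$ and $M_{a'} = \Syz_R(I_{a'})$. Write $F_{a'} := \bigoplus_{i=1}^{m} R(-a' \alpha - \Deg(V_i)) \oplus \bigoplus_{j=1}^{l} R(-\Deg(V_{m+j}))$ so that $0 \lra M_{a'} \lra F_{a'} \lra I_{a'} \lra 0$ is the defining presentation. The key identities $X^r \cdot X^{dq} = X^a$ and $X^{d-r} \cdot X^a = X^{dq+d}$ provide two chain maps of presentations: one, from the $dq$-presentation shifted by $-r\alpha$ to the $a$-presentation, given on the $F$-level by the identity on the first $m$ summands and multiplication by $X^r$ on the last $l$; and one, from the $(dq+d)$-presentation to the $a$-presentation, given by multiplication by $X^{d-r}$ on the first $m$ summands and the identity on the last $l$. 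Summing them produces a commutative diagram of short exact sequences with vertical maps $\phi$, $\psi$, $\chi$ on the $M$-, $F$- and $I$-levels.

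Both $\psi$ and $\chi$ are surjective by construction, since each standard generator of $F_a$ and each generator $X^a V_i = X^r \cdot X^{dq} V_i$ or $V_{m+j}$ of $I_a$ comes from the corresponding element in one of the summands. The snake lemma then yields
\begin{align*}
0 \lra \ker\phi \lra \ker\psi \lra \ker\chi \stackrel{\delta}{\lra} \coKer\phi \lra 0,
\end{align*}
and I would verify that the connecting map $\delta$ vanishes by testing it on the natural generators $X^{a+d-2r} V_i$ and $V_{m+j}$ of $\ker\chi$ (viewed through the projection to $I_{dq}(-r\alpha)$): the lifts $(X^{d-r} e_i, -e_i)$ and $(e_{m+j}, -X^r e_{m+j})$ in $F_{dq}(-r\alpha) \oplus F_{dq+d}$ are annihilated by $\psi$ by direct computation. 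So $\delta = 0$ on generators, hence identically, $\phi$ is surjective, and the kernel sequence $0 \to \ker\phi \to \ker\psi \to \ker\chi \to 0$ is short exact.

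Finally, a direct linear-algebra computation on the free modules (using $a+d-2r = dq+d-r$ to match degree shifts) identifies $\ker\psi \cong F_{a+d-2r}(-r\alpha)$, and the induced map $\ker\psi \to \ker\chi$ is, up to sign, the canonical presentation $F_{a+d-2r}(-r\alpha) \lra I_{a+d-2r}(-r\alpha)$. This forces $\ker\chi \cong I_{a+d-2r}(-r\alpha)$ and therefore $\ker\phi \cong M_{a+d-2r}(-r\alpha)$; shifting by $s$ gives the stated sequence. The main delicacy is keeping track of signs and degree shifts in the Mayer--Vietoris setup; once these are fixed, everything is forced by the snake lemma together with the generator-level vanishing of $\delta$.
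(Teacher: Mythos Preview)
Your snake-lemma framing is different from the paper's approach: the paper writes down the two maps $\psi_s$ and $\phi_s$ on the syzygy modules explicitly and verifies injectivity, surjectivity and middle-exactness by hand, whereas you lift the whole picture to the presentations $0\to M_{a'}\to F_{a'}\to I_{a'}\to 0$ and read everything off the kernel sequence. This is a reasonable repackaging, and your induced map $\phi$ on the syzygy level is precisely the paper's $\phi_s$.

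However, your argument that the connecting map $\delta$ vanishes is circular. You test $\delta$ on ``the natural generators $X^{a+d-2r}V_i$ and $V_{m+j}$ of $\ker\chi$'', but you have not yet shown that these elements generate $\ker\chi$; that is the equality $\ker\chi \cong I_{a+d-2r}(-r\alpha)$, which you only deduce \emph{after} assuming $\delta=0$. The elements you list do lie in $\ker\chi$ and $\delta$ does vanish on them, but this only says $\delta$ is zero on the image of $\ker\psi\to\ker\chi$, which is automatic. To close the gap you must prove directly that
\[
\{u\in I_{dq}:\ X^{r}u\in I_{dq+d}\}=I_{dq+d-r}.
\]
Writing $R=\bigoplus_{j=0}^{d-1}k[Y_1,\ldots,Y_n]\,X^{j}$ and observing that $I_{dq}=J_qR$, $I_{dq+d}=J_{q+1}R$ for the $k[Y]$-ideals $J_q=(F^{q}V_1,\ldots,F^{q}V_m,V_{m+1},\ldots,V_{m+l})$, and then tracking the $X^{j}$-components of $X^{r}u$, does give this equality. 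But that is exactly the $X$-adic decomposition the paper uses in its surjectivity lemma for $\phi_s$, so the snake-lemma packaging does not bypass the main step; it merely relocates it from ``$\phi$ is onto'' to ``$\ker\chi$ is what I claim it is''.
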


Note that the modules $M_{dq}$ and $M_{dq+d}$ in the middle spot are already defined over the polynomial ring $k[Y_1,\ldots,Y_n]$. From this theorem we can compute the Hilbert-series of the $R$-modules $M_a$.

\begin{thm*}[cf. Theorem \ref{HilbSer}]
The Hilbert-series of $M_a$ is given by
\begin{equation*}
\lK_{M_a}(t) = \frac{(t^{\alpha\cdot r}-t^{\alpha\cdot d})\cdot \lK_{M_{dq}}(t)+(1-t^{\alpha\cdot r})\lK_{M_{dq+d}}(t)}{1-t^{\alpha\cdot d}}.
\end{equation*}
\end{thm*}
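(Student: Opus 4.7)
The plan is to extract two linear equations from the short exact sequence of Theorem \ref{ses} by applying it first to $a$ and then to $a':=a+d-2r$, and to solve the resulting $2\times 2$ system for $\lK_{M_a}(t)$.

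Setting $s=0$ in the sequence of Theorem \ref{ses}, using the shift identity $\lK_{N(m)}(t)=t^{-m}\lK_N(t)$ and the additivity of the Hilbert series on short exact sequences of graded modules, one reads off
\begin{equation*}
\lK_{M_a}(t) \; = \; t^{\alpha r}\lK_{M_{dq}}(t) + \lK_{M_{dq+d}}(t) - t^{\alpha r}\lK_{M_{a+d-2r}}(t).
\end{equation*}
If $r=0$, then $a=dq$, the sequence reduces to a trivial split exact sequence, and the asserted formula collapses to the tautology $\lK_{M_{dq}}(t)=\lK_{M_{dq}}(t)$. We may therefore assume $1\leq r\leq d-1$.

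Set $a':=a+d-2r=dq+(d-r)$. Since $1\leq d-r\leq d-1$, the standard decomposition of $a'$ has quotient $q'=q$ and remainder $r'=d-r$. The key bookkeeping is the identity $a'+d-2r'=dq+(d-r)+d-2(d-r)=dq+r=a$, so that applying Theorem \ref{ses} to $M_{a'}$ produces a short exact sequence whose outer term is $M_a(-\alpha(d-r))$ and whose middle term is $M_{dq}(-\alpha(d-r))\oplus M_{dq+d}$. Passing to Hilbert series gives the second equation
\begin{equation*}
\lK_{M_{a'}}(t) \; = \; t^{\alpha(d-r)}\lK_{M_{dq}}(t) + \lK_{M_{dq+d}}(t) - t^{\alpha(d-r)}\lK_{M_a}(t).
\end{equation*}

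Substituting this into the first equation eliminates $\lK_{M_{a'}}(t)$, and collecting terms yields
\begin{equation*}
(1-t^{\alpha d})\lK_{M_a}(t) \; = \; (t^{\alpha r}-t^{\alpha d})\lK_{M_{dq}}(t) + (1-t^{\alpha r})\lK_{M_{dq+d}}(t).
\end{equation*}
Dividing by $1-t^{\alpha d}$ as formal Laurent series gives the claimed formula. I do not expect a serious obstacle here; once Theorem \ref{ses} is available, the only non-routine point is the observation that $a$ and $a'=a+d-2r$ form a closed pair under the recursion, so that the sequence of applications of Theorem \ref{ses} terminates after two steps rather than leading to an infinite descent that would require a separate convergence argument.
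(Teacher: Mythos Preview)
Your proof is correct and follows essentially the same approach as the paper: apply Theorem~\ref{ses} once to $a$ and once to $a'=a+d-2r$, observe that the pair $(a,a')$ is closed under the recursion, and solve the resulting $2\times 2$ linear system. The paper's proof is slightly terser (it does not single out the case $r=0$), but the argument is identical.
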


In the second section we compute for each isomorphism class of indecomposable, maximal Cohen-Macaulay modules over surface rings of type ADE the Hilbert-series 
of the representing first syzygy modules of ideals found in Chapter 3. Moreover, we discuss which splitting behaviours of a given maximal Cohen-Macaulay 
module $M$ can be excluded by computing its Hilbert-series.

In Chapter 5 we use the fact that the surface rings of type ADE defined over $\C$ are the rings of invariants of $\C[x,y]$ under actions defined by the 
finite subgroups of $\SL_2(\C)$. If $R$ is of type ADE over an algebraically closed field of characteristic $p>0$ and $G\subsetneq\SL_2(\C)$ is the 
corresponding group in characteristic zero with order invertible in $k$, it remains true that $R$ is the ring of invariants of $k[x,y]$ under the action 
of a finite subgroup $G'\subsetneq\SL_2(k)$ with $|G'|=|G|$. If $H$, $G$ are finite subgroups of $\SL_2(k)$ such that $H$ is a normal subgroup of $G$, then 
$k[x,y]^G$ is a subring of $k[x,y]^H$, inducing a morphism $\iota:V\ra U$, where $V$ and $U$ are the punctured spectra of $\Spec\left(k[x,y]^H\right)$ resp. 
$\Spec\left(k[x,y]^G\right)$. Fixing a surface ring of type ADE with corresponding 
group $G$ and non-isomorphic, indecomposable, maximal Cohen-Macaulay modules $M_1,\ldots,M_n$ representing all isomorphism classes, we will 
compute the pull-backs of all the corresponding $\Oc_U$-modules under all maps $\iota$ as above. These explicit computations are done in Sections 5.1 to 5.5. 
In Section 5.6 we will gather the results of the previous sections and obtain the following lemma.

\begin{lem*}[cf. Lemma \ref{fpbindec}]
Let $U$ be the punctured spectrum of a surface ring $R$ of type $D$ or $E$. Assume that the 
order of the group corresponding to the singularity is invertible in the ground field. Then $\Syz_U\left(X^{p^e},Y^{p^e},Z^{p^e}\right)$ is indecomposable for all $e\in\N$.
\end{lem*}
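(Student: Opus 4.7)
The plan is to deduce the lemma from the explicit pull-back computations performed in Sections 5.1--5.5 via a descent argument along a finite Galois cover. Write $R = k[x,y]^G$ for the finite subgroup $G \subseteq \SL_2(k)$ corresponding to the singularity; by assumption $|G|$ is invertible in $k$. Choose a normal subgroup $H \trianglelefteq G$ for which $R' := k[x,y]^H$ is of strictly smaller ADE type, ideally of type $A$, so that on the punctured spectrum $V := \punctured{R'}{R'_+}$ the indecomposable non-free maximal Cohen--Macaulay modules are rank-one, i.e.\ line bundles. The inclusion $R \subseteq R'$ induces a finite morphism $\iota : V \to U$ with Galois group $G/H$.

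Because Frobenius commutes with every morphism of $k$-schemes, one has for all $e \in \N$
$$\iota\pb\widetilde{M_e}|_U \cong \iota\pb(\fpb{e}\widetilde{M_0}|_U) \cong \fpb{e}(\iota\pb\widetilde{M_0}|_U).$$
The sheaf on the right is controlled by the explicit decomposition $\iota\pb\widetilde{M_0}|_U \cong N_1 \oplus \cdots \oplus N_k$ obtained in the corresponding section of Chapter 5. The choice of $H$ guarantees that each $N_j$ is a line bundle on $V$; hence so is each Frobenius twist $\fpb{e}N_j$, and Krull--Schmidt identifies the $\fpb{e}N_j$ as the unique indecomposable summands of $\iota\pb\widetilde{M_e}|_U$.

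The conclusion then comes from descent along $\iota$ (via the Reynolds operator $\iota_*(-)^{G/H}$, which is available since $|G/H|$ is invertible in $k$): direct summand decompositions of $\widetilde{M_e}|_U$ correspond bijectively to $G/H$-equivariant direct summand decompositions of $\iota\pb\widetilde{M_e}|_U$, and hence to $G/H$-invariant idempotents in $\mathrm{End}_V\bigl(\bigoplus_j \fpb{e}N_j\bigr)$. Distinct line bundles on the normal surface $V$ admit no non-zero homomorphisms between them, so this endomorphism algebra is a product of copies of the ground field indexed by the summands, and its $G/H$-invariant idempotents correspond to $G/H$-stable subsets of $\{\fpb{e}N_j\}_j$. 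It therefore suffices to verify that $G/H$ permutes the $\fpb{e}N_j$ transitively.

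The main obstacle is exactly this transitivity statement, which must be checked uniformly in $e$ and separately for each type $D_n, E_6, E_7, E_8$ by inspection of the explicit $G/H$-action on the line bundles $N_j$ tabulated in Sections 5.1--5.5; the choice of $H$ typically has to be adapted to each type so as to simultaneously keep $V$ of type $A$ and secure the transitive action on the summands and on their Frobenius twists. Once transitivity is established in each case, the only $G/H$-invariant idempotents are $0$ and $1$, so $\widetilde{M_e}|_U$, and hence $\Syz_U(X^{p^e}, Y^{p^e}, Z^{p^e})$, is indecomposable.
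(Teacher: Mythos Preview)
Your descent strategy hinges on the claim that ``distinct line bundles on the normal surface $V$ admit no non-zero homomorphisms between them, so this endomorphism algebra is a product of copies of the ground field indexed by the summands.'' Both halves of this sentence fail on the punctured spectrum $V$ of an $A$-type surface ring. Already $\End_V(\Oc_V)=\Gamma(V,\Oc_V)\cong R'$ is the full coordinate ring, not $k$, since $V$ is an open (non-proper) surface. And for two line bundles $L_1,L_2$ on $V$ one has $\Hom_V(L_1,L_2)\cong\Gamma(V,L_1^{-1}\otimes L_2)$, which is the reflexive rank-one $R'$-module representing $L_1^{-1}\otimes L_2$ --- a nonzero ideal of $R'$. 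Hence the idempotents of $\End_V(\iota^*\widetilde{M_e}|_U)$ are not parametrized by subsets of the summand set, and your reduction to a transitivity check does not go through as stated.

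The paper's argument bypasses the endomorphism algebra entirely and is considerably lighter. It argues by contradiction: if $\widetilde{M_e}|_U$ split as $L_1\oplus L_2$ with $L_i$ line bundles, then $\iota^*\widetilde{M_e}|_U\cong\iota^*L_1\oplus\iota^*L_2$. The pull-back tables of Chapter~5 show that for a suitable choice of $\iota$ (to $\Ac_1$ in types $D_{2n+2}$ and $E$, to $\Ac_{n-1}$ in type $D_{n+2}$ with $n$ odd) \emph{every} line bundle on $U$ pulls back to $\Oc_V$, so the right-hand side would be $\Oc_V^{\,2}$. On the other hand the same tables give $\iota^*\widetilde{M_0}|_U$ as a sum of \emph{non-trivial} line bundles, and since $p$ is coprime to $|\Pic(V)|$ these stay non-trivial under all Frobenius pull-backs. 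Commuting Frobenius with $\iota^*$ yields the contradiction. No $G/H$-equivariant structure or transitivity analysis is needed --- only the tables and the observation that line bundles on $U$ trivialize under $\iota^*$ while the pull-back of $\Syz_U(\mm)$ does not.
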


In Chapter 6 we compute the Hilbert-Kunz functions of the surface rings of type DE (and A). During these computations the whole strength of Theorem 
\ref{HilbSer} becomes obvious.

In the last chapter we discuss possible extensions of the approach using matrix factorizations. More explicitly, we compute the Hilbert-Kunz functions 
of $E_8$ with respect to the ideals $(X,Y,Z^2)$ and $(X,Y^2,YZ,Z^2)$ (cf. Examples \ref{exa1chap7} and \ref{exa2chap7}) as well as the Hilbert-Kunz 
functions of $A_{\infty}:=k[X,Y,Z]/(XY)$ and $D_{\infty}:=k[X,Y,Z]/(X^2Y-Z^2)$ (cf. Sections 7.2 and 7.3). We also compute for every isomorphism class 
of non-free, indecomposable, maximal Cohen-Macaulay modules over $A_{\infty}$ and $D_{\infty}$ a representing first syzygy module of an ideal and the 
corresponding Hilbert-series.

In the fourth section we focuss on projective Fermat curves $C$ of degree $n$ with $C:=\Proj(R)$ and $R:=k[X,Y,Z]/(X^n+Y^n+Z^n)$, where we assume $k$ to 
be an algebraically closed field of characteristic $p>0$ coprime to $n$. We start the section by a brief discussion of Kaid's result on the strong 
Harder-Narasimhan filtration of $\Syz_C\left(X^{p^e},Y^{p^e},Z^{p^e}\right)$ for $e\gg 0$ and the Hilbert-Kunz function of $R$ in the case, where $\Syz_C(X,Y,Z)$ is not 
strongly semistable. Since the vector bundle $\Syz_C(X,Y,Z)$ is strongly semistable and is not trivialized by the Frobenius if and only if all quotients $R/(X^q,Y^q,Z^q)$ have infinite 
projective dimension (cf. Corollary \ref{deltafinprodim}), we can use a theorem of Kustin, Rahmati and Vraciu which gives the free resolution of 
the quotient $Q:=R/\left(X^N,Y^N,Z^N\right)$ if the projective dimension of $Q$ is infinite (cf. \cite{vraciu} or Theorem \ref{inftyfree}) to obtain

\begin{cor*}[cf. Corollary \ref{inftysyz}]
Let $N=\theta\cdot n+r$ with $\theta$, $r\in\N$ and $0\leq r<n$. Assume that $R/\left(X^N,Y^N,Z^N\right)$ has infinite projective dimension. Then
$$\Syz_R\left(X^N,Y^N,Z^N\right)(m)\cong \left\{\begin{aligned} & \Syz_R(X^r,Y^r,Z^r) && \text{if }\theta\text{ is even,}\\ & \Syz_R(X^{n-r},Y^{n-r},Z^{n-r}) && \text{if }\theta\text{ is odd.}\end{aligned}\right.$$
holds for some $m\in\Z$.
\end{cor*}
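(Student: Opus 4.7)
The plan is to invoke Theorem \ref{inftyfree} of Kustin, Rahmati and Vraciu directly. Under the hypothesis $\projdim_R Q=\infty$ with $Q:=R/(X^N,Y^N,Z^N)$, that theorem supplies an explicit minimal free resolution of $Q$ over $R$. Since $R$ is a hypersurface, such a resolution is eventually $2$-periodic and arises from a matrix factorization of $F:=X^n+Y^n+Z^n$. Reading the first syzygy module $\Syz_R(X^N,Y^N,Z^N)$ off of this resolution and identifying it, up to a degree shift $m$, with one of the two candidate modules is exactly what the corollary asks for.

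First I would write $N=\theta n+r$ and rewrite the ideal generators as $X^N=(X^n)^\theta X^r$, $Y^N=(Y^n)^\theta Y^r$ and $Z^N=(Z^n)^\theta Z^r$. Using the Fermat relation $X^n+Y^n+Z^n=0$ in $R$, the matrices appearing in the resolution of Theorem \ref{inftyfree} can be sorted into two alternating types, one whose entries are the monomials $X^r,Y^r,Z^r$ and one whose entries are the monomials $X^{n-r},Y^{n-r},Z^{n-r}$. In other words, the periodic part of the resolution is the matrix factorization of $F$ whose associated maximal Cohen-Macaulay modules on one side are $\Syz_R(X^r,Y^r,Z^r)$ and on the other side $\Syz_R(X^{n-r},Y^{n-r},Z^{n-r})$.

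It then remains to count where $\Syz_R(X^N,Y^N,Z^N)$ sits. It appears as the very first syzygy in the resolution, and each application of the periodic differential swaps the two sides; after $\theta$ such swaps one lands on one of the two candidate modules, with an even $\theta$ giving $\Syz_R(X^r,Y^r,Z^r)$ and an odd $\theta$ giving $\Syz_R(X^{n-r},Y^{n-r},Z^{n-r})$. The grading shift $m\in\Z$ is then forced by comparing the total degrees of the generators of the modules appearing at the starting and ending positions of the walk through the periodic strand.

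The hard part will be confirming this identification of the matrix factorization. Theorem \ref{inftyfree} produces explicit matrices, but one still has to match them, up to isomorphism of matrix factorizations on the hypersurface $R$, with the factorization of $F$ whose cokernels are precisely the two $\Syz_R$-modules above. This is essentially a uniqueness statement for minimal matrix factorizations over a hypersurface, combined with a direct inspection of the entries at one stage of the periodic strand; once this match is made, the rest of the argument follows automatically from $2$-periodicity and bookkeeping of degrees.
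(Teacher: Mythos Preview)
Your overall strategy is correct and is exactly the paper's approach: the corollary is an immediate consequence of Theorem~\ref{inftyfree} together with the identification $\coKer(\phi_{r,s})\cong\Syz_R(X^r,Y^r,Z^r)$ already recorded just before that theorem (these matrix factorizations being equivalent to those of Example~\ref{diagrank2matfac}). However, you are over-engineering two points. First, there is no ``walking $\theta$ steps through the periodic strand'': Theorem~\ref{inftyfree} already presents the resolution in two separate cases according to the parity of $\theta$, and in each case the map $F_2\to F_1$ is explicitly labelled as $\phi_{r,n-r}$ or $\phi_{n-r,r}$. The first syzygy module is then simply the cokernel of that single labelled map, so one reads off the answer directly rather than counting swaps. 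Second, what you call the ``hard part'' --- matching the matrix factorization with the one whose cokernel is $\Syz_R(X^r,Y^r,Z^r)$ --- is not part of this corollary at all: that identification is established beforehand via the equivalence with Example~\ref{diagrank2matfac}, so no further uniqueness argument is needed here.
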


This corollary is used to prove that $$\delta\left(\frac{1}{n},\frac{1}{n},\frac{1}{n}\right)=0$$ is sufficient for having a Frobenius periodicity of $\Syz_{\Proj(R)}(X,Y,Z)$. 
Moreover, the corollary enables us to compute the Hilbert-Kunz function of $R$, provided $\Syz_C(X,Y,Z)$ is strongly semistable and is not trivialized by the Frobenius. 

At the end we discuss the question which Frobenius periodicities the bundles $\Syz_C(X,Y,Z)$ might admit.

Finally, the last section of Chapter 7 is reserved for a discussion of open questions.

We should mention that we used CoCoA for a numerical treatment of all explicit results on Hilbert-Kunz functions and multiplicities (of course only for small values of the various parameters).

\section*{Acknowledgements}
First of all, my deepest thanks go to my supervisor Holger Brenner. Without our countless 
discussions it would not have been possible to write this thesis. I thank him for various usefull comments 
and his extraordinary patience during the last years.

I would like to thank Tim R\"omer and Heinz Spindler for their great lectures on commutative algebra and algebraic geometry. 
Especially, because many of these courses were held in addition to their teaching duties.

I thank Almar Kaid for several discussions on Hilbert-Kunz theory. These made the beginning of my 
Ph.D. studies easier.

I am thankful to Axel St\"abler for very helpfull discussions on vector bundles and his usefull comments to my thesis. 

Special thanks go to Igor Burban who brought the theory of matrix factorizations to my attention and to Kevin Tucker
who is responsible for the corollaries about the $F$-signature functions of the surface rings of type ADE.

I thank Alessio Caminata for his corrections to my thesis as well as the whole institute of mathematics in Osnabr\"uck for the great time I had in the last years. There was always a warm and familar working atmosphere.

Finally my deep gratitude goes to my family and my friends for their support and patience.

\section*{Notations and conventions}
During this thesis we will make use of the following notations and conventions.

\begin{itemize}
\item $\N:=\{0,1,2,\ldots\}$ and $\N_{\geq a}:=\{a,a+1,a+2,\ldots\}$.
\item All rings are assumed to be non-zero, commutative and Noetherian. Moreover, they have a unit element.
\item A standard-graded ring $R$ is generated over $R_0$ by finitely many elements of degree one.
\item A positively-graded ring $R$ is generated over $R_0$ by finitely many elements of positive degree.
\item A hypersurface denotes a quotient of a polynomial ring by a principal ideal.
\item Prime and maximal ideals are always proper ideals.
\item We denote by $\hat{R}$ the $\mm$-adic completion of a local ring $(R,\mm)$ or the $R_+$-adic completion of a graded ring.
\item If $R\ra S$ is a homomorphism of rings and $I\subseteq R$ an ideal, we denote by $IS$ the ideal of $S$ which is generated by the image of $I$.
\item Krull dimensions of rings and modules are denoted by $\Dim(\_)$.
\item If $V$ is a $k$-vector space, its dimension is denoted by $\Dim_k(V)$.
\end{itemize}

\chapter{A survey on Hilbert-Kunz theory}
This chapter is devoted to give an overview of Hilbert-Kunz theory. In the first section we will give the basic definitions and state some results. 
In the second section we will give a brief introduction to vector bundles and explain how they appear in Hilbert-Kunz theory. Finally, in section three 
we will define the related invariants limit Hilbert-Kunz multiplicity and $F$-signature and list up some further references.

\section{The algebraic viewpoint of Hilbert-Kunz theory}
We start by gathering some properties of the length function $\lambda_R$.

\begin{prop}\label{proplength}
 Let $(R,\mm)$, $(S,\nn)$ be local rings, let $M$ be a finitely generated $R$-module and $N$ a finitely generated $S$-module. Then the following hold.

\begin{enumerate}
 \item If $R$ is a $k$-algebra with $\Dim_k(R/\mm)<\infty$, then $\lambda_R(M)\cdot\Dim_k(R/\mm)=\Dim_k(M).$
 \item If $R\rightarrow S$ is local and $\lambda_R(M)$, $\lambda_S(S/\mm S)$ are finite, then 
	$$\lambda_R(M)\cdot\lambda_S(S/\mm S)\geq \lambda_S(S\otimes_RM).$$
	Moreover, equality holds if the morphism is flat.
 \item Let $R\rightarrow S$ be local with $\lambda_S(N)<\infty$ and $[S/\nn:R/\mm]<\infty$. Then we have 
	$$\lambda_R(N)=\lambda_S(N)\cdot [S/\nn:R/\mm].$$
\end{enumerate}
\end{prop}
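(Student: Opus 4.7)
The plan is to prove all three parts by reducing to composition series, exploiting the fact that every composition factor of a finitely generated module over a local ring is isomorphic to the residue field, and tracking how these factors behave under the relevant operations.

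For part (i), I would start by choosing a composition series $0=M_0\subsetneq M_1\subsetneq\cdots\subsetneq M_n=M$ of length $n=\lambda_R(M)$, whose factors $M_i/M_{i-1}$ are all isomorphic to $R/\mm$ as $R$-modules. Since $R$ is a $k$-algebra, each such factor is a $k$-vector space of dimension $\Dim_k(R/\mm)$. Using additivity of $\Dim_k$ in short exact sequences of $k$-vector spaces applied inductively to $0\to M_{i-1}\to M_i\to M_i/M_{i-1}\to 0$, one obtains $\Dim_k(M)=\sum_{i=1}^n\Dim_k(R/\mm)=\lambda_R(M)\cdot\Dim_k(R/\mm)$.

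For part (ii), I would again choose a composition series of $M$ over $R$ of length $\lambda_R(M)$ with factors isomorphic to $R/\mm$. Tensoring with $S$ over $R$ is right exact, so for each step we get a surjection $S\otimes_RM_i\twoheadrightarrow S\otimes_RM_i/M_{i-1}\cong S\otimes_R(R/\mm)=S/\mm S$. Right exactness gives only the inequality $\lambda_S(S\otimes_RM_i)\leq \lambda_S(S\otimes_RM_{i-1})+\lambda_S(S/\mm S)$, which by induction yields the desired estimate. If $R\to S$ is flat, tensor products preserve short exact sequences, so these become equalities and one gets equality overall. The main subtlety here is that without flatness there is no lower bound, only the upper bound from right exactness.

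For part (iii), choose a composition series of $N$ over $S$ of length $\lambda_S(N)$ with factors isomorphic to $S/\nn$. Each such factor is a field extension of $R/\mm$ of degree $[S/\nn:R/\mm]$, hence as an $R$-module has length equal to its dimension over $R/\mm$, which is $[S/\nn:R/\mm]$ by the same argument as in (i) applied to the field $S/\nn$. Summing the lengths of the factors over $R$ via the exact sequences $0\to N_{i-1}\to N_i\to S/\nn\to 0$ and additivity of $\lambda_R$ yields $\lambda_R(N)=\lambda_S(N)\cdot[S/\nn:R/\mm]$. The hardest part of this whole proposition is really just keeping careful track of when equalities versus inequalities hold in (ii), and ensuring in (iii) that the composition factors $S/\nn$ indeed have finite length as $R$-modules — which is guaranteed by the hypothesis $[S/\nn:R/\mm]<\infty$ combined with the fact that $\nn$ sits above $\mm$.
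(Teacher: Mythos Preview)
Your proof is correct and is the standard composition-series argument. The paper itself does not give a proof but simply refers the reader to Exercises 1.6(d), 1.8(b) and Lemma 1.36(a) in Chapter 7 of Liu's book, so there is nothing to compare your approach against beyond noting that what you wrote is exactly the expected argument behind those references.
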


\begin{proof}
See Exercises 1.6 (d), 1.8 (b) and Lemma 1.36 (a) in Chapter 7 of \cite{liu}.
\end{proof}

\begin{defi}
Let $(R,\mm)$ be a local ring of characteristic $p>0$. Let $I=(f_1,\ldots,f_m)$ be an $\mm$-primary ideal and 
$M$ a finitely generated $R$-module. The function $\HKF(I,M,p^e):\N\ra\N$, 
$$e\longmapsto\lambda_R\left(M/\left(f_1^{p^e},\ldots,f_m^{p^e}\right)M\right)$$
is called the \textit{Hilbert-Kunz function of $M$ with respect to $I$}.
\end{defi}

\begin{rem}
\begin{enumerate}
 \item Denote by $\TF:R\ra R$ the Frobenius morphism $r\mapsto r^p$. Since this is a homomorphism and $\left(f_1^{p^e},\ldots,f_m^{p^e}\right)=\TF^e(I)$, it is 
      immediately clear that the Hilbert-Kunz function only depends on $I$ and not on the generators. This is not true for the so-called \textit{generalized 
      Hilbert-Kunz function} $\HKF(I,M,n):\N\ra\N,$
      $$n\longmapsto\lambda_R\left(M/(f_1^{n},\ldots,f_m^{n})M\right).$$
      To show this, we compute the values for $n=2$ with respect to two different representations of $\mm$ in a ring of characteristic three. Let
      $$R:=\Z/(3)\llbracket X,Y,Z\rrbracket/(X^2+Y^3+Z^4),$$ 
      $I=\mm$ and $M=R$. Choosing $I=(X,Y,Z)$ we have 
      $$R/(X^2,Y^2,Z^2)=\Z/(3)\llbracket X,Y,Z\rrbracket/(X^2,Y^2,Z^2),$$
      which has length eight. But taking $I=(X+YZ,Y,Z)$, we have 
      $$R/((X+YZ)^2,Y^2,Z^2)=\Z/(3)\llbracket X,Y,Z\rrbracket/(X^2,Y^2,Z^2,XYZ),$$
      which has length seven.
 \item Using Proposition \ref{proplength}, one can show that the Hilbert-Kunz function is independent under completion of $R$ resp. taking the algebraic 
      closure of $R/\mm$ (compare \cite[Lemma 3.1]{kunz1}). Note that this proof also works for the generalized Hilbert-Kunz function.
 \item In view of the previous remark, the Hilbert-Kunz function of a positively-graded affine $k$-algebra, where $k$ is a field and may assumed to be 
      algebraically closed, is always the Hilbert-Kunz function of the $R_+$-adic completion of $R$.
 \item Assume that $(R,\mm)$ is an integral domain of characteristic $p>0$ with perfect residue class field. Denote by $R^{1/q}$ the ring of $q$-th roots of 
      elements in $R$ in an algebraic closure of the field of fractions of $R$. If $R^{1/q}$ is a finite $R$-module for all $q=p^e$ then 
      we have by \cite[Remark 1.3]{watanabe} for any $\mm$-primary ideal $I$ the isomorphism 
      $$R/\TF^e(I)\cong R^{1/q}/IR^{1/q}.$$
\end{enumerate}
\end{rem}

\begin{Not}
\begin{enumerate}
\item If $I=\mm$, we will omit the ideal in our notation, hence $$\HKF(M,q):=\HKF(\mm,M,q).$$ Moreover, we will call this function the Hilbert-Kunz function of $M$.
\item We will also use the notation $I\qpot:=\TF^e(I)$ with $q=p^e$.
\end{enumerate}
\end{Not}

\begin{exa}\label{genhkfan}
We will compute the generalized Hilbert-Kunz function of the surface rings of type $A_n$, given by $R:=k[X,Y,Z]/(X^{n+1}-YZ)$, with respect to the graded 
maximal ideal $\mm=(X,Y,Z)$. We need to compute the length of $R/(X^m,Y^m,Z^m)$ for all $m\in\N$, which is the same as the dimension of the quotient as 
a $k$-vector space. For any $m\in\N$, let $m=l(n+1)+r$ with $l\in\N$ and $0\leq r\leq n$. 
Counting the monomials in 
$$k[X,Y,Z]/(X^m,Y^m,Z^m,X^{n+1}-YZ),$$ 
we can always reduce the power of $X$ to a number in $\{0,\ldots,n\}$ using the defining equation $X^{n+1}=YZ$. Since all monomials divisible by $X^m$ vanish, we obtain the equations
$$\begin{array}{cccccc}
X^m & = & X^rY^lZ^l &=& 0 & \rdelim\}{3}{1cm}[ $n+1-r$ equations]\\
 & \vdots & & & \\
X^{m+n-r} & = & X^nY^lZ^l &=& 0 & \\
X^{m+n-r+1} & = & Y^{l+1}Z^{l+1} &=& 0 & \rdelim\}{3}{1cm}[ $r$ equations]\\
 & \vdots & & & \\
X^{m+n} & = & X^{r-1}Y^{l+1}Z^{l+1} &=& 0 & \\
\end{array}$$
Since the powers of $X$ are linearly independent, we see by the above equations that a monomial $X^hY^iZ^j\in R/(X^m,Y^m,Z^m)$ is non-zero if and only if the exponents satisfy the following relations

\begin{enumerate}
\item If $0\leq h\leq r-1$, then either $0\leq i\leq l$ and $0\leq j\leq m-1$ or $l+1\leq i\leq m-1$ and $0\leq j\leq l$.
\item If $r\leq h\leq n$, then either $0\leq i\leq l-1$ and $0\leq j\leq m-1$ or $l\leq i\leq m-1$ and $0\leq j\leq l-1$.
\end{enumerate}

Counting the number of triples $(h,i,j)$ satisfying the above constraints, we get
\begin{align*}
\HKF(R,m) &= \Dim_k(k[X,Y,Z]/(X^m,Y^m,Z^m,X^{n+1}-YZ))\\
&= (n+1-r)\left[\sum_{i=0}^{l-1}\sum_{j=0}^{m-1}1+\sum_{i=l}^{m-1}\sum_{j=0}^{l-1}1\right]
 +r\left[\sum_{i=0}^{l}\sum_{j=0}^{m-1}1+\sum_{i=l+1}^{m-1}\sum_{j=0}^{l}1\right]\\
&= (n+1-r)(ml+(m-l)l)+r((l+1)m+(m-(l+1))(l+1))\\
&= (n+1-r)(2ml-l^2)+r(2(l+1)m-(l+1)^2)\\
&= (n+1-r)\left(2m\frac{m-r}{n+1}-\left(\frac{m-r}{n+1}\right)^2\right)+r\left(2m\left(\frac{m-r}{n+1}+1\right)-\left(\frac{m-r}{n+1}+1\right)^2\right)\\
\begin{split} &= (n+1-r)\left(\frac{2m^2}{n+1}-\frac{2rm}{n+1}-\frac{m^2}{(n+1)^2}+\frac{2rm}{(n+1)^2}-\frac{r^2}{(n+1)^2}\right)\\
  &\quad +r\left(\frac{2m^2}{n+1}+\frac{2n+2-2r}{n+1}m-\frac{m^2-2rm+r^2}{(n+1)^2}-\frac{2m-2r}{n+1}-1\right)\end{split}\\
\begin{split} &= (n+1-r)\left(\frac{2n+1}{(n+1)^2}m^2-\frac{2rn}{(n+1)^2}m-\frac{r^2}{(n+1)^2}\right)\\
  &\quad +r\left(\frac{2n+1}{(n+1)^2}m^2+\left(2-\frac{2r}{n+1}+\frac{2r}{(n+1)^2}-\frac{2}{n+1}\right)m-\frac{r^2}{(n+1)^2}+\frac{2r}{n+1}-1\right)\end{split}\\
\begin{split} &= \frac{2n+1}{n+1}m^2+\left(-\frac{2rn}{n+1}+\frac{2r^2n}{(n+1)^2}+2r-\frac{2r^2}{n+1}+\frac{2r^2}{(n+1)^2}-\frac{2r}{n+1}\right)m\\
  &\quad -\frac{r^2}{n+1}+\frac{r^3}{(n+1)^2}-\frac{r^3}{(n+1)^2}+\frac{2r^2}{n+1}-r\end{split}\\
&= \left(2-\frac{1}{n+1}\right)m^2+\frac{r^2}{n+1}-r.
\end{align*}
\end{exa}

\begin{rem}\label{kunzbsp}
Similarly to the last example Kunz computed in \cite[Example 4.3]{kunz2} the Hilbert-Kunz functions of binomial hypersurfaces of the form 
$$R:=k[X,Y_1,\ldots,Y_n]/(X^d-Y_1^{a_1}\cdot\ldots\cdot Y_n^{a_n}),$$
where $k$ is a field of characteristic $p>0$ coprime to $d\in\N_{\geq 2}$ and $a_1,\ldots,a_n$ are natural numbers with $\sum a_i>1$. He obtains 
(compare with the introduction of \cite{hkmexists})
$$\HKM(R)=\left\{\begin{aligned}
 & d\cdot\left(1-\prod\left(1-\frac{a_i}{d}\right)\right) && \text{if }a_i<d\text{ for all }i\in\{1,\ldots,n\},\\
 & d && \text{otherwise.}
\end{aligned}\right.$$
\end{rem}

The first deep theorem in Hilbert-Kunz theory was the following.

\begin{thm}[Kunz]\label{kunzthm}
Let $(R,\mm)$ be a local ring of dimension $d$ and positive characteristic $p$. The following statements are equivalent.

\begin{enumerate}
\item $R$ is regular.
\item $R$ is reduced and a flat $R^p$-module.
\item $\HKF(R,p)=p^d$.
\item $R$ is reduced and a flat $R^{p^e}$-module for all $e\in\N$.
\item $\HKF(R,p^e)=p^{ed}$ for all $e\in\N$.
\end{enumerate}
\end{thm}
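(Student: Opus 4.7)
The plan is to establish the equivalences by first reducing to the case where $R$ is complete with perfect residue field---using that completion is faithfully flat and that length, regularity, reducedness, and the Hilbert--Kunz function are all unaffected by this operation, as noted in the remarks preceding the theorem---and then running two parallel chains: the flatness equivalence $(1) \Leftrightarrow (2) \Leftrightarrow (4)$ and the length equivalence $(1) \Leftrightarrow (3) \Leftrightarrow (5)$.

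For $(1) \Rightarrow (4)$, after the reduction the Cohen structure theorem identifies $\hat R$ with $k[[x_1, \ldots, x_d]]$, and one exhibits $\hat R$ as a free $\hat R^{p^e}$-module with explicit basis given by the monomials $x_1^{i_1} \cdots x_d^{i_d}$ with $0 \leq i_j < p^e$. This immediately gives flatness and rank $p^{ed}$; reducedness is automatic since regular local rings are domains. The step $(4) \Rightarrow (2)$ is trivial by specializing to $e = 1$. The crux is $(2) \Rightarrow (1)$, which is the content of Kunz's original 1969 theorem. The approach here would be to show that flatness of the Frobenius forces the Koszul complex on any system of parameters to remain acyclic after every Frobenius pullback, bounding the projective dimension of the residue field $k = R/\mm$ and yielding regularity by the Auslander--Buchsbaum--Serre criterion. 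Reducedness (hypothesized in (2)) is essential so that Frobenius is injective and $R^p \cong R$ as rings, which allows flatness to propagate through iterates.

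For the length chain, $(1) \Rightarrow (5)$ is the direct computation $\HKF(R, p^e) = \dim_k \hat R/\mm^{[p^e]} = p^{ed}$ read off from the monomial basis, and $(5) \Rightarrow (3)$ is immediate. For $(3) \Rightarrow (1)$, I would establish the a priori bound $\HKF(R, p) \geq p^d$, provable by comparing with the Hilbert--Samuel multiplicity of a parameter ideal via Lech's formula and the fact that $R/\mm^{[p]}$ is a quotient of $R/(a_1^p, \ldots, a_d^p)$ for an appropriately chosen system of parameters. Equality in this bound forces $\hat R$ to be a free $\hat R^p$-module of rank exactly $p^d$ (freeness following from the length coinciding with the minimal rank forced by Nakayama), at which point reducedness is extracted by observing that a nonzero nilpotent with $x^p = 0$ would destroy the freeness, and the already-established implication $(2) \Rightarrow (1)$ closes the loop.

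The main obstacle, as indicated, is the step $(2) \Rightarrow (1)$: extracting regularity from flatness of the Frobenius. The reductions to the complete case and the monomial-basis computations are essentially routine, but deducing regularity from a Tor-vanishing or Koszul-exactness hypothesis under Frobenius is the technical heart of Kunz's theorem and requires a careful analysis of how minimal free resolutions interact with the Frobenius endofunctor; the remaining implications, including the sharp length bound, organize around this central result.
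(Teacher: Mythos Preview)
The paper does not give a proof of this theorem; it simply refers to Kunz's original articles \cite[Theorem~2.1, Theorem~3.3]{kunz1}. So there is nothing in the paper to compare your outline against.

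That said, your sketch is a faithful summary of how Kunz's argument actually runs, and you correctly locate the hard step at $(2)\Rightarrow(1)$. One comment on your route for $(3)\Rightarrow(1)$: you propose to pass through $(2)$ by first extracting freeness and reducedness from the length equality. This is viable but needs care. After completing with perfect residue field, $R$ is module-finite over $R^p$ and $\lambda(R/\mm^{[p]})$ is the minimal number of $R^p$-generators of $R$; you want to argue that this equals $p^d$ forces $R$ to be free of rank $p^d$ over $R^p$, and in particular reduced. The step ``minimal number of generators equals $p^d$ implies free of that rank'' is not automatic from Nakayama alone---one needs an independent lower bound showing the generic rank is already $p^d$, which is essentially where the inequality $\HKF(R,p)\geq p^d$ (Kunz's Proposition~3.2) enters. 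Kunz's own proof of $(3)\Rightarrow(1)$ in \cite{kunz1} is organised somewhat differently, going through a Noether normalisation rather than factoring through $(2)$, but your approach can be made to work with the bound in hand.
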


\begin{proof} See \cite[Theorem 2.1, Theorem 3.3]{kunz1}. \end{proof}

This theorem leads to the question how the quotient $\HKF(I,M,p^e)/p^{ed}$ behaves for large $e$. This question was answered by Monsky. 

Recall that for a set $X$ and a function $f:X\ra\R$, the set $O(f)$ consists of all functions $g:X\ra\R$ with the property that $|g/f|$ is bounded.

\begin{thm}[Monsky]\label{thmhkmexists}
Let $(R,\mm)$ be a local ring of positive characteristic $p$ and dimension $d$. Let $I$ be an $\mm$-primary ideal and $M$ a finitely generated $R$-module.
\begin{enumerate}
\item There is a positive real constant $c(M)$ such that 
	$$\lambda_R\left(M/I\qpot M\right)=c(M)q^d+O\left(q^{d-1}\right).$$
\item If $0\ra L\ra M\ra N\ra 0$ is a short exact sequence of finitely generated $R$-modules, then 
	$$\lambda_R\left(M/I\qpot M\right)=\lambda_R\left(L/I\qpot L\right)+\lambda_R\left(N/I\qpot N\right)+O\left(q^{d-1}\right).$$
\end{enumerate}
\end{thm}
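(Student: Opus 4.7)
The plan is to prove the two assertions simultaneously by induction on $d=\dim R$, with (ii) providing the bookkeeping that lets the computation of the leading term in (i) be reduced to cyclic modules of the form $R/\mathfrak{p}$. Before starting, I would reduce to the case that $R$ is complete with algebraically closed residue field, since neither $\lambda_R(M/I^{[q]}M)$ nor $d$ is affected by $\mathfrak{m}$-adic completion or by faithfully flat enlargement of the residue field (see \cite[Lemma 3.1]{kunz1}). After this reduction $R$ is $F$-finite, and the Frobenius pushforwards $F^e_*M$ (that is, $M$ with $R$-action twisted through $F^e:R\to R,\ r\mapsto r^q$) are finitely generated $R$-modules, giving the identity
$$\lambda_R\bigl(M/I^{[q]}M\bigr)=\lambda_R\bigl(F^e_*M\,/\,I\cdot F^e_*M\bigr)$$
which is the bridge between the asymptotic ($q\to\infty$) Hilbert--Kunz side and the static ($q$ fixed) Hilbert--Samuel side.

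A first coarse estimate, which will be used repeatedly, is the sandwich $I^{qs}\subseteq I^{[q]}\subseteq I^q$, where $s$ is the minimal number of generators of $I$; combined with the Hilbert--Samuel theorem $\lambda_R(M/I^nM)=(e(I,M)/d!)n^d+O(n^{d-1})$, this gives the uniform upper bound $\lambda_R(M/I^{[q]}M)=O(q^{\dim M})$ for every finitely generated $M$. In particular, the sequence in (i) is bounded, and any $M$ with $\dim M<d$ already satisfies $\lambda_R(M/I^{[q]}M)=O(q^{d-1})$.

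For (ii), tensoring $0\to L\to M\to N\to 0$ with $R/I^{[q]}$ yields the right-exact sequence
$$\mathrm{Tor}_1^R\bigl(R/I^{[q]},N\bigr)\lra L/I^{[q]}L\lra M/I^{[q]}M\lra N/I^{[q]}N\lra 0,$$
so the claim reduces to the uniform bound $\lambda_R(\mathrm{Tor}_1^R(R/I^{[q]},N))=O(q^{d-1})$. Choosing a free presentation $R^a\xrightarrow{\Phi}R^b\to N\to 0$, one identifies this Tor as a quotient of $K/I^{[q]}K$ with $K:=\ker(\Phi)$; although the naive bound $\lambda_R(K/I^{[q]}K)=O(q^d)$ is a priori insufficient, one can rerun the Tor computation against a syzygy module and trade a factor of $q$ for a factor coming from a fixed regular element of $R$, eventually producing the $O(q^{d-1})$ estimate.

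For (i), a prime filtration $0=M_0\subsetneq M_1\subsetneq\cdots\subsetneq M_n=M$ with $M_i/M_{i-1}\cong R/\mathfrak{p}_i$ combined with (ii) reduces the problem to $M=R/\mathfrak{p}$, and the coarse estimate then further reduces to the case where $\dim R/\mathfrak{p}=d$, i.e.\ $R$ is a complete local domain of dimension $d$. In this remaining case, I would show that the bounded sequence $e\mapsto\lambda_R(R/I^{[p^e]})/p^{ed}$ is Cauchy: comparing the HK function with respect to $I$ to the one with respect to $I^{[p]}$ via $I^{[p^{e+1}]}=(I^{[p]})^{[p^e]}$, and exploiting that $F_*R$ is torsion-free of generic rank $p^d$, one bounds the successive discrepancies by $O(p^{e(d-1)})$ using (ii). I expect the main obstacle of the whole argument to be the uniform Tor estimate $\lambda_R(\mathrm{Tor}_1^R(R/I^{[q]},N))=O(q^{d-1})$: it is the common engine underlying both parts, and its delicacy lies in the fact that the support of the Tor is concentrated at $\mathfrak{m}$ rather than in a lower-dimensional stratum, so the bound cannot be read off from dimensional considerations alone but must come from tracking the asymptotic behaviour of cokernels under the Frobenius.
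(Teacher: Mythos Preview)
The paper does not prove this theorem; it simply cites \cite[Theorem 1.8]{hkmexists}. Your sketch follows Monsky's architecture faithfully: reduce to the complete $F$-finite case, establish the coarse bound $\lambda_R(M/I^{[q]}M)=O(q^{\Dim M})$ via $I^{qs}\subseteq I^{[q]}\subseteq I^q$ and Hilbert--Samuel, derive (ii) from a $\mathrm{Tor}_1$ estimate, reduce (i) by prime filtration to a complete local domain, and there show the normalised sequence is Cauchy by comparing $F_*R$ with $R^{p^d}$ via (ii). This is the correct shape, and you rightly flag the $\mathrm{Tor}_1$ estimate as the crux.

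Your treatment of that estimate is where the sketch stays incomplete. A small slip first: with the presentation $R^a\xrightarrow{\Phi}R^b\to N\to 0$, the module $\mathrm{Tor}_1^R(R/I^{[q]},N)$ embeds into $Z/I^{[q]}Z$ where $Z=\Ima\Phi$; it is not naturally a quotient of $K/I^{[q]}K$ for $K=\Ker\Phi$. More substantively, ``trade a factor of $q$ for a regular element'' does not name a mechanism. In Monsky's proof one does not need $\lambda_R(\mathrm{Tor}_1^R(R/I^{[q]},N))=O(q^{d-1})$ for arbitrary $N$. Instead, (ii) is deduced from the statement that two modules with the same length at each minimal prime of maximal dimension have HK functions differing by $O(q^{d-1})$; this is proved by exhibiting a map between them whose kernel and cokernel have dimension $<d$, and then one only needs the $\mathrm{Tor}$ estimate for those lower-dimensional pieces, where it is accessible by induction on dimension combined with the coarse bound. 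This reduction is the step your outline does not supply.
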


\begin{proof}
See \cite[Theorem 1.8]{hkmexists}.
\end{proof}

Note that part (i) of this theorem ensures that the limit in the following definition exists.

\begin{defi} Let $(R,\mm)$ be a local ring of positive characteristic $p$ and dimension $d$. Let $I$ be an $\mm$-primary ideal and $M$ a finitely generated $R$-module. We call the limit
$$\lim_{e\ra\infty}\frac{\HKF(I,M,p^e)}{p^{ed}}$$
the \textit{Hilbert-Kunz multiplicity of $M$ with respect to $I$}. We denote this limit by $\HKM(I,M)$ and set $\HKM(M):=\HKM(\mm,M)$ for short. We will 
also call $\HKM(M)$ the Hilbert-Kunz multiplicity of $M$.
\end{defi}

\begin{rem}
We give a few remarks on results one obtains on the way to prove Theorem \ref{thmhkmexists}. The references are \cite{hkmexists} and \cite{tightapp}.
\begin{enumerate}
\item We have the inequalities ($\Te(I)$ denotes the ordinary multiplicity of $I$)
      $$\frac{\Te(I)}{d!}\leq\HKM(I,R)\leq\Te(I).$$
      Hence, the Hilbert-Kunz multiplicity of $R$ with respect to $I$ equals the multiplicity of $I$ if $\Dim(R)\leq 1$ and generalizes it in higher dimensions.
\item If $I$ is generated by a system of parameters, one always has $\HKM(I,R)=\Te(I)$.
\item The \textit{associativity formula} holds for Hilbert-Kunz multiplicities, that is 
      $$\HKM(I,M)=\sum_{\substack{\pp\in\Ass(M),\\ \Dim(R/\pp)=\Dim(R)}}\HKM(I,R/\pp)\cdot\lambda_{R_{\pp}}(M_{\pp}).$$
\item We have $\HKM(I,M)=0$ if and only if $\Dim(M)<\Dim(R)$.
\item Two finitely generated $R$-modules have the same Hilbert-Kunz multiplicity with respect to the ideal $I$ if their localizations at all minimal primes of 
      maximal dimension are isomorphic. Hence, the Hilbert-Kunz multiplicity does not detect embedded primes.
\end{enumerate}
\end{rem}

We give an example to the last remark.

\begin{exa}\label{exahkm1}
Let $p$ be a prime number, $R:=\Z/(p)\llbracket X,Y,Z\rrbracket/(XZ,YZ)$ and $M:=R/(Z)$. There is only one minimal prime of maximal dimension of $R$, 
namely $(Z)$. The localizations of $R$ and $M$ at $(Z)$ are both isomorphic to the field of fractions of the ring $R/(Z)$. Their Hilbert-Kunz functions 
are
\begin{align*}
\HKF(M,p^e) &= p^{2e}\text{ and} \\
\HKF(R,p^e) &= p^{2e}+p^e-1
\end{align*}
as one sees by counting monomials as in Example \ref{genhkfan}. This gives $\HKM(R)=\HKM(M)=1$. Treating $R$ and $M$ as $\Z/(p)\llbracket X,Y,Z\rrbracket$-modules, 
their Hilbert-Kunz functions are the same but their Hilbert-Kunz multiplicities are zero.
\end{exa}

The next example shows that the limit 
$$\lim_{n\ra\infty}\frac{\lambda_R\left(M/\left(f_1^n,\ldots,f_m^n\right) M\right)}{n^{\Dim(R)}},$$ 
sometimes called \textit{generalized Hilbert-Kunz multiplicity}, does not exist even in easy cases.

\begin{exa}
Let $R:=k[X,Y,Z]/(X+Y+Z)$, where $k$ is any field of characteristic $p>0$. Consider the sequence 
$$\left(\frac{\Dim_k\left(R/(X^n,Y^n,Z^n)\right)}{n^2}\right)_{n\in\N}.$$
The subsequence to indices of the form $p^e$ converges to 1, but the subsequence to indices of the form $2p^e$ converges to $\tfrac{3}{4}$ as one sees by 
counting monomials as in Example \ref{genhkfan}.
\end{exa}

By Theorem \ref{kunzthm} one has that every regular local ring of positive characteristic has Hilbert-Kunz multiplicity one. In fact, one can 
show in this situation that
$$\HKF(I,R,p^e)=\lambda_R(R/I)\cdot p^{e\cdot\Dim(R)}$$
holds for any $\mm$-primary ideal (see \cite[1.4]{watyo1} and \cite[Exercise 8.2.10]{brunsherzog}). 
By Example \ref{exahkm1} the converse does not hold in general. One can only expect it to be true if there are no embedded primes. In fact this is - as for the Hilbert-Samuel multiplicity (see \cite{nagata}) - already enough.

\begin{thm}[Watanabe, Yoshida]\label{hkm1meansreg}
Let $(R,\mm)$ be an unmixed local ring of positive characteristic. If $R$ has Hilbert-Kunz multiplicity one, then $R$ is regular.
\end{thm}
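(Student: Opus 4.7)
My plan is a three-step reduction: first to pass to a complete local ring, then to reduce to a domain via the associativity formula, and finally to compare $R$ with a regular local subring obtained from the Cohen structure theorem. The central algebraic ingredient I intend to exploit is the module-finite extension formula for Hilbert-Kunz multiplicities (due to Watanabe and Yoshida, referred to in the excerpt as Theorem \ref{watyorank}).

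Since $\HKM$ is invariant under completion, and unmixedness of $R$ transfers to $\hat R$ (built into the usual meaning of ``unmixed''), I first replace $R$ by $\hat R$. Combining the associativity formula with the unmixed hypothesis (which ensures $\Ass(R) = \mathrm{Min}(R)$ and that all these primes have maximal dimension) I obtain
$$1 = \HKM(R) = \sum_{\pp \in \mathrm{Min}(R)} \HKM(R/\pp) \cdot \lambda_{R_\pp}(R_\pp).$$
A companion lower bound $\HKM(R/\pp) \geq 1$, valid for every local domain of positive dimension (proved as a preliminary step by Watanabe--Yoshida), together with $\lambda_{R_\pp}(R_\pp) \geq 1$ forces the sum to collapse to a single summand with $\lambda_{R_\pp}(R_\pp) = 1$ (i.e.\ $\pp R_\pp = 0$) and $\HKM(R/\pp) = 1$. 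Hence I may assume $R$ is a complete local domain with $\HKM(R) = 1$.

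Next, by the Cohen structure theorem, I fix a regular local subring $A = k[[x_1, \ldots, x_d]] \subseteq R$ over which $R$ is module-finite. Since $\mm_A R$ is generated by a system of parameters for $R$, the module-finite extension formula yields
$$\HKM(\mm_A R, R) = [Q(R):Q(A)] \cdot \HKM(A) = \mathrm{rank}_A(R).$$
The inclusion $\mm_A R \subseteq \mm_R$ gives $\HKM(R) \leq \mathrm{rank}_A(R)$, so $\mathrm{rank}_A(R) \geq 1$. Once $\mathrm{rank}_A(R) = 1$ is established, $R \subseteq Q(R) = Q(A)$ is integral over $A$ and the normality of the regular local ring $A$ forces $R = A$, which is regular.

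The main obstacle is upgrading the coarse inequality $1 = \HKM(R) \leq \mathrm{rank}_A(R)$ to the equality $\mathrm{rank}_A(R) = 1$. Writing $\mm_R = (\mm_A R, y_2, \ldots, y_n)$ with $y_2, \ldots, y_n \in \mm_R$ lifting a basis of $\mm_R / \mm_A R$, one has $\mm_R^{[q]} = (\mm_A^{[q]} R, y_2^q, \ldots, y_n^q)$. The plan is to compare $\Dim_k\bigl(R/\mm_A^{[q]} R\bigr) = \mathrm{rank}_A(R) \cdot q^d + O(q^{d-1})$ with $\HKF(R, q) = \Dim_k\bigl(R/(\mm_A^{[q]} R, y_2^q, \ldots, y_n^q)\bigr)$ via a finite free resolution of $R$ as an $A$-module (which exists because $A$ is regular) together with Monsky's additivity (Theorem \ref{thmhkmexists}(ii)), tracking how the cokernel of a natural injection $A^{\mathrm{rank}_A(R)} \hookrightarrow R$ contributes to the leading term of $\HKF(R, q)$. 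Here the unmixed hypothesis is essential, since it prevents those contributions from being absorbed by lower-dimensional pieces that would leave $\HKM(R) = 1$ compatible with $\mathrm{rank}_A(R) > 1$.
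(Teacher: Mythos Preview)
The paper does not give an argument for this theorem; it simply cites \cite[Theorem~1.5]{watyo1} and the alternative proof \cite[Theorem~3.1]{hkm1reg}. So there is no in-paper proof to compare against, and your proposal must be judged on its own merits.

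Your Steps~1 and~2 (completion, then the associativity formula to reduce to a complete local domain) are standard and essentially correct. One small point you gloss over: from $\lambda_{R_\pp}(R_\pp)=1$ and $\pp$ being the unique associated prime you need to argue that $\pp=0$, not merely pass to $R/\pp$; this follows because any $a\in\pp$ satisfies $sa=0$ for some $s\notin\pp$, and such $s$ is a non-zerodivisor.

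Step~3 is where the theorem actually lives, and here your proposal is only a sketch, not a proof. You correctly identify that it suffices to show $\mathrm{rank}_A(R)=1$, and you derive the trivial inequality $\mathrm{rank}_A(R)\geq 1$. But the ``plan'' you offer to close the gap --- comparing $\lambda(R/\mm_A^{[q]}R)$ with $\HKF(R,q)$ via a free $A$-resolution of $R$ and Monsky additivity --- is too vague to evaluate, and I do not see what inequality it would yield beyond $\mathrm{rank}_A(R)=e(\mm_A R,R)\leq\lambda(R/\mm_A R)$, which is always true and does not use $\HKM(R)=1$. Your closing remark that ``the unmixed hypothesis is essential'' here is also off: you already spent unmixedness in Step~2 to reduce to a domain, after which it is automatic.

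Both proofs in the literature require a genuine new idea at precisely this point. Watanabe--Yoshida pass through tight closure, showing that $\HKM(R)=1$ forces parameter ideals to be tightly closed. Huneke--Yao iterate their length inequality (the one recorded here as Theorem~\ref{watyorelhkm}) in a way that goes well beyond a single application against the Noether normalization. Your outline does not capture either mechanism, so as it stands Step~3 is a gap rather than a proof.
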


\begin{proof}
See \cite[Theorem 1.5]{watyo1} or \cite[Theorem 3.1]{hkm1reg} for an alternative proof.
\end{proof}

If the ring has dimension zero, we always have $I\qpot M=0$ for $q$ large enough. Therefore, all Hilbert-Kunz functions in these cases are eventually 
constant. Due to Monsky the structure of Hilbert-Kunz functions over one-dimensional rings is known completely.

\begin{thm}[Monsky]
Let $(R,\mm)$ be a local ring of positive characteristic $p$ and dimension one. Let $I$ be an $\mm$-primary ideal and $M$ a finitely generated $R$-module. There is a 
periodic function $\phi:\N\ra\N$ such that
$$\HKF(I,M,p^e)=\HKM(I,M)\cdot p^e+\phi(e)$$
for all $e\gg 0$.
\end{thm}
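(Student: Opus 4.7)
My strategy is to reduce to a one-dimensional complete local domain, where the integral closure is a DVR and the Frobenius iteration can be controlled explicitly. Since $\HKF$ is invariant under completion, I may assume $R$ is complete. Filtering $M$ by associated-prime cyclic quotients $R/\pp$ and applying Monsky's additivity (Theorem~\ref{thmhkmexists}(ii)), I reduce to the case $M = R/\pp$. The case $\Dim(R/\pp)=0$ is trivial, since then $R/\pp$ is Artinian and the HKF is eventually constant. In the case $\Dim(R/\pp)=1$, replacing $R$ by $R/\pp$ and then, if necessary, by a minimal component of $\hat R$, I reduce to $M = R$ with $R$ a complete one-dimensional local domain. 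The $O(q^{d-1}) = O(1)$ errors from the devissage steps can moreover be upgraded to eventually periodic errors: each correction has the form $\lambda_R(L/(L \cap I^{[q]}M))$ for some Artinian submodule $L \subseteq M$, and since $L$ has only finitely many submodules and $I^{[q]} \bmod \Ann(L)$ is an eventually periodic ideal in the Artinian ring $R/\Ann(L)$ (by pigeonhole on the finite set of ideals), this correction is eventually periodic in $e$.

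With $R$ now a complete one-dimensional local domain, I bring in the integral closure $\bar R$, which is a finite $R$-module (by excellence) and a complete DVR with uniformizer $\pi$ and valuation $v$. Let $\mathfrak c = (R :_R \bar R) = \pi^k \bar R$ be the conductor and $C := \bar R/R$ (of finite length as an $R$-module). For $q \gg 0$ we have $I^{[q]} \subseteq \mm^q \subseteq \mathfrak c$, which forces $I^{[q]}\bar R \subseteq \mathfrak c \subseteq R$ and $R \cap I^{[q]}\bar R = I^{[q]}\bar R$. The short exact sequence $0 \to R \to \bar R \to C \to 0$ together with the long exact sequence of Tor (and $I^{[q]}C = 0$ for $q \gg 0$) then yields
$$\HKF(I, R, q) \;=\; \lambda_R\bigl(\bar R/I^{[q]}\bar R\bigr) \;-\; \lambda_R(C) \;+\; \lambda_R\bigl(I^{[q]}\bar R/I^{[q]}R\bigr).$$
In the DVR $\bar R$ one has $I^{[q]}\bar R = (\pi^{qv(I\bar R)})$, so $\lambda_{\bar R}(\bar R/I^{[q]}\bar R) = q\cdot v(I\bar R)$, and by Proposition~\ref{proplength}(iii) the first summand equals $q \cdot v(I\bar R) \cdot [\bar R/\bar\mm : R/\mm]$---a linear function of $q$ whose slope is necessarily $\HKM(I, R)$.

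The main obstacle is showing that $e \mapsto \lambda_R(I^{[q]}\bar R/I^{[q]}R)$ is eventually periodic. Writing each generator $f_j$ of $I$ in $\bar R$ as $f_j = u_j\pi^{v_j}$ with $u_j \in \bar R^\times$, we have $f_j^q = u_j^q \pi^{qv_j}$, so for $q \gg 0$ the quotient $I^{[q]}\bar R/I^{[q]}R = (\pi^{qv(I\bar R)})\bar R / (u_j^q \pi^{qv_j})_j\, R$ is determined (after a uniform rescaling by $\pi^{-qv(I\bar R)}$) by the images of the tuple $(u_1^{p^e},\ldots,u_s^{p^e})$ inside a fixed finite-length truncation $\bar R/\pi^N\bar R$, where $N$ depends only on $k$, $v(I\bar R)$ and the $v_j$. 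Since Monsky's bound already forces $\lambda_R(I^{[q]}\bar R/I^{[q]}R)$ into a bounded range of integers, the isomorphism class of this quotient ranges over a finite set; the Frobenius iteration $u_j \mapsto u_j^{p^e}$ on the relevant finite data must then enter an eventually periodic orbit by pigeonhole, and so does the length. Combining this periodic correction with the linear main term and the eventually periodic devissage corrections yields the desired form $\HKF(I, M, p^e) = \HKM(I, M)\cdot p^e + \phi(e)$ with $\phi$ eventually periodic.
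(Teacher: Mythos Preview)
The paper does not give a proof here; it simply cites Monsky's original paper. Your overall architecture---complete, reduce to a one-dimensional domain, pass to the DVR normalization $\bar R$, and isolate an eventually periodic defect term---is indeed Monsky's route. However, two of your steps do not go through as written.

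\emph{The d\'evissage.} Your assertion that ``each correction has the form $\lambda_R(L/(L \cap I^{[q]}M))$ for some Artinian submodule $L \subseteq M$'' is not what a prime filtration produces. At stage $i$ the correction is $(M_{i-1}\cap I^{[q]}M_i)/I^{[q]}M_{i-1}$, and $M_{i-1}$ is typically \emph{not} Artinian (it is built from all earlier $R/\pp_j$, some of dimension one). Even when $L$ is Artinian, the submodule $L\cap I^{[q]}M$ depends on how $I^{[q]}$ acts on all of $M$, not on $I^{[q]}\bmod\Ann(L)$, so your pigeonhole does not control it. A repair is to d\'evisser differently: take $0\to T\to M\to M'\to 0$ with $T$ the finite-length part; then $T\cap I^{[q]}M$ is a \emph{decreasing} chain in the Artinian module $T$, hence eventually constant. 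For the torsion-free $M'$, embed it into a free $\bar R$-module with Artinian cokernel and run the normalization argument directly for $M'$ rather than reducing further to cyclic pieces.

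\emph{The periodicity in the domain case.} Tracking the tuple $(u_1^{p^e},\ldots,u_s^{p^e})$ inside $\bar R/\pi^N\bar R$ and invoking pigeonhole fails when the residue field is infinite: that truncation is then an infinite set and Frobenius orbits of tuples need not close. The object that \emph{does} live in a finite set is the $R$-submodule $J_q=\sum_j u_j^{q}\pi^{q(v_j-v_0)}R$ of $\bar R$. Since $J_q\supseteq u_{j_0}^{q}\mathfrak c=\mathfrak c$, it is determined by its image $\bar J_q$ in the finite-length $R$-module $\bar R/\mathfrak c$, and one checks directly (using $p$-semilinearity of Frobenius) that $\bar J_{pq}=R\cdot F(\bar J_q)$. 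Thus $e\mapsto\bar J_{p^e}$ is a deterministic self-map on the finite set of $R$-submodules of $\bar R/\mathfrak c$, and now pigeonhole gives eventual periodicity of $\lambda_R(I^{[q]}\bar R/I^{[q]}R)=\lambda_R\bigl((\bar R/\mathfrak c)/\bar J_q\bigr)$.
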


\begin{proof}
See \cite[Theorem 3.11]{hkmexists}.
\end{proof}

\begin{rem}
In \cite{hkf1dim} the author gives an algorithm in the case of an one-dimensional, standard-graded, affine $k$-algebra $R$, where $k$ is a 
finite field, that computes $\phi$ and a bound $e_0\in\N$ such that the above theorem holds for all $e\geq e_0$.
\end{rem}

For the next theorems dealing with the structure of Hilbert-Kunz functions over normal domains, we need the \textit{divisor class group} $C(R)$ of $R$, which is 
the quotient of the group of Weil divisors by the normal subgroup of principal divisors. For a finitely generated $R$-module $M$, we choose a primary decomposition with 
quotients $R/P_i$. We attach to $M$ the divisor $-\sum P_j$, where $P_j$ runs through all $P_i$ of height one (with repetitions) in the primary decomposition. The image of 
$-\sum P_j$ in $C(R)$ is independent of the chosen decomposition and is denoted by $c(M)$.

\begin{thm}[Huneke, McDermott, Monsky]
Let $(R,\mm)$ be a local normal domain of positive characteristic $p$ and dimension $d$. Let $M$ and $N$ be finitely generated, torsion-free $R$-modules of the same rank 
$r$. Then the following statements hold.

\begin{enumerate}
\item If $c(M)=0$, then $\HKF(M,q)=r\cdot \HKF(R,q)+O\left(q^{d-2}\right)$.
\item If $c(M)=c(N)$, then $\HKF(M,q)=\HKF(N,q)+O\left(q^{d-2}\right)$.
\end{enumerate}
\end{thm}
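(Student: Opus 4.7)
The plan is to prove (ii) in full and deduce (i) as the special case $N=R^r$, noting that $c(R^r)=0$ by construction of the class. The guiding intuition is that a torsion-free module of rank $r$ is determined up to codimension-one data by the pair $(r,c(M))$, and any discrepancy living in codimension $\geq 2$ can only perturb the Hilbert-Kunz function by $O(q^{d-2})$.

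First I would reduce to reflexive modules. The canonical maps $M \to M^{**}$ and $N \to N^{**}$ have kernel and cokernel supported in codimension at least two, so it suffices to bound $\HKF(C,q)$ for any finitely generated $R$-module $C$ with $\Dim C \leq d-2$. Viewing such a $C$ as a module over $R/\Ann(C)$ and applying Theorem \ref{thmhkmexists}(i) there, we obtain $\HKF(C,q) = O(q^{d-2})$ (the length is the same whether computed over $R$ or over $R/\Ann(C)$ because the residue fields agree). The reduction is advantageous because over the normal domain $R$, reflexive modules are locally free of rank $r$ at every height-one prime, so the divisor class $c$ fully captures their behaviour in codimension $\leq 1$.

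Next, for reflexive $M$, $N$ of rank $r$ with $c(M)=c(N)$, I would produce a chain of torsion-free rank-$r$ modules $M = M_0, M_1, \ldots, M_k = N$ in which each neighbouring pair fits into a short exact sequence whose third term has dimension $\leq d-2$. Concretely, one may filter $M$ and $N$ by prime cyclic quotients $R/P$; by the hypothesis $c(M)=c(N)$ the multisets of height-one primes occurring in the two filtrations (viewed as Weil divisors modulo principal divisors) agree, so after reordering the two filtrations differ only in their height-$\geq 2$ quotients. Packaging these discrepancies as short exact sequences gives the desired chain.

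The main obstacle lies in converting such exact sequences into the estimate $\HKF(M,q) - \HKF(N,q) = O(q^{d-2})$. Theorem \ref{thmhkmexists}(ii) only supplies an error of $O(q^{d-1})$ per sequence, which is insufficient. The key technical input of the Huneke-McDermott-Monsky theorem is therefore a refined additivity: for any short exact sequence $0 \to L \to M' \to N' \to 0$ with $\Dim L \leq d-2$ one must obtain the sharper estimate
$$\HKF(M',q) - \HKF(N',q) = O(q^{d-2}).$$
Establishing this sharpening is the principal difficulty; one route is to analyse the Frobenius action on the finite-length modules obtained by localizing $L$ at its height-$(d-2)$ associated primes, and to exploit that the leading $q^{d-1}$ contributions from successive filtration steps cancel once the divisor classes match. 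This refinement is the real content of the theorem, while the outer architecture — reduction to reflexive modules and comparison via a chain with codimension-$\geq 2$ cokernels — is the more formal part of the argument.
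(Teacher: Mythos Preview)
The paper does not give its own proof of this statement; it simply cites \cite[Theorem 1.4, Lemma 1.6]{hkfnormal}. So there is no argument in the paper to compare against, and your proposal must stand on its own.

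As an outline it is reasonable in spirit, but it is not a proof, and it contains a genuine slip. First the slip: you say the chain has ``third term'' of dimension $\leq d-2$, but then you state the refined additivity for a sequence $0\to L\to M'\to N'\to 0$ with $\Dim L\leq d-2$. If $M'$ is torsion-free and $L\subseteq M'$ has $\Dim L\leq d-2$, then $L$ is torsion, hence zero. The small module must be the cokernel, not the kernel; the sequences you want are of the form $0\to M_i\to M_{i+1}\to C_i\to 0$ with $\Dim C_i\leq d-2$.

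More substantively, two steps are left as promissory notes. The chain construction is not actually carried out: having prime filtrations of $M$ and $N$ whose height-one quotients agree in the class group does not by itself produce a sequence of torsion-free rank-$r$ modules interpolating between $M$ and $N$ with small cokernels at each step; some additional argument (e.g.\ producing an explicit map $M\to N$ or passing through a common overmodule that is an isomorphism in codimension one) is required. And the ``refined additivity'' --- that a short exact sequence with cokernel of dimension $\leq d-2$ perturbs the Hilbert--Kunz function only by $O(q^{d-2})$ rather than the $O(q^{d-1})$ of Theorem~\ref{thmhkmexists}(ii) --- is, as you yourself say, the real content of the theorem, and you do not prove it. Bounding $\lambda(C/\mm^{[q]}C)$ by $O(q^{d-2})$ is easy, but one must also bound the relevant $\mathrm{Tor}_1$ term, and your remark about ``localizing at height-$(d-2)$ primes'' does not explain how. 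What you have written is a plausible architecture with the load-bearing lemma missing.
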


\begin{proof}See \cite[Theorem 1.4, Lemma 1.6]{hkfnormal}.\end{proof}

\begin{thm}[Huneke, McDermott, Monsky]
Let $(R,\mm,k)$ be a local, excellent, normal domain of positive characteristic $p$, dimension $d$ and with $k$ perfect. Let $I$ be an $\mm$-primary 
ideal and $M$ a finitely generated $R$-module. Then there exists a constant $\beta(I,M)\in\R$ with 
$$\HKF(I,M,q)=\HKM(I,M)\cdot q^d+\beta(I,M)\cdot q^{d-1}+O\left(q^{d-2}\right).$$ 
\end{thm}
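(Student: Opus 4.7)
The plan is to establish existence of the constant $\beta(I,M)$ by combining the rank-and-class rigidity of the preceding Huneke-McDermott-Monsky theorem with a Frobenius-pullback recursion on $\HKF(I,R,q)$. First I would reduce to the case $M=R$: by part (i) of the preceding theorem, $\HKF(I,M,q)=r\cdot\HKF(I,R,q)+O(q^{d-2})$ when $c(M)=0$, and by part (ii), for torsion-free modules the $O(q^{d-2})$-error is determined by the divisor class $c(M)$. Hence, provided a $q^{d-1}$-coefficient of $\HKF(I,M,q)$ exists, it is uniquely determined by $\Rank(M)$ and $c(M)$, and existence for arbitrary $M$ reduces to existence for $\HKF(I,R,q)$ together with a chosen set of rank-one representatives of $C(R)$.

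Next I would set up the Frobenius recursion. Under the hypotheses (excellent normal domain, $k$ perfect), $R^{1/p}$ is a torsion-free finitely generated $R$-module of rank $p^d$. The Frobenius isomorphism $R\to R^{1/p}$ together with perfectness of the residue field yields the length identity
\begin{equation*}
\HKF(I,R,pq)=\lambda_R(R/I^{[pq]})=\lambda_R(R^{1/p}/I^{[q]}R^{1/p})=\HKF(I,R^{1/p},q).
\end{equation*}
Applying parts (i) and (ii) of the preceding theorem to the rank-$p^d$ module $R^{1/p}$ then gives
\begin{equation*}
\HKF(I,R^{1/p},q)=p^d\cdot\HKF(I,R,q)+\phi(q),
\end{equation*}
where $\phi(q)=O(q^{d-1})$ by Monsky's theorem and, once a $q^{d-1}$-coefficient of $\phi$ is established, that coefficient depends only on the class $c(R^{1/p})$.

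The third step is to solve the resulting recursion. Set $b_e:=\HKF(I,R,p^e)-\HKM(I,R)p^{ed}$; then Monsky's theorem yields $b_e=O(p^{e(d-1)})$ and the two displays above give $b_{e+1}=p^d b_e+\phi(p^e)$. Granting the refined expansion $\phi(q)=\gamma q^{d-1}+O(q^{d-2})$ for some constant $\gamma$, division by $p^{(e+1)(d-1)}$ shows that $c_e:=b_e/p^{e(d-1)}$ satisfies $c_{e+1}=p\cdot c_e+\gamma/p^{d-1}+O(p^{-e})$. This linear recursion has the unstable fixed point $\beta:=-\gamma/(p^{d-1}(p-1))$; writing $c_e=\beta+\epsilon_e$ converts it to $\epsilon_{e+1}=p\epsilon_e+O(p^{-e})$, whose backward iteration combined with the \emph{a priori} boundedness of $(c_e)_e$ (from Monsky) forces $\epsilon_e=O(p^{-e})$. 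This yields $b_e=\beta p^{e(d-1)}+O(p^{e(d-2)})$, and together with the reduction step proves the theorem.

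The main obstacle is precisely the refined asymptotic of $\phi$: part (ii) of the preceding theorem provides only $O(q^{d-2})$-rigidity within a fixed divisor class and does not by itself guarantee that $\phi(q)$ admits a genuine $q^{d-1}$-coefficient. To overcome this I would iterate the Frobenius identity, obtaining $\HKF(I,R,p^nq)=\HKF(I,R^{1/p^n},q)$ for every $n\in\N$, and then use the combined rigidity across the rank-$p^{nd}$ modules $R^{1/p^n}$ to pin down the coefficient $\gamma$ as the unique constant compatible with the controlled growth of $(b_e)_e$. Excellence of $R$ (which guarantees each $R^{1/p^n}$ is module-finite over $R$) and perfectness of $k$ (which makes residue-field extensions contribute trivially to lengths) both enter essentially in this step.
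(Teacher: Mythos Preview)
The paper does not give its own proof; it simply cites \cite[Theorem 1.12]{hkfnormal}. So I compare your proposal against the Huneke--McDermott--Monsky argument itself.

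Your framework is essentially theirs: reduce via the rank-and-class rigidity of the preceding theorem, use the identity $\HKF(I,R,pq)=\HKF(I,R^{1/p},q)$, and solve the resulting recursion. Your step-3 recursion analysis is correct once you grant $\phi(q)=\gamma\,q^{d-1}+O(q^{d-2})$, and you correctly identify this hypothesis as the crux.

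The gap is that your proposed resolution of this crux is where the actual content of the proof lies, and your sketch does not supply it. Writing $\delta:=c(R^{1/p})$, the preceding theorem gives $\phi(q)=\psi_\delta(q)+O(q^{d-2})$, where $\psi_c(q):=\HKF(I,L_c,q)-\HKF(I,R,q)$ for a rank-one $L_c$ of class $c$. So what is needed is $\psi_\delta(q)=\gamma\,q^{d-1}+O(q^{d-2})$. Iterating to $R^{1/p^n}$ merely reproduces the same unresolved problem with the classes $c(R^{1/p^n})$ in place of $\delta$; it does not by itself force convergence of $\psi_\delta(q)/q^{d-1}$. Concretely, boundedness of $c_e=b_e/p^{e(d-1)}$ together with $c_{e+1}=p\,c_e+\psi(p^e)p^{1-d}$ for a merely bounded $\psi$ is compatible with oscillatory, non-convergent $(c_e)$, so ``boundedness plus rigidity'' is not enough. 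In the actual proof this step is carried out by constructing the homomorphism $\tau\colon C(R)\to(\R,+)$ mentioned in the remark following the theorem: one establishes additivity $\psi_{c_1+c_2}=\psi_{c_1}+\psi_{c_2}+O(q^{d-2})$, computes how Frobenius pushforward moves divisor classes, and then runs a recursion on the class group rather than only on the integers. That analysis is the missing ingredient in your outline.

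A smaller omission: your reduction step handles torsion-free $M$; passing from an arbitrary finitely generated $M$ to its torsion-free quotient requires controlling the contribution of the torsion submodule (a lower-dimensional module) to order $O(q^{d-2})$, which you do not address.
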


\begin{proof}See \cite[Theorem 1.12]{hkfnormal}.\end{proof}

\begin{rem}
\begin{enumerate}
 \item The numbers $\beta(I,M)$ in the previous theorem can be computed using values of a homomorphism $\tau:C(R)\ra(\R,+)$. If $\tau$ is constant zero 
 (for example if $C(R)$ is torsion), all $\beta(I,M)$ are zero. An important example, where $C(R)$ is torsion, is the case, where $(R,\mm,k)$ is a 
 complete, local, two-dimensional, normal domain and $k$ is the algebraic closure of a finite field (cf. \cite[Lemma 2.1]{hkfnormal}).
 \item In the introduction of \cite{hkfnormal} the authors discuss briefly (non-)possible extensions of the previous theorem to the $O\left(q^{d-2}\right)$-term. They point out that by an example in \cite{monskyhan} the Hilbert-Kunz function of the ring 
 $$R=\Z/(5)[X,Y,Z,W]/(X^4+Y^4+Z^4+W^4)$$
 is given by the formula
 $$\HKF(R,5^e)=\frac{168}{61}5^{3e}-\frac{107}{61}3^e,$$
 which shows that one cannot hope to have a constant factor in front of the $q^{d-2}$-term, even for rings that are regular in codimension two.
 \item In \cite[Corollary 2.5]{secondcoeff} the authors weaken the assumptions in the above theorem to an excellent, equidimensional, reduced ring such that the ideal of the singular locus has height at most two.
 \item In \cite[Theorem 3.2]{jean2} the authors weaken the assumptions in the previous theorem to an excellent, local ring with perfect residue class field such that all localizations at primes $\pp$ of dimension $d$ resp. $d-1$ are fields resp. DVRs. 
	Moreover, they argue with the Chow group instead of the divisor class group.
 \item In \cite{contessa} Contessa uses Koszul homology and the fact 
      $$\lambda_R\left(M/I\qpot M\right)=\lambda_R\left(\lK_0\left(I\qpot;M\right)\right)$$ 
      to compute Hilbert-Kunz functions of finitely generated modules over regular rings with algebraically closed residue class field with respect to ideals generated by 
      a system of parameters. She shows that for a finitely generated 
      torsion-free module $M$ one has 
      $$\HKF(I,M,q)=\Rank(M)\cdot q^d+\text{const}\cdot q^{d-2}+O\left(q^{d-3}\right),$$
      where the constant is a non-negative real number, depending only on $(M^{\vee})^{\vee}/M$ (see \cite[Theorem 3.4]{contessa}). In particular, the constant is zero if $M$ is reflexive.
\end{enumerate}
\end{rem}

The next two statements give bounds for Hilbert-Kunz functions (of rings).

\begin{lem}
Let $(R,\mm,k)$ be a complete local ring of positive characteristic $p$ and assume $k$ to be algebraically closed. Then the inequalities
$$\Min_{\substack{y_1,\ldots,y_d\\ \text{system of parameters}}}\lambda_R(R/(y_1,\ldots,y_d))\cdot q^d\geq \HKF(R,q)\geq q^d$$
hold. Moreover, if $R$ is Cohen-Macaulay, we have $\Te(R)\cdot q^d\geq \HKF(R,q)\geq q^d$.
\end{lem}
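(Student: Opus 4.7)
The plan is to handle the two inequalities separately and then specialize to the Cohen--Macaulay case.

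For the upper bound, I would fix an arbitrary system of parameters $y_1, \ldots, y_d$ of $R$ and set $J := (y_1, \ldots, y_d)$. Since $J \subseteq \mm$ we have $J\qpot \subseteq \mm\qpot$, so the surjection $R/J\qpot \twoheadrightarrow R/\mm\qpot$ yields $\HKF(R,q) \leq \lambda_R(R/J\qpot)$. The core step is the inequality $\lambda_R(R/J\qpot) \leq q^d\cdot\lambda_R(R/J)$, which I would obtain by a standard filtration argument: filter $R/(y_1^q, \ldots, y_d^q)$ by the submodules $(y_1^i, y_2^q, \ldots, y_d^q)/(y_1^q, \ldots, y_d^q)$ for $i = 0, \ldots, q$. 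Each of the $q$ successive quotients is cyclic and annihilated by $(y_1, y_2^q, \ldots, y_d^q)$, so its length is at most $\lambda_R(R/(y_1, y_2^q, \ldots, y_d^q))$; iterating on $y_2, \ldots, y_d$ produces the desired bound. Taking the minimum over all systems of parameters gives the stated upper bound.

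When $R$ is Cohen--Macaulay, any system of parameters is a regular sequence, and by permutability of regular sequences the successive quotients in the filtration are actually isomorphic to $R/(y_1, y_2^q, \ldots, y_d^q)$ (one uses that $y_1$ is a non-zerodivisor on $R/(y_2^q, \ldots, y_d^q)$). Hence the above inequality becomes an equality $\lambda_R(R/J\qpot) = q^d\cdot\lambda_R(R/J)$. Since $k$ is algebraically closed, hence infinite, the maximal ideal $\mm$ admits a minimal reduction $J$, which is a parameter ideal satisfying $\lambda_R(R/J) = \Te(J) = \Te(\mm) = \Te(R)$ by the standard multiplicity theory for Cohen--Macaulay rings. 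Substituting delivers $\HKF(R,q) \leq \Te(R)\cdot q^d$.

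For the lower bound $\HKF(R,q) \geq q^d$, the plan is to reinterpret $\HKF(R,q)$ as the minimal number of generators of the Frobenius pushforward $\TF^e_*(R)$, namely $R$ regarded as an $R$-module via the $e$-th iterated Frobenius. One has the canonical identification $\TF^e_*(R)/\mm\TF^e_*(R) \cong R/\mm\qpot$, so $\HKF(R,q) = \mu_R(\TF^e_*(R))$. Completeness together with perfectness of $k$ ensures that $R$ is $F$-finite (via the Cohen structure theorem), so $\TF^e_*(R)$ is a finitely generated $R$-module. After reducing to the domain case by passing to $R/\pp$ for a minimal prime $\pp$ of maximal dimension (the surjection $R \twoheadrightarrow R/\pp$ only decreases the Hilbert--Kunz function), the generic rank of $\TF^e_*(R)$ equals $[\mathrm{Frac}(R) : \mathrm{Frac}(R)^q]$. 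Since $\mathrm{Frac}(R)$ is a finitely generated field extension of the perfect field $k$ of transcendence degree $d$, this degree equals $q^d$. The elementary inequality $\mu_R(M) \geq \mathrm{rank}_R(M)$ at the generic point then closes the argument.

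The main obstacle is the lower bound, specifically the clean identification of the generic rank of $\TF^e_*(R)$ with $q^d$, which relies on the theory of $p$-bases for finitely generated field extensions of perfect fields. The reduction to the domain case via a minimal prime of maximal dimension must also be handled with care to ensure the dimension is preserved; in the upper bound the subtle point is the identification, via the theory of minimal reductions, of $\min_J\lambda_R(R/J)$ over parameter ideals with the Hilbert--Samuel multiplicity $\Te(R)$.
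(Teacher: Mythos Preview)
Your argument is correct. The paper itself does not give a proof but simply cites \cite[Proposition 3.2]{kunz1}, so there is no in-paper argument to compare against; your approach is in fact a modern rephrasing of Kunz's original proof. The upper bound via the parameter-ideal filtration and the Cohen--Macaulay refinement via minimal reductions are exactly as in Kunz, while your treatment of the lower bound through $\mu_R(\TF^e_*R)$ and the generic rank $[K:K^q]=q^d$ is the now-standard reformulation of Kunz's use of the finiteness of $R$ over $R^q$ combined with Cohen's structure theorem. One small remark: the reduction to a domain via a minimal prime of maximal dimension is fine as stated, but you could equally well avoid it by noting that the generic rank of $\TF^e_*R$ at any minimal prime $\pp$ of maximal dimension is already $q^d$, and $\mu_R(M)\geq \dim_{\kappa(\pp)} M_\pp$ for any such $\pp$.
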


\begin{proof}
See \cite[Proposition 3.2]{kunz1}.
\end{proof}

\begin{thm}[Watanabe, Yoshida / Huneke, Yao]\label{watyorelhkm}
Let $(R,\mm)$ be a local ring of characteristic $p>0$. Let $I\subseteq J$ be ideals, where $I$ is $\mm$-primary (and $J=R$ is allowed). Then
$$\HKF(I,R,q)\leq\HKF(J,R,q)+\lambda_R(J/I)\cdot\HKF(R,q).$$
In particular, for $J=R$ one has $\HKF(I,R,q)\leq\lambda_R(R/I)\cdot\HKF(R,q)$.
\end{thm}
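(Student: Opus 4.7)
The plan is to reduce the inequality to a comparison of Frobenius powers and then exploit a filtration of $J/I$. First I would use the inclusion $I^{[q]}\subseteq J^{[q]}$ to obtain the short exact sequence
$$0\lra J^{[q]}/I^{[q]}\lra R/I^{[q]}\lra R/J^{[q]}\lra 0,$$
which gives $\HKF(I,R,q)=\HKF(J,R,q)+\lambda_R\bigl(J^{[q]}/I^{[q]}\bigr)$. Thus it suffices to prove
$$\lambda_R\bigl(J^{[q]}/I^{[q]}\bigr)\leq\lambda_R(J/I)\cdot\HKF(R,q).$$

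Since $I$ is $\mm$-primary, $J/I$ has finite length $n:=\lambda_R(J/I)$, so I would choose a composition series
$$I=I_0\subsetneq I_1\subsetneq\ldots\subsetneq I_n=J$$
with $I_k/I_{k-1}\cong R/\mm$. Writing $I_k=I_{k-1}+(f_k)$, the condition on the quotients translates to $\mm\cdot f_k\subseteq I_{k-1}$. Because $\TF^e$ is a ring homomorphism, the Frobenius power of an ideal is generated by $q$-th powers of any generating set, so $I_k^{[q]}=I_{k-1}^{[q]}+(f_k^q)$, and consequently
$$I_k^{[q]}/I_{k-1}^{[q]}\cong R/\bigl(I_{k-1}^{[q]}:f_k^q\bigr).$$
From $\mm f_k\subseteq I_{k-1}$ we get $\mm^{[q]}f_k^q=(\mm f_k)^{[q]}\subseteq I_{k-1}^{[q]}$, so $\mm^{[q]}\subseteq(I_{k-1}^{[q]}:f_k^q)$. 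Hence there is a surjection $R/\mm^{[q]}\twoheadrightarrow I_k^{[q]}/I_{k-1}^{[q]}$, giving
$$\lambda_R\bigl(I_k^{[q]}/I_{k-1}^{[q]}\bigr)\leq\lambda_R(R/\mm^{[q]})=\HKF(R,q).$$

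Summing the telescoping series of lengths, I obtain $\lambda_R(J^{[q]}/I^{[q]})\leq n\cdot\HKF(R,q)=\lambda_R(J/I)\cdot\HKF(R,q)$, which establishes the first inequality. The special case $J=R$ is immediate by observing $\HKF(R,R,q)=\lambda_R(R/R^{[q]})=0$ and $\lambda_R(R/I)=\lambda_R(J/I)$.

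I do not anticipate serious difficulties: the one point that needs care is the identity $I_k^{[q]}=I_{k-1}^{[q]}+(f_k^q)$ (stability of Frobenius powers under generating sets) and the compatibility $(\mm f_k)^{[q]}=\mm^{[q]}f_k^q$. Both follow directly from the fact that $(ab)^q=a^qb^q$ in characteristic $p$ together with the characterization of $I^{[q]}$ as the ideal generated by $\{x^q:x\in I\}$; once these are in hand, the composition-series argument proceeds without further complications.
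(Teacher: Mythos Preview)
Your argument is correct and is precisely the standard composition-series proof that the paper defers to by citing \cite[Lemma~4.2~(2)]{watyo1} and \cite[Lemma~2.1, Corollary~2.2~(1)]{hkm1reg} rather than proving the statement directly. The key steps you identify---the telescoping via a composition series of $J/I$, the identity $I_k^{[q]}=I_{k-1}^{[q]}+(f_k^q)$, and the inclusion $\mm^{[q]}\subseteq(I_{k-1}^{[q]}:f_k^q)$ coming from $(\mm f_k)^{[q]}=\mm^{[q]}f_k^q\subseteq I_{k-1}^{[q]}$---are exactly what those references do.
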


\begin{proof} See \cite[Lemma 4.2 (2)]{watyo1} for the analogue statement about the Hilbert-Kunz multiplicities and \cite[Lemma 2.1, Corollary 2.2 (1)]{hkm1reg} for the general case. \end{proof}

The next theorem shows how Hilbert-Kunz multiplicities behave under module-finite extensions of local domains.

\begin{thm}[Watanabe, Yoshida]\label{watyorank}
Let $(R,\mm)\subseteq (S,\nn)$ be an extension of local domains of positive characteristic, where $S$ is a finite $R$-module. Let $I$ be an $\mm$-primary ideal of $R$. Then we have 
$$\HKM(I,R)=\frac{\HKM(IS,S)\cdot\left[S/\nn:R/\mm\right]}{\left[Q(S):Q(R)\right]},$$
where $Q(\_)$ denotes the field of fractions.
\end{thm}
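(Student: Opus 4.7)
The plan is to compute $\lambda_R(S/(IS)\qpot)$ in two different ways and compare the leading asymptotics. Before starting, observe that the finite extension of local domains $R\subseteq S$ satisfies $\Dim(S) = \Dim(R) =: d$ by going-up, that $IS$ is $\nn$-primary, that $[S/\nn : R/\mm]$ is finite (since $S/\nn$ is a quotient of the finitely generated $R/\mm$-module $S/\mm S$), and that $(IS)\qpot = I\qpot S$ as ideals of $S$.

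First, I would apply Proposition \ref{proplength}(iii) to the $S$-module $N := S/(IS)\qpot$, whose $S$-length is finite because $(IS)\qpot$ is $\nn$-primary:
\begin{align*}
\lambda_R\bigl(S/(IS)\qpot\bigr) &= \lambda_S\bigl(S/(IS)\qpot\bigr)\cdot [S/\nn : R/\mm]\\
&= \HKF(IS,S,q)\cdot [S/\nn : R/\mm].
\end{align*}

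For a second expression, recall that $S$ is a torsion-free, finitely generated $R$-module whose rank equals $r := [Q(S) : Q(R)]$, since $S\otimes_R Q(R) = Q(S)$ for a domain $S$ finite over the domain $R$. Choosing elements $s_1,\ldots,s_r\in S$ that form a $Q(R)$-basis of $Q(S)$, I obtain an injection $R^r\hookrightarrow S$ which extends to a short exact sequence
$$0\lra R^r\lra S\lra T\lra 0$$
of finitely generated $R$-modules, in which $T$ is torsion and hence of Krull dimension strictly less than $d$. Theorem \ref{thmhkmexists}(ii) then yields
$$\lambda_R(S/I\qpot S) = r\cdot\HKF(I,R,q) + \lambda_R(T/I\qpot T) + O(q^{d-1}),$$
and Theorem \ref{thmhkmexists}(i), combined with the vanishing $\HKM(I,T) = 0$ for modules of dimension below $\Dim(R)$, reduces the middle summand to $O(q^{d-1})$.

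Equating the two expressions for $\lambda_R(S/(IS)\qpot) = \lambda_R(S/I\qpot S)$, dividing by $q^d$ and letting $e\to\infty$ produces
$$\HKM(IS,S)\cdot [S/\nn : R/\mm] = [Q(S):Q(R)]\cdot\HKM(I,R),$$
which is the claimed formula after dividing by $[Q(S):Q(R)]$. The step requiring the most care is controlling the contribution of the torsion cokernel $T$: the whole argument hinges on Theorem \ref{thmhkmexists}(i) providing a \emph{uniform} $O(q^{d-1})$ error bound for modules of Krull dimension smaller than $\Dim(R)$, since that is what prevents $T$ from contaminating the $q^d$-coefficient and makes the identification of $\HKM(I,R)$ as the leading coefficient of $\tfrac{1}{r}\lambda_R(S/I\qpot S)$ rigorous.
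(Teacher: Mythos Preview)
Your proof is correct and reproduces the standard argument due to Watanabe and Yoshida; the paper itself does not give a proof but simply refers to \cite[Theorem 2.7]{watyo1} and \cite[Theorem 1.6]{watyo3dim}, whose proofs proceed exactly along the lines you describe (compute $\lambda_R(S/I\qpot S)$ via the residue-field degree on one side and via a short exact sequence $0\to R^r\to S\to T\to 0$ on the other, then compare leading asymptotics).
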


\begin{proof}
See \cite[Theorem 2.7]{watyo1} resp. \cite[Theorem 1.6]{watyo3dim}.
\end{proof}

\begin{rem}\label{hkfflat}
Using Proposition \ref{proplength}, it follows easily that Theorem \ref{watyorank} holds even for the Hilbert-Kunz functions under the additional 
assumptions that $R$ is an $R/\mm$-algebra and that the extension $R\subseteq S$ is flat. Explicitly, we have (the numbers refer to Proposition 
\ref{proplength})

\begin{align*}
\lambda_R\left(R/I\qpot\right) &\stackrel{(ii)}{=} \frac{\lambda_S\left(S/(IS)\qpot\right)}{\lambda_S(S/\mm S)}\\
					&\stackrel{(iii)}{=} \frac{\lambda_S\left(S/(IS)\qpot\right)\cdot \left[S/\nn:R/\mm\right]}{\lambda_R(S/\mm S)}\\
					&\stackrel{(i)}{=} \frac{\lambda_S\left(S/(IS)\qpot\right)\cdot \left[S/\nn:R/\mm\right]}{\Dim_k(S/\mm S)}.
\end{align*}

The denominator is the same as in the theorem, since $$\Dim_k(S/\mm S)=\Dim_{Q(R)}(Q(R)\otimes_kk\otimes_RS)=\Dim_{Q(R)}Q(S).$$
\end{rem}

Using Theorem \ref{watyorank} Brenner reproved a theorem in invariant theory of Smith.

\begin{thm}[Smith / Brenner]
Let $B:=k[X_1,\ldots,X_n]$ with an algebraically closed field $k$ of positive characteristic $p$. Let $G\subsetneq\GL(n,k)$ be a finite subgroup and consider its 
natural linear action on $B$. Let $A:=B^G$ and $\mm:=A_+$. We set $h:=\Dim_k(B/\mm B)$, where $\mm B$ is the Hilbert ideal. Let $R$ be one of the rings 
$A_{\mm}$ and $\widehat{A_{\mm}}$. Then the following statements hold.

\begin{enumerate}
 \item $h=|G|\cdot\HKM(R)\geq |G|.$
 \item Equality holds if and only if $A$ is a polynomial ring.
\end{enumerate}
\end{thm}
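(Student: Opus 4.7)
The plan is to deduce both parts from Theorem \ref{watyorank} applied to the module-finite extension $A\subseteq B$, together with Theorem \ref{hkm1meansreg}.

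First I would set up the extension carefully. Since $G$ is finite, $B$ is a finitely generated $A$-module, and since $G$ acts linearly, the origin is $G$-fixed; consequently $B_+$ is the only prime of $B$ lying over $\mm = A_+$. Therefore $B$ localized at $B_+$ gives a genuinely local domain $S := B_{B_+}$, and $\mm S$ is an $\nn$-primary ideal where $\nn := B_+ B_{B_+}$. Moreover, $S/\nn = k = R/\mm$, so the residue degree in Theorem \ref{watyorank} is $1$, and $S/\mm S \cong B/\mm B$ as $k$-algebras, giving $\lambda_S(S/\mm S) = h$. By Artin's theorem on fixed fields (applied to the faithful $G$-action on $Q(B)=k(X_1,\ldots,X_n)$), we have $[Q(S):Q(R)] = [Q(B):Q(A)] = |G|$.

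Next I would compute $\HKM(\mm S, S)$. Since $S$ is a localization of the polynomial ring $B$, it is regular, and the remark immediately before Example \ref{exahkm1} (or Theorem \ref{kunzthm}) yields
\[
\HKM(\mm S, S) \;=\; \lambda_S(S/\mm S) \;=\; h.
\]
Applying Theorem \ref{watyorank} to the extension $R = A_\mm \subseteq S = B_{B_+}$ with ideal $I = \mm$ then gives
\[
\HKM(R) \;=\; \frac{\HKM(\mm S,S)\cdot [S/\nn : R/\mm]}{[Q(S):Q(R)]} \;=\; \frac{h}{|G|}.
\]
Since $\HKM$ is invariant under completion, the same formula holds with $R = \widehat{A_\mm}$. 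The inequality $\HKM(R)\geq 1$ comes from the general bound $\HKF(R,q)\geq q^d$ (which was recorded earlier), giving $h = |G|\cdot\HKM(R)\geq |G|$.

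For part (ii), equality $h = |G|$ is equivalent to $\HKM(R) = 1$. Since $R$ is a domain, hence unmixed, Theorem \ref{hkm1meansreg} forces $R$ to be regular, i.e., $A_\mm$ is regular (and likewise $\widehat{A_\mm}$). A standard graded argument then upgrades this: a positively-graded, finitely generated $k$-algebra that is regular at its irrelevant maximal ideal must be a polynomial ring (lift a regular system of parameters of $A_\mm$ to homogeneous generators of $\mm$ and use that $\dim A = \dim A_\mm$ equals the number of such generators). Conversely, if $A$ is a polynomial ring, then $A_\mm$ is regular and $\HKM(R)=1$. The main subtlety I expect is the justification that $B_+$ is the unique prime of $B$ over $\mm$, which is where the linearity of the $G$-action (and hence the $G$-fixedness of the origin) enters essentially; once that is in place, the rest reduces to a clean bookkeeping application of Theorem \ref{watyorank}.
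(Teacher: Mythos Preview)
Your proposal is correct and follows exactly the route the paper indicates: the paper does not spell out a proof but introduces the theorem with the sentence ``Using Theorem~\ref{watyorank} Brenner reproved a theorem in invariant theory of Smith,'' and then simply cites \cite{larry} and \cite{invariant}. Your argument---applying Theorem~\ref{watyorank} to the module-finite extension $A_\mm\subseteq B_{B_+}$, using regularity of $B$ to get $\HKM(\mm S,S)=h$, and invoking Theorem~\ref{hkm1meansreg} for the equality case---is precisely the intended alternative proof.
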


\begin{proof}
The original proof was given in \cite{larry} and the alternative proof can be found in \cite{invariant}.
\end{proof}

A second application of Theorem \ref{watyorank} is the computation of the Hilbert-Kunz multiplicities of surface rings with isolated singularities of type ADE.

\newpage

\begin{defi}
Let $R:=k[X,Y,Z]/(F)$, where $k$ is a field. We say that $R$ and $\widehat{R}$ are \textit{surface rings of type} 

\begin{itemize}
 \item[$A_n$] if $F=X^{n+1}+YZ$ and $n\geq 0$,
 \item[$D_n$] if $F=X^2+Y^{n-1}+YZ^2$ and $n\geq 4$,
 \item[$E_6$] if $F=X^2+Y^3+Z^4$,
 \item[$E_7$] if $F=X^2+Y^3+YZ^3$,
 \item[$E_8$] if $F=X^2+Y^3+Z^5$.
\end{itemize}

In any case a surface ring of type ADE has in almost all characteristics an isolated singularity at the origin and we refer to these singularities as 
\textit{singularities of type ADE}.
\end{defi}

\begin{rem}
Recall that the surface rings $\C[X,Y,Z]/(F)$ of type ADE appear as rings of invariants of $\C[x,y]$ by the actions of the finite subgroups of $\SL_2(\C)$. 
The groups corresponding to the singularities of type $A_n$, $D_n$, $E_6$, $E_7$ resp. $E_8$ are the cylic group with $n+1$ elements, the binary 
dihedral group of order $4n-8$, the binary tetrahedral group of order 24, the binary octahedral group of order 48 resp. the binary icosahedral group 
of order 120. If $k$ is algebraically closed of characteristic $p>0$ the groups above can be viewed as finite subgroups of $\SL_2(k)$, provided their order 
is invertible in $k$. In these cases $k[X,Y,Z]/(F)$ is again the ring of invariants of $k[x,y]$ under the action of the corresponding group 
(cf. \cite[Chapter 6, \S 2]{leuwie}).
\end{rem}

Now, we show how one might use Theorem \ref{watyorank} to compute the Hilbert-Kunz multiplicities of surface rings of type ADE. See \cite[Theorem 5.4]{watyo1} 
for a stronger formulation of the next theorem.

\begin{thm}[Watanabe, Yoshida]\label{watyogroup}
Let $R$ be a surface ring of type ADE. Assume that $k$ is algebraically closed and that the order of the corresponding group $G$ is invertible modulo 
$\chara(k)>0$. Then 
$$\HKM(R)=2-\frac{1}{|G|}.$$ 
\end{thm}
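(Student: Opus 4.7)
The plan is to realise $R$ as the invariant ring of the polynomial ring $S := k[x,y]$ under the corresponding finite subgroup $G \subset \SL_2(k)$ and then apply Theorem \ref{watyorank} to the module-finite extension $R \subseteq S$. Both rings share the residue field $k$ at their respective irrelevant ideals, so $[S/S_+ : R/R_+] = 1$, and since $|G|$ is invertible in $k$ the induced extension of fraction fields $Q(R) \subseteq Q(S)$ is Galois with group $G$, giving $[Q(S):Q(R)] = |G|$. Passing to the localisations at the respective irrelevant ideals does not change Hilbert-Kunz multiplicities, so Theorem \ref{watyorank} yields
$$\HKM(R) \;=\; \frac{\HKM(R_+ S,\, S)}{|G|}.$$

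Next, $S_{S_+}$ is a two-dimensional regular local ring and $R_+ S$ is $S_+$-primary (because $R \hookrightarrow S$ is finite), so the consequence of Theorem \ref{kunzthm} stating that $\HKM(I,S) = \lambda_S(S/I)$ for any primary ideal $I$ in a regular local ring reduces the assertion to the combinatorial identity
$$h \;:=\; \Dim_k\bigl(k[x,y]/(X,Y,Z)\bigr) \;=\; 2|G| - 1,$$
where $X$, $Y$, $Z$ denote the fundamental invariants generating $R$ inside $S$.

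The main obstacle is this last identity, which I would verify case by case using the classical explicit descriptions of $S^G$. For type $A_n$ it is transparent: the invariants can be chosen as $xy$, $x^{n+1}$, $y^{n+1}$, so that $k[x,y]/(xy,x^{n+1},y^{n+1})$ has the monomial $k$-basis $\{1,x,\ldots,x^n,y,\ldots,y^n\}$, giving $h = 2n+1 = 2|G|-1$ because $|G| = n+1$. For the types $D_n$, $E_6$, $E_7$, $E_8$ one plugs the corresponding explicit Klein generators (see e.g.\ \cite[Chapter 6, \S 2]{leuwie}) into the formula and computes $h$ either by a direct monomial count in $k[x,y]/(X,Y,Z)$, or, more uniformly, by extracting the Hilbert series of $k[x,y]/(X,Y,Z)$ from the Koszul complex on $(X,Y,Z)$ together with the single non-trivial syzygy coming from the defining relation $F(X,Y,Z)=0$ (the three invariants cannot form a regular sequence in a ring of dimension two, and this relation is the only obstruction in each case). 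Evaluating the resulting series at $t=1$ gives $h = 2|G|-1$ in each of the remaining four cases, which finishes the proof.
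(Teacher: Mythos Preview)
Your overall strategy---realising $R$ as $k[x,y]^G$ and applying Theorem \ref{watyorank} to reduce to the colength computation $h=\dim_k k[x,y]/(X,Y,Z)=2|G|-1$---is exactly the paper's approach, and your treatment of the $A_n$ case is fine.

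The gap is in your proposed verification of $h=2|G|-1$ for the $D$ and $E$ types. Your ``Koszul complex plus the single syzygy from $F(X,Y,Z)=0$'' idea does not give the minimal free resolution of $k[x,y]/(X,Y,Z)$. Over the two-dimensional regular ring $k[x,y]$ the syzygy module $\Syz(X,Y,Z)$ is free of rank \emph{two}, and in none of the DE cases is the syzygy coming from the defining relation a minimal generator. For instance, for $E_8$ the invariants have degrees $30,20,12$; the relation syzygy $(X,Y^2,Z^4)$ has total degree $60$, whereas the two minimal syzygies both have degree $31$ (they arise because $\bar X$ lies in the socle of $k[x,y]/(Y,Z)$, so $x\bar X=y\bar X=0$). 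Without the correct pair of syzygy degrees your Hilbert-series computation cannot be completed, and the Euler characteristic of the Koszul complex alone contributes $0$ to $\dim_k Q$, so nothing is gained there. Your fallback option (a direct monomial count case by case) would of course work but is not what you wrote down.

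The paper's argument for the DE cases is short and uniform: one observes that in each case $X^2$ is a polynomial in $Y$ and $Z$, that $\deg Y\cdot\deg Z=2|G|$, and that $\deg X=\deg Y+\deg Z-2$. Hence $Y,Z$ is a regular sequence in $k[x,y]$, the Artinian complete intersection $k[x,y]/(Y,Z)$ has length $\deg Y\cdot\deg Z=2|G|$ and one-dimensional socle in degree $\deg X$, and the nonzero image $\bar X$ spans that socle. Therefore
\[
\lambda\bigl(k[x,y]/(X,Y,Z)\bigr)=\lambda\bigl(k[x,y]/(Y,Z)\bigr)-1=2|G|-1,
\]
which is exactly the identity you were after.
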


\begin{proof}
We prove the graded version of the statement. The proof carries over to the complete situation. Let $R$ be of type $A_n$. Then Example \ref{genhkfan} 
shows $\HKM(R)=2-\tfrac{1}{n+1}$. Let $R$ be of type $D_n$ or $E$. Denote the corresponding group by $G$ and let $k[x,y]$ be the ring on which $G$ acts. 
Then the extension $Q(k[x,y]):Q(R)$ has degree $|G|$ (cf. \cite[Proposition 5.4]{leuwie}). The ring $R=k[x,y]^G$ is generated as $k$-algebra by three 
invariant homogeneous polynomials $X$, $Y$, $Z$ such that $X^2$ is a polynomial in $Y$ and $Z$ and such that $\Deg(Y)\cdot\Deg(Z)=2\cdot|G|$ holds 
(cf. Chapter \ref{pbmaxcm}). Since $k[x,y]$ is regular, we have $\HKM(I,k[x,y])=\lambda(k[x,y]/I)$ for every $(x,y)$-primary ideal $I$. This shows
$$\HKM((X,Y,Z),k[x,y])=\lambda(k[x,y]/(X,Y,Z))=\lambda(k[x,y]/(Y,Z))-1=2\cdot|G|-1.$$
By Theorem \ref{watyorank} we have
$$\HKM(R)=\frac{\HKM((X,Y,Z),k[x,y])}{|G|}=2-\frac{1}{|G|}.$$
\end{proof}

Keeping the assumptions that $R$ is a Cohen-Macaulay local ring of dimension two, one can prove the inequality
$$\HKM(R)\geq\frac{r+2}{2r+2}\cdot\Te(R),$$
where $r$ denotes the number of generators of $\mm/J$ for a minimal reduction $J$ of $\mm$ (cf. \cite[Lemma 5.5]{watyo1}). 
In \cite{watyo2} the authors study the numbers $\HKM(I^n,R)$ for $\mm$-primary ideals $I$. In particular, they show that for a local, two-dimensional, 
Cohen-Macaulay ring with algebraically closed residue class field one has 
$$\HKM(\mm^n,R)\geq \frac{\Te(R)}{2}n^2+\frac{n}{2}$$
and that this minimal bound is achieved for all $n$ if and only if $\HKM(R)=\frac{\Te(R)+1}{2}$ (cf. \cite[Theorem 2.5]{watyo2}). Moreover, they show 
that a local, two-dimensional, Cohen-Macaulay ring $R$ with algebraically closed residue class field has minimal Hilbert-Kunz multiplicity 
($n=1$ in the above inequality) if and only if the associated graded ring of $R$ is isomorphic to the $\Te(R)$-th Veronese subring of $k[X,Y]$ (cf. \cite[Theorem 3.1]{watyo2}).

One big class of rings in which Hilbert-Kunz functions and multiplicities are studied is the class of rings that arise from combinatorial problems. 
For example monoid and toric rings are studied in \cite{eto1} and \cite{watanabe}. An explicit treatment of the two-dimensional toric case is done 
in \cite{choi}. In \cite{watyo1} and \cite{eto2} the authors focus on Rees algebras. In \cite{eto3} (see also \cite{eto2} and \cite{bcp}), Eto computes 
the Hilbert-Kunz multiplicity of Segre products of polynomial rings.
The (generalized) Hilbert-Kunz functions of $2\times 2$ determinantal rings (in $m\cdot n$ many variables) are studied in \cite{lance}. They obtain in the case $m=2$ the following theorem (cf. \cite[Theorem 4.4, Corollary 4.5]{lance}).

\begin{thm}[Miller, Swanson]
Let $k$ be a field and 
$$R:=k[X_{i,j}|i=1,2;j=1,\ldots,n]/I_2,$$ 
where $I_2$ is the ideal generated by the two-minors of the matrix $(X_{ij})_{ij}$. For any natural number $m\in\N$ we obtain

\begin{enumerate}
\item $$\HKF(R,m)=\frac{nm^{n+1}-(n-2)m^n}{2}+n\binom{n+m-1}{n+1}$$
\item $$\HKM(R)=\frac{n}{2}+\frac{n}{(n+1)!}$$
\end{enumerate}
\end{thm}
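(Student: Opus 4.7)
The plan is to exploit the classical description of $R$ as the Segre product $k[s,t]\,\#\,k[y_1,\ldots,y_n]$ via the embedding $X_{1j}\mapsto sy_j$, $X_{2j}\mapsto ty_j$, which identifies $R$ with the subring of $k[s,t,y_1,\ldots,y_n]$ having a $k$-basis of monomials $s^a t^b y_1^{c_1}\cdots y_n^{c_n}$ with $a+b=c_1+\cdots+c_n$. The generators of $J=(X_{ij}^m)$ become $s^m y_j^m$ and $t^m y_j^m$, so an elementary divisibility check among Segre-admissible monomials shows that a basis monomial $\mu=s^a t^b y^{\vec c}$ lies in $J$ if and only if $\max(a,b)\geq m$ \emph{and} $\max_j c_j\geq m$. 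Hence $\Dim_k(R/J)$ equals the number of basis monomials in $U\cup V$, where $U=\{\max(a,b)<m\}$ and $V=\{\max_j c_j<m\}$, and we split this count by inclusion-exclusion.

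First I would compute $|V|$ directly: for each $\vec c\in\{0,\ldots,m-1\}^n$, the number of $(a,b)\in\N^2$ with $a+b=\sum c_j$ is $\sum c_j+1$, so
\begin{equation*}
|V|=m^n+\sum_{\vec c}\sum_j c_j=m^n+\tfrac{n\,m^n(m-1)}{2}=\frac{nm^{n+1}-(n-2)m^n}{2},
\end{equation*}
which is exactly the first summand in the claimed formula. It remains to show $|U|-|U\cap V|=n\binom{n+m-1}{n+1}$.

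The crucial observation here is that any $\mu\in U$ satisfies $\sum c_j=a+b\leq 2m-2$, so \emph{at most one} index $j$ can have $c_j\geq m$. Consequently, writing $W_j=\{\mu\in U:c_j\geq m\}$, the sets $W_j$ are pairwise disjoint and by symmetry $|U\setminus V|=n\,|W_1|$. Substituting $c'_1=c_1-m$ reduces $|W_1|$ to
\begin{equation*}
|W_1|=\sum_{s=0}^{m-2}(m-1-s)\binom{s+n-1}{n-1},
\end{equation*}
which collapses via the hockey-stick identity and the relation $s\binom{s+n-1}{n-1}=n\binom{s+n-1}{n}$ to
\begin{equation*}
(m-1)\binom{m+n-2}{n}-n\binom{m+n-2}{n+1}=\binom{n+m-1}{n+1},
\end{equation*}
the last equality being a one-line Pascal manipulation. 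Combining the two pieces yields (i), and (ii) follows by reading off the coefficient of $m^{n+1}$, since $\Dim(R)=n+1$ forces $\HKM(R)=\tfrac{n}{2}+\tfrac{n}{(n+1)!}$.

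The only real obstacle is the binomial identity for $|W_1|$, but the disjointness of the $W_j$ (which rests entirely on the Segre relation $a+b=\sum c_j$) makes the inclusion-exclusion essentially trivial; without that disjointness one would face a more painful alternating sum. A small sanity check for $n=2$, $m=2$ (giving $8+2=10$, which matches a direct monomial count in $R/J$) is reassuring and serves as a useful tie-point while carrying out the algebra.
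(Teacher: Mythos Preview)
Your argument is correct. Note, however, that the paper does not give its own proof of this statement: it is quoted as a result of Miller and Swanson with a reference to \cite[Theorem 4.4, Corollary 4.5]{lance}, so there is nothing in the paper to compare against. Your Segre-product description of $R$, the characterisation of $J$-membership, and the disjointness of the $W_j$ (forced by $\sum c_j=a+b\le 2m-2$) are all sound, and the binomial identity reduces, via Pascal's rule, to the elementary equality $(m-2)\binom{m+n-2}{n}=(n+1)\binom{m+n-2}{n+1}$.
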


\begin{rem}
The above result was extended in \cite{swanson} to the case 
$$R:=k[X_{i,j}|i=1,\ldots,m;j=1,\ldots,n]/I_2.$$
\end{rem}

The structure of (generalized) Hilbert-Kunz functions for monomial rings and binomial hypersurfaces is studied in \cite{conca}.

\begin{thm}[Conca]
Let $I$ be a monomial ideal in $S=k[X_1,\ldots,X_n]$, where $k$ is a field and let $R=S/I$. Then there is a polynomial $P_R(Y)\in\Z[Y]$ of degree $\Dim(R)$ with leading coefficient $\Te(R)$ such that $\HKF(R,m)=P_R(m)$ for all $m\in\N$ that are at least the highest exponent appearing in the generators of $I$. 
\end{thm}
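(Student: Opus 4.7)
The plan is to count monomials directly. Write $I=(u_1,\ldots,u_r)$ with $u_j=X_1^{e_{j,1}}\cdots X_n^{e_{j,n}}$, so that $R/(X_1^m,\ldots,X_n^m)R$ has a $k$-basis consisting of the monomials $X^a$ with $0\leq a_i<m$ and $X^a\notin I$. Applying inclusion--exclusion to the divisibility events ``$u_j\mid X^a$'' gives
$$\HKF(R,m)=\sum_{T\subseteq\{1,\ldots,r\}}(-1)^{|T|}\cdot\#\left\{a\in[0,m)^n:u_T\mid X^a\right\},$$
where $u_T:=\operatorname{lcm}(u_j:j\in T)=X^{b(T)}$ with $b_i(T)=\max_{j\in T}e_{j,i}$ (and $u_\emptyset=1$, $b_i(\emptyset)=0$). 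Once $m$ is at least the largest exponent occurring in any $u_j$, each count factors as $\prod_{i=1}^n(m-b_i(T))$, so
$$P_R(m):=\sum_T(-1)^{|T|}\prod_{i=1}^n(m-b_i(T))\in\Z[m],\qquad\deg P_R\leq n,$$
and $\HKF(R,m)=P_R(m)$ for all such $m$. This already establishes polynomiality and integrality of the coefficients; what remains is to identify $\deg P_R$ and its leading coefficient.

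Since $P_R$ depends only on the combinatorics of $I$ and not on $k$, I would specialize $k$ to a field of positive characteristic $p$ and exploit Theorem \ref{thmhkmexists}:
$$\frac{P_R(p^e)}{p^{e\Dim(R)}}=\frac{\HKF(R,p^e)}{p^{e\Dim(R)}}\longrightarrow\HKM(R)$$
as $e\to\infty$, with a strictly positive limit whenever $\Dim(R)>0$; the case $\Dim(R)=0$ is trivial, since then $\HKF(R,m)$ is eventually equal to $\lambda(R)=\Te(R)$. Because a polynomial's normalized growth tends to a finite positive value only when the exponent in the denominator matches its degree, it follows that $\deg P_R=\Dim(R)$ and that the leading coefficient equals $\HKM(R)$.

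Finally, I would show $\HKM(R)=\Te(R)$ for monomial quotients. The associativity formulas for both multiplicities give
$$\HKM(R)=\sum_{\pp}\HKM(\mm,R/\pp)\lambda_{R_{\pp}}(R_{\pp})\quad\text{and}\quad\Te(R)=\sum_{\pp}\Te(\mm,R/\pp)\lambda_{R_{\pp}}(R_{\pp}),$$
with both sums running over the minimal primes $\pp$ of $R$ satisfying $\Dim(R/\pp)=\Dim(R)$. For a monomial ideal every such $\pp$ has the form $\pp_J=(X_j:j\in J)$, and $R/\pp_J\cong k[X_i:i\notin J]$ is regular; by Theorem \ref{kunzthm} and its Hilbert--Samuel analogue, $\HKM(\mm,R/\pp_J)=1=\Te(\mm,R/\pp_J)$. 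The two sums therefore coincide.

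The main obstacle is the last step: the combinatorial expression for the leading coefficient of $P_R$ is opaque, and only the twin associativity formulas reveal that $\HKM(R)$ and $\Te(R)$ both reduce to the common length-theoretic sum indexed by the minimal monomial primes of maximal dimension (equivalently, by the minimum vertex covers of the exponent support hypergraph of $I$).
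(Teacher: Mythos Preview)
The paper itself does not prove this theorem; it simply refers to \cite[Theorem 2.1]{conca}. There is thus no in-paper argument to compare against.

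Your proof is correct. The inclusion--exclusion count gives $P_R\in\Z[Y]$ immediately; the characteristic specialization is legitimate because $P_R$, $\Dim(R)$, and $\Te(R)$ are all characteristic-independent for monomial quotients, and Monsky's theorem then forces $\deg P_R=\Dim(R)$ with leading coefficient $\HKM(R)$. The final step---reducing both $\HKM(R)$ and $\Te(R)$ via their respective associativity formulas to the same length-weighted sum over the regular quotients $R/\pp_J$---is a clean way to close the argument.
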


\begin{proof}
See \cite[Theorem 2.1]{conca}.
\end{proof}

\begin{thm}[Conca]
Let $R=k[X_1,\ldots,X_{n+m}]/(F)$, where $k$ is a field and 
$$F=X_1^{a_1}\cdot\ldots\cdot X_m^{a_m}-X_{m+1}^{b_{m+1}}\cdot\ldots\cdot X_{m+n}^{b_{m+n}}$$
is a homogeneous binomial with maximal appearing exponent $u$. Then there exists a rational polynomial $P_R(Y,Z)\in\Q[Y,Z]$ of degree $n+m-1$ with leading 
coefficient $C$ and an integer $\alpha\geq 0$ with
$$\HKF(R,l)=P_R(l,\epsilon)$$
for all $l\geq\alpha$ and $\epsilon\equiv l\text{ }(u)$ with $\epsilon\in\{0,\ldots,u-1\}$. Moreover, we have
$$C=\sum_{i=1}^m\sum_{j=1}^n(-1)^{i+j}s_{im}(a)s_{jn}(b)\frac{ij}{(i+j-1)u^{i+j-1}},$$
with $a=(a_1,\ldots,a_m)$, $b=(b_{m+1},\ldots,b_{m+n})$ and $s_{\alpha\beta}$ denotes the elementary symmetric polynomial of degree $\alpha$ in $\beta$ indeterminates.
\end{thm}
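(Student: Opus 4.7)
The approach is to compute $\HKF(R,l) = \Dim_k R/JR$ (with $J := (X_1^l,\ldots,X_{n+m}^l)$) as the dimension of the kernel of multiplication by $F$ on $S/J$, interpret this kernel combinatorially in terms of ``chains'' of monomials, and extract a quasi-polynomial formula from an explicit count. Since $F$ is a non-zero-divisor in $S := k[X_1,\ldots,X_{n+m}]$, the exact sequence $0 \to S(-d) \stackrel{\cdot F}{\to} S \to R \to 0$ applied to $S/J$ yields
\begin{equation*}
0 \longrightarrow (J:F)/J \longrightarrow S/J \stackrel{\cdot F}{\longrightarrow} S/J \longrightarrow R/JR \longrightarrow 0,
\end{equation*}
so $\HKF(R,l) = \Dim_k (J:F)/J$. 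I would study this kernel via the monomial basis $\{X^\delta : \delta \in \Box_l\}$ of $S/J$, with $\Box_l := \{0,\ldots,l-1\}^{n+m}$.

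Writing $F = X^a - X^b$ (with $a \in \N^m$ supported on the first $m$ variables and $b \in \N^n$ on the last $n$), a general element $g = \sum c_\delta X^\delta$ lies in the kernel iff for every $\mu \in \Box_l$ one has $c_{\mu-a}\cdot \mathbf{1}_{\mu\geq a} = c_{\mu-b}\cdot \mathbf{1}_{\mu\geq b}$ (with the convention $c_\nu = 0$ for $\nu \notin \Box_l$). This forces $c$ to be constant along the ``$(a-b)$-chains'' obtained by iterated translation $\delta \mapsto \delta + (a-b)$ inside $\Box_l$, together with additional vanishing whenever the chain meets the boundary of $\Box_l$ asymmetrically. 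A case analysis of the four possible block types shows that a chain supports a non-zero coefficient iff its backward endpoint (the base $(\alpha,\beta)$) is $\beta$-blocked, i.e., some $\beta_j \geq l-b_j$, and its forward endpoint is $\alpha$-blocked, i.e., some coordinate of $\alpha + K a$ reaches $l-a_i$. Writing $K_a(\alpha) := \Min_i \lfloor (l-1-\alpha_i)/a_i\rfloor$ and $K_b(\beta) := \Min_j \lfloor \beta_j/b_j\rfloor$, this forward condition is exactly $K_a(\alpha) \leq K_b(\beta)$.

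Grouping these chains by $k = K_a(\alpha)$, inclusion--exclusion for both $\alpha$ and $\beta$ yields
\begin{equation*}
\HKF(R,l) = \sum_{k\geq 0} \Bigl[\prod_{i=1}^m \Max(0,l-ka_i) - \prod_{i=1}^m \Max(0,l-(k+1)a_i)\Bigr]\Bigl[\prod_{j=1}^n \Max(0,l-kb_j) - \prod_{j=1}^n \Max(0,l-(k+1)b_j)\Bigr].
\end{equation*}
Write $l = qu+\epsilon$ with $\epsilon \in \{0,\ldots,u-1\}$. For $k \leq q-1$ one has $(k+1)u \leq l$, so all $\Max$'s can be dropped and the summand is genuinely polynomial in $(k,l)$. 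The boundary at $k=q$ supplies a correction that is itself polynomial in $(l,\epsilon)$ (each $\Max(0,l-(q+1)a_i)$ is either $0$, the constant $\epsilon$, or a linear expression in $l,\epsilon$, depending on whether $a_i$ equals $u$). Since the upper limit of the sum depends on $l$ only through $q = (l-\epsilon)/u$, the total is a polynomial $P_R(l,\epsilon) \in \Q[l,\epsilon]$ for all $l \geq \alpha$, where $\alpha$ is chosen so the boundary regime has stabilized.

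Finally the leading term is extracted. Expanding
\begin{equation*}
\prod_{i=1}^m (l-ka_i) - \prod_{i=1}^m (l-(k+1)a_i) = \sum_{i=1}^m (-1)^i\bigl[k^i - (k+1)^i\bigr]\, s_{i,m}(a)\, l^{m-i},
\end{equation*}
the $(i,j)$-term in the product of the two brackets has leading behavior $(-1)^{i+j} ij\, s_{i,m}(a)s_{j,n}(b)\, k^{i+j-2}\,l^{m+n-i-j}$; summation from $k=0$ to $q-1 \sim l/u$ replaces $k^{i+j-2}$ by $q^{i+j-1}/(i+j-1)$ and gives the $l^{n+m-1}$ contribution $(-1)^{i+j} s_{i,m}(a) s_{j,n}(b)\, ij/[(i+j-1)u^{i+j-1}]$. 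The boundary at $k=q$ contributes only to strictly lower $l$-degrees, because whenever $a_i = u$ (resp.\ $b_j = u$) the corresponding factor degenerates to $\epsilon$ (of $l$-degree $0$), and by definition of $u$ this collapse occurs at least once; hence the $l$-degree of the boundary is at most $n+m-2$. Summing the $(i,j)$-contributions then recovers the claimed formula for $C$ and shows $\Deg P_R = n+m-1$. The main obstacle is the combinatorial argument in Step~2: showing that precisely the chains with base $\beta$-blocked \emph{and} top $\alpha$-blocked support non-zero kernel coefficients. Once this asymmetric characterization is established, the summation and asymptotic analysis are routine, with the one remaining subtlety being the verification that the $k = q$ boundary term is of strictly lower $l$-degree than the bulk sum.
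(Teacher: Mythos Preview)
The paper does not prove this result; it simply cites Conca's original article. Your approach---computing $\dim_k (J:F)/J$ via the chain structure on the monomial lattice and then counting---is the natural one and is essentially Conca's. The chain characterization in Step~2 is correct: at an interior point of a chain both ``all $\alpha_i \geq a_i$'' and ``all $\beta_j \geq b_j$'' hold automatically, so no vanishing is forced; at the backward endpoint the constraint coming from $\mu = \delta + b$ kills $c_\delta$ unless some $\beta_j \geq l-b_j$, and dually at the forward endpoint. Your counting formula is then justified by the identity
\[
\bigl|\{\beta : K_b(\beta)\geq k,\ \exists j\colon \beta_j\geq l-b_j\}\bigr| \;=\; B_k - B_{k+1},
\]
which holds because the complementary condition ``all $\beta_j < l-b_j$'' together with $K_b(\beta)\geq k$ cuts out exactly $\prod_j \Max(0,\,l-(k{+}1)b_j) = B_{k+1}$.

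There is, however, a real slip in the boundary argument. Saying ``one factor collapses to $\epsilon$, hence the $l$-degree is at most $m+n-2$'' accounts for only one unit of degree drop. If $u$ is attained by some $a_i$ but by no $b_j$, your argument bounds the $\alpha$-bracket by $l$-degree $m-1$ but says nothing about the $\beta$-bracket, which a priori could have $l$-degree $n$, giving only $m+n-1$. The correct reason the boundary term has $l$-degree $\leq m+n-2$ is that \emph{each} bracket separately has $l$-degree one less than naive: if some $a_i = u$ the collapsed factor $\epsilon$ does it, while if no $a_i = u$ then (for $l$ large) the $\Max$'s drop and the bracket becomes $\prod_i(l-qa_i) - \prod_i(l-(q{+}1)a_i)$, whose leading $l^m$-terms cancel after substituting $q = (l-\epsilon)/u$ (both products have top term $l^m\prod_i(1-a_i/u)$). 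The same dichotomy applies to the $\beta$-bracket, so the product has $l$-degree at most $(m-1)+(n-1)$. With this correction your argument goes through. One further point you leave implicit: that $C\neq 0$ (so that the degree is exactly $n+m-1$) follows from $C = \HKM(R) > 0$.
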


\begin{proof}
See \cite[Theorem 3.1]{conca}.
\end{proof}

We now turn to the hypersurface case. Studying properties of the function $D_F:\N^n\ra \N$,
$$(a_1,\dots,a_n)\mapsto\Dim_k(k[X_1,\ldots,X_n]/(X_1^{a_1},\ldots,X^{a_n},X_1+\ldots+X_n)),$$
Han and Monsky developed in \cite{monskyhan} an
algorithm to compute Hilbert-Kunz functions of \textit{diagonal hypersurfaces} 
$$k[X_1,\ldots,X_n]/\left(X^{d_1}+\ldots+X^{d_n}\right)\text{ with }d_1,\ldots,d_n\in\N_{\geq 2}.$$
In particular, they obtain the following result on the structure of Hilbert-Kunz functions in this case.

\begin{thm}[Han, Monsky]
Let $R$ be a $d$-dimensional diagonal hypersurface over a field $k$ of positive characteristic $p$. Then the following holds.

\begin{enumerate}
 \item If $\chara(k)=2$ or $d=2$, we have 
      $$\HKF(R,p^e)=\HKM(R)\cdot p^{de}+\Delta_e,$$
      where $e\mapsto\Delta_e$ is eventually periodic.
 \item If $\chara(k)\geq 3$ and $d\geq 3$ there are integers $\mu\geq 1$ and $0\leq l\leq p^{(d-2)\mu}$ such that
      $$\HKF(R,p^e)=\HKM(R)\cdot p^{de}+\Delta_e,$$
      where $\Delta_{e+\mu}=l\cdot\Delta_e$ for $e\gg 0$.
\end{enumerate}

Moreover, $\Delta_e\leq 0$ in any case.
\end{thm}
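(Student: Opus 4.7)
The plan is to follow Han and Monsky's approach via the combinatorial function $D_F$ for the linear form $F = X_1+\ldots+X_n$, deriving the structure from a Frobenius-induced linear recursion.

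First I would reduce the diagonal case to the linear one. For $R = k[X_1,\ldots,X_n]/(X_1^{d_1}+\ldots+X_n^{d_n})$ and $q = p^e$, every monomial in $R/(X_1^q,\ldots,X_n^q)$ factors uniquely as $X_1^{r_1}\cdots X_n^{r_n} \cdot Y_1^{s_1}\cdots Y_n^{s_n}$ with $Y_i := X_i^{d_i}$ and $0 \leq r_i < d_i$, and the defining relation becomes $Y_1+\ldots+Y_n = 0$. Counting basis monomials yields
$$\HKF(R, q) = \sum_{0 \leq r_i < d_i} D_F\bigl(\lceil (q-r_1)/d_1\rceil, \ldots, \lceil (q-r_n)/d_n\rceil\bigr),$$
reducing the problem to the behaviour of $D_F$ under scaling of its arguments by $p$.

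Next I would exploit the Frobenius identity $(X_1+\ldots+X_n)^{p^e} \equiv X_1^{p^e}+\ldots+X_n^{p^e} \pmod{p}$. Following Han, this yields a recursion expressing $D_F(pa_1,\ldots,pa_n)$ in terms of $D_F(a_1,\ldots,a_n)$ plus a correction depending on the residues $a_i \bmod p$. Plugging this back into the sum, the defect $\Delta_e := \HKF(R,p^e)-\HKM(R)\cdot p^{(n-1)e}$ becomes governed by iterating a linear operator $T$ on the finite-dimensional $\Q$-vector space spanned by defect-contribution functions indexed by the relevant residue data. The integer $\mu$ is the order of the Frobenius permutation of these residue classes, and $l$ is the dominant eigenvalue of $T^{\mu}$; the bound $l \leq p^{(d-2)\mu}$ follows from tracking how the recurrence's degree scales as $p^{(n-3)\mu} = p^{(d-2)\mu}$.

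In the degenerate cases $\chara(k) = 2$ or $d = 2$ the operator $T$ has no growing component, so the recursion collapses to eventual periodicity: in characteristic two all intermediate multinomial coefficients appearing in the correction vanish, making the correction itself purely periodic, while for $d = 2$ (so $n = 3$) the defect lives in a one-dimensional space where Han's $\delta$-function is directly periodic under scaling by $p$. The non-positivity $\Delta_e \leq 0$ can then be checked by induction on $e$ using the recursion: the seed contribution is non-positive by a direct monomial-counting argument (the relation $X_1^{d_1}+\ldots+X_n^{d_n}=0$ only identifies monomials in $k[X_1,\ldots,X_n]/(X_1^q,\ldots,X_n^q)$, never creating new ones), and each step of the recursion preserves the sign. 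The main obstacle is establishing the sharp eigenvalue bound $l \leq p^{(d-2)\mu}$, which is the combinatorial heart of the Han-Monsky analysis of $D_F$ and demands careful tracking of how the defect space grows under iteration of $T$.
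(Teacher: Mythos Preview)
The paper does not give its own proof of this result; it is stated in the survey chapter and simply cites Han and Monsky's original paper (their Theorem 5.7 and Corollary 5.9) together with an alternative proof by Chiang and Hung. Your sketch is broadly in line with the actual Han--Monsky argument: the reduction to $D_F$ for the linear form, the Frobenius recursion on $D_F$, and the interpretation of the defect via a linear operator on a finite-dimensional space of residue data are all correct in spirit.

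One place where your sketch is too loose is the non-positivity $\Delta_e \leq 0$. Your monomial-counting justification --- that the defining relation ``only identifies monomials, never creating new ones'' --- proves only the trivial bound $\HKF(R,q) \leq q^n$, not the required $\HKF(R,q) \leq \HKM(R)\cdot q^{n-1}$. The actual argument (Han--Monsky's Theorem 5.8, mentioned in the remark immediately following this theorem in the paper) is that the sequence $\HKF(R,p^e)/p^{de}$ is non-decreasing; since it converges to $\HKM(R)$, every term lies below the limit, giving $\Delta_e \leq 0$. Establishing this monotonicity uses the recursion structure for $D_F$ in a nontrivial way and is not a direct monomial-identification observation, so ``each step of the recursion preserves the sign'' hides the real work.
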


\begin{proof}
See \cite[Theorem 5.7]{monskyhan} and \cite[Corollary 5.9]{monskyhan}. See \cite[Theorem 1.1]{chiang1} for a different proof using the same tools.
\end{proof}

\begin{rem}
\begin{enumerate}
\item The ideas of \cite{monskyhan} are generalized in \cite{chiang2} to hypersurfaces $R=k[X_{i,j}|i=1,\ldots,m,j=1,\ldots,t_i]/(f)$ with
$$f=\sum_{i=1}^m\prod_{j=1}^{t_i}X_{i,j}^{d_{i,j}}.$$
\item In her thesis \cite{chang}, Chang used the algorithm of Han and Monsky to compute the Hilbert-Kunz function of the Fermat-quartic 
$$R=\Z/(p)[W,X,Y,Z]/(W^4+X^4+Y^4+Z^4)$$ 
for odd primes $p$. Explicitly, she obtains
\begin{equation*}\HKF(R,p^e)=\left\{\begin{aligned}
 & \frac{2}{3}\left(\frac{8p^2+8p+12}{2p^2+2p+1}\right)p^{3e}-\frac{1}{3}\left(\frac{10p^2+10p+21}{2p^2+2p+1}\right)\left(\frac{p+1}{2}\right)^e && \text{if }p\equiv 1\text{ } (4),\\
 & \frac{2}{3}\left(\frac{8p^3+4p+12}{2p^2-p+1}\right)p^{3e}-\frac{1}{3}\left(\frac{10p^3+11p+21}{2p^3-p+1}\right)\left(\frac{p-1}{2}\right)^e && \text{if }p\equiv 3\text{ } (4).
\end{aligned}\right.\end{equation*}
\item In his thesis \cite{chen}, Chen remarks that there cannot be a numerical function of polynomial type having the Hilbert-Kunz multiplicity of the Fermat-quartic as leading coefficient.
\item In the special case $n=2$, the function $D_F$ is also known as Han's $\delta$-function, which we will study in more detail in Chapter \ref{chaphan}.
\item For a hypersurface $R$, the sequence $$\left(\frac{\HKF(R,p^e)}{p^{e\cdot\Dim(R)}}\right)_{e\in\N}$$ is non-decreasing by \cite[Theorem 5.8]{monskyhan}.
\end{enumerate}
\end{rem}

The special case of the Hilbert-Kunz multiplicities of $d$-dimensional Fermat-quadrics reappear as lower bounds for non-regular complete intersections.

\begin{thm}[Enescu, Shimomoto]
Let $(R,\mm,k)$ be a local, non-regular, complete intersection of dimension $d\geq 2$ with $\chara(k)\neq 2$. Then 
$$\HKM(R)\geq\HKM\left(k\llbracket X_0,\ldots,X_d\rrbracket/\left(\sum_{i=0}^dX_i^2\right)\right).$$
\end{thm}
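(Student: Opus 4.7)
The right-hand side is the Hilbert-Kunz multiplicity of the Fermat quadric $Q_d := k\llbracket X_0,\ldots,X_d\rrbracket/(\sum_{i=0}^d X_i^2)$, a non-degenerate quadric hypersurface of dimension $d$ in characteristic $\neq 2$. I would reduce to a hypersurface presentation of the same dimension $d$, then degenerate to a homogeneous hypersurface via the initial-form construction, and finally identify the non-degenerate quadric as the HKM-minimizer among such.

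\textbf{Reduction to a hypersurface.} Write $R = S/(f_1,\ldots,f_c)$ with $(S,\mm_S)$ regular local of dimension $d+c$ and $(f_1,\ldots,f_c)$ a regular sequence. Since $R$ is not regular, the image of $(f_1,\ldots,f_c)$ in $\mm_S/\mm_S^2$ has dimension strictly less than $c$, forcing some $f_i$ to lie in $\mm_S^2$; say $f_1 \in \mm_S^2$. After a change of generators one arranges as many $f_j$ as possible outside $\mm_S^2$; those $f_j \notin \mm_S^2$ extend to part of a regular system of parameters of $S$, and modding them out exhibits $R$ as a quotient of a regular local ring $T$ of dimension $d+1$ by the single element $g := \bar{f_1} \in \mm_T^2$. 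When several $f_i$ must simultaneously lie in $\mm_S^2$, the reduction is iterated, using Theorem \ref{watyorelhkm} and the associativity formula to control how HKM changes under successive quotients by regular elements whose initial forms sit in the square of the maximal ideal.

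\textbf{Degeneration and minimality.} For $R = T/(g)$ with $(T,\mm_T)$ regular of dimension $d+1$ and $g \in \mm_T^2$ of order $r \geq 2$, let $g^* \in \mathrm{gr}_{\mm_T}(T) \cong k[X_0,\ldots,X_d]$ be the $\mm_T$-adic initial form. A Rees-algebra construction realizes $T/(g)$ and $k\llbracket X_0,\ldots,X_d\rrbracket/(g^*)$ as generic and special fibers of a flat family over $k\llbracket t\rrbracket$; a direct Frobenius-length estimate (the higher-order tail $g - g^* \in \mm_T^{r+1}$ contributes only positively to $\lambda(T/(g,\mm_T^{[q]}))$) yields $\HKM(T/(g)) \geq \HKM(k\llbracket X_0,\ldots,X_d\rrbracket/(g^*))$. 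It remains to minimize $\HKM(k\llbracket X_0,\ldots,X_d\rrbracket/(h))$ over nonzero homogeneous forms $h$ of degree $\geq 2$. Higher degrees can be ruled out because, by direct count (analogous to Remark \ref{kunzbsp}), $k\llbracket X\rrbracket/(X_0^r)$ already has HKM $= r \geq 2 > \HKM(Q_d)$ for $d \geq 2$, with a general higher-degree form handled by a Han--Monsky style calculation. Within degree $2$, and because $\chara k \neq 2$, a rank-$\rho$ quadratic form is equivalent to $\sum_{i=0}^{\rho-1} X_i^2$, so the corresponding hypersurface is a power series extension of $Q_{\rho-1}$; since HKM is preserved under power series extension (a routine Frobenius-length identity $\HKF(A\llbracket Y\rrbracket,q) = q \cdot \HKF(A,q)$), its HKM equals $\HKM(Q_{\rho-1})$. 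The sequence $\bigl(\HKM(Q_n)\bigr)_{n \in \N}$ is non-increasing (for example $2 = \HKM(Q_1) > 3/2 = \HKM(Q_2) > \cdots$, checked by the Han--Monsky algorithm in arbitrary dimension), so the minimum in dimension $d$ is attained at the full-rank quadric $\rho = d+1$, giving $\HKM(R) \geq \HKM(Q_d)$.

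\textbf{Main obstacle.} The hardest step is the degeneration to the initial form: Hilbert-Kunz multiplicity is generally only upper-semicontinuous in flat families, so the lower bound $\HKM(\text{generic fiber}) \geq \HKM(\text{special fiber})$ is not a black-box consequence of semicontinuity and requires the explicit Rees-algebra filtration together with the complete-intersection hypothesis to control the contribution of $g - g^*$ to the Frobenius quotient lengths. A close second technical point is the reduction to a hypersurface of the same dimension $d$ when more than one $f_i$ is forced into $\mm_S^2$; there the inductive argument must carefully account for the ``higher-codimension'' generators lying in the square of the maximal ideal.
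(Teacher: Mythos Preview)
The paper does not prove this; it only cites \cite[Theorem~4.6]{hkmupper}. Your outline is in the spirit of the Enescu--Shimomoto argument, but your central degeneration step is false as stated.

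The inequality $\HKM(T/(g)) \geq \HKM(T/(g^*))$ with $g^*$ the initial form fails already for $T = k\llbracket x,y,z\rrbracket$, $p$ odd with $p \neq 7$, and $g = x^2 + y^2 + z^7$. Here $g^* = x^2 + y^2$ and $\HKM(T/(g^*)) = 2$ (a one-dimensional ring of multiplicity two with a free variable adjoined), whereas Corollary~\ref{hkmdiag} together with Example~\ref{deltahalb} (which gives $\delta(\tfrac12,\tfrac12,\tfrac17)=0$) yields $\HKM(T/(g)) = 13/7 < 2$. Your heuristic that the higher-order tail ``contributes only positively'' to the colength is exactly backwards here: adding $z^7$ resolves the non-isolated singularity of $T/(g^*)$ and \emph{lowers} $\HKM$. (The theorem itself survives for this example, since $13/7 > 3/2 = \HKM(Q_2)$.) Your instinct that semicontinuity points the wrong way was correct; the remedy is not a sharper Rees-algebra estimate but a different route---the actual argument handles the multiplicity-two hypersurface case via the characteristic-$\neq 2$ normal form $g \sim x_0^2 + \cdots + x_{\rho-1}^2 + h$ and the resulting degree-two cover of a regular ring, rather than by passing to $T/(g^*)$. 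Separately, your reduction step invokes Theorem~\ref{watyorelhkm}, which is an \emph{upper} bound for $\HKF$ and does not supply the lower bound you need when passing from a complete intersection to a hypersurface.
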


\begin{proof} This bound was conjectured in \cite{watyo3dim} and the $d$-dimensional case was proven in \cite[Theorem 4.6]{hkmupper}.\end{proof}

\begin{rem}
In \cite[Theorem 3.8]{limitmonsky} the authors showed that 
$$\left(\HKM\left(k\llbracket X_0,\ldots,X_d\rrbracket/\left(\sum_{i=0}^dX_i^2\right)\right)\right)_{\chara(k)=p}$$ 
tends to 1+ the coefficient of $t^{d-1}$ in the power series of $\sec(t)+\tan(t)$ around zero as $p$ goes to infinity.
\end{rem}

Recall that for regular rings one has $\HKF(I,R,p^e)=\lambda_R(R/I)\cdot p^{e\cdot\Dim(R)}$ (discussion before Theorem \ref{hkm1meansreg}). This is generalized 
to complete intersections by the following theorem.

\begin{thm}[Dutta, Miller]\label{thmclaudia}
Let $(R,\mm,k)$ be a local ring of dimension $d$ which is a complete intersection and let $M$ be a finitely generated $R$-module of finite 
length. Then the following statements hold.
\begin{enumerate}
 \item We have $\lambda_R(\TF^e(M))\geq \lambda_R(M)\cdot p^{ed} \text{ for all }e\in\N$. 
 \item The sequence $(\lambda_R(\TF^e(M))\cdot p^{-ed})_{e\in\N}$ is non-decreasing.
 \item The following statements are equivalent.
 
  \begin{enumerate}
    \item The module $M$ has finite projective dimension.
    \item We have $\lambda_R(\TF^e(M))=\lambda_R(M)\cdot p^{ed}$ for all $e\in\N$.
    \item The limit $\lim_{e\ra\infty}\lambda_R(\TF^e(M))\cdot p^{-ed}$ exists and equals $\lambda_R(M)$.
  \end{enumerate}
\noindent In particular, if $I$ is an $\mm$-primary ideal such that the quotient $R/I$ has finite projective dimension, one has 
$$\HKF(I,R,p^e)=\lambda_R(R/I)\cdot p^{ed}.$$
\end{enumerate}
\end{thm}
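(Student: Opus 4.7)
Using Proposition \ref{proplength}(ii) together with the faithful flatness of the $\mm$-adic completion, which preserves lengths of finite-length modules and preserves projective dimension, I may assume $R$ is complete. Cohen's structure theorem then lets me write $R = S/(f_1,\ldots,f_c)$, with $(S,\nn)$ a regular local ring of dimension $n = d + c$ and $(f_1,\ldots,f_c)\subseteq \nn^2$ a regular sequence. The central identification I will exploit is
$$\TF_R^e(N) \;\cong\; \TF_S^e(N)/(f_1,\ldots,f_c)\TF_S^e(N)$$
as $R$-modules, valid for every $R$-module $N$, together with the observation that $\TF_S^e(N)$ is automatically annihilated by $\bigl(f_1^q,\ldots,f_c^q\bigr)$, where $q = p^e$. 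Since $S$ is regular, Kunz's Theorem \ref{kunzthm} gives that $\TF_S$ is exact on finite-length modules and $\lambda_S(\TF_S^e(N)) = q^n \lambda_R(N)$ (using $\lambda_S = \lambda_R$ on $R$-modules of finite length, as both residue fields agree).

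For parts (i) and (ii) simultaneously, I reduce to the one-step inequality $\lambda_R(\TF_R(N)) \geq p^d\lambda_R(N)$ for every finite-length $R$-module $N$: iterating this starting at $N = M$ yields the bound in (i), and applying it with $N := \TF^e(M)$ yields the monotonicity in (ii). I will establish the inequality via the following length lemma over $S$: if $X$ is a finite-length $S$-module and $g\in S$ satisfies $g^p X = 0$, then $\lambda_S(X/gX)\geq\lambda_S(X)/p$. To prove this, consider the ascending filtration
$$0 \;=\; \Ann_X(g^0) \;\subseteq\; \Ann_X(g) \;\subseteq\; \cdots \;\subseteq\; \Ann_X(g^p) \;=\; X;$$
multiplication by $g^k$ gives an injection $\Ann_X(g^{k+1})/\Ann_X(g^k)\hookrightarrow \Ann_X(g)$ for each $k$, and the four-term exact sequence $0\to \Ann_X(g)\to X\xrightarrow{g} X\to X/gX\to 0$ yields $\lambda(\Ann_X(g)) = \lambda(X/gX)$. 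Adding the $p$ successive lengths then gives $\lambda(X)\leq p\cdot \lambda(X/gX)$. Applying the lemma iteratively with $X := \TF_S(N),\TF_S(N)/(f_1),\ldots$ and $g := f_1,\ldots,f_c$ (the quotient at stage $j$ is still annihilated by $f_{j+1}^p$) produces $\lambda_R(\TF_R(N))\geq p^{-c}\lambda_S(\TF_S(N)) = p^{n-c}\lambda_R(N) = p^d \lambda_R(N)$.

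For part (iii), the equivalence (b) $\Leftrightarrow$ (c) is immediate from the monotonicity in (ii), since a non-decreasing sequence with constant limit is itself constant. The implication (a) $\Rightarrow$ (b) is the classical Peskine--Szpiro intersection formula: when $\projdim_R M < \infty$, a finite free $R$-resolution of $M$ remains acyclic under $\TF^e$ (which is where the complete intersection hypothesis enters, via the rigidity of Tor on a complete intersection applied to modules of finite projective dimension), so an Euler characteristic calculation with respect to a system of parameters yields $\lambda_R(\TF^e M) = p^{ed}\lambda_R(M)$. The converse implication (b) $\Rightarrow$ (a) is the deeper direction due to Dutta: equality at every stage of the one-step inequality forces $\mathrm{Tor}^R_{i}(R^{(e)},M) = 0$ for all $i\geq 1$ and all $e$, after which Tor-rigidity specific to complete intersections upgrades this vanishing to $\projdim_R M <\infty$.

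The main obstacle will be this final implication (b) $\Rightarrow$ (a). The length argument sketched above readily gives the lower bound and monotonicity, but controlling when those bounds are attained and translating asymptotic Tor-vanishing into honest finite projective dimension is exactly the substantive content of Dutta and Miller's theorem; it rests on the rigidity of Tor that is special to complete intersections and fails in general.
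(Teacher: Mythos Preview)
The paper does not prove this theorem at all; it simply cites Dutta \cite{dutta} for part (i) and the implication (a)$\Rightarrow$(b), and Miller \cite{claudia} for the remaining statements. So you are going well beyond what the paper attempts.

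Your argument for (i) and (ii) is correct and pleasant: the reduction to a one-step inequality, the length lemma (if $g^pX=0$ then $\lambda(X/gX)\ge \lambda(X)/p$ via the annihilator filtration), and the iteration through the regular sequence $f_1,\ldots,f_c$ on $\TF^e_S(N)$ together give exactly what is needed. This is essentially the approach in the cited sources.

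There is a genuine inaccuracy in your sketch of (a)$\Rightarrow$(b). The fact that a finite free resolution of a module of finite projective dimension stays acyclic under $\TF^e$ is the Peskine--Szpiro acyclicity lemma and holds over \emph{any} Noetherian local ring; it does not use the complete-intersection hypothesis, and invoking ``rigidity of Tor'' here is misplaced. Moreover, acyclicity alone does not immediately yield the length identity $\lambda_R(\TF^eM)=p^{ed}\lambda_R(M)$: the Euler characteristic of the Koszul complex on a system of parameters against a finite-length module is zero, so the phrase ``Euler characteristic calculation with respect to a system of parameters'' does not name an argument that works. Dutta's actual proof runs through the same change-of-rings framework you set up for (i) and (ii), tracking the \emph{equality} case of your length lemma inductively down the regular sequence $f_1,\ldots,f_c$; this is where the complete-intersection presentation is genuinely used.

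Your identification of (b)$\Rightarrow$(a) as the substantive direction is correct, and you are right that it rests on rigidity phenomena special to complete intersections; you do not prove it, but neither does the paper.
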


\begin{proof}
Part (i) and the implication (a) $\Rightarrow$ (b) of (iii) can be found in \cite[Theorem 1.9]{dutta}. For the other statements see \cite[Theorem 2.1]{claudia}.
\end{proof}

An open question in Hilbert-Kunz theory is whether Hilbert-Kunz multiplicities are rational or not. Many experts thought for a long time Hilbert-Kunz 
multiplicities would always be rational (cf. \cite{hkmexists}), since this was true in all explicit examples and in all cases, where the Hilbert-Kunz multiplicity was formulated 
in terms of a different theory as combinatorial algebra or vector bundles. This opinion has changed due to a conjecture of Monsky.

\begin{thm}[Monsky]
Let $S:=\Z/(2)[X,Y,Z]$ and $h:=X^3+Y^3+XYZ\in S$. Suppose that Monskys conjecture \cite[Conjecture 1.5]{hkmrat} on the value of 
$$\HKF\left(S/\left(h^{2l+1}\right),2^{e+1}\right)-\frac{1}{2}\cdot\left(\HKF\left(S/\left(h^{2l},2^{e+1}\right)\right)+\HKF\left(S/\left(h^{2l+2},2^{e+1}\right)\right)\right)$$
holds for all $0\leq l<2^e$. Then the following statements hold.

\begin{enumerate}
 \item Let $2^e\geq j$. Then there are recursively defined integers $v_j$ and $u_j$ such that
      $$\HKF(S/(h^j),2^e)=\left\{\begin{aligned} & \frac{7j}{3}\cdot 4^e-\frac{j^2}{3}\cdot 2^e+u_j && \text{if }j\equiv 1\text{ }(3), \\ & \frac{7j}{3}\cdot 4^e-\frac{j^2}{3}\cdot 2^e+v_j && \text{if }j\equiv 2\text{ }(3).\end{aligned}\right.$$
 \item $$\HKM(\Z/(2)[X,Y,Z,U,V]/(UV+h))=\frac{4}{3}+\frac{5}{14\cdot\sqrt{7}}.$$
 \item The number $$\sum_{n\geq 0}\binom{2n}{n}^2\cdot\left(\frac{1}{2^{16}}\right)^n$$ is transcendental and a $\Q$-linear combination of Hilbert-Kunz 
	multiplicities of hypersurfaces in characteristic two.
\end{enumerate}
\end{thm}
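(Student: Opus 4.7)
The strategy is induction on $e$, bootstrapping from the assumed conjecture via two complementary recursive identities: a characteristic-two Frobenius doubling identity coming from Kunz flatness, and the interpolation identity supplied by the conjecture. The preliminary observation is that in characteristic two
$$\left(h^{2m}, X^{2^{e+1}}, Y^{2^{e+1}}, Z^{2^{e+1}}\right) = \left(h^{m}, X^{2^{e}}, Y^{2^{e}}, Z^{2^{e}}\right)^{[2]},$$
so the flatness of Frobenius on the regular ring $S$ (Theorem \ref{kunzthm}) yields the doubling identity
$$\HKF\left(S/\left(h^{2m}\right), 2^{e+1}\right) = 8\cdot\HKF\left(S/\left(h^{m}\right), 2^{e}\right).$$

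For part (i), define $\phi(j,e):=\HKF(S/(h^{j}), 2^{e}) - \tfrac{7j}{3}\cdot 4^{e} + \tfrac{j^{2}}{3}\cdot 2^{e}$. The doubling identity becomes $\phi(2m, e+1) = 8\,\phi(m, e)$, while subtracting the polynomial part from both sides of the conjectural relation expresses $\phi(2l+1, e+1)$ as an integer combination of $\phi(l,e)$, $\phi(l+1, e)$ and an explicit bounded correction. By induction on $e$, assuming $\phi(j, e)$ is independent of $e$ whenever $2^{e}\geq j$, both recursions propagate this independence to level $e+1$, producing integer values that depend only on $j$; the theorem names these $u_{j}$ or $v_{j}$ according to $j\bmod 3$. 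The base cases follow from direct length computation; for example $\HKF(S/(h), 1) = 1$ yields $u_{1}=-1$.

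For part (ii), viewing $T := \Z/(2)[X,Y,Z,U,V]/(UV + h)$ as a free $S$-module on the basis $\{U^{i}:i\geq 0\}\cup\{V^{j}:j\geq 1\}$ and using the rewriting $V^{a}U^{q} = h^{\min(a,q)}\cdot(\text{basis element})$, one obtains the $S$-module decomposition
$$T\big/\left(U^{q}, V^{q}\right) \;\cong\; S/\left(h^{q}\right)\;\oplus\;2\bigoplus_{k=1}^{q-1}S/\left(h^{k}\right),\qquad q = 2^{e},$$
whence $\HKF(T, 2^{e}) = \HKF(S/(h^{q}), q) + 2\sum_{k=1}^{q-1}\HKF(S/(h^{k}), q)$. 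Since every summand satisfies $2^{e}\geq k$, the formula from part (i) applies, and its polynomial contribution to $\HKF(T, 2^{e})/16^{e}$ tends to $\tfrac{19}{9}$; the remaining value $\tfrac{4}{3} + \tfrac{5}{14\sqrt{7}} - \tfrac{19}{9}$ is supplied by the asymptotic of the sum of the error constants. The appearance of $\sqrt{7}$ is forced by the eigenvalues of the $2\times 2$ transfer matrix encoding the interleaved doubling/interpolation recursion for $(u_{j}, v_{j})$, and the sum is evaluated by diagonalising this matrix.

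For part (iii), the same transfer matrix yields a closed-form generating function: assembling $(u_{j}, v_{j})$ into a generating series and unwinding the recursion, one identifies the result, after an algebraic change of variables, with the hypergeometric series $\sum_{n}\binom{2n}{n}^{2}x^{n}$ specialised to $x = 2^{-16}$. The formula from (ii) then expresses this sum as a $\Q$-linear combination of $1$, $\sqrt{7}$ and $\HKM(T)$. Transcendence follows from classical transcendence results on values of ${}_{2}F_{1}\left(\tfrac{1}{2},\tfrac{1}{2};1;\cdot\right)$ at algebraic arguments (Wolfart, Chudnovsky). The chief technical obstacle, beyond the conjecture itself, is making the generating-function identification fully explicit: one must track Monsky's conjectural correction through the interleaved recursion and recognise the resulting series as precisely the hypergeometric function $\sum_{n}\binom{2n}{n}^{2}x^{n}$ evaluated at $x = 2^{-16}$.
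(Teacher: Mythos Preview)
The paper does not prove this statement; it is recorded as a survey result with references to Monsky's original papers for (i)--(ii) and to Schneider for the transcendence in (iii). Your sketch for (i) and (ii) is the right shape and is essentially Monsky's route: the Frobenius doubling identity from Kunz flatness, combined with the conjectural interpolation, gives the two-step recursion for the error constants; your $S$-module decomposition of $T/(U^{q},V^{q})$ is correct and is exactly how $\HKF(T,q)$ is reduced to the family $\HKF(S/(h^{m}),q)$; and the $\sqrt{7}$ does come from the eigenvalues of the resulting transfer matrix.

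Your sketch for (iii), however, contains a real inconsistency. You say the hypergeometric value is a $\Q$-linear combination of $1$, $\sqrt{7}$ and $\HKM(T)$. But by (ii), which you have just accepted, $\HKM(T)=\tfrac{4}{3}+\tfrac{5}{14\sqrt{7}}$ is algebraic; any $\Q$-linear combination of $1$, $\sqrt{7}$ and this number is therefore again algebraic, contradicting the transcendence you invoke in the next sentence. The Hilbert--Kunz multiplicities that enter in (iii) are not those of $T$ alone; Monsky brings in a further family of hypersurfaces, and the passage from the $(u_{j},v_{j})$-recursion to the specific value $\sum_{n}\binom{2n}{n}^{2}2^{-16n}$ is not the one you describe. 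The generating-function heuristic may be in the right spirit, but as written your argument for (iii) would make the number algebraic rather than exhibit it as a $\Q$-combination of Hilbert--Kunz multiplicities.
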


\begin{proof}
For (i) and (ii) see \cite[Theorem 1.9, Corollary 2.7]{hkmrat}. The proof that the number in (iii) is transcendental (and exists) can be found in \cite{schneider} and the rest of the statement in \cite[Theorem 2.2]{hkmtrans}.
\end{proof}

Very recently, Brenner came up in \cite{holgerirre} with a quite explicit example that Hilbert-Kunz multiplicities might be irrational.

\begin{thm}[Brenner]\label{hkmirratholger}
There exists a local domain whose Hilbert-Kunz multiplicity is irrational and there exists a three-dimensional hypersurface ring 
$$R=k[X,Y,Z,W]/(F),$$ 
where $F$ is homogeneous of degree 4 and $k$ is an algebraically closed field of 
large characteristic, and an Artinian $R$-module $M$ such that $\HKM(M)$ is irrational.
\end{thm}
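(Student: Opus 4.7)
The plan is to apply the geometric approach to Hilbert--Kunz theory in graded dimension two, which expresses $\HKM(I,S)$ for a two-dimensional standard-graded normal domain $S$ and a homogeneous $S_+$-primary ideal $I = (f_1,\ldots,f_n)$ via the strong Harder--Narasimhan filtration of the syzygy bundle $\Syz_C(f_1,\ldots,f_n)$ on the curve $C = \Proj(S)$. If this filtration has quotients of ranks $r_i$ and slopes $\mu_i$, then $\HKM(I,S)$ is a rational combination of $\Deg(C)$, the degrees $d_j = \Deg(f_j)$, and the quantities $r_i\mu_i^2$; crucially, the slopes enter quadratically, so that irrational limiting slopes under iterated Frobenius pullback force an irrational Hilbert--Kunz multiplicity.

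The heart of the argument is to exhibit a smooth projective curve $C$ over $\overline{\F}_p$ and a syzygy bundle $\mathcal{S}$ on $C$ whose iterated Frobenius pullbacks $\fpb{e}\mathcal{S}$ destabilize according to a linear recursion whose governing integer matrix has irrational algebraic eigenvalues. A natural candidate is a rank-two syzygy bundle on a plane curve of low degree for which the strong HN filtration of each $\fpb{e}\mathcal{S}$ is controlled by a $2\times 2$ integer recursion matrix with characteristic polynomial irreducible over $\Q$. The dominant (irrational) eigenvalue then pinpoints the limiting slope of the maximal destabilizing subbundle, and squaring it in the formula above yields an irrational value of $\HKM$.

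To promote this two-dimensional example to the three-dimensional hypersurface asserted in the theorem, I would realize $C$ as a generic hyperplane section of a smooth quartic surface $\Proj(R)$ in $\Prim^3$, where $R = k[X,Y,Z,W]/(F)$ with $F$ homogeneous of degree $4$. For a generic linear form $\ell\in R_1$ one has $R/(\ell)\cong S$ after a suitable identification, and a flat base change argument along the lines of Theorem \ref{watyorank} and Remark \ref{hkfflat} reduces the Hilbert--Kunz multiplicity of a judiciously chosen Artinian module $M = R/J$ to the irrational $\HKM$ already computed over $S$. Localizing $R$ at its graded maximal ideal then furnishes the required local domain with irrational Hilbert--Kunz multiplicity.

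The principal obstacle is the first step: producing a syzygy bundle on a projective curve whose iterated Frobenius pullbacks have provably irrational limiting slopes. Semistability under Frobenius is notoriously delicate, and the construction must control the successive destabilizations of $\fpb{e}\mathcal{S}$ precisely enough to force an irrational fixed point of the associated recursion. This arithmetic input, together with the need to work in sufficiently large characteristic to rule out small-$p$ degeneracies, is the genuinely new content of the theorem; the subsequent passage to a quartic hypersurface in $\Prim^3$ via generic hyperplane sections is comparatively routine given the flatness machinery already in place.
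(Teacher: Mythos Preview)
The paper does not prove this theorem; it merely cites Brenner's preprint \cite{holgerirre} and states the result. So there is no ``paper's own proof'' to compare against. Nevertheless, your proposal contains a genuine and fatal error of strategy.

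Your plan is to produce an irrational Hilbert--Kunz multiplicity already on a two-dimensional standard-graded normal domain $S$ via the Brenner--Trivedi formula (Theorem~\ref{holger}), by arranging that the ``limiting slopes'' of the iterated Frobenius pull-backs of a syzygy bundle are irrational. This cannot work. By Langer's theorem (Theorem~\ref{langer}) the Harder--Narasimhan filtration of $\fpb{e}(\Syz_C(I))$ becomes strong for some finite $e_0$, and for all $e\ge e_0$ it is simply the Frobenius pull-back of the one at level $e_0$ (Remark~\ref{pbofstronghnfilt}). The numbers $\nu_i=-\mu_i/(\Deg(C)\cdot p^{e_0})$ are therefore \emph{rational}: each $\mu_i$ is the slope of a vector bundle on a curve, hence a quotient of integers, and $\Deg(C)$ and $p^{e_0}$ are integers. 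Plugging rational $\nu_i$ into the formula of Theorem~\ref{holger} gives a rational $\HKM(I,S)$. In fact this is precisely how Brenner proved in \cite{bredim2} that Hilbert--Kunz multiplicities are always rational in graded dimension two. There is no ``irrational fixed point'' of a recursion to be found here; the process terminates at a rational answer after finitely many Frobenius steps.

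Consequently the entire three-step programme collapses: step~2 is impossible in dimension two, so there is nothing to lift in step~3. Brenner's actual construction in \cite{holgerirre} is genuinely higher-dimensional from the outset and does not factor through a two-dimensional irrational example. It extends the geometric approach to higher syzygy modules on higher-dimensional projective varieties (cf.\ the formula stated just after Proposition~\ref{hkgeomapp} in the paper, attributed to Brenner), where the relevant asymptotic invariants of vector bundles are no longer forced to be rational. The irrationality arises from limits of normalized cohomological data of Frobenius pull-backs on a surface, not from slopes on a curve.
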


At the end of this section we state some results on the interaction of Hilbert-Kunz multiplicities with the theory of tight closure.

\begin{defi}
Let $R$ be a ring of characteristic $p>0$ and let $I$ be an ideal of $R$. We say that $x\in R$ belongs to the \textit{tight closure} of $I$ 
if $cx^q\in I\qpot$ for a $c$ in the complement of the minimal primes of $R$ not depending on $q$ and all large $q$. The tight closure of $I$ is an ideal, 
which we denote by $I\pb$ and we say that $I$ is \textit{tightly closed} in the case $I=I\pb$.
\end{defi}

The following theorem shows that the Hilbert-Kunz multiplicity measures the membership to the tight closure.

\begin{thm}[Hochster, Huneke]
Let $(R,\mm)$ be a local ring of characteristic $p>0$ and let $I\subseteq J$ be $\mm$-primary ideals. Then 
$$I\pb=J\pb \text{ }\Longrightarrow \text{ } \HKM(I,R)=\HKM(J,R).$$
The converse holds, if $R$ is quasi-unmixed and analytically unramified.
\end{thm}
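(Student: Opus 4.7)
The plan is to handle the two implications separately, and the forward direction essentially reduces to a clever use of the defining property of tight closure.

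For the forward direction, the inclusion $I\subseteq J$ already gives $I\qpot\subseteq J\qpot$ for every $q=p^e$, hence $\HKF(I,R,q)\geq\HKF(J,R,q)$ and therefore $\HKM(J,R)\leq\HKM(I,R)$. For the reverse inequality, I would exploit the fact that $J\subseteq J\pb=I\pb$. Since $J$ is finitely generated, a single element $c$ in the complement of the minimal primes of $R$ witnesses the tight closure membership of every generator of $J$ simultaneously, so that $c\cdot J\qpot\subseteq I\qpot$ for all sufficiently large $q$. Thus the finite-length module $J\qpot/I\qpot$ is annihilated by $c$ and becomes a module over $R/cR$, whose Krull dimension is at most $d-1$ because $c$ avoids the minimal primes.

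If $m$ denotes a fixed number of generators of $J$, then $J\qpot$ is generated by the corresponding $q$-th powers, and consequently
$$\lambda_R\bigl(J\qpot/I\qpot\bigr)\leq m\cdot\lambda_R\bigl(R/(c,I\qpot)\bigr).$$
Applying Theorem \ref{thmhkmexists} to the $\mm$-primary ideal $I(R/cR)$ inside the local ring $R/cR$ of dimension at most $d-1$ gives $\lambda_R(R/(c,I\qpot))=O(q^{d-1})$. Hence the difference $\HKF(I,R,q)-\HKF(J,R,q)$ is of order $O(q^{d-1})$, and dividing by $q^d$ and taking the limit produces the equality of Hilbert-Kunz multiplicities.

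For the converse, assuming $R$ quasi-unmixed and analytically unramified, I would first reduce by induction on a fixed minimal set of generators of $J$ modulo $I$ to the key case $J=I+(x)$, so that it suffices to prove $x\in I\pb$ whenever $\HKM(I,R)=\HKM(I+(x),R)$. The identity $(I+(x))\qpot=I\qpot+(x^q)$ together with the map $1\mapsto x^q$ yields the crucial length computation
$$\HKF(I,R,q)-\HKF(I+(x),R,q)=\lambda_R\bigl((I+(x))\qpot/I\qpot\bigr)=\lambda_R\bigl(R/(I\qpot:x^q)\bigr),$$
and the hypothesis forces the right-hand side to be $o(q^d)$. What remains is to extract from this asymptotic bound a single $c$ lying outside every minimal prime with $cx^q\in I\qpot$ for all $q$, i.e.\ to locate $c\in\bigcap_q (I\qpot:x^q)$ with $c$ avoiding the minimal primes. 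This is the hard part: the quasi-unmixed hypothesis rules out embedded primes of low dimension in the primary decompositions of the ideals $I\qpot:x^q$ (so that a non-trivial portion of the intersection survives), while analytic unramifiedness allows one to pass to the completion and invoke the existence of test elements in order to convert the asymptotic smallness of these colon lengths into a uniform lower bound on their heights, producing the required $c$.

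The main obstacle is precisely this final extraction step in the converse direction. The forward implication uses only the definition of tight closure and Monsky's asymptotic existence theorem, whereas the converse genuinely requires the structural hypotheses on $R$ to prevent the colon ideals $I\qpot:x^q$ from escaping into embedded or minimal components that would block the existence of a uniform multiplier $c$.
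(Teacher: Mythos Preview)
The paper does not give its own proof of this statement; it simply cites \cite[Theorem 5.4]{tightapp}. So there is nothing in the paper to compare your argument against beyond noting that your outline is the standard one from the Hochster--Huneke source.

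Your forward direction is correct and complete. The only point worth checking is that $c$ avoiding all minimal primes genuinely forces $\Dim(R/cR)\leq d-1$: any prime containing $c$ must strictly contain some minimal prime of $R$, hence has strictly smaller coheight. With that, Monsky's theorem on $R/cR$ gives the $O(q^{d-1})$ bound exactly as you wrote.

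For the converse, your reduction to $J=I+(x)$ and the length identity $\HKF(I,R,q)-\HKF(I+(x),R,q)=\lambda_R(R/(I\qpot:x^q))$ are both correct, and you have honestly flagged the remaining step as the real content. Your description of that step is in the right spirit but is a sketch rather than a proof: the actual argument passes to $\hat{R}$ (reduced by analytic unramifiedness, equidimensional by quasi-unmixedness), reduces modulo a minimal prime $P$ of $\hat{R}$ for which the image of $x$ remains outside $(I\hat{R}/P)\pb$, and then uses that in the complete local domain $\hat{R}/P$ one has a fixed nonzero test element $c$, so that $I\qpot:x^q\subseteq (c)+P$ in $\hat{R}/P$ for all $q$, which forces the colon lengths to grow like $q^d$ rather than $o(q^d)$. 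Your paragraph gestures at these ingredients but does not assemble them into the contradiction; since the paper itself defers to the reference, this level of detail would already exceed what the paper provides.
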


\begin{proof}
See \cite[Theorem 5.4]{tightapp}.
\end{proof}

The previous theorem tells us that Hilbert-Kunz multiplicities behave to tight closure as Hilbert-Samuel multiplicities behave to integral closure.

\begin{defi}
We call a ring $R$ of characteristic $p>0$ an \textit{F-rational} ring if every ideal generated by parameters is tightly closed.
\end{defi}

The following lemma gives a criterion for Cohen-Macaulay rings of multiplicity two to be $F$-rational.

\begin{lem}
Let $(R,\mm)$ be Cohen-Macaulay of positive characteristic with multiplicity two. Then $R$ is $F$-rational if and only if $\HKM(R)<2$. 
\end{lem}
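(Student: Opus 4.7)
The plan is to translate both ``$R$ is $F$-rational'' and ``$\HKM(R) < 2$'' into the same statement about the tight closure of a fixed minimal reduction of $\mm$, and then bridge the two via the Hochster--Huneke theorem relating equality of Hilbert-Kunz multiplicities to equality of tight closures.

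First I would reduce to the case where $R/\mm$ is infinite by passing to $R(X) := R[X]_{\mm[X]}$ (which preserves both Hilbert-Kunz multiplicity and $F$-rationality) and then pick a minimal reduction $J$ of $\mm$. Since $R$ is Cohen--Macaulay, $J$ is generated by a system of parameters and
$$\lambda_R(R/J) = \Te(J,R) = \Te(\mm,R) = 2,$$
whence $\lambda_R(\mm/J) = 1$. The key numerical consequence is that the only ideals lying between $J$ and $\mm$ are $J$ and $\mm$ themselves. On the Hilbert-Kunz side, $\HKM(J,R) = \Te(J,R) = 2$ because $J$ is a parameter ideal, and since $J \subseteq \mm$ forces $J\qpot \subseteq \mm\qpot$ for every $q = p^e$, one has $\HKM(R) \leq \HKM(J,R) = 2$ unconditionally.

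Next I would pin down the possible tight closures. Tight closure is always contained in integral closure, and since every element integral over $\mm$ lies in $\sqrt{\mm}=\mm$, we get $\mm\pb = \mm$. Because $J$ is a reduction of $\mm$, $\mm$ is integral over $J$, so $\bar J = \mm$ and thus $J \subseteq J\pb \subseteq \mm$; by the length-one dichotomy above, $J\pb \in \{J,\mm\}$. Using the standard characterization of $F$-rationality for Cohen--Macaulay local rings (colon-capturing: in a CM local ring, $R$ is $F$-rational if and only if some, equivalently every, parameter ideal is tightly closed), we obtain
$$R \text{ is } F\text{-rational} \iff J\pb = J \iff J\pb \neq \mm = \mm\pb.$$

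Finally I would invoke the Hochster--Huneke theorem stated just before the definition of $F$-rationality: $\mm\pb = J\pb$ if and only if $\HKM(\mm,R) = \HKM(J,R) = 2$. Combined with $\HKM(R) \leq 2$ from Step~1, this yields
$$\HKM(R) < 2 \iff \mm\pb \neq J\pb \iff R \text{ is } F\text{-rational},$$
as required. The principal obstacle is checking the hypotheses for the converse direction of the Hochster--Huneke theorem (quasi-unmixed and analytically unramified): quasi-unmixedness is immediate from Cohen--Macaulayness, while analytic unramifiedness is arranged by first passing to $\widehat R$, using that both Hilbert-Kunz multiplicity and the tight closure of $\mm$-primary ideals are compatible with completion, and that the nilradical of $\widehat R$ (which vanishes in all situations of interest here) contributes nothing to either side of the claimed equivalence.
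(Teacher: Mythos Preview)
Your argument is correct and is precisely the standard proof; the paper itself gives no argument here, only a reference to \cite[5.3]{watyo1}, and what you wrote is essentially the content of that reference. The key steps---take a minimal reduction $J$ of $\mm$, use $\lambda_R(\mm/J)=1$ to force $J\pb\in\{J,\mm\}$, identify $F$-rationality with $J\pb=J$, and compare $\HKM(J,R)=2$ with $\HKM(\mm,R)$ via the Hochster--Huneke theorem---are exactly right.

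One caveat: your last paragraph handling the hypotheses for the converse of Hochster--Huneke is imprecise. The assertion that ``the nilradical of $\widehat R$ vanishes in all situations of interest here'' does not follow from Cohen--Macaulayness alone, and passing to $\widehat R/\mathrm{nil}(\widehat R)$ is not obviously harmless for either $F$-rationality or $\HKM$. The clean way to close this is to note that the direction requiring the converse is ``$R$ $F$-rational $\Rightarrow \HKM(R)<2$''; but $F$-rationality already forces $R$ to be reduced (any nilpotent lies in the tight closure of every parameter ideal), and under the mild standing assumption of excellence---implicit throughout this literature and in the cited source---this gives analytic unramifiedness. Alternatively, one simply works over a complete local ring from the outset, as is standard in the Watanabe--Yoshida setup.
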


\begin{proof} See \cite[5.3]{watyo1}.\end{proof}

In view of Theorem \ref{watyorank} we obtain the following corollary.

\begin{cor}
Let $R$ be a surface ring of type ADE such that the order of the associated group is invertible in the ground field. Then $R$ is $F$-rational.
\end{cor}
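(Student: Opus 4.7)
The plan is to combine Theorem~\ref{watyogroup} (computing $\HKM(R)=2-1/|G|$) with the immediately preceding lemma, which says that a Cohen-Macaulay local ring of positive characteristic and multiplicity two is $F$-rational if and only if its Hilbert-Kunz multiplicity is strictly less than $2$. Since $R$ is a hypersurface, the non-regular locus is the isolated singularity at $\mm=(X,Y,Z)$; at every other prime, $R$ is regular and hence automatically $F$-rational. So it suffices to verify the hypotheses of the lemma at $R_\mm$ (or equivalently at $\widehat{R}$, by the invariance of Hilbert-Kunz multiplicity under completion).

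First I would note that Cohen-Macaulayness is automatic for a hypersurface. Next, one has to check that the multiplicity of $R_\mm$ equals $2$ in every ADE case. This equals the order of the defining polynomial $F$ at the origin, which an inspection of the list gives immediately: for $A_n$ with $n\geq 1$ the lowest-degree term is $-YZ$; for $D_n$ with $n\geq 4$ it is $X^2$ (the other two monomials have degrees $n-1\geq 3$ and $3$); and for $E_6$, $E_7$, $E_8$ it is again $X^2$. In each case the order is exactly $2$. The case $A_0$ is trivial since $F=X-YZ$ makes $R$ a polynomial ring, hence regular and $F$-rational.

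Finally, Theorem~\ref{watyogroup} applies under our hypotheses (algebraically closed ground field, order of $G$ invertible in $k$) and yields
$$\HKM(R)=2-\frac{1}{|G|}<2,$$
since $|G|$ is a positive integer. The preceding lemma then gives that $R_\mm$ is $F$-rational, which finishes the proof. There is no substantial obstacle: the corollary is a direct packaging of Theorem~\ref{watyogroup} and the lemma, with the multiplicity check being a one-line case-by-case glance at the defining equations.
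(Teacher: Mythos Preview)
Your proof is correct and matches the paper's approach: the corollary is stated without proof as an immediate consequence of the preceding lemma (the multiplicity-two criterion for $F$-rationality) together with the computation $\HKM(R)=2-1/|G|$ from Theorem~\ref{watyogroup}. You have simply filled in the details the paper leaves implicit, namely the Cohen--Macaulayness, the multiplicity check, and the trivial $A_0$ case.
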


\section{The geometric viewpoint of Hilbert-Kunz theory}
In this section we introduce some basic notations concerning vector bundles, discuss briefly the different Frobenius morphisms appearing in algebraic geometry, 
focus a little more on syzygy bundles and different stability properties of vector bundles. We will often restrict ourselves to projective curves, since the definitions get much simpler 
in this situation. Thereafter we will explain the connection of the theory of vector bundles and Hilbert-Kunz theory. We will finish this section by a discussion 
of the Hilbert-Kunz functions of the homogeneous coordinate rings of elliptic curves and irreducible curves of degree three.

\subsection{Generalities on vector bundles}
\begin{defi}
Let $X$ be a scheme. A (geometric) \textit{vector bundle} of rank $r$ over $X$ is a scheme $f:E\ra X$ together with an open covering $\{U_i\}_{i\in I}$ of $X$ 
and isomorphisms $\Psi_i:f^{-1}(U_i)\ra\A^r_{U_i}$ such that for any $i,j$ and for any open affine set $V:=\Spec(A)\subseteq U_i\cap U_j$ the automorphism
$$\Psi:=\Psi_j|_{f^{-1}(U_j)}\circ\Psi^{-1}_i|_{\A_V^r}$$
is given by an $A$-linear automorphism of $A[X_1,\ldots,X_r]$.
\end{defi}

\begin{rem}
\begin{enumerate}
 \item There is an equivalence of the categories of vector bundles over $X$ and the category of locally free sheaves of constant (finite) rank over $X$. This equivalence attaches 
    to a vector bundle of rank $r$ its sheaf of sections, which carries a natural $\Oc_X$-module structure and is locally free of rank $r$. For the converse, attach to a locally free 
    sheaf of rank $r$ the relative spectrum of its symmetric algebra. This gives a vector bundle of rank $r$, whose sheaf of sections is the dual of the locally free sheaf we started with
    (cf. \cite[Exercise II.5.18]{hartshorne}).
 \item Note that the quotient of a vector bundle by a subbundle is again a vector bundle (cf. \cite[Proposition 1.7.1]{lepotier}). Hence an inclusion of locally free sheaves $\Sc\subseteq\Tc$
    yields by the above equivalence a closed immersion of vector bundles if and only if the quotient $\Tc/\Sc$ is itself a locally free sheaf (cf. \cite[Proposition 1.7.11]{ega2}).
\end{enumerate}
\end{rem}

\begin{defi}
Let $\Ec$ be a locally free sheaf of rank $r$ on $X$. Then 
$$\Det(\Ec):=\bigwedge^r\Ec$$ 
is called the \textit{determinant-bundle} or \textit{determinant} of $\Ec$.
\end{defi}

\begin{exa}
If $\Lc$ is an invertible sheaf, we have $\Det(\Lc)=\Lc$.
\end{exa}

By \cite[Exercise II.5.16]{hartshorne} determinants satisfy the following properties.

\newpage

\begin{prop}\label{propdet} The situation is the same as in the previous definition.
\begin{enumerate}
 \item Determinants are invertible.
 \item If $r=2$, there is an isomorphism $$\Ec\cong\Ec^{\vee}\otimes\Det(\Ec).$$
 \item If $f:Y\ra X$ is a morphism of schemes and $\Ec$ a locally free $\Oc_X$-module of rank $r$, we have an isomorphism
    $$f\pb(\Det(\Ec))\cong\Det(f\pb(\Ec)).$$
 \item From a short exact sequence 
    $$0\ra\Fc\ra\Ec\ra\Gc\ra 0$$
    of locally free sheaves, we obtain the isomorphism
    $$\Det(\Fc)\otimes\Det(\Gc) \cong \Det(\Ec).$$
\end{enumerate}
\end{prop}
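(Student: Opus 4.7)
The plan is to prove all four statements by reducing to the local situation where $\Ec$ (and, in (iv), also $\Fc$ and $\Gc$) is trivial, and then gluing the local isomorphisms by showing that all constructions involved are natural in $\Ec$. Throughout I would use that $\bigwedge^{\bullet}$ commutes with localization, since it is defined as a quotient of a tensor power and both tensor products and cokernels commute with localization.

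For (i), I would pick a trivializing open cover $\{U_i\}$ for $\Ec$ with isomorphisms $\Ec|_{U_i}\cong\Oc_{U_i}^{r}$; since $\bigwedge^{r}\Oc_{U_i}^{r}\cong\Oc_{U_i}$ (generated by $e_1\wedge\cdots\wedge e_r$ for a basis $e_1,\ldots,e_r$), this shows $\Det(\Ec)|_{U_i}$ is free of rank one, hence $\Det(\Ec)$ is invertible. For (iii), the pullback functor $f\pb$ is right exact and commutes with arbitrary tensor products, so it commutes with the formation of $\bigwedge^{r}$; the isomorphism $f\pb(\Det\Ec)\cong\Det(f\pb\Ec)$ is then natural and follows from the definition.

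For (ii), the key observation is that in rank two the wedge product gives a natural pairing $\Ec\otimes\Ec\ra\bigwedge^{2}\Ec=\Det(\Ec)$, $(s,t)\mapsto s\wedge t$. By adjointness this produces a morphism $\Phi:\Ec\ra\Shom(\Ec,\Det(\Ec))\cong\Ec^{\vee}\otimes\Det(\Ec)$. It remains to check $\Phi$ is an isomorphism, which I would verify locally: on a trivializing open with basis $e_{1},e_{2}$, the section $e_{1}$ maps to the functional $e_{2}\mapsto e_{1}\wedge e_{2}$, $e_{1}\mapsto 0$, and analogously for $e_{2}$; with respect to the induced basis of $\Ec^{\vee}\otimes\Det(\Ec)$ this is the identity, so $\Phi$ is a local (and hence global) isomorphism.

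The main obstacle will be (iv), since the local splitting of the short exact sequence $0\ra\Fc\ra\Ec\ra\Gc\ra 0$ is easy but the local isomorphisms must be shown to glue. For this I would construct a canonical map $\mu:\Det(\Fc)\otimes\Det(\Gc)\ra\Det(\Ec)$ as follows: setting $s:=\Rank(\Fc)$ and $t:=\Rank(\Gc)$, send a local section $(f_{1}\wedge\cdots\wedge f_{s})\otimes(g_{1}\wedge\cdots\wedge g_{t})$ to $f_{1}\wedge\cdots\wedge f_{s}\wedge\tilde{g}_{1}\wedge\cdots\wedge\tilde{g}_{t}\in\Det(\Ec)$, where $\tilde{g}_{j}\in\Ec$ is any local lift of $g_{j}\in\Gc$. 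Independence of the lift follows because two lifts differ by a section of $\Fc$, and such a contribution vanishes in $\Det(\Ec)$ after wedging with $f_{1}\wedge\cdots\wedge f_{s}$ (since $s+1$ sections of the rank-$s$ bundle $\Fc$ cannot be independent in $\bigwedge^{s+1}\Ec$ modulo the appropriate alternating relations; more precisely, each offending term has $s+1$ factors coming from $\Fc$, and after expanding in a local basis of $\Fc$ one obtains a repeated wedge which is zero). Once $\mu$ is defined globally, I would check on a trivializing cover on which the sequence splits — which exists because locally free sequences split locally — that $\mu$ is an isomorphism, by choosing a basis of $\Ec$ adapted to the splitting. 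Functoriality then promotes the local isomorphisms to a global one, completing the proof.
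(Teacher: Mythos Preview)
Your proof is correct and follows the standard approach; the paper itself does not give a proof but simply cites \cite[Exercise II.5.16]{hartshorne}, so your argument is in fact more detailed than what appears there. One minor inaccuracy: in (ii), the local matrix of $\Phi$ with respect to the bases $\{e_1,e_2\}$ and $\{e_1^{\vee}\otimes(e_1\wedge e_2),\,e_2^{\vee}\otimes(e_1\wedge e_2)\}$ is $\left(\begin{smallmatrix}0&-1\\1&0\end{smallmatrix}\right)$ rather than the identity (since $\Phi(e_2)(e_1)=e_2\wedge e_1=-e_1\wedge e_2$), but this is still an isomorphism, so your conclusion stands.
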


\begin{defi}
Let $\Ec$ be a locally free sheaf of rank $r$ on a smooth projective curve $C$. Then $\Deg(\Ec):=\Deg(\Det(\Ec))$ is called the \textit{degree} of $\Ec$, where the degree 
of the line bundle $\Det(\Ec)$ is the degree of the corresponding Weil divisor. If $C$ is a projective curve with an ample invertible sheaf $\Oc_C(1)$ 
the \textit{degree} of $C$ is defined as $\Deg(C):=\Deg(\Oc_C(1))$.
\end{defi}

\begin{exa}
We have $\Deg(\Oc_C(m))=m\cdot\Deg(\Oc_C(1))=m\cdot\Deg(C)$.
\end{exa}

\begin{rem}
\begin{enumerate}
 \item Note that one could also define the degree of $\Ec$ as $$\chi(\Ec)-r\cdot\chi(\Oc_X)=\chi(\Ec)-r(1-g),$$ where $\chi$ denotes the Euler characteristic. 
      This definition is equivalent to our by \cite[Theorem 2.6.9]{lepotier}.
 \item From the alternative definition it becomes clear that the degree is additive on short exact sequences (see \cite[Lemma 1.5 (2)]{maruyama}). 
 \item By \cite[Lemma 1.5 (4)]{maruyama} we have the formula
      $$\Deg(\Ec\otimes\Fc)=\Rank(\Ec)\Deg(\Fc)+\Rank(\Fc)\Deg(\Ec).$$
\end{enumerate}
\end{rem}

We close this subsection by stating two prominent theorems for locally free sheaves.

\begin{thm}[Serre duality]
Let $X$ be a smooth projective variety of positive dimension $d$ defined over an algebraically closed field and let $\omega_X:=\bigwedge^d\Omega_X$ 
be the canonical bundle on $X$. For every locally free sheaf $\Ec$ on $X$ there are natural isomorphisms
$$\lK^i(X,\Ec)\cong\lK^{d-i}(X,\Ec^{\vee}\otimes\omega_X)^{\ast}$$
for all $i=0,\ldots,d$, where $^{\ast}$ denotes the dual vector space.
\end{thm}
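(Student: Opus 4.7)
The plan is to prove Serre duality by reduction to the case of projective space, where it can be verified by explicit Čech computation, and then transfer the statement to an arbitrary smooth projective variety using a closed embedding and an identification of the abstract dualizing sheaf with the canonical sheaf $\omega_X=\bigwedge^d\Omega_X$. Throughout, ``natural'' means functorial in $\Ec$ via Yoneda/composition pairings.

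First I would treat $X=\Prim^N_k$. One computes directly that $\lK^N(\Prim^N,\Oc(-N-1))\cong k$ and that the multiplication pairing
$$\lK^0(\Prim^N,\Oc(m))\otimes\lK^N(\Prim^N,\Oc(-m-N-1))\lra\lK^N(\Prim^N,\Oc(-N-1))\cong k$$
is perfect for every $m\in\Z$. Since $\omega_{\Prim^N}\cong\Oc(-N-1)$, this gives the statement for $\Ec=\Oc(m)$ in top degree. Vanishing of the intermediate cohomology groups of line bundles on $\Prim^N$ and descending induction on $i$ via a short exact sequence of the form
$$0\lra\Kc\lra\Dirsum\Oc(-m_j)\lra\Ec\lra 0,$$
obtained from a surjection of a sufficiently negative twist of $\Oc^{\oplus r}$ onto $\Ec^{\vee}$ after dualizing, together with the long exact sequence in cohomology and the five-lemma, propagates the duality isomorphism from line bundles to arbitrary locally free $\Ec$ and from degree $N$ down to degree $0$.

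Next I would move to a general smooth projective $X$ of dimension $d$ by choosing a closed immersion $\iota:X\inj\Prim^N$. For any coherent sheaf $\Fc$ on $X$ one has a dualizing sheaf $\omega_X^{\circ}$ characterized by a trace map $t:\lK^d(X,\omega_X^{\circ})\to k$ making the pairing
$$\Hom_{\Oc_X}(\Fc,\omega_X^{\circ})\otimes \lK^d(X,\Fc)\lra k$$
perfect; existence of $(\omega_X^{\circ},t)$ follows from the projective-space case applied to $\iota_{\ast}\Fc$ together with the adjunction
$$\Hom_{\Oc_X}(\Fc,\Shom_{\Oc_{\Prim^N}}(\iota_{\ast}\Oc_X,\omega_{\Prim^N}))\cong \Hom_{\Oc_{\Prim^N}}(\iota_{\ast}\Fc,\omega_{\Prim^N}).$$
Applying this perfect pairing to $\Fc=\Ec^{\vee}\otimes\omega_X^{\circ}$ and iterating via a locally free resolution of $\Ec^{\vee}$ yields the duality in all degrees $i$; the intermediate degrees are handled by $\delta$-functor arguments, writing $\Ec$ as a kernel in a short exact sequence
$$0\lra\Ec\lra\Dirsum\Oc_X(-m_j)\lra\Qc\lra 0$$
with the $-m_j$ sufficiently negative so that the higher cohomology of the middle term is controlled by the projective-space case.

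Finally, to obtain the stated form of the theorem one must identify $\omega_X^{\circ}$ with $\omega_X=\bigwedge^d\Omega_X$. Smoothness of $X$ forces $\iota$ to be a regular embedding with conormal sheaf $\Nc_{X/\Prim^N}^{\vee}$ locally free of rank $N-d$, and the fundamental exact sequence
$$0\lra\Nc_{X/\Prim^N}^{\vee}\lra\iota\pb\Omega_{\Prim^N}\lra\Omega_X\lra 0$$
together with Proposition \ref{propdet}(iv) yields the adjunction-style formula
$$\omega_X\cong\iota\pb\omega_{\Prim^N}\otimes\Det\left(\Nc_{X/\Prim^N}^{\vee}\right)^{\vee},$$
while an analogous computation using the Koszul resolution of $\iota_{\ast}\Oc_X$ and $\Shom$ into $\omega_{\Prim^N}$ produces the same expression for $\omega_X^{\circ}$. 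The main obstacle is precisely this last identification $\omega_X^{\circ}\cong\omega_X$: the abstract duality statement is essentially a cohomological bookkeeping exercise once the pairing on $\Prim^N$ is in hand, but pinning down the dualizing sheaf in explicit terms requires the full adjunction/Koszul computation along $\iota$ and the verification that the trace on $\omega_X$ induced from the trace on $\omega_{\Prim^N}$ is compatible with cup product on $X$.
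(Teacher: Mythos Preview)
Your sketch is correct and is precisely the standard argument from Hartshorne (reduce to $\Prim^N$ via explicit \v{C}ech computation, transfer via a closed embedding using an abstract dualizing sheaf, then identify $\omega_X^{\circ}\cong\bigwedge^d\Omega_X$ through the conormal sequence and Koszul resolution), which is exactly the reference the paper cites in lieu of a proof. There is nothing to add or correct.
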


\begin{proof}
See \cite[Corollary III.7.7]{hartshorne}.
\end{proof}

\begin{thm}[Riemann-Roch]\label{thmrr}
Let $C$ be a smooth projective curve of genus $g$ defined over an algebraically closed field $k$ and let $\Ec$ be a locally free sheaf on $C$. Then we have 
$$\text{h}^0(C,\Ec)-\text{h}^1(C,\Ec)=\Deg(\Ec)+(1-g)\cdot\Rank(\Ec),$$
where $\Th^i(\_):=\Dim_k(\lK^i(\_))$.
\end{thm}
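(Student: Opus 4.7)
The plan is to reduce the statement to the case $\Ec=\Oc_C$ by two successive inductions, exploiting that both sides of the claimed equality are additive on short exact sequences of locally free sheaves. First I would settle additivity; then reduce the rank to one; then reduce an arbitrary line bundle to $\Oc_C$; and finally verify the base case.

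\textbf{Step 1 (Additivity).} For a short exact sequence $0\to\Fc\to\Ec\to\Gc\to 0$ of locally free sheaves on $C$, the long exact sequence of cohomology together with the vanishing $\Th^i(C,-)=0$ for $i\geq 2$ (since $\Dim(C)=1$) shows that $\Th^0-\Th^1$ is additive. The right hand side is additive because rank clearly is, and degree is (by the remark following the definition of degree). Hence the theorem for any two terms of such a sequence implies it for the third.

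\textbf{Step 2 (Reduction to line bundles, induction on the rank $r$).} For $r\geq 2$ I would produce a short exact sequence
\begin{equation*}
0\lra\Lc\lra\Ec\lra\Ec'\lra 0
\end{equation*}
with $\Lc$ invertible and $\Ec'$ locally free of rank $r-1$. Twisting by a sufficiently ample $\Oc_C(n)$ arranges $\Th^0(C,\Ec\otimes\Oc_C(n))\neq 0$; a nonzero global section gives a map $\Oc_C(-n)\to\Ec$ with rank one image. Replacing this image by its saturation $\Lc$ inside $\Ec$ and using that coherent torsion-free sheaves on a smooth curve are locally free, $\Lc$ is invertible and the quotient $\Ec':=\Ec/\Lc$ is torsion-free of rank $r-1$, hence locally free of rank $r-1$. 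Step 1 then propagates the statement from $\Lc$ and $\Ec'$ (where it holds by induction) to $\Ec$.

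\textbf{Step 3 (Line bundles and base case).} For $\Lc=\Oc_C$ the asserted formula reads $1-g=0+(1-g)\cdot 1$, which is immediate from $\Th^0(C,\Oc_C)=1$ (since $C$ is integral and projective over the algebraically closed $k$) and the definition $g:=\Th^1(C,\Oc_C)$. For an arbitrary line bundle $\Lc=\Oc_C(D)$ with $D=\sum_P n_P\cdot P$, induct on $\sum_P|n_P|$: the short exact sequences
\begin{equation*}
0\lra\Oc_C(D)\lra\Oc_C(D+P)\lra i_{P,\ast}k\lra 0
\end{equation*}
reduce the cases $D\pm P$ to the case $D$ via Step 1, using $\Th^0(i_{P,\ast}k)=1$, $\Th^1(i_{P,\ast}k)=0$, and that passing from $D$ to $D+P$ increases the degree of the line bundle by $1$ while leaving the rank equal to $1$.

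\textbf{Main obstacle.} The only non-formal step is the rank reduction in Step 2: one has to ensure that the line subsheaf constructed from a global section admits a locally free quotient, which relies on the fact that coherent torsion-free sheaves on a smooth curve are locally free (a pointwise statement over the DVRs $\Oc_{C,P}$). Once this is granted, the rest is standard devissage combining additivity of $\Th^0-\Th^1$, degree, and rank.
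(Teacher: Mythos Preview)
Your argument is correct. The paper does not give its own proof here; it simply cites \cite[Theorem 2.6.9]{lepotier}, so there is nothing to compare against beyond noting that your d\'evissage (additivity of $\chi$, reduction to line bundles via a saturated rank-one subsheaf, then the standard induction on $\sum_P|n_P|$ using the skyscraper sequence) is precisely the classical route and is the content of the cited reference. One small point of hygiene: in Step~3 you invoke additivity of $\Th^0-\Th^1$ for a sequence whose cokernel $i_{P,\ast}k$ is not locally free, whereas Step~1 as written only states additivity for short exact sequences of locally free sheaves. The long-exact-sequence argument of Step~1 of course applies verbatim to arbitrary coherent sheaves on a curve, so this is only a phrasing issue, not a mathematical one.
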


\begin{proof}
See \cite[Theorem 2.6.9]{lepotier}.
\end{proof}

\subsection{Frobenius}

In this section we define the (absolute) Frobenius morphism.

\begin{defi}
Let $X$ be a scheme over a field $k$ of positive characteristic $p$. Then the map $\TF_X:X\ra X$ given by the identity on the topological spaces and the Frobenius homomorphism on the section rings, is called the \textit{(absolute) Frobenius}. Note that we will write $\TF$ instead of $\TF_X$ if we deal only with one scheme.
\end{defi}

\begin{lem}\label{frobprop}
\begin{enumerate}
\item Let $g:X\ra Y$ be a morphism of schemes over $\Z/(p)$. Then the equality $\TF_Y\circ g=g\circ \TF_X$ holds.
\item For $x\in X$, we have $\TF_X(x)=x$.
\item If all stalks of $X$ are regular rings, then $\TF_X$ is flat.
\item If $X$ is integral and geometrically reduced, we have $\Deg(\TF_X)=p^{\Dim(X)}$.
\end{enumerate}
\end{lem}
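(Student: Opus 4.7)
The plan is to dispatch (i)--(iii) with short arguments and concentrate the real work on (iv).

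For (i), on topological spaces both composites coincide with $g$ since $\TF_X$ and $\TF_Y$ are the identity. On sections, for any open $V\subseteq Y$ and any $s\in\Oc_Y(V)$ the map $g^{\#}$ is a ring homomorphism, hence commutes with raising to the $p$-th power:
$$g^{\#}(\TF_Y^{\#}(s))=g^{\#}(s^p)=g^{\#}(s)^p=\TF_X^{\#}(g^{\#}(s)).$$
This forces $\TF_Y\circ g=g\circ\TF_X$. Part (ii) is immediate, since $\TF_X$ is by definition the identity on the underlying topological space. For (iii), Kunz's theorem (Theorem~\ref{kunzthm}) characterises regular local rings of characteristic $p$ as those whose Frobenius is flat. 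Since flatness of $\TF_X$ can be checked stalkwise, the hypothesis that every stalk of $X$ is regular yields the conclusion at once.

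The main content is (iv). The strategy is to reduce the computation to the case of affine space via Noether normalization. Assuming $X$ is of finite type over a field $k$ of characteristic $p$ (which the hypothesis ``geometrically reduced'' tacitly presumes), choose a finite surjective morphism $\pi\colon X\to\A_k^d$ with $d=\Dim(X)$. By part (i) one has
$$\pi\circ\TF_X=\TF_{\A_k^d}\circ\pi.$$
The hypothesis that $X$ is geometrically reduced translates into $K(X)$ being separably generated over $k$, so that the imperfection degree $[K(X):K(X)^p]$ is finite and $\TF_X$ is a dominant generically finite morphism of integral schemes. Since $\pi$ is likewise generically finite, multiplicativity of degrees in the corresponding towers of function fields applied to both sides of the displayed equation yields
$$\Deg(\pi)\cdot\Deg(\TF_X)=\Deg(\TF_{\A_k^d})\cdot\Deg(\pi),$$
and therefore $\Deg(\TF_X)=\Deg(\TF_{\A_k^d})$. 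On affine space a direct computation shows that $k[x_1,\ldots,x_d]$ is a free module of rank $p^d$ over its subring $k[x_1^p,\ldots,x_d^p]$ with monomial basis $\{x_1^{i_1}\cdots x_d^{i_d}\}_{0\leq i_j<p}$, so $\Deg(\TF_{\A_k^d})=p^d$, which finishes the proof.

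The main obstacle is the verification in (iv) that ``geometrically reduced'' delivers exactly the piece of information needed, namely that $K(X)/k$ is separably generated so the imperfection degree takes its generic value $p^d$. Without this separability assumption the $p$-th power map $K(X)\to K(X)$ would miss too many elements and $[K(X):K(X)^p]$ would exceed $p^d$, so the hypothesis is indispensable for the equality to hold.
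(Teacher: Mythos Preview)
Your arguments for (i)--(iii) are correct and more explicit than the paper's proof, which simply cites Liu's book and Kunz's theorem without further detail.

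The Noether normalization approach to (iv) is a good strategy, and the reduction $\Deg(\TF_X)=\Deg(\TF_{\A_k^d})$ via part (i) and multiplicativity of degrees is valid. Two points need care, however. First, the image of the \emph{absolute} Frobenius on $k[x_1,\ldots,x_d]$ is $k^p[x_1^p,\ldots,x_d^p]$, not $k[x_1^p,\ldots,x_d^p]$; hence $\Deg(\TF_{\A_k^d})=p^d\cdot[k:k^p]$, which equals $p^d$ only for perfect $k$. Since the thesis works over algebraically closed fields this is harmless in context, but statement (iv) for the absolute Frobenius is actually false over an imperfect base (take $X=\A_k^1$ with $k=\mathbb{F}_p(s)$: then $[k(t):k^p(t^p)]=p^2\neq p$). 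Liu's cited corollary is really about the relative Frobenius $F_{X/k}$.

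Second, your closing paragraph contradicts your own argument. The Noether normalization reduction uses only that $X$ is integral and of finite type over $k$; geometric reducedness plays no role in it, and finiteness of $[K(X):K(X)^p]$ requires only that $K(X)$ be finitely generated over an $F$-finite field, not separability. So the claim that without separability $[K(X):K(X)^p]$ would exceed $p^d$ cannot be right: your own proof shows this degree is always $p^d[k:k^p]$, independent of whether $K(X)/k$ is separable. Over a perfect field ``integral'' already forces ``geometrically reduced'', making the hypothesis redundant there; its genuine role is in the relative setting, where it ensures that $X^{(p)}$ is again integral so that $\Deg(F_{X/k})$ is well-defined.
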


\begin{proof}
See \cite[Lemma 3.2.22]{liu} for parts (i) and (ii). Part (iii) follows from Kunz' Theorem \ref{kunzthm}. Part (iv) is Corollary 3.2.27 of \cite{liu}.
\end{proof}

Since the absolute Frobenius acts by $x\mapsto x^p$ on $k$, the morphism $\TF_X:X\ra X$ is $k$-linear if and only if $k=\Z/(p)$. In the general case, one obtains a $k$-linear morphism by the following construction. First, form the pull-back
$$X^{(p)}:=X\times_{\Spec(k)}\Spec(k)\stackrel{\text{pr}_2}{\ra}\Spec(k)$$
with respect to the structure map $f:X\ra\Spec(k)$ and the absolute Frobenius $\TF_{\Spec(k)}$ (see the diagram below). 

Secondly, one obtains by the universal property of pull-backs (and part (i) of the previous lemma) a morphism $\TF_{\text{rel}}:X\ra X^{(p)}$ of varieties over $k$, which we call \textit{relative Frobenius} or \textit{$k$- linear Frobenius}.

The following diagram shows all involved maps and schemes.
$$\xymatrix{
X\ar@/_/[ddrr]_f\ar@/^/[drrrr]^{\TF_X}\ar@{-->}[drr]^{\TF_{\text{rel}}} & & & & \\
& & X^{(p)}\ar[rr]^{\text{pr}_1}\ar[d]^{\text{pr}_2} & & X\ar[d]^f \\
& & \Spec(k)\ar[rr]^{\TF_{\Spec(k)}} & & \Spec(k)
}$$
If $X$ is an affine variety $\Spec(k[T_1,\ldots,T_n]/I)$ over $k$, then the absolute Frobenius acts on $\Oc_X$ by sending $g\in k[T_1,\ldots,T_n]/I$ to $g^p$, while the relative Frobenius acts on $\Oc_X$ by raising only the (images of the) variables $T_i$ to their $p$-th power (compare \cite[Section 2.1]{liedtke}). See \cite[Section 3.2.4]{liu} for a more general definition of the absolute and relative Frobenius morphisms.

Note that if $k$ is perfect, then $X$ and $X^{(p)}$ are isomorphic as schemes over $k$, since their structural morphisms differ only by an isomorphism.

\begin{defi}
Let $\TF:X\ra X$ be the absolute Frobenius morphism and $\Gc$ an $\Oc_X$-module. We call the pull-back $\fpb{}(\Gc)$ of $\Gc$ defined by the following diagram
$$\xymatrix{
\fpb{}(\Gc)\ar[r]\ar[d] & \Gc\ar[d]\\
X\ar[r]^{\TF} & X
}$$
the \textit{Frobenius pull-back} of $\Gc$.
\end{defi}

For line bundles the Frobenius pull-back is easy to compute.

\begin{lem}\label{fpbline}
If $X$ is separable and $k$ is algebraically closed, we have $$\fpb{}(\Lc)\cong\Lc^p$$ for every invertible sheaf $\Lc$ on $X$.
\end{lem}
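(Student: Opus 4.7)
The plan is to analyze the Frobenius pull-back of an invertible sheaf via transition functions on a trivializing open cover. Recall from Lemma~\ref{frobprop}~(ii) that the absolute Frobenius $\TF: X \to X$ is the identity on the underlying topological space, and by definition acts on sections by raising them to the $p$-th power. Hence pulling back along $\TF$ preserves trivializing covers, and the new cocycle data is obtained from the old by applying $\TF^\#$.

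Concretely, I would choose an open cover $\{U_i\}_{i \in I}$ of $X$ on which $\Lc$ is trivializable, together with transition functions $g_{ij} \in \Gamma(U_i \cap U_j, \Oc_X^\times)$ satisfying the cocycle relation $g_{ij} g_{jk} = g_{ik}$. Since $\TF$ induces the identity on the topological space, the same cover $\{U_i\}$ trivializes $\fpb{}(\Lc)$, and the new transition functions are $\TF^\#(g_{ij}) = g_{ij}^p$. These cocycles are precisely those that define $\Lc^{\otimes p}$, so the desired isomorphism $\fpb{}(\Lc) \cong \Lc^p$ follows by reassembling the local trivializations into a global isomorphism.

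The only technical point that needs justification is that pulling back along $\TF$ really does transform the trivialization data in this way; this is immediate from the general description of the pull-back of a locally free sheaf combined with the explicit form of $\TF^\#$, and does not rely on any deep input. The hypotheses that $X$ is separable and $k$ is algebraically closed appear to play no essential role in the transition-function computation itself; rather, they seem to ensure that $\Pic(X)$ admits its usual geometric description, so that the result can legitimately be interpreted as saying that the absolute Frobenius acts as multiplication by $p$ on $\Pic(X)$. The main obstacle, if any, is thus purely notational: keeping track of which occurrences of $p$ refer to the exponent from the $p$-th power map and which refer to the tensor power $\Lc^{\otimes p}$.
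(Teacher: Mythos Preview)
Your argument via transition functions is correct and is the standard way to prove this statement. The paper does not actually give its own proof here; it simply cites \cite[Lemma~1.2.6]{frobsplit}. Your direct computation---observing that $\TF$ is the identity on the underlying space and that $\TF^\#$ sends each cocycle $g_{ij}$ to $g_{ij}^p$---is exactly the content of that cited lemma, so there is nothing to compare beyond noting that you have supplied the argument the paper omits. Your remark that the hypotheses on $X$ and $k$ are not needed for the cocycle computation is also correct; the result holds for any scheme over $\F_p$.
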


\begin{proof}
See \cite[Lemma 1.2.6]{frobsplit}.
\end{proof}

We end up this section by defining the Hasse invariant of elliptic curves (compare with \cite[Chapter IV.4]{hartshorne}). Let $X$ be an elliptic curve over a perfect field $k$ of positive characteristic $p$. Then the absolute Frobenius $\TF:X\ra X$ induces a map 
$$\lK^1(\TF):\lK^1(X,\Oc_X)\ra\lK^1(X,\Oc_X)$$ 
on cohomology. This map fulfills 
$$\lK^1(\TF)(\lambda\cdot a)=\lambda^p\cdot\lK^1(\TF)(a)$$
for all $\lambda\in k$ and $a\in \lK^1(X,\Oc_X)$. Since $X$ is elliptic, we have $\Dim_k(\lK^1(X,\Oc_X))=1$. Thus, the map $\lK^1(\TF)$ has to be either 0 or bijective, since $k$ is perfect.

\begin{defi}
The \textit{Hasse invariant} of $X$ is zero if $\lK^1(\TF)$ is the zero map and one if $\lK^1(\TF)$ is bijective.
\end{defi}

\subsection{Syzygy bundles}

In this subsection we will turn our attention to syzygy bundles, since they give a nice class of examples of vector bundles and because exactly these are 
the vector bundles that will appear in the application of vector bundles to Hilbert-Kunz theory.

\begin{defi}
Let $k$ be a field and $R$ an affine $k$-algebra. Let $f_1,\ldots,f_n\in R$ and $X:=\Spec(R)$. 
\begin{enumerate}
\item We call the kernel of the map 
	$$\Oc_X^n\stackrel{f_1,\ldots,f_n}{\lra}\Oc_X$$
	the \textit{sheaf of syzygies} for $f_1,\ldots,f_n$ and denote it by $\Syz_X(f_1,\ldots,f_n)$. 
\item Let $\Ic_Z$ be the ideal sheaf of the closed subscheme $Z:=V(f_1,\ldots,f_n)\subseteq X$. We call the short exact sequence
	$$0\lra\Syz_X(f_1,\ldots,f_n)\lra\Oc_X^n\stackrel{f_1,\ldots,f_n}{\lra}\Ic_Z\lra 0$$
	the \textit{presenting sequence} of the sheaf of syzygies.
\end{enumerate}
\end{defi}

\begin{defi}
Let $k$ be a field and $R$ a standard-graded, affine $k$-algebra. Let $f_1,\ldots,f_n\in R$ be homogeneous elements with $\Deg(f_i)=d_i$. Let $Y:=\Proj(R)$. 
\begin{enumerate}
\item We call the kernel of the map 
	$$\bigoplus_{i=1}^n\Oc_Y(-d_i)\stackrel{f_1,\ldots,f_n}{\lra}\Oc_Y$$
	the \textit{sheaf of syzygies} for $f_1,\ldots,f_n$ and denote it by $\Syz_Y(f_1,\ldots,f_n)$. 
\item Let $\Ic_Z$ be the ideal sheaf of the closed subscheme $Z:=V_+(f_1,\ldots,f_n)\subseteq Y$. We call the short exact sequence
	$$0\lra\Syz_Y(f_1,\ldots,f_n)\lra\bigoplus_{i=1}^n\Oc_Y(-d_i)\stackrel{f_1,\ldots,f_n}{\lra}\Ic_Z\lra 0$$
	the \textit{presenting sequence} of the sheaf of syzygies.
\end{enumerate}
\end{defi}

Note that in both situations the sheaf of syzygies $\Syz_X(f_1,\ldots,f_n)$ resp. $\Syz_Y(f_1,\ldots,f_n)$ is nothing but the sheafification of the (graded) 
$R$-module $\Syz_R(f_1,\ldots,f_n)$. Therefore we may define in the graded case for any $m\in\Z$ the \textit{sheaf of syzygies of total degree $m$} of 
$f_1,\ldots,f_n$, denoted by $\Syz_Y(f_1,\ldots,f_n)(m)$, as the sheafification of the $R$-module $\Syz_R(f_1,\ldots,f_n)(m)$. Note that the equality
$$\Syz_Y(f_1,\ldots,f_n)(m)=\Syz_Y(f_1,\ldots,f_n)\otimes\Oc_Y(m)$$ 
holds, since $R$ is standard-graded (for counter-examples in the non standard-graded case see for example \cite[Section 1.5]{weighted}). Moreover, if 
$R$ has dimension at least two and satisfies $S_2$ the sheaf 
$\Syz_Y(f_1,\ldots,f_n)(m)$ encodes the homogeneous component of degree $m$ of the $R$-module $\Syz_R(f_1,\ldots,f_n)$. 
We get this homogeneous component back by taking global sections
$$\Gamma\left(Y,\Syz_Y(f_1,\ldots,f_n)(m)\right)\cong \Syz_R(f_1,\ldots,f_n)_m$$
(cf. \cite[Exercise III.3.5]{hartshorne}).

\begin{exa}
Let $R:=k[X_0,\ldots,X_n]$ and $Y:=\Proj(R)=\Prim^n_k$. Then there is a short exact sequence (cf. \cite[Theorem II.8.13]{hartshorne})
$$0\lra\Omega_{Y/k}\lra\Oc_Y^{n+1}(-1)\stackrel{X_0,\ldots,X_n}{\lra}\Oc_Y\lra 0,$$
which arises from the Euler sequence. 
This sequence is a presenting sequence of $\Omega_{Y/k}$ and of $\Syz_Y(X_0,\ldots,X_n)$, hence they are isomorphic and $\Syz_Y(R_+)$ is nothing but 
the cotangent bundle on $Y$. If $Y'\subseteq Y$ is a closed subvariety, the sheaf of syzygies $\Syz_{Y'}(X_0,\ldots,X_n)$ on $Y'$ is just the restriction of 
the cotangent bundle $\Omega_{Y/k}$ to $Y'$.
\end{exa}

Recall some basic properties of syzygy modules.

\begin{lem}\label{manipulationlemma} Let $R$ be a standard-graded, affine $k$-domain and let $f,f_1,\ldots,f_n\in R$ be homogeneous and $f$ non-zero. The following statements hold.
\begin{enumerate}
 \item Let $g_1,\ldots,g_m\in(f_1,\ldots,f_n)$ be homogeneous elements with $g_j=\sum_{i=1}^nh_{ij}\cdot f_i$. This defines a graded $R$-module homomorphism
	$$\Syz_R(g_1,\ldots,g_m)\stackrel{(h_{ij})_{ij}}{\lra}\Syz_R(f_1,\ldots,f_n).$$
 \item The following isomorphism holds
	$$\Syz_R(ff_1,\ldots,ff_n)\cong\Syz_R(f_1,\ldots,f_n)(-\Deg(f)).$$
 \item If $f,f_n$ is an $R$-regular sequence, we have 
    $$\Syz_R(ff_1,\ldots,ff_{n-1},f_n)\cong\Syz_R(f_1,\ldots,f_n)(-\Deg(f)).$$
 \item If $\Deg(ff_i)=\Deg(f_n)$ for some $i\in\{1,\ldots,n-1\}$, we have $$\Syz_R(f_1,\ldots,f_n)\cong\Syz_R(f_1,\ldots,f_{n-1},f_n+ff_i).$$ 
 \end{enumerate}
\end{lem}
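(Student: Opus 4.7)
The plan is to verify each of the four claims by a direct elementwise computation, using the concrete description of $\Syz_R(h_1,\ldots,h_k)$ as the module of tuples $(c_1,\ldots,c_k)\in R^k$ with $\sum_jc_jh_j=0$, and to track degrees carefully in each case. No deep machinery is needed; the main bookkeeping issue is seeing that the prescribed maps are homogeneous and, in parts (ii) and (iii), that we may legitimately cancel powers of $f$.

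For part (i), I will define $\phi$ by $(a_1,\ldots,a_m)\mapsto(b_1,\ldots,b_n)$ with $b_i:=\sum_{j=1}^m h_{ij}a_j$. The identity $\sum_ib_if_i=\sum_j a_j\bigl(\sum_i h_{ij}f_i\bigr)=\sum_ja_jg_j=0$ shows $\phi$ lands in $\Syz_R(f_1,\ldots,f_n)$, and $\Deg h_{ij}=\Deg g_j-\Deg f_i$ makes $\phi$ a graded homomorphism. For part (ii), the point is that $R$ is a domain and $f\neq 0$, so for a tuple $(a_1,\ldots,a_n)$ the relations $f\cdot\sum a_if_i=0$ and $\sum a_if_i=0$ are equivalent. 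Thus the identity on tuples is an ungraded isomorphism, and counting degrees (a syzygy at total degree $D$ on the left has components of degree $D-\Deg(f)-\Deg(f_i)$, which is the degree of a component of a syzygy of $(f_1,\ldots,f_n)$ at total degree $D-\Deg(f)$) produces the shift $(-\Deg(f))$.

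For part (iii), I send $(a_1,\ldots,a_n)\mapsto(a_1,\ldots,a_{n-1},a_n/f)$. The division is justified by the regular-sequence hypothesis: from $a_nf_n=-f\sum_{i<n}a_if_i\in(f)$ and the fact that $f_n$ is a non-zerodivisor on $R/(f)$, we deduce $a_n\in(f)$, and then $a_n/f$ is unambiguous because $f$ is itself a non-zerodivisor. The inverse multiplies the last coordinate by $f$, and the same degree count as in (ii) yields the shift. For part (iv), I use the substitution $(a_1,\ldots,a_n)\mapsto(a_1,\ldots,a_{i-1},a_i-fa_n,a_{i+1},\ldots,a_n)$, based on rewriting $\sum_ja_jf_j=0$ as $\sum_{j\neq i,n}a_jf_j+(a_i-fa_n)f_i+a_n(f_n+ff_i)=0$. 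The inverse adds $fa_n$ back to the $i$-th coordinate, and the hypothesis $\Deg(ff_i)=\Deg(f_n)$ guarantees that $fa_n$ has the same degree as $a_i$, so the map is homogeneous of degree zero. The only mildly subtle point in the whole argument is the cancellation in part (iii); everything else is formal once the map is written down.
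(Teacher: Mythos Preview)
Your proof is correct and follows essentially the same approach as the paper: the paper declares parts (i), (ii), and (iv) obvious and only writes out part (iii), where it uses precisely your argument---reduce the syzygy relation modulo $f$, use that $f_n$ is a non-zerodivisor on $R/(f)$ to conclude $a_n\in(f)$, and then divide. Your additional detail on the trivial parts and the degree bookkeeping is fine and adds nothing contrary to the paper's treatment.
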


\begin{proof}
Since the parts (i), (ii) and (iv) are obvious, we only prove part (iii). Let 
$$(s_1,\ldots,s_n)\in\Syz_R(ff_1,\ldots,ff_{n-1},f_n)$$ 
be a syzygy of total degree $m$. We obtain
$$0=\sum_{i=1}^{n-1}s_iff_i+s_nf_n\equiv s_nf_n\text{ modulo }f.$$
This forces $s_n\in (f)$, since $f_n$ is a non-zero divisor in $R/(f)$. But then $(s_1,\ldots,s_{n-1},s_n/f)$ is a syzygy for $f_1,\ldots,f_n$ of 
total degree $m-\Deg(f)$. On the other hand, if $(t_1,\ldots,t_n)$ is a syzygy for $f_1,\ldots,f_n$ of total degree $m'$, then $(t_1,\ldots,t_{n-1},ft_n)$ 
is a syzygy for the elements $ff_1,\ldots,ff_{n-1},f_n$ of total degree $m'+\Deg(f)$. These operations lead to two homomorphisms that are inverse to each other.
\end{proof}

\begin{prop}\label{syzprop}
Let $k$ be an algebraically closed field and $R$ a standard-graded $k$-algebra of dimension at least two and $Y:=\Proj(R)$. Let 
$f_1,\ldots,f_n\in R$ be homogeneous elements with $\Deg(f_i)=d_i$. Assume that at least one of the $f_i$ is a non-zero 
divisor. Then the following statements hold.

\begin{enumerate}
\item The rank of $\Syz_Y(f_1,\ldots,f_n)$ is $n-1$.
\item If the ideal generated by the $f_i$ is $R_+$-primary, then the morphism 
	$$\bigoplus_{i=1}^n\Oc_Y(-d_i)\stackrel{f_1,\ldots,f_n}{\lra}\Oc_Y$$
	is surjective and $\Syz_Y(f_1,\ldots,f_n)$ is locally free.
\item The sheaf $\Syz_Y(f_1,\ldots,f_n)$ is locally free on $U:=D_+(f_1,\ldots,f_n)\subseteq Y$.
\item If $Y$ is an irreducible curve and the ideal $(f_1,\ldots,f_n)$ is $R_+$-primary, then
	$$\Det(\Syz_Y(f_1,\ldots,f_n)(m))\cong\Oc_Y\left((n-1)m-\sum_{i=1}^nd_i\right).$$
	In particular, we have $\Deg(\Syz_Y(f_1,\ldots,f_n)(m))=\left((n-1)m-\sum_{i=1}^nd_i\right)\cdot \Deg(Y)$.
\item Let $\phi:R\ra S$ be a morphism of degree $d$ of normal, standard-graded $k$-domains of dimension two. Let $g:Y':=\Proj(S)\ra Y$ be the 
	corresponding morphism of smooth projective curves. Then the following holds 
	$$g\pb(\Syz_Y(f_1,\ldots,f_n)(m))\cong\Syz_{Y'}(\phi(f_1),\ldots,\phi(f_n))(dm).$$
\item Assume that $\chara(k)=p>0$ and that $Y$ is an irreducible curve. For the iterated pull-backs under the Frobenius, we have
	$$\fpb{e}(\Syz_Y(f_1,\ldots,f_n)(m))\cong\Syz_Y(f_1^q,\ldots,f_n^q)(mq),$$
	where $q=p^e$ and $e\in\N$.
\end{enumerate}
\end{prop}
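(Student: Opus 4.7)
The plan is to treat all six parts in a uniform way by analyzing the defining presenting sequence
$$0\lra\Syz_Y(f_1,\ldots,f_n)\lra\bigoplus_{i=1}^n\Oc_Y(-d_i)\stackrel{f_1,\ldots,f_n}{\lra}\Oc_Y$$
and exploiting that we are on a projective variety of positive dimension, so that torsion-free sheaves are well behaved. For (i), I would compute the rank at the generic point: since at least one $f_i$ is a non-zero divisor, the displayed morphism is non-zero at the generic point of $Y$, hence surjective there (the target has rank one), so the generic rank of the kernel is $n-1$. For (ii), the assumption that $(f_1,\ldots,f_n)$ is $R_+$-primary means $V_+(f_1,\ldots,f_n)=\emptyset$ in $Y$, so at every point of $Y$ some $f_i$ is a unit in the stalk and the morphism to $\Oc_Y$ is surjective. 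Locally the short exact sequence $0\to K\to A^n\to A\to 0$ over a local ring $A$ splits, so $K$ is free of rank $n-1$; globally this gives local-freeness of $\Syz_Y(f_1,\ldots,f_n)$. Part (iii) is the same argument restricted to $U=D_+(f_1,\ldots,f_n)$, where surjectivity onto $\Oc_U$ holds by the very definition of $U$.

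For (iv) I would combine (i) and (ii) to get a short exact sequence
$$0\lra\Syz_Y(f_1,\ldots,f_n)\lra\bigoplus_{i=1}^n\Oc_Y(-d_i)\lra\Oc_Y\lra 0$$
of locally free sheaves on $Y$. Proposition \ref{propdet} (iv) yields
$$\Det(\Syz_Y(f_1,\ldots,f_n))\otimes\Oc_Y\cong\bigotimes_{i=1}^n\Oc_Y(-d_i)\cong\Oc_Y\Bigl(-\sum_{i=1}^nd_i\Bigr).$$
Since $\Syz_Y(f_1,\ldots,f_n)$ has rank $n-1$, tensoring with $\Oc_Y(m)$ and taking determinants multiplies by $\Oc_Y(m)^{\otimes(n-1)}$, giving the asserted line bundle. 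The degree formula follows from $\Deg(\Oc_Y(k))=k\cdot\Deg(Y)$.

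For (v), the key point is that $Y$ and $Y'$ are smooth projective curves, so every torsion-free coherent sheaf (in particular $\Syz_Y(f_1,\ldots,f_n)$, viewed as a subsheaf of a locally free sheaf) is locally free. Thus the presenting sequence is locally split, and applying $g\pb$ preserves exactness. Since $\phi$ has degree $d$, we have $g\pb\Oc_Y(k)\cong\Oc_{Y'}(dk)$, and the morphism $f_i:\Oc_Y(-d_i)\to\Oc_Y$ pulls back to multiplication by $\phi(f_i)$. Hence
$$g\pb\Syz_Y(f_1,\ldots,f_n)\cong\Syz_{Y'}(\phi(f_1),\ldots,\phi(f_n)),$$
and tensoring both sides with $g\pb\Oc_Y(m)\cong\Oc_{Y'}(dm)$ gives the stated shift. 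Part (vi) is the exact same argument for the absolute Frobenius $\TF^e:Y\to Y$: by Lemma \ref{frobprop} (iv) it has degree $q=p^e$, by Lemma \ref{fpbline} we have $\fpb{e}\Oc_Y(m)\cong\Oc_Y(mq)$, and multiplication by $f_i$ pulls back to multiplication by $f_i^q$ since the absolute Frobenius acts on sections by $r\mapsto r^{p^e}$.

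The main obstacle, and the one I would double-check most carefully, is the compatibility in (v): verifying that the presenting sequence really does pull back to the analogous presenting sequence for the images $\phi(f_i)$ with the correct twists. Once one uses local-freeness on a smooth curve to obtain local splitness, exactness of $g\pb$ is automatic, and the rest reduces to tracking how the degree-$d$ morphism acts on the twisting sheaves. Everything else in the proposition then falls out cleanly from the presenting sequence, and (vi) is essentially a translation of (v) into the Frobenius setting using the computations of $\fpb{e}$ on line bundles already collected earlier.
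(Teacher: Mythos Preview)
Your proof is correct and follows essentially the same route as the paper, which argues each part from the presenting sequence and, for (v)--(vi), simply pulls that sequence back (citing \cite[Proposition II.6.9]{hartshorne} and Lemma~\ref{fpbline} for the line-bundle identifications). One small sharpening for your flagged point in (v): the presenting sequence is locally split because the \emph{quotient} $\Ic_Z$---being a subsheaf of $\Oc_Y$ on a smooth curve---is torsion-free and hence locally free; your general principle already covers this, but it is the cokernel, not the kernel, that drives the splitting.
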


\begin{proof}
\begin{enumerate}
\item Since $f_1,\ldots,f_n$ contains a non-zero divisor, the rank of $\Ic_Z$ (on every irreducible component of $Y$) is one. Since the rank is additive on short exact sequences, 
    the statement follows from the presenting sequence of $\Syz_Y(f_1,\ldots,f_n)$.
\item Since $(f_1,\ldots,f_n)$ is $R_+$-primary, the ideal sheaf $\Ic_Z$ is just $\Oc_Y$. The supplement follows, since kernels of surjective maps between 
    locally free sheaves are locally free.
\item On the affine patches $D_+(f_i)$, the map
\begin{align*}
 && \bigoplus_{j\in\{1,\ldots,n\},j\neq i}\Oc_Y(-d_j)|_{D_+(f_i)} &\longrightarrow \Syz_Y(f_1,\ldots,f_n)|_{D_+(f_i)},\\
 && (g_1,\ldots,g_{i-1},g_{i+1},\ldots,g_n) &\longmapsto \left(g_1,\ldots,g_{i-1},-\frac{\sum_{j\in\{1,\ldots,n\},j\neq i}f_jg_j}{f_i},g_{i+1},\ldots,g_n\right)
\end{align*}
      is an isomorphism.
\item By part (ii) the presenting sequence of $\Syz_Y(f_1,\ldots,f_n)$ ends in $\Oc_Y$. We obtain
    \begin{align*}
    \Det(\Syz_Y(f_1,\ldots,f_n)(m))\otimes\Oc_Y(m) &\cong \Det\left(\bigoplus_{i=1}^n\Oc_Y(m-d_i)\right)\\
    &\cong \Oc_Y\left(nm-\sum_{i=1}^nd_i\right).
    \end{align*}
    Tensoring with $\Oc_Y(-m)$ gives the result. The supplement is clear.
\item The statement follows by taking the pull-back of the presenting sequence of $\Syz_Y(f_1,\ldots,f_n)$ and by \cite[Proposition II.6.9]{hartshorne}.
\item Similarly to (v), the statement follows by taking the Frobenius pull-back of the presenting sequence of $\Syz_Y(f_1,\ldots,f_n)$ using Lemma \ref{fpbline}.
\end{enumerate}
\end{proof}

\begin{rem} In the affine situation the analogues of parts (i), (iii), (v) and (vi) of the above proposition are still true.
\end{rem}

\subsection{Stability}

An important property of vector bundles is the semistablility. It is essential to construct meaningfull moduli spaces (see \cite{huyb}). 

\begin{defi} Let $C$ be a smooth projective curve defined over an algebraically closed field and let $\Ec$ be a locally free $\Oc_C$-module. We call the quotient 
$\mu(\Ec):=\frac{\Deg(\Ec)}{\Rank(\Ec)}$ the \textit{slope} of the sheaf $\Ec$. 
\end{defi}

\begin{defi} Let $C$ be a smooth projective curve defined over an algebraically closed field and let $\Ec$ be a locally free $\Oc_C$-module.
We call $\Ec$ \textit{semistable} if for every locally free subsheaf $0\neq \Fc\subsetneq\Ec$ the inequality 
$\mu(\Fc)\leq\mu(\Ec)$ holds.
We say that $\Ec$ is \textit{stable} if the above inequality is always strict.
\end{defi}

Note that our definition of stability is due to Mumford (another not equivalent definition of stability using Euler 
characteristics is due to Gieseker).

\begin{exa}
Let $C\subsetneq \Prim^2_k$ be a smooth plane curve of degree at least two defined over an algebraically closed field $k$. Then $\Omega_{\Prim^2_k/k}|_C$ 
is semistable as shown in \cite[Proposition 6.2]{tightdim2} and \cite[Corollary 3.5]{tri1}.
\end{exa}

\begin{prop}
Let $C$ be a smooth projective curve defined over an algebraically closed field and let $\Ec$, $\Fc$ be locally free $\Oc_X$-modules. The following statements hold.

\begin{enumerate}
\item Line bundles are stable.
\item If $\Ec$ and $\Fc$ are semistable then $\Ec\oplus\Fc$ is semistable if and only if $\mu(\Fc)=\mu(\Ec)$.
\item The $\Oc_X$-module $\Ec$ is (semi-)stable if and only if for every line bundle $\Lc$ the sheaf $\Ec\otimes\Lc$ is (semi-)stable.
\end{enumerate}
\end{prop}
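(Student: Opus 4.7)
My plan is to dispose of the three parts in the order (i), (iii), (ii), reducing everything to additivity of the degree and the degree-rank formula for tensor products already noted in the remarks above.

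For part (i), I let $\Lc$ be a line bundle and $\Fc$ a nonzero locally free subsheaf with $\Fc \subsetneq \Lc$. Since $\Lc$ is torsion-free, so is $\Fc$, forcing $\Rank(\Fc) = 1$ (nonzero subsheaves cannot have rank zero, and one is the maximum). Thus $\Fc$ is itself a line bundle and the quotient $\Lc/\Fc$ is a nonzero torsion sheaf supported at finitely many closed points of $C$, hence of strictly positive degree. Additivity of the degree on $0 \to \Fc \to \Lc \to \Lc/\Fc \to 0$ then forces $\Deg(\Fc) < \Deg(\Lc)$, giving $\mu(\Fc) < \mu(\Lc)$.

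For part (iii), the crucial observation is that tensoring with the line bundle $\Lc$ is an auto-equivalence of the category of coherent $\Oc_C$-modules, so $\Fc \mapsto \Fc \otimes \Lc$ gives an inclusion-preserving bijection between locally free subsheaves of $\Ec$ and of $\Ec \otimes \Lc$ (with inverse given by tensoring with $\Lc^{\vee}$). The degree-rank formula $\Deg(\Fc \otimes \Lc) = \Rank(\Fc) \Deg(\Lc) + \Deg(\Fc)$ recorded in the previous remark yields $\mu(\Fc \otimes \Lc) = \mu(\Fc) + \Deg(\Lc)$, so the slope inequalities defining (semi)stability translate across this bijection without change, giving both directions of the equivalence.

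For part (ii), the "only if" direction is immediate: if $\mu(\Ec) \neq \mu(\Fc)$, then $\mu(\Ec \oplus \Fc)$ is a weighted average strictly between the two, so whichever summand has the larger slope embeds as a destabilising locally free subsheaf. For the converse, assume $\mu := \mu(\Ec) = \mu(\Fc) = \mu(\Ec \oplus \Fc)$ and let $0 \neq \Gc \subseteq \Ec \oplus \Fc$ be an arbitrary locally free subsheaf. I set $\Gc_1 := \Gc \cap \Ec$ (the kernel of the projection of $\Gc$ to $\Fc$) and let $\Gc_2$ denote the image of $\Gc$ in $\Fc$. Both are torsion-free subsheaves of $\Ec$ resp.\ $\Fc$ on the smooth curve, hence locally free, and when nonzero satisfy $\mu(\Gc_i) \leq \mu$ by semistability of $\Ec$ and $\Fc$. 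The short exact sequence $0 \to \Gc_1 \to \Gc \to \Gc_2 \to 0$ combined with additivity of degree and rank expresses $\mu(\Gc)$ as a convex combination of $\mu(\Gc_1)$ and $\mu(\Gc_2)$ (with edge cases $\Gc_1 = 0$ or $\Gc_2 = 0$ reducing to pure semistability of one summand), hence $\mu(\Gc) \leq \mu$.

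The main technical hurdle, although ultimately routine, is the "if" direction of (ii): a subsheaf of a direct sum need not split as a sum of subsheaves of the summands, so one cannot reduce to the obvious case directly. The trick of bounding $\mu(\Gc)$ by the convex combination of the slopes of the kernel and image of the projection $\Gc \to \Fc$ is the standard device that bypasses this, and it relies crucially on the fact that on a smooth projective curve torsion-freeness is equivalent to local freeness.
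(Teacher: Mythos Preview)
Your proof is correct and follows essentially the same strategy as the paper's: the kernel/image decomposition for part (ii) and the slope-shift under tensoring by a line bundle for part (iii) are exactly the arguments the paper uses. The only noteworthy variation is in part (i): the paper twists by the dual of the sub-line-bundle and argues that a line bundle admitting a nonzero section has non-negative degree, whereas you stay on the quotient side and use that a nonzero torsion sheaf on a smooth curve has strictly positive degree. Your version is slightly cleaner, since it yields the strict inequality needed for stability directly, while the paper's contradiction is phrased against semistability and the passage to stability is left implicit.
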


\begin{proof}
To prove (i) assume that the line bundle $\Lc$ is not semistable. Then there exists a line bundle $\Ec\subsetneq\Lc$ with $\Deg(\Ec)>\Deg(\Lc)$. The inclusion 
$$0\ra\Ec\ra\Lc$$ leads to the inclusion $$0\ra\Oc\ra\Lc\otimes\Ec^{\vee}.$$
Because $\Oc$ injects into $\Lc\otimes\Ec^{\vee}$ its degree $\Deg(\Lc\otimes\Ec^{\vee})=\Deg(\Lc)-\Deg(\Ec)$ has to be non-negative contradicting $\Deg(\Ec)>\Deg(\Lc)$.

To prove part (ii) assume that $\Ec\oplus\Fc$ is semistable. Then $\Ec\oplus 0$ is a locally free subsheaf of $\Ec\oplus\Fc$. The inequality 
$$\mu(\Ec\oplus 0)=\mu(\Ec)\leq \mu(\Ec\oplus\Fc)$$
is equivalent to $\mu(\Ec)\leq\mu(\Fc)$. Replacing $\Ec$ by $\Fc$ gives $\mu(\Fc)\leq\mu(\Ec)$, hence $\mu(\Ec)=\mu(\Fc)$.
Now let $\mu(\Ec)=\mu(\Fc)$. It is easy to see that $\mu(\Ec\oplus\Fc)=\mu(\Ec)=\mu(\Fc)$ holds. Assume that $\Ec\oplus\Fc$ is not semistable. Then there exists a locally free subsheaf $\Gc\subsetneq\Ec\oplus\Fc$ with 
$\mu(\Gc)>\mu(\Ec\oplus\Fc)$. From the short exact sequence
$$0\ra\Ec\ra\Ec\oplus\Fc\ra\Fc\ra 0$$
we obtain locally free subsheaves $\Gc':=\Ker(\Gc\ra\Fc)$ and $\Gc'':=\Ima(\Gc\ra\Fc)$ of $\Ec$ resp. $\Fc$ and the short exact sequence
\begin{equation}\label{gcses}0\ra\Gc'\ra\Gc\ra\Gc''\ra 0.\end{equation}
If $\Gc'=0$, we obtain the contradiction $$\mu(\Fc)\geq\mu(\Gc'')=\mu(\Gc)>\mu(\Ec\oplus\Fc)=\mu(\Fc).$$
Similarly, if $\Gc''=0$, one gets $\mu(\Ec)>\mu(\Ec).$
If $\Gc'$ and $\Gc''$ are both non-zero, one obtains from (\ref{gcses}) the contradiction
$$\Max(\mu(\Gc'),\mu(\Gc''))\geq\mu(\Gc)>\mu(\Ec\oplus\Fc)=\mu(\Ec)=\mu(\Fc)$$
since $\Ec$ and $\Fc$ are semistable.

To prove part (iii) fix a line bundle $\Lc$. If $0\neq\Fc\subsetneq\Ec$ contradicts (semi-)stability, then $0\neq\Fc\otimes\Lc\subsetneq\Ec\otimes\Lc$ 
contradicts (semi-)stability. Similarly, if $0\neq\Fc'\subsetneq\Ec\otimes\Lc$ contradicts (semi-)stability, then $0\neq\Fc'\otimes\Lc^{\vee}\subsetneq\Ec$ 
contradicts (semi-)stability.
\end{proof}

\begin{thm}[Grothendiek]
Let $\Ec$ be a vector bundle of rank $r$ on $\Prim_k^1$, where $k$ is an algebraically closed field. Then there are uniquely determined 
integers $a_1\geq\ldots\geq a_r$ with
$$\Ec\cong\bigoplus_{i=1}^r\Oc_{\Prim_k^1}(a_i).$$
\end{thm}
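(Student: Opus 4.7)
\begin{Proof}[Proof plan]
The plan is to proceed by induction on the rank $r$, the key ingredient being the construction of a suitable line subbundle of maximal degree, followed by a splitting argument via vanishing of $\Ext^1$.

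For the base case $r=1$, the statement reduces to $\Pic(\Prim_k^1)\cong\Z$, which is standard.

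For the inductive step, assume the result holds for bundles of rank $<r$ and let $\Ec$ be a bundle of rank $r$. First I would choose $a\in\Z$ \emph{maximal} with the property $\lK^0(\Prim_k^1,\Ec(-a))\neq 0$; such an $a$ exists because by Serre vanishing applied to the dual $\Ec^{\vee}$ together with Serre duality, $\lK^0(\Ec(-a))=0$ for $a\gg 0$, while some twist has global sections. Pick a non-zero section $s\in\lK^0(\Ec(-a))$. If $s$ vanished at a point $p\in\Prim_k^1$, then using $\Oc(-p)\cong\Oc(-1)$, dividing by the local equation of $p$ would produce a non-zero section of $\Ec(-a-1)$, contradicting maximality of $a$. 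Hence $s$ gives a nowhere-vanishing section of $\Ec(-a)$, i.e.\ an injection $\Oc\hookrightarrow\Ec(-a)$ with locally free quotient, equivalently an exact sequence
$$0\lra\Oc(a)\lra\Ec\lra\Qc\lra 0$$
with $\Qc$ locally free of rank $r-1$. By induction, $\Qc\cong\bigoplus_{i=2}^{r}\Oc(a_i)$ for unique integers $a_i$, which I can order so that $a_2\geq\ldots\geq a_r$.

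Next I would prove $a_i\leq a$ for every $i$, so that the sequence above fits the desired ordering. Twisting the exact sequence by $\Oc(-a-1)$ and taking the long exact cohomology sequence gives
$$0\lra\lK^0(\Oc(-1))\lra\lK^0(\Ec(-a-1))\lra\lK^0(\Qc(-a-1))\lra\lK^1(\Oc(-1)),$$
whose outer terms both vanish, and whose middle-left term vanishes by the maximal choice of $a$. Therefore $\lK^0(\Qc(-a-1))=\bigoplus_{i=2}^{r}\lK^0(\Oc(a_i-a-1))=0$, which forces $a_i\leq a$ for all $i$. Setting $a_1:=a$, the splitting of the extension now follows from
$$\Ext^1(\Qc,\Oc(a_1))\cong\bigoplus_{i=2}^{r}\lK^1(\Prim_k^1,\Oc(a_1-a_i))=0,$$
since $a_1-a_i\geq 0\geq -1$ and $\lK^1(\Prim_k^1,\Oc(n))=0$ for $n\geq -1$. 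This yields the desired isomorphism $\Ec\cong\bigoplus_{i=1}^r\Oc(a_i)$.

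For uniqueness I would argue that the multiset $\{a_i\}$ is recovered from the numerical function $m\mapsto\Dim_k\Hom(\Oc(m),\Ec)=\Dim_k\lK^0(\Ec(-m))$, which for $\bigoplus\Oc(a_i)$ equals $\sum_{i}\max(a_i-m+1,0)$ and whose successive differences determine the $a_i$ uniquely. The technical heart of the argument is the maximality construction producing the line subbundle together with the derived bound $a_i\leq a_1$; once this is in hand, the cohomological vanishing on $\Prim_k^1$ finishes the job cleanly, so I expect no real obstacle beyond verifying that the chosen section is nowhere vanishing.
\end{Proof}
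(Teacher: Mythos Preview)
Your proof is correct and is essentially the standard argument; the paper itself does not give a proof but simply cites \cite[Theorem 1.3.1]{huyb}, where precisely this induction-on-rank approach (maximal-degree line subbundle, $\Ext^1$ vanishing to split, uniqueness from the Hilbert function) is carried out.
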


\begin{proof}
See \cite[Theorem 1.3.1]{huyb}.
\end{proof}

Using this theorem and the last proposition, one gets immediately a classification of all (semi-)stable vector bundles on the projective line:

\begin{cor}
Let $\Ec$ be a vector bundle on $\Prim_k^1$, where $k$ is an algebraically closed field. Then $\Ec$ is semistable if and only if all $a_i$ 
in Grothendiecks Theorem are equal. Moreover, a vector bundle is stable if and only if it is a line bundle.
\end{cor}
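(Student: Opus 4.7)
The plan is to leverage Grothendieck's decomposition $\Ec \cong \bigoplus_{i=1}^r \Oc_{\Prim_k^1}(a_i)$ (with $a_1 \geq \ldots \geq a_r$) together with part (ii) of the preceding proposition, which says that a direct sum of two semistable bundles is semistable precisely when they share the same slope.

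For the first equivalence, the ``if'' direction is straightforward: if $a_1 = \ldots = a_r =: a$, then $\Ec \cong \Oc(a)^{\oplus r}$; since each $\Oc(a)$ is a line bundle hence semistable (even stable, by part (i) of the preceding proposition), an inductive application of part (ii) shows the direct sum is semistable. For the ``only if'' direction, I would argue contrapositively: if the $a_i$ are not all equal, then $a_1 > a_r$ and the slope satisfies
\[
\mu(\Ec) = \frac{a_1 + \ldots + a_r}{r} < a_1 = \mu(\Oc(a_1)),
\]
because at least one summand in the numerator is strictly less than $a_1$. Embedding $\Oc(a_1)$ as the first summand of $\Ec$ exhibits a locally free subsheaf with strictly larger slope, contradicting semistability.

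For the second equivalence, the ``if'' direction is part (i) of the preceding proposition. For ``only if'', suppose $\Ec$ is stable but has rank $r \geq 2$. Then $\Ec$ is in particular semistable, so by the first part all $a_i$ are equal to some integer $a$ and $\Ec \cong \Oc(a)^{\oplus r}$. But then the first summand $\Oc(a) \subsetneq \Ec$ is a proper locally free subsheaf with $\mu(\Oc(a)) = a = \mu(\Ec)$, violating the strict inequality required by stability. Hence $r = 1$ and $\Ec$ is a line bundle.

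No step looks genuinely hard — the main thing to get right is the bookkeeping on slopes in the ``only if'' direction of semistability, ensuring the inequality $\mu(\Oc(a_1)) > \mu(\Ec)$ is strict exactly when the $a_i$ are not all equal. Everything else is an immediate consequence of Grothendieck's splitting theorem and the previously proved properties of the slope.
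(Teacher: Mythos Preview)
Your proof is correct and follows exactly the approach the paper indicates: the corollary is stated without proof as an immediate consequence of Grothendieck's theorem and the preceding proposition, and your argument spells out precisely this deduction using parts (i) and (ii) of that proposition.
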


The next theorem shows that one can find for a given vector bundle a filtration with semistable quotients.

\begin{thm}[Harder, Narasimhan]
Let $C$ be a smooth projective curve. Then every locally free sheaf $\Ec$ over $C$ has a unique filtration
$$0=\Ec_0\subsetneq\Ec_1\subsetneq\ldots\subsetneq\Ec_t=\Ec$$
satisfying the following properties 
\begin{enumerate}
\item all quotients $\Ec_i/\Ec_{i-1}$ are semistable and
\item the slopes $\mu_i:=\mu(\Ec_i/\Ec_{i-1})$ form a strictly decreasing chain $\mu_1>\ldots>\mu_t$.
\end{enumerate}
\end{thm}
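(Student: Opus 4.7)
The plan is to build the filtration by induction on the rank of $\Ec$, taking at each step the so-called \emph{maximal destabilizing subsheaf} as $\Ec_1$, and then to recover uniqueness from a maximality characterization.

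First I would make precise what $\Ec_1$ should be: among all non-zero locally free subsheaves $\Fc\subseteq\Ec$, consider those of maximal slope, and among these pick one of maximal rank. The crux is showing that this definition is well-posed, i.e.\ that slopes of locally free subsheaves of $\Ec$ are bounded above and that the supremum is attained. For this I would first replace an arbitrary coherent subsheaf by its saturation in $\Ec$ (which on a smooth curve is a locally free subsheaf containing it and of slope at least as large), so that it suffices to bound slopes of subbundles. Then, using that any rank-$r$ subbundle of $\Ec$ can be viewed as a line subbundle of $\bigwedge^r\Ec$, I would reduce to bounding degrees of line subbundles of a fixed vector bundle; this is where I expect the main obstacle to lie, since it is the analytic/geometric heart of the argument. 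One proves the bound by observing that a non-zero map $\Lc\hookrightarrow\Ec$ yields $\lK^0(C,\Ec\otimes\Lc^{\vee})\neq 0$, and invoking Serre duality together with Riemann--Roch (Theorem~\ref{thmrr}) to force $\Deg(\Lc)\leq\Deg(\Ec)+r(g-1)+C_0$ for an absolute constant $C_0$ depending only on $\Ec$. Once slopes are bounded, the supremum is attained: among subsheaves realizing the supremum slope, ranks are bounded by $\Rank(\Ec)$, so one can pick one of maximal rank.

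Next I would verify that this maximal destabilizer $\Ec_1$ is itself semistable. If it were not, it would contain a locally free subsheaf $\Gc\subsetneq\Ec_1$ with $\mu(\Gc)>\mu(\Ec_1)$, contradicting the choice of $\Ec_1$ as a subsheaf of $\Ec$ of maximal slope. Similarly, for any locally free subsheaf $\Fc\subseteq\Ec$ one has $\mu(\Fc)\leq\mu(\Ec_1)$, with equality forcing $\Fc\subseteq\Ec_1$: indeed $\Fc+\Ec_1$ is a subsheaf with slope at least $\mu(\Ec_1)$ (by additivity of rank and degree on the short exact sequence $0\to\Fc\cap\Ec_1\to\Fc\oplus\Ec_1\to\Fc+\Ec_1\to 0$ together with $\mu(\Fc\cap\Ec_1)\leq\mu(\Ec_1)$), so by maximality of the rank it must equal $\Ec_1$. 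Having established the defining property of $\Ec_1$, I would apply the induction hypothesis to the quotient $\Ec/\Ec_1$, which is locally free (the quotient of locally free sheaves on a smooth curve by a saturated subsheaf), obtaining a Harder--Narasimhan filtration $0=\overline{\Ec}_0\subsetneq\ldots\subsetneq\overline{\Ec}_{t-1}=\Ec/\Ec_1$; pulling back to $\Ec$ produces the required filtration of $\Ec$. The strict inequality $\mu(\Ec_1)>\mu(\Ec_2/\Ec_1)$ follows because if equality held, then the preimage $\Ec_2$ would be a subsheaf of $\Ec$ with $\mu(\Ec_2)=\mu(\Ec_1)$ and strictly larger rank, violating the rank-maximality of $\Ec_1$.

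For uniqueness I would compare two filtrations $(\Ec_i)$ and $(\Ec'_j)$ satisfying (i) and (ii). The key observation is that $\Ec_1$ is characterized as the unique maximal destabilizer by the property established above, so it suffices to show that $\Ec'_1$ enjoys the same universal property. If $\Fc\subseteq\Ec$ is locally free, composing with the projections to the graded pieces $\Ec'_j/\Ec'_{j-1}$ and using that non-zero maps from a semistable sheaf to a semistable sheaf of strictly smaller slope vanish, one obtains $\mu(\Fc)\leq\mu(\Ec'_1)$ with equality forcing $\Fc\subseteq\Ec'_1$. Hence $\Ec'_1=\Ec_1$, and uniqueness of the rest of the filtration follows by induction on the rank applied to $\Ec/\Ec_1$.
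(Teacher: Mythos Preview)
The paper does not give its own proof here; it simply cites \cite[Lemma 1.3.7]{hnfilt}. Your sketch follows exactly the standard argument one finds in such references: build $\Ec_1$ as the maximal destabilizing subsheaf, check it is semistable and uniquely characterized, and induct on $\Ec/\Ec_1$.

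Two places could be tightened. First, your boundedness step via Riemann--Roch is not quite as written: a vector bundle of very negative degree can still have non-zero global sections, so $\lK^0(C,\Ec\otimes\Lc^{\vee})\neq 0$ alone does not immediately bound $\Deg(\Lc)$. A clean fix, once you have reduced to line subbundles $\Lc\subseteq\Gc$ of a fixed bundle $\Gc=\bigwedge^r\Ec$, is to choose $n$ with $\Gc^{\vee}(n)$ globally generated; the resulting embedding $\Gc\hookrightarrow\Oc_C(n)^N$ forces $\Deg(\Lc)\leq n\cdot\Deg(C)$ since $\Oc_C(n)^N$ is semistable. Second, in your uniqueness argument the vanishing statement ``non-zero maps from a semistable sheaf to a semistable sheaf of strictly smaller slope vanish'' needs the source to be semistable, while you apply it to an arbitrary $\Fc\subseteq\Ec$. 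The fix is immediate: replace $\Fc$ by its own maximal destabilizer $\Fc_1$, which is semistable of slope $\geq\mu(\Fc)$, and run the argument with $\Fc_1$. With these adjustments your proof is complete and matches the standard one.
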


\begin{proof}
See \cite[Lemma 1.3.7]{hnfilt}.
\end{proof}

\begin{defi}
The filtration from the previous theorem is called \textit{Harder-Narasimhan filtration}. The sheaf $\Ec_1$ in the filtration is called the 
\textit{maximal destabilizing subsheaf} of $\Ec$ and the quotient $\Ec/\Ec_{t-1}$ is called the \textit{minimal destabilizing quotient} 
of $\Ec$. The corresponding slopes $\mu_{\text{max}}(\Ec):=\mu_1$ and $\mu_{\text{min}}(\Ec):=\mu_t$ are called \textit{maximal slope} resp. 
\textit{minimal slope} of $\Ec$.
\end{defi}

Working in positive characteristic, it is not true that Frobenius pull-backs of semistable vector bundles are again semistable. A counter example to this 
can be found on the curve
$$C:=\Proj(k[X,Y,Z]/(X^4+Y^4+Z^4)),$$ 
where $k$ is an algebraically closed field with $\chara(k)=3$. Then $\Syz_C(X,Y,Z)$ is semistable, but 
$\fpb{}(\Syz_C(X,Y,Z))$ is not as shown in \cite[Example 3.1]{holgeralmar}. The following definition is due to Miyaoka (cf. \cite[Section 5]{defsss}).

\begin{defi} Let $C$ be a smooth projective curve over an algebraically closed field $k$ of positive characteristic. A locally free 
sheaf $\Ec$ over $C$ is called \textit{strongly semistable}, if all its Frobenius pull-backs $\fpb{e}(\Ec)$, $e\geq 0$, are semistable.
\end{defi}

\begin{defi}
We call a Harder-Narasimhan filtration a \textit{strong Harder-Narasimhan filtration} if all quotients are strongly semistable.
\end{defi}

The next theorem shows that for every locally free sheaf there is a Frobenius pull-back with a strong Harder-Narasimhan filtration. In fact the Frobenius 
pull-backs stabilize in a certain way (see the next remark for the precise statement).

\begin{thm}[Langer]\label{langer}
Let $C$ be a smooth projective curve over an algebraically closed field $k$ with $\chara(k)=p>0$. For every locally free sheaf $\Ec$ there 
exists a (minimal) natural number $e_0$ such that the Harder-Narasimhan filtration of $\fpb{e_0}(\Ec)$ is strong.
\end{thm}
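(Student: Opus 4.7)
The plan is to show that the Harder-Narasimhan polygon of $\fpb{e}(\Ec)$ stabilises under iterated Frobenius pullback. Since $C$ is smooth, Kunz's Theorem \ref{kunzthm} implies that the absolute Frobenius $\TF : C \to C$ is flat; hence pullback by $\TF$ is exact and preserves inclusions of locally free subsheaves. In particular, if $\Fc \subsetneq \fpb{e}(\Ec)$ is any sub-bundle of slope $\nu$, then $\fpb{}(\Fc) \subsetneq \fpb{e+1}(\Ec)$ has slope $p\nu$. Consequently the Harder-Narasimhan polygon of $\fpb{e}(\Ec)$, rescaled by multiplying degrees by $p^{-e}$, is monotone non-decreasing in $e$ in the natural convex-hull partial order on concave polygons.

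I would next establish a uniform upper bound for this normalised polygon. The essential ingredient is a Sun-type inequality on $C$: for any semistable sheaf $\Fc$ of rank at most $r$ on $C$ there is an estimate
\[
\mu_{\max}(\fpb{}(\Fc)) - p\,\mu(\Fc)\ \le\ K(r,g)
\]
with $K(r,g)$ depending only on $r$ and on the genus $g$ of $C$. Iterating this bound up the tower of Frobenius pullbacks and summing the resulting geometric series in $p^{-1}$ produces a uniform upper bound on $\mu_{\max}(\fpb{e}(\Ec))/p^e$; a symmetric argument yields a uniform lower bound for $\mu_{\min}(\fpb{e}(\Ec))/p^e$. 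Combined with monotonicity, this traps the whole normalised polygon in a bounded region.

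The final step is a discreteness argument: the slopes appearing in the Harder-Narasimhan filtration of any sheaf of rank $r$ all lie in $\tfrac{1}{r!}\Z$, so only finitely many normalised polygons can occur inside the bounded region, and monotonicity forces the polygon to become constant for all $e \ge e_0$ at some $e_0$. Once it is constant, the Frobenius pullback of the Harder-Narasimhan filtration of $\fpb{e_0}(\Ec)$ produces, by exactness of $\fpb{}$, a filtration of $\fpb{e_0+1}(\Ec)$ whose quotients have the same ranks and whose slopes are exactly multiplied by $p$; by uniqueness of the Harder-Narasimhan filtration (which is determined by the ranks and slopes of the polygon), this must be the Harder-Narasimhan filtration of $\fpb{e_0+1}(\Ec)$. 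Iterating, every Harder-Narasimhan quotient $R_k$ of $\fpb{e_0}(\Ec)$ satisfies that $\fpb{f}(R_k)$ is a Harder-Narasimhan quotient of $\fpb{e_0+f}(\Ec)$, in particular semistable, for every $f \ge 0$; thus $R_k$ is strongly semistable and the Harder-Narasimhan filtration of $\fpb{e_0}(\Ec)$ is strong.

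The main obstacle is the Sun-type inequality underpinning the boundedness step. Proving it requires leaving pure slope bookkeeping and analysing destabilising sub-bundles of $\fpb{}(\Fc)$ via the canonical connection, or equivalently via the short exact sequence relating the sheaf of differentials $\Omega_C$ to the relative Frobenius morphism; this is the technical heart of the argument, whereas the monotonicity, boundedness and discreteness ingredients then combine mechanically to yield the stabilisation.
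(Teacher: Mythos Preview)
The paper does not prove this theorem; it simply cites \cite{langer}. Your outline follows the broad strategy of that reference---monotonicity of the normalised Harder--Narasimhan polygon under Frobenius pullback, boundedness via a destabilisation inequality, and then stabilisation---and you correctly single out the Sun/Shepherd-Barron--type bound as the technical heart of the boundedness step.

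There is, however, a genuine gap in your stabilisation step. The un-normalised slopes of the Harder--Narasimhan quotients of $\fpb{e}(\Ec)$ do lie in $\tfrac{1}{r!}\Z$, but after dividing by $p^{e}$ the \emph{normalised} slopes lie in $\tfrac{1}{r!\,p^{e}}\Z$, a lattice that refines without bound as $e$ grows. Consequently infinitely many normalised polygons fit inside any bounded region, and monotonicity together with boundedness yields only \emph{convergence} of the polygon, not the \emph{stabilisation} you assert; nothing you have written excludes, for instance, a strict increase of order $p^{-e}$ at every step. Passing from convergence to eventual constancy is exactly where Langer's argument does substantial additional work, and it is not a mechanical consequence of the three ingredients you list. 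A smaller issue: in your final paragraph, a filtration having the same ranks and slopes as the Harder--Narasimhan polygon is not automatically the Harder--Narasimhan filtration---one must also know that its quotients are semistable. This does follow once the polygons agree, but for a different reason than bare uniqueness: any destabilising subsheaf of a pulled-back quotient would lift to a subsheaf of $\fpb{e_0+1}(\Ec)$ whose $(\text{rank},\text{degree})$ point lies strictly above the Harder--Narasimhan polygon, which is impossible.
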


\begin{proof}
See \cite[Theorem 2.7]{langer}.
\end{proof}

\begin{rem}\label{pbofstronghnfilt}
Let $0\subsetneq\Ec_1\subsetneq\ldots\subsetneq\Ec_{t-1}\subsetneq\fpb{e_0}(\Ec)$ be the strong Harder-Narasimhan filtration from Langers Theorem. 
Since Frobenius pull-backs respect inclusions and commute with quotients, the filtration
$$0\subsetneq\fpb{e-e_0}(\Ec_1)\subsetneq\ldots\subsetneq\fpb{e-e_0}(\Ec_{t-1})\subsetneq\fpb{e}(\Ec)$$
is a strong Harder-Narasimhan filtration for $\fpb{e}(\Ec)$ for all $e\geq e_0$.
\end{rem}

%\begin{prop}
%Eigenschaften von VB ueber elliptischen Kurven Proposition 1.1 bei Brenner/Hein
%\end{prop}                                                                                                                                                                                                                                                                                                                                                                                                       

\subsection{Relation to Hilbert-Kunz theory}
In this subsection we explain how the theory of vector bundles arises in Hilbert-Kunz theory. The next proposition will be crucial.

\begin{prop}\label{hkgeomapp}
Let $k$ be an algebraically closed field of positive characteristic $p$, let $R$ be a standard-graded $k$-domain of dimension at least two satisfying $S_2$ and 
let $Y=\Proj(R)$. Let $I=(f_1,\ldots,f_n)$ be an $R_+$-primary ideal with $\Deg(f_i)=d_i$. Then for every $q=p^e\geq 1$ and 
$m\in\Z$ the following formula holds
$$\Dim_k\left(R/I\qpot\right)_m=\Th^0(Y,\Oc_Y(m))-\sum_{i=1}^n\Th^0(Y,\Oc_Y(m-q\cdot d_i))+\Th^0(Y,\Syz_Y(f_1^q,\ldots,f_n^q)(m)).$$
\end{prop}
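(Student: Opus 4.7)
The plan is to extract the formula from the long exact cohomology sequence attached to the presenting sequence of the syzygy bundle, after twisting by $\Oc_Y(m)$. Since $I^{[q]}=(f_1^q,\ldots,f_n^q)$ is again $R_+$-primary, Proposition \ref{syzprop}(ii) guarantees that the presenting sequence
$$0\lra\Syz_Y(f_1^q,\ldots,f_n^q)\lra\bigoplus_{i=1}^n\Oc_Y(-qd_i)\stackrel{f_1^q,\ldots,f_n^q}{\lra}\Oc_Y\lra 0$$
is short exact; tensoring with $\Oc_Y(m)$ (which is locally free, hence exact) yields
$$0\lra\Syz_Y(f_1^q,\ldots,f_n^q)(m)\lra\bigoplus_{i=1}^n\Oc_Y(m-qd_i)\stackrel{f_1^q,\ldots,f_n^q}{\lra}\Oc_Y(m)\lra 0.$$

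The key preparatory step is the identification $R_m\cong\lK^0(Y,\Oc_Y(m))$ for every $m\in\Z$. This is exactly where the assumptions ``$\Dim(R)\geq 2$ and $R$ satisfies $S_2$'' come into play: depth $R_+$-wise at least $2$ forces $\lK^0_{R_+}(R)_m=\lK^1_{R_+}(R)_m=0$, and the local cohomology exact sequence identifies the cokernel (and kernel) of the canonical map $R_m\to\lK^0(Y,\Oc_Y(m))$ with pieces of these vanishing local cohomologies. Under this identification the map on global sections induced by $(f_1^q,\ldots,f_n^q)$ is literally the multiplication map $\bigoplus_i R_{m-qd_i}\to R_m$, whose image is by definition the homogeneous component $(I^{[q]})_m$.

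Now take the long exact cohomology sequence of the twisted presenting sequence and truncate to degree zero:
$$0\lra\lK^0(Y,\Syz_Y(f_1^q,\ldots,f_n^q)(m))\lra\bigoplus_{i=1}^n R_{m-qd_i}\lra R_m\lra (R/I^{[q]})_m\lra 0,$$
where exactness at $R_m$ uses that the image of $\bigoplus_i R_{m-qd_i}\to R_m$ equals $(I^{[q]})_m$. Counting alternating $k$-dimensions in this exact sequence of finite-dimensional $k$-vector spaces yields
$$\Dim_k(R/I^{[q]})_m=\Th^0(Y,\Oc_Y(m))-\sum_{i=1}^n\Th^0(Y,\Oc_Y(m-qd_i))+\Th^0(Y,\Syz_Y(f_1^q,\ldots,f_n^q)(m)),$$
which is the claimed formula.

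The main obstacle is really only the $S_2$-identification $R_m\cong\lK^0(Y,\Oc_Y(m))$ (and its analogue in each $m-qd_i$); the rest is a mechanical extraction from the long exact sequence. Everything else, including that $I^{[q]}$ remains $R_+$-primary and that the presenting sequence is short exact, is already handed to us by Proposition \ref{syzprop}.
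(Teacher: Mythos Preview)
Your proof is correct and follows essentially the same approach as the paper: both take global sections of the twisted presenting sequence, use the $S_2$ hypothesis to identify $\Gamma(Y,\Oc_Y(l))\cong R_l$, observe that the image of the multiplication map is $(I^{[q]})_m$ so that the cokernel is $(R/I^{[q]})_m$, and read off the formula from the alternating sum of dimensions. Your justification of the identification $R_m\cong\lK^0(Y,\Oc_Y(m))$ via local cohomology is a bit more explicit than the paper's bare citation of \cite[Exercise III.3.5]{hartshorne}, but the argument is the same.
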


\begin{proof}
Starting with the exact sequence
$$0\lra\Syz_Y(f_1^q,\ldots,f_n^q)(m)\lra\sum_{i=1}^n\Oc_Y(m-q\cdot d_i)\stackrel{f_1^q,\ldots,f_n^q}{\lra}\Oc_Y(m)\lra 0,$$
we obtain by taking global sections the exact sequence
\begin{equation}\label{seqgeom}0\lra\Gamma(Y,\Syz_Y(f_1^q,\ldots,f_n^q)(m))\lra\sum_{i=1}^nR_{m-q\cdot d_i}\stackrel{f_1^q,\ldots,f_n^q}{\lra}R_m,\end{equation}
where $\Gamma(Y,\Oc_Y(l))=R_l$ for any $l\in\Z$, since $R$ satisfies $S_2$ (cf. \cite[Exercise III.3.5]{hartshorne}). Obviously, the image of the last map is $\left(I\qpot\right)_m$, hence we may add 
$\ra\left(R/I\qpot\right)_m\ra 0$ at the right hand side of (\ref{seqgeom}). Computing the alternating sum of the $k$-vector space dimensions the sum 
$\sum_m\left(R/I\qpot\right)_m$ is finite since $I\qpot$ is $R_+$-primary and it is exactly $\HKF(I,R,q)$.
\end{proof}

At this point we should mention that - although Proposition \ref{hkgeomapp} holds for rings of higher dimension than two - there are no examples in dimension at least three in which the above proposition is helpfull. 
The reason for this is that second syzygy modules of quotients $R/I$ do not behave well if $\Dim(R)\geq 3$. In fact, the $\Dim(R)$-th syzygy modules of 
$R/I$ are the correct modules to deal with. This is emphasized by a recent work of Brenner. Using more general tools from algebraic geometry than we 
presented, he was able to prove the following theorem (compare with \cite{holgerirre}).

\begin{thm}[Brenner]
Let $R$ be a standard-graded Cohen-Macaulay domain of dimension $d+1\geq 2$ with an isolated singularity over an algebraically closed field $k$ of positive characteristic $p$. Let $Y:=\Proj(R)$ and let $I\subsetneq R$ be an $R_+$-primary ideal. Let 
\begin{equation}\label{complex}\ldots\ra F_2\stackrel{\delta_2}{\ra} F_1\stackrel{\delta_1}{\ra} F_0\ra R/I\ra 0\end{equation}
be a graded complex with $F_i:=\bigoplus_{j\in J_i}R(-\beta_{ij})$. Let $\Syz_R^d:=\ker(\delta_d)$. If the complex on $\punctured{R}{R_+}$ associated to (\ref{complex}) is exact, we have
$$\HKF(I,R,p^e) = \sum_{m\in\N}\Th^d(\fpb{e}(\Syz_Y^d)(m))+\sum_{i=0}^d(-1)^{d-1-i}\left(\sum_{j\in J_i}\left(\sum_{m\in\N}\Th^d(\Oc_Y(-\beta_{ij}p^e+m))\right)\right).$$
\end{thm}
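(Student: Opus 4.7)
My plan is to extract $\dim_k(R/I^{[p^e]})_m$ from the cohomology on $Y=\Proj(R)$ of the Frobenius pull-back of the sheafified complex, degree by degree, and then sum over $m\in\N$. First, because $R/I$ has finite length, $\widetilde{R/I}$ vanishes on $Y$, so the hypothesis that the complex is exact on $\punctured{R}{R_+}$ produces an exact complex of coherent sheaves $0 \to \Syz_Y^d \to \widetilde{F_d} \to \cdots \to \widetilde{F_0} \to 0$ on $Y$. The isolated-singularity hypothesis forces $Y$ to be smooth, so the absolute Frobenius on $Y$ is flat by Kunz's theorem; therefore $\fpb{e}$ preserves exactness, and twisting by $\Oc_Y(m)$ yields, for every $m\in\Z$, an exact complex of locally free sheaves
\[0 \to \fpb{e}(\Syz_Y^d)(m) \to \fpb{e}(\widetilde{F_d})(m) \to \cdots \to \fpb{e}(\widetilde{F_0})(m) \to 0,\]
in which $\fpb{e}(\widetilde{F_i})(m) = \bigoplus_{j\in J_i}\Oc_Y(m-\beta_{ij}p^e)$ by Lemma \ref{fpbline}.

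I then split this into short exact sequences $0 \to K_i \to \fpb{e}(\widetilde{F_i})(m) \to K_{i-1} \to 0$ for $1\le i\le d$, with $K_0=\fpb{e}(\widetilde{F_0})(m)$ and $K_d=\fpb{e}(\Syz_Y^d)(m)$, and run the attached long exact sequences in cohomology. The Cohen--Macaulayness of $R$ (of dimension $d+1$) gives, via the standard comparison with graded local cohomology, $\Th^0(\fpb{e}(\widetilde{F_i})(m)) = (F_i^{[p^e]})_m$ and $\Th^q(\fpb{e}(\widetilde{F_i})(m))=0$ for $1\le q\le d-1$. The first short exact sequence already identifies $\Th^1(K_1(m))$ with the cokernel of the module map $(F_1^{[p^e]})_m\to(F_0^{[p^e]})_m$, i.e.\ with $(R/I^{[p^e]})_m$. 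The middle-range vanishing then forces the diagonal shift $\Th^q(K_i)=\Th^{q-1}(K_{i-1})$ for $2\le q\le d-1$, which upon iteration yields $\Th^i(K_i(m))=(R/I^{[p^e]})_m$ for every $1\le i\le d-1$ and in particular $\Th^{d-1}(K_{d-1}(m))=(R/I^{[p^e]})_m$. At the same time, a parallel telescoping at the top row $q=d$ gives $\Th^d(K_i) = \sum_{j=0}^i (-1)^{i-j}\Th^d(\fpb{e}(\widetilde{F_j})(m))$ for $1\le i\le d-1$, using that $\Th^{d-1}(K_{i-1})=0$ for these $i$ (an iterated shift landing on $\Th^{d-i}(F_0)=0$).

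The final step is to apply the long exact sequence for $0\to K_d\to \fpb{e}(\widetilde{F_d})(m)\to K_{d-1}\to 0$ at rows $q=d-1$ and $q=d$; together with the middle-range vanishing this collapses to the four-term exact sequence
\[0\to\Th^{d-1}(K_{d-1})\to\Th^d(K_d)\to\Th^d(\fpb{e}(\widetilde{F_d})(m))\to\Th^d(K_{d-1})\to 0.\]
Substituting the formulas for $\Th^{d-1}(K_{d-1})$ and $\Th^d(K_{d-1})$ and rearranging gives the pointwise identity
\[\dim_k(R/I^{[p^e]})_m = \Th^d(\fpb{e}(\Syz_Y^d)(m)) + \sum_{i=0}^d (-1)^{d-1-i}\Th^d(\fpb{e}(\widetilde{F_i})(m)),\]
valid for every $m\in\Z$; summing over $m\in\N$ (each sum on the right being finite by Serre vanishing, and both sides vanishing for $m<0$) yields the theorem. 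In my view the main obstacle is the careful bookkeeping of the two parallel inductions, making sure $\Th^{d-1}(K_{i-1})$ vanishes for $1\le i\le d-1$ but equals $(R/I^{[p^e]})_m$ at $i=d$: this asymmetry is exactly what produces the nontrivial $\Th^d$-of-$\Syz$ term, and only then do the signs combine correctly into $(-1)^{d-1-i}$ in the final alternating sum.
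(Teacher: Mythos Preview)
The paper does not give a proof of this theorem: it is stated in the survey chapter and attributed to Brenner with a reference to \cite{holgerirre}. So there is no ``paper's own proof'' to compare against; your argument has to stand on its own.

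Your approach---sheafify the complex on $Y$, apply Frobenius pull-back (flat since $Y$ is smooth by Kunz), split into short exact sequences, and chase long exact sequences using the Cohen--Macaulay vanishing $\Th^q(\Oc_Y(n))=0$ for $1\le q\le d-1$---is the natural one and is correct for $d\ge 2$. The two parallel inductions you describe are exactly right: the diagonal shift $\Th^q(K_i)\cong\Th^{q-1}(K_{i-1})$ for $2\le q\le d-1$ yields both $\Th^{d-1}(K_{d-1})=(R/I^{[p^e]})_m$ and $\Th^{d-1}(K_i)=0$ for $i\le d-2$, and the top-row telescoping then gives the alternating sum. The final four-term sequence and the sign bookkeeping are correct.

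There is a small gap at $d=1$. Your identification $\Th^1(K_1)=(R/I^{[p^e]})_m$ uses $\Th^1(\fpb{e}(\widetilde{F_1})(m))=0$, and your final four-term sequence uses $\Th^{d-1}(\fpb{e}(\widetilde{F_d})(m))=0$; both are instances of the middle-range vanishing and are vacuous when $d=1$. In that case there is only one short exact sequence and the long exact sequence directly gives the four-term piece
\[
0\to (R/I^{[p^e]})_m\to \Th^1(\fpb{e}(\Syz^1_Y)(m))\to \Th^1(\fpb{e}(\widetilde{F_1})(m))\to \Th^1(\fpb{e}(\widetilde{F_0})(m))\to 0,
\]
which yields the formula immediately; this is the content of Proposition~\ref{hkgeomapp} after a Riemann--Roch/Euler-characteristic conversion from $\Th^0$ to $\Th^1$. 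You should either treat $d=1$ separately with this one-line argument or note that your final four-term sequence is only valid for $d\ge 2$.

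One further remark: your step ``the cokernel of $(F_1^{[p^e]})_m\to(F_0^{[p^e]})_m$ is $(R/I^{[p^e]})_m$'' tacitly assumes that $F_1\to F_0\to R/I\to 0$ is exact as modules, not merely on the punctured spectrum. This is the intended reading of the hypothesis (one takes $F_0=R$ and $\delta_1$ given by generators of $I$), but strictly speaking the theorem statement only asks for exactness on $\punctured{R}{R_+}$. Without global exactness at $F_0$, your argument computes $\HKF(J,R,p^e)$ for $J=\operatorname{im}(\delta_1)$ rather than $\HKF(I,R,p^e)$; these agree when the complex begins with a presentation of $R/I$, which is the case of interest.
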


In the next chapter we will give some examples how one can compute Hilbert-Kunz functions with Proposition \ref{hkgeomapp}.

Starting with Proposition \ref{hkgeomapp}, using Langer's Theorem \ref{langer} and properties of strongly semistable vector bundles, Brenner and Trivedi computed independently 
the Hilbert-Kunz multiplicity of two-dimensional, standard-graded, normal domains containing an algebraically closed field of positive characteristic. To state their result, we need some more notation.

\begin{Not}
In the situation of Proposition \ref{hkgeomapp} assume $\Dim(R)=2$ and that $Y$ is smooth. Then there exists by Langer's Theorem \ref{langer} an $e_0$ such that for every 
$e\geq e_0$ the $e$-th Frobenius pull-back of the sheaf $\Syz_Y(f_1,\ldots,f_n)$ has a strong Harder-Narasimhan filtration 
$$0=\Sc_0\subsetneq \Sc_1\subsetneq\ldots\subsetneq\Sc_{t-1}\subsetneq\Sc_t=\fpb{e}(\Syz_Y(f_1,\ldots,f_n)).$$
For $i=1,\ldots,t$, we define the numbers 
$$\nu_i:=-\frac{\mu(\Sc_i/\Sc_{i-1})}{\Deg(Y)\cdot p^e} \quad \text{and} \quad r_i:=\Rank(\Sc_i/\Sc_{i-1}).$$
Note that these numbers are independent of $e$ by Remark \ref{pbofstronghnfilt}.
\end{Not}

\begin{thm}[Brenner/Trivedi]\label{holger}
In the situation of Proposition \ref{hkgeomapp} assume $\Dim(R)=2$ and that $Y$ is smooth. Then the Hilbert-Kunz multiplicity of $R$ with respect to $I$ is given by
$$\HKM(I,R)=\frac{\Deg(Y)}{2}\cdot\left(\sum_{i=1}^tr_i\nu_i^2-\sum_{j=1}^nd_j^2\right),$$
where the $r_i$ and $\nu_i$ are computed with respect to $\Syz_Y(I)$.
\end{thm}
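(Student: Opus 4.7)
My plan is to start with Proposition \ref{hkgeomapp} and sum over $m\geq 0$ to obtain
\begin{equation*}
\HKF(I,R,q) = \sum_{m\geq 0}\Bigl[\Th^0(\Oc_Y(m)) - \sum_{j=1}^n\Th^0(\Oc_Y(m-qd_j)) + \Th^0(\Syz_Y(I^{[q]})(m))\Bigr].
\end{equation*}
Writing $\Th^0 = \chi + \Th^1$ on the smooth projective curve $Y$ and applying additivity of $\chi$ to the presenting sequence of $\Syz_Y(I^{[q]})$, I will observe that the $\chi$-contribution of each summand cancels identically (this is essentially Proposition \ref{syzprop}(iv) together with Riemann-Roch: the Euler characteristic of the syzygy bundle is determined by those of the line bundles in its resolution). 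Hence the computation reduces to the parallel formula with $\Th^1$ in place of $\Th^0$.

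Next I will estimate each contribution on the right-hand side. The sum $\sum_m\Th^1(\Oc_Y(m))$ is supported on $m\leq(2g-2)/\Deg(Y)$ by Serre duality and contributes only $O(1)$. For the shifted line bundles, combining Riemann-Roch with the vanishing $\Th^0(\Oc_Y(m-qd_j))=0$ for $m<qd_j$ yields $\Th^1(\Oc_Y(m-qd_j)) = (qd_j-m)\Deg(Y)+g-1$ in that range and $0$ for $m$ sufficiently above $qd_j$. Summing produces $\sum_m\Th^1(\Oc_Y(m-qd_j)) = \tfrac{\Deg(Y)}{2}(qd_j)^2 + O(q)$ for every $j$.

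For the syzygy sheaf I will apply Langer's Theorem \ref{langer} to obtain, for all $e\geq e_0$, a strong Harder-Narasimhan filtration $0=\Sc_0\subsetneq\Sc_1\subsetneq\cdots\subsetneq\Sc_t=\Syz_Y(I^{[q]})$ whose successive quotients $\Sc_i/\Sc_{i-1}$ are strongly semistable of rank $r_i$ and slope $-\nu_iq\Deg(Y)$. Since $\Syz_Y(I^{[q]})$ embeds in $\bigoplus_j\Oc_Y(-qd_j)$, every sub-bundle has slope at most $-q(\min_jd_j)\Deg(Y)<0$, so each $\nu_i$ is strictly positive. For each strongly semistable piece $(\Sc_i/\Sc_{i-1})(m)$ of slope $(m-\nu_iq)\Deg(Y)$, semistability together with Serre duality gives
\begin{equation*}
\Th^1((\Sc_i/\Sc_{i-1})(m)) = \begin{cases} r_i\bigl[(\nu_iq-m)\Deg(Y)+g-1\bigr] & \text{if } m\leq \nu_iq-c, \\ 0 & \text{if } m\geq \nu_iq+c, \end{cases}
\end{equation*}
for a constant $c$ depending only on $g$ and $\Deg(Y)$. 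Because the $\nu_i$ are strictly ordered, the connecting morphism $\lK^0((\Sc_i/\Sc_{i-1})(m))\to\lK^1(\Sc_{i-1}(m))$ in the long exact sequence of the filtration vanishes outside the transition windows (for both sides to be non-zero one would need some $j<i$ with $m<\nu_jq<\nu_iq\leq m$). Hence $\Th^1(\Syz_Y(I^{[q]})(m))=\sum_i\Th^1((\Sc_i/\Sc_{i-1})(m))$ up to a total $O(q)$ error, and summing over $m$ produces $\tfrac{\Deg(Y)}{2}q^2\sum_ir_i\nu_i^2 + O(q)$.

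Assembling the three contributions yields
\begin{equation*}
\HKF(I,R,q) = \frac{\Deg(Y)}{2}\,q^2\Bigl(\sum_{i=1}^tr_i\nu_i^2 - \sum_{j=1}^nd_j^2\Bigr) + O(q),
\end{equation*}
and dividing by $q^2$ gives the asserted limit. The main obstacle will be the uniform control of the $O(q)$ error across the transition windows of width $O(1)$ around each threshold $\nu_iq$ and each $qd_j$, and the rigorous justification—resting on strong semistability and the strict ordering of the $\nu_i$—that the connecting maps in the long exact sequences of the filtration indeed vanish outside those windows.
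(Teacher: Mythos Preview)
The paper does not prove this theorem; it simply cites \cite{bredim2} and \cite{tri1}. Your outline is correct and is essentially the argument carried out in those references: pass from $\Th^0$ to $\Th^1$ via the identity $\Th^0=\chi+\Th^1$ together with the vanishing of the alternating $\chi$-sum along the presenting sequence, evaluate the line-bundle contributions by Riemann--Roch, and treat $\Syz_Y(I^{[q]})$ through its strong Harder--Narasimhan filtration, using semistability of the graded pieces to pin down $\Th^0$ and $\Th^1$ outside short windows.

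The obstacle you flag at the end is real but mild, and your diagnosis is right. For the connecting map $\partial_i\colon \lK^0((\Sc_i/\Sc_{i-1})(m))\to\lK^1(\Sc_{i-1}(m))$ one has: the source vanishes for $m<\nu_iq$ by semistability, while the target vanishes for $m>\nu_{i-1}q+(2g-2)/\Deg(Y)$ by induction on $i$ (since $h^1(\Sc_{i-1}(m))\leq\sum_{j<i}h^1((\Sc_j/\Sc_{j-1})(m))$ and each summand vanishes in that range). Because the $\nu_i$ are independent of $e\geq e_0$ and strictly increasing, once $q(\nu_i-\nu_{i-1})>(2g-2)/\Deg(Y)$ the two ranges are disjoint and $\partial_i=0$ for \emph{every} $m$, not merely outside windows. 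The uniform bound on $h^0,h^1$ of a semistable piece inside its window follows from the elementary inequality $h^0(\Ec(m+1))\leq h^0(\Ec(m))+\Rank(\Ec)\cdot\Deg(Y)$, applied across the window starting from $h^0=0$ just below it; since rank and window width are bounded independently of $q$, so is the contribution.
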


\begin{proof}
See \cite[Theorem 3.6]{bredim2} for the general statement and \cite[Theorem 4.12]{tri1} for the case $I=R_+$.
\end{proof}

% \begin{cor}
% In the situation of Theorem \ref{holger} assume that $\Syz_X(I)$ is strongly semistable. Then the Hilbert-Kunz multiplicity of $I$ is given by

% $$\HKM(I,R)=\frac{\Deg(X)}{2}\left(\frac{\left(\sum_{i=1}^nd_i\right)^2}{n-1}-\sum_{i=1}^nd_i^2\right).$$
% \end{cor}

% \begin{proof}
% See \cite[Theorem 2.5]{bredim2}.
% \end{proof}

We will state some generalizations of Theorem \ref{holger}.

\begin{thm}[Trivedi]\label{trivedi}
Drop the assumption in Theorem \ref{holger} that $Y$ is smooth. Then the inequality
$$\HKM(I,R)\geq\frac{\Deg(Y)}{2}\left(\frac{\left(\sum_{i=1}^nd_i\right)^2}{n-1}-\sum_{i=1}^nd_i^2\right)$$
holds. Equality holds if and only if the pull-back of $\Syz_Y(I)$ to the normalization of $Y$ is strongly semistable. In particular, if $Y$ is already smooth, 
equality holds if and only if $\Syz_Y(I)$ is strongly semistable.
\end{thm}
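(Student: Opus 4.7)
The plan is to reduce the computation from $Y$ to its normalization $\tilde Y$, where Theorem \ref{holger} applies, and then to extract the claimed inequality from a convexity argument on the strong Harder-Narasimhan filtration of the pulled-back syzygy bundle. Let $\pi : \tilde Y \to Y$ denote the normalization; since $\tilde Y$ is a normal projective curve over $k$, it is smooth, and since $\pi$ is finite and birational one has $\Deg_{\tilde Y}(\pi^{\ast} \Oc_Y(1)) = \Deg(Y)$. Because $(f_1,\ldots,f_n)$ is $R_+$-primary, $\Syz_Y(f_1,\ldots,f_n)$ is locally free on $Y$ by Proposition \ref{syzprop}~(ii); pulling its presenting sequence back along $\pi$ then shows $\pi^{\ast}\Syz_Y(f_1^q,\ldots,f_n^q) \cong \Syz_{\tilde Y}(\pi^{\ast}f_1^q,\ldots,\pi^{\ast}f_n^q)$, which is locally free on $\tilde Y$ of rank $n-1$ and degree $-\Deg(Y)\cdot q\sum_j d_j$.

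Next I would transfer the formula of Proposition \ref{hkgeomapp} from $Y$ to $\tilde Y$. For any locally free $\mathcal F$ on $Y$, the adjunction $\mathcal F \to \pi_{\ast}\pi^{\ast}\mathcal F$ is injective (both sheaves are torsion-free with the same function field), and its cokernel $Q$ is supported on $\mathrm{sing}(Y)$ with stalk length at each singular point bounded by $\Rank(\mathcal F)$ times the local conductor length. Since $\pi$ is affine one has $h^0(Y,\pi_{\ast}\pi^{\ast}\mathcal F)=h^0(\tilde Y,\pi^{\ast}\mathcal F)$, so
\[
\bigl| h^0(Y,\mathcal F) - h^0(\tilde Y,\pi^{\ast}\mathcal F) \bigr| \;\leq\; h^0(Y,Q) \;\leq\; (n-1)\cdot C,
\]
with $C$ depending only on the singularities of $Y$. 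Summing this $O(1)$-per-degree error over the $O(q)$ many degrees $m$ at which the summand in Proposition \ref{hkgeomapp} can be non-zero produces a total error of $O(q)$ in $\HKF(I,R,q)$, which vanishes after dividing by $q^2$. Hence $\HKM(I,R)$ agrees with the limit of the analogous expression computed entirely on the smooth curve $\tilde Y$, to which Theorem \ref{holger} applies.

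For $e \gg 0$ Langer's Theorem \ref{langer} provides a strong Harder-Narasimhan filtration of $\fpb{e}(\pi^{\ast}\Syz_Y(I))$ with ranks $\tilde r_i$ and invariants $\tilde \nu_i := -\mu(\tilde{\mathcal S}_i / \tilde{\mathcal S}_{i-1})/(\Deg(Y)\cdot p^e)$. Additivity of rank and degree on short exact sequences, together with the degree computation above, yield $\sum_i \tilde r_i = n-1$ and $\sum_i \tilde r_i \tilde \nu_i = \sum_j d_j$. The Cauchy-Schwarz inequality with weights $\tilde r_i$ then gives
\[
\sum_i \tilde r_i \tilde \nu_i^2 \;\geq\; \frac{\bigl(\sum_i \tilde r_i \tilde \nu_i\bigr)^2}{\sum_i \tilde r_i} \;=\; \frac{\bigl(\sum_j d_j\bigr)^2}{n-1},
\]
with equality if and only if all $\tilde \nu_i$ coincide, which is precisely the condition that $\pi^{\ast}\Syz_Y(I)$ be strongly semistable on $\tilde Y$. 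Substituting this into Theorem \ref{holger} (applied on $\tilde Y$) produces both the asserted inequality for $\HKM(I,R)$ and the equality characterization. The main obstacle is the uniform control claimed in paragraph two: one must verify rigorously that the singular-locus torsion contributions are bounded independently of the twist and of $q$, so that the cohomological discrepancy between $Y$ and $\tilde Y$ does not leak into the leading $q^2$-coefficient of $\HKF(I,R,q)$; once this is in hand, the Cauchy-Schwarz step and the appeal to Theorem \ref{holger} are essentially formal.
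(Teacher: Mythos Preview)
The paper does not prove this theorem; it simply cites \cite[Theorem 2.2]{tri2}. Your sketch supplies an actual argument, and its two main ingredients---reduction to the normalization $\tilde Y$ and a Cauchy--Schwarz estimate on the strong Harder--Narasimhan data---are correct and are in the spirit of Trivedi's proof.

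A remark on your reduction step. Your term-by-term comparison of $h^0$ on $Y$ and $\tilde Y$ is workable (by the projection formula the cokernel of $\mathcal F \to \pi_*\pi^*\mathcal F$ is $\mathcal F \otimes (\pi_*\Oc_{\tilde Y}/\Oc_Y)$, a skyscraper of length $\Rank(\mathcal F)\cdot\delta$ independent of any twist), but the bookkeeping you flag as the ``main obstacle'' is unnecessary. The equality $\HKM(I,R)=\HKM(I\tilde R,\tilde R)$ follows in one line from Theorem~\ref{watyorank}, since $\pi$ is finite and birational and $k$ is algebraically closed, so both the field-extension degree and the residue-field degree are~$1$. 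Alternatively, part~(ii) of Theorem~\ref{thmhkmexists} applied to $0\to R\to\tilde R\to\tilde R/R\to 0$, together with $\Dim(\tilde R/R)\le 1$, gives the same conclusion without touching sheaf cohomology.

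Your convexity step is exactly right: from $\sum_i \tilde r_i=n-1$ and $\sum_i \tilde r_i\tilde\nu_i=\sum_j d_j$ one gets $\sum_i \tilde r_i\tilde\nu_i^2\ge(\sum_j d_j)^2/(n-1)$ by Cauchy--Schwarz, with equality iff all $\tilde\nu_i$ coincide, i.e.\ iff the filtration is trivial, i.e.\ iff $\pi^*\Syz_Y(I)$ is strongly semistable. Plugging this into Theorem~\ref{holger} on $\tilde Y$ yields the inequality and the equality characterization.
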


\begin{proof}
See \cite[Theorem 2.2]{tri2}.
\end{proof}

\begin{cor}\label{coralmar}
Let $k$ be an algebraically closed field of positive characteristic $p$. Let $R:=k[X,Y,Z]/(F)$ be standard-graded with $F$ irreducible and homogeneous 
of degree $d$ such that $C:=\Proj(R)$ is a smooth projective curve. Let $I=(f_1,f_2,f_3)\subsetneq R$ be a homogeneous and $R_+$-primary ideal with all 
generators having the same degree $a\geq 1$. Then the following statements hold.

\begin{enumerate}
\item If $\Syz_C(I)$ is not semistable, then there exists an integer $0<l\leq a d$ with $l\equiv ad\text{ }(2)$ such that 
	$$\HKM(I,R)=\frac{3d}{4}\cdot a^2+\frac{l^2}{4d}.$$
\item If $\Syz_C(I)$ is semistable but not strongly semistable, then there exists an integer $0<l\leq d(d-3)$ with $l\equiv pad\text{ }(2)$ such that 
	$$\HKM(I,R)=\frac{3d}{4}\cdot a^2+\frac{l^2}{4d\cdot p^{2e}},$$
	where $e\geq 1$ is such that $\fpb{e-1}(\Syz_C(I))$ is semistable, but $\fpb{e}(\Syz_C(I))$ is not.
\item If $\Syz_C(I)$ is strongly semistable, then  
	$$\HKM(I,R)=\frac{3d}{4}\cdot a^2.$$
\end{enumerate}
\end{cor}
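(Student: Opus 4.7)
The approach is to apply Theorem~\ref{holger} with $n=3$, $d_1=d_2=d_3=a$ and $\Deg(C)=d$. By Proposition~\ref{syzprop}(iv) the bundle $\Syz_C(I)$ has rank $2$ and degree $-3ad$, hence slope $-3ad/2$; this is the basic numerical input shared by all three cases.

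Case (iii) is immediate: strong semistability makes the strong Harder-Narasimhan filtration trivial, so $t=1$, $r_1=2$, $\nu_1=3a/2$, and Theorem~\ref{holger} gives
$$\HKM(I,R)=\frac{d}{2}\left(2\cdot\left(\frac{3a}{2}\right)^2-3a^2\right)=\frac{3da^2}{4}.$$
For case (i) the Harder-Narasimhan filtration $0\subsetneq\Sc_1\subsetneq\Syz_C(I)$ is non-trivial. Because $\Sc_1$ and $\Sc_2/\Sc_1$ are line bundles and line bundles are strongly stable, this filtration is automatically strong, with $r_1=r_2=1$. Setting $\alpha:=\Deg(\Sc_1)$, $\beta:=\Deg(\Sc_2/\Sc_1)$ and $l:=\alpha-\beta>0$, additivity of degree gives $\alpha+\beta=-3ad$, whence $\nu_1=3a/2-l/(2d)$ and $\nu_2=3a/2+l/(2d)$; plugging into Theorem~\ref{holger} yields
$$\HKM(I,R)=\frac{d}{2}\bigl(\nu_1^2+\nu_2^2-3a^2\bigr)=\frac{3da^2}{4}+\frac{l^2}{4d}.$$
The parity $l\equiv ad\pmod{2}$ follows from $2\alpha=-3ad+l\in\Z$, and the upper bound $l\leq ad$ comes from the presenting sequence: the inclusion $\Sc_1\hookrightarrow\Syz_C(I)\hookrightarrow\Oc_C(-a)^{3}$ produces a non-zero map of line bundles $\Sc_1\to\Oc_C(-a)$, forcing $\alpha\leq -ad$.

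Case (ii) is the subtle one, since Theorem~\ref{holger} requires the \emph{strong} Harder-Narasimhan filtration. I would pick the minimal $e\geq 1$ with $\fpb{e}(\Syz_C(I))$ not semistable. The rank-$2$ argument from case (i) shows that the Harder-Narasimhan filtration of $\fpb{e}(\Syz_C(I))$ is strong, and by Remark~\ref{pbofstronghnfilt} it is precisely the filtration computing the $r_i,\nu_i$ in Theorem~\ref{holger}. Proposition~\ref{syzprop}(vi) gives $\Deg(\fpb{e}(\Syz_C(I)))=-3adp^{e}$, so the case-(i) computation repeats with an extra factor of $p^{e}$. Setting $l:=\Deg(\Sc_1)-\Deg(\Sc_2/\Sc_1)$, I would obtain
$$\HKM(I,R)=\frac{3da^2}{4}+\frac{l^2}{4dp^{2e}},\qquad l\equiv adp^{e}\equiv pad\pmod{2}.$$
The hard part is the upper bound $l\leq d(d-3)$, which is the only step that is not bookkeeping. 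This is where the hypothesis that $\fpb{e-1}(\Syz_C(I))$ is semistable while its Frobenius pullback is not gets used, via Shepherd-Barron's theorem on Frobenius-destabilization of semistable bundles on smooth projective curves: applied to $\fpb{e-1}(\Syz_C(I))$ it gives
$$l=\mu_{\max}(\fpb{e}(\Syz_C(I)))-\mu_{\min}(\fpb{e}(\Syz_C(I)))\leq 2g_C-2,$$
and $2g_C-2=d(d-3)$ by the degree-genus formula for smooth plane curves of degree $d$.

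The entire argument is linear-algebra bookkeeping with degrees and slopes, anchored at Theorem~\ref{holger}; the only non-elementary ingredient is the Shepherd-Barron inequality in case (ii), which is where the specific geometry of the plane curve (its genus) enters and forces the precise shape of the bound.
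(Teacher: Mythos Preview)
Your argument is correct and is essentially the proof contained in the references the paper cites (Brenner, Trivedi, Kaid): apply Theorem~\ref{holger} to the strong Harder--Narasimhan filtration, reduce the rank-two filtration data to the single integer $l$, and in case~(ii) bound $l$ by $2g_C-2=d(d-3)$ via the canonical-connection argument for Frobenius destabilization of a semistable rank-two bundle. The only remark is bibliographic: the inequality $\mu_{\max}(\fpb{}\Ec)-\mu_{\min}(\fpb{}\Ec)\leq 2g-2$ for semistable rank-two $\Ec$ is more commonly attributed to Sun or derived from Langer's bounds (and is exactly what Trivedi uses in \cite{tri1}); calling it ``Shepherd--Barron's theorem'' is not wrong in spirit but may confuse a reader looking for the precise statement.
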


\begin{proof}
For the case $a=1$ see \cite[Theorem 5.3]{tri1} and \cite[Corollary 4.6]{bredim2}. For the general case see 
\cite[Corollary 1.4.9 (2), Proposition 1.4.11]{almar}.
\end{proof}

\begin{thm}[Brenner]
In the situation of Theorem \ref{holger}, the Hilbert-Kunz function of $R$ with respect to $I$ has the shape
$$\HKF(I,R,p^e)=\HKM(I,R)\cdot p^{2e}+\gamma(p,e),$$
where $\gamma(p,\_)$ is a bounded function. Moreover, if $k$ is the algebraic closure of a finite field, the function $\gamma(p,\_)$ is eventually periodic.
\end{thm}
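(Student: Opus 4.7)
The plan is to combine the cohomological formula of Proposition \ref{hkgeomapp} with Langer's Theorem \ref{langer} on the existence of a strong Harder--Narasimhan filtration and then estimate each cohomological contribution via Riemann--Roch. First I fix $e_0$ so that $\fpb{e_0}(\Syz_Y(I))$ admits a strong Harder--Narasimhan filtration $0=\Sc_0\subsetneq\Sc_1\subsetneq\ldots\subsetneq\Sc_t$; by Remark \ref{pbofstronghnfilt} the quotients $\Fc_i^{(e)}:=\fpb{e-e_0}(\Sc_i/\Sc_{i-1})$ are then strongly semistable of slope $-\nu_i\cdot\Deg(Y)\cdot q$ and rank $r_i$ for every $e\geq e_0$ with $q=p^e$, and crucially the numerical data $(\nu_i,r_i)$ appearing in Theorem \ref{holger} do \emph{not} depend on $e$. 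Summing the formula of Proposition \ref{hkgeomapp} over $m$ and using Proposition \ref{syzprop}(vi) gives
$$\HKF(I,R,q)=\sum_{m}\Th^0(Y,\Oc_Y(m))-\sum_{i=1}^n\sum_{m}\Th^0(Y,\Oc_Y(m-qd_i))+\sum_{m}\Th^0(Y,\fpb{e}(\Syz_Y(I))(m)),$$
where after cancellation only finitely many graded pieces contribute to the total.

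Next, for $e\geq e_0$, I would iterate the long exact sequence of cohomology attached to the filtration to replace the last sum by the contributions of the quotients $\Fc_i^{(e)}$, up to error terms arising from $\Th^1$. For a strongly semistable sheaf $\Fc$ on the smooth curve $Y$, one has $\Th^0(Y,\Fc(m))=0$ whenever the slope of $\Fc(m)$ is negative (any non-trivial section would produce an inclusion $\Oc_Y\hookrightarrow\Fc(m)$ contradicting semistability), and Riemann--Roch gives the exact value $\Rank(\Fc)(m-\nu q)\Deg(Y)+\Rank(\Fc)(1-g)$ as soon as this slope exceeds $2g-2$. The transition zone between the two regimes has width $O(1)$ in $m$, independent of $e$. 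Computing the cancellations against the first two summands using Riemann--Roch on $Y$ produces, after routine calculation, the leading term
$$\frac{\Deg(Y)}{2}\left(\sum_{i=1}^tr_i\nu_i^2-\sum_{j=1}^nd_j^2\right)q^2=\HKM(I,R)\cdot q^2,$$
by Theorem \ref{holger}. The remainder $\gamma(p,e)$ collects (a) corrections from the finitely many degrees where $\dim R_m\neq\Th^0(Y,\Oc_Y(m))$, (b) corrections in the transition zones around $m=\nu_iq$ where $\Th^1(Y,\Fc_i^{(e)}(m))$ does not vanish, and (c) the $\Th^1$-contributions of the intermediate filtration sheaves $\Sc_j$ appearing in the long exact sequences. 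Because $(r_i,\nu_i)$ are constant in $e$ and the widths of the transition zones together with their cohomology dimensions depend only on the fixed subquotients $\Sc_i/\Sc_{i-1}$, each contribution is bounded uniformly in $e$.

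For the supplement over $k=\overline{\F_p}$, I would appeal to the theorem of Lange--Stuhler that a strongly semistable bundle of degree zero on a smooth projective curve over $\overline{\F_p}$ is trivialized by some finite \'etale cover, so that the Frobenius orbit of its isomorphism class is finite. Twisting each $\Sc_i/\Sc_{i-1}$ by an appropriate line bundle to reduce to the degree-zero case, and using that $\Pic^0(Y)$ over $\overline{\F_p}$ is a torsion group (being a direct limit of the finite groups $\Pic^0(Y)(\F_{p^j})$), the isomorphism classes of the quotients $\Fc_i^{(e)}$ together with the relevant line-bundle twists cycle through only finitely many possibilities as $e$ grows. Hence each of the contributions (a)--(c) takes only finitely many values and $\gamma(p,\_)$ is eventually periodic.

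The hard part will be to make the error bound in (c) uniform in $e$: the intermediate sheaves $\Sc_j$ are not themselves Frobenius pull-backs of a fixed sheaf (only the subquotients are), and controlling their cohomology requires an induction on the length $t$ of the filtration, using at each step the long exact sequence
$$\cdots\to\Th^0(Y,\Sc_{j-1}(m))\to\Th^0(Y,\Sc_j(m))\to\Th^0(Y,\Fc_j^{(e)}(m))\to\Th^1(Y,\Sc_{j-1}(m))\to\cdots$$
and observing that the critical window of non-vanishing $\Th^1$ for $\Sc_j$ enlarges at each step only by an amount bounded in terms of the fixed filtration of $\fpb{e_0}(\Syz_Y(I))$, and not in terms of $e$.
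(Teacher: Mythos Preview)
The paper does not supply a proof of this statement; it simply refers to \cite[Theorem 6.1]{bredim2func}. Your outline is essentially the argument carried out there: express $\HKF(I,R,q)$ via Proposition \ref{hkgeomapp}, invoke Langer's Theorem \ref{langer} to pass to a Frobenius power where the Harder--Narasimhan filtration is strong, compute the $q^2$-coefficient from Riemann--Roch on the strongly semistable subquotients (recovering the formula of Theorem \ref{holger}), and bound the remainder by observing that the transition window between $\Th^0=0$ and $\Th^1=0$ for each subquotient has width $O(1)$ in $m$, governed only by the genus and by the fixed numerical data $(r_i,\nu_i)$. Your point (c) about the intermediate $\Sc_j$ is exactly where the care is needed, and the induction you sketch is the right way to handle it.

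One small caveat in your periodicity argument: the subquotients $\Sc_i/\Sc_{i-1}$ need not have degree divisible by their rank, so you cannot literally twist by a line bundle to reach degree zero and then apply Lange--Stuhler directly. In the cited paper this is circumvented by showing that the twisted bundles $\fpb{e-e_0}(\Sc_i/\Sc_{i-1})\otimes\Oc_Y(\lfloor\nu_iq\rfloor)$ have degree lying in a fixed bounded interval and are all defined over one finite subfield of $k$; the arithmetic Frobenius then permutes only finitely many isomorphism classes. Your invocation of the torsion structure of $\Pic^0(Y)$ over $\overline{\F_p}$ is the right ingredient for the line-bundle part of this reduction, but the rank $\geq 2$ part needs the extra observation about descent to a finite field rather than a direct appeal to Lange--Stuhler on a degree-zero twist.
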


\begin{proof}
See \cite[Theorem 6.1]{bredim2func}.
\end{proof}

\subsection{Curves of degree three}
Using the just explained geometric approach, Brenner and Hein computed the Hilbert-Kunz function of the homogeneous coordinate ring $R$ associated 
to a very ample line bundle $\Oc_C(1)$ of an elliptic curve $C$ over an algebraically closed field $k$. See \cite[Corollary 4.6]{brehein} for 
the case where $\Syz_C(f_1,\ldots,f_n)$ is semistable and \cite[Theorem 4.4]{brehein} for the general case. In the same paper they also look more 
carefully at the case, where $C$ is embedded by a complete linear system $|\Oc_C(1)|$ into some projective space $\Prim_k^N$. In 
\cite[Theorem 5.2]{brehein} they give a full discription of the Hilbert-Kunz function of $R$. In particular, the Hilbert-Kunz multiplicity of $R$ is 
$\HKM(R)=\frac{(N+1)^2}{2N}$. We just state the explicit results of the Hilbert-Kunz functions in the cases $N=2,3,4$, given in 
\cite[Corollaries 5.3-5.6]{brehein}. Note that these results had already been proved in \cite[Theorem 3.17]{triflag}.

\begin{prop} Let $C$ be an elliptic curve over an algebraically closed field $k$ of characteristic $p>0$, embedded by a complete linear 
system into $\Prim_k^N$. Then the Hilbert-Kunz function of the homogeneous coordinate ring $R$ of $C$ is given by the following formulas.

\begin{enumerate}
\item Let $N=2$ and let $h$ be the Hasse invariant of $C$. Then
    $$\HKF_R(R,q)=\left\{\begin{aligned} & \frac{9}{4}q^2-\frac{5}{4} && \text{if }q\text{ is odd},\\ & 8-h && \text{if }q=2,\\ & \frac{9}{4}q^2-h && \text{otherwise},\end{aligned}\right.$$
\item $$\HKF_R(R,q)=\left\{\begin{aligned} & \frac{8}{3}q^2-\frac{5}{3} && \text{if }N=3\text{ and }p\neq 3,\\
 & \frac{25}{8}q^2-\frac{17}{8} && \text{if }N=4\text{ and }p\neq 2.\end{aligned}\right.$$
\end{enumerate}
\end{prop}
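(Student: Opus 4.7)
The plan is to apply the geometric formula of Proposition~\ref{hkgeomapp} to $I = R_+$, generated by a basis $X_0, \ldots, X_N$ of $H^0(C, \Oc_C(1))$. Summing over $m \in \N$ produces
$$\HKF(R, q) = \sum_{m \geq 0}\Bigl[h^0(\Oc_C(m)) - (N+1)\,h^0(\Oc_C(m-q)) + h^0(\Fc_m)\Bigr],$$
where $\Fc_m := \Syz_C(X_0^q, \ldots, X_N^q)(m)$ has rank $N$ and, by Proposition~\ref{syzprop}, degree $(N+1)\bigl(Nm - (N+1)q\bigr)$. Since $C$ has genus~$1$, Riemann--Roch gives $h^0(\Oc_C(\ell)) = \ell(N+1)$ for $\ell \geq 1$, $h^0(\Oc_C) = 1$, and $h^0(\Oc_C(\ell)) = 0$ for $\ell < 0$. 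All substance thus lies in evaluating $h^0(\Fc_m)$.

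To control these cohomologies I would first show that $\Syz_C(X_0, \ldots, X_N)$ is strongly semistable under the hypotheses of the proposition. For $N = 2$ this bundle is the restriction of $\Omega_{\Prim^2}$, semistable on every smooth plane curve of degree at least two by \cite[Proposition~6.2]{tightdim2}; for $N = 3$ and $N = 4$ one runs a slope argument on potential destabilizing line subbundles using that $C$ is cut out by quadrics in $\Prim^N$. Because $\gcd(N, N+1) = 1$, Atiyah's classification of bundles on elliptic curves promotes semistability to stability, and under the characteristic assumption $p \nmid N$ --- satisfied precisely in cases (ii), (iii) and the odd-$q$ branch of (i) --- Frobenius pull-backs of stable bundles of coprime rank and degree on an elliptic curve remain semistable. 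For a semistable sheaf $\mathcal{E}$ on $C$, Serre duality and Riemann--Roch give $h^0(\mathcal{E}) = 0$ whenever $\deg\mathcal{E} < 0$ and $h^0(\mathcal{E}) = \deg\mathcal{E}$ whenever $\deg\mathcal{E} > 0$; the slope-zero case $Nm = (N+1)q$ would require $N \mid q$ by coprimality, which is excluded in these regimes.

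Substituting these values, the summands for $m > (N+1)q/N$ collapse pairwise: the line-bundle contribution $(N+1)[(N+1)q - Nm]$ cancels against $h^0(\Fc_m) = (N+1)[Nm - (N+1)q]$. The sum therefore truncates to $0 \leq m \leq (N+1)q/N$, and a direct telescoping yields $\HKF(R, q) = \tfrac{(N+1)^2}{2N}\,q^2 + c_N$ with $c_N = -5/4$, $-5/3$, $-17/8$ in the three generic branches, matching the stated values.

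The main obstacle is the even-$q$ subcase of~(i), where $p = 2$ is forced and $N \mid q$ brings the critical slope-zero degree $m = 3q/2$ into range. There $\Fc_m$ has rank $2$, trivial determinant, and degree $0$; by Atiyah's classification it is either $\Lc \oplus \Lc^{-1}$ for some $\Lc \in \Pic^0(C)$, a trivial $\Oc_C \oplus \Oc_C$, or the unique non-split Atiyah bundle $E_2$, with $h^0 \in \{0, 1, 2\}$. The outcome depends on the iterated action of Frobenius on $\Pic^0(C)$, equivalently on the Hasse invariant $h$ (the Frobenius action on $H^1(C, \Oc_C)$). An explicit tracking of this action shows $h^0(\Fc_3) = 1 - h$ when $q = 2$ (Frobenius-once takes $\Syz_C$ to $\Lc \oplus \Lc^{-1}$ for ordinary $C$ and to $E_2$ for supersingular $C$) and $h^0(\Fc_{3q/2}) = 2 - h$ when $q \geq 4$ (further Frobenius iterations stabilize the bundle, arriving at $E_2$ for ordinary and at $\Oc_C^2$ for supersingular $C$); combining these with the truncated sums computed above produces exactly the exceptional values $8 - h$ and $\tfrac{9}{4}q^2 - h$.
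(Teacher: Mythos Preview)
The paper does not prove this proposition; it only quotes the formulas from \cite[Corollaries 5.3--5.6]{brehein} (and notes they appeared earlier in \cite[Theorem 3.17]{triflag}). Your strategy --- evaluate Proposition~\ref{hkgeomapp} by controlling $h^0$ of the Frobenius pull-backs of $\Syz_C(R_+)$ via strong semistability and Atiyah's classification --- is precisely the method of \cite{brehein}, and your computation of the branches with $p\nmid N$ is correct. (A minor point: strong semistability on elliptic curves holds regardless of whether $p\mid N$; that hypothesis is only needed, as you also observe, to rule out the critical slope-zero twist $m=(N+1)q/N$.)

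Your $N=2$, $p=2$ analysis, however, is internally inconsistent. You claim that for an ordinary curve $\Fc_3\cong L\oplus L^{-1}$ and that further Frobenius pull-backs stabilise at $E_2$; but the Frobenius pull-back of a direct sum of line bundles is again a direct sum of line bundles and can never become the indecomposable $E_2$. The values $h^0(\Fc_3)=1-h$ and $h^0(\Fc_{3q/2})=2-h$ for $q\geq4$ are correct, but the bundle producing them in the ordinary case must be $\Fc_3\cong E_2\otimes M$ with $M$ the unique nontrivial $2$-torsion point of $\Pic^0(C)$: then $h^0(\Fc_3)=0$, and $\fpb{}(E_2\otimes M)=\fpb{}(E_2)\otimes M^2=E_2$ (Frobenius fixes the extension class when $h=1$, and $M^2=\Oc_C$), so $h^0=1$ for all $q\geq4$. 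With this repair your argument goes through.
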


The Hilbert-Kunz functions of the homogeneous coordinate rings of curves of degree three have been studied by many authors using a whole bunch of different tools. In his thesis (cf. \cite{pardue}), 
Pardue made a conjecture on the Hilbert-Kunz functions of the homogeneous coordinate rings of plane curves of degree three. This conjecture was proven case by case by different authors. We 
summarize the results and references.

If $C$ is an irreducible nodal cubic, Pardue showed in his thesis (cf. \cite{pardue} and also \cite[Theorem 3]{buchweitz}) 
$$\HKF(R,q)=\frac{7}{3}q^2-\frac{1}{3}q-D(q)$$ 
with $D(q)=\frac{5}{3}$, if $q\equiv 2\text{ }(3)$ and $D(q)=1$ otherwise. Using Burbans results from \cite{cycleprojline} on the classification of 
Frobenius pull-backs of line bundles on plane rational nodal curves, Monsky showed in \cite{nodal}, that the Hilbert-Kunz function of $R$ with 
respect to an $\mm$-primary ideal $I$ is of the form $\HKF(I,R,q)=\mu q^2+\alpha q +D(q)$, where $\mu$ and $\alpha$ are constants depending on the 
data of $\Syz_C(I)$ and $D(q)$ depends on these data and on the class of $q$ modulo three.

If $C$ is cuspidal, we have 
$$\HKF(R,3^e)=\frac{7}{3}9^e\quad\text{and}\quad \HKF(R,q)=\frac{7}{3}q^2-\frac{4}{3}$$ 
for $p\neq 3$ (see \cite{pardue}). If $C$ is smooth, the Hilbert-Kunz function of $R$ was computed for odd characteristics in \cite[Theorem 4]{buchweitz} based on 
Geronimus' formula for the determinant of a matrix, whose entries are Legendre polynomials. In characteristic two, one might use the 
algorithm from \cite{monskyhan} if the Hasse invariant is zero. If the Hasse invariant is one, Monsky computed the Hilbert-Kunz function in \cite{char2cubic} using tools 
from invariant theory.

\section{Related invariants and further readings}
In this section we want to discuss some invariants related to Hilbert-Kunz theory. We also give a list 
of references related to Hilbert-Kunz theory that have not been mentioned in this survey.

\subsection{Limit Hilbert-Kunz multiplicity}
The basic question in this subsection is whether one can define a ``Hilbert-Kunz multiplicity'' in characteristic zero. We have already defined the 
generalized Hilbert-Kunz function $n\mapsto \lambda_R(R/(f_1^n,\ldots,f_m^n)R)$. From this definition one might get the idea to study the limit
$$\lim_{n\ra\infty}\frac{\lambda_R(R/(f_1^n,\ldots,f_m^n)R)}{n^{\Dim(R)}},$$
provided it exists. Consider for example $R:=\Z[X,Y,Z]/(X+Y+Z)$. Then by \cite[Theorem 2.14]{monskyhan} we obtain
$$\Dim_{\Q}(\Q\otimes_{\Z}R/(X^n,Y^n,Z^n))=\left\lceil\frac{3n^2}{4}\right\rceil.$$
Dividing this number by $n^2$ and taking the limit $n\ra\infty$, we get 
$$\lim_{n\ra\infty}\frac{\Dim_{\Q}(\Q\otimes_{\Z}R/(X^n,Y^n,Z^n)}{n^2}=\frac{3}{4}.$$
But since $\Q\otimes_{\Z}R\cong\Q[X,Y]$ is regular, we want the ``Hilbert-Kunz multiplicity'' to be one. Hence the above limit cannot be a good candidate.

\begin{defi}
Let $R$ be a positively-graded $\Z$-domain and let $I=(f_1,\ldots,f_m)$ be a homogeneous $R_+$-primary ideal. For a prime $p$ let 
$R_p:=R\otimes_{\Z}\Z/(p)$ and denote by $I_p$ the extended ideal $IR_p$. 
We call the limit (provided it exists)
$$\LHKM(I,R):=\lim_{p\ra\infty}\HKM(I_p,R_p)$$
the \textit{limit Hilbert-Kunz multiplicity} of $R$ with respect to $I$.
\end{defi}

The existence of $\LHKM(\mm,R)$, where $R$ is a diagonal hypersurface over $\Z$ was shown by Gessel and Monsky in \cite{limitmonsky}. To state their 
result, we need one more definition.

\begin{defi}
Let $d_1,\ldots,d_s$ be positive integers. For any integer $\lambda$ we define 
$$C_{\lambda}:=\sum_{\substack{(\epsilon_1,\ldots,\epsilon_s)\in\{\pm 1\}^s,\\ \frac{\epsilon_1}{d_1}+\ldots+\frac{\epsilon_s}{d_s}>2\lambda}}(\epsilon_1\cdot\ldots\cdot\epsilon_{s})\cdot\left(\frac{\epsilon_1}{d_1}+\ldots+\frac{\epsilon_s}{d_s}-2\lambda\right)^{s-1}.$$
\end{defi}

\begin{thm}[Gessel, Monsky] Let $R:=\Z[X_1,\ldots,X_s]/\left(\sum_{i=1}^sX_i^{d_i}\right)$ with positive integers $d_i$ and let $d:=\prod d_i$. 
The limit Hilbert-Kunz multiplicity of $R$ is given by
$$\LHKM(R)=\frac{d(2^{1-s})}{(s-1)!}\cdot\left(C_0+2\sum_{\lambda>0}C_{\lambda}\right).$$
\end{thm}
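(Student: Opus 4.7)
The plan is to compute $\HKM(R_p)$ for each prime $p$ via the Han-Monsky algorithm \cite{monskyhan}, then pass to the limit $p\to\infty$, reducing the problem to an integral that matches the stated closed form after Fourier expansion. For fixed $p$, the Han-Monsky reduction expresses $\HKM(R_p)$ in terms of Monsky's function
$$D_p(a_1,\ldots,a_s) := \Dim_{\F_p}\F_p[X_1,\ldots,X_s]/(X_1^{a_1},\ldots,X_s^{a_s},X_1+\ldots+X_s).$$
Using the flat extension $X_i\mapsto X_i^{d/d_i}$, one rewrites $\HKF(R_p,p^e)$ in terms of $D_p$-values at arguments close to $p^e/d_i$; normalizing by $p^{e(s-1)}$ and letting $e\to\infty$ then expresses $\HKM(R_p)$ as a linear functional of rescaled $D_p$-values.

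Next, I would show that as $p\to\infty$ these rescaled values converge: in the coordinates $x_i=a_i/p^e\in[0,1/d_i]$, Han's recursion (driven by sawtooth terms $\{p^e/d_i\}$) becomes in the limit an integral of a piecewise-polynomial kernel over the box $\prod_i[0,1/d_i]$. To identify this limit integral with the stated combinatorial sum, I would expand the generating function $\prod_i\sinh(t/d_i)/(t/d_i)$ using $\sinh(x)=\tfrac{1}{2}(e^x-e^{-x})$; this yields a sum over the $2^s$ sign patterns $(\epsilon_1,\ldots,\epsilon_s)\in\{\pm 1\}^s$ weighted by $\prod\epsilon_i$, exactly matching the structure of $C_\lambda$. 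The power $(s-1)$ and the factor $1/(s-1)!$ arise from an $(s-1)$-fold iterated integration (equivalently the residue at $t=0$), the constraint $\sum\epsilon_i/d_i>2\lambda$ records positivity of the shifted linear form, and the factor $2$ in front of $\sum_{\lambda>0}C_\lambda$ reflects the $\lambda\leftrightarrow -\lambda$ symmetry after Poisson summation over the dual lattice.

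The hard part will be commuting the two limits $e\to\infty$ (inside $\HKM(R_p)$) and $p\to\infty$ (defining $\LHKM$), which requires uniform estimates on the Han-Monsky error in $p$. These estimates degrade precisely at the thresholds $\sum\epsilon_i/d_i\in 2\Z$ where the piecewise-polynomial integrand is discontinuous, and controlling the contribution from arguments lying near these thresholds --- for instance via an equidistribution argument for $p^e/d_i\bmod 1$ --- is the heart of the matter. Everything else is a careful bookkeeping of boundary terms and a standard Fourier-analytic identity.
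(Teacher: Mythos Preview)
The paper does not prove this theorem at all: its entire proof is the single line ``See \cite[Theorem 2.4]{limitmonsky},'' citing the original Gessel--Monsky paper. So there is no argument in the paper to compare your proposal against.

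As a sketch of the Gessel--Monsky approach, your outline is in the right spirit---one does pass through the Han--Monsky representation function and a generating-function identity built from $\prod_i \sinh(t/d_i)$---but several of your stated mechanisms are speculative. In particular, ``Poisson summation over the dual lattice'' and an ``equidistribution argument for $p^e/d_i\bmod 1$'' are not how the original paper handles the limit; Gessel and Monsky work more directly with the piecewise-polynomial limit of the Han--Monsky recursion and extract the $C_\lambda$ from a power-series coefficient. Your identification of the hard step (interchanging $e\to\infty$ and $p\to\infty$) is correct in spirit, but you have not indicated how to obtain the required uniformity, and this is not something that follows from general principles. If you want to fill in a real proof here, you should consult \cite{limitmonsky} directly rather than reconstruct it.
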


\begin{proof}
See \cite[Theorem 2.4]{limitmonsky}.
\end{proof}

Another big class of rings for which the limit Hilbert-Kunz multiplicity is known to exist, is given by standard-graded, two-dimensional $k$-domains, 
where $k$ is an algebraically closed field of characteristic zero. To state the theorem, we need to define spreads.

\begin{defi}
Let $R$ be a finitely generated, positively-graded, two-dimensional domain over an algebraically closed field $k$ of characteristic 0 and let $I$ be a 
homogeneous $R_+$-primary ideal. There exist a finitely generated $\Z$-algebra $A\subseteq k$, a finitely generated, positively-graded algebra $R_A$ over $A$ 
and a homogeneous ideal $I_A\subsetneq R_A$ with $R_A\otimes_Ak=R$ and such that the following properties hold for all closed points $s\in\Spec(A)$

\begin{enumerate}
 \item the ring $R_s:=R_A\otimes_A\kappa(s)$ is a finitely generated, positively-graded, two-dimen\-sional domain over $\kappa(s)$,
 \item the rings $R_s$ are normal, if $R$ is and
 \item the ideal $I_s:=\Ima(I_A\otimes_A\kappa(s))\subsetneq R_s$ is homogeneous and $(R_s)_+$-primary.
\end{enumerate}

We call the data $(A,R_A,I_A)$ a \textit{spread} of the pair $(R,I)$. Note that the spread $(A,R_A,I_A)$ can be chosen such that $A$ contains a given 
finitely generated $k$-subalgebra $A_0$.
\end{defi}

\begin{thm}[Trivedi]
Let $R$ be a standard-graded, two-dimensional domain over an algebraically closed field $k$ of characteristic 0. Let $I \subsetneq R$ be a
homogeneous $R_+$-primary ideal and let $(A,R_A,I_A)$ be a spread of the pair $(R,I)$. Denote by $s_0$ the generic point of $\Spec(A)$. Then the limit
$$\LHKM(I,R)=\lim_{s\ra s_0}\HKM(I_s,R_s)$$
over the closed points $s\in\Spec(A)$ exists. Moreover, if $I=(f_1,\ldots,f_n)$ with $\Deg(f_i)=d_i$, this limit equals 
$$\frac{\Deg(\Proj(S))}{2}\left(\sum_{i=1}^tr_i\left(\frac{\mu_i}{\Deg(\Proj(S))}\right)^2-\sum_{i=1}^nd_i^2\right),$$
where $S$ is the normalization of $R$ and the numbers $r_i$, $\mu_i$ are the ranks and slopes of the consecutive quotients of the bundles in the 
Harder-Narasimhan filtration of $\Syz_{\Proj(S)}(IS)$.
\end{thm}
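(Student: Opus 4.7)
The plan is to combine three ingredients: (i) reduce the statement to the normal case via the finite extension $R\hookrightarrow S$, (ii) spread out the characteristic-zero Harder-Narasimhan filtration of $\Syz_{\Proj(S)}(IS)$ over $\Spec(A)$, and (iii) show that, fiber by fiber in positive characteristic, Trivedi's formula from Theorem \ref{trivedi} (applied via the normalization) produces numbers that converge to the claimed expression as the residue characteristic tends to infinity.

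First I would replace the pair $(R,I)$ by $(S,IS)$. Since $R\hookrightarrow S$ is a module-finite extension of domains and the formation of the normalization commutes with sufficiently generic base change, after shrinking $A$ we may spread out $S$ to $S_A$ so that $S_s$ is the normalization of $R_s$ for every closed $s\in\Spec(A)$. Theorem \ref{watyorank} then gives $\HKM(I_s,R_s)=\HKM(I_sS_s,S_s)\cdot[Q(S_s):Q(R_s)]^{-1}$ (the residue extension becomes trivial after shrinking $A$), and the degree of fraction fields is constant on a dense open, so existence and value of the limit for $(R,I)$ reduce to those for $(S,IS)$; at the same time, the curve $\Proj(S)$ is smooth in characteristic zero, so Theorem \ref{holger} is the target statement to match.

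Next I would spread out the Harder-Narasimhan filtration
\[
0=\Sc_0\subsetneq\Sc_1\subsetneq\ldots\subsetneq\Sc_t=\Syz_{\Proj(S)}(IS)
\]
with slopes $\mu_1>\ldots>\mu_t$ and ranks $r_1,\ldots,r_t$. Standard flatness and constructibility arguments (Mumford's relative version of HN, or the openness of semistability in flat families) allow one, after further shrinking $\Spec(A)$, to assume this filtration extends to a filtration of $\Syz_{\Proj(S_A)}(I_AS_A)$ by subbundles flat over $A$, whose restriction to every closed fiber $s$ has ranks $r_i$, consecutive quotients semistable of slope $\mu_i$, and such that the degree of $\Proj(S_s)$ with respect to the polarization is the same as $\Deg(\Proj(S))$. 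On each such fiber this is already a filtration by semistable quotients with strictly decreasing slopes, hence is \emph{the} Harder-Narasimhan filtration of $\Syz_{\Proj(S_s)}(I_sS_s)$.

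The hard part is controlling the passage from semistability to strong semistability on the fibers. In positive characteristic, each quotient $(\Sc_i/\Sc_{i-1})_s$ is semistable but may fail to be strongly semistable, so the strong HN filtration used in Theorem \ref{holger} can be a proper refinement, giving data $(r_j(s),\nu_j(s))$ different from the characteristic-zero $(r_i,\mu_i/\Deg(\Proj(S)))$. To bound the discrepancy I would invoke Langer's effective bound on the instability of Frobenius pull-backs of semistable bundles on smooth projective curves: the deviation $\mu_{\max}(F^{e*}\mathcal F)-\mu_{\min}(F^{e*}\mathcal F)$ of any semistable $\mathcal F$ on $\Proj(S_s)$ is bounded by a quantity depending on the bundle and curve but going to zero as $p(s)\to\infty$ (for instance, by a constant divided by $p(s)$ after normalizing by $p^e$). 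Combined with the fact that both slope and rank are additive on the strong refinement, this shows
\[
\sum_j r_j(s)\,\nu_j(s)^2 \;=\; \sum_{i=1}^t r_i\!\left(\tfrac{\mu_i}{\Deg(\Proj(S))}\right)^{\!2}+o_s(1)
\]
as $s\to s_0$. Plugging into Theorem \ref{holger} applied on each fiber gives
\[
\HKM(I_sS_s,S_s)\;\longrightarrow\;\frac{\Deg(\Proj(S))}{2}\!\left(\sum_{i=1}^{t} r_i\!\left(\tfrac{\mu_i}{\Deg(\Proj(S))}\right)^{\!2}-\sum_{i=1}^{n}d_i^2\right),
\]
proving both the existence of the limit and the claimed formula. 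The main obstacle is precisely this last quantitative control on the defect of strong semistability, which is why the argument essentially rests on Langer's bound; once that input is available, the rest is spreading out and bookkeeping.
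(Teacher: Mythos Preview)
The paper does not prove this theorem; it defers entirely to \cite[Theorem 2.4]{triredmodp}. Your outline is essentially Trivedi's argument there: pass to the normalization $S$ (where $[Q(S):Q(R)]=1$, so Theorem~\ref{watyorank} simply gives $\HKM(I_s,R_s)=\HKM(I_sS_s,S_s)$), spread out the characteristic-zero Harder--Narasimhan filtration using openness of semistability so that it restricts to the HN filtration on a dense open set of closed fibers, and then bound the discrepancy between the HN and strong HN filtrations on each fiber via the Shepherd-Barron/Langer instability bound, feeding the result into Theorem~\ref{holger}.

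One small imprecision worth flagging: the raw difference $\mu_{\max}(F^{e*}\mathcal F)-\mu_{\min}(F^{e*}\mathcal F)$ for a semistable $\mathcal F$ does not itself tend to zero as $p\to\infty$---it is bounded by a constant (depending on genus and rank) times $(p^e-1)/(p-1)$. What goes to zero is this quantity divided by $p^e\cdot\Deg(\Proj(S_s))$, so that all normalized slopes $\nu_j(s)$ arising from the strong refinement of a single semistable quotient $(\Sc_i/\Sc_{i-1})_s$ lie within $O(1/p)$ of $\mu_i/\Deg(\Proj(S))$. You do indicate this parenthetically, and it is exactly what the convexity argument for $\sum r_j\nu_j^2$ requires.
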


\begin{proof} See \cite[Theorem 2.4]{triredmodp}.\end{proof}

Trivedi remarks (cf. \cite[Remark 2.6]{triredmodp}) that all $\HKM(I_s,R_s)$ are bounded below by $\LHKM(I,R)$ and that equality holds for a closed point 
$s\in\Spec(A)$ if and only if the Harder-Narasimhan filtration 
of $\Syz_{\Proj(R_s)}(I_s)$ is strong.

At least in this situation, the limit Hilbert-Kunz multiplicity behaves to solid closure like the Hilbert-Kunz multiplicity to tight closure (cf. \cite[Theorem 3.3]{solid}).

The result of Trivedi was recovered in \cite{limitholger} for standard-graded, flat $\Z$-domains $R$, where almost all fiber rings $R_p:=R\otimes_{\Z}\Z/(p)$ 
are geometrically normal, two-dimensional domains. Moreover, they show that in this situation, one does not have to compute all $\HKM(I_p,R_p)$ to compute 
$\LHKM(I,R)$. In fact, one can skip the limit in the first number.

\begin{thm}[Brenner, Li, Miller]
In the above situation, one has 
$$\LHKM(I,R)=\lim_{p\ra\infty}\frac{\lambda_{R_p}\left(R_p/I_p^{[p^e]}\right)}{p^{2e}}$$
for every fixed integer $e\geq 1$.
\end{thm}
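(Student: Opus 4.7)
The plan is to combine Proposition \ref{hkgeomapp} with Langer's uniform boundedness for Frobenius pullbacks and the spread-out construction underlying Trivedi's Theorem \ref{trivedi}. Fix $e \geq 1$ and set $q := p^e$.

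\textbf{Step 1. Geometric expansion.} Let $Y_p := \Proj(R_p)$. By hypothesis $R_p$ is a two-dimensional normal domain for almost all $p$, so Proposition \ref{hkgeomapp} applied to $R_p$ and $I_p$ yields
$$\lambda_{R_p}(R_p/I_p^{[q]}) = \sum_{m\in\Z}\Bigl(\Th^0(Y_p,\Oc_{Y_p}(m)) - \sum_{j=1}^n \Th^0(Y_p,\Oc_{Y_p}(m-qd_j)) + \Th^0(Y_p,\Fc_p^{(e)}(m))\Bigr),$$
where $\Fc_p^{(e)} := \fpb{e}(\Syz_{Y_p}(I_p))$ by Proposition \ref{syzprop}(vi). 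The first two terms are controlled by Riemann-Roch on $Y_p$, whose degree and genus stabilize in $p$ after a suitable spread-out.

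\textbf{Step 2. Riemann-Roch on the Harder-Narasimhan quotients.} Let $0 = \Sc_0 \subsetneq \cdots \subsetneq \Sc_t = \Fc_p^{(e)}$ be the Harder-Narasimhan filtration, with ranks $r_i$ and slopes $\tilde\mu_i$ depending a priori on $p$ and $e$. Each quotient $\Sc_i/\Sc_{i-1}$ is semistable; twisting by $\Oc(m)$ and summing over $m$ (so that the Euler-characteristic formula of Riemann-Roch dominates over the vanishing thresholds), the same bookkeeping that proves Theorem \ref{holger} gives
$$\lambda_{R_p}(R_p/I_p^{[q]}) = \frac{\Deg(Y_p)}{2}\Bigl(\sum_{i=1}^t r_i \tilde\nu_i^2 - q^2\sum_{j=1}^n d_j^2\Bigr) + O(q),$$
with $\tilde\nu_i := -\tilde\mu_i/\Deg(Y_p)$, where the $O(q)$ absorbs the contribution of the $\mu_i$-near-zero strata and of the boundary terms. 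Crucially, this identity does \emph{not} require $\Fc_p^{(e)}$ to have a strong Harder-Narasimhan filtration, only an ordinary one.

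\textbf{Step 3. Uniform comparison with the characteristic-zero HN filtration.} Choose a spread $(A, R_A, I_A)$ of $(R_0, I_0)$ where $R_0 := R \otimes_\Z \bar\Q$, and let $\nu_i$, $r_i$ be the normalized slopes and ranks of the Harder-Narasimhan filtration of $\Syz_{\Proj(R_0)}(I_0)$. After shrinking $\Spec(A)$, this filtration spreads out to a flat filtration of $\Syz_{Y_p}(I_p)$ by locally free subsheaves whose quotients are semistable of slope $-\nu_i \Deg(Y_p)$ for every closed fiber of sufficiently large characteristic; this uses the openness of semistability and of locally-freeness. Now apply Langer's effective bound \cite[Theorem 2.7]{langer} to each semistable quotient $\Qc_i$: the HN slopes of $\fpb{e}(\Qc_i)$ differ from $q \cdot \mu(\Qc_i)$ by at most $C(1 + p + \cdots + p^{e-1}) = O(p^{e-1})$, with a constant $C$ depending only on numerical invariants of the polarized curve $Y_p$ that are locally constant on the spread. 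Since Frobenius pullback is exact and preserves inclusions, the pulled-back filtration refines to the HN filtration of $\Fc_p^{(e)}$, yielding
$$\tilde\nu_i = q \nu_i + O(p^{e-1}) \quad \text{for each } i.$$

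\textbf{Step 4. Passing to the limit and main obstacle.} Substituting Step 3 into Step 2 and dividing by $q^2 = p^{2e}$, the cross terms $q\nu_i \cdot O(p^{e-1})$ contribute $O(p^{2e-1})/p^{2e} = O(p^{-1})$, and the $O(q)$ error from Step 2 becomes $O(p^{-e}) \subseteq O(p^{-1})$ since $e \geq 1$. This gives
$$\frac{\lambda_{R_p}(R_p/I_p^{[q]})}{p^{2e}} = \frac{\Deg(Y_p)}{2}\Bigl(\sum_{i=1}^t r_i \nu_i^2 - \sum_{j=1}^n d_j^2\Bigr) + O(p^{-1}),$$
whose right-hand side converges to Trivedi's formula for $\LHKM(I,R)$ as $p \to \infty$. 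The crux of the argument is the \emph{uniform-in-$p$} control needed in Step 3: both Langer's constant $C$ and the existence of a coherent spread-out HN filtration must be independent of $p$, so that the error $O(p^{e-1})$ is genuine. This uniformity in turn requires that the relevant numerical invariants (degree, genus, Hilbert polynomial of $Y_p$, HN numerics of the characteristic-zero syzygy bundle) are constant over the base of the spread, which holds after shrinking $\Spec(A)$. The hypothesis $e \geq 1$ is essential, as for $e = 0$ the $O(q)$ term of Step 2 would not be absorbed by the normalization $p^{2e} = 1$.
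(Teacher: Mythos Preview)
The thesis does not prove this theorem; it is stated in the survey section on limit Hilbert-Kunz multiplicity as a cited result from \cite{limitholger} with no argument given. So there is no in-paper proof to compare against.

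Your strategy is the correct one and almost certainly matches the original: spread out the characteristic-zero Harder--Narasimhan filtration (which remains the HN filtration in large characteristic by openness of semistability), bound the destabilization of each semistable quotient under $\fpb{e}$ uniformly in $p$, and feed this into the Riemann--Roch bookkeeping behind Theorem~\ref{holger}. Two points need tightening.

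First, the citation in Step~3 is off: Theorem~\ref{langer} (i.e.\ \cite[Theorem~2.7]{langer}) is the \emph{existence} of a strong HN filtration, not the effective inequality $\mu_{\max}(\fpb{}\Qc) - p\,\mu(\Qc) \le C(g,r)$ that you actually invoke. That bound does hold on curves (Langer, Shepherd-Barron, Sun), with $C$ depending only on genus and rank, so the uniformity in $p$ is genuine; but cite the right statement.

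Second, Step~3 conflates two filtrations. The $\tilde\nu_i$ of Step~2 are the HN data of $\Fc_p^{(e)}$, whereas the filtration you produce in Step~3 is the pullback of the spread-out filtration, refined by the HN filtrations of the $\fpb{e}\Qc_i$. This refined filtration has semistable quotients but is not the HN filtration of $\Fc_p^{(e)}$ in general (the slope intervals around $p^e\nu_i$ may overlap for small $p$, and the number of pieces differs). The clean fix is to observe that the quantity $\sum_k r_k\mu_k^2$ is the same for \emph{any} filtration with semistable quotients, since it is additive on short exact sequences and equals $r\mu^2$ on semistable bundles. Computing it via your refined filtration then gives $\sum_i r_i\, p^{2e}\nu_i^2(\Deg Y_p)^2 + O(p^{2e-1})$ directly, and Step~4 goes through.
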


\subsection{F-Signature}
In this subsection we will define the F-signature (function) and connect them to Hilbert-Kunz theory. For further properties of F-signature and 
explicit examples we refer to the literature.

\begin{defi}
Let $(R,\mm,k)$ be a reduced local ring of dimension $d$ and characteristic $p>0$. Assume that $k$ is perfect and that $R$ is $F$-finite 
(meaning that $\TF:R\ra R$ is a finite map). 
For any $e\in\N$ denote by $a_q$ the maximal rank of a free submodule of $R^{1/q}$. Then the function 
$$e\mapsto\FS(R,q):=a_q$$
is called the \textit{$F$-signature function} of $R$. The limit 
$$\Ts(R):=\lim_{e\ra\infty}\frac{\FS(R,q)}{q^d}$$
(provided it exists) is called the \textit{$F$-signature} of $R$.
\end{defi}

The $F$-signature was firstly studied in \cite{smithvan} and the existence was proven by Tucker in \cite[Theorem 4.9]{fsignexists}. Yao has shown 
in \cite[Proposition 4.1]{yao} that the $F$-signature of $R$ is nothing but the minimal relative Hilbert-Kunz multiplicity of $R$, that is 
$$\inf\{\HKM(I,R)-\HKM(J,R)|I\subsetneq J,\text{ } \sqrt{I}=\mm,\text{ } \lambda_R(J/I)=1\}$$
and was defined in \cite{minrelhkm}.

The following theorem gives a connection between the $F$-signature function and the Hilbert-Kunz function.

\begin{thm}[Huneke, Leuschke]\label{fsighkf}
Let $(R,\mm)$ be Gorenstein, let $I$ be an $\mm$-primary ideal generated by a system of parameters and $J:=(I,y)$, where $y\in R$ is a representative for the socle of $R/I$. We then get the equality
$$\FS(R,q)=\HKF(I,R,q)-\HKF(J,R,q).$$
\end{thm}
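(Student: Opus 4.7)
The plan is to show that both sides of the claimed equality compute the length $\lambda_R(JR^{1/q}/IR^{1/q})$. First I would invoke the identification $R/I^{[q]}\cong R^{1/q}/IR^{1/q}$ from the remark following the definition of the Hilbert-Kunz function (valid after passing to the completion, with the residue field perfect in the $F$-finite setting where $\FS$ is defined). This rewrites the right-hand side as
$$\HKF(I,R,q)-\HKF(J,R,q)=\lambda_R\left(R^{1/q}/IR^{1/q}\right)-\lambda_R\left(R^{1/q}/JR^{1/q}\right)=\lambda_R\left(JR^{1/q}/IR^{1/q}\right).$$

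Next, Krull--Schmidt (applicable on the complete local ring $\widehat{R}$, which does not affect Hilbert-Kunz functions) yields a decomposition $R^{1/q}=R^{a_q}\oplus M_q$ of $R$-modules, where $a_q=\FS(R,q)$ is the maximal number of free $R$-summands and $M_q$ has no free $R$-summand. Splitting lengths gives
$$\lambda_R\left(JR^{1/q}/IR^{1/q}\right)=a_q\cdot\lambda_R(J/I)+\lambda_R(JM_q/IM_q),$$
and $\lambda_R(J/I)=1$ because $\bar y$ generates the one-dimensional socle of the zero-dimensional Gorenstein ring $R/I$. It therefore suffices to prove $JM_q=IM_q$, equivalently $yM_q\subseteq IM_q$.

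This I would deduce from the following key lemma: \emph{if $M$ is a finitely generated maximal Cohen--Macaulay $R$-module with no free direct summand, then $yM\subseteq IM$.} Having no free summand forces the trace ideal $\tau(M):=\sum_{\phi\in\Hom_R(M,R)}\phi(M)$ to lie in $\mm$. Since $\bar y$ lies in the socle of $R/I$, we have $\mm y\subseteq I$, and therefore for every $m\in M$ and every $\phi\in\Hom_R(M,R)$,
$$\phi(ym)=y\cdot\phi(m)\in y\cdot\mm\subseteq I.$$
Thus $\overline{ym}\in M/IM$ is killed by every functional in the image of the natural map $\Hom_R(M,R)\to\Hom_{R/I}(M/IM,R/I)$. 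Applying this with $M=M_q$, which is MCM as a summand of the MCM module $R^{1/q}$, Gorenstein duality implies that this map is surjective and that $M_q/IM_q$ is reflexive over the zero-dimensional Gorenstein ring $R/I$; hence nonzero elements of $M_q/IM_q$ are separated by $R/I$-linear functionals. Consequently $\overline{ym}=0$, i.e., $ym\in IM_q$, which proves the lemma and the theorem.

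The main obstacle is the Gorenstein-duality input: the isomorphism $\Hom_R(M,R)\otimes_R R/I\cong\Hom_{R/I}(M/IM,R/I)$ and the attendant reflexivity of $M/IM$ over $R/I$ for MCM $M$. This combines the vanishing $\Ext^i_R(M,R)=0$ for $i\geq 1$ (MCM over Gorenstein) with the Koszul resolution of $R/I$ by the regular sequence generating $I$, and it is the step that genuinely uses the Gorenstein hypothesis on $R$.
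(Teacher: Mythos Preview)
The paper gives no proof here; it only cites Huneke and Leuschke's original article. Your argument is correct and is essentially their proof: split off the free part of $R^{1/q}$, reduce to showing $yM\subseteq IM$ whenever $M$ is maximal Cohen--Macaulay with no free summand, and deduce this from the surjection $\Hom_R(M,R)\twoheadrightarrow\Hom_{R/I}(M/IM,R/I)$ (using $\Ext^{>0}_R(M,R)=0$ for MCM $M$ over a Gorenstein ring) together with the fact that nonzero elements of a finitely generated module over the Artinian Gorenstein ring $R/I$ are detected by $R/I$-linear functionals. One minor clarification: reflexivity of $M/IM$ over $R/I$ holds for \emph{every} finitely generated $R/I$-module---it is a feature of zero-dimensional Gorenstein rings (Matlis self-duality with $E(k)\cong R/I$)---and does not require $M$ to be MCM; the MCM hypothesis enters only through the lifting of functionals, exactly as you isolate in your final paragraph.
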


\begin{proof}
See \cite[Theorem 11(2)]{2thmsmcm}.
\end{proof}

\subsection{Further readings}

We give some more references on papers related to Hilbert-Kunz theory that are not mentioned so far.

\begin{enumerate}
\item In \cite{fields} Fields investigates the theory of quasi-polynomial functions to length functions of generalized Tor-modules. This theory has applications to Hilbert-Kunz functions.
\item More on Hilbert-Kunz functions of hypersurfaces can be found in \cite{chen}, \cite{upad2}, \cite{upad1}, \cite{upad4} and \cite{upad3}.
\item For more explicit examples of Hilbert-Kunz functions (in characteristic two) see \cite{lines4}, \cite{points4} and \cite{zdp4}.
\item The geometric approach to Hilbert-Kunz theory is used in \cite{triflag} and \cite{irredcurves}.
\item For a deeper analysis of Han's $\delta$-function and similar functions, especially of their fractal structure, see \cite{montei1}, \cite{montei2}, \cite{texdiss} and \cite{texnew}.
\item More bounds for Hilbert-Kunz multiplicities can be found in \cite{boundshkmaber}, \cite{boundshkmaber2}, \cite{manuel}, \cite{boundshkm} and \cite{fsignexists}.
\item For more on Frobenius functors and Frobenius pull-backs in higher dimensions see \cite{dutta}, \cite{dutta2}, \cite{dutta3}, \cite{seibert1} and \cite{fropb}.
\item For some generalizations of Hilbert-Kunz functions and multiplicities for example to ideals that are not $\mm$-primary see \cite{dao}, \cite{neil1} and \cite{neil2}.
\item More properties and explicit examples of $F$-signatures can be found in \cite{fpure}, \cite{aberfsign}, \cite{eric}, \cite{fpairs2}, \cite{fpairs1}, \cite{korff}, \cite{singh} and \cite{sannai}.
\item More about tight and solid closure and their interactions with Hilbert-Kunz theory can be found for example in \cite{tightclo}, \cite{solidclo}, \cite{tightapp}, \cite{holgerhab}, \cite{solid} and \cite{holgerbuch}.
\item Results on the Hilbert-Kunz function of maximal Cohen-Macaulay modules can be found in \cite{jean}, \cite{seibert2} and \cite{watyomcm}.
\end{enumerate}

\chapter{Examples of Hilbert-Kunz functions and Han's $\delta$ function}\label{chaphan}
In the first section we will discuss some examples of Hilbert-Kunz functions. The second section is devoted to a more detailed study of Han's $\delta$ 
function and a generalization of a theorem due to Monsky. We will end section two by computing some examples of Hilbert-Kunz multiplicities with the help 
of this generalization. In the third section we will use this generalization to study the behaviour of the Hilbert-Kunz multiplicity in certain 
families of trinomial surface rings.
\section{Examples of Hilbert-Kunz functions}
We fix some notations for the next two lemmata in which we will compute formulas for values of Hilbert-Kunz functions. These formulas will be used to 
compute Hilbert-Kunz functions of explicit examples (cf. Example \ref{hkf2lm} as well as the computations in the Chapters \ref{chaphkfade} and \ref{chapextfurexa}). 
Let $R:=k[X,Y,Z]/(F)$ be a normal, standard-graded domain with an algebraically closed field $k$ of prime characteristic $p$ and $\Deg(F)=d>0$. Moreover, 
we fix the $R_+$-primary ideal $I:=(X^{\alpha},Y^{\beta},Z^{\gamma})$ with $\alpha$, $\beta$, $\gamma\in\N_{\geq 1}$. Let $Q:\N^3\ra\Z$ be the quadratic form defined as
$$(n_1,n_2,n_3)\longmapsto 2(n_1n_2+n_1n_3+n_2n_3)-n_1^2-n_2^2-n_3^2.$$

\begin{lem}\label{hkffrei} 
Let $e\in\N$ be such that
\begin{align}\label{syzsplit}\Syz_R\left(I\qpot\right)=R(-n)\oplus R(-l)\text{ for some }n,l\in\Z.\end{align}
Then the value at $e$ of the Hilbert-Kunz function of $R$ with respect to $I$ is given by
$$\HKF(I,R,p^e)=\frac{d\cdot Q(\alpha,\beta,\gamma)\cdot p^{2e}+d\cdot|n-l|^2}{4}.$$
Note that the assumption (\ref{syzsplit}) means that the quotient $R/I\qpot$ has finite projective dimension and this carries over to all quotients 
$R/I^{[q']}$ for $q'\geq q$.
\end{lem}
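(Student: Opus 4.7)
The plan is to extract a graded free resolution of $R/I\qpot$ from the hypothesis, read off its Hilbert series, and evaluate the total length as a limit at $t=1$.

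First, I would concatenate the presenting sequence of $\Syz_R(I\qpot)$ with $0\ra I\qpot\ra R\ra R/I\qpot\ra 0$ and use the splitting to obtain the graded exact complex
\begin{align*}
0\lra R(-n)\oplus R(-l)\lra R(-q\alpha)\oplus R(-q\beta)\oplus R(-q\gamma)\lra R\lra R/I\qpot\lra 0.
\end{align*}
Since $R$ is a standard-graded hypersurface of degree $d$ in three variables, $\lK_R(t)=(1-t^d)/(1-t)^3$, so taking alternating sums of Hilbert series along the resolution yields
\begin{align*}
\lK_{R/I\qpot}(t) = \frac{1-t^d}{(1-t)^3}\cdot g(t),\qquad g(t):=1-t^{q\alpha}-t^{q\beta}-t^{q\gamma}+t^n+t^l.
\end{align*}

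Next, I need the degree balance $n+l=q(\alpha+\beta+\gamma)$. This follows from Proposition \ref{syzprop}(iv) applied to the two descriptions of $\Syz_{\Proj(R)}(I\qpot)$ by comparing determinants; equivalently, it is forced by $g'(1)=0$, which must hold since $R/I\qpot$ has finite length and hence $\lK_{R/I\qpot}(t)$ is a polynomial.

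Since $I\qpot$ is $R_+$-primary, $R/I\qpot$ is Artinian and $\HKF(I,R,p^e)=\lim_{t\to 1}\lK_{R/I\qpot}(t)$. Factoring $1-t^d=(1-t)(1+t+\ldots+t^{d-1})$ reduces the computation to $d\cdot\lim_{t\to 1}g(t)/(1-t)^2$, and since $g(1)=g'(1)=0$, two applications of l'Hospital's rule give $g''(1)/2 = \tfrac{1}{2}(n^2+l^2-q^2(\alpha^2+\beta^2+\gamma^2))$, the linear terms in $g''(1)$ cancelling by the balance. Finally, rewriting $n^2+l^2=\tfrac{1}{2}(n+l)^2+\tfrac{1}{2}(n-l)^2$ and substituting $n+l=q(\alpha+\beta+\gamma)$ converts this expression into the claimed $\tfrac{1}{4}(d\cdot Q(\alpha,\beta,\gamma)\,p^{2e}+d|n-l|^2)$. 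The argument is essentially l'Hospital together with a degree-counting identity, so no substantial obstacle is expected beyond verifying these algebraic identities cleanly.
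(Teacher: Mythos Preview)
Your proof is correct and takes a genuinely different route from the paper's. The paper works on the curve $C=\Proj(R)$: it invokes Proposition~\ref{hkgeomapp} to express $\Dim_k(R/I\qpot)_m$ in terms of $\Th^0(C,\Oc_C(\ast))$, substitutes the splitting $\Syz_C(I\qpot)\cong\Oc_C(-n)\oplus\Oc_C(-l)$, and then evaluates the resulting alternating sum of $\sum_{m}\Th^0(\Oc_C(m-\ast))$ by Riemann--Roch, arguing separately that the $\Th^1$-contributions vanish in the limit by Serre vanishing. Your argument, by contrast, stays entirely on the level of graded $R$-modules: you read off a length-two graded free resolution of $R/I\qpot$, write its Hilbert series as $(1-t^d)g(t)/(1-t)^3$, and extract the total length via l'H\^opital, the key identity $n+l=q(\alpha+\beta+\gamma)$ dropping out automatically from the requirement that the Hilbert series be a polynomial. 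Your method is more self-contained and elementary---no sheaf cohomology, no Riemann--Roch, no genus---and mirrors exactly the Hilbert-series computation of Proposition~\ref{deltadim} for the polynomial ring; the paper's approach has the advantage of fitting into its broader geometric framework, and is what makes the companion Lemma~\ref{hkfdreivar} (where the syzygy module does not split freely) go through in the same way.
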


\begin{proof}
Computing the degrees of the $C:=\Proj(R)$-bundles corresponding to the $R$-modules in (\ref{syzsplit}), one finds
$$l+n=q(\alpha+\beta+\gamma).$$
Therefore, we can write $l$ and $n$ as 
\begin{equation}\label{lundn}\frac{q(\alpha+\beta+\gamma)\pm |l-n|}{2}.\end{equation}
Using Proposition \ref{hkgeomapp} and Riemann-Roch \ref{thmrr}, we obtain
\begin{align*}
 &\quad \Dim_k(R/(X^{\alpha q},Y^{\beta q},Z^{\gamma q}))\\
 &= \lim_{x\ra\infty}\left[\sum_{m=0}^x\dimglo{\Oc_C(m)}-\sum_{m=0}^x\dimglo{\Oc_C(m-\alpha q)}-\sum_{m=0}^x\dimglo{\Oc_C(m-\beta q)}-\sum_{m=0}^x\dimglo{\Oc_C(m-\gamma q)}\right.\\
 &\quad \left. +\sum_{m=0}^x\dimglo{\Oc_C(m-l)}+\sum_{m=0}^x\dimglo{\Oc_C(m-n)}\right]\\
 &= \lim_{x\ra\infty}\left[\sum_{m=0}^x\dimglo{\Oc_C(m)}-\sum_{m=0}^{x-\alpha q}\dimglo{\Oc_C(m)}-\sum_{m=0}^{x-\beta q}\dimglo{\Oc_C(m)}-\sum_{m=0}^{x-\gamma q}\dimglo{\Oc_C(m)}\right.\\
 &\quad \left. +\sum_{m=0}^{x-l}\dimglo{\Oc_C(m)}+\sum_{m=0}^{x-n}\dimglo{\Oc_C(m)}\right]\\
 &= \lim_{x\ra\infty}\left[\sum_{m=x-\alpha\cdot q+1}^x\dimglo{\Oc_C(m)}-\sum_{m=x-l+1}^{x-\beta q}\dimglo{\Oc_C(m)} -\sum_{m=x-n+1}^{x-\gamma q}\dimglo{\Oc_C(m)}\right]\\
 &= \lim_{x\ra\infty}\left[\sum_{m=x-\alpha\cdot q+1}^x(dm+1-g)+\sum_{m=x-\alpha\cdot q+1}^x\dimext{\Oc_C(m)}-\sum_{m=x-l+1}^{x-\beta q}(dm+1-g)\right.\\
 &\quad \left. -\sum_{m=x-l+1}^{x-\beta q}\dimext{\Oc_C(m)}-\sum_{m=x-n+1}^{x-\gamma q}(dm+1-g) -\sum_{m=x-n+1}^{x-\gamma q}\dimext{\Oc_C(m)}\right],
\end{align*}
where $g$ in the last equality denotes the genus of $C$. With the help of Equation (\ref{lundn}) one sees that the various sums over $dm+1-g$ sum up 
to the constant
$$\frac{d Q(\alpha,\beta,\gamma)q^2+d|l-n|^2}{4}.$$
Since both bounds of the sums over $\Th^1(\Oc_C(m))$ depend linearly on $x$ and $\Th^1(\Oc_C(m))$ vanishes for large $m$ by a theorem of Serre 
(cf. \cite[Theorem III.5.2]{hartshorne}) these sums have no share in the limit over $x$. All in all, the claim follows.
\end{proof}

\begin{lem}\label{hkfdreivar} 
Let $e\in\N$ be such that
\begin{align}\label{eqsyzsyz}\Syz_R\left(I\qpot\right)\cong\Syz_R(X^a,Y^b,Z^c)(-n)\end{align}
for some $n\in\Z$ and some positive integers $a$, $b$, $c$ such that at least one of the inequalities $a<\alpha q$, $b<\beta q$, $c<\gamma q$ holds. 
Computing the degrees of the $C:=\Proj(R)$-bundles corresponding to the $R$-modules in (\ref{eqsyzsyz}), we find
$$n=\frac{q(\alpha+\beta+\gamma)-a-b-c}{2}.$$
With $D:=\Dim_k(R/(X^a,Y^b,Z^c))$ the value at $e$ of the Hilbert-Kunz function of $R$ with respect to $I$ is given by
$$\HKF(I,R,p^e)=\frac{d\cdot Q(\alpha,\beta,\gamma)\cdot p^{2e}-d\cdot Q(a,b,c)}{4}+D.$$
\end{lem}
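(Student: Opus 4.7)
The plan is to mirror the telescoping argument from Lemma \ref{hkffrei}, using a second application of Proposition \ref{hkgeomapp} to convert the syzygy contribution into the length $D$. Writing $q := p^e$, I would apply Proposition \ref{hkgeomapp} to $I^{[q]}$ to express $\HKF(I,R,q)$ as $\lim_{x\to\infty}$ of the partial sum over $0 \leq m \leq x$ of the five terms $\dimglo{\Oc_C(m)}$, $-\dimglo{\Oc_C(m - \alpha q)}$, $-\dimglo{\Oc_C(m - \beta q)}$, $-\dimglo{\Oc_C(m - \gamma q)}$, and $+\dimglo{\Syz_C(I^{[q]})(m)}$. The hypothesis $\Syz_R(I^{[q]}) \cong \Syz_R(X^a,Y^b,Z^c)(-n)$ lets me rewrite the last term as $\dimglo{\Syz_C(X^a,Y^b,Z^c)(m-n)}$; after reindexing and absorbing the zero terms $\dimglo{\Syz_C(X^a,Y^b,Z^c)(m')} = 0$ for $m' < \min(a,b,c)$, this becomes $\sum_{m'=0}^{x-n} \dimglo{\Syz_C(X^a,Y^b,Z^c)(m')}$.

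The key move is a second application of Proposition \ref{hkgeomapp}, this time to the $R_+$-primary ideal $(X^a,Y^b,Z^c)$ at level $e = 0$. Since $\Dim_k R/(X^a,Y^b,Z^c) = D$ is finite, summing the resulting identity over all $m' \geq 0$ yields
$$\sum_{m' \geq 0} \dimglo{\Syz_C(X^a,Y^b,Z^c)(m')} = D - \sum_{m' \geq 0} \bigl[ \dimglo{\Oc_C(m')} - \dimglo{\Oc_C(m'-a)} - \dimglo{\Oc_C(m'-b)} - \dimglo{\Oc_C(m'-c)} \bigr].$$
Substituting this back collapses the whole expression into $D + \lim_{x \to \infty} \sum_i \epsilon_i S(x - c_i)$ with $S(N) := \sum_{m=0}^N \dimglo{\Oc_C(m)}$ and eight shifts $(c_i,\epsilon_i) = (0,+),\,(\alpha q,-),\,(\beta q,-),\,(\gamma q,-),\,(n,-),\,(n+a,+),\,(n+b,+),\,(n+c,+)$.

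Next, I would invoke Riemann--Roch and Serre vanishing to write $S(N) = p(N) + A$ for $N$ large, where $p(N) = \tfrac{d}{2}N^2 + (\tfrac{d}{2} + 1 - g)N + (1-g)$ and $A = \sum_{m \geq 0} \dimext{\Oc_C(m)}$ is a constant. Since $\sum_i \epsilon_i = 0$, the constant $A$ cancels out. The degree-matching identity $2n = q(\alpha+\beta+\gamma) - (a+b+c)$, which follows from Proposition \ref{syzprop}(iv) and is already noted in the lemma statement, forces $\sum_i \epsilon_i c_i = 0$; combined with $\sum_i \epsilon_i = 0$ this annihilates the divergent $x^2$ and $x$ coefficients of $\sum_i \epsilon_i p(x - c_i)$. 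What survives is $\tfrac{d}{2} \sum_i \epsilon_i c_i^2$, and a direct expansion (once more via $2n = q(\alpha+\beta+\gamma) - (a+b+c)$) yields $\sum_i \epsilon_i c_i^2 = \tfrac{1}{2}[q^2 Q(\alpha,\beta,\gamma) - Q(a,b,c)]$. Assembling everything reproduces the claimed formula.

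The main obstacle is purely bookkeeping: individually the partial sums $\sum_{m=0}^x \cdots$ grow like $-dx^2$, so one must combine them carefully before passing to the limit. The identity $2n = q(\alpha+\beta+\gamma) - (a+b+c)$ plays exactly the role that $l + n = q(\alpha+\beta+\gamma)$ plays in the proof of Lemma \ref{hkffrei}: it is the precise degree-matching that cancels the divergent leading terms.
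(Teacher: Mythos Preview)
Your proposal is correct and follows essentially the same approach as the paper: the paper obtains your ``second application of Proposition \ref{hkgeomapp}'' by directly invoking the presenting sequence of $\Syz_C(X^a,Y^b,Z^c)(m-n)$ (its equation (\ref{subst})), arriving at the same eight shifted sums $\sum_{m=0}^{x-c_i}\dimglo{\Oc_C(m)}$ plus $D$, and then defers the Riemann--Roch cancellation to the computation already carried out in Lemma \ref{hkffrei}. Your explicit bookkeeping via $\sum_i\epsilon_i=0$ and $\sum_i\epsilon_i c_i=0$ is a clean way to organize that cancellation, but the substance is identical.
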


\begin{proof}
From the presenting sequence of $\Syz_C(X^a,Y^b,Z^c)(m-n)$ we obtain
\begin{align}
\nonumber &\quad \Th^0(C,\Syz_C(X^a,Y^b,Z^c)(m-n))\\ 
&= \Th^0(C,\Oc_C(m-n-a))+\Th^0(C,\Oc_C(m-n-b))+\Th^0(C,\Oc_C(m-n-c)) \label{subst} \\
\nonumber &\quad -\Th^0(C,\Oc_C(m-n))+\Dim_k(R/(X^a,Y^b,Z^c))_{m-n}.
\end{align}
As in the previous example, using Proposition \ref{hkgeomapp}, Riemann-Roch \ref{thmrr} and substituting 
$\Th^0\left(C,\Syz_C\left(I\qpot\right)(m)\right)$ with the right hand side of (\ref{subst}), we get
\begin{align*}
 &\quad \Dim_k(R/(X^{\alpha q},Y^{\beta q},Z^{\gamma q}))\\
 &= \lim_{x\ra\infty}\left[\sum_{m=0}^x\dimglo{\Oc_C(m)}-\sum_{m=0}^x\dimglo{\Oc_C(m-\alpha q)}-\sum_{m=0}^x\dimglo{\Oc_C(m-\beta q)}\right.\\
 &\quad -\sum_{m=0}^x\dimglo{\Oc_C(m-\gamma q)}+\sum_{m=0}^x\dimglo{\Oc_C(m-n-a)}+\sum_{m=0}^x\dimglo{\Oc_C(m-n-b)}\\
 &\quad \left. +\sum_{m=0}^x\dimglo{\Oc_C(m-n-c)}-\sum_{m=0}^x\dimglo{\Oc_C(m-n)}\right]+D\\
 &= \lim_{x\ra\infty}\left[\sum_{m=0}^x\dimglo{\Oc_C(m)}-\sum_{m=0}^{x-\alpha q}\dimglo{\Oc_C(m)}-\sum_{m=0}^{x-\beta q}\dimglo{\Oc_C(m}-\sum_{m=0}^{x-\gamma q}\dimglo{\Oc_C(m)}\right.\\
 &\quad \left. +\sum_{m=0}^{x-n-a}\dimglo{\Oc_C(m)}+\sum_{m=0}^{x-n-b}\dimglo{\Oc_C(m)}+\sum_{m=0}^{x-n-c}\dimglo{\Oc_C(m)}-\sum_{m=0}^{x-n}\dimglo{\Oc_C(m)}\right]+D\\
 &= \frac{d\cdot Q(\alpha,\beta,\gamma)\cdot q^2-d\cdot Q(a,b,c)}{4}+D.
\end{align*}
\end{proof}

An important application of Lemma \ref{hkfdreivar} is the case, where $\Syz_{\Proj(R)}(X,Y,Z)$ admits a Frobenius periodicity.

\begin{defi}
Let $X$ be a scheme over an algebraically closed field $k$ of characteristic $p>0$. Let $\Sc$ be a vector bundle over $X$. 
Assume there are $s<t\in\N$ such that the Frobenius pull-backs $\fpb{e}(\Sc)$ of $\Sc$ are pairwise non-isomorphic 
for $0\leq e\leq t-1$ and $\fpb{t}(\Sc)\cong\fpb{s}(\Sc)$. We say that $\Sc$ admits a \textit{$(s,t)$-Frobenius periodicity.} 
Finally, the bundle $\Sc$ admits a \textit{Frobenius periodicity} if there are $s<t\in\N$ such that $\Sc$ admits a $(s,t)$-Frobenius periodicity.
\end{defi}

\begin{rem}
\begin{enumerate}
\item By a result of Lange and Stuhler the vector bundles $\Sc$ admitting a $(0,t)$-Frobenius periodicity are exactly those which are \'etale trivializable 
(cf. \cite[Satz 1.4]{langestuhler}), where $\Sc$ is \textit{\'etale trivializable} if there exists a finite \'etale, surjective morphism $\phi:D\ra C$ such that 
$\phi\pb(\Sc)$ is trivial.
\item If $X$ is a projective scheme only bundles of degree zero might admit a Frobenius periodicity in the strong sence above. For bundles of non-zero degree 
we will use the following weaker definition of Frobenius periodicity. We say that $\Sc$ admits a $(s,t)$-Frobenius periodicity if 
$$\fpb{e}(\Sc)\ncong\fpb{e'}(\Sc)(m) \quad\text{and}\quad \fpb{t}(\Sc)\cong\fpb{s}(\Sc)(n)$$
hold for all $0\leq e'<e\leq t-1$, all $m\in\Z$ and some $n\in\Z$.
\end{enumerate}
\end{rem}

\begin{exa}
Let $d\geq 2$ be an integer and let $k$ be a field of characteristic $p$ with $p\equiv -1\text{ }(2d)$. Let 
$$C:=\Proj\left(k[X,Y,Z]/\left(X^d+Y^d-Z^d\right)\right)$$ 
be the Fermat curve of degree $d$ and $\Sc:=\Syz_C(X,Y,Z)$. In this case Brenner and Kaid showed that $\Sc$ is always strongly semistable and admits a 
$(0,1)$-Frobenius periodicity (cf. \cite[Lemma 3.2, Theorem 3.4]{holgeralmar}). To be more precise, one has
$$\fpb{}(\Sc)\cong\Sc\left(-\frac{3p-3}{2}\right).$$ 
From this periodicity one can compute the Hilbert-Kunz function of $R$ as (cf. \cite[Corollary 4.1]{holgeralmar} or use Lemma \ref{hkfdreivar} with 
$\alpha=\beta=\gamma=a=b=c=D=1$)
$$\HKF(R,q)=\frac{3d}{4}\cdot q^2+1-\frac{3d}{4}.$$
In Chapter \ref{chapextfurexa} we recover the periodicity by a different approach as in \cite{holgeralmar}. The isomorphisms $\fpb{}(\Sc)\cong\Sc\left(-\frac{3p-3}{2}\right)$ 
as well as the trivializing \'{e}tale covers are computed explicitly in \cite{axel}.
\end{exa}

\begin{exa}\label{hkf2lm}
We are interested in the Hilbert-Kunz functions of the diagonal hypersurfaces of the form
$$R:=k[X,Y,Z]/(X^2+Y^l+Z^m),$$ 
where $k$ is an algebraically closed field of characteristic $p>0$. With $\Deg(X)=lm$, $\Deg(Y)=2m$ and $\Deg(Z)=2l$ the ring $R$ is homogeneous. 
Moreover, we assume $l=3$ and $m\geq 6$ or $4\leq l\leq m$. Note that $l$ and $m$ are chosen such that 
\begin{equation}\label{cond2lm}\frac{1}{2}+\frac{1}{l}+\frac{1}{m}\leq 1.\end{equation}
The above inequality is equivalent to the condition that the triple $\left(\tfrac{1}{2},\tfrac{1}{l},\tfrac{1}{m}\right)$ does not satisfy the strict 
triangle inequality. Since the map 
$$R\ra S:=k[U,V,W]/\left(U^{2lm}+V^{2lm}+W^{2lm}\right)$$ 
with $X\mapsto U^{lm}$, $Y\mapsto V^{2m}$, $Z\mapsto W^{2l}$ is flat and the extension 
$$k[U,V,W]/\left(U^{lm},V^{2m},W^{2l}\right):k$$
has degree $4l^2m^2$ (cf. \cite[Theorem 22.2]{matsu}), we obtain
$$\HKF(R,q)=\frac{\HKF\left(\left(U^{lm},V^{2m},W^{2l}\right),S,q\right)}{4l^2m^2}$$
by Remark \ref{hkfflat}. Using the Lemmata \ref{hkffrei} and \ref{hkfdreivar}, we obtain
\begin{equation}\label{equ1} \HKF(R,q)=\frac{2lm\cdot Q(lm,2m,2l)\cdot q^2+2lm\cdot\delta^2}{4\cdot 4l^2m^2}\end{equation} 
if $\Syz_S(U^{lmq},V^{2mq},W^{2lq})\cong S(-n)\oplus S(-n-\delta)$ for some $n,\delta\in\Z$ and 
\begin{equation}\label{equ2} \HKF(R,q)=\frac{2lm\cdot Q(lm,2m,2l)\cdot q^2- 2lm\cdot Q(lma,2mb,2lc)}{4\cdot 4l^2m^2}+\frac{\Dim_k\left(S/(U^a,V^b,Z^c)\right)}{4l^2m^2}\end{equation}
if $\Syz_S(U^{lmq},V^{2mq},W^{2lq})\cong\Syz_S(U^a,V^b,W^c)(-n)$ for some $n\in\Z$ and some $a,b,c\in\N$ such that at least one of the inequalities 
$a<lmq$, $b<2mq$, $c<2lq$ holds.

At first we present a geometric computation which will show that $\Syz_S(U^{lmq},V^{2mq},W^{2lq})$ gets free if $q$ is large and the inequality 
(\ref{cond2lm}) is strict. Afterwards, we turn to the algebraic situation to compute the values of the Hilbert-Kunz functions that are not covered by the 
geometric computation. The computation in the algebraic situation will recover the results from the geometric one.

Denote by $C$ the projective spectrum of $S$ and assume that the inequality (\ref{cond2lm}) is strict. This means that $(0,W^{2l},-V^{2m})$ is a syzygy 
for $U^{lm},V^{2m},W^{2l}$ of minimal possible degree $2(l+m)$. Therefore, the vector bundle $\Oc_C(-2(l+m))$ is the maximal destabilizing subsheaf of 
$\Syz_C\left(U^{lm},V^{2m},W^{2l}\right)$ and we obtain the short exact sequence
$$0\ra\Oc_C(-2(l+m))\ra\Syz_C\left(U^{lm},V^{2m},W^{2l}\right)\ra\Oc_C(-lm)\ra 0.$$
The $e$-th Frobenius pull-back of this sequence splits if 
\begin{align*}
\Ext^1(\Oc_C(-lmp^e),\Oc_C(-2(l+m)p^e)) & \cong \lK^1(C,\Oc_C(-(2l+2m-lm)p^e))\\
 & \cong \lK^0(C,\Oc_C((2l+2m-lm)p^e+2lm-3))
\end{align*}
vanishes, where the last isomorphism is due to Serre duality with $\omega_C\cong\Oc_C(2lm-3)$. Since $2m+2l-lm<0$, the degree shift $(2l+2m-lm)p^e+2lm-3$ gets 
negative for large $e$ such that $\Oc_C((2l+2m-lm)p^e+2lm-3)$ has no global sections. This shows (with $q=p^e$)
$$\Syz_C(U^{lmq},V^{2mq},W^{2lq})\cong\Oc_C(-2(l+m)q)\oplus\Oc_C(-lmq)$$
for large $e$. We obtain the splitting 
$$\Syz_S(U^{lmq},V^{2mq},W^{2lq})\cong S(-2(l+m)q)\oplus S(-lmq)$$
of $S$-modules, showing 
\begin{align*}
\HKF\left(\left(U^{lm},V^{2m},W^{2l}\right),S,q\right) & = \frac{2lm\cdot Q(lm,2m,2l)\cdot q^2+2lm\cdot\delta^2}{4}=8l^2m^2q^2 \text{ and}\\
\HKF(R,q) & = 2q^2
\end{align*}
for all $q$ bigger than $\tfrac{2lm-3}{ml-2m-2l}$.

We now show by an algebraic computation that we always have a splitting
$$\Syz_S(U^{lmq},V^{2mq},W^{2lq})\cong S(-2(l+m)q)\oplus S(-lmq)$$
if $2l+2m<lm$ or $2l+2m=lm$ and $p\not\equiv 1\text{ }(l)$. In the case $p=2$, we have (for $q\geq 2$)
$$U^{lmq}=(V^{2lm}+W^{2lm})^{\frac{q}{2}}=V^{lmq}+W^{lmq}.$$
From this equality, we deduce that $\Syz_S(U^{lmq},V^{2mq},W^{2lq})$ is free. The generators are 
$$\left(-1,V^{(l-2)mq},W^{(m-2)lq}\right)\text{ and }(0,W^{2lq},-V^{2mq})$$
of total degrees $mlq$ and $2mq+2lq$.

If $q$ is divisible by $l$, we have 
$$V^{2mq}=(-U^{2ml}-W^{2ml})^{\frac{q}{l}}=-U^{2mq}-W^{2mq}.$$
From this equality we obtain with Lemma \ref{manipulationlemma}
\begin{align*}
\Syz_S(U^{lmq},V^{2mq},W^{2lq}) & \cong \Syz_S(U^{lmq},-U^{2mq}-W^{2mq},W^{2lq}) \\
 & \cong \Syz_S(U^{lmq},U^{2mq},W^{2lq}) \\
 & \cong \Syz_S(U^{(l-2)mq},1,W^{2lq})(-2mq)\\
 & \cong (S(-(l-2)mq)\oplus S(-2lq))(-2mq)\\
 & \cong S(-lmq)\oplus S(-2mq-2lq).
\end{align*}
Now we can assume that $p$ is odd and that we can write $q=ln+r$ with $1\leq r\leq l-1$. Using $U^{2lm}=-V^{2lm}-W^{2lm}$, we get
\begin{equation}\label{xqzer}U^{lmq}=-U^{lm}\sum_{i=0}^{\frac{q-1}{2}}\binom{\frac{q-1}{2}}{i}(-1)^{\frac{q-1}{2}}V^{2lmi}W^{2lm\left(\frac{q-1}{2}-i\right)}.\end{equation}
The summands for $i\geq \left\lceil\frac{q}{l}\right\rceil=n+1$ are divisible by $V^{2mq}$ and the summands with 
$$i\leq \left\lfloor\frac{(m-2)q-m}{2m}\right\rfloor$$
are divisible by $W^{2lq}$. We take a closer look at the bound for divisibility by $W^{2lq}$.
\begin{align}\label{zqdiv}
\frac{(m-2)q-m}{2m} & = \frac{(m-2)(ln+r)-m}{2m} \nonumber \\
 & = n+\frac{(ml-2m-2l)n+m(r-1)-2r}{2m}.
\end{align}
If the numerator in the fraction in the last line is non-negative, all summands for indices $i\leq n$ in Equation (\ref{xqzer}) are divisible by $W^{2lq}$. 
Since all summands with index $i\geq n+1$ are multiples of $V^{2mq}$, we obtain $U^{lmq}\in (V^{2mq},W^{2lq})$. As in the case $p=2$, this implies
$$\Syz_S(U^{lmq},V^{2mq},W^{2lq})\cong S(-lmq)\oplus S(-2mq-2lq).$$
The number $ml-2m-2l$ vanishes if and only if $(l,m)\in\{(3,6),(4,4)\}$, otherwise it is positive. The cases $(l,m)=(3,6)$ and $(l,m)=(4,4)$ will be 
discussed seperatedly, hence we assume $ml-2m-2l\geq 1$. Starting with the case $r=1$, we have to show $(ml-2m-2l)n\geq 2$. From $r=1$, we obtain 
$n\geq 1$. If $l$ is even, $ml-2l-2m$ is even and the result follows. If $l$ is odd, we get that $n$ is even, since $q$ is odd. Now let $r>1$. To 
prove that the numerator in (\ref{zqdiv}) is non-negative, it suffices to show $m(r-1)-2r\geq 0$. This is true, because otherwise we obtain the contradiction
$$m(r-1)<2r\Longleftrightarrow r<1+\frac{2}{m-2}\leq 2,$$
since $m\geq 4$ in all cases. This argument is still valid in the cases $(l,m)\in\{(3,6),(4,4)\}$.

Up to now, we found (except for the cases $q$ odd with $q\equiv 1\text{ }(l)$ and $(l,m)\in\{(3,6),(4,4)\}$)
$$\Syz_S(U^{lmq},V^{2mq},W^{2lq})\cong S(-lmq)\oplus S(-2mq-2lq).$$
The Hilbert-Kunz function of $R$ is given by Equation (\ref{equ1}) as 
\begin{align*}
\HKF(R,q) &= \frac{2lm\cdot Q(lm,2l,2m)\cdot q^2+2lm\cdot (lm-2m-2l)^2\cdot q^2}{16l^2m^2}\\
 &= 2q^2
\end{align*}
We only have to treat the cases $q=nl+1$ for $(l,m)\in\{(3,6),(4,4)\}$ with $q$ odd. If $p\not\equiv 1 \text{ }(l)$ we obtain a splitting 
$$\Syz_S(U^{lmp},V^{2mp},W^{2lp})\cong S(-lmp)\oplus S(-2mp-2lp),$$
which carries over to all higher exponents of $p$. Hence, we may assume $q=p$ with $p=nl+1$. We start with the case $(l,m)=(3,6)$ and work with the 
grading $\Deg(X)=18$, $\Deg(Y)=12$ and $\Deg(Z)=6$. With $p=3n+1$ only the summand for $i=n$ in Equation (\ref{xqzer}) does not belong to the ideal 
$(V^{12p},W^{6p})$. The binomial coefficient of this summand is non-zero since all its factors are $<p$.  This shows
\begin{align*}
 \Syz_S(U^{18p},V^{12p},Z^{6p}) &\cong \Syz_S\left(U^{18}V^{12(p-1)}W^{6(p-1)},V^{12p},W^{6p}\right)\\
 &\cong \Syz_S(U^{18},V^{12},W^{6})(-18(p-1)).
\end{align*}
By Equation (\ref{equ2}), we obtain
\begin{align*}
\HKF(R,q) &= \frac{36\cdot Q(18,12,6)\cdot q^2-36\cdot Q(18,12,6)+4\cdot 18\cdot 12\cdot 6}{4\cdot 4\cdot 9\cdot 36}\\
 &= 2q^2-1.
\end{align*}
In the case $(l,m)=(4,4)$ we work with the grading $\deg(X)=16$, $\Deg(Y)=\Deg(Z)=8$. Let $q=4n+1$. Arguing as in the case $(l,m)=(3,6)$ we may assume 
$q=p$ and obtain
$$\Syz_S(U^{16p},V^{8p},W^{8p}) \cong \Syz_S(U^{16},V^{8},W^{8})(-16(p-1)).$$
Using Equation (\ref{equ2}) in this situation, we find again $\HKF(R,q)=2q^2-1$.

All in all we have
$$\HKF(R,p^e)=\begin{cases}
1 & \text{if }e=0,\\
2q^2-1 & \text{if }e>0, \text{ } p\equiv 1\text{ }(l) \text{ and } (l,m)\in\{(3,6),(4,4)\},\\
2q^2 & \text{else}.
\end{cases}$$
\end{exa}

\begin{rem}
The previous example covers all two-dimensional diagonal hypersurfaces of multiplicity two which are not of type $A_n$, $E_6$ or $E_8$ with one exceptional family. Namely the cases 
$R:=k[X,Y,Z]/(X^2+Y^2+Z^l)$ for $l\geq 2$ and $\chara(k)=2$ are not treated. In this case an easy calculation shows $\HKF(R,2^e)=2\cdot 4^e$.
\end{rem}

\section{Han's $\delta$ function}
In this section we introduce "Han's $\delta$ function", which measures syzygy gaps in a polynomial ring in two variables. It was first studied by Han in her thesis \cite{handiss}.
We also give a result of Monsky, that allows us to compute Hilbert-Kunz multiplicities of certain curves in $\Prim^2$ with the help of this function.
All statements are reproved in a slightly more general context to make them work in non-standard graded situations.

Let $k$ denote an algebraically closed field of $\chara k=p>0$ and let $R=k[U,V]$, where $U$ and $V$ have the degrees $a$ resp. $b\in\N_{\geq 1}$. All polynomials are assumed to be (quasi-)\-homogeneous.

\begin{defi}
Let $F_1$, $F_2$, $F_3\in R$ be homogeneous and coprime. By Hilberts Syzygy Theorem we get a splitting $\Syz_R(F_1,F_2,F_3)\cong R(-\alpha)\oplus R(-\beta)$ for some $\alpha\geq\beta\in\N$.
Let $$\sg(F_1,F_2,F_3):=\alpha-\beta.$$
\end{defi}

\begin{rem}
Let $I$ be the ideal generated by the $F_i$ and $d_i:=\deg(F_i)$. Then by \cite[Proposition 5.8.9]{cocoa2} the Hilbert-series of $R/I$ is $$\lK_{R/I}(t)=\sum_{i=0}^{\infty}\Dim_k((R/I)_i)\cdot t^i=\frac{1-t^{d_1}-t^{d_2}-t^{d_3}+t^{\alpha}+t^{\beta}}{(1-t^a)(1-t^b)}.$$
Note that this is a polynomial, since we have $\Dim_k(R/I)<\infty$ by the coprimeness of the $F_i$, hence $\Dim_k((R/I)_i)=0$ for $i\gg 0$. 
\end{rem}

\begin{prop}\label{deltadim} With the previous notations we get the following formulas.
\begin{enumerate}
\item $\alpha+\beta=d_1+d_2+d_3.$
\item Let $Q(r,s,t):=2rs+2rt+2st-r^2-s^2-t^2$. Then $$\Dim_k(R/I)=\frac{Q(d_1,d_2,d_3)+\sg(F_1,F_2,F_3)^2}{4ab}.$$
\end{enumerate}\end{prop}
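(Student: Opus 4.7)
My plan is to read off both statements from the Hilbert--series formula
$$\lK_{R/I}(t)=\frac{N(t)}{(1-t^a)(1-t^b)},\qquad N(t):=1-t^{d_1}-t^{d_2}-t^{d_3}+t^\alpha+t^\beta,$$
recalled in the preceding remark. The key point is that since $F_1,F_2,F_3$ are coprime, $R/I$ is a finite-dimensional $k$-vector space, so $\lK_{R/I}(t)$ is a polynomial. Hence $(1-t^a)(1-t^b)$ must divide $N(t)$ in $\Z[t]$; in particular, $N(t)$ must vanish to order at least $2$ at $t=1$.

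For part (i), I will simply expand $N$ around $t=1$. Clearly $N(1)=1-3+2=0$, so the first non-trivial condition comes from $N'(1)=0$. Computing
$$N'(1)=-d_1-d_2-d_3+\alpha+\beta=0$$
immediately yields $\alpha+\beta=d_1+d_2+d_3$.

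For part (ii), I will evaluate $\lK_{R/I}(t)$ at $t=1$, noting that $\Dim_k(R/I)=\lK_{R/I}(1)$ since the series is a polynomial with non-negative coefficients summing to this dimension. Writing $(1-t^a)(1-t^b)=(1-t)^2\cdot g(t)$ with $g(1)=ab$, and using $N(1)=N'(1)=0$, a Taylor expansion gives
$$\lK_{R/I}(1)=\lim_{t\to 1}\frac{N(t)}{(1-t^a)(1-t^b)}=\frac{N''(1)}{2ab}.$$
A direct computation of $N''(1)$, followed by the substitution $\alpha+\beta=d_1+d_2+d_3$ from (i) and the identity $\alpha^2+\beta^2=\tfrac{1}{2}((\alpha+\beta)^2+(\alpha-\beta)^2)$, converts $N''(1)$ into
$$N''(1)=\alpha^2+\beta^2-(d_1^2+d_2^2+d_3^2)+(\alpha+\beta)-(d_1+d_2+d_3)=\frac{Q(d_1,d_2,d_3)+\sg(F_1,F_2,F_3)^2}{2},$$
from which the claimed formula follows.

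I do not expect any real obstacle; the only thing to be a bit careful about is justifying that $\lK_{R/I}(1)$ really equals $\lim_{t\to 1}N(t)/((1-t^a)(1-t^b))$, which is clear once one observes that the quotient is a polynomial with $\lK_{R/I}(1)=\sum_i\Dim_k((R/I)_i)=\Dim_k(R/I)$. Part (i) is essentially the vanishing of the first non-trivial Taylor coefficient, and part (ii) is the next one, an elementary but slightly longer algebraic manipulation.
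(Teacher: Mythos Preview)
Your proposal is correct and takes essentially the same approach as the paper: both arguments differentiate the identity $N(t)=\lK_{R/I}(t)(1-t^a)(1-t^b)$ once (resp.\ twice) and evaluate at $t=1$ to obtain (i) (resp.\ (ii)). A tiny cosmetic slip: in your displayed formula for $N''(1)$ the linear terms should read $-(\alpha+\beta)+(d_1+d_2+d_3)$ rather than $+(\alpha+\beta)-(d_1+d_2+d_3)$, but since these cancel by (i) the final answer is unaffected.
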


\begin{proof}
Write $P(t)$ for the numerator of $\lK(R/I)(t)$. Differentiating both sides of $P(t)=\lK_{R/I}(t)(1-t^a)(1-t^b)$ one gets
\begin{align*}
 &  -d_1t^{d_1-1}-d_2t^{d_2-1}-d_3t^{d_3-1}+\alpha t^{\alpha-1}+\beta t^{\beta-1}\\
=& (\lK_{R/I}(t))'(1-t^a)(1-t^b)-a\cdot \lK_{R/I}(t)(1-t^{a-1})(1-t^b)-b\cdot \lK_{R/I}(t)(1-t^a)(1-t^{b-1}).
\end{align*}
The evaluation at 1 gives (i).
Similarly, differentiating $P(t)=\lK_{R/I}(t)(1-t^a)(1-t^b)$ twice and evaluating at 1 gives (ii).

Note that (i) can be seen alternatively by the additivity of the degree of vector bundles.
\end{proof}

The previous proposition gives $$\sg(F_1,F_2,F_3)=\alpha-\beta=d_1+d_2+d_3-2\beta,$$ where $\beta$ is the minimal degree of a non-trivial syzygy for $I$. This allows us to drop the coprimeness condition in the definition of $\sg$.

\begin{defi}
Let $m(F_1,F_2,F_3)$ be the \textit{minimal degree} of a non-trivial syzygy for the ideal generated by $F_1$, $F_2$ and $F_3$. Then define $$\sg(F_1,F_2,F_3):=d_1+d_2+d_3-2\cdot m(F_1,F_2,F_3).$$
\end{defi}

Some basic properties of $\sg$ are given in the next proposition.

\begin{prop}\label{propdelta}
With the previous notations and $F\in R$ non-zero and homogeneous the following (in-)equalities hold.

\begin{enumerate}
\item $\sg(FF_1,FF_2,FF_3)=\Deg(F)+\sg(F_1,F_2,F_3)$.
\item If $F$ and $F_3$ are coprime, we have $\sg(FF_1,FF_2,F_3)=\sg(F_1,F_2,F_3)$.
\item $|\sg(FF_1,F_2,F_3)-\sg(F_1,F_2,F_3)|\leq\Deg(F)$.
\item $\sg(F_1^p,F_2^p,F_3^p)=p\cdot\sg(F_1,F_2,F_3)$.
\end{enumerate}\end{prop}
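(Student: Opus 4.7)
The plan is to reduce each item to the syzygy manipulations from Lemma \ref{manipulationlemma} plus one elementary construction for part (iii), and then to Kunz's theorem for part (iv). Throughout I write $d_i:=\Deg(F_i)$, $e:=\Deg(F)$, and $m(\cdot,\cdot,\cdot)$ for the minimal total degree of a non-trivial syzygy.

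For part (i), Lemma \ref{manipulationlemma} (ii) gives an isomorphism of graded modules
$$\Syz_R(FF_1,FF_2,FF_3)\cong\Syz_R(F_1,F_2,F_3)(-e),$$
so $m(FF_1,FF_2,FF_3)=m(F_1,F_2,F_3)+e$. Inserting this into the defining formula for $\sg$, the total-degree sum of the generators increases by $3e$ while $2m$ increases by $2e$, yielding the claimed $+e$.

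For part (ii), by coprimeness of $F$ and $F_3$ in the UFD $R=k[U,V]$ the pair $F,F_3$ is an $R$-regular sequence, so Lemma \ref{manipulationlemma} (iii) applies and gives
$$\Syz_R(FF_1,FF_2,F_3)\cong\Syz_R(F_1,F_2,F_3)(-e).$$
Now the total-degree sum increases by $e$ and $2m$ increases by $2e$, so $\sg$ is unchanged.

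Part (iii) is the one that requires a small additional argument because no such isomorphism is available. First I would exhibit the two inequalities $0\leq m(FF_1,F_2,F_3)-m(F_1,F_2,F_3)\leq e$ by constructing syzygies explicitly. Given a minimal syzygy $(s_1,s_2,s_3)$ of degree $m'$ for $(FF_1,F_2,F_3)$, the triple $(Fs_1,s_2,s_3)$ is a syzygy of the same total degree $m'$ for $(F_1,F_2,F_3)$; this yields $m(F_1,F_2,F_3)\leq m(FF_1,F_2,F_3)$. Conversely, given a minimal syzygy $(t_1,t_2,t_3)$ of degree $m$ for $(F_1,F_2,F_3)$, multiplying the second and third components by $F$ produces the syzygy $(t_1,Ft_2,Ft_3)$ of degree $m+e$ for $(FF_1,F_2,F_3)$, proving $m(FF_1,F_2,F_3)\leq m(F_1,F_2,F_3)+e$. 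Substituting into
$$\sg(FF_1,F_2,F_3)-\sg(F_1,F_2,F_3)=e-2\bigl(m(FF_1,F_2,F_3)-m(F_1,F_2,F_3)\bigr)$$
gives the two-sided bound $|\cdot|\leq e$.

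For part (iv), since $R=k[U,V]$ is regular, the Frobenius $\TF:R\to R$ is flat by Kunz's Theorem \ref{kunzthm}. Writing the minimal graded free resolution
$$0\lra R(-\alpha)\oplus R(-\beta)\lra R(-d_1)\oplus R(-d_2)\oplus R(-d_3)\lra (F_1,F_2,F_3)\lra 0,$$
applying $\TF^{\ast}$ preserves exactness and turns each $F_i$ into $F_i^p$ while multiplying every degree shift by $p$. Thus $\Syz_R(F_1^p,F_2^p,F_3^p)\cong R(-p\alpha)\oplus R(-p\beta)$, and the claim follows from $\sg=\alpha-\beta$. Among the four parts I expect (iii) to be the main obstacle, since it is the only one where the argument departs from an outright isomorphism of syzygy modules; the other parts are essentially bookkeeping on top of Lemma \ref{manipulationlemma} and the flatness of Frobenius.
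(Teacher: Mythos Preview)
Your proof is correct. The paper itself does not supply a proof of this proposition; it simply cites Propositions 2.12, 2.17, 2.19 and 2.16 of \cite{texdiss}. Your argument is self-contained and uses only tools already present in the paper: Lemma \ref{manipulationlemma} for parts (i) and (ii), a direct construction of syzygies for the two-sided bound in (iii), and Kunz's flatness theorem \ref{kunzthm} for (iv). One small remark: Lemma \ref{manipulationlemma} is stated for standard-graded domains, whereas here $R=k[U,V]$ carries an arbitrary positive grading; the proofs of parts (ii) and (iii) of that lemma, however, use only that $R$ is a domain and that the relevant pair is a regular sequence, so they go through unchanged.
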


\begin{proof}
See Propositions 2.12, 2.17, 2.19 resp. 2.16 of \cite{texdiss}.
\end{proof}

We now specify the situation:

\begin{setup}
Let $k$ be an algebraically closed field of characteristic $p>0$, let $R:=k[X,Y]$ equipped with the standard grading. For $a_1$, $a_2$, $a_3$, $e\in\N$ and $q:=p^e$ set 
$$\delta\left(\frac{a_1}{q},\frac{a_2}{q},\frac{a_3}{q}\right):=q^{-1}\cdot\sg(X^{a_1},Y^{a_2},(X+Y)^{a_3}).$$
Then Proposition \ref{propdelta} (iv) ensures, that this definition is independent of $q$ and part (iii) gives the inequality
$$\left|\delta\left(\frac{a_1}{q},\frac{a_2}{q},\frac{a_3}{q}\right)-\delta\left(\frac{b_1}{q},\frac{b_2}{q},\frac{b_3}{q}\right)\right|\leq q^{-1}\cdot\left\Vert(a_1-b_1,a_2-b_2,a_3-b_3)\right\Vert_1,$$
where $\Vert\_\Vert_1$ denotes the taxicap distance. Therefore $\delta$ is Lipschitz and extends to a unique continuous function $\delta$ on $[0,\infty)^3$ with 
$\delta\left(\frac{t}{q}\right)=q^{-1}\cdot\delta(t)$ and $|\delta(t)-\delta(s)|\leq \left\Vert t-s\right\Vert_1$.
\end{setup}

Our next goal is to explain how $\delta$ can be computed in this situation. We follow \cite{mason}, which provides a simpler proof than \cite{handiss}.

\begin{defi}
Let $$L_{\text{odd}}:=\left\{u\in\N^3|u_1+u_2+u_3\text{ is odd}\right\}$$ be the odd lattice.
\end{defi}

We are interested in the taxicap distance of elements of the form $\tfrac{t}{p^s}$ to $L_{\text{odd}}$, where $s$ is an integer and $t$ a three-tuple of non-negative real numbers.
Note that for given $\tfrac{t}{p^s}$ there is at most one $u\in L_{\text{odd}}$ satisfying 
$$\left\Vert \frac{t}{p^s}-u\right\Vert_1<1.$$
Moreover, the only candidates for $u_i$ are the roundups and rounddowns of $\tfrac{t_i}{p^s}$.

\begin{defi}
For $s\in\Z$ and $t\in[0,\infty)^3$ we define
$$E_s(t):=\left\{\begin{aligned}
 & 0 && \text{ if there is no } u\in L_{\text{odd}} \text{ with }\left\Vert \frac{t}{p^s}-u\right\Vert_1<1,\\
 & 1-\left\Vert \frac{t}{p^s}-u\right\Vert_1 && \text{ if }\left\Vert \frac{t}{p^s}-u\right\Vert_1<1. 
\end{aligned}\right.$$
\end{defi}

Monsky has proved in \cite[Lemma 21, Theorem 22]{mason} the following properties of the functions $E_s(t)$.

\begin{thm}[Monsky]\label{estthm}
Let $t=(t_1,t_2,t_3)\in[0,\infty)^3.$

\begin{enumerate}
\item If the $t_i$ do not satisfy the strict triangle inequality, say $t_1 \geq t_2+t_3$, then $$\Max_{s\in\Z}\left\{p^s\cdot E_s(t)\right\}=t_1-t_2-t_3.$$
\item If the $t_i$ satisfy the strict triangle inequality, then either $E_s(t)=0$ for all $s$ or there exists a maximal $s$ with $E_s(t)\neq 0$. Moreover, for this maximal $s$ we have $$p^s\cdot E_s(t)=\Max_{r\in\Z}\left\{p^r\cdot E_r(t)\right\}.$$
\item $$\delta(t)=\Max_{s\in\Z}\left\{p^s\cdot E_s(t)\right\}.$$
\end{enumerate}
\end{thm}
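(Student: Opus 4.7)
My plan is to leverage the scaling identity $\delta(t/p^s) = p^{-s}\delta(t)$ together with the Lipschitz bound $|\delta(t)-\delta(t')|\leq \|t-t'\|_1$ from the setup, reducing the whole theorem to understanding $\delta$ near points of $L_{\text{odd}}$. First I would prove the base estimate $\delta(u) \geq 1$ for every $u \in L_{\text{odd}} \cap \N^3$: this is a parity observation, since the sum of degrees of $X^{u_1}$, $Y^{u_2}$, $(X+Y)^{u_3}$ is odd, Proposition \ref{deltadim}(i) forces the two exponents $\alpha, \beta$ in the syzygy module to have opposite parity, so $\sg = \alpha - \beta$ is an odd positive integer, hence at least $1$.

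Combined with the Lipschitz bound, this base case yields the universal lower bound $\delta(t) \geq p^s E_s(t)$ for every $s \in \Z$ and every $t \in [0,\infty)^3$: if no odd lattice point is within $\ell^1$-distance $1$ of $t/p^s$ then $E_s(t) = 0$ and the bound is vacuous; otherwise, for the (unique) nearest $u \in L_{\text{odd}}$, the chain $\delta(t/p^s) \geq \delta(u) - \|t/p^s - u\|_1 \geq E_s(t)$ rescales to the claim. This already proves one direction of (iii). For part (i), assume without loss of generality $t_1 \geq t_2 + t_3$; as $s \to -\infty$ the point $t/p^s$ is well-approximated by odd lattice points $u$ with $u_1 > u_2 + u_3$, giving $p^s E_s(t) \to t_1 - t_2 - t_3$, and the matching upper bound $\delta(t) \leq t_1 - t_2 - t_3$ comes from the trivial syzygy arising when the strict triangle inequality fails at the integer scale (one writes $X^{u_1} \in (Y^{u_2}, (X+Y)^{u_3})$ whenever $u_1 \geq u_2 + u_3 - 1$). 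For part (ii), strictness of the triangle inequality confines those $s$ with $E_s(t) \neq 0$ to a bounded range: for large $s$ the point $t/p^s$ is too close to the origin to be within distance $1$ of $L_{\text{odd}}$, while for very negative $s$ any candidate lattice point $u$ near $t/p^s$ would have to violate the strict triangle inequality in the limit; hence only finitely many $s$ contribute and a maximal one exists.

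The main obstacle is the reverse inequality $\delta(t) \leq \Max_s p^s E_s(t)$ in (iii). The plan is a Han-Monsky style recursion: reduce $\delta$ on a triple $(a,b,c)$ to $\delta$ on a modified triple by analyzing the minimal syzygies of $(X^a, Y^b, (X+Y)^c)$ under the Frobenius twist provided by Proposition \ref{propdelta}(iv), and iterate, peeling off one factor of $p$ at each stage. The terminal value of the recursion equals $p^{s_0} E_{s_0}(t)$ for an appropriate $s_0$, and monotonicity along the algorithm shows this is also the maximum over $s$. The hard step, which I expect to require the most care, is verifying that each reduction step either strictly decreases a suitable complexity measure, terminating the recursion in the strict-triangle case and producing the maximal $s$ of (ii), or stabilizes into a pattern identifying $\delta(t)$ with the triangle-inequality defect as in (i); this careful tracking of syzygy gaps modulo $p$-th powers is precisely the technical content of \cite[Lemma 21]{mason}.
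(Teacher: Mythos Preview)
The paper does not prove this theorem; it simply cites Monsky's paper \cite{mason}. So there is no argument to compare with, and the relevant question is whether your sketch stands on its own.

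Your opening moves are sound. The parity argument giving $\delta(u)\geq 1$ for $u\in L_{\text{odd}}\cap\N^3$ is correct: $\alpha+\beta=u_1+u_2+u_3$ is odd, so $\alpha-\beta$ is odd and hence at least $1$. Combined with the Lipschitz bound this gives $\delta(t)\geq p^sE_s(t)$ for every $s$, which is one half of (iii).

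However, your treatment of (i) has the limit going in the wrong direction. Since $E_s(t)\in[0,1]$ by definition, one has $p^sE_s(t)\leq p^s\to 0$ as $s\to -\infty$, so $p^sE_s(t)$ cannot tend to $t_1-t_2-t_3>0$ that way. The right picture is at large \emph{positive} $s$: then $t/p^s$ is near the origin, and when $t_1\geq t_2+t_3$ the point $(1,0,0)\in L_{\text{odd}}$ lies at distance $1-(t_1-t_2-t_3)/p^s$, giving $p^sE_s(t)=t_1-t_2-t_3$ exactly for all sufficiently large $s$. The matching upper bound is just the already-established inequality $p^sE_s(t)\leq\delta(t)$ together with the direct computation $\delta(t)=t_1-t_2-t_3$ in the non-strict-triangle region (your observation that $X^{u_1}\in(Y^{u_2},(X+Y)^{u_3})$ yields this equality, not merely an upper bound, once you note that the Koszul syzygy $(0,(X+Y)^{u_3},-Y^{u_2})$ of degree $u_2+u_3$ is the one of minimal degree).

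Your argument for (ii) also misfires. It is not true in general that only finitely many $s$ satisfy $E_s(t)\neq 0$: for negative $s$ the rescaled point $p^{|s|}t$ is large, and there is no obstruction to it lying near $L_{\text{odd}}$ infinitely often. But nothing that strong is needed; the existence of a \emph{maximal} $s$ with $E_s(t)\neq 0$ follows simply because $t/p^s\to 0$ as $s\to+\infty$ and all points of $L_{\text{odd}}$ have $\ell^1$-norm at least $1$. You do not address the second assertion in (ii), that the maximum of $p^rE_r(t)$ is actually attained at this maximal $s$; this is part of the genuine content and does not drop out of the bounds you have.

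Finally, for the reverse inequality in (iii) you explicitly defer to \cite[Lemma 21]{mason}. That is exactly what the paper does for the whole theorem, so at that point your proposal and the paper coincide.
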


From this theorem one gets Han's Theorem \cite[Theorem 2.25 and Theorem 2.29]{handiss}.

\begin{thm}[Han]\label{hansthm}
Let $t=(t_1,t_2,t_3)\in[0,\infty)^3.$ If the $t_i$ do not satisfy the strict triangle inequality (w.l.o.g. $t_1\geq t_2+t_3$), we have $\delta(t)=t_1-t_2-t_3.$
If the $t_i$ satisfy the strict triangle inequality and there are $s\in \Z$ and $u\in L_{odd}$ with $\left\Vert p^st-u\right\Vert_1<1$, then there is such a pair $(s,u)$ with minimal $s$ and with this pair $(s,u)$ we get
$$\delta(t)=\frac{1}{p^s}\cdot\left(1-\left\Vert p^st-u\right\Vert_1\right).$$ Otherwise, we have $\delta(t)=0$.
\end{thm}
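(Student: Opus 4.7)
The plan is to derive Han's formulas directly from Monsky's Theorem~\ref{estthm}, which already packages all the hard work. Part~(iii) of that theorem gives the starting identity
\[
\delta(t)=\max_{s\in\Z}\left\{p^s\cdot E_s(t)\right\},
\]
so the task reduces to evaluating this supremum in each of the cases distinguished by Han's statement.

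First I would dispatch the degenerate case: if the $t_i$ fail the strict triangle inequality, say $t_1\geq t_2+t_3$, then part~(i) of Theorem~\ref{estthm} gives $\max_s p^sE_s(t)=t_1-t_2-t_3$ immediately, matching Han's formula. The only mild point to record here is that this case is genuinely disjoint from the cases handled below, since the argument of the second clause will produce the same expression $t_1-t_2-t_3$ in boundary situations.

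Next I would handle the case where the $t_i$ satisfy the strict triangle inequality. Part~(ii) of Theorem~\ref{estthm} splits this into two subcases. If $E_s(t)=0$ for every $s\in\Z$, then $\delta(t)=0$ by part~(iii), which is exactly the last clause of Han's theorem (no pair $(s,u)$ exists, because the existence of such a pair is by definition the non-vanishing of some $E_s(t)$). Otherwise there is a maximal integer $s_0$ with $E_{s_0}(t)\neq 0$, and at this $s_0$ the maximum is attained, i.e.
\[
\delta(t)=p^{s_0}E_{s_0}(t)=p^{s_0}\bigl(1-\Vert t/p^{s_0}-u\Vert_1\bigr)
\]
for the unique $u\in L_{\mathrm{odd}}$ witnessing $E_{s_0}(t)\neq 0$.

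To match Han's formulation I would then perform the bookkeeping substitution $s:=-s_0$, so that $t/p^{s_0}=p^st$ and the condition $E_{s_0}(t)\neq 0$ becomes $\Vert p^st-u\Vert_1<1$. Under this change of variables ``maximal $s_0$'' translates to ``minimal $s$'', and the displayed formula becomes
\[
\delta(t)=\frac{1}{p^s}\bigl(1-\Vert p^st-u\Vert_1\bigr),
\]
which is precisely Han's assertion. The uniqueness of the witnessing $u$ for a given $s$ (already noted in the setup because two distinct odd-lattice points have $\ell^1$-distance at least $1$) guarantees that $(s,u)$ is well-defined. The main obstacle here is purely bureaucratic — matching Monsky's indexing convention (where $E_s$ uses $t/p^s$) with Han's convention (where the condition involves $p^st$) — and then verifying that ``maximal $s_0$ with $E_{s_0}\neq 0$'' is indeed sent to ``minimal $s$ with a valid pair''; no further analytic content is needed beyond Theorem~\ref{estthm}.
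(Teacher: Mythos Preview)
Your proof is correct and follows essentially the same approach as the paper: both derive Han's theorem directly from the three parts of Theorem~\ref{estthm}, with the only bookkeeping being the sign change $s=-s_0$ to pass from Monsky's convention (involving $t/p^{s_0}$) to Han's (involving $p^st$). The paper's proof is terser but identical in substance.
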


\begin{proof}
If the $t_i$ do not satisfy the strict triangle inequality, the statement is immediate from Theorem \ref{estthm} (i) and (iii).
If the $t_i$ satisfy the strict triangle inequality, then by Theorem \ref{estthm} (ii) there is a minimal $s$ such that $E_{-s}(t)\neq 0$ or all $E_s(t)$ vanish. Part (iii) gives the result.
\end{proof}

\begin{rem}
At least if $t$ is a triple of rational numbers, there is a finite range in which the $s$ in Theorem \ref{hansthm} has to lie. Namely, if the $t_i$ do not satisfy 
the strict triangle inequality, one has $p^lE_l(t)=0$ for all $l\geq\tfrac{3}{2}\cdot\Max(t_i)$ by \cite{mason}. This gives a lower bound for $s$. With 
$t_i:=z_i/n_i$, $z_i,n_i\in\N$, the distance of $t\cdot p^u$ for $u\geq 0$ depends only on the residue classes of $z_i\cdot p^u$ modulo $2\cdot n_i$. With 
$n:=2\cdot\text{lcm}(t_i)$, the series 
$$((\overline{z_1\cdot p^u},\overline{z_2\cdot p^u},\overline{z_3\cdot p^u}))_{u\in\N},$$
where $\overline{x}:=x\text{ mod }n$, is eventually periodic. The number of different values for the elements of this series gives an upper bound for $s$.

Note that there is an explicit implementation of Han's Theorem (for rational arguments) in Appendix \ref{app.myhkmthm}.
\end{rem}

\begin{exa}\label{deltahalb}
Let $b\leq a\in\N_{\geq 2}$ with $2(a+b)\geq ab$ and both prime to the odd prime $p$. For $e\in\N$ we set $q:=p^e$ for short. We want to show $$\delta\left(\frac{1}{2},\frac{1}{a},\frac{1}{b}\right)=0.$$
The first step is to show that the $s$ in Han's Theorem \ref{hansthm} has to be positive. The nearest elements in $L_{\text{odd}}$ to $(\tfrac{1}{2q},\tfrac{1}{aq},\tfrac{1}{bq})$ are $(1,0,0)$, $(0,1,0)$, $(0,0,1)$ and $(1,1,1)$.
The first three points give a taxicap distance of at least 1. For the point $(1,1,1)$ we get
$$\begin{aligned}
 & 1-\frac{1}{2q}+1-\frac{1}{aq}+1-\frac{1}{bq} && < 1\\
\Longleftrightarrow & \frac{1}{2q}+\frac{1}{aq}+\frac{1}{bq} && > 2\\
\Longleftrightarrow & 1+\frac{2}{a}+\frac{2}{b} && > 4q.
\end{aligned}$$
Since $a,b\geq 2$, this gives the contradiction $4q< 3$. Therefore $s>0$ in Theorem \ref{hansthm}.

Now consider the points $(\tfrac{q}{2},\tfrac{q}{a},\tfrac{q}{b})$ for $q\geq p$. Since the first coordinate yields a summand $\tfrac{1}{2}$ in the taxicap distance to $L_{\text{odd}}$ and we may choose $\tfrac{p-1}{2}$ or $\tfrac{p+1}{2}$, 
we get that the taxicap distance of $(\tfrac{q}{2},\tfrac{q}{a},\tfrac{q}{b})$ to $L_{\text{odd}}$ is exactly $\tfrac{1}{2}$ plus the taxicap distance 
of $(\tfrac{q}{a},\tfrac{q}{b})$ to $\N^2$.
The four points in $\N^2$ that we have to consider have the roundup or rounddown of $\tfrac{q}{a}$ in the first and of $\tfrac{q}{b}$ in the second 
component. Hence, the taxicap distance to $\N^2$ is of the form $\tfrac{n}{a}+\tfrac{m}{b}$ with $1\leq n< a$ and $1\leq m< b$, since $a$ and $b$ are 
prime to $p$. If this taxicap distance is smaller than $\tfrac{1}{2}$, we get the contradiction 
$$nb+ma<\frac{ab}{2}\leq a+b.$$ 
All in all, the taxicap distance of $p^s(\tfrac{1}{2},\tfrac{1}{a},\tfrac{1}{b})$ to $L_{\text{odd}}$ is greater than 1 for all $s\in\Z$. Therefore $\delta$ is zero.
\end{exa}

Before we connect the $\delta$ function with Hilbert-Kunz multiplicities, we prove a computation rule for a non-standard graded analogue.

\begin{setup}\label{setuptau}
Let $k$ be an algebraically closed field of characteristic $p>0$, let $R:=k[U,V]$ equipped with the positive grading $\Deg(U)=a$, $\Deg(V)=b$. Fix $c$, $d\in\N_{\geq 1}$ 
such that $ac=bd$. For $a_1$, $a_2$, $a_3$, $c$, $d$, $e\in\N$ with $q:=p^e$ set 
$$\tau\left(\frac{a_1}{q},\frac{a_2}{q},\frac{a_3}{q}\right):=q^{-1}\cdot\sg(U^{a_1},V^{a_2},(U^c+V^d)^{a_3}).$$ 
Again Proposition \ref{propdelta} (iv) ensures, that this definition is independent of $q$ and part (iii) gives the inequality
$$\left|\tau\left(\frac{a_1}{q},\frac{a_2}{q},\frac{a_3}{q}\right)-\tau\left(\frac{b_1}{q},\frac{b_2}{q},\frac{b_3}{q}\right)\right|\leq q^{-1}\cdot\left\Vert(a_1-b_1,a_2-b_2,a_3-b_3)\right\Vert_1.$$ 
Therefore $\tau$ is Lipschitz and extends to a unique continuous function $\tau$ on $[0,\infty)^3$ with $\tau(\tfrac{t}{q})=q^{-1}\cdot\tau(t)$ and $|\tau(t)-\tau(s)|\leq \left\Vert t-s\right\Vert_1$.
\end{setup}

\begin{lem}\label{deltaquasi} With the notations from Setup \ref{setuptau} and $t\in[0,\infty)^3$ we have 
$$\tau(t)=ac\delta\left(\frac{t_1}{c},\frac{t_2}{d},t_3\right).$$
\end{lem}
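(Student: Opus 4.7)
The strategy is to reduce to the standard-graded Han $\delta$ via a suitable flat base change. Consider $\phi\colon k[X,Y]\to k[U,V]$ defined by $X\mapsto U^c$, $Y\mapsto V^d$. Observing that $k[U,V]$ is free over its subring $\phi(k[X,Y])=k[U^c,V^d]$ on the basis $\{U^iV^j\mid 0\le i<c,\,0\le j<d\}$ shows $\phi$ is flat. I would then equip $k[X,Y]$ with the rescaled grading $\Deg(X)=\Deg(Y)=ac=bd$ (instead of the standard grading used to define $\delta$) so that $\phi$ becomes a morphism of graded rings into the positively graded $k[U,V]$ of Setup \ref{setuptau}, since $\Deg(U^c)=ac=\Deg(V^d)$.

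Flat base change then yields a graded isomorphism
$$\Syz_{k[U,V]}\!\bigl(U^{ca_1},V^{da_2},(U^c+V^d)^{a_3}\bigr)\cong\Syz_{k[X,Y]}\!\bigl(X^{a_1},Y^{a_2},(X+Y)^{a_3}\bigr)\otimes_{k[X,Y]}k[U,V].$$
If the standard-graded syzygy module on $k[X,Y]$ decomposes as $k[X,Y](-\alpha)\oplus k[X,Y](-\beta)$, then in the rescaled grading it reads $k[X,Y](-ac\alpha)\oplus k[X,Y](-ac\beta)$, and hence after tensoring becomes $k[U,V](-ac\alpha)\oplus k[U,V](-ac\beta)$. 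Therefore
$$\sg_{k[U,V]}\!\bigl(U^{ca_1},V^{da_2},(U^c+V^d)^{a_3}\bigr)=ac\cdot\sg_{k[X,Y]}\!\bigl(X^{a_1},Y^{a_2},(X+Y)^{a_3}\bigr),$$
with the right-hand $\sg$ computed in the standard grading. Dividing by $q=p^e$ and unravelling the definitions, this becomes
$$\tau\!\left(\tfrac{ca_1}{q},\tfrac{da_2}{q},\tfrac{a_3}{q}\right)=ac\cdot\delta\!\left(\tfrac{a_1}{q},\tfrac{a_2}{q},\tfrac{a_3}{q}\right),$$
i.e.\ the desired identity evaluated at every triple of the form $(ca_1/q,da_2/q,a_3/q)$.

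Such triples are dense in $[0,\infty)^3$ as $a_i\in\N$ and $e\in\N$ vary (for any target and any $\varepsilon>0$, take $q=p^e$ with $\max(c,d,1)/q<\varepsilon$ and the obvious integer parts), and both $\tau$ and $(t_1,t_2,t_3)\mapsto ac\cdot\delta(t_1/c,t_2/d,t_3)$ are continuous in $t$ by the Lipschitz extension recalled in Setup \ref{setuptau}; the identity therefore propagates to all $t\in[0,\infty)^3$. The only genuine subtlety is handling the two gradings on $k[X,Y]$ simultaneously—the standard one that defines $\delta$, and the $ac$-rescaling that makes $\phi$ degree-preserving—after which the argument is a routine flat base change followed by a density/continuity argument.
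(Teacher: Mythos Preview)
Your proof is correct and follows essentially the same route as the paper's: both use the map $\phi\colon k[X,Y]\to k[U,V]$, $X\mapsto U^c$, $Y\mapsto V^d$, check the identity on the dense set of triples $(ca_1/q,da_2/q,a_3/q)$, and conclude by continuity. The only cosmetic difference is that the paper phrases the key computation via pullback of line bundles along the induced map on projective spectra, whereas you work directly with graded flat base change on modules---these are equivalent formulations of the same argument.
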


\begin{proof}
For $t\in\N^3$ both sides multiply by $p$ when $t$ is replaced by $pt$. Since rationals of the form $\tfrac{nc}{p^j}$, $\tfrac{nd}{p^j}$ are dense in $[0,\infty)$, it suffices 
to prove the statement for $t=(ct_1,dt_2,t_3)$ with $t_i\in\N$. Let $S$ be the standard-graded polynomial ring $k[X,Y]$. Consider the map 
$\phi:S\rightarrow R$, $X\mapsto U^c$, $Y\mapsto V^d$, which is homogeneous of degree $ac$. Denote by $\psi$ the induced map on the projective spectra. 
We obtain 
$$\xymatrix{
\psi\pb(\Syz_{\Proj(S)}(X^{t_1},Y^{t_2},(X+Y)^{t_3}))\ar[r]^{\cong}\ar[d]^{\cong} & \Syz_{\Proj(R)}(U^{ct_1},V^{dt_2},(U^c+V^d)^{t_3})\ar[d]^\cong\\
\psi\pb(\Oc_{\Proj(S)}(l)\oplus\Oc_{\Proj(S)}(l+\delta))\ar[r]^{\cong} & \Oc_{\Proj(R)}(acl)\oplus\Oc_{\Proj(S)}(acl+ac\delta)
}$$
for some $l\in\Z$ and $\delta=\delta(t_1,t_2,t_3)$. Now the claim follows, since we have the isomorphism
$$\Syz_{\Proj(R)}(U^{ct_1},V^{dt_2},(U^c+V^d)^{t_3})\cong\Oc_{\Proj(R)}(m)\oplus\Oc_{\Proj(R)}(m+\tau)$$
for some $m\in\Z$ and $\tau=\tau(ct_1,dt_2,t_3)$.
\end{proof}

We now prove a slightly more general version of \cite[Theorem 2.3]{irred} (see Corollary \ref{irredmon} below). We will skip the proofs that are still valid in our situation.

\begin{setup}\label{setupmu}
Let $R$ be a standard-graded domain of dimension $d$ and $J:=(x_1,\ldots,x_n)$ an $R_+$-primary ideal with all $x_i$ non-zero and homogeneous. For $l\in\N^n$ let 
$J(l):=\left(x_1^{l_1},\ldots,x_n^{l_n}\right)$. We denote $\HKM(J(l),R)$ by $\mu(l)$.
\end{setup}

We will show that $\mu$ extends to a unique continuous function on $[0,\infty)^n$ with the property $\mu(pt)=p^d\cdot\mu(t)$ 
(this is Remark 5 in the appendix of \cite{texnew} and we will lean on the proof given there in the case $l_i=l_1$ for all $i$).

\begin{lem}
In the situation of Setup \ref{setupmu} let $l\in\N^n$. If $l_i\geq 1$, let $l^{(i)}:=l-e_i$, where $e_i$ denotes the $i$-th canonical vector. Then $$\mu(l)-\mu\left(l^{(i)}\right)=O\left((\Max(l_j))^{d-1}\right)$$
\end{lem}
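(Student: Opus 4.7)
The plan is to reduce the difference $\mu(l)-\mu(l^{(i)})$ to the Hilbert--Kunz multiplicity of an auxiliary ideal $K$ of ``one dimension lower,'' and then bound $\lambda_R(R/K)$ by a Hilbert--Samuel argument.

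First I would exploit the inclusion $J(l)^{[q]}\subseteq J(l^{(i)})^{[q]}$, which is immediate because $x_i^{l_iq}=x_i^q\cdot x_i^{(l_i-1)q}$. This gives a short exact sequence
$$0\lra J(l^{(i)})^{[q]}/J(l)^{[q]}\lra R/J(l)^{[q]}\lra R/J(l^{(i)})^{[q]}\lra 0,$$
so the difference of lengths equals $\lambda_R(J(l^{(i)})^{[q]}/J(l)^{[q]})$. The middle quotient is cyclic, generated by the image of $x_i^{(l_i-1)q}$, hence isomorphic to $R/(J(l)^{[q]}:x_i^{(l_i-1)q})$. Since $x_i^q\cdot x_i^{(l_i-1)q}=x_i^{l_iq}\in J(l)^{[q]}$ and $x_j^{l_jq}\in J(l)^{[q]}$ for $j\neq i$, this colon ideal contains $K^{[q]}$ where $K:=(x_i,\, x_j^{l_j}:j\neq i)$, which is $R_+$-primary. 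Thus
$$\HKF(J(l),R,q)-\HKF(J(l^{(i)}),R,q)=\lambda_R\bigl(R/(J(l)^{[q]}:x_i^{(l_i-1)q})\bigr)\leq \HKF(K,R,q).$$

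Next I would invoke Theorem \ref{watyorelhkm} with $I=K$ and $J=R$ to get $\HKF(K,R,q)\leq \lambda_R(R/K)\cdot \HKF(R,q)$. Dividing by $q^d$ and letting $e\to\infty$ yields the clean inequality
$$0\leq \mu(l)-\mu(l^{(i)})\leq \lambda_R(R/K)\cdot \HKM(R).$$
Since $\HKM(R)$ is a constant independent of $l$, everything now hinges on showing that $\lambda_R(R/K)=O\bigl((\Max l_j)^{d-1}\bigr)$.

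This is the step I expect to be the main obstacle, and I would handle it as follows. Because $R$ is a domain of dimension $d$ and $x_i\neq 0$ is a non-zero-divisor, $\bar R:=R/x_iR$ is a standard-graded ring of dimension $d-1$, and $\lambda_R(R/K)=\lambda_{\bar R}(\bar R/(\bar x_j^{l_j}:j\neq i))$. The ideal $\bar L:=(\bar x_j:j\neq i)$ is $\bar R_+$-primary because $J$ was $R_+$-primary. A pigeonhole argument shows
$$\bar L^M\subseteq (\bar x_j^{l_j}:j\neq i)\quad\text{for}\quad M=\sum_{j\neq i}(l_j-1)+1\leq (n-1)\Max l_j,$$
since any monomial of total degree $M$ in the $\bar x_j$ must have some exponent $\geq l_j$. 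Now Hilbert--Samuel theory on $\bar R$ gives $\lambda_{\bar R}(\bar R/\bar L^M)\leq c\cdot M^{d-1}$ for a constant $c$ depending only on $\bar R$ and $\bar L$, so
$$\lambda_R(R/K)\leq c\cdot((n-1)\Max l_j)^{d-1}=O\bigl((\Max l_j)^{d-1}\bigr),$$
completing the proof. The potentially delicate point is the passage to the $(d-1)$-dimensional ring $\bar R$, which requires the domain hypothesis of Setup \ref{setupmu}, but once that is in hand the Hilbert--Samuel bound is standard.
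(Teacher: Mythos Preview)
Your proof is correct and follows essentially the same strategy as the paper: bound $\mu(l)-\mu(l^{(i)})$ via Theorem~\ref{watyorelhkm}, identify the relevant length with a length over the $(d-1)$-dimensional quotient $R/x_iR$, and conclude by a Hilbert--Samuel estimate. The paper applies Theorem~\ref{watyorelhkm} directly to the pair $J(l)\subseteq J(l^{(i)})$ (obtaining $\mu(l)-\mu(l^{(i)})\le\HKM(R)\cdot\lambda_R(J(l^{(i)})/J(l))$) and then observes that $J(l^{(i)})/J(l)$ is a quotient of $S/J(l)S$ with $S=R/x_i$; your route via the auxiliary ideal $K=(x_i,x_j^{l_j}:j\neq i)$ lands on exactly the same length $\lambda_R(R/K)=\lambda_S(S/J(l)S)$, so the two arguments coincide in substance---your pigeonhole-plus-Hilbert--Samuel bound on $\lambda_R(R/K)$ is in fact a bit more carefully stated than the paper's.
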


\begin{proof}
The $R$-module $J\left(l^{(i)}\right)/J(l)$ is annihilated by $x_i$, which gives it a natural $S:=R/x_i$-module structure. Therefore we get the inequality
$$\lambda_R\left(J\left(l^{(i)}\right)/J(l)\right)=\lambda_S\left(J\left(l^{(i)}\right)S/J(l)S\right)\leq \lambda_S(S/J(l)S)\leq \lambda_S\left(S/\mm^{n\cdot\Max(l_j)}\right).$$
But the last length is for large $\Max(l_j)$ given by a polynomial in $\Max(l_j)$ of degree $d-1$.

By Theorem \ref{watyorelhkm} we have
$$\mu(J(l))-\mu\left(J\left(l^{(i)}\right)\right)\leq \HKM(R)\cdot\lambda_R\left(J\left(l^{(i)}\right)/J(l)\right)=O\left((\Max(l_j))^{d-1}\right).$$
\end{proof}

\begin{lem}
The notations are the same as in Setup \ref{setupmu}. Let $\Ic\subsetneq[0,1]^n$ be the set of all elements of the form $(\tfrac{a_1}{q_1},\ldots,\tfrac{a_n}{q_n})$, where the $q_i\geq 1$ are powers of $p$ and the $a_i$ are natural numbers $\leq q_i$.
Then $\mu:\Ic\ra\R$ is Lipschitz.
\end{lem}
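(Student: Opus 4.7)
\begin{Proof}
The plan is to bootstrap the Lipschitz estimate for $\mu$ on $\Ic$ from the incremental bound of the previous lemma, using the homogeneity of $\mu$ under the Frobenius.

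First I record the scaling identity $\mu(p l) = p^d \mu(l)$ for any $l \in \N^n$: since $J(pl) = J(l)^{[p]}$, one has $\HKF(J(pl), R, p^f) = \HKF(J(l), R, p^{f+1})$ for every $f$, so dividing by $p^{fd}$ and letting $f \to \infty$ gives the claim. Iterating shows that for any power $q = p^e$ of $p$ the formula $\mu(l/q) := q^{-d} \mu(l)$ extends $\mu$ unambiguously to $\Ic$.

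Next, given $t, s \in \Ic$, I choose a common power $q = p^e$ of $p$ such that $t = l/q$ and $s = m/q$ with $l, m \in \N^n$ and all coordinates bounded by $q$. Interpolate from $l$ to $m$ along a monotone lattice path in $\N^n$, changing a single coordinate by $\pm 1$ at each step; the number of steps is $\|l - m\|_1 = q \cdot \|t - s\|_1$, and every intermediate tuple has coordinates in $\{0, 1, \ldots, q\}$. The previous lemma supplies a constant $C$, depending only on $R$ and $J$, with
\[
|\mu(l^{(i)}) - \mu(l^{(i-1)})| \leq C \cdot q^{d-1}
\]
at each step. Summing and applying the triangle inequality yields
\[
|\mu(l) - \mu(m)| \leq C \cdot q^d \cdot \|t - s\|_1,
\]
and dividing by $q^d$ gives the Lipschitz bound $|\mu(t) - \mu(s)| \leq C \|t - s\|_1$ on $\Ic$.

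The main subtleties are to verify that the constant hidden in the previous lemma is genuinely uniform in $l$ and to deal with intermediate tuples having a vanishing coordinate. The first is built into the previous proof, since the bound came from $\lambda_R(J(l^{(i)})/J(l)) \leq \lambda_S(S/\mm^{n \cdot \Max(l_j)} S)$ for $S := R/(x_i)$, a quantity which is a polynomial of degree $d-1$ in $\Max(l_j)$ whose coefficients depend only on $R$ and $J$. For the second, any intermediate tuple with $l_j = 0$ gives $J(l) = R$ and hence $\mu(l) = 0$; applying the previous lemma to the neighbouring tuple obtained by raising the offending coordinate to $1$ still furnishes the same uniform bound at that step.
\end{Proof}
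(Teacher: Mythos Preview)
Your proof is correct and follows essentially the same route as the paper: reduce to a common $p$-power denominator $q$, telescope along a lattice path changing one coordinate by $\pm 1$ at a time, bound each step via the previous lemma by a constant times $q^{d-1}$, sum, and divide by $q^d$. Your explicit justification of the scaling identity $\mu(pl)=p^d\mu(l)$ and your remark on tuples with a zero coordinate are points the paper leaves implicit, but they do not alter the argument.
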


\begin{proof}
Given two elements from $\Ic$, we may expand each component by a power of $p$ and end up with two elements of the form $\tfrac{l}{q}$, $\tfrac{m}{q}$, where $l$ and $m$ are $n$-tuples of natural numbers $\leq q$.
$$\begin{aligned}
|\mu(l)-\mu(m)| =&\text{ } |\mu(l)-\mu(l_1,\ldots,l_{n-1},m_n)+\mu(l_1,\ldots,l_{n-1},m_n)-\mu(l_1,\ldots,l_{n-2},m_{n-1},m_n)\\
 &\text{ } +\ldots+\mu(l_1,m_2,\ldots,m_n)-\mu(m)|\\
\leq &\text{ } |\mu(l)-\mu(l_1,\ldots,l_{n-1},m_n)|+|\mu(l_1,\ldots,l_{n-1},m_n)-\mu(l_1,\ldots,l_{n-2},m_{n-1},m_n)|\\
  &\text{ } +\ldots+|\mu(l_1,m_2,\ldots,m_n)-\mu(m)|\\
 = &\text{ } \sum_{i=1}^n|\mu(l_1,\ldots,l_i,m_{i+1},\ldots,m_n)-\mu(l_1,\ldots,l_{i-1},m_i,\ldots,m_n)|.
\end{aligned}$$
Expanding the $i$-th summand in the same way (w.l.o.g. $l_i\geq m_i$), the previous lemma gives the inequality
\begin{align*}
 &\text{ } |\mu(l_1,\ldots,l_{i-1},l_i,m_{i+1},\ldots,m_n)-\mu(l_1,\ldots,l_{i-1},m_i,m_{i+1},\ldots,m_n)|\\
\leq &\text{ } |\mu(l_1,\ldots,l_{i-1},l_i,m_{i+1},\ldots,m_n)-\mu(l_1,\ldots,l_{i-1},l_i-1,m_{i+1},\ldots,m_n)|\\
 &\text{ } +\ldots +|\mu(l_1,\ldots,l_{i-1},m_i+1,m_{i+1},\ldots,m_n)-\mu(l_1,\ldots,l_{i-1},m_i,m_{i+1},\ldots,m_n)|\\
\leq &\text{ } \sum_{j=0}^{|l_{n+1-i}-m_{n+1-i}|-1}M_i\cdot\Max(l_1,\ldots,l_{i-1},l_i-j,m_{i+1},\ldots,m_n)^{d-1}\\
\leq &\text{ } M_i\cdot \Max(l_1,\ldots,l_i,m_{i+1},\ldots,m_n)^{d-1}\cdot|l_i-m_i|,
\end{align*}
where $M_i$ is the constant from the previous lemma. Summation over $i$ yields
$$ |\mu(l)-\mu(m)| \leq n\cdot \Max\{M_i\}\cdot (\Max\{l_1,m_1,\ldots,l_n,m_n\})^{d-1}\left\Vert l-m\right\Vert_1.$$
Dividing by $q^d$ gives 
$$\left|\mu\left(\frac{l}{q}\right)-\mu\left(\frac{m}{q}\right)\right|\leq \frac{n\cdot \Max\{M_i\}}{q}\left\Vert l-m\right\Vert_1= n\cdot \Max\{M_i\}\left\Vert \frac{l}{q}-\frac{m}{q}\right\Vert_1,$$
hence $\mu$ is Lipschitz on $\Ic$ with Lipschitz constant $n\cdot \Max\{M_i\}$.
\end{proof}

\begin{lem}\label{muconti}
The notations are the same as in Setup \ref{setupmu}. For $t\in[0,\infty)^n$ choose $e\in\N$ such that $p^e\leq\Max(t_i)<p^{e+1}=:q$. This gives 
$\tfrac{t}{q}\in [0,1]^n$ and we define $\mu(t):= q^d\cdot\mu(\tfrac{t}{q})$. Then $\mu(t)$ is continous.
\end{lem}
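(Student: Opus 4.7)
The proof will proceed in three steps: first extend $\mu$ from $\Ic$ to $[0,1]^n$ by continuity, then upgrade the trivial scaling identity on $\Ic$ to a global identity $\mu(pt)=p^d\mu(t)$ on this extension, and finally use this scaling to show that the piecewise definition on $[0,\infty)^n$ glues continuously across the scale boundaries $\Max(t_i)=p^e$.

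For the first step, observe that $\Ic$ is dense in $[0,1]^n$: for any $t\in[0,1]^n$ the points $\left(\lfloor p^et_1\rfloor/p^e,\ldots,\lfloor p^et_n\rfloor/p^e\right)$ lie in $\Ic$ and converge to $t$ in taxicap distance. Combined with the Lipschitz property of $\mu$ on $\Ic$ established in the previous lemma, this yields a unique Lipschitz (hence continuous) extension of $\mu$ to all of $[0,1]^n$. For the second step, take $t=l/p^{e+1}\in\Ic\cap[0,1/p]^n$, so $pt=l/p^e\in\Ic$ as well, and compute directly
$$\mu(pt)=\frac{\HKM(J(l),R)}{p^{ed}}=p^d\cdot\frac{\HKM(J(l),R)}{p^{(e+1)d}}=p^d\mu(t).$$
Density of $\Ic\cap[0,1/p]^n$ in $[0,1/p]^n$ together with continuity of both sides propagates the identity $\mu(pt)=p^d\mu(t)$ to all $t\in[0,1/p]^n$.

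For the third step, given any $t$ with $\Max(t_i)\geq 1$, the integer $e\in\N$ with $p^e\leq\Max(t_i)<p^{e+1}$ is unique, and the assignment $t\mapsto p^{(e+1)d}\mu\bigl(t/p^{e+1}\bigr)$ is unambiguous and continuous on the open strip $\{p^e<\Max(t_i)<p^{e+1}\}$ as a scaled restriction of the continuous function constructed in Step 1. To check compatibility at a boundary $\Max(t_i)=p^e$ (with $e\geq 1$), the value obtained from the strip above is $p^{(e+1)d}\mu\bigl(t/p^{e+1}\bigr)$, while approaching from the strip below yields $p^{ed}\mu\bigl(t/p^e\bigr)$. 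Setting $s:=t/p^{e+1}\in[0,1/p]^n$, the scaling identity of Step 2 applied to $s$ gives $\mu(ps)=p^d\mu(s)$, which rewrites precisely as the equality of the two candidate values. Hence $\mu$ is well-defined and continuous on $[0,\infty)^n$, with the homogeneity $\mu(pt)=p^d\mu(t)$ valid globally. Continuity at the boundary $\Max(t_i)=1$ reduces to the same argument with $e=0$, and continuity at $t=0$ follows from $\mu(0)=0$ together with the Lipschitz bound on $[0,1]^n$.

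The main obstacle is Step 2: upgrading the multiplicative scaling $\mu(pt)=p^d\mu(t)$ from an algebraic identity on the discrete lattice $\Ic$ to an identity of continuous functions on $[0,1/p]^n$. Without it, one cannot identify the boundary values coming from adjacent strips, and the piecewise definition on $[0,\infty)^n$ would fail to patch continuously across the hypersurfaces $\Max(t_i)=p^e$. Once this identity is secured via density, the remaining verifications are a routine bookkeeping of the chosen scale $e$.
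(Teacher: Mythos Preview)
Your proof is correct and follows essentially the same approach as the paper: continuity on open strips comes from the Lipschitz extension established in the preceding lemma, and compatibility across the boundaries $\{\Max(t_i)=p^e\}$ is verified via the scaling relation $\mu(pt)=p^d\mu(t)$. Your treatment is in fact more thorough than the paper's, which only writes out the check at the corner points $(p^e,\ldots,p^e)$; by making the scaling identity explicit on all of $[0,1/p]^n$ you handle every boundary point at once.
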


\begin{proof}
By the previous lemma we only have to take care of the "borders" $p^e$. We have 
$$\Lim{\Max{t_i}\downarrow p^e}\mu(t)=q^d\cdot\mu\left(\frac{1}{p},\ldots,\frac{1}{p}\right)=\left(\frac{q}{p}\right)^d\cdot \HKM(R).$$
For the "limit from below", we have to look for the limit $t\rightarrow (p^e,\ldots,p^e)$, $t_1,\ldots,t_n<p^e$. This limit is
$$\Lim{\stackrel{t\rightarrow (p^e,\ldots,p^e)}{t_1,\ldots,t_n<p^e}}\mu(t)=\left(\frac{q}{p}\right)^d\cdot\mu(1,\ldots,1)=\left(\frac{q}{p}\right)^d\cdot \HKM(R).$$
\end{proof}

Next, we want to use the function $\mu$ to compute the Hilbert-Kunz multiplicities of certain hypersurfaces $k[X,Y,Z]/(F)$ with respect to the ideals 
$(X^a,Y^b,Z^c)$. This is a slight generalization of Monsky's result from \cite{irred}, where he restricted himself to the case $a=b=c$.

\begin{defi}\label{defdegreg}
Let $k$ be an algebraically closed field of characteristic $p>0$. An irreducible, homogeneous (standard-graded) polynomial $F\in k[X,Y,Z]$ of degree $d$ is called \textit{regular} if each of the points $(1:0:0)$, $(0:1:0)$, $(0:0:1)$ has multiplicity $<\tfrac{d}{2}$ on the projective curve defined by $F$. Other\-wise we call $F$ \textit{irregular}.
\end{defi}

Monsky has shown in \cite[Lemma 2.2]{irred} that regular, irreducible trinomials $$F=X^{a_1}Y^{a_2}Z^{a_3}+X^{b_1}Y^{b_2}Z^{b_3}+X^{c_1}Y^{c_2}Z^{c_3}$$ appear in two families:

\begin{center}
\begin{tabular}{c|ccc|ccc|ccc}
 & $a_1$ & $a_2$ & $a_3$ & $b_1$ & $b_2$ & $b_3$ & $c_1$ & $c_2$ & $c_3$\\ \hline
type I & $>\tfrac{d}{2}$ & & 0 & 0 & $>\tfrac{d}{2}$ & & & 0 & $>\tfrac{d}{2}$ \\
type II & $d$ & 0 & 0 &  & $>\tfrac{d}{2}$ & & 0 & & $>\tfrac{d}{2}$ \\
\end{tabular}
\end{center}

An empty slot means, that there is no further restriction at the exponent (except making $F$ homogeneous).

The determinant of the matrix
\begin{equation}\label{deflambda}
\left(\begin{array}{ccc}
a_1 & b_1 & c_1\\
a_2 & b_2 & c_2\\
a_3 & b_3 & c_3
\end{array}\right)
\end{equation}
is $d\lambda$ with $0<\lambda\in\N$, because the rows sum up to $(d,d,d)$.

\begin{con}
Given a regular, irreducible trinomial $F$, compute $\lambda$ as above. Let $A:=k[r,s,t]/(r^{\lambda}+s^{\lambda}+t^{\lambda})$.
We denote by $R$, $S$, $T$ the elements $r^{\lambda}$, $s^{\lambda}$, $t^{\lambda}$ of $A$. Let $x$, $y$, $z$ be the elements 
$r^{b_3}s^{b_2}$, $s^{c_1}t^{c_3}$, $r^{a_1}t^{a_2}$ if $F$ is of type I
and $r^{c_3-b_3}s^{c_2}t^{b_3}$, $s^d$, $r^{b_1}t^{d-b_1}$ if $F$ is of type II.

Now we show where the ring $k[X,Y,Z]/(F)$ appears in these data. Let $B:=k[x,y,z]\subseteq A$ and consider the surjective map $\phi:k[X,Y,Z]\ra B$, 
$X\mapsto x$, $Y\mapsto y$, $Z\mapsto z$. Assume that $F$ is of type I. Replacing the third row in (\ref{deflambda}) by the sum of all three rows, 
one finds $\lambda=a_1b_2+a_2c_1-b_2c_1$. Similarly, one obtains the equalities $\lambda=b_3c_1+a_1c_3-a_1b_3$ and $\lambda=b_2c_3+a_2b_3-a_2c_3$. Then
\begin{align*}
x^{a_1}y^{a_2}+y^{b_2}z^{b_3}+x^{c_1}z^{c_3} & = (r^{b_3}s^{b_2})^{a_1}(s^{c_1}t^{c_3})^{a_2}+(s^{c_1}t^{c_3})^{b_2}(r^{a_1}t^{a_2})^{b_3}+(r^{b_3}s^{b_2})^{c_1}(r^{a_1}t^{a_2})^{c_3}\\
 & = r^{a_1b_3}s^{a_1b_2+a_2c_1}t^{a_2c_3}+r^{a_1b_3}s^{b_2c_1}t^{b_2c_3+a_2b_3}+r^{b_3c_1+a_1c_3}s^{b_2c_1}t^{a_2c_3}\\
 & = r^{a_1b_3}s^{b_2c_1}t^{a_2c_3}(s^{a_1b_2+a_2c_1-b_2c_1}+t^{b_2c_3+a_2b_3-a_2c_3}+r^{b_3c_1+a_1c_3-a_1b_3})\\
 & = r^{a_1b_3}s^{b_2c_1}t^{a_2c_3}(r^{\lambda}+s^{\lambda}+t^{\lambda})
\end{align*}
shows that $\phi$ factors through $k[X,Y,Z]/(F)$. Since $k[X,Y,Z]/(F)$ and $B$ are two-dimensional domains they have to be isomorphic.  A similar 
computation shows that $B$ and $k[X,Y,Z]/(F)$ are isomorphic if $F$ is of type II.

We also define the subrings $B':=k[R,S,T]$ and $C:=k[x^{\lambda},y^{\lambda},z^{\lambda}]$ of $A$.
Note that $B'$ is isomorphic to a polynomial ring in two variables.

Monsky has shown that $B'$ and $B$ are finite over $C$ of ranks $d$ resp. $\lambda^2$.
\end{con}

\begin{defi}\label{defmyhkmthm}
Let $F=X^{a_1}Y^{a_2}Z^{a_3}+X^{b_1}Y^{b_2}Z^{b_3}+X^{c_1}Y^{c_2}Z^{c_3}$ be regular and irreducible. For $t=(t_1,t_2,t_3)\in[0,\infty)^3$ set 
\begin{align*}
\alpha'(t) &:= t_1d-t_1b_3-t_1c_2-t_2c_1-t_3b_1,\\ 
\beta'(t) &:= t_2d-t_2c_1-t_3a_2-t_1c_2-t_2a_3,\\ 
\gamma'(t) &:= t_3d-t_3a_2-t_1b_3-t_3b_1-t_2a_3.
\end{align*}
Let $\alpha_+(t):=\alpha'(t)$ if $\alpha'(t)$ is $\geq 0$ and $0$ otherwise. Similarly, $\alpha_-(t):=-\alpha'(t)$ if $\alpha'(t)<0$ and $0$ otherwise. Do the same for $\beta'(t)$ and $\gamma'(t)$.
After all these preparations set
$$\begin{array}{ccc|ccc}
 & \text{type I} & & & \text{type II} & \\
\alpha(t) & \beta(t) & \gamma(t) & \alpha(t) & \beta(t) & \gamma(t)\\ \hline
 \alpha_+(t)+\gamma_-(t) & \beta_+(t)+\alpha_-(t) & \gamma_+(t)+\beta_-(t) & \alpha_+(t)+\beta_-(t)+\gamma_-(t) & \beta_+(t) & \gamma_+(t)+\alpha_-(t)
  \end{array}$$
\end{defi}

\begin{thm}\label{myhkmthm}
For $t=(t_1,t_2,t_3)\in[0,\infty)^3$ we have 
$$\mu(t)=\frac{d}{4}\cdot Q(t_1,t_2,t_3)+\frac{\lambda^2}{4d}\left(\delta\left(\frac{\alpha(t)}{\lambda},\frac{\beta(t)}{\lambda},\frac{\gamma(t)}{\lambda}\right)\right)^2.$$
\end{thm}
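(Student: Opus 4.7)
\begin{Proof}[Proof plan]
My plan is to imitate Monsky's argument for \cite[Theorem 2.3]{irred} (the diagonal case $t_1=t_2=t_3=s$) and then account for the non-diagonal exponents by carefully tracking monomial degrees. First I would reduce to integer triples. Both sides of the claimed identity are continuous in $t$: the left side is continuous by Lemma \ref{muconti}, and the right side is continuous because $\delta$ is Lipschitz and $Q$ is a polynomial. Moreover, both sides are homogeneous of degree $2$ with respect to the scaling $t\mapsto pt$ (the left side by construction in Lemma \ref{muconti}, the right side because $\delta(pt)=p\,\delta(t)$ and $Q$ is quadratic). Hence it suffices to verify the formula on the dense set of triples $t=l/q\in\Q_{\geq 0}^3$ with $q=p^e$, and by rescaling by $q$ we reduce to $t\in\N^3$.

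Next I would pass from the ring $B\cong k[X,Y,Z]/(F)$ to the trinomial overring $A=k[r,s,t]/(r^\lambda+s^\lambda+t^\lambda)$ via the module-finite inclusion $B\subseteq A$ introduced in the construction preceding Definition \ref{defmyhkmthm}. Applying Theorem \ref{watyorank} gives
\begin{equation*}
\HKM\left((X^{t_1},Y^{t_2},Z^{t_3}),B\right)=\frac{\HKM\left((x^{t_1},y^{t_2},z^{t_3})A,A\right)}{[Q(A):Q(B)]},
\end{equation*}
and the degree $[Q(A):Q(B)]$ is computable from the rank data recalled in the construction (Monsky shows $B$ has rank $\lambda^2$ over $C:=k[x^\lambda,y^\lambda,z^\lambda]$ while $A$ has rank $d\lambda^2$ over $C$, yielding $[Q(A):Q(B)]=d$). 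Substituting the explicit monomial expressions for $x,y,z$ (type I: $x=r^{b_3}s^{b_2}$ etc.; type II analogously) turns the extended ideal into a monomial ideal in $A$.

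The heart of the argument is then the computation of $\HKM$ of this monomial ideal in $A$. For each Frobenius power $q=p^e$, Lemma \ref{manipulationlemma} (parts (ii) and (iii)) lets me strip off the common monomial factors from pairs of generators after using the hypersurface relation $r^\lambda+s^\lambda+t^\lambda=0$ to rewrite the ideal as $(r^{\alpha(qt)},s^{\beta(qt)},t^{\gamma(qt)})$ times an explicit monomial; the exponents $\alpha(qt),\beta(qt),\gamma(qt)$ arise precisely as the ``balance'' between the positive exponents $\alpha'_+,\beta'_+,\gamma'_+$ appearing naturally and the ``deficits'' $\alpha'_-,\beta'_-,\gamma'_-$ that must be pushed to the remaining variables via the trinomial relation, which is why the definitions in Definition \ref{defmyhkmthm} split according to type. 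The sgn-bookkeeping in $\alpha_\pm,\beta_\pm,\gamma_\pm$ will be the main technical obstacle and is what differs from Monsky's symmetric diagonal case.

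Finally, $\HKM((r^{\alpha(qt)},s^{\beta(qt)},t^{\gamma(qt)}),A)$ can be read off from the length computation in Proposition \ref{deltadim}(ii) combined with Proposition \ref{propdelta}(iv): viewing $A$ on its projective curve, a syzygy for $r^{\alpha(qt)},s^{\beta(qt)},t^{\gamma(qt)}$ modulo $r^\lambda+s^\lambda+t^\lambda$ corresponds to a syzygy for $r^{\alpha(qt)},s^{\beta(qt)},(-r^\lambda-s^\lambda)^{\gamma(qt)/\lambda}$ in the polynomial ring $k[r,s]$, so the syzygy gap equals $q\cdot\lambda\cdot\delta(\alpha(t)/\lambda,\beta(t)/\lambda,\gamma(t)/\lambda)$. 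Inserting this into Proposition \ref{deltadim}(ii), dividing by $q^2$, passing to the limit $e\to\infty$, and dividing by $[Q(A):Q(B)]=d$ yields the claimed formula, with the $\frac{d}{4}Q(t_1,t_2,t_3)$ piece coming from the leading quadratic term of $Q$ on the generator degrees and the $\delta$-squared piece from the syzygy gap contribution.
\end{Proof}
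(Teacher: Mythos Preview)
Your overall strategy—reduce by density and homogeneity, apply Watanabe--Yoshida to change rings, then identify the extended ideal as a monomial ideal whose syzygy gap is expressed by $\delta$—is Monsky's strategy and also the paper's. But you route through the wrong intermediate ring, and this creates a genuine gap in the final step.

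You pass from $B\cong k[X,Y,Z]/(F)$ up to the Fermat ring $A=k[r,s,t]/(r^{\lambda}+s^{\lambda}+t^{\lambda})$ and then try to compute $\HKM((r^{\alpha},s^{\beta},t^{\gamma}),A)$ via Proposition~\ref{deltadim}(ii). But Proposition~\ref{deltadim} is a colength formula in the polynomial ring $k[U,V]$; it does not compute Hilbert--Kunz multiplicities in the non-regular ring $A$. Your proposed correspondence ``a syzygy in $A$ corresponds to a syzygy for $r^{\alpha},s^{\beta},(-r^{\lambda}-s^{\lambda})^{\gamma/\lambda}$ in $k[r,s]$'' already presumes $\lambda\mid\gamma$ (which your reduction to $t\in\N^{3}$ does not guarantee) and, more seriously, conflates the $A$-syzygy module with a $k[r,s]$-syzygy module; no such identification is available without further work. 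In effect, computing $\HKM$ of a monomial ideal in the Fermat ring $A$ is itself a special case of the theorem you are trying to prove, so the argument becomes circular.

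The paper avoids this by going \emph{down} from $B$ to $C=k[x^{\lambda},y^{\lambda},z^{\lambda}]$ and then \emph{across} to the regular subring $B'=k[R,S,T]\cong k[R,S]$ (with $R=r^{\lambda}$, $S=s^{\lambda}$, $T=t^{\lambda}$ and $R+S+T=0$), using Theorem~\ref{watyorank} twice. Because $B'$ is regular, $\HKM(JB',B')=\Dim_{k}(B'/JB')$, and now Proposition~\ref{deltadim}(ii) and the $\sg$-manipulations of Proposition~\ref{propdelta} are legitimately available. For this to work the extended ideal must land in $B'$, which is exactly why the paper reduces to $t=\lambda\cdot(a,b,c)$ with $a,b,c\in\N$ (using density of $\{\lambda n/p^{j}\}$), not merely to $t\in\N^{3}$. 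Once you make this reduction, the detour through $A$ is unnecessary: the generators $x^{\lambda a},y^{\lambda b},z^{\lambda c}$ already lie in $B'$, and the entire computation takes place there.
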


\begin{proof}
Since both sides multiply by $p^2$, when $t$ is replaced by $pt$ and since rationals of the form $\tfrac{n\lambda}{p^i}$ are dense in $[0,\infty)$, it suffices 
to prove the statement for $t=\lambda\cdot(a,b,c)$ with natural numbers $a$, $b$, $c$. Therefore we have to show
$$\mu(\lambda\cdot(a,b,c))=\frac{d\cdot\lambda ^2}{4}\cdot Q(a,b,c)+\frac{\lambda^2}{4d}\left(\delta\left(\frac{\alpha(t)}{\lambda},\frac{\beta(t)}{\lambda},\frac{\gamma(t)}{\lambda}\right)\right)^2.$$
Consider the ideal $J:=(x^{a\lambda},y^{b\lambda},z^{c\lambda})$ of $C$.
By Theorem \ref{watyorank} we have 
$$\HKM(J,C)=\frac{\HKM(JB,B)}{\lambda^2}\qquad\text{and}\qquad \HKM(J,C)=\frac{\HKM(JB',B')}{d}.$$
This yields 
$$\HKM(JB,B)=\frac{\lambda^2}{d}\cdot \HKM(JB',B')=\frac{\lambda^2}{d}\cdot\Dim_k(B'/JB'),$$
since $B'$ is regular (cf. \cite[1.4]{watyo1}).
 
If $F$ is of type I, we have $$JB'=((R^{b_3}S^{b_2})^a,(S^{c_1}T^{c_3})^b,(R^{a_1}T^{a_2})^c).$$
By Proposition \ref{propdelta} (ii) we have
\begin{align*}
\sg((R^{b_3}S^{b_2})^a,(S^{c_1}T^{c_3})^b,(R^{a_1}T^{a_2})^c) & = \sg(R^{ab_3}S^{ab_2-bc_1},T^{bc_3},R^{ca_1}T^{ca_2})\\
& = \sg(S^{ab_2-bc_1},T^{bc_3},R^{ca_1-ab_3}T^{ca_2})\\
& = \sg(S^{ab_2-bc_1},T^{bc_3-ca_2},R^{ca_1-ab_3})\\
& = \delta\left(\frac{\alpha(t)}{\lambda},\frac{\beta(t)}{\lambda},\frac{\gamma(t)}{\lambda}\right).
\end{align*}
Note that we made some assumptions in this computation, for example $ab_2\geq bc_1$ in the first equality. We defined $\alpha$, $\beta$ and $\gamma$ such that all 
cases that might appear in the above computation end up in $\delta\left(\tfrac{\alpha(t)}{\lambda},\tfrac{\beta(t)}{\lambda},\tfrac{\gamma(t)}{\lambda}\right)$.
By Proposition \ref{deltadim} (ii) we have 
\begin{align*}
\Dim_k(B'/JB') &= \frac{1}{4}\cdot\left(Q(ad,bd,cd)+\delta(ab_2-bc_1,bc_3-ca_2,ca_1-ab_3)^2\right)\\
&= \frac{1}{4}\cdot\left(d^2\cdot Q(a,b,c)+\left(\delta\left(\frac{\alpha(t)}{\lambda},\frac{\beta(t)}{\lambda},\frac{\gamma(t)}{\lambda}\right)\right)^2\right).
\end{align*}
Multiplying by $\tfrac{\lambda^2}{d}$ gives the result.

If $F$ is of type II we have 
$$JB'=(R^{ac_3-ab_3}S^{ac_2}T^{ab_3},S^{bd},R^{cb_1}T^{cd-cb_1})\quad\text{and}$$
\begin{align*}
\sg(JB') & = \sg(R^{ac_3-ab_3}S^{ac_2}T^{ab_3},S^{bd},R^{cb_1}T^{cd-cb_1})\\
& = \sg(R^{ac_3-ab_3}T^{ab_3},S^{bd-ac_2},R^{cb_1}T^{cd-cb_1})\\
& = \sg(R^{ac_3-ab_3},S^{bd-ac_2},R^{cb_1}T^{cd-cb_1-ab_3})\\
& = \sg(R^{ac_3-ab_3-cb_1},S^{bd-ac_2},T^{cd-cb_1-ab_3})\\
& = \delta\left(\frac{\alpha(t)}{\lambda},\frac{\beta(t)}{\lambda},\frac{\gamma(t)}{\lambda}\right).
\end{align*}
As in the case where $F$ is of type I we treated only one of the possible cases. Again all other cases end up in the same result by definition of $\alpha$, $\beta$ and $\gamma$.

By Proposition \ref{deltadim} (ii) we have
\begin{align*}
 \Dim_k(B'/JB') &= \frac{1}{4}\cdot\left(Q(ad,bd,cd)+\delta(ac_3-ab_3-cb_1,bd-ac_2,cd-cb_1-ab_3)^2\right)\\
&= \frac{1}{4}\cdot \left(d^2\cdot Q(a,b,c)+\left(\delta\left(\frac{\alpha(t)}{\lambda},\frac{\beta(t)}{\lambda},\frac{\gamma(t)}{\lambda}\right)\right)^2\right).
\end{align*}
Multiplying by $\tfrac{\lambda^2}{d}$ gives the result.
\end{proof}

We obtain \cite[Theorem 2.3]{irred} as a corollary.

\begin{cor}\label{irredmon}
$$\mu(s,s,s)=\frac{3d}{4}s^2+\frac{\lambda^2}{4d}\delta\left(\frac{(a_1+b_2-d)s}{\lambda},\frac{(a_1+c_3-d)s}{\lambda},\frac{(b_2+c_3-d)s}{\lambda}\right)^2.$$
\end{cor}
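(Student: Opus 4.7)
My plan is to obtain the corollary as a direct specialization of Theorem \ref{myhkmthm} at $t=(s,s,s)$, where the main work is to simplify the arguments $\alpha(s,s,s)$, $\beta(s,s,s)$, $\gamma(s,s,s)$ using the homogeneity of $F$ and the structural restrictions on regular trinomials.

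First I would compute $Q(s,s,s)=3s^2$, which immediately yields the first term $\tfrac{3d}{4}s^2$ in the formula. Next I would substitute $t_1=t_2=t_3=s$ into the expressions from Definition \ref{defmyhkmthm} to get
\begin{align*}
\alpha'(s,s,s) &= s(d-b_3-c_2-c_1-b_1),\\
\beta'(s,s,s) &= s(d-c_1-a_2-c_2-a_3),\\
\gamma'(s,s,s) &= s(d-a_2-b_3-b_1-a_3).
\end{align*}
Using the three homogeneity identities $a_1+a_2+a_3=b_1+b_2+b_3=c_1+c_2+c_3=d$ to eliminate pairs like $b_1+b_3 = d-b_2$ and $c_1+c_2 = d-c_3$, these simplify to
$$\alpha'(s,s,s)=(b_2+c_3-d)s,\quad \beta'(s,s,s)=(a_1+c_3-d)s,\quad \gamma'(s,s,s)=(a_1+b_2-d)s.$$

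Then I would verify that all three quantities are nonnegative by invoking the classification of regular trinomials recalled before Definition \ref{defmyhkmthm}. In type I we have $a_1,b_2,c_3>\tfrac{d}{2}$, so each of $a_1+b_2-d$, $a_1+c_3-d$, $b_2+c_3-d$ is strictly positive. In type II we have $a_1=d$ together with $b_2,c_3>\tfrac{d}{2}$, which again makes all three expressions positive. Therefore $\alpha_-=\beta_-=\gamma_-=0$ in both cases, so the recipe in Definition \ref{defmyhkmthm} collapses (type I gives $\alpha=\alpha_+$, $\beta=\beta_+$, $\gamma=\gamma_+$; type II does the same once the vanishing of $\alpha_-,\beta_-,\gamma_-$ is inserted) to $\alpha(s,s,s)=\alpha'(s,s,s)$, and analogously for $\beta$ and $\gamma$.

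Finally I would note that $\delta$ is symmetric in its three arguments, since by its very definition it measures the syzygy gap of an ideal generated by $X^{a_1}$, $Y^{a_2}$, $(X+Y)^{a_3}$, and $\sg$ depends only on the ideal, not on the ordering of the generators. This lets me rearrange the triple $(b_2+c_3-d,\,a_1+c_3-d,\,a_1+b_2-d)$ into the order displayed in the corollary. Plugging everything into Theorem \ref{myhkmthm} then yields the claimed formula. The whole argument is essentially bookkeeping; the only ``obstacle'' is keeping the type~I/type~II case distinction straight while checking the sign of each $\alpha',\beta',\gamma'$, but the regularity hypothesis makes this routine.
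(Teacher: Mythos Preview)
Your proposal is correct and is precisely the specialization the paper has in mind; the paper itself gives no proof beyond labeling the statement a corollary of Theorem~\ref{myhkmthm}. One small quibble: the fact that $\sg$ depends only on the ideal shows only that $\sg(F_1,F_2,F_3)$ is invariant under permuting the $F_i$, not that $\delta(a_1,a_2,a_3)$ is symmetric in the $a_i$ (swapping $a_1$ and $a_3$ changes the ideal); the symmetry you need follows instead from suitable linear automorphisms of $k[X,Y]$, or directly from the manifestly symmetric formula in Han's Theorem~\ref{hansthm}.
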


With $t=(a,b,c)\in\N^3$ in Theorem \ref{myhkmthm} one obtains

\begin{cor}\label{myhkmcor}
$$\HKM\left(\left(X^a,Y^b,Z^c\right),k[X,Y,Z]/(F)\right)=\frac{d}{4}\cdot Q(a,b,c)+\frac{\lambda^2}{4d}\left(\delta\left(\frac{\alpha(a,b,c)}{\lambda},\frac{\beta(a,b,c)}{\lambda},\frac{\gamma(a,b,c)}{\lambda}\right)\right)^2.$$ 
\end{cor}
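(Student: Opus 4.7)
\begin{Proof}[Proof proposal]
The plan is to obtain the corollary as a direct specialization of Theorem \ref{myhkmthm} to integer arguments, where the continuous extension $\mu$ coincides by construction with the Hilbert-Kunz multiplicity. No new ingredients should be needed beyond bookkeeping.

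First I would recall from Setup \ref{setupmu} that for a standard-graded domain $R$ and an $R_+$-primary ideal $J = (x_1, \dots, x_n)$ with all $x_i$ non-zero homogeneous, the function $\mu$ was defined on $\mathbb{N}^n$ as $\mu(l) = \HKM(J(l), R)$, with $J(l) = (x_1^{l_1}, \dots, x_n^{l_n})$. In the present setting $R = k[X,Y,Z]/(F)$ with $F$ a regular irreducible trinomial, and $J = (X, Y, Z)$, so that $J((a,b,c)) = (X^a, Y^b, Z^c)$, giving $\mu(a,b,c) = \HKM((X^a, Y^b, Z^c), R)$ directly on the nose, without invoking the continuous extension of Lemma \ref{muconti}.

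Next I would simply substitute $t = (a, b, c) \in \mathbb{N}^3$ into Theorem \ref{myhkmthm}. The right-hand side is already formulated for arbitrary $t \in [0,\infty)^3$; in particular the auxiliary quantities $\alpha(t)$, $\beta(t)$, $\gamma(t)$ defined in Definition \ref{defmyhkmthm} are natural numbers when $t \in \mathbb{N}^3$, so $\alpha(a,b,c)$, $\beta(a,b,c)$, $\gamma(a,b,c)$ are well-defined integer inputs to $\delta$. The quadratic form $Q$ and the integer $\lambda$ are unchanged.

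There is no obstacle to speak of: this is essentially a notational rewriting of Theorem \ref{myhkmthm} at integer points. The one thing I would double-check is the case distinction hidden in Definition \ref{defmyhkmthm} (type I vs. type II, and the signs of $\alpha'$, $\beta'$, $\gamma'$), to confirm that the corresponding case analysis performed inside the proof of Theorem \ref{myhkmthm} indeed specializes correctly for integer triples; this was already handled in the proof of the theorem by collecting all sign patterns into the single expression $\delta(\alpha/\lambda, \beta/\lambda, \gamma/\lambda)$, so no further work is required.
\end{Proof}
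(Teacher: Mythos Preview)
Your proposal is correct and matches the paper's approach exactly: the paper simply states ``With $t=(a,b,c)\in\N^3$ in Theorem \ref{myhkmthm} one obtains'' and records the corollary without further argument. Your observation that $\mu(a,b,c)=\HKM((X^a,Y^b,Z^c),R)$ holds by definition on $\N^3$ (so no appeal to the continuous extension is needed) is precisely the point.
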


\begin{rem}
An explicit implementation of Corollary \ref{myhkmcor} is given in Appendix \ref{app.myhkmthm}.
\end{rem}

\begin{cor}\label{deltastrongsemistable}
Let $N$ be the normalization of the projective curve $V_+(F)$. Then the pull-back of $\Syz_{V_+(F)}(X^a,Y^b,Z^c)$ to $N$ is strongly semistable
if and only if $$\delta\left(\frac{\alpha(a,b,c)}{\lambda},\frac{\beta(a,b,c)}{\lambda},\frac{\gamma(a,b,c)}{\lambda}\right)=0.$$
\end{cor}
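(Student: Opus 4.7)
The plan is to combine Theorem \ref{myhkmthm} (equivalently Corollary \ref{myhkmcor}) with the characterization of equality in Trivedi's lower bound (Theorem \ref{trivedi}). By Corollary \ref{myhkmcor} we already have an exact formula
\[
\HKM\left((X^a,Y^b,Z^c),k[X,Y,Z]/(F)\right)=\frac{d}{4}\,Q(a,b,c)+\frac{\lambda^2}{4d}\,\delta\!\left(\frac{\alpha(a,b,c)}{\lambda},\frac{\beta(a,b,c)}{\lambda},\frac{\gamma(a,b,c)}{\lambda}\right)^2,
\]
so it suffices to show that the first summand is exactly Trivedi's lower bound for the ideal $(X^a,Y^b,Z^c)$ on the curve $V_+(F)$.

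First I would apply Theorem \ref{trivedi} with $n=3$, generator degrees $d_1=a$, $d_2=b$, $d_3=c$, and $\Deg(V_+(F))=d$. A short expansion gives
\[
\frac{d}{2}\left(\frac{(a+b+c)^2}{2}-a^2-b^2-c^2\right)=\frac{d}{4}\bigl(2ab+2ac+2bc-a^2-b^2-c^2\bigr)=\frac{d}{4}\,Q(a,b,c).
\]
Thus Trivedi's inequality reads $\HKM((X^a,Y^b,Z^c),R)\geq\tfrac{d}{4}Q(a,b,c)$, with equality if and only if the pull-back of $\Syz_{V_+(F)}(X^a,Y^b,Z^c)$ to the normalization $N$ is strongly semistable.

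Combining this with the exact formula from Corollary \ref{myhkmcor}, equality in Trivedi's bound is equivalent to the vanishing of the non-negative correction term $\tfrac{\lambda^2}{4d}\,\delta(\tfrac{\alpha}{\lambda},\tfrac{\beta}{\lambda},\tfrac{\gamma}{\lambda})^2$, which, since $\lambda,d>0$ and $\delta\geq 0$, is equivalent to
\[
\delta\!\left(\frac{\alpha(a,b,c)}{\lambda},\frac{\beta(a,b,c)}{\lambda},\frac{\gamma(a,b,c)}{\lambda}\right)=0.
\]
This gives the claimed equivalence. There is no real obstacle here: the computation is essentially bookkeeping, the only minor check being that the numerical constant in Trivedi's bound matches the quadratic form $Q$, which I carried out above. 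Hence the corollary is an immediate consequence of the two results already established.
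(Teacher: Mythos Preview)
Your proof is correct and follows essentially the same approach as the paper: apply Theorem \ref{trivedi} to identify strong semistability of the pull-back with equality in the bound $\tfrac{d}{4}Q(a,b,c)$, then compare with the exact formula of Corollary \ref{myhkmcor} to conclude that the nonnegative $\delta$-term must vanish.
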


\begin{proof}
By Theorem \ref{trivedi} the pull-back of $\Syz_{V_+(F)}(X^a,Y^b,Z^c)$ to the normalization $N$ is strongly semistable if and only if 
\begin{align*}
 \HKM\left(\left(X^a,Y^b,Z^c\right),k[X,Y,Z]/(F)\right) &= \frac{d}{2}\cdot\left(\frac{(a+b+c)^2}{2}-a^2-b^2-c^2\right)\\
 &= \frac{d}{4}\cdot Q(a,b,c).
\end{align*}
The claim follows by comparing this Hilbert-Kunz multiplicity with Corollary \ref{myhkmcor}.
\end{proof}

\begin{rem}\label{hkmstgnonstg} Let $R=k[U,V,W]/(f)$ be positively-graded, such that the image $F$ of $f$ under the map $k[U,V,W]\lra k[X,Y,Z],$ 
where $U$, $V$, $W$ are mapped to $X^{\Deg(U)}$, $Y^{\Deg(V)}$, $Z^{\Deg(W)}$ becomes of type I or II.
In this situation we can compute
$$\HKM\left(\left(X^{\Deg(U)},Y^{\Deg(V)},Z^{\Deg(W)}\right),k[X,Y,Z]/(F)\right)$$
with Corollary \ref{myhkmcor}. By Theorem \ref{watyorank} we have to divide this number by the degree of the extension $Q(S):Q(R)$ to get $\HKM(R)$. 
This degree is nothing else but the product $\Deg(U)\cdot\Deg(V)\cdot\Deg(W)$.
\end{rem}

In the next section we will use this remark to discuss the behaviour of the Hilbert-Kunz multiplicity, when one exponent in $f$ tends to infinity and the others are constant. 

\begin{defi}
 With the notations from the previous remark, we call the quasi-homo\-geneous trinomial $f$ \textit{regular} if the homogeneous trinomial $F$ is regular.
\end{defi}

In \cite[Theorem 2.30]{handiss}, Han computed the Hilbert-Kunz multiplicity of diagonal hypersurfaces of dimension two. In view of the last remark, we can 
compute them using Theorem \ref{myhkmthm}.

\begin{cor}\label{hkmdiag}
The Hilbert-Kunz multiplicity of $k[U,V,W]/(U^{d_1}+V^{d_2}+W^{d_3})$ equals
$$\frac{1}{4}\cdot\left(2d_1+2d_2+2d_3-\frac{d_1d_2}{d_3}-\frac{d_1d_3}{d_2}-\frac{d_2d_3}{d_1}+d_1d_2d_3\cdot\delta\left(\frac{1}{d_1},\frac{1}{d_2},\frac{1}{d_3}\right)^2\right).$$
Moreover, if the $\tfrac{1}{d_i}$ do not satisfy the strict triangle inequality, this is $\Min\left\{d_1,d_2,d_3\right\}$.
\end{cor}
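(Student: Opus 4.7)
\begin{Proof}[Proof proposal]
The plan is to reduce to the regular trinomial case covered by Theorem \ref{myhkmthm} via the substitution recipe of Remark \ref{hkmstgnonstg}. Equip $R:=k[U,V,W]/(U^{d_1}+V^{d_2}+W^{d_3})$ with the grading $\Deg(U)=d_2d_3$, $\Deg(V)=d_1d_3$, $\Deg(W)=d_1d_2$, so that the defining relation becomes homogeneous of degree $d:=d_1d_2d_3$. Under the map $k[U,V,W]\to k[X,Y,Z]$ sending $U\mapsto X^{d_2d_3}$, $V\mapsto Y^{d_1d_3}$, $W\mapsto Z^{d_1d_2}$, our relation is taken to the Fermat polynomial $F=X^d+Y^d+Z^d$. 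This $F$ is a regular, irreducible trinomial of type I (all diagonal exponents equal $d>d/2$, all off-diagonal ones vanish), and the exponent matrix is $d\cdot\Id$, so its determinant $d^3=d\lambda$ yields $\lambda=d^2$.

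By Remark \ref{hkmstgnonstg} it suffices to compute
\[
\HKM\!\bigl((X^{d_2d_3},Y^{d_1d_3},Z^{d_1d_2}),k[X,Y,Z]/(F)\bigr)
\]
and then divide by $\Deg(U)\Deg(V)\Deg(W)=d_1^2d_2^2d_3^2$. Applying Corollary \ref{myhkmcor} with $a=d_2d_3$, $b=d_1d_3$, $c=d_1d_2$, the Definition \ref{defmyhkmthm} simplifies drastically because $F$ has only $a_1=b_2=c_3=d$ non-zero: one finds $\alpha'(a,b,c)=ad$, $\beta'(a,b,c)=bd$, $\gamma'(a,b,c)=cd$, all non-negative, hence
\[
\frac{\alpha(a,b,c)}{\lambda}=\frac{1}{d_1},\quad \frac{\beta(a,b,c)}{\lambda}=\frac{1}{d_2},\quad \frac{\gamma(a,b,c)}{\lambda}=\frac{1}{d_3}.
\]
A direct expansion of $\tfrac{d}{4}Q(d_2d_3,d_1d_3,d_1d_2)/(d_1^2d_2^2d_3^2)$ gives the first three summands $\tfrac{1}{4}(2d_1+2d_2+2d_3-\tfrac{d_2d_3}{d_1}-\tfrac{d_1d_3}{d_2}-\tfrac{d_1d_2}{d_3})$, while the $\delta$-contribution $\tfrac{\lambda^2}{4d}\delta(\cdot)^2/(d_1^2d_2^2d_3^2)=\tfrac{d_1d_2d_3}{4}\delta(1/d_1,1/d_2,1/d_3)^2$ produces the remaining summand in the claimed formula.

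For the supplement, assume without loss of generality $1/d_1\geq 1/d_2+1/d_3$, equivalently $d_1=\min\{d_1,d_2,d_3\}$. Han's Theorem \ref{hansthm} then gives $\delta(1/d_1,1/d_2,1/d_3)=1/d_1-1/d_2-1/d_3$. Squaring, multiplying by $d_1d_2d_3$, and substituting into the formula, the cross-terms cancel against $-\tfrac{d_2d_3}{d_1}-\tfrac{d_1d_3}{d_2}-\tfrac{d_1d_2}{d_3}$ and against $2d_2+2d_3$, leaving only $\tfrac{1}{4}(4d_1)=d_1$. The only step that requires any thought is bookkeeping the definitions of $\alpha,\beta,\gamma$ in the type-I case and verifying that all $\alpha',\beta',\gamma'$ are non-negative here (so $\alpha_-=\beta_-=\gamma_-=0$); everything else is a routine algebraic simplification.
\end{Proof}
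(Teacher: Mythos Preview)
Your proof is correct and follows essentially the same route as the paper: you apply Remark \ref{hkmstgnonstg} with the grading $\Deg(U)=d_2d_3$, $\Deg(V)=d_1d_3$, $\Deg(W)=d_1d_2$, pass to the Fermat polynomial $X^d+Y^d+Z^d$ with $d=d_1d_2d_3$, read off $\lambda=d^2$ and $\alpha/\lambda=1/d_1$, $\beta/\lambda=1/d_2$, $\gamma/\lambda=1/d_3$, and divide by $d_1^2d_2^2d_3^2$. Your treatment is in fact more detailed than the paper's, which merely records these values and asserts the result; in particular you spell out the verification that $\alpha',\beta',\gamma'\ge 0$ and carry out the algebraic cancellation for the supplement, which the paper dismisses as ``clear from the first''.
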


\begin{proof}
The second statement is clear from the first. Let $\Deg(U)=d_2d_3$, $\Deg(V)=d_1d_3$ and $\Deg(W)=d_1d_2$. 
Then we get $d=d_1d_2d_3$, $\alpha=d_1d_2^2d_3^2$, $\beta=d_1^2d_2d_3^2$, $\gamma=d_1^2d_2^2d_3$ and $\lambda=d_1^2d_2^2d_3^2$. 
Dividing $$\HKM\left(\left(X^{d_2d_3},Y^{d_1d_3},Z^{d_1d_2}\right),k[X,Y,Z]/\left(X^d+Y^d+Z^d\right)\right)$$ by $\Deg(U)\cdot\Deg(V)\cdot\Deg(W)=d_1^2d_2^2d_3^2$, we get the result.
\end{proof}

\begin{defi}\label{defdegregnonst}
 For any $P$, $Q\in\N\setminus\{0,1\}$, we say that $k[U,V,W]/(U^P+V^Q+UVW)$ has a \textit{singularity of type $T_{PQ\infty}$}.
\end{defi}

\begin{exa}\label{lastexahan}
We compute the Hilbert-Kunz multiplicities of $R:=k[U,V,W]/(f)$ of type ADE and $T_{PQ\infty}$. We will denote the degrees of $U$, $V$, $W$ and $f$ by $a$, $b$, $c$ and $d$.
\begin{itemize}
\item[$A_n$:] Since we need a trinomial, we use $f:=U^{n+1}+V^2+W^2$ in this computation, hence we have to restrict to odd characteristics. The arguments 
of $\delta$ in Corollary \ref{hkmdiag} are $\tfrac{1}{2}$, $\tfrac{1}{2}$ and $\tfrac{1}{n+1}$. It is easy to see $\delta=0$ if $p$ is odd. This gives
$$
\HKM(R) = \frac{1}{4}\left(2n+10-2(n+1)-\frac{4}{n+1}\right) = 2-\frac{1}{n+1}.
$$
\item[$D_n$:] Let $f=U^2+V^{n-1}+VW^2$ with $n\geq 4$ and $a=n-1$, $b=2$, $c=n-2$. Then $d=2n-2$, $F=X^{2n-2}+Y^{2n-2}+Y^2Z^{2n-4}$, $\lambda=4(n-1)(n-2)$, $\alpha=2(n-1)(n-2)$, $\beta=2(n-1)$ and $\gamma=2(n-1)(n-2)$.
The arguments of $\delta$ in Corollary \ref{myhkmcor} are $\tfrac{1}{2},\tfrac{1}{2(n-2)},\tfrac{1}{2},$ hence $\delta=0$ as above (provided $p$ is odd). We get
\begin{align*}
\HKM(R) &= \frac{\frac{2n-2}{4}\left(2\cdot((n-1)(n-2)+2(2n-3))-((n-2)^2+2^2+(n-1)^2)\right)}{2(n-1)(n-2)}\\
 &= 2-\frac{1}{4(n-2)}.
\end{align*}
If $p=2$, we have $\delta\left(\tfrac{1}{2},\tfrac{1}{2(n-2)},\tfrac{1}{2}\right)=\tfrac{1}{2\cdot (n-2)}$ and $\HKM(R)=2$.
\item[$E_6$:] Let $f=U^2+V^3+W^4$. By Corollary \ref{hkmdiag} and Example \ref{deltahalb} we get
$$
\HKM(R) = \frac{1}{4}\left(18-\frac{3}{2}-\frac{8}{3}+24\cdot\delta\left(\frac{1}{2},\frac{1}{3},\frac{1}{4}\right)^2\right) = 2-\frac{1}{24},
$$
if $p\geq 5$. If $p\leq 3$, we have $\delta=\tfrac{1}{12}$ and $\HKM(R)=2$.
\item[$E_7$:] Let $f=U^2+V^3+VW^3$, $a=9$, $b=6$, $c=4$ and $d=18$. Then $F=X^{18}+Y^{18}+Y^6Z^{12}$, $\alpha=6\cdot 18$, $\beta=3\cdot 18$, $\gamma=4\cdot 18$ and $\lambda=12\cdot 18$.
The arguments of $\delta$ are $\tfrac{1}{2},\tfrac{1}{3},\tfrac{1}{4}$. By Example \ref{deltahalb} we get $\delta=0$ if $p\geq 5$ and
$$
\HKM(R) = \frac{\frac{18}{4}\left(2\cdot(9\cdot 6+9\cdot 4+6\cdot 4)-(9^2+6^2+4^2)\right)}{9\cdot 6\cdot 4} = 2-\frac{1}{48}.
$$
If $p\leq 3$, we have $\delta=\tfrac{1}{12}$ and $\HKM(R)=2$.
\item[$E_8$:] Let $f=U^2+V^3+W^5$. By Corollary \ref{hkmdiag} and Example \ref{deltahalb} for $p\geq 7$ we get
$$
\HKM(R) = \frac{1}{4}\left(20-\frac{6}{5}-\frac{10}{3}-\frac{15}{2}+30\cdot\delta\left(\frac{1}{2},\frac{1}{3},\frac{1}{5}\right)^2\right) = 2-\frac{1}{120}.
$$
If $p\leq 5$, one gets $\delta=\tfrac{1}{30}$ and $\HKM(R)=2$.
\item[$T_{PQ\infty}$:] Let $f=U^P+V^Q+UVW$, $a=Q$, $b=P$, $c=PQ-P-Q$ and $d=PQ$. To get $c\geq 1,$ we assume $P+Q>4$. Then $F=X^{PQ}+Y^{PQ}+X^QY^PZ^{PQ-P-Q}$, $\alpha=(Q-2)PQ$, $\beta=(P-2)PQ$, $\gamma=\lambda=(PQ-P-Q)PQ$. 
Because $$(P-2)(Q-2)\geq 0 \Longleftrightarrow \frac{P-2}{PQ-P-Q}+\frac{Q-2}{PQ-P-Q}\leq 1,$$
we see that the arguments of $\delta$ do not satisfy the strict triangle inequality and hence 
$$
\HKM(R) = \frac{3P^2Q^2-4P^2Q-4PQ^2+4PQ}{PQ(PQ-P-Q)} = 3-\frac{P+Q-4}{PQ-P-Q}
$$
in all characteristics. Since $4<P+Q$ and $\tfrac{P+Q-4}{PQ-P-Q}\leq 1$, this number is in the interval $[2,3)\cap\Q$.
\end{itemize}

The computations in the ADE situation recover Theorem \ref{watyogroup} with one exception: To compute the Hilbert-Kunz multiplicity of a surface ring of type 
$A_n$ we had to restrict to odd characteristics, which is not necessary in Theorem \ref{watyogroup} if $n+1$ is odd.
\end{exa}

We always assumed that $F$ is regular, meaning that all singularities of $V_+(F)$ have multiplicity strictly smaller than $\tfrac{d}{2}$. The Hilbert-Kunz 
multiplicity of an irregular trinomial was computed by Trivedi in \cite[Theorem 1.3]{tri2}.

\begin{thm}[Trivedi]
Let $C$ be a singular irreducible plane curve of degree $d$ defined over an algebraically closed field of characteristic $p>0$. Assume that $C$ has a singular 
point of multiplicity $r\geq \tfrac{d}{2}$. Let $R$ be the homogeneous coordinate ring of $C$. Then
$$\HKM(R)=\frac{3d}{4}+\frac{(2r-d)^2}{4d}.$$
\end{thm}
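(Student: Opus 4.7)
After a linear change of coordinates, I may assume that the singular point of multiplicity $r$ on $C=\Proj(R)$ is $P=(1:0:0)$, so that $F\in(Y,Z)^{r}$. Let $\pi\colon\widetilde C\to C$ denote the normalization. The idea is to extend the Brenner-Trivedi computation of Theorem \ref{holger} to the normalization, so that $\HKM(R)$ can be read off the Harder-Narasimhan filtration of the pulled-back syzygy bundle $\pi^{*}\Syz_{C}(X,Y,Z)$ on the smooth curve $\widetilde C$. This bundle has rank two, total degree $-3d$ and slope $-3d/2$; Theorem \ref{trivedi} already tells us that $\HKM(R)\geq 3d/4$ with equality iff the pull-back is strongly semistable, so the task is to locate a maximal destabilizing line subbundle and to evaluate the general slope formula.

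The destabilizing subbundle should come from the high multiplicity at $P$. Because $Y,Z$ generate $\mathfrak{m}_{P}$ locally at $P$, the inverse-image ideal $\pi^{-1}(\mathfrak{m}_{P})\cdot\Oc_{\widetilde C}$ is of the form $\Oc_{\widetilde C}(-D)$ for an effective divisor $D$ supported on $\pi^{-1}(P)$, and a standard computation with local multiplicities gives $\deg D=r$. Consequently $\pi^{*}Y=s_{D}\cdot Y'$ and $\pi^{*}Z=s_{D}\cdot Z'$ with $Y',Z'\in H^{0}(\widetilde C,\pi^{*}\Oc_{C}(1)(-D))$, where $s_{D}$ is the canonical section of $\Oc_{\widetilde C}(D)$. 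The triple $(0,Z',-Y')$ then defines a nonzero global section of $\pi^{*}\Syz_{C}(X,Y,Z)\otimes\pi^{*}\Oc_{C}(2)(-D)$, which corresponds to an inclusion of the line bundle $L:=\pi^{*}\Oc_{C}(-2)(D)$ of degree $r-2d$ into $\pi^{*}\Syz_{C}(X,Y,Z)$. Since $\mu(L)=r-2d\geq -3d/2$ exactly when $r\geq d/2$, the hypothesis on the multiplicity of $P$ is precisely what forces destabilization.

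The main obstacle is showing that $L$ is in fact the \emph{saturated} maximal destabilizing subsheaf, i.e.\ that $\mu_{\max}(\pi^{*}\Syz_{C}(X,Y,Z))=r-2d$, so that the quotient $\pi^{*}\Syz_{C}(X,Y,Z)/L$ is locally free. Locally, this reduces to checking that $Y'$ and $Z'$ have no common zero on $\widetilde C$: outside $\pi^{-1}(P)$ they are pullbacks of sections of $\Oc_{C}(1)$ that do not vanish simultaneously (any common zero there would be another singular point of the irreducible curve), while at the preimages of $P$ the construction of $D$ as the divisor controlling the simultaneous vanishing of $\pi^{*}Y$ and $\pi^{*}Z$ ensures that the reduced pair $(Y',Z')$ has already been maximally divided out. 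I will need to verify this saturation carefully, together with the multiplicity-theoretic identity $\deg D=r$; both are local assertions in the analytic local ring at $P$ and its normalization. Once saturation is established, the Harder-Narasimhan filtration $0\subset L\subset\pi^{*}\Syz_{C}(X,Y,Z)$ is automatically \emph{strong}, because both factors are line bundles on a smooth curve and line bundles are strongly stable.

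With the strong filtration at hand, the slopes on $\widetilde C$ are $\mu_{1}=r-2d$ and $\mu_{2}=-d-r$, so that the normalized slopes of the Brenner-Trivedi formula are $\nu_{1}=(2d-r)/d$ and $\nu_{2}=(d+r)/d$, with $r_{1}=r_{2}=1$ and $\sum d_{j}^{2}=3$. Plugging in and simplifying gives
$$\HKM(R)=\frac{d}{2}\left(\frac{(2d-r)^{2}+(d+r)^{2}}{d^{2}}-3\right)=\frac{d^{2}-dr+r^{2}}{d}=\frac{3d}{4}+\frac{(2r-d)^{2}}{4d},$$
which is the claimed formula; the boundary case $r=d/2$ recovers the strongly semistable value $3d/4$, in agreement with the equality case of Theorem \ref{trivedi}.
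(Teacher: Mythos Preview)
The paper does not prove this theorem; it is simply quoted from Trivedi \cite[Theorem 1.3]{tri2}. Your outline is essentially Trivedi's approach: pull the syzygy bundle back to the normalization, exhibit a destabilising line subbundle coming from the high multiplicity, and read off $\HKM(R)$ from the resulting slopes. The final numerics are correct.

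There are, however, two genuine gaps in your argument. First, you conflate ``saturated'' with ``maximal destabilising''. Checking that $Y'$ and $Z'$ have no common zero shows only that the quotient $Q=\pi^{*}\Syz_{C}(X,Y,Z)/L$ is locally free; it does not by itself show $\mu_{\max}=r-2d$. For $r>d/2$ the missing step is short: $Q$ is a line bundle of degree $-d-r<-3d/2<r-2d$, so any line subbundle $L'$ of degree $>\deg L$ cannot map nontrivially to $Q$ and hence lies in $L$, a contradiction. For $r=d/2$ your filtration has both quotients of slope $-3d/2$, so $0\subset L\subset\pi^{*}\Syz$ is \emph{not} the Harder--Narasimhan filtration; rather, $\pi^{*}\Syz$ is an extension of two line bundles of equal slope, hence semistable, and the same holds after every Frobenius pull-back, giving strong semistability and $\HKM(R)=3d/4$ by Theorem \ref{trivedi}. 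You should treat these two cases separately.

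Second, and more seriously, you take for granted that the formula of Theorem \ref{holger} (valid when $\Proj(R)$ is smooth) can be computed on the normalization when $C$ is singular. Theorem \ref{trivedi} as stated here gives only the lower bound and its equality case, not the full slope formula in the non-semistable situation. Establishing that $\HKM(R)$ agrees with the Brenner--Trivedi expression evaluated on $\widetilde C$ is precisely the technical heart of \cite{tri2}, and it requires comparing $R$ with its normalization at the level of Hilbert--Kunz functions. Without this, your computation of $\HKM(R)$ from $\nu_{1},\nu_{2}$ is unjustified.
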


\section[The behaviour of the Hilbert-Kunz multiplicity in families]{The behaviour of the Hilbert-Kunz multiplicity in families \except{toc}{of surface rings given by certain quasi-homogeneous trinomials}}\label{quasitrinomials}
In this section we want to apply Theorem \ref{myhkmthm} to study the behaviour of the Hilbert-Kunz multiplicity in families of surface rings defined by irreducible, 
regular, quasi-homogeneous trinomials (IRQT for short).
To be more precise we study the Hilbert-Kunz multiplicity of rings $R:=k[U,V,W]/(F)$, where $F$ is a trinomial of the type
$$U^{a_1}V^{a_2}+V^{b_2}W^{b_3}+U^{c_1}W^{c_3}\qquad\text{or}\qquad U^{a_1}+U^{b_1}V^{b_2}W^{b_3}+V^{c_2}W^{c_3},$$
homogeneous of degree $d$ with respect to some positive grading $\Deg(U)=a$, $\Deg(V)=b$, $\Deg(W)=c$ and regular in the sense of Definition \ref{defdegregnonst}.
Now we are interested in the behaviour of the Hilbert-Kunz multiplicity of $R$, when $a_1$, $b_2$ or $c_3$ grows.

\begin{defi}
With the notations above let

\begin{tabular}{ll}
$F_1(a_2,b_1,b_3,c_2,c_3)$ & be the family of IRQT of the form $U^LV^{a_2}+V^{b_2}W^{b_3}+U^{c_1}W^{c_3}$,\\
$F_U(b_1,b_2,b_3,c_2,c_3)$ & be the family of IRQT of the form $U^L+U^{b_1}V^{b_2}W^{b_3}+V^{c_2}W^{c_3}$,\\
$F_V(a_1,b_1,b_3,c_2,c_3)$ & be the family of IRQT of the form $U^{a_1}+U^{b_1}V^LW^{b_3}+V^{c_2}W^{c_3}$,\\
$F_W(a_1,b_1,b_2,b_3,c_2)$ & be the family of IRQT of the form $U^{a_1}+U^{b_1}V^{b_2}W^{b_3}+V^{c_2}W^L$.
\end{tabular}

We will write $F_1$, $F_U$, $F_V$, $F_W$ for short.
\end{defi}

In what follows, we will always assume that the arguments of $\delta$ in Corollary \ref{myhkmcor} do not satisfy the strict triangle inequality for $L$ large enough.

\subsection{The family $F_1$}
The trinomials $$U^LV^{a_2}+V^{b_2}W^{b_3}+U^{c_1}W^{c_3}$$ are quasi-homogeneous in the grading
\begin{align*}
a & = b_2c_3-a_2c_3+a_2b_3\\
b & = (c_3-b_3)L+b_3c_1\\
c & = b_2L-b_2c_1+a_2c_1
\end{align*}
of degree $$d=b_2c_3L+a_2b_3c_1.$$

Note that $b_2>a_2$ and $c_3>b_3$ give $a$, $b>0$. Assume $L>c_1$ to get $c>0$. The regularity conditions $aL$, $bb_2$, $cc_3>d/2$ translate for large $L$ to $b_2c_3>2a_2(c_3-b_3)$ and $c_3>2b_3$.

We have (recall Definition \ref{defmyhkmthm})
$$\begin{aligned}
\alpha' & = ad-acb_3-abc_1 && = ab(b_2-c_1),\\
\beta' & = bd-bca_2-abc_1 && = bc(c_3-a_2),\\
\gamma' & = cd-acb_3-bca_2 && = ac(L-b_3).
\end{aligned}$$
Since $\gamma'$ is a degree two polynomial in $L$ with leading coefficient $(b_2-a_2)b_2c_3+a_2b_2b_3>0$ and $\alpha'$ depends linearly on $L$, we get 
$\gamma'>0$ and $\alpha'<\gamma'$ for large $L$. 
\begin{center}
\begin{table}[h]
$$\begin{array}{c|c||c|c|c||c}
\alpha' & \beta' & \alpha & \beta & \gamma & \delta\cdot\lambda \\ \hline
+ & + & \alpha' & \beta' & \gamma' & \gamma'-\alpha'-\beta'\text{ or }\beta'-\alpha'-\gamma' \\
+ & - & \alpha' & 0 & \gamma'-\beta' & \gamma'-\alpha'-\beta'\text{ or }\beta'-\alpha'-\gamma'\\
- & + & 0 & \beta'-\alpha' & \gamma' & \gamma'+\alpha'-\beta'\text{ or }\beta'-\alpha'-\gamma'\\
- & - & 0 & -\alpha' & \gamma'-\beta' & \gamma'+\alpha'-\beta'\text{ or }\beta'-\alpha'-\gamma'.
\end{array}$$
\caption{The table shows the possible values of $\delta\cdot\lambda$ depending the signs of $\alpha'$, $\beta'$.}
\label{tabdellamf1}
\end{table}
\end{center}
According to Table \ref{tabdellamf1} the value of $\delta\cdot \lambda$ in Theorem \ref{myhkmthm} has the shape $\gamma'-\beta'-\alpha'$ 
or $\pm(\beta'-\gamma'-\alpha')$. In the first case we have $\delta\cdot\lambda=cd-bd-ad+2abc_1$, which gives the Hilbert-Kunz multiplicity
\begin{align*}
\HKM(R) &= \frac{d\cdot(2ab+2ac+2bc-a^2-b^2-c^2)}{4abc}+\frac{(cd-bd-ad+2abc_1)^2}{4abcd}\\
 &= \frac{d}{c}+\frac{abc_1^2}{cd}+\frac{c_1}{c}\cdot(c-b-a)\\
 &= c_1+\frac{d-ac_1}{c}+\frac{abc_1^2}{cd}-\frac{bc_1}{c}\\
 &= c_1+c_3+c_1\frac{b_3-c_3}{b_2}-\frac{b_3c_1^2}{b_2}\frac{a}{d}.
\end{align*}
In this case $L$ occurs only in $d$, hence the Hilbert-Kunz multiplicity is increasing in $L$ and tends to $c_1+c_3+c_1\frac{b_3-c_3}{b_2}$.

Similarly, in the second case we have $\delta\cdot\lambda=\pm(bd-cd-ad+2acb_3)$ and
$$\HKM(R)=\frac{d+b_3(b-c)}{b}+\frac{acb_3^2-adb_3}{bd}=b_2+b_3-b_2b_3\frac{a}{d}.$$
Here the Hilbert-Kunz multiplicity is increasing in $L$ and tends to $b_2+b_3$.

By the symmetry of the problem we get in the case $b_2=L$:
$$\begin{aligned}
\HKM(R) & = c_1+c_3-c_1c_3\frac{b}{d} && \text{in the cases } \lambda\cdot\delta=\pm(\gamma'-\beta'-\alpha')\\
\HKM(R) & = a_1+a_2+a_2\frac{c_1-a_1}{c_3}-\frac{a_2^2c_1}{c_3}\frac{b}{d} && \text{in the case } \lambda\cdot\delta=\alpha'-\beta'-\gamma'
\end{aligned}$$
and in the case $c_3=L$:
$$\begin{aligned}
\HKM(R) & = a_1+a_2-a_1a_2\frac{c}{d} && \text{in the cases } \lambda\cdot\delta=\pm(\alpha'-\beta'-\gamma')\\
\HKM(R) & = b_2+b_3+b_3\frac{a_2-b_2}{a_1}-\frac{a_2b_3^2}{a_1}\frac{c}{d} && \text{in the case } \lambda\cdot\delta=\beta'-\alpha'-\gamma'.
\end{aligned}$$

\begin{exa}
Consider the family $(U^L+V^{d_2}+W^{d_3})_{L\gg 0}$ of polynomials defining diagonal hypersurfaces with $d_2\neq d_3$. The above computations show that their Hilbert-Kunz multiplicity tends to $\Min(d_2,d_3)$ for $L\ra\infty$.
This is exactly $\HKM\left(k[V,W]/\left(V^{d_2}+W^{d_3}\right)\right)$. We cannot apply the above computation in the case $d_2=d_3$, since then the triple 
$\left(\tfrac{1}{L},\tfrac{1}{d_2},\tfrac{1}{d_2}\right)$ always satisfies the strict triangle inequality.
\end{exa}

Next, we prove that the limits for $L\ra\infty$ above appear as the Hilbert-Kunz multiplicity of a binomial hypersurface attached to the family $F_1$.

\begin{defi} We set
$\lim_{L\ra\infty}F_1:=V^{b_2}W^{b_3}+U^{c_1}W^{c_3}=W^{b_3}(V^{b_2}+U^{c_1}W^{c_3-b_3}).$
\end{defi}

\begin{rem}
By Theorem \ref{thmhkmexists} (ii) and Remark \ref{kunzbsp} we have
$$\HKM\left(k[U,V,W]/(W^{b_3}\cdot(V^{b_2}+U^{c_1}W^{c_3-b_3})\right)=\begin{cases}c_1+c_3+c_1\frac{b_3-c_3}{b_2} & \text{if }c_1,c_3-b_3<b_2,\\
                                                                              b_2+b_3 & \text{otherwise.}
\end{cases}$$
\end{rem}

\begin{lem}
We obtain the following limit behaviour of the Hilbert-Kunz multiplicity in the family $F_1$
$$\lim_{L\ra\infty}\HKM\left(k[U,V,W]/(U^LV^{a_2}+V^{b_2}W^{b_3}+U^{c_1}W^{c_3})\right)=\HKM\left(\lim_{L\ra\infty}F_1\right).$$
\end{lem}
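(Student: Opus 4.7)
The plan is to read off $\lim_{L\to\infty}\HKM(R)$ from the explicit formulas already computed for the family $F_1$, and then match these with the right-hand side given by the Remark immediately preceding the lemma. Writing $R=k[U,V,W]/(U^LV^{a_2}+V^{b_2}W^{b_3}+U^{c_1}W^{c_3})$, Theorem \ref{myhkmthm} via Remark \ref{hkmstgnonstg} yields one of the two closed forms
\[
\HKM(R)=c_1+c_3+c_1\frac{b_3-c_3}{b_2}-\frac{b_3c_1^2}{b_2}\cdot\frac{a}{d}
\quad\text{or}\quad
\HKM(R)=b_2+b_3-b_2b_3\cdot\frac{a}{d},
\]
depending on whether $\lambda\cdot\delta=\gamma'-\beta'-\alpha'$ or $\lambda\cdot\delta=\pm(\beta'-\gamma'-\alpha')$. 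Since $a$ is independent of $L$ while $d=b_2c_3L+a_2b_3c_1$ grows linearly, the term $a/d$ tends to $0$, so the limit equals $c_1+c_3+c_1(b_3-c_3)/b_2$ in the first case and $b_2+b_3$ in the second.

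For the right-hand side I would use the factorisation
\[
\lim_{L\to\infty}F_1=W^{b_3}\bigl(V^{b_2}+U^{c_1}W^{c_3-b_3}\bigr),
\]
together with Kunz's formula for the binomial hypersurface (Remark \ref{kunzbsp}) and additivity of Hilbert-Kunz multiplicities on the two minimal primes (coming from $R/(W)\cong k[U,V]$ being regular and contributing the summand $b_3$). This recovers exactly the piecewise formula from the Remark: $c_1+c_3+c_1(b_3-c_3)/b_2$ if $c_1,c_3-b_3<b_2$, and $b_2+b_3$ otherwise.

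It then remains to match the case distinctions. Here I would observe that the sign of $\alpha'=ab(b_2-c_1)$ is controlled by the comparison $c_1\lessgtr b_2$, while a direct computation shows that the leading $L^2$-coefficient of $\beta'-\gamma'$ is $b_2c_3(c_3-b_3-b_2)$, so that for large $L$ the comparison $\beta'\lessgtr\gamma'$ is controlled by $c_3-b_3\lessgtr b_2$. Walking through the four sign combinations in Table \ref{tabdellamf1} and applying Han's Theorem \ref{hansthm} (i.e.\ taking the largest of $\alpha,\beta,\gamma$ minus the sum of the other two under the non-strict-triangle hypothesis), one verifies that the first closed form occurs precisely when $c_1<b_2$ and $c_3-b_3<b_2$, while the second occurs otherwise, matching the right-hand side exactly.

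The main obstacle will be the bookkeeping of the sign analysis: one has to confirm that for sufficiently large $L$ the triple $(\alpha,\beta,\gamma)$ really does violate the strict triangle inequality in every branch of the case distinction, and one must check consistency along the boundary (e.g.\ $c_1=b_2$ or $c_3-b_3=b_2$), where both closed forms give the common value $b_2+b_3$, so that the piecewise limit is well-defined.
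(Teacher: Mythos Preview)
Your proposal is correct and follows essentially the same approach as the paper: after noting that the boundary cases $c_1=b_2$ and $c_3-b_3=b_2$ give coinciding values, the paper reduces the claim to the equivalence $\delta\cdot\lambda=\gamma'-\alpha'-\beta'\Leftrightarrow c_1,c_3-b_3<b_2$, which it reads off from Table~\ref{tabdellamf1} together with the sign of $\alpha'=ab(b_2-c_1)$ and the sign of the leading coefficient $\LC(\gamma'-\beta')=b_2c_3(b_2+b_3-c_3)$, exactly as you outline. The limits themselves and the formula for $\HKM(\lim_{L\to\infty}F_1)$ are already established in the text preceding the lemma, so the paper's proof is somewhat terser than your reconstruction, but the substance is identical.
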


\begin{proof}
Since the two possible values for $\HKM\left(\lim_{L\ra\infty}F_1\right)$ coincide in the cases $c_1=b_2$ and $c_3-b_3=b_2$, we may assume that both 
equations do not hold. Then we only have to show the equivalence
$$\delta\cdot\lambda=\gamma'-\alpha'-\beta'\Leftrightarrow c_1,c_3-b_3<b_2.$$
By Table \ref{tabdellamf1} the condition $\delta\cdot\lambda=\gamma'-\alpha'-\beta'$ is equivalent to the conditions that $\alpha'$ and the leading 
coefficient of $\gamma'-\beta'$ are positive. Now, the condition $\alpha'>0$ gives $b_2>c_1$ and $\LC(\gamma'-\beta')=b_2c_3(b_2+b_3-c_3)>0$ 
gives $b_2>c_3-b_3$.
\end{proof}

\subsection{The family $F_U$}
The trinomials $$U^L+U^{b_1}V^{b_2}W^{b_3}+V^{c_2}W^{c_3}$$ are quasi-homogeneous in the grading
\begin{align*}
a & = b_2c_3-b_3c_2\\
b & = (c_3-b_3)L-c_3b_1\\
c & = (b_2-c_2)L+b_1c_2
\end{align*}
of degree $$d=aL.$$

Since $c_3>b_3$ and $b_2>c_2$ the degrees are positive. In this case the regularity conditions $aL,bb_2,cc_3>d/2$ translate to the condition 
$$b_2c_3+b_3c_2>\Max(2b_2b_3,2c_2c_3)$$ 
for large values of $L$. We have
\begin{align*}
\alpha' & = ad-ac(b_1+b_3)-abc_2,\\
\beta' & = bd-abc_2,\\
\gamma' & = cd-ac(b_1+b_3).
\end{align*}
In this case $\beta'$ and $\gamma'$ are polynomials in $L$ of degree two with positive leading coefficients and $\alpha'$ depends linearly on $L$. Therefore we may assume $L$ to be big enough such that $\beta',$ $\gamma'\geq 0$ and $\alpha'<\beta',$ $\gamma'$.
\begin{center}
\begin{table}[h]
$$\begin{array}{c||c|c|c||c}
\alpha' & \alpha & \beta & \gamma & \delta\cdot\lambda \\ \hline
+ & \alpha' & \beta' & \gamma' & \gamma'-\alpha'-\beta'\text{ or }\beta'-\alpha'-\gamma' \\
- & 0 & \beta' & \gamma'-\alpha' & \gamma'-\alpha'-\beta'\text{ or }-\gamma'+\alpha'+\beta'.
\end{array}$$
\caption{The table shows the possible values of $\delta\cdot\lambda$ depending the sign of $\alpha'$.}
\label{tabdellamfu}
\end{table}
\end{center}
Thus, $\delta\cdot \lambda$ has the shape $\pm(\gamma'-\beta'-\alpha')$ or $\beta'-\gamma'-\alpha'$. In the first case we have 
$\delta\cdot\lambda=\pm(cd-bd-ad+2abc_2)$, which gives the Hilbert-Kunz multiplicity
$$\HKM(R)=\frac{d+c_2(c-b)}{c}+\frac{abc_2^2-adc_2}{cd}=c_2+c_3-c_2c_3\frac{a}{d}.$$
This is an increasing function in $L$, tending to $c_2+c_3$.

Similarly, in the second case we have $\delta\cdot\lambda=bd-cd-ad+2ac(b_1+b_3)$, which gives the Hilbert-Kunz multiplicity
\begin{align*}
\HKM(R) &= \frac{d+(b_1+b_3)(b-c)}{b}+\frac{ac(b_1+b_3)^2-ad(b_1+b_3)}{bd}\\
 &= b_1+b_2+b_3+b_1\frac{c_2-b_2}{c_3-b_3}+\frac{b_1^2c_3}{b}\frac{c_2-b_2}{c_3-b_3} +\frac{(b_1+b_3)^2}{b}(b_2-c_2+\frac{b_1c_2}{L})-\frac{ab_3+b_1^2c_2}{b}.
\end{align*}
This function tends to $b_1+b_2+b_3+b_1\frac{c_2-b_2}{c_3-b_3}$.

\begin{exa}
 Consider the family $(U^L+UVW+W^Q)_{L\gg 0}$ of $T_{LQ\infty}$ singularities. Since $\alpha=(Q-2)LQ$, $\beta=(Q^2-Q)L^2-Q^2L$ and $\gamma=(L-2)LQ$, we are in the second case above and see that the Hilbert-Kunz multiplicity
is given by
$$3-\frac{1}{Q-1}+\left(3-Q-\frac{1}{Q-1}\right)\frac{1}{LQ-L-Q}.$$
In particular, the Hilbert-Kunz multiplicity is constant for $Q=2$ and strictly increasing in the cases $Q\geq 3$.
\end{exa}

One question concerning the behaviour of the Hilbert-Kunz multiplicity as a function in $L$ is left.

\begin{que}
Assume that we are in the case $\delta\cdot\lambda=bd-cd-ad+2ac(b_1+b_3)$. Are there Hilbert-Kunz functions that are (strictly) decreasing as functions in $L$?
\end{que}

As in the case of the family $F_1$, the limit of the Hilbert-Kunz multiplicity for $L\ra\infty$ appears as Hilbert-Kunz multiplicity of an attached binomial 
hypersurface.

\begin{defi}
Let $\lim_{L\ra\infty}F_U:=U^{b_1}V^{b_2}W^{b_3}+V^{c_2}W^{c_3}=V^{c_2}W^{b_3}(U^{b_1}V^{b_2-c_2}+W^{c_3-b_3}).$
\end{defi}

\begin{rem}
By Theorem \ref{thmhkmexists} (ii) and Remark \ref{kunzbsp} we have
$$\begin{aligned}
 & \HKM\left(k[U,V,W]/(V^{c_2}W^{b_3}\cdot(U^{b_1}V^{b_2-c_2}+W^{c_3-b_3}))\right)\\
= & \begin{cases}
b_1+b_2+b_3+b_1\frac{c_2-b_2}{c_3-b_3} & \text{if }b_1,b_2-c_2<c_3-b_3,\\ 
c_2+c_3 & \text{otherwise.}
\end{cases}
\end{aligned}$$
\end{rem}

\begin{lem}
The limit behaviour of the Hilbert-Kunz multiplicity in the family $F_U$ is given by
$$\lim_{L\ra\infty}\HKM\left(k[U,V,W]/(U^L+U^{b_1}V^{b_2}W^{b_3}+V^{c_2}W^{c_3})\right)=\HKM\left(\lim_{L\ra\infty}F_U\right).$$
\end{lem}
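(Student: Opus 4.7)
The analysis preceding the lemma already computes, for each possible form of $\delta\cdot\lambda$, the asymptotic value of $\HKM(R)$ as $L\to\infty$: either $c_2+c_3$ when $\delta\cdot\lambda=\pm(\gamma'-\alpha'-\beta')$, or $b_1+b_2+b_3+b_1\frac{c_2-b_2}{c_3-b_3}$ when $\delta\cdot\lambda=\beta'-\alpha'-\gamma'$. So the lemma reduces to identifying, in terms of $b_1,b_2,b_3,c_2,c_3$, which branch of Table \ref{tabdellamfu} actually applies for $L\gg 0$, and checking that the resulting dichotomy matches the dichotomy in the preceding remark on $\HKM(\lim_{L\to\infty}F_U)$.

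My first step is to pin down the sign of $\alpha'$ for large $L$. Substituting $d=aL$ and the explicit values of $a$, $b$, $c$ into $\alpha'=ad-ac(b_1+b_3)-abc_2$, a direct expansion shows that the coefficient of $L$ in $\alpha'$ is $a(b_2-c_2)(c_3-b_1-b_3)$. Since $b_2>c_2$, this means $\alpha'>0$ for $L\gg 0$ iff $b_1<c_3-b_3$, and $\alpha'<0$ for $L\gg 0$ iff $b_1>c_3-b_3$. By inspection of the second row of Table \ref{tabdellamfu}, in the case $\alpha'<0$ the only two admissible values of $\delta\cdot\lambda$ are $\gamma'-\alpha'-\beta'$ and $-\gamma'+\alpha'+\beta'$. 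The algebraic identity
\begin{equation*}
-\gamma'+\alpha'+\beta' \;=\; -(\gamma'-\alpha'-\beta')
\end{equation*}
shows these two possibilities have the same square and hence give rise to the same value of $\HKM(R)$, namely $c_2+c_3$ in the limit.

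The second step, in the case $\alpha'>0$, is to decide between $\delta\cdot\lambda=\gamma'-\alpha'-\beta'$ and $\delta\cdot\lambda=\beta'-\alpha'-\gamma'$. Exactly one of $\gamma'\geq\alpha'+\beta'$ and $\beta'\geq\alpha'+\gamma'$ holds for $L\gg 0$ (the third triangle inequality is excluded by assumption), and this is decided by the sign of $\gamma'-\beta'$. Writing $\beta'=ab(L-c_2)$ and $\gamma'=ac(L-(b_1+b_3))$ and expanding, the leading coefficient of $\beta'-\gamma'$ in $L$ is $a[(c_3-b_3)-(b_2-c_2)]$, so $\beta'>\gamma'$ for $L\gg 0$ iff $b_2-c_2<c_3-b_3$. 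In that sub-case $\delta\cdot\lambda=\beta'-\alpha'-\gamma'$ and the asymptotic HKM is the second value; otherwise $\delta\cdot\lambda=\gamma'-\alpha'-\beta'$ and the asymptotic HKM is $c_2+c_3$.

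Combining the two steps, the asymptotic value $b_1+b_2+b_3+b_1\frac{c_2-b_2}{c_3-b_3}$ arises precisely when $b_1<c_3-b_3$ and $b_2-c_2<c_3-b_3$, and the value $c_2+c_3$ arises in every other case; this matches the preceding remark. The boundary cases (equality in either inequality) are harmless, because there $b_1+b_2+b_3+b_1\frac{c_2-b_2}{c_3-b_3}=c_2+c_3$ by a short calculation, so both descriptions agree. The main obstacle is the book-keeping of signs and leading coefficients required to rule out the geometrically improbable subcases of Table \ref{tabdellamfu}; once the two leading-coefficient computations above are in place, the rest is mechanical.
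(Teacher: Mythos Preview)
Your proof is correct and follows essentially the same approach as the paper: dispose of the boundary cases $b_1=c_3-b_3$ and $b_2-c_2=c_3-b_3$ by observing that the two limit values coincide there, then show that $\alpha'>0$ for large $L$ iff $b_1<c_3-b_3$, and that $\beta'>\gamma'$ for large $L$ iff the leading coefficient $a[(c_3-b_3)-(b_2-c_2)]$ of $\beta'-\gamma'$ is positive. The paper's write-up is terser (it simply states the equivalences and records $\beta'-\gamma'=a(c_2+c_3-b_2-b_3)L^2+O(L)$), while you explicitly compute the linear coefficient of $\alpha'$ as $a(b_2-c_2)(c_3-b_1-b_3)$, but the logic is identical.
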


\begin{proof}
Since the two possible values of $\HKM\left(\lim_{L\ra\infty}F_U\right)$ coincide in the cases $b_1=c_3-b_3$ and $b_2-c_2=c_3-b_3$, we may assume that both 
equalities do not hold. According to Table \ref{tabdellamfu} we have
$$\delta\cdot\lambda=\beta'-\alpha'-\gamma'\Leftrightarrow \alpha'>0\text{ and }\Max\{\beta',\gamma'\}=\beta'.$$
The condition $\alpha'>0$ is equivalent to $b_1<c_3-b_3$ and the condition $\Max\{\beta',\gamma'\}=\beta'$ is equivalent to the positivity of the 
leading coefficient of 
$$\beta'-\gamma'=a\cdot (c_2+c_3-b_2-b_3)L^2+O(L).$$
\end{proof}

\subsection{The family $F_V$}
The trinomials $$U^{a_1}+U^{b_1}V^{L}W^{b_3}+V^{c_2}W^{c_3}$$ are quasi-homogeneous in the grading
\begin{align*}
a & = c_3L-b_3c_2\\
b & = a_1c_3-a_1b_3-b_1c_3\\
c & = a_1L-a_1c_2+b_1c_2
\end{align*}
of degree $$d=aa_1.$$

For $L>c_2$ we get positive values for $a$ and $c$. The condition $b>0$ is non-trivial. For large $L$ we get the condition 
$$(*)\qquad a_1c_3>2(a_1b_3+b_1c_3)$$ 
to get a regular polynomial. We have
\begin{align*}
\alpha' & = ad-ac(b_1+b_3)-abc_2,\\
\beta' & = bd-abc_2,\\
\gamma' & = cd-ac(b_1+b_3).
\end{align*}
In this case $\beta'$ depends only linearly on $L$ and $\alpha'$, $\gamma'$ are $O(L^2)$. 
\begin{center}
\begin{table}[h]
$$\begin{array}{c|c|c||c|c|c||c}
\alpha' & \beta' & \gamma' & \alpha & \beta & \gamma & \delta\cdot\lambda \\ \hline
+ & + & + & \alpha' & \beta' & \gamma' & \alpha'-\beta'-\gamma'\text{ or }\gamma'-\alpha'-\beta' \\
+ & + & - & \alpha'-\gamma' & \beta' & 0 & \alpha'-\beta'-\gamma'\text{ or }\beta'-\alpha'+\gamma' \\
+ & - & + & \alpha'-\beta' & 0 & \gamma' & \alpha'-\beta'-\gamma'\text{ or }\gamma'-\alpha'+\beta' \\
+ & - & - & \alpha'-\beta'-\gamma' & 0 & 0 & \alpha'-\beta'-\gamma' \\
- & + & + & 0 & \beta' & \gamma'-\alpha' & \alpha'+\beta'-\gamma'\text{ or }\gamma'-\alpha'-\beta' \\
- & + & - & -\gamma' & \beta' & -\alpha' & \alpha'-\beta'-\gamma'\text{ or }\gamma'-\alpha'-\beta' \\
- & - & + & -\beta' & 0 & \gamma'-\alpha' & \alpha'-\beta'-\gamma'\text{ or }\gamma'-\alpha'+\beta' \\
- & - & - & -\beta'-\gamma' & 0 & -\alpha' & \alpha'-\beta'-\gamma'\text{ or }\gamma'-\alpha'+\beta'.
\end{array}$$
\caption{The table shows the possible values of $\delta\cdot\lambda$ depending the signs of $\alpha'$, $\beta'$, $\gamma'$.}
\label{tabdellamfv}
\end{table}
\end{center}
As we can read off Table \ref{tabdellamfv} the product $\delta\cdot \lambda$ has always the shape 
$\pm(\gamma'-\beta'-\alpha')$ or $\pm(\alpha'-\gamma'-\beta')$. In the first case we have $\delta\cdot\lambda=\pm(cd-bd-ad+2abc_2)$. This gives the 
constant Hilbert-Kunz multiplicity
$$\HKM(R)=\frac{d+c_2(c-a)}{c}+\frac{abc_2^2-bdc_2}{cd}=c_2+c_3-\frac{c_2c_3}{a_1}.$$

Similarly, in the second case we have $\delta\cdot\lambda=\pm(ad-cd-bd)$, which gives the constant Hilbert-Kunz multiplicity
$$\HKM(R)=\frac{d}{a}=a_1.$$

\begin{defi}
Let $\lim_{L\ra\infty}F_V:=U^{a_1}+V^{c_2}W^{c_3}.$
\end{defi}

\begin{rem}
By Theorem \ref{thmhkmexists} (ii) and Remark \ref{kunzbsp} we have
$$\HKM\left(k[U,V,W]/(U^{a_1}+V^{c_2}W^{c_3})\right)=
\begin{cases}
c_2+c_3-\frac{c_2c_3}{a_1} & \text{if }c_2,c_3<a_1,\\
a_1 & \text{otherwise.}
\end{cases}$$
\end{rem}

\begin{lem}
The limit behaviour of the Hilbert-Kunz multiplicity in the family $F_V$ is given by
$$\lim_{L\ra\infty}\HKM\left(k[U,V,W]/(U^{a_1}+U^{b_1}V^{L}W^{b_3}+V^{c_2}W^{c_3})\right)=\HKM\left(\lim_{L\ra\infty}F_V\right).$$
\end{lem}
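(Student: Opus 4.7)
The analysis of the family $F_V$ in this subsection shows that the Hilbert-Kunz multiplicity of $k[U,V,W]/(U^{a_1}+U^{b_1}V^LW^{b_3}+V^{c_2}W^{c_3})$ is independent of $L$ (for $L$ sufficiently large that the standing assumption on failure of the strict triangle inequality is satisfied), taking one of two constant values: $c_2+c_3-c_2c_3/a_1$ when $(\delta\cdot\lambda)^2=(\gamma'-\beta'-\alpha')^2$, and $a_1$ when $(\delta\cdot\lambda)^2=(\alpha'-\beta'-\gamma')^2$. Hence the left hand side of the claimed equality is trivially equal to the value of this constant, and the lemma reduces to matching the resulting dichotomy in the family with that of $\HKM(\lim_{L\to\infty}F_V)$ given by the preceding remark.

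The two expressions $c_2+c_3-c_2c_3/a_1$ and $a_1$ coincide precisely when $c_2=a_1$ or $c_3=a_1$, so in these edge cases both sides of the desired equality equal $a_1$ and the lemma holds trivially. We may therefore assume $c_2\neq a_1$ and $c_3\neq a_1$; the remaining content of the lemma is the equivalence
\[(\delta\cdot\lambda)^2=(\gamma'-\beta'-\alpha')^2\quad\Longleftrightarrow\quad c_2<a_1\text{ and }c_3<a_1.\]

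The crucial computational identities, obtained by direct manipulation of the definitions of $\alpha'$, $\beta'$, $\gamma'$ in terms of $a$, $b$, $c$, $d$ (and using $d=aa_1$), are
\[\beta'=ab(a_1-c_2)\qquad\text{and}\qquad\alpha'-\gamma'=ac(c_3-a_1).\]
Since $a,b,c>0$ for large $L$, these yield $\mathrm{sign}(\beta')=\mathrm{sign}(a_1-c_2)$ and $\mathrm{sign}(\alpha'-\gamma')=\mathrm{sign}(c_3-a_1)$. Moreover $|\alpha'-\gamma'|$ grows like $L^2$ while $|\beta'|$ grows only like $L$, so for large $L$ the magnitude of $\alpha'-\gamma'$ dominates that of $\beta'$.

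Combining these observations with the Type II formulas of Definition \ref{defmyhkmthm} expressing $\alpha,\beta,\gamma$ in terms of $\alpha',\beta',\gamma'$, with Han's Theorem \ref{hansthm}, and with Table \ref{tabdellamfv}, a sign analysis based on $c_3-a_1$ and $a_1-c_2$ (and using the regularity bounds $a_1>2b_1$, $c_3>2b_3$ to exclude incompatible sign patterns of $\alpha'$, $\gamma'$) confirms the desired equivalence. The main obstacle is the mixed case $c_2>a_1$, $c_3<a_1$ (and its symmetric counterpart): here $\beta'<0$ forces $\beta=0$ via the Type II formulas, and one obtains $\gamma-\alpha-\beta=\gamma'-\alpha'-|\beta'|$ rather than the naive $\gamma'-\alpha'-\beta'$. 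Nevertheless, since $|\beta'|=-\beta'$ in this regime, one computes $(\delta\cdot\lambda)^2=(\gamma'-\alpha'+\beta')^2=(\alpha'-\beta'-\gamma')^2$, which yields $\HKM=a_1$, matching $\HKM(\lim_{L\to\infty}F_V)=a_1$ as required whenever $c_2\geq a_1$ or $c_3\geq a_1$.
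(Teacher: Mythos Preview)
Your proof is correct and follows essentially the same approach as the paper: reduce to the case $c_2,c_3\neq a_1$, then show that the case $\delta\cdot\lambda=\pm(\gamma'-\alpha'-\beta')$ occurs exactly when $a_1>c_2$ and $a_1>c_3$, via the sign of $\beta'$ and the leading behaviour of $\gamma'-\alpha'$. The paper's proof is terser, simply asserting that $\beta'>0$ gives $a_1>c_2$ and $\LC(\gamma'-\alpha')>0$ gives $a_1>c_3$; you make explicit the underlying identities $\beta'=ab(a_1-c_2)$ and $\gamma'-\alpha'=ac(a_1-c_3)$ that justify those assertions, and you spell out the mixed-sign case more carefully. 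One minor remark: the regularity constraint for $F_V$ recorded in the paper is the single condition $a_1c_3>2(a_1b_3+b_1c_3)$, not the two separate bounds $a_1>2b_1$ and $c_3>2b_3$ you invoke; however, these bounds are not actually needed for the sign analysis, so this does not affect the validity of your argument.
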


\begin{proof}
We may assume $c_2,c_3\neq a_1$. Then Table \ref{tabdellamfv} shows that $\delta\cdot\lambda=\pm(\gamma'-\alpha'-\beta')$ is equivalent to $\beta'>0$ and 
$\LC(\gamma'-\alpha')>0$. The condition $\beta'>0$ gives $a_1>c_2$ and $\LC(\gamma'-\alpha')>0$ gives $a_1>c_3.$
\end{proof}

\subsection{The family $F_W$}
The trinomials $$U^{a_1}+U^{b_1}V^{b_2}W^{b_3}+V^{c_2}W^L$$ are quasi-homogeneous in the grading
\begin{align*}
a & = b_2L-b_3c_2\\
b & = (a_1-b_1)L-a_1b_3\\
c & = (b_2-c_2)a_1+b_1c_2
\end{align*}
of degree $$d=aa_1.$$

The inequalities $a_1>b_1$ and $b_2>c_2$ give $b$, $c>0$. Assume $L>b_3$ to get $a$ positive. The regularity conditions give for large $L$ the 
following conditions on the exponents
$$a_1>2b_1 \qquad \text{and} \qquad a_1b_2>2c_2(a_1-b_1).$$
We have
\begin{align*}
\alpha' & = ad-ac(b_1+b_3)-abc_2,\\
\beta' & = bd-abc_2,\\
\gamma' & = cd-ac(b_1+b_3).
\end{align*}
At first we note that the equivalence $$\alpha'<0\Longleftrightarrow c<0$$
gives $\alpha'\geq 0$.
In this case $\gamma'$ depends linearly on $L$ and $\alpha'$, $\beta'$ are polynomials in $L$ of degree two.
\begin{center}
\begin{table}[h]
$$\begin{array}{c|c||c|c|c||c}
\beta' & \gamma' & \alpha & \beta & \gamma & \delta\cdot\lambda \\ \hline
+ & + & \alpha' & \beta' & \gamma' & \alpha'-\beta'-\gamma'\text{ or }\beta'-\alpha'-\gamma' \\
+ & - & \alpha'-\gamma' & \beta' & 0 & \alpha'-\beta'-\gamma'\text{ or }\beta'-\alpha'+\gamma'\\
- & + & \alpha'-\beta' & 0 & \gamma' & \alpha'-\beta'-\gamma'\text{ or }\gamma'-\alpha'+\beta'\\
- & - & \alpha'-\beta'-\gamma' & 0 & 0 & \alpha'-\beta'-\gamma'.
\end{array}$$
\caption{The table shows the possible values of $\delta\cdot\lambda$ depending the signs of $\beta'$, $\gamma'$.}
\label{tabdellamfw}
\end{table}
\end{center}
Therefore $\delta\cdot \lambda$ has the shape $\beta'-\alpha'-\gamma'$ or $\pm(\alpha'-\beta'-\gamma')$. In the first case we have 
$\delta\cdot\lambda=bd-cd-ad+2ac(b_1+b_3)$, giving
\begin{align*}
\HKM(R) &= \frac{d+(b_1+b_3)(b-a)}{b}+\frac{ac(b_1+b_3)^2-cd(b_1+b_3)}{bd}\\
 &= b_1+b_2+b_3-\frac{b_2b_3}{a_1-b_1}+\frac{1}{b}(a_1-b_1-b_3)\left(\frac{a_1b_2b_3}{a_1-b_1}-b_3c_2-\frac{b_1c+b_3c}{a_1}\right)\\
 &= b_1+b_2+b_3-\frac{b_2b_3}{a_1-b_1}+\frac{1}{b}(a_1-b_1-b_3)^2\left(\frac{b_1c_2}{a_1}-\frac{b_1b_2}{a_1-b_1}\right)
\end{align*}
as Hilbert-Kunz multiplicity. It tends to $b_1+b_2+b_3-\frac{b_2b_3}{a_1-b_1}$. 
Treating $\HKM(R)$ as a function in $L$, it is either constant or strictly increasing. It cannot be strictly decreasing, since then the term $\tfrac{b_1c_2}{a_1}-\tfrac{b_1b_2}{a_1-b_1}$ 
would have to be negative, which is equivalent to $-b_1c_2\geq a_1\cdot (b_2-c_2)>0$. In the second case we have $\delta\cdot\lambda=\pm(ad-cd-bd)$ 
and get the constant Hilbert-Kunz multiplicity
$$\HKM(R)=\frac{d}{a}=a_1.$$

We give two examples showing that both possibilities, $\HKM(R)$ constant or strictly inreasing in $L$, appear in the case $\delta\cdot \lambda=\beta'-\alpha'-\gamma'$.

\begin{exa}
With $F:=U^5+UV^3+W^L$ we have $a=3L$, $b=4L$, $c=15$ and $d=15L$. Since $\alpha'=45L^2-45L$, $\beta'=60L^2$ and $\gamma'=180L$, we have $\beta'-\alpha'-\gamma'\geq 0$ for $L\geq 15$, hence we are in the 
first case of the above computation and the Hilbert-Kunz multiplicity of $R$ is given by 
$$\HKM(R)=4+\frac{1}{4L}\cdot 4^2\cdot \left(-\frac{3}{4}\right)=4-\frac{3}{L},$$
which is strictly increasing in $L$.
\end{exa}

\begin{exa}
Now consider $F:=U^5+UV^3W^4+W^L$. In this case we have $a=3L$, $b=4L-20$, $c=15$ and $d=15L$. 
This gives $\alpha'=45L^2-225L$, $\beta'=60L^2-300L$ and $\gamma'=0$. Therefore $\beta'-\alpha'-\gamma'\geq 0$ for $L\geq 5$ and we are again in the 
first case of the above computation. Now, the Hilbert-Kunz multiplicity of $R$ is given by 
$$\HKM(R)=8-\frac{12}{4}=5.$$
\end{exa}

\begin{defi}
Let $\lim_{L\ra\infty}F_W:=U^{a_1}+U^{b_1}V^{b_2}W^{b_3}=U^{b_1}(U^{a_1-b_1}+V^{b_2}W^{b_3}).$
\end{defi}

\begin{rem}
By Theorem \ref{thmhkmexists} (ii) and Remark \ref{kunzbsp} we have
$$\HKM\left(k[U,V,W]/(U^{b_1}\cdot(U^{a_1-b_1}+V^{b_2}W^{b_3}))\right)=
\begin{cases}
b_1+b_2+b_3-\frac{b_2b_3}{a_1-b_1} & \text{if }b_2,b_3<a_1-b_1,\\
a_1 & \text{otherwise.}
\end{cases}$$
\end{rem}

\begin{lem}
The limit behaviour of the Hilbert-Kunz multiplicity in the family $F_W$ is given by
$$\lim_{L\ra\infty}\HKM\left(k[U,V,W]/(U^{a_1}+U^{b_1}V^{b_2}W^{b_3}+V^{c_2}W^L)\right)=\HKM\left(\lim_{L\ra\infty}F_W\right).$$
\end{lem}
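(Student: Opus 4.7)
The plan is to mimic the proofs already given for the families $F_1$, $F_U$ and $F_V$. First, by direct substitution of $b_2 = a_1-b_1$ (respectively $b_3 = a_1-b_1$) into the expression $b_1+b_2+b_3-\tfrac{b_2 b_3}{a_1-b_1}$ one verifies that this number equals $a_1$ in both cases, so the two candidate limit values agree on the boundary. One may therefore assume $b_2 \neq a_1-b_1$ and $b_3 \neq a_1-b_1$. The case analysis preceding the lemma then reduces the proof to establishing the equivalence
\[
\delta\cdot\lambda = \beta' - \alpha' - \gamma' \quad\Longleftrightarrow\quad b_2 < a_1-b_1 \text{ and } b_3 < a_1-b_1,
\]
since the first case yields the limit $b_1+b_2+b_3-\tfrac{b_2 b_3}{a_1-b_1}$ while the remaining sub-cases all give $a_1$.

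Next, inspecting Table \ref{tabdellamfw}, the value $\beta'-\alpha'-\gamma'$ appears only in the row $(\beta'>0,\,\gamma'>0)$, and is the actual value of $\delta\cdot\lambda$ precisely when $\beta'$ dominates both $\alpha'$ and $\gamma'$ for large $L$. I would translate each of these three analytic conditions into arithmetic inequalities on the exponents by computing leading coefficients in $L$. Directly, $\gamma' = ac(a_1-b_1-b_3)$, so (using $a,c>0$) the positivity of $\gamma'$ is equivalent to $b_3 < a_1-b_1$. A short manipulation gives $\beta'-\alpha' = (b-a)d + ac(b_1+b_3)$, and substituting the explicit formulas for $a$, $b$, $d$ shows that the leading coefficient in $L$ equals $a_1\,b_2\,(a_1-b_1-b_2)$; hence $\beta' > \alpha'$ for large $L$ is equivalent to $b_2 < a_1-b_1$. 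Finally, $\beta' = ab(a_1-c_2)$ is of degree two in $L$ whereas $\gamma'$ is linear, so the inequality $\beta' > \gamma'$ for large $L$ follows automatically from $\beta' > 0$, i.e.\ from $a_1 > c_2$, and this is implied by $c_2 < b_2 < a_1-b_1$, the first inequality being forced by the positivity of $a = b_2 L - b_3 c_2$ already required in the setup.

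The main obstacle is purely computational: the identification of the leading coefficient of $\beta' - \alpha'$, which after cancellation reduces to the identity $a_1 b_2 (a_1-b_1-b_2)$. All remaining steps are routine sign-reading in Definition \ref{defmyhkmthm} and the entries of Table \ref{tabdellamfw}, together with the previous case analysis that already computed the two possible limits of $\HKM(R)$.
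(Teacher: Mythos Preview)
Your proof is correct and follows essentially the same approach as the paper: reduce to the equivalence $\delta\cdot\lambda=\beta'-\alpha'-\gamma' \Leftrightarrow b_2,b_3<a_1-b_1$, then read off the conditions $\gamma'>0$ and $\beta'>\alpha'$ from Table~\ref{tabdellamfw} and translate them via leading coefficients in $L$. Your computation of the leading coefficient of $\beta'-\alpha'$ as $a_1 b_2(a_1-b_1-b_2)$ is in fact the correct one; the paper's printed expression $a_1(b_1+b_2-a_1)$ carries a sign slip (and drops the harmless positive factor $b_2$), so your version actually repairs a minor typo while arriving at the intended conclusion.
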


\begin{proof}
We may assume $b_2,b_3\neq a_1-b_1$. According to Table \ref{tabdellamfw} we have
$$\delta\cdot\lambda=\beta'-\alpha'-\gamma'\Leftrightarrow \beta',\gamma'>0\text{ and }\Max\{\alpha',\beta'\}=\beta'.$$
The condition $\gamma'>0$ is equivalent to $b_3<a_1-b_1$ and the condition $\Max\{\alpha',\beta'\}=\beta'$ is equivalent to the positivity of the 
leading coefficient of 
$$\beta'-\alpha'=a_1\cdot (b_1+b_2-a_1)L^2+O(L).$$
\end{proof}

The results in this section lead to the following questions.

\begin{que}
What do the limits of the Hilbert-Kunz multiplicities tell us about the families of singularities?
\end{que}

\begin{que}
Fix $F\in k[X_1,\ldots,X_n]$ and $a_1,\ldots,a_{n-1}\in\N$. Under which conditions does the equality 
$$\lim_{L\ra\infty}\HKM\left(k[X_1,\ldots,X_n]/\left(F+X_1^{a_1}\cdot\ldots\cdot X_{n-1}^{a_{n-1}}X_n^L\right)\right)=\HKM(k[X_1,\ldots,X_n]/(F))$$
hold? Does it even hold for the Hilbert-Kunz functions?
\end{que}

Examples for which the limit behaviour holds for the Hilbert-Kunz functions are the diagonal hypersurfaces $R_{l,m}:=k[X,Y,Z]/(X^2+Y^l+Z^m)$ with $\tfrac{1}{2}+\tfrac{1}{l}+\tfrac{1}{m}\leq 1$. For those we have the equality 
$$\lim_{m\ra\infty}\HKF(R_{l,m})=\HKF(k[X,Y,Z]/(X^2+Y^l)).$$
Further examples for which the limit behaviour from the question holds for the Hilbert-Kunz functions are given in Sections two and three of Chapter seven.

\chapter[Matrix factorizations and representations as first syzygy modules]{Matrix factorizations and representations as first syzygy modules \except{toc}{of ideals}}\label{chapmatfac}
In this chapter we want to introduce the theory of matrix factorizations, their connection to maximal Cohen-Macaulay modules over hypersurfaces and explain how one 
gets with their help isomorphisms of maximal Cohen-Macaulay modules with "nice" first syzygy modules of ideals.

\section{Properties of maximal Cohen-Macaulay modules}
In this section we state some important properties of maximal Cohen-Macaulay modules.

\begin{defi}
Let $R$ be a ring and $M$ a finitely generated $R$-module. We call $M$ a \textit{maximal Cohen-Macaulay} $R$-module, if $M=0$ or
$$\Depth_{R_{\mm}}(M_{\mm})=\Dim(M_{\mm})=\Dim(R_{\mm})$$
holds for all maximal ideals $\mm$ of $R$. We say that $R$ is \textit{Cohen-Macaulay} if $R$ is maximal Cohen-Macaulay as an $R$-module.
\end{defi}

\begin{rem}
An equivalent definition of the maximal Cohen-Macaulay property using Grothendieck's vanishing theorem (cf. \cite[Theorem 3.5.7]{brunsherzog}) is that 
$$\lK_{\mm R_{\mm}}^i(M_{\mm})=0$$
holds for all $0\leq i\leq \Dim(R_{\mm})-1$ and all maximal ideals $\mm$ of $R$.
\end{rem}

\begin{exa} We give some examples.

\begin{enumerate}
 \item If $\Dim(R)=0$, we have $0\leq\Depth_{R_{\mm}}(M_{\mm})\leq \Dim(R_{\mm})=0$ for all $\mm\in\Spec(R)$, giving that every finitely generated $R$-module is maximal Cohen-Macaulay.
 \item Regular rings are Cohen-Macaulay and every maximal Cohen-Macaulay $R$-module $M$ is free, since $M_{\mm}$ has finite projective dimension and this 
is $$\projdim_{R_{\mm}}(M_{\mm})=\Depth_{R_{\mm}}(R_{\mm})-\Depth_{R_{\mm}}(M_{\mm})=0$$ by the Auslander-Buchsbaum formula.
 \item If $(R,\mm)$ has dimension one and at least one non-zero divisor, then a finitely generated $R$-module $M$ is maximal Cohen-Macaulay if and only if it is torsion-free. That is because 
 $\mm$ and all its powers contain a non-zero divisor, giving the inclusion $\lK_{\mm}^0(M)\subseteq \TT(M)$, where 
 $\TT(M)$ denotes the torsion-submodule of $M$. For the other inclusion, note that $\TT(M)$ is Artinian because $R$ is one-dimensional, hence
 $\TT(M)=\lK^0_{\mm}(\TT(M))\subseteq \lK^0_{\mm}(M).$
\end{enumerate}
\end{exa}

\begin{defi}
Let $(R,\mm)$ be a Cohen-Macaulay local ring. We say that $R$ has an \textit{isolated singularity} if $R_{\pp}$ is regular for 
all $\pp\in\Spec(R)\setminus\{\mm\}$.
\end{defi}

We can characterize the property of having an isolated singularity by the structure of the maximal Cohen-Macaulay modules.

\begin{lem}\label{isolated}
Let $(R,\mm)$ be a Cohen-Macaulay local ring. The following are equivalent.

\begin{enumerate}
 \item The ring $R$ has an isolated singularity.
 \item All maximal Cohen-Macaulay $R$-modules are locally free on the punctured spectrum of $R$.
\end{enumerate}
\end{lem}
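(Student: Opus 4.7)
The plan is to prove the two implications separately, linking the maximal Cohen-Macaulay property to the vanishing of projective dimension via the Auslander-Buchsbaum formula, exactly as in the second bullet of the preceding example list.

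For (i) $\Rightarrow$ (ii), I would fix a maximal Cohen-Macaulay $R$-module $M$ and a prime $\pp\neq\mm$. If $\pp\notin\Supp(M)$ then $M_\pp=0$ is trivially free, so assume $\pp\in\Supp(M)$. First I would observe that $M_\pp$ is itself maximal Cohen-Macaulay over $R_\pp$: since $R$ is Cohen-Macaulay, $\Dim(R_\pp)=\Dim(R)-\Dim(R/\pp)$, and combining the standard inequality $\Depth(M_\pp)+\Dim(R/\pp)\geq\Depth(M)=\Dim(R)$ with $\Depth(M_\pp)\leq\Dim(M_\pp)\leq\Dim(R_\pp)$ forces equality throughout. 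By hypothesis $R_\pp$ is regular, hence every finitely generated $R_\pp$-module has finite projective dimension. Auslander-Buchsbaum then gives $\projdim_{R_\pp}(M_\pp)=\Depth(R_\pp)-\Depth_{R_\pp}(M_\pp)=0$, so $M_\pp$ is free over the local ring $R_\pp$.

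For (ii) $\Rightarrow$ (i), I would fix $\pp\neq\mm$ and show that the residue field $k(\pp)=R_\pp/\pp R_\pp$ has finite projective dimension over $R_\pp$; the Auslander-Buchsbaum-Serre theorem then yields that $R_\pp$ is regular. Setting $d:=\Dim(R)$, take a minimal free resolution $\cdots\to F_1\to F_0\to R/\pp\to 0$ and let $N:=\Syz_R^d(R/\pp)$ be the $d$-th syzygy module. The key preliminary fact is that over a Cohen-Macaulay local ring of dimension $d$, every $d$-th syzygy module is either zero or maximal Cohen-Macaulay; this one proves by applying the depth lemma to the short exact sequences $0\to\Syz^{i+1}\to F_i\to\Syz^i\to 0$ and inducting on $i$ to obtain $\Depth(\Syz^i)\geq\min(d,i)$. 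If $N=0$ then $R/\pp$ has finite projective dimension over $R$, and this property localizes, so $k(\pp)$ has finite projective dimension over $R_\pp$. Otherwise $N$ is maximal Cohen-Macaulay, and applying hypothesis (ii) gives that $N_\pp$ is free over $R_\pp$; since localization preserves exactness, $N_\pp$ is a $d$-th syzygy of $k(\pp)$ in the localized resolution, so again $\projdim_{R_\pp}(k(\pp))\leq d<\infty$.

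The main technical obstacle lies in the reverse direction, namely producing a single maximal Cohen-Macaulay module whose freeness on $\Spec(R_\pp)$ actually detects regularity of $R_\pp$. The trick of taking the $d$-th syzygy of $R/\pp$ turns this into a question about the projective dimension of $k(\pp)$, which is the classical characterization of regularity. Everything else is a routine application of depth inequalities and the Auslander-Buchsbaum formula, both for the ambient ring and after localization.
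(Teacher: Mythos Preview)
Your proof is correct and follows the standard route. The paper itself does not give a self-contained argument: for (i)$\Rightarrow$(ii) it simply says the implication is clear because the maximal Cohen--Macaulay property localizes (citing \cite[Theorem 2.1.2]{brunsherzog}), and for (ii)$\Rightarrow$(i) it refers the reader to \cite[Lemma 3.3]{yobook}. Your treatment of (i)$\Rightarrow$(ii) is exactly the unpacking of that one-line remark, and your argument for (ii)$\Rightarrow$(i) via the $d$-th syzygy of $R/\pp$ is the standard proof one finds in Yoshino's book; note also that your ``key preliminary fact'' that high syzygies over a Cohen--Macaulay ring are maximal Cohen--Macaulay is precisely Proposition~\ref{kermcm} of the paper. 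So there is no divergence in method---you have simply supplied the details the paper outsourced.
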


\begin{proof}
The implication (i) $\Rightarrow$ (ii) is clear since the property maximal Cohen-Macaulay localizes (cf. \cite[Theorem 2.1.2]{brunsherzog}). See \cite[Lemma 3.3]{yobook} for the converse.
\end{proof}

Maximal Cohen-Macaulay modules appear in a natural way as kernels in free resolutions as the following proposition shows.

\begin{prop}\label{kermcm}
Let $(R,\mm)$ be a Cohen-Macaulay ring and
$$F_{n-1}\ra F_{n-2}\ra\ldots\ra F_1\ra F_0$$
an exact sequence of free $R$-modules. If $n\geq\Dim(R)$, then $\Ker(F_{n-1}\ra F_{n-2})$ is maximal Cohen-Macaulay.
\end{prop}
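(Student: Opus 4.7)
The plan is to localize at an arbitrary maximal ideal $\mm$ of $R$, since the maximal Cohen-Macaulay property is local at maximal ideals. After localization we have a local Cohen-Macaulay ring $(R_{\mm},\mm R_{\mm})$ of dimension $d:=\Dim(R_{\mm})\leq\Dim(R)\leq n$, an exact sequence of free $R_{\mm}$-modules (localization is exact), and we must show that the kernel of the last map has depth $\geq d$. If $d=0$ every finitely generated module is trivially maximal Cohen-Macaulay, so I will assume $d\geq 1$. Each $F_i$ is then free of depth $d$.

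Next I will decompose the long exact sequence into short exact sequences. Set $K_i:=\Ker(F_i\ra F_{i-1})$ for $1\leq i\leq n-1$ and $M:=\Ima(F_1\ra F_0)$. Exactness gives short exact sequences
$$0\lra K_i\lra F_i\lra K_{i-1}\lra 0 \quad (2\leq i\leq n-1)\qquad\text{and}\qquad 0\lra K_1\lra F_1\lra M\lra 0,$$
together with the inclusion $0\ra M\ra F_0$. If any $K_i$ vanishes, the next short exact sequence identifies $K_{i+1}$ with the free module $F_{i+1}$ and the argument ends immediately, so I may assume all of them are non-zero.

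The core of the proof is an iterated application of the depth lemma: for any short exact sequence $0\ra A\ra B\ra C\ra 0$ one has $\Depth(A)\geq\min(\Depth(B),\Depth(C)+1)$. Applied to $0\ra M\ra F_0\ra F_0/M\ra 0$ this yields $\Depth(M)\geq\min(d,1)=1$. Feeding this into $0\ra K_1\ra F_1\ra M\ra 0$ gives $\Depth(K_1)\geq\min(d,2)$, and then induction on $i$ using the short exact sequences $0\ra K_i\ra F_i\ra K_{i-1}\ra 0$ produces
$$\Depth(K_i)\geq\min(d,i+1)\qquad\text{for every }1\leq i\leq n-1.$$

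Finally, specialising to $i=n-1$ gives $\Depth(K_{n-1})\geq\min(d,n)$. Since $n\geq\Dim(R)\geq d$ by hypothesis, the minimum equals $d$, so $K_{n-1}$ has depth $d$ over $R_{\mm}$ and is therefore maximal Cohen-Macaulay. As $\mm$ was arbitrary, this proves the proposition. There is no real obstacle; the only points that require some care are keeping track of the depth bound shift at the very first step (where $M$ is only a submodule of a free module, not a kernel inside a resolution) and handling the degenerate cases in which some $K_i$ or $M$ is zero.
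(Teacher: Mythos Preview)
Your proof is correct and is precisely the standard depth-lemma argument; the paper does not provide its own proof but refers to \cite[Proposition~1.4]{yobook}, where essentially this same argument is given. The only cosmetic point is that the notation $(R,\mm)$ in the statement already indicates a local ring, so the initial localization step is unnecessary (though harmless).
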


\begin{proof}
See \cite[Proposition 1.4]{yobook}.
\end{proof}

\begin{defi}
Let $R$ be a ring and $M$ an $R$-module. Let $M^{\vee}:=\Hom_R(M,R)$. The module $M^{\vee\vee}$ is called the \textit{reflexive hull} of $M$.
We say that $M$ is \textit{reflexive} if the natural map 
\begin{align*}
 M & \longrightarrow M^{\vee\vee}\\
 m & \longmapsto \ev_m
\end{align*}
is an isomorphism.
\end{defi}

Note that reflexive hulls are torsion-free, since $\Hom_R(M,N)$ is torsion-free if $N$ is. 
They appear for example as modules of sections of certain sheaves over the punctured spectrum. We give the precise statement.

\begin{lem}
Let $(R,\mm)$ be a reduced, two-dimensional Cohen-Macaulay ring. Let $M$ be a finitely generated, torsion-free $R$-module. 
We denote by $\widetilde{M}$ the $\Oc_{\Spec(R)}$-module associated to $M$ and we set $U:=\Spec(R)\setminus\{\mm\}$. If $R$ is Gorenstein in codimension
one, we have the isomorphism
$$\Gamma(U,\widetilde{M}|_U)\cong M^{\vee\vee}.$$
\end{lem}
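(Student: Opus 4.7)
The plan is to identify both $\Gamma(U,\widetilde{M}|_U)$ and $M^{\vee\vee}$ as the $S_2$-ification of $M$, and then appeal to the uniqueness of that object.

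First I would set $N := \Gamma(U,\widetilde{M}|_U)$ and analyze the canonical restriction map $M \to N$ via the local cohomology four-term sequence
$$0 \lra \lK^0_{\mm}(M)\lra M \lra N \lra \lK^1_{\mm}(M)\lra 0.$$
Since $\Depth(R)=2$, the maximal ideal $\mm$ contains a non-zero divisor; combined with the torsion-freeness of $M$, this yields $\lK^0_{\mm}(M)=0$. A standard dimension shift then shows $\lK^0_{\mm}(N)=\lK^1_{\mm}(N)=0$, so $\Depth_{\mm}(N)\geq 2$. Off the closed point $N$ agrees with $\widetilde{M}|_U$, which is locally $S_2$ there because every $R_{\pp}$ with $\pp\neq\mm$ is Gorenstein (hence $S_2$) and $M_{\pp}$ is torsion-free. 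Thus $N$ is $S_2$.

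Next I would show that $M^{\vee\vee}$ has the same two properties. Because $R$ is Cohen-Macaulay of depth $2$, the standard depth estimate $\Depth_{\mm}\Hom_R(L,R)\geq\min(2,\Depth(R))$ gives $\Depth_{\mm}(M^{\vee})\geq 2$ and therefore $\Depth_{\mm}(M^{\vee\vee})\geq 2$. To see that the natural map $M\to M^{\vee\vee}$ is an isomorphism on $U$, note that for $\pp\in U$ the ring $R_{\pp}$ has height $\leq 1$ and is by hypothesis Gorenstein. If $\Ht(\pp)=0$, then $R_{\pp}$ is a field (since $R$ is reduced), so $M_{\pp}$ is free and reflexive. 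If $\Ht(\pp)=1$, then $M_{\pp}$ is a finitely generated torsion-free module over a one-dimensional Gorenstein local ring, hence maximal Cohen-Macaulay, hence reflexive (this is the standard fact that $\Ext^i_{R_{\pp}}(M_{\pp},R_{\pp})=0$ for $i\geq 1$ forces the biduality map to be an isomorphism in this setting). Since taking the double dual commutes with localization, $M^{\vee\vee}$ is an $S_2$-extension of $\widetilde{M}|_U$ as well.

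Finally I would prove the uniqueness of such an extension. If $P$ is any finitely generated $R$-module containing $M$ with $P|_U=\widetilde{M}|_U$ and $\Depth_{\mm}(P)\geq 2$, then $P/M$ is supported on $\{\mm\}$, so equals $\lK^0_{\mm}(P/M)$, and the long exact sequence attached to $0\to M\to P\to P/M\to 0$ identifies $P/M$ canonically with $\lK^1_{\mm}(M)$ (using $\lK^0_{\mm}(P)=\lK^1_{\mm}(P)=0$). Hence $P$ fits into the same canonical four-term sequence as $N$, giving a canonical isomorphism $P\cong N$. Applied to $P=M^{\vee\vee}$ this produces the desired isomorphism $M^{\vee\vee}\cong \Gamma(U,\widetilde{M}|_U)$.

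The step I expect to be the main obstacle is verifying that the natural biduality map $M\to M^{\vee\vee}$ really is an isomorphism at every height-one prime of $R$; this is the one place where the hypothesis ``Gorenstein in codimension one'' is indispensable, and the argument relies on the classical identification of reflexive modules with maximal Cohen-Macaulay modules over one-dimensional Gorenstein local rings. Everything else is a formal manipulation with the local cohomology sequence at the closed point.
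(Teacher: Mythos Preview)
Your argument is correct. The paper does not actually prove this lemma; it simply refers to Propositions~3.7 and~3.10 of \cite{mcmsur}. Your approach---identifying both sides as the $S_2$-ification of $M$ via the local-cohomology four-term sequence at $\mm$---is the standard one and is almost certainly the content of the cited reference. In particular, your identification of the one genuinely nontrivial step (reflexivity of $M_{\pp}$ at height-one primes, requiring the Gorenstein-in-codimension-one hypothesis) is exactly right.

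One streamlining of your uniqueness step: rather than matching two extensions through $\lK^1_{\mm}(M)$, apply the four-term sequence directly to $M^{\vee\vee}$. Since $\Depth_{\mm}(M^{\vee\vee})\ge 2$, the restriction map
\[
M^{\vee\vee}\longrightarrow\Gamma\bigl(U,\widetilde{M^{\vee\vee}}|_U\bigr)
\]
is an isomorphism, and the right-hand side equals $\Gamma(U,\widetilde{M}|_U)$ by the isomorphism $\widetilde{M}|_U\cong\widetilde{M^{\vee\vee}}|_U$ you have already established. This makes the conclusion immediate and avoids any question about whether ``fitting into the same canonical four-term sequence'' really pins down the extension.
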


\begin{proof}
See Propositions 3.7 and 3.10 of \cite{mcmsur}.
\end{proof}

\begin{lem}\label{kerrefl}
Let $(R,\mm)$ be a reduced, two-dimensional Cohen-Macaulay ring which is Gorenstein in codimension one. Then kernels of maps between finitely generated reflexive modules are again reflexive.
\end{lem}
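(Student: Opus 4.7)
The plan is to reduce reflexivity of the kernel to left-exactness of the section functor on the punctured spectrum, using the previous lemma as the main input. Let $\phi: M \to N$ be a homomorphism of finitely generated reflexive $R$-modules and set $K := \ker(\phi)$. First I would observe that $K$ is finitely generated and torsion-free, since it is a submodule of the torsion-free module $M$ (reflexive modules are automatically torsion-free, as the natural map $M \to M^{\vee\vee}$ factors through a torsion-free module). This puts $K$ in the scope of the preceding lemma, so I may identify $K^{\vee\vee}$ with $\Gamma(U, \widetilde{K}|_U)$, where $U := \Spec(R) \setminus \{\mm\}$.

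Next I would consider the exact sequence of coherent $\Oc_U$-modules
$$0 \lra \widetilde{K}|_U \lra \widetilde{M}|_U \stackrel{\widetilde{\phi}|_U}{\lra} \widetilde{N}|_U,$$
obtained by sheafifying and restricting. Applying the left-exact functor $\Gamma(U,-)$ and translating via the previous lemma gives an exact sequence
$$0 \lra K^{\vee\vee} \lra M^{\vee\vee} \lra N^{\vee\vee}.$$
Since $M$ and $N$ are reflexive, the natural maps $M \to M^{\vee\vee}$ and $N \to N^{\vee\vee}$ are isomorphisms.

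Finally, I would assemble the commutative diagram
$$\xymatrix{
0 \ar[r] & K \ar[r] \ar[d] & M \ar[r]^{\phi} \ar[d]^{\cong} & N \ar[d]^{\cong} \\
0 \ar[r] & K^{\vee\vee} \ar[r] & M^{\vee\vee} \ar[r] & N^{\vee\vee}
}$$
and conclude by a straightforward diagram chase (or the four/five lemma) that the vertical map $K \to K^{\vee\vee}$ is an isomorphism, i.e.\ that $K$ is reflexive.

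There is no real obstacle here: the preceding lemma does all the heavy lifting by converting reflexivity into a global-sections statement on $U$, and the rest is left-exactness of $\Gamma(U,-)$ plus a diagram chase. The only point to be careful about is verifying that $K$ is torsion-free so that the preceding lemma is actually applicable.
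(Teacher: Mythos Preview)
Your proposal is correct and follows essentially the same approach as the paper: both use the preceding lemma to identify reflexive hulls with sections over the punctured spectrum, apply left-exactness of $\Gamma(U,-)$ to the restricted short exact sequence, and conclude via the resulting commutative diagram that $K \to K^{\vee\vee}$ is an isomorphism. The paper's proof is simply a more compressed version of yours, noting in one line that $K$ is torsion-free as a submodule of a torsion-free module.
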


\begin{proof}
Let $0\ra K\ra M_1\ra M_2$ be an exact sequence of finitely generated $R$-modules with $M_1$ and $M_2$ reflexive. From this sequence, we obtain the commutative diagram
$$\xymatrix{
0\ar[r] & K\ar[r] & M_1\ar[r]\ar[d]^{\cong} & M_2\ar[d]^{\cong}\\
0\ar[r] & \Gamma(U,\widetilde{K}_{|U})\ar[r] & \Gamma(U,\widetilde{M_1}_{|U})\ar[r] & \Gamma(U,\widetilde{M_2}_{|U}),
}$$
which induces an isomorphism $K\cong\Gamma(U,\widetilde{K}_{|U})\cong K^{\vee\vee}$, since $K$ - as a submodule of a torsion-free module - is itself torsion-free.
\end{proof}

Our next goal is to show that for a two-dimensional normal domain maximal Cohen-Macaulay is equivalent to reflexive. For this we will need the following 
proposition.

\begin{prop}\label{charrefl}
Let $R$ be a ring and $M$ a finitely generated $R$-module. Then $M$ is reflexive if and only if

\begin{enumerate}
  \item $M_{\pp}$ is reflexive for all $\pp\in\Spec(R)$ with $\Depth_{R_{\pp}}(R_{\pp})\leq 1$ and
  \item $\Depth_{R_{\pp}}(M_{\pp})\geq 2$ for all $\pp\in\Spec(R)$ with $\Depth_{R_{\pp}}(R_{\pp})\geq 2$.
\end{enumerate}
\end{prop}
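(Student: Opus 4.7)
The proof will proceed by analyzing the natural evaluation map $\phi_M \colon M \to M^{\vee\vee}$ via the four-term exact sequence
\begin{equation*}
0 \lra K \lra M \stackrel{\phi_M}{\lra} M^{\vee\vee} \lra C \lra 0,
\end{equation*}
where $K$ and $C$ denote the kernel and cokernel. The key preliminary observation is that applying $(\_)^{\vee}$ to $\phi_M$ yields a map $M^{\vee\vee\vee} \to M^{\vee}$ which, composed with the natural inclusion $M^{\vee}\hookrightarrow M^{\vee\vee\vee}$, is the identity on $M^{\vee}$. Splitting the four-term sequence into two short exact sequences and dualizing them will give $K^{\vee}=0=C^{\vee}$.

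Next I would establish the depth property of the double dual: from a free presentation $F_1\to F_0\to M\to 0$ and a free presentation $G_1\to G_0\to M^{\vee}\to 0$ one gets exact sequences $0\to M^{\vee}\to F_0^{\vee}\to F_1^{\vee}$ and $0\to M^{\vee\vee}\to G_0^{\vee}\to G_1^{\vee}$. So $M^{\vee\vee}$ is a second syzygy module, and the depth lemma (applied twice) yields $\Depth_{R_{\pp}}(M^{\vee\vee}_{\pp})\geq\min(2,\Depth_{R_{\pp}}(R_{\pp}))$ for every $\pp\in\Spec(R)$.

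For the forward direction, if $M$ is reflexive, then condition (i) follows because reflexivity is preserved under localization, and condition (ii) follows from $M\cong M^{\vee\vee}$ combined with the depth estimate above. For the converse, assume (i) and (ii) hold; I would show $K=0=C$ by ruling out associated primes. At any $\pp\in\Spec(R)$ with $\Depth_{R_\pp}(R_\pp)\leq 1$, condition (i) immediately gives $K_\pp=C_\pp=0$. At any $\pp$ with $\Depth_{R_\pp}(R_\pp)\geq 2$, condition (ii) gives $\Depth_{R_\pp}(M_\pp)\geq 2$ and the depth estimate gives $\Depth_{R_\pp}(M^{\vee\vee}_\pp)\geq 2$; applying the depth lemma to $0\to K_\pp\to M_\pp\to \Ima(\phi_M)_\pp\to 0$ and $0\to \Ima(\phi_M)_\pp\to M^{\vee\vee}_\pp\to C_\pp\to 0$ then shows $\Depth_{R_\pp}(K_\pp),\Depth_{R_\pp}(C_\pp)\geq 1$, so $\pp$ is not associated to either.

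The main obstacle I expect is tracking the depth inequalities carefully through the two short exact sequences obtained by splitting the four-term sequence, particularly at primes where $\Ima(\phi_M)_\pp$ could a priori have low depth; ensuring that the estimates on $M_\pp$ and $M^{\vee\vee}_\pp$ transfer correctly to $K_\pp$ and $C_\pp$ via the depth lemma is the technical heart of the argument.
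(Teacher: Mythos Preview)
The paper does not give a proof; it simply cites \cite[Proposition~1.4.1]{brunsherzog}. Your outline is essentially the standard argument found there, and the forward direction together with the second-syzygy depth estimate $\Depth_{R_\pp}(M^{\vee\vee}_\pp)\geq\min(2,\Depth_{R_\pp}(R_\pp))$ are correct.

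There is, however, a real ordering issue in your converse argument, exactly at the point you flagged. At a prime $\pp$ with $\Depth_{R_\pp}(R_\pp)\geq 2$, the depth lemma applied to $0\to K_\pp\to M_\pp\to \Ima(\phi_M)_\pp\to 0$ does give $\Depth(K_\pp)\geq\min(2,\Depth(\Ima(\phi_M)_\pp)+1)\geq 1$, so the claim for $K$ is fine. But for $C$ you need $\Depth(\Ima(\phi_M)_\pp)\geq 2$, and the first sequence only yields $\Depth(\Ima(\phi_M)_\pp)\geq\min(\Depth(K_\pp)-1,\,2)\geq 0$, which is useless. The resolution is to argue in two passes: your local analysis already shows $\pp\notin\Ass(K)$ for \emph{every} prime $\pp$, hence $K=0$ globally; then $\Ima(\phi_M)=M$, the second sequence reads $0\to M_\pp\to M^{\vee\vee}_\pp\to C_\pp\to 0$ with both left terms of depth $\geq 2$, and the depth lemma gives $\Depth(C_\pp)\geq 1$ as desired.

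(Your preliminary observation that $K^\vee=0=C^\vee$ is not used in the main argument and is also not a direct consequence of dualizing the two short exact sequences as you describe; dualizing only gives left-exact sequences, and the splitting of $\phi_M^\vee$ does not by itself force $\ker(\phi_M^\vee)=0$.)
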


\begin{proof} See \cite[Proposition 1.4.1]{brunsherzog}.\end{proof}

\begin{thm}\label{mcmrefl}
Let $R$ be a two-dimensional normal domain and $M$ a finitely generated $R$-module. Then $M$ is maximal Cohen-Macaulay if and only if it is reflexive.
\end{thm}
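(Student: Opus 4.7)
The plan is to reduce the statement to the local characterization of reflexivity given in Proposition \ref{charrefl}. Two structural facts about a two-dimensional normal domain $R$ are essential. First, $R$ is automatically Cohen-Macaulay: by Serre's criterion normality means $R_1 + S_2$, and $S_2$ in dimension two coincides with Cohen-Macaulay. Second, $R_\pp$ is a DVR for every height-one prime $\pp$, again by normality. In particular $\Depth(R_\pp) = \Ht(\pp)$ at every prime $\pp$, so the hypotheses $\Depth(R_\pp) \le 1$ and $\Depth(R_\pp) \ge 2$ in Proposition \ref{charrefl} become $\Ht(\pp) \le 1$ and $\Ht(\pp) = 2$ respectively.

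Suppose first that $M$ is maximal Cohen-Macaulay. To verify condition (ii) of Proposition \ref{charrefl}, let $\pp$ be a prime of height two; then $\pp$ is maximal (since $R$ is catenary and equidimensional) and the MCM hypothesis gives $\Depth_{R_\pp}(M_\pp) = 2$. To verify condition (i), let $\pp$ be a prime of height at most one. If $\pp = (0)$, then $R_\pp$ is a field and $M_\pp$ is a vector space, which is trivially reflexive. If $\Ht(\pp) = 1$, then $R_\pp$ is a DVR, and $M_\pp$ is MCM over $R_\pp$ by the localization of the MCM property (\cite[Theorem 2.1.2]{brunsherzog}); hence $M_\pp$ is either zero or has depth one, so it is torsion-free over the DVR $R_\pp$, therefore free, therefore reflexive.

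Conversely, suppose $M$ is reflexive and let $\mm$ be a maximal ideal of $R$. Since both reflexivity and the MCM condition localize, we may replace $R$ by $R_\mm$ and assume $(R,\mm)$ is local. If $\Dim(R) \le 1$ there is nothing to prove (in dimension zero every finitely generated module is MCM, and in dimension one over a CM ring a reflexive module is torsion-free hence MCM). If $\Dim(R) = 2$, then $\Depth(R) = 2$, so condition (ii) of Proposition \ref{charrefl} applied at $\mm$ yields $\Depth_R(M) \ge 2$. Since $\Depth_R(M) \le \Dim(M) \le \Dim(R) = 2$, all three numbers coincide and $M$ is MCM at $\mm$.

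The argument is essentially bookkeeping: all the real content is packaged into Proposition \ref{charrefl}, and the main thing to track is that under the normality assumption the depth conditions on $R_\pp$ used there translate neatly into height conditions, with $R_\pp$ being a field in codimension zero and a DVR in codimension one. The only mildly delicate step is noting that an MCM module over a DVR is free (so that reflexivity at height-one primes is automatic rather than something to be assumed separately).
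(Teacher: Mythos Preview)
Your proof is correct and follows essentially the same route as the paper's: both directions are reduced to Proposition~\ref{charrefl}, using normality to convert the depth hypotheses on $R_\pp$ into height conditions and noting that $M_\pp$ is free over the regular local ring $R_\pp$ in codimension $\le 1$. The only cosmetic difference is that in the converse you localize at a maximal ideal first, whereas the paper checks the MCM condition at primes of each possible height directly; the content is the same.
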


\begin{proof}
We give the proof of \cite[Proposition 1.28]{karroum}. First assume that $M$ is maximal Cohen-Macaulay. Then $M$ satisfies condition (ii) of Proposition \ref{charrefl}. 
Since $R$ is normal, we have $\Depth_{R_{\pp}}(R_{\pp})\leq 1$, whenever $\Dim(R_{\pp})\leq 1$ by Serre's criterion. Since $R_{\pp}$ is regular for those primes, the localization 
$M_{\pp}$ is free, hence reflexive. Note that this part of the proof does not need the condition $\Dim(R)=2$.

For the contrary, let $M$ be reflexive. Then, since $R$ is normal, by condition (ii) of Proposition \ref{charrefl}, the modules $M_{\pp}$ are maximal Cohen-Macaulay for all $\pp\in\Spec(R)$ 
such that $\Dim(R_{\pp})=2$. The modules $M_{\pp}$ for $\Dim(R_{\pp})=0$ are of course maximal Cohen-Macaulay. For the primes $\pp$ 
with $\Dim(R_{\pp})=1$, the modules $M_{\pp}$ are reflexive, hence torsion-free, hence maximal Cohen-Macaulay.
\end{proof}

\begin{defi}
Let $R$ be a ring. We say that $R$ is of \textit{finite Cohen-Macaulay type} if there are only finitely many isomorphism classes of indecomposable, maximal 
Cohen-Macaulay modules.
\end{defi}

\subsection{Maximal Cohen-Macaulay modules and Hilbert-Kunz theory}

\begin{thm}[Seibert]
Let $(R,\mm,k)$ be a local ring of dimension $d$ and characteristic $p>0$. Assume that $R$ is of finite Cohen-Macaulay type and that $R^p$ is a 
finite $R$-module.

\begin{enumerate}
 \item If $R$ is Cohen-Macaulay, then $\HKM(R)\in\Q$.
 \item If $N$ is maximal Cohen-Macaulay, then $\HKM(N)\in\Q$.
\end{enumerate}
\end{thm}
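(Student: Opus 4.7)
The plan is to exploit the finite Cohen--Macaulay type together with the Frobenius pushforward to reduce the computation of $\HKF(R,p^e)$ to tracking multiplicities of a finite list of indecomposable modules via a linear recursion over $\Z$, after which rationality drops out of the theory of rational generating functions.

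First I would rewrite the Hilbert-Kunz function in Frobenius-pushforward language. If we denote by $\TF_{\ast}^eR$ the $R$-module whose underlying abelian group is $R$ with $R$-action twisted by the $e$-th iterate of Frobenius, then
\begin{equation*}
\HKF(R,p^e)=\lambda_R\!\left(R/\mm^{[p^e]}\right)=\lambda_R\!\left(\TF_{\ast}^eR/\mm\TF_{\ast}^eR\right),
\end{equation*}
and analogously $\HKF(\mm,N,p^e)=\lambda_R(\TF_{\ast}^eN/\mm\TF_{\ast}^eN)$ for a finitely generated $R$-module $N$. The hypothesis that $R^p$ is finite over $R$ guarantees that each $\TF_{\ast}^eR$ (and each $\TF_{\ast}^eN$) is a finitely generated $R$-module, and a comparison of lengths with the dimension formula shows that when $R$ is Cohen--Macaulay the module $\TF_{\ast}^eR$ is maximal Cohen--Macaulay; the same conclusion holds for $\TF_{\ast}^eN$ whenever $N$ is maximal Cohen--Macaulay.

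Next I would fix representatives $M_1,\ldots,M_s$ for the isomorphism classes of indecomposable maximal Cohen--Macaulay $R$-modules. By Krull--Schmidt (applicable because $R$ is local) there are uniquely determined non-negative integers $a_i(e)$ such that
\begin{equation*}
\TF_{\ast}^eR\cong\bigoplus_{i=1}^sM_i^{a_i(e)}.
\end{equation*}
Applying $\TF_{\ast}$ again and decomposing each $\TF_{\ast}M_i\cong\bigoplus_jM_j^{b_{ji}}$ one more time produces a non-negative integer matrix $B=(b_{ji})\in\Z^{s\times s}$ with $a(e+1)=B\cdot a(e)$, so $a(e)=B^e\cdot a(0)$. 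Writing $c_i:=\lambda_R(M_i/\mm M_i)\in\N$, this yields the closed expression
\begin{equation*}
\HKF(R,p^e)=\sum_{i=1}^sc_i\,a_i(e)=c^{\mathrm{T}}B^ea(0).
\end{equation*}

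Finally I would extract rationality from this linear recursion. The generating series $\sum_{e\geq0}\HKF(R,p^e)T^e=c^{\mathrm{T}}(\mathrm{Id}-TB)^{-1}a(0)$ is a rational function in $T$ with coefficients in $\Q$, because $B$ has integer entries. By Monsky's Theorem \ref{thmhkmexists} the sequence $\HKF(R,p^e)/p^{ed}$ converges, so $p^d$ must be an eigenvalue of $B$ and the pole of the generating series at $T=p^{-d}$ dominates; the leading coefficient $\HKM(R)$ is precisely $(1-p^dT)\cdot c^{\mathrm{T}}(\mathrm{Id}-TB)^{-1}a(0)$ evaluated at $T=p^{-d}$, a $\Q$-rational expression in the entries of $B$, $a(0)$ and the $c_i$, hence rational. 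The identical argument with $a(0)$ replaced by the multiplicity vector of the initial decomposition of $N$ proves part~(ii). The only subtle point, and the one that would need the most care, is establishing that $\TF_{\ast}^eN$ is maximal Cohen--Macaulay (so that the decomposition into the finite list $M_1,\dots,M_s$ actually takes place) and that the matrix $B$ is independent of $e$; both rely on the $F$-finiteness hypothesis and on the fact that Frobenius pushforward is an exact functor preserving depth after a finite extension.
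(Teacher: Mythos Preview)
The paper does not supply its own proof of this theorem; it simply cites Seibert's original paper \cite[Theorem~4.1]{seibert2}. Your sketch is precisely Seibert's argument: decompose each Frobenius pushforward $\TF_{\ast}^eR$ (resp.\ $\TF_{\ast}^eN$) into the finitely many indecomposable maximal Cohen--Macaulay modules, encode the effect of a single Frobenius pushforward on multiplicity vectors by an integer matrix $B$, and read off rationality of $\HKM$ from the rational generating function $c^{\mathrm T}(\mathrm{Id}-TB)^{-1}a(0)$ together with Monsky's asymptotic $\HKF(R,p^e)=\HKM(R)p^{de}+O(p^{(d-1)e})$.

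Two small caveats worth tightening if you write this out in full. First, the identity $\lambda_R(R/\mm^{[p^e]})=\lambda_R(\TF_{\ast}^eR/\mm\,\TF_{\ast}^eR)$ is literally true only when the residue field $k$ is perfect; otherwise a factor $[k:k^{p}]^{e}$ appears, though this is harmless for the rationality conclusion. Second, Krull--Schmidt uniqueness of the decomposition requires $R$ to be Henselian (e.g.\ complete); since the Hilbert--Kunz function is unchanged under completion one reduces to that case, but the passage of finite Cohen--Macaulay type to the completion deserves a word. Your identification of the delicate step---that $\TF_{\ast}^eN$ remains maximal Cohen--Macaulay---is exactly right, and is handled cleanly via $\lK^i_{\mm}(\TF_{\ast}M)\cong\TF_{\ast}\lK^i_{\mm}(M)$.
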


\begin{proof}
See \cite[Theorem 4.1]{seibert2}.
\end{proof}

Moreover, Seibert gives an algorithm to compute the Hilbert-Kunz functions in these cases. For example he obtains
$$\HKF(N,p^e)=2p^e,\text{ if }e\geq 1$$ 
for all maximal Cohen-Macaulay modules $N$ over $R:=k\llbracket X,Y\rrbracket/(X^n+Y^2),$ where $n\geq 3$ is odd (cf. \cite[Example 5.1]{seibert2}) and
$$\begin{aligned}
   \HKF(R,p^e) & = 2p^{3e}-1,\\
   \HKF(K,p^e) & = 2p^{3e}+1,\\
   \HKF(U,p^e) & = 4p^{3e}+4
  \end{aligned}$$
for all $e\geq 0$, where $R$ is the second Veronese subring of $k\llbracket X,Y,Z\rrbracket$ (with $\chara(k)>2$) and $K$, $U$ are representatives for the 
isomorphism classes of the non-free, 
indecomposable, maximal Cohen-Macaulay modules with $K=(XY,Y^2,YZ)R$ (this is the canonical module) and 
$$U=K^3/R\left(\begin{matrix}XY\\Y^2\\YZ\end{matrix}\right)$$ 
(cf. \cite[Example 5.2]{seibert2}).

Let $k$ be an algebraically closed field of prime characteristic such that $n\in\N_{\geq 2}$ is invertible in $k$. Recall that the $n$-th Veronese 
subring $R$ of $k[X,Y]$ is the ring of invariants under the action $X\mapsto \epsilon\cdot X$, $Y\mapsto \epsilon\cdot Y$, 
where $\epsilon$ denotes a primitive $n$-th root of unity. The group acting on $k[X,Y]$ is isomorphic to $\Z/(n)$. Since $\Z/(n)$ is commutative, all 
irreducible representations are one-dimensional and there are exactly $n$ different equivalence classes (cf. \cite[Theorem 8.1]{dornhoff}). 
By the correspondence between irreducible representations of the acting group and 
the indecomposable, maximal Cohen-Macaulay modules over the ring of invariants (cf. \cite[Corollary 6.4]{leuwie}), we obtain that the $n$-th Veronese 
subring of $k[X,Y]$ has up to isomorphism exactly $n$ different indecomposable, maximal Cohen-Macaulay modules of rank one and none of higher rank. This 
shows that the Picard group of the punctured spectrum of $R$ is isomorphic to $\Z/(n)$. In the next example we want to demonstrate how one can make use 
of the finite Cohen-Macaulay type in the application of Proposition \ref{hkgeomapp}. 

\begin{exa}\label{veronese}
Let $S:=k[X,Y]$ and $R$ its $n$-th Veronese subring, hence $R$ is generated as $k$-algebra by the monomials $Z_i:=X^iY^{n-i}$. To compute the Hilbert-Kunz 
function of $R$, we have to control the Frobenius pull-backs of $\Syz(Z_0,\ldots,Z_n)$ on the punctured spectrum of $R$. Since $\Syz_R(Z_0,\ldots,Z_n)$ is 
the second syzygy module of the quotient $R/(Z_0,\ldots,Z_n)$, it is maximal Cohen-Macaulay. Since this module has rank $n$, it has to split into $n$ indecomposable, 
maximal Cohen-Macaulay modules of rank one. Our first goal is to describe these indecomposable modules as syzygy modules of two-generated monomial ideals. 
To compute generators for syzygy modules of monomial ideals of $R$, we will work over the factorial domain $S$ as follows. Let $P:=\Syz_R(F_1,\ldots,F_m)$ 
be a syzygy module of a monomial ideal in $R$ and let $(A_1,\ldots,A_m)\in P$. Then treat the relation 
$\sum A_j\cdot F_j=0$ over $S$, meaning that we write the monomials $F_j$ in the variables $X$ and $Y$. Then one can cancel the common 
multiple of the $F_j$, written in $X$, $Y$. The result is a relation $\sum A_j\cdot G_j(X,Y)=0$, where the $G_j\in S$ are coprime. From this relation 
one can compute the $A_j$ (as elements in $R$), using the fact that $k[X,Y]$ is factorial. 
Let $i<j\in\{0,\ldots,n\}$ and let $(A,B)\in\Syz_R(Z_i,Z_j)$. Then 
$$\begin{aligned}
 && AZ_i+BZ_j &= 0 \\
\Leftrightarrow && AX^iY^{n-i} &= -BX^jY^{n-j} \\
\Leftrightarrow && AY^{j-i} &= -BX^{j-i} \\
\Leftrightarrow && (A,B) &= C(X^{j-i},-Y^{j-i})
\end{aligned}$$
for some $C\in k[X,Y]$. Since $A$ is an element in $R$, its degree as element in $S$ is $na$ for some $a\in\N$. Therefore, the degree of $C$ is $na+i-j$. 
If $a\geq 2$ we see that $A$ and $B$ have a common factor in $R$, hence if $(A,B)\in\Syz_R(Z_i,Z_j)$ is of minimal degree, the degree of $A\in S$ has to 
be $n$. But then the possibilities for $C$ of minimal degree are the monomials of degree $n-j+i$. Hence, the $R$-modules $\Syz_R(Z_i,Z_j)$ are (minimally) generated by 
$n-j+i+1$ elements. Explicitely, one has
$$\Syz_R(Z_i,Z_j)=\left\langle
\begin{pmatrix}Z_{j-i} \\ -Z_0\end{pmatrix},
\begin{pmatrix}Z_{j-i+1} \\ -Z_1\end{pmatrix},\ldots,
\begin{pmatrix}Z_{n-1} \\ -Z_{n-j+i-1}\end{pmatrix},
\begin{pmatrix}Z_{n} \\ -Z_{n-j+i}\end{pmatrix}
\right\rangle.$$
This shows 
$$\Syz_R(Z_i,Z_j)\cong\Syz_R(Z_l,Z_m)\Longleftrightarrow |i-j|=|l-m|.$$
Therefore, it is possible to distinguish the different indecomposable, maximal Cohen-Macaulay modules by their number of generators. Moreover, this shows 
that the number of generators of a maximal Cohen-Macaulay module of rank $r$ is bounded above by $n\cdot r$, since it has to split into a direct sum of $r$ 
indecomposable, maximal Cohen-Macaulay modules of rank one and each of these has at most $n$ generators. For $i\in\{0,\ldots,n-1\}$ let 
$$\{(s_{i,1},s_{i+1,1}),\ldots,(s_{i,n},s_{i+1,n})\}$$ 
be a system of generators of $\Syz_R(Z_i,Z_{i+1})$. These give elements 
$$(0,\ldots,0,s_{i,j},s_{i+1,j},0,\ldots,0)\in\Syz_R(Z_0,\ldots,Z_n).$$
We want to show that 
\begin{equation}\label{gen}\{(0,\ldots,0,s_{i,j},s_{i+1,j},0,\ldots,0)|1\leq j\leq n, 0\leq i\leq n-1\}\end{equation}
generates $\Syz_R(Z_0,\ldots,Z_n)$. Since all $s_{i,j}$ have degree one we have to show that the elements in (\ref{gen}) are linear independent. Assuming 
the contrary, there are $i_0\in\{0,\ldots,n-1\}$, $j_0\in\{1,\ldots,n\}$ such that
\begin{equation}\label{linrel}\begin{pmatrix} 0 \\ \vdots \\ 0 \\ s_{i_0,j_0} \\ s_{i_0+1,j_0} \\ 0 \\ \vdots \\ 0 \end{pmatrix} =
\sum_{j\neq j_0}\lambda_j\cdot\begin{pmatrix} 0 \\ \vdots \\ 0 \\ s_{i_0,j} \\ s_{i_0+1,j} \\ 0 \\ \vdots \\ 0 \end{pmatrix}
+
\sum_{i\neq i_0}\begin{pmatrix} 0 \\ \vdots \\ 0 \\ t_{i,i} \\ t_{i,i+1} \\ 0 \\ \vdots \\ 0 \end{pmatrix},\end{equation}
where $\lambda_j\in k$ and for each $i$ the element $(0,\ldots,0,t_{i,i},t_{i,i+1},0,\ldots,0)$ is a linear combination of the elements 
$(0,\ldots,0,s_{i,j},s_{i+1,j},0,\ldots,0)$ for $j\in\{1,\ldots,n\}$. Computing the first $i_0$ components of the right hand side in Equation (\ref{linrel}) (note that the 
summands for $i\geq i_0+1$ are not necessary), we obtain the following system of linear equations
$$\begin{array}{rcl}
t_{i_0-1,i_0}+\sum_{j\neq j_0}\lambda_j\cdot s_{i_0,j} &=& s_{i_0,j_0}\\
t_{i_0-1,i_0-1}+t_{i_0-2,i_0-1} &=& 0\\ 
 &\vdots& \\
t_{i_0-l,i_0-l}+t_{i_0-l-1,i_0-l} &=& 0\\ 
 &\vdots& \\
t_{1,1}+t_{0,1} &=& 0\\ 
t_{0,0} &=& 0.
\end{array}$$
Since $t_{0,0}=0$ and $t_{0,0}Z_0+t_{0,1}Z_1=0$, we obtain $t_{0,1}=0$. Solving the above linear equation system from the bottom, we obtain the contradiction
$s_{i_0,j_0}=\sum_{j\neq j_0}\lambda_j\cdot s_{i_0,j}$.

This shows that the $n\cdot n$ elements from (\ref{gen}) are part of a minimal system of generators of $\Syz_R(Z_0,\ldots,Z_n)$. Since this module is 
maximal Cohen-Macaulay of rank $n$, it has at most $n\cdot n$ generators. All in all, we obtain
$$\Syz_R(Z_0,\ldots,Z_n)\cong\bigoplus_{i=0}^{n-1}\Syz_R(Z_i,Z_{i+1}).$$
For $i\in\N$ write $i=n\cdot q+r$ with $q\in\N$ and $r\in\{0,\ldots,n-1\}$. Let $(A,B)\in\Syz_R(Z_0^i,Z_1^i)$. Then
$$\begin{aligned}
 && AZ_0^i+BZ_1^i &= 0 \\
\Leftrightarrow && AY^{ni} &= -BX^iY^{(n-1)i} \\
\Leftrightarrow && AY^i &= -BX^i \\
\Leftrightarrow && (A,B) &= C(X^i,-Y^i)
\end{aligned}$$
for some $C\in k[X,Y]$. An argumentation as above shows that the possibilities for $C$ of minimal degree are the monomials of degree $n-r$. This shows 
that $\Syz_R(Z_0^i,Z_1^i)$ has $n-r+1$ generators and hence
$$\Syz_R(Z_0^i,Z_1^i)\cong\Syz_R(Z_0,Z_r)(m)\cong\Syz_R(Z_0^r,Z_1^r)(l)$$
for some $m,l\in\Z$. All in all, we obtain for every $m\in\Z$ 
$$\Syz_R(Z_0^i,\ldots,Z_n^i)(m)\cong \Syz_R(Z_0^r,\ldots,Z_n^r)\left(m-(n+1)\frac{i-r}{n}\right).$$
If the characteristic of $k$ is $p$ and $i=p^e$, we obtain from Proposition \ref{hkgeomapp} as in Example \ref{hkfdreivar} by using $\Deg(\Proj(R))=n$ 
(cf. \cite[Exercise I.7.1]{hartshorne}) the formula
$$\HKF(R,q)=\frac{n+1}{2}\cdot(q^2-r^2)+D_r,$$
with $D_r:=\Dim_k\left( R/\left(Z_0^r,\ldots,Z_n^r\right)\right)$.

Next, we give an explicit formula for $D_r$. Of course, we have 
\begin{equation}\label{dr}
D_r = \sum_{i=0}^r\Dim_k\left( R/\left(Z_0^r,\ldots,Z_n^r\right)\right)_i+\sum_{i\geq r+1}\Dim_k\left( R/\left(Z_0^r,\ldots,Z_n^r\right)\right)_i.
\end{equation}
Since monomials in $R$ of degree $i$ correspond to monomials in $k[X,Y]$ of degree $ni$, the first sum above equals 
$$\sum_{i=0}^r(ni+1)-(n+1)=n\cdot\frac{r(r+1)}{2}+r-n.$$
We will show that the second summand in (\ref{dr}) vanishes by showing that a monomial in $R$ of degree $r+1$ has a factor of the form $Z_j^r$ for some $j$. 
Let $M\in R$ be a monomial of degree $r+1$. This means that $M=X^iY^{n(r+1)-i}\in k[X,Y]$ for some $i\in\{0,\ldots,n(r+1)\}$. Assume that 
$i\in [ar,(a+1)r)$ for some $a\in\N$. We obtain
$$M=X^iY^{n(r+1)-i}=X^{i-ar+ar}Y^{nr+n-i+ar-ar}=X^{ar}Y^{nr-ar}X^{i-ar}Y^{ar+n-i}=Z_a^rX^{i-ar}Y^{ar+n-i},$$
showing that all monomials in $R_{r+1}$ vanish modulo $(Z_0^r,\ldots,Z_n^r)$.

Finally, under the assumption $\gcd(p,n)=1$ the Hilbert-Kunz function of $R$ is given by
$$\HKF(R,p^e)=\frac{n+1}{2}\cdot \left(p^{2e}-r^2\right)+n\cdot\frac{r(r+1)}{2}+r-n,$$
where $r$ is the smallest non-negative representative of the class of $p^e$ in $\Z/(n)$.
\end{exa}

\section{Generalities on matrix factorizations}
Let $(S,\mm,k)$ be a regular local ring, $f\in \mm^2\setminus\{0\}$ and let $R:=S/(f)$.
The starting point for the theory of matrix factorizations is the observation that by the Auslander-Buchsbaum formula every maximal Cohen-Macaulay $R$-module $M$ has - viewed as a $S$-module - a free resolution of length one:
$$0\ra S^n\stackrel{\phi}{\ra} S^n\ra M\ra 0.$$

Because $\phi$ is injective and $f\cdot S^n$ is contained in the image of $\phi$, one can construct a linear map $\psi:S^n\ra S^n$ satisfying $\phi\circ\psi=f\cdot\Id$.
Composing this equation from the right by $\phi$ and using again that $\phi$ is injective, one gets $\psi\circ\phi=f\cdot\Id$. This motivates the following definition.

\begin{defi}
For a given non-zero $f\in\mm^2$, a pair of $n\times n$-matrices $(\phi,\psi)$ over $S$ with
$$\phi\circ\psi=\psi\circ\phi=f\cdot\Id_n$$
is called a \textit{matrix factorization} for $f$ of size $n$.

A matrix factorization $(\phi,\psi)$ is called \textit{reduced} if all entries are non-units.
\end{defi}

\begin{exa}
The pairs $(f,1)$ and $(1,f)$ are non-reduced matrix factorizations for $f$ of size one. If $f=g\cdot h$ with $g,h\in\mm$, then $(g,h)$ and $(h,g)$ are reduced matrix factorizations for $f$ of size one.
\end{exa}

We denote the set of matrix factorizations for $f$ by $\MF_S(f)$. The following two definitions will enrich its structure to an additive category.

\begin{defi}
 Let $(\phi_1,\psi_1)$ and $(\phi_2,\psi_2)$ be two matrix factorizations for $f$ of size $n$ resp. $m$. A \textit{morphism} from $(\phi_1,\psi_1)$ to $(\phi_2,\psi_2)$ is a pair of $m\times n$-matrices $(\alpha,\beta)$ over $S$ such that the following diagram commutes
$$\xymatrix{
S^n\ar[r]^{\psi_1}\ar[d]^{\alpha} & S^n\ar[r]^{\phi_1}\ar[d]^{\beta} & S^n\ar[d]^{\alpha} \\
S^m\ar[r]^{\psi_2} & S^m\ar[r]^{\phi_2} & S^m.
}$$
We call $(\phi_1,\psi_1)$ and $(\phi_2,\psi_2)$ \textit{equivalent} if there is a morphism $(\alpha,\beta)$ between them, where $\alpha$ and $\beta$ are isomorphisms. 
We will denote a morphism $(\alpha,\beta)$ from $(\phi_1,\psi_1)$ to $(\phi_2,\psi_2)$ by $$(\alpha,\beta):(\phi_1,\psi_1)\ra(\phi_2,\psi_2).$$
\end{defi}

\begin{rem}
Note that the commutativity of the second square is enough: Multiplying $\psi_2\alpha$ by $f\cdot\Id$ from the right gives
$$\begin{aligned}
(\psi_2\alpha)(f\cdot\Id) =& (\psi_2\alpha)(\phi_1\psi_1)\\
 =& \psi_2(\alpha\phi_1)\psi_1\\
 =& \psi_2(\phi_2\beta)\psi_1\\
 =& (\psi_2\phi_2)(\beta\psi_1)\\
 =& (f\cdot\Id)(\beta\psi_1).
\end{aligned}$$
\end{rem}

\begin{defi}
 Let $(\phi_1,\psi_1)$ and $(\phi_2,\psi_2)$ be two matrix factorizations for $f$ of size $n$ resp. $m$. We define their direct sum as
$$(\phi_1,\psi_1)\oplus (\phi_2,\psi_2):=\left(\begin{pmatrix}\phi_1&0\\0&\phi_2\end{pmatrix},\begin{pmatrix}\psi_1&0\\0&\psi_2\end{pmatrix}\right).$$ 
\end{defi}

\begin{defi}
A matrix factorization $(\phi,\psi)$ is called \textit{indecomposable} if it is not equivalent to a direct sum of matrix factorizations. 
\end{defi}

\begin{exa}\label{d4split}
Let $f=X^2+Y^3+YZ^2$. Assume that the ground field is not of characteristic two and contains $i$. Then $(\phi_0,\phi_0)$, $(\phi_1,\phi_1)$ and $(\phi_2,\phi_2)$ with $$\phi_0=\begin{pmatrix} -X & Y^2 & YZ & 0\\ Y & X & 0 & -Z\\ Z & 0 & X & Y\\ 0 & -YZ & Y^2 & -X\end{pmatrix},$$
$$\phi_1=\begin{pmatrix} X & iY^2+YZ\\ Z-iY & -X\end{pmatrix}\qquad\text{and}\qquad \phi_2=\begin{pmatrix} X & -iY^2+YZ\\ Z+iY & -X\end{pmatrix}$$
are matrix factorizations for $f$. In this case $(\alpha,-\alpha):(\phi_0,\phi_0)\lra(\phi_1,\phi_1)\oplus (\phi_2,\phi_2)$ with 
$$\alpha=\begin{pmatrix} i & 0 & 0 & -1\\ 0 & -1 & -i & 0\\ -i & 0 & 0 & -1\\ 0 & -1 & i & 0 \end{pmatrix}$$
is an equivalence of matrix factorizations for $f$.
\end{exa}

Since $f\cdot S^n\subseteq \phi(S^n)$, the cokernel of $\phi$ is annihilated by $f$ and therefore an $R$-module. Taking the sequence of maps and modules
$$\ldots\stackrel{\phi}{\longrightarrow} S^n\stackrel{\psi}{\longrightarrow} S^n\stackrel{\phi}{\longrightarrow} S^n\lra\coKer(\phi)\lra 0$$
modulo $f$, gives a two-periodic free resolution of $\coKer(\phi)$ as an $R$-module. Because of this, $\coKer(\phi)$ is isomorphic to all of its $2i$-th 
syzygy modules that are maximal Cohen-Macaulay by Proposition \ref{kermcm} if $2i$ is at least $\Dim(R)$.
All in all we get a functor 
$$\coKer((\phi,\psi)):=\coKer(\phi):\MF_S(f)\lra\mathfrak{C}(R),$$ 
where $\mathfrak{C}(R)$ denotes the category of maximal Cohen-Macaulay $R$-modules.
As described at the beginning of this section, we have a functor $\Gamma:\mathfrak{C}(R)\longrightarrow\MF_S(f)$ as well.

\begin{thm}[Eisenbud]\label{eismatfac}
The maps $\coKer$ and $\Gamma$ induce a bijection between the sets of reduced matrix factorizations for $f$ up to equivalence and non-free, maximal Cohen-Macaulay $R$-modules up to isomorphism. Moreover, this bijection respects direct sums.
\end{thm}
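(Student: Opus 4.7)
The plan is to establish four ingredients separately: well-definedness of $\coKer$ on equivalence classes of reduced matrix factorizations, well-definedness of the inverse construction $\Gamma$ on isomorphism classes of non-free MCM modules, the fact that the two are mutually inverse, and finally the compatibility with direct sums (which is essentially bookkeeping from the block structure of the defining relations $\phi\psi = \psi\phi = f\cdot\Id$).

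For the forward direction, given a reduced matrix factorization $(\phi,\psi)$, I would first observe that $\phi$ is injective (since $\psi\phi = f\cdot\Id$ and $f$ is a non-zerodivisor on $S$), so $0 \to S^n \stackrel{\phi}{\to} S^n \to M \to 0$ is exact with $M := \coKer(\phi)$. Then $\projdim_S M \leq 1$, and the Auslander-Buchsbaum formula yields $\Depth_S M = \Dim S - 1 = \Dim R$. Since $fM = 0$ we have $\Depth_R M = \Depth_S M$, so $M$ is MCM over $R$. Reducedness of $(\phi,\psi)$ means all entries of $\phi$ lie in $\mm$, so the two-periodic $R$-resolution $\ldots \to R^n \to R^n \to M \to 0$ obtained by reducing modulo $f$ is the minimal $R$-resolution of $M$; if $M$ were free, this minimal resolution would have length zero, forcing $n=0$. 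Functoriality of $\coKer$ immediately sends equivalent matrix factorizations to isomorphic cokernels.

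For the reverse direction, let $M$ be a non-free MCM $R$-module. Viewing $M$ as an $S$-module, the same Auslander-Buchsbaum argument gives $\projdim_S M \leq 1$, and $M$ cannot be $S$-free (since $f$ annihilates it but $f \neq 0$). So $\projdim_S M = 1$ and $M$ admits a minimal $S$-resolution $0 \to S^n \stackrel{\phi}{\to} S^m \to M \to 0$; tensoring with the fraction field of $S$ kills $M$ and forces $n = m$. Because $fM = 0$, the inclusion $fS^n \subseteq \Ima\phi$ allows us to define $\psi$ with $\phi\psi = f\cdot\Id$, and the computation given in the remark after the definition of a morphism of matrix factorizations shows $\psi\phi = f\cdot\Id$ as well. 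Minimality of the $S$-resolution means the entries of $\phi$ lie in $\mm$; by the two-periodicity of the induced $R$-resolution the same holds for $\psi$, so $\Gamma(M) := (\phi,\psi)$ is reduced. Uniqueness of minimal free resolutions up to isomorphism ensures that $\Gamma$ descends to isomorphism classes.

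That the two maps are mutually inverse is essentially built in: starting from a reduced matrix factorization, reducedness guarantees that the $S$-resolution it produces for $\coKer(\phi)$ is already minimal, so $\Gamma(\coKer(\phi,\psi))$ returns $(\phi,\psi)$ up to equivalence; conversely $\coKer(\Gamma(M)) = M$ by construction. Additivity of both functors under direct sums is straightforward from the block-diagonal form of $(\phi_1,\psi_1) \oplus (\phi_2,\psi_2)$. I expect the main obstacle to be the well-definedness of $\Gamma$ on isomorphism classes: one needs that any $R$-isomorphism $M_1 \cong M_2$ lifts to a pair of isomorphisms between the chosen minimal $S$-resolutions, and that such a lift on the $\phi$-component automatically intertwines the $\psi$-components. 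The lift of the isomorphism exists by standard comparison of projective resolutions, and the intertwining property follows by multiplying the square $\psi_2 \alpha = \beta \psi_1$ by $f\cdot\Id$ and using injectivity of $\phi_1$ and $\phi_2$, exactly as in the remark after the definition of a morphism of matrix factorizations. A secondary subtlety, already mentioned above, is the reducedness of $\psi$; this I would handle by noting that $(\psi,\phi)$ is again a matrix factorization for $f$ whose cokernel is a second syzygy of $M$, hence non-free and MCM, and applying the minimality argument symmetrically.
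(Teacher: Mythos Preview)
The paper does not give its own proof---it simply cites Eisenbud's original paper---so there is no approach to compare against. Your sketch follows the standard argument and is largely correct, but the handling of the reducedness of $\psi$ has a genuine gap. You propose to show that $\psi$ has no unit entries by noting that $\coKer(\psi)$ is a syzygy of $M$ (the first $R$-syzygy, incidentally, not the second), hence non-free MCM, and then ``applying the minimality argument symmetrically.'' But the minimality argument you invoked for $\phi$ says: the \emph{minimal} $S$-resolution of a module has presentation matrix with entries in $\mm$. The resolution $0 \to S^n \stackrel{\psi}{\to} S^n \to \coKer(\psi) \to 0$ that you have in hand is not known to be minimal; in fact it is minimal exactly when $\psi$ has no unit entries, which is what you are trying to prove. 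The argument is circular.

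The correct fix is direct: if some entry of $\psi$ is a unit, row and column operations bring $(\phi,\psi)$ into the form $(f,1) \oplus (\phi',\psi')$, whence $M = \coKer(\phi) \cong R \oplus \coKer(\phi')$ acquires a free $R$-summand. So the hypothesis actually needed is that $M$ has no free direct summand, not merely that $M$ is non-free. (For the indecomposable modules the paper cares about these conditions coincide, and the literature often uses ``non-free'' loosely in this sense.) With that reading, replace the circular step by the splitting argument above and your proof is complete.
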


\begin{proof}See \cite[Corollary 6.3]{eismat}.\end{proof}

\begin{rem}
Note that the non-reduced matrix factorizations $(f,1)$ and $(1,f)$ of $f$ are mapped under $\coKer$ to $R$ \mbox{resp. $0$}. Using the above theorem one can show that any 
matrix factorization $(\phi,\psi)$ is equivalent to a direct sum of matrix factorizations of the form $(\phi_0,\psi_0)\bigoplus\oplus_{i=1}^a(f,1)\bigoplus\oplus_{i=1}^b(1,f)$, where the natural numbers $a$, $b$ are uniquely determined and $(\phi_0,\psi_0)$ is reduced and unique up to equivalence (cf. \cite[Remark 7.5]{yobook}).
\end{rem}

Theorem \ref{eismatfac} generalizes to certain quotients of categories given by the following construction.

\begin{con}
Let $\Ac$ be an Abelian category and $\Bc$ a subclass of objects of $\Ac$. For any $A$, $B\in\Ac$, we define $\Bc(A,B)\subseteq\Hom_{\Ac}(A,B)$ as the set of morphisms that factor through direct sums of objects in $\Bc$. 
In fact, $\Bc(A,B)$ is a normal subgroup of $\Hom_{\Ac}(A,B)$. We define the category $\Ac/\Bc$, whose objects are the objects of $\Ac$, but the morphisms from $A$ to $B$ are given by the quotient
$$\Hom_{\Ac/\Bc}(A,B):=\Hom_{\Ac}(A,B)/\Bc(A,B).$$
\end{con}

\begin{defi}
 Let $\underline{\MF}_S(f):=\MF_S(f)/\{(1,f),(f,1)\}$ be the category of equivalence classes of reduced matrix factorizations and $\underline{\mathfrak{C}}(R):=\mathfrak{C}(R)/\{R\}$ the category of isomorphism classes of non-free maximal Cohen-Macaulay \mbox{$R$-modules}.
\end{defi}

Now Theorem \ref{eismatfac} generalizes as follows.

\begin{thm}[Eisenbud]
The functors $\coKer$ and $\Gamma$ induce an equivalence of the categories $\underline{\MF}_S(f)$ and $\underline{\mathfrak{C}}(R).$
\end{thm}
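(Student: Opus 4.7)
The strategy is to upgrade the bijection on isomorphism classes already supplied by Theorem~\ref{eismatfac} to a full equivalence by checking that $\coKer$ and $\Gamma$ (a) descend to well-defined functors on the quotient categories, and (b) remain quasi-inverse at the level of morphisms. The plan is to handle the descent in each direction separately, using the explicit homological lifting recipe that produced $\Gamma$ in the first place.

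First I would verify that $\coKer$ descends. A morphism of matrix factorizations that factors through a direct sum of copies of $(1,f)$ and $(f,1)$ induces, upon taking cokernels, a morphism of $R$-modules that factors through $\coKer((1,f))^{\oplus a}\oplus\coKer((f,1))^{\oplus b}=R^{\oplus a}\oplus 0=R^{\oplus a}$, so it is annihilated in $\underline{\mathfrak C}(R)$. This step is completely formal once one records that $\coKer$ is additive and that $\coKer((f,1))=0$.

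The delicate step is descending $\Gamma$. Given $\alpha_0\colon M\to N$ with $S$-free resolutions $0\to S^n\xrightarrow{\phi_1}S^n\to M\to 0$ and $0\to S^m\xrightarrow{\phi_2}S^m\to N\to 0$, one lifts $\alpha_0$ by projectivity of $S^n$ to some $\alpha\colon S^n\to S^m$ and defines $\beta$ by the unique factorization $\phi_2\beta=\alpha\phi_1$ (injectivity of $\phi_2$). The key computation is that any two such lifts differ by $\alpha-\alpha'=\phi_2 h$ for some $h\colon S^n\to S^m$, forcing $\beta-\beta'=h\phi_1$. Thus the ambiguity $(\phi_2 h,h\phi_1)$ is exactly the shape of a morphism $(\phi_1,\psi_1)\to(\phi_2,\psi_2)$ that factors as $(\phi_1,\psi_1)\xrightarrow{(h\phi_1,h)}(1,f)^n\xrightarrow{(\phi_2 h,h)}(\phi_2,\psi_2)$, as one checks by writing out the two defining squares of a morphism of matrix factorizations. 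This is the main obstacle, because it requires identifying the ``homotopy between two lifts'' picture with ``factors through $(1,f)^n$''; once this identification is written down, well-definedness of $\Gamma$ on $\underline{\mathfrak C}(R)$ is immediate. Conversely, I would check that if $\alpha_0$ factors through a free module $R^k$, the standard lift factors through the matrix factorization $(\Id_k,f\Id_k)=(1,f)^{\oplus k}$, so $\Gamma$ indeed kills the ideal of morphisms we quotiented out by.

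Finally I would assemble the equivalence. On objects, the bijection on isomorphism classes from Theorem~\ref{eismatfac}, together with the fact that the non-reduced summands $(1,f)$ and $(f,1)$ are precisely the objects killed in $\underline{\MF}_S(f)$, shows that $\coKer$ and $\Gamma$ are essentially surjective and mutually inverse on objects. For the morphism bijection, the two natural transformations $\coKer\circ\Gamma\Rightarrow\Id$ and $\Gamma\circ\coKer\Rightarrow\Id$ are produced by the canonical identifications arising from the lifting construction; these are isomorphisms in the quotient categories because the discrepancy in each case is again a morphism factoring through $(1,f)$, $(f,1)$, or $R$, respectively. This assembly step is mostly formal once the two descent statements are in place.
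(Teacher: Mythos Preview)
The paper does not give a proof of this statement: it simply cites \cite[Theorem 7.4]{yobook}. Your proposal therefore cannot be compared to the paper's own argument, but it does follow what is essentially the standard proof (as in Yoshino's book), namely showing that the null-homotopic lifts correspond exactly to morphisms factoring through the trivial matrix factorizations, so that both functors descend.

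One small bookkeeping slip: in your explicit factorization of $(\phi_2 h, h\phi_1)$ through $(1,f)^m$, the labels on the two arrows are not quite right under the paper's convention for morphisms $(\alpha,\beta)$. With that convention one has morphisms $(\phi_1,\psi_1)\xrightarrow{(h,\,h\phi_1)}(1,f)^m\xrightarrow{(\phi_2,\,\Id_m)}(\phi_2,\psi_2)$, and their composite is $(\phi_2 h, h\phi_1)$ as desired. Also note the intermediate object has size $m$, not $n$. These are cosmetic; the idea is sound.
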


\begin{proof}See \cite[Theorem 7.4]{yobook}.\end{proof}

\begin{rem}\label{rankofmatfac}
 If $f$ is prime and $(\phi,\psi)$ is a matrix factorization for $f$ of size $n$, then there is a unit $u\in S$ and a number $m\in\{0,\ldots,n\}$ such that $\Det(\phi)=u\cdot f^m$ and $\Det(\psi)=u^{-1}\cdot f^{n-m}$. In this case $\coKer(\phi)$ has rank $m$ and $\coKer(\psi)$ has rank $n-m$ as $R$-modules.
This can be seen by localizing at the prime ideal $(f)$. Since $S_{(f)}$ is a discrete valuation ring, the map $\phi_{(f)}$ on the localization is equivalent to the map $f\cdot\Id_m\oplus\Id_{n-m}$. Obviously, $\coKer(\phi_{(f)})$ has rank $m$ over the field $Q(R)\cong S_{(f)}/fS_{(f)}$.
Note that $m\in\{1,\ldots,n-1\}$, if $(\phi,\psi)$ is reduced. Compare with \cite[Proposition 5.6]{eismat}.
\end{rem}

The next lemma describes the connection of a maximal Cohen-Macaulay $R$-module and its dual in terms of matrix factorizations.

\begin{lem}
Given a matrix factorization $(\phi,\psi)$ for $f$, the pair $\left(\phi\trans,\psi\trans\right)$ is again a matrix factorization for $f$. Moreover, we have $$\coKer\left(\phi\trans\right)\cong(\coKer(\phi))^{\vee}.$$
\end{lem}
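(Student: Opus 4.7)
The first statement is immediate: transposing both sides of $\phi\psi = \psi\phi = f\cdot\Id_n$ gives $\psi^T\phi^T = \phi^T\psi^T = f\cdot\Id_n$, so $(\phi^T,\psi^T)$ is again a matrix factorization of $f$.

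For the second statement, my plan is to use the two-periodic free resolution together with the fact that $R = S/(f)$ is Gorenstein (as a hypersurface in a regular local ring) and that $\coKer(\phi)$ is a maximal Cohen-Macaulay $R$-module. Reducing $(\phi,\psi)$ modulo $f$ yields the exact two-periodic complex
\[
\cdots \xrightarrow{\bar\phi} R^n \xrightarrow{\bar\psi} R^n \xrightarrow{\bar\phi} R^n \longrightarrow \coKer(\phi)\longrightarrow 0,
\]
which is a free resolution of $\coKer(\phi)$ over $R$. (Exactness at each spot follows from the injectivity of $\phi$ and $\psi$ over $S$ together with $\phi\psi = f\cdot\Id$.)

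Next I would apply $\Hom_R(-,R)$ to this resolution. Since $R$ is Gorenstein of dimension $d$ and $\coKer(\phi)$ is maximal Cohen-Macaulay, one has $\Ext^i_R(\coKer(\phi),R) = 0$ for all $i\geq 1$. Consequently the dualized complex
\[
0\longrightarrow (\coKer(\phi))^{\vee}\longrightarrow R^n\xrightarrow{\bar\phi^T} R^n\xrightarrow{\bar\psi^T} R^n\xrightarrow{\bar\phi^T}\cdots
\]
is still exact. In particular $(\coKer(\phi))^{\vee} = \ker(\bar\phi^T)$, and from exactness at the next two spots we get $\ker(\bar\phi^T) = \im(\bar\psi^T)$ and $\ker(\bar\psi^T) = \im(\bar\phi^T)$.

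To conclude, I would identify $\im(\bar\psi^T)$ with $\coKer(\phi^T)$. Indeed, $\im(\bar\psi^T) \cong R^n/\ker(\bar\psi^T) = R^n/\im(\bar\phi^T) = \coKer(\bar\phi^T) = \coKer(\phi^T)$. Chaining the two isomorphisms yields $(\coKer(\phi))^{\vee}\cong\coKer(\phi^T)$. There is no serious obstacle here: the whole argument rests on the vanishing of $\Ext^i_R(\coKer(\phi),R)$ for $i\geq 1$, which is the standard cohomological characterization of MCM modules over a Gorenstein ring, and the rest is just bookkeeping on the two-periodic resolution.
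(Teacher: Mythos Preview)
Your proof is correct and reaches the same chain of isomorphisms as the paper,
\[
(\coKer\phi)^{\vee}\cong\ker(\bar\phi^{T})=\Ima(\bar\psi^{T})\cong R^{n}/\ker(\bar\psi^{T})=R^{n}/\Ima(\bar\phi^{T})=\coKer(\phi^{T}),
\]
but you justify the middle equalities differently. You dualize the full periodic resolution of $\coKer(\phi)$ and invoke $\Ext^{i}_{R}(\coKer(\phi),R)=0$ for $i\geq 1$ (using that $R$ is Gorenstein and $\coKer(\phi)$ is maximal Cohen--Macaulay) to conclude the transposed complex is exact. The paper instead simply observes that $(\phi^{T},\psi^{T})$ is itself a matrix factorization of $f$, hence its associated two-periodic complex over $R$ is automatically exact; this gives $\ker(\bar\phi^{T})=\Ima(\bar\psi^{T})$ and $\ker(\bar\psi^{T})=\Ima(\bar\phi^{T})$ without any appeal to Ext vanishing. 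The paper's route is more elementary and self-contained (it uses nothing beyond the basic theory of matrix factorizations already set up), while yours situates the result in the broader framework of duality for MCM modules over Gorenstein rings.
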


\begin{proof}
The first statement is clear. For a matrix $M$ over $S$, we denote the matrix over $R$ given by taking every entry from $M$ modulo $f$ by $\overline{M}$. Applying $\Hom(\_,R)$ to $$R^n\stackrel{\overline{\phi}}{\lra}R^n\lra\coKer(\overline{\phi})\lra 0,$$
produces the exact sequence
$$0\lra(\coKer(\overline{\phi}))^{\vee}\lra R^n\stackrel{\overline{\phi}\trans}{\lra}R^n\lra\coKer\left(\overline{\phi}\trans\right)\lra 0.$$
This shows $$\coKer\left(\overline{\phi}\trans\right)\cong R^n/\Ker\left(\overline{\psi}\trans\right)\cong \Ima\left(\overline{\psi}\trans\right)\cong \Ker\left(\overline{\phi}\trans\right)\cong(\coKer(\overline{\phi}))^{\vee}.$$
Since $\coKer(\phi)$ and $\coKer\left(\phi\trans\right)$ have a natural $R$-module structure, they identify with the $R$-modules $\coKer(\overline{\phi})$ resp. $\coKer\left(\overline{\phi}\trans\right)$.
\end{proof}

Due to Yoshino (cf. \cite{yotensor}), there is the notion of the tensor product of matrix factorizations for power series in disjoint sets of variables.

\begin{defi}
Let $(\phi_f,\psi_f)$ be a matrix factorization for $f$ in $k\llbracket X_1,\ldots,X_r \rrbracket$ of size $n$ and $(\phi_g,\psi_g)$ a matrix factorization for $g$ in $k\llbracket Y_1,\ldots,Y_s\rrbracket$ of size $m$.
Then 
$$(\phi_f,\psi_f)\hat{\otimes} (\phi_g,\psi_g):=\left(
\begin{pmatrix}\phi_f\otimes\id_m & \id_n\otimes\phi_g\\ -\id_n\otimes\psi_g & \psi_f\otimes\id_m\end{pmatrix},
\begin{pmatrix}\psi_f\otimes\id_m & -\id_n\otimes\phi_g\\ \id_n\otimes\psi_g & \phi_f\otimes\id_m\end{pmatrix}
\right)$$
is a matrix factorization for $f+g$ in $k\llbracket X_1,\ldots,X_r,Y_1,\ldots,Y_s\rrbracket$ of size $2nm$.

If $(\alpha,\beta):(\phi_1,\psi_1)\ra (\phi_2,\psi_2)$ is a morphism in $\MF(f)$, then 
$$(\alpha,\beta)\hat{\otimes}(\phi_g,\psi_g)\coloneqq \left(\begin{pmatrix}\alpha\otimes\id_m & 0\\ 0 & \beta\otimes\id_m\end{pmatrix},\begin{pmatrix}\beta\otimes\id_m & 0\\ 0 & \alpha\otimes\id_m\end{pmatrix}\right)$$
is a morphism from $(\phi_1,\psi_1)\hat{\otimes}(\phi_g,\psi_g)$ to $(\phi_2,\psi_2)\hat{\otimes}(\phi_g,\psi_g)$ in $\MF(f+g)$. Similarly, if $(\gamma,\delta):(\phi_3,\psi_3)\ra(\phi_4,\psi_4)$ is a morphism in $\MF(g)$, then
$$(\phi_f,\psi_f)\hat{\otimes}(\gamma,\delta)\coloneqq \left(\begin{pmatrix}\id_n\otimes\gamma & 0\\ 0 & \id_n\otimes\delta\end{pmatrix},\begin{pmatrix}\id_n\otimes\delta & 0\\ 0 & \id_n\otimes\gamma\end{pmatrix}\right)$$
is a morphism from $(\phi_f,\psi_f)\hat{\otimes}(\phi_3,\psi_3)$ to $(\phi_f,\psi_f)\hat{\otimes}(\phi_4,\psi_4)$ in $\MF(f+g)$.
\end{defi}

\begin{exa}\label{diagrank2matfac}
Let $d_1$, $d_2$, $d_3\in\N_{\geq 2}$. Then for each $1\leq a\leq d_1-1$ the pair $(X^a,X^{d_1-a})$ is a reduced matrix factorization for $X^{d_1}$. Similarly, $(Y^b,Y^{d_2-b})$ and $(Z^c,Z^{d_3-c})$ with $1\leq b\leq d_2-1$, $1\leq c\leq d_3-1$ are reduced matrix factorizations for $Y^{d_2}$ resp. $Z^{d_3}$.
With these matrix factorizations we can construct matrix factorizations for $X^{d_1}+Y^{d_2}$ and for $X^{d_1}+Y^{d_2}+Z^{d_3}$. For given $a$, $b$, $c$ as above, we get
\begin{align*}
 & ((X^a,X^{d_1-a})\hat{\otimes}(Y^b,Y^{d_2-b}))\hat{\otimes}(Z^c,Z^{d_3-c})\\
=& \left(\begin{pmatrix}
X^a & Y^b \\
-Y^{d_2-b} & X^{d_1-a}
\end{pmatrix},
\begin{pmatrix}
X^{d_1-a} & -Y^b \\
Y^{d_2-b} & X^a 
\end{pmatrix}\right)\hat{\otimes}(Z^c,Z^{d_3-c})\\
= &
\left(\begin{pmatrix}
X^a & Y^b & Z^c & 0\\
-Y^{d_2-b} & X^{d_1-a} & 0 & Z^c\\
-Z^{d_3-c} & 0 & X^{d_1-a} & -Y^b\\
0 & -Z^{d_3-c} & Y^{d_2-b} & X^a
\end{pmatrix},
\begin{pmatrix}
X^{d_1-a} & -Y^b & -Z^c & 0\\
Y^{d_2-b} & X^a & 0 & -Z^c\\
Z^{d_3-c} & 0 & X^a & Y^b\\
0 & Z^{d_3-c} & -Y^{d_2-b} & X^{d_1-a}
\end{pmatrix}\right).
\end{align*}
Denote the result by $(\phi,\psi)$. Then $(\phi,\psi)$ and $\left(\phi\trans,\psi\trans\right)$ are equivalent. Let $$\eta:=\begin{pmatrix} 0 & -1 \\ 1 & 0 \end{pmatrix}.$$
Then $(\alpha,\alpha):(\phi,\psi)\lra \left(\phi\trans,\psi\trans\right)$ given by $$\alpha=\begin{pmatrix} 0 & \eta\\ \eta & 0\end{pmatrix}$$
is an equivalence of matrix factorizations. Note that the corresponding modules $\coKer(\phi)$ have rank two by Remark \ref{rankofmatfac}, since $\Det(\phi)=(X^{d_1}+Y^{d_2}+Z^{d_3})^2$.
\end{exa}

This tensor product behaves much better than the tensor product of modules.

\begin{thm}[Yoshino]\label{proptensor}
With the previous notations $$\_\hat{\otimes}(\phi_g,\psi_g):\MF(f)\lra\MF(f+g)$$ is an exact functor. Moreover, if $(\phi_g,\psi_g)$ is reduced, this functor is faithful.
\end{thm}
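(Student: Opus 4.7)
The plan is to verify the two claims separately, using only the explicit block formulas that define the tensor product. Exactness in $\MF_S(f)$ is a statement about the underlying free $S$-modules: a sequence $0\to(\phi_1,\psi_1)\to(\phi_2,\psi_2)\to(\phi_3,\psi_3)\to 0$ is exact when the induced three-term sequences of free $S$-modules in each "row" of the matrix-factorization diagram are short exact. With this in mind, the proof is essentially a bookkeeping exercise that transports exactness through the tensor product.

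First I would unwind the definitions. A morphism $(\alpha,\beta):(\phi_1,\psi_1)\to(\phi_2,\psi_2)$ becomes, after $\_\,\hat{\otimes}(\phi_g,\psi_g)$, the block-diagonal pair
\[
\left(\begin{pmatrix}\alpha\otimes\id_m & 0\\ 0 & \beta\otimes\id_m\end{pmatrix},\begin{pmatrix}\beta\otimes\id_m & 0\\ 0 & \alpha\otimes\id_m\end{pmatrix}\right).
\]
Hence a short exact sequence of morphisms $(\alpha_i,\beta_i)$, $i=1,2$, is carried to a sequence whose underlying $S$-linear maps are the block-diagonal maps $(\alpha_i\otimes\id_m)\oplus(\beta_i\otimes\id_m)$ and $(\beta_i\otimes\id_m)\oplus(\alpha_i\otimes\id_m)$. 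Because tensoring an exact sequence of free $S$-modules by the free module $S^m$ (via $\_\otimes\id_m$) preserves exactness, and because a direct sum of exact sequences is exact, each resulting sequence of free $S$-modules is short exact. This gives exactness of the functor.

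For faithfulness I would argue directly from the same block formula. Suppose $(\alpha,\beta)\,\hat{\otimes}(\phi_g,\psi_g)$ represents the zero morphism in $\MF_S(f+g)$. Reading off the two diagonal blocks forces $\alpha\otimes\id_m=0$ and $\beta\otimes\id_m=0$. Since $m\geq 1$, tensoring the matrix $\alpha$ with $\id_m$ produces $m$ disjoint block copies of $\alpha$; this vanishes precisely when $\alpha=0$, and likewise for $\beta$. Thus $(\alpha,\beta)=0$ in $\MF_S(f)$, which is faithfulness. The role of the reducedness hypothesis on $(\phi_g,\psi_g)$ is to guarantee that no nontrivial summands of the form $(1,f+g)$ or $(f+g,1)$ appear in the tensor product — otherwise the argument could collapse a nonzero $(\alpha,\beta)$ modulo these trivial factors, which is the only place where the non-reduced factorizations $(\phi_g,\psi_g)=(1,g)$ or $(g,1)$ could spoil the conclusion.

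The main obstacle is conceptual rather than computational: one must fix the right notion of "exact sequence" in the additive category $\MF_S(f)$ and ensure the verification is compatible with the quotient passage to $\underline{\MF}_S(f)$ used elsewhere. Once this is pinned down, both statements reduce to the two observations that (i) $\_\otimes_S S^m$ preserves exact sequences of free $S$-modules and (ii) $\alpha\otimes\id_m=0$ implies $\alpha=0$, together with a short check that reducedness prevents the tensored factorization from containing the trivial summands modded out in the quotient category.
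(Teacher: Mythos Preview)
The paper gives no proof of its own here --- it simply cites Yoshino's Lemmata 2.8 and 2.11 --- so there is no in-paper argument to compare against. Your exactness argument is correct: the tensored morphisms are block-diagonal in $\alpha\otimes\id_m$ and $\beta\otimes\id_m$, and tensoring a short exact sequence of free $S$-modules with a free module preserves exactness.

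Your faithfulness argument, however, proves only the trivial case. With morphisms taken strictly (as the paper defines them), $\alpha\otimes\id_m=0$ forces $\alpha=0$ for \emph{any} $m\geq 1$, so your argument establishes faithfulness without using reducedness at all --- which should make you suspicious. The reducedness hypothesis is present because the substantive content of Yoshino's Lemma 2.11 concerns the \emph{homotopy} category (equivalently, $\underline{\MF}_S$): one must show that if $(\alpha,\beta)\hat{\otimes}(\phi_g,\psi_g)$ is null-homotopic, meaning there exist block matrices $S,T$ with
\[
\begin{pmatrix}\alpha\otimes\id_m & 0\\ 0 & \beta\otimes\id_m\end{pmatrix}=\Phi_2\, T + S\,\Psi_1
\]
for the tensored factorization maps $\Phi_2,\Psi_1$, then $(\alpha,\beta)$ was already null-homotopic in $\MF_S(f)$. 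Extracting an $f$-homotopy from an $(f+g)$-homotopy whose blocks mix the $\phi_g,\psi_g$ entries is the nontrivial step, and it is exactly here that reducedness enters: a unit in $\phi_g$ or $\psi_g$ permits homotopies that do not descend. Your proposed role for reducedness --- preventing trivial summands $(1,f+g)$ or $(f+g,1)$ in the tensor product --- is not the mechanism at work and does not address this descent problem. You flag the ``quotient passage'' as a conceptual obstacle, which is the right instinct, but the proof as written does not engage with it.
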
 

\begin{proof} See \cite[Lemmata 2.8 and 2.11]{yotensor}.\end{proof}

\begin{rem}\label{splitrem}
 In Chapter 3 of \cite{yotensor} Yoshino discusses how the indecomposability of matrix factorizations behaves under tensor products. If $\chara(k)\neq 2$, then by \cite[Example 3.8]{yotensor} the matrix factorizations computed in Example \ref{diagrank2matfac}
are indecomposable if and only if at most one of the following equalities holds: $2a=d_1$, $2b=d_2$, $2c=d_3$.
\end{rem}

Yoshino found also the following decomposition behaviour.

\begin{lem}\label{tensorsplit}
Suppose $\chara(k)\neq 2$ and $i\in k$. For matrix factorizations $(\phi,\phi)$ of size $n$ and $(\psi,\psi)$ of size $m$ the tensor product $(\phi,\phi)\hat{\otimes}(\psi,\psi)$ decomposes as the direct sum $(\xi,\zeta)\oplus (\zeta,\xi)$ with
$$(\xi,\zeta)=(\phi\otimes\id_m-i(\id_n\otimes\psi),\phi\otimes\id_m+i(\id_n\otimes\psi)).$$
\end{lem}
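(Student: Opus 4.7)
\begin{Proof}[Proof proposal]
The plan is to write down the tensor product $(\phi,\phi)\hat{\otimes}(\psi,\psi)$ explicitly, using the abbreviations $A:=\phi\otimes\id_m$ and $B:=\id_n\otimes\psi$. Then this matrix factorization equals $(M_1,M_2)$ with
\begin{equation*}
M_1=\begin{pmatrix} A & B \\ -B & A \end{pmatrix},\qquad
M_2=\begin{pmatrix} A & -B \\ B & A \end{pmatrix}.
\end{equation*}
Two observations are crucial: first, $A$ and $B$ commute since they act on different tensor factors; secondly, one has $A^2=f\cdot\id_{nm}$ and $B^2=g\cdot\id_{nm}$ because $(\phi,\phi)$ and $(\psi,\psi)$ are matrix factorizations for $f$ and $g$.

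Next I would introduce the change of basis
\begin{equation*}
\tilde{P}:=\begin{pmatrix} \id_{nm} & \id_{nm} \\ i\cdot\id_{nm} & -i\cdot\id_{nm}\end{pmatrix},
\end{equation*}
which has determinant $-2i$ and is therefore invertible because $\chara(k)\neq 2$ and $i^2=-1\neq 0$. A direct computation, using only that $AB=BA$ and $i^2=-1$, gives
\begin{equation*}
\tilde{P}^{-1}M_1\tilde{P}=\begin{pmatrix} A+iB & 0 \\ 0 & A-iB\end{pmatrix}\quad\text{and}\quad \tilde{P}^{-1}M_2\tilde{P}=\begin{pmatrix} A-iB & 0 \\ 0 & A+iB\end{pmatrix}.
\end{equation*}
Setting $\xi:=A-iB$ and $\zeta:=A+iB$, the pair $(\tilde{P},\tilde{P})$ therefore defines an equivalence between $(M_1,M_2)$ and the direct sum $(\zeta,\xi)\oplus(\xi,\zeta)=(\xi,\zeta)\oplus(\zeta,\xi)$.

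Finally, I would check that $(\xi,\zeta)$ is indeed a matrix factorization for $f+g$, which amounts to
\begin{equation*}
\xi\zeta=(A-iB)(A+iB)=A^2+iAB-iBA-i^2B^2=A^2+B^2=(f+g)\cdot\id_{nm},
\end{equation*}
and the analogous computation for $\zeta\xi$; both rely only on $AB=BA$ and $i^2=-1$. I do not expect any genuine obstacle in this argument: the heart is the simultaneous diagonalization of the two-by-two block matrices $\left(\begin{smallmatrix}1&0\\0&1\end{smallmatrix}\right)\otimes A\pm\left(\begin{smallmatrix}0&1\\-1&0\end{smallmatrix}\right)\otimes B$ using the eigenvectors $(1,\pm i)$ of $\left(\begin{smallmatrix}0&1\\-1&0\end{smallmatrix}\right)$, which in turn is precisely what requires $i\in k$ and $\chara(k)\neq 2$.
\end{Proof}
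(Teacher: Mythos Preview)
Your proof is correct and complete. The paper does not give its own argument for this lemma but simply cites \cite[Lemma 3.2]{yotensor}; your explicit block-diagonalization via the eigenvectors $(1,\pm i)$ of $\left(\begin{smallmatrix}0&1\\-1&0\end{smallmatrix}\right)$ is precisely the standard proof one finds there, so there is nothing further to compare.
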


\begin{proof} See \cite[Lemma 3.2]{yotensor}.\end{proof}

\begin{exa}
Consider the matrix factorizations $\left(X^{d_1'},X^{d_1'}\right)$ for $X^{2d_1'}$ and $\left(Y^{d_2'},Y^{d_2'}\right)$ for $Y^{2d_2'}$. Then $$\left(X^{d_1'},X^{d_1'}\right)\hat{\otimes}\left(Y^{d_2'},Y^{d_2'}\right) \cong \left(X^{d_1'}-iY^{d_2'},X^{d_1'}+iY^{d_2'}\right)\oplus \left(X^{d_1'}+iY^{d_2'},X^{d_1'}-iY^{d_2'}\right).$$
If we are in the case $a=d_1'=d_1/2$ and $b=d_2'=d_2/2$ of Example \ref{diagrank2matfac},
$$\left(\left(X^{d_1'},X^{d_1'}\right)\hat{\otimes}\left(Y^{d_2'},Y^{d_2'}\right)\right)\hat{\otimes}\left(Z^c,Z^{d_3-c}\right)$$ splits by Remark \ref{splitrem} (or Theorem \ref{proptensor}) as the tensor products of the two direct summands above with $(Z^c,Z^{d_3-c})$.
\end{exa}

\section[Constructing isomorphisms of maximal Cohen-Macaulay modules]{Constructing isomorphisms \except{toc}{between}\for{toc}{of} maximal Cohen-Macaulay modules \except{toc}{and first syzygy modules of ideals}}
In this section we construct isomorphisms between maximal Cohen-Macaulay modules and first syzygy modules of ideals over rings of dimension two.
To be more precise, given a full set $M_1,\ldots,M_n$ of representatives for the isomorphism classes of indecomposable, maximal Cohen-Macaulay 
$R$-modules, we want to find for each $i$ non-zero elements $F_1,\ldots,F_m\in R$ such that
$$M_i\cong\Syz_R(F_1,\ldots,F_m).$$
This has to be possible, since first syzygy modules of ideals are second syzygy modules of quotient rings.

From now on we assume that $f$ is irreducible, that $R=S/(f)$ is a local Cohen-Macaulay normal domain of dimension two and that $R$ has no 
singularities outside the origin. In this situation all maximal Cohen-Macaulay modules are locally free on the punctured spectrum of $R$ 
(cf. Lemma \ref{isolated}). We will denote the punctured spectrum of $R$ by $U$.

Now we describe a way to find a representation of a maximal Cohen-Macaulay module, given by a matrix factorization, as a syzygy bundle on $U$. 
At the end of the chapter we discuss an algebraic approach to find an isomorphism between maximal 
Cohen-Macaulay modules and first syzygy modules of ideals that reflects the main idea of restricting $\phi$ to a submodule of lower rank better than 
the rather sheaf-theoretic approach. We will see that this algebraic approach does not work very properly.

\begin{con}
For a (reduced) matrix factorization $(\phi_S,\psi_S)$ of size $n$, we have a two periodic free resolution of $M:=\coKer(\phi_S)$ as $R$-module:
$$\ldots\stackrel{\psi}{\lra} R^n\stackrel{\phi}{\lra} R^n\stackrel{\psi}{\lra} R^n\stackrel{\phi}{\lra} R^n\lra M\lra 0,$$
where $\phi$ and $\psi$ arise from $\phi_S$ and $\psi_S$ by taking their entries modulo $f$. Note that we can identify $M$ with $\coKer(\phi)$.

Therefore we get $$M\cong R^n/\Ima(\phi)\cong R^n/\Ker(\psi)\cong \Ima(\psi).$$

Later we will see that in the case of rank one modules the isomorphism $M\cong\Ima(\psi)$ is already enough to describe $M$ as a syzygy module.

For any subset $J$ of $\{1,\ldots,n\}$ we denote by $\psi^J$ the matrix obtained from $\psi$ by keeping all columns whose index belongs to $J$ (and 
deleting those with index in $J^{\text{C}}$).

If $\coKer(\phi)\cong \Ima(\psi)$ has rank $m$ as an $R$-module, the sheaf $\widetilde{\Ima(\psi)}|_U$ gives a locally free $\Oc_U$-module of rank $m$. Therefore a 
representing first syzygy module of an ideal has to be a syzygy module of an $(m+1)$-generated ideal.

\label{supposethat}Suppose that we can choose a $J\subseteq\{1,\ldots,n\}$ of cardinality $m+1$ such that $\widetilde{\Ima(\psi)}|_U$ and $\Fc\coloneqq\widetilde{\Ima(\psi^J)}|_U$
are isomorphic as sheaves of $\Oc_U$-modules\footnote{Note that we have no argument why this should be possible, but it works in all examples.}, where the isomorphism is the map induced by the natural inclusion $\Ima\left(\psi^J\right)\subseteq\Ima(\psi)$. 
A necessary condition for this is that the matrix $\psi^J$ has full rank - namely $m$ - in every non-zero point. 
Otherwise there would be at least one point $u\in U$ such that the stalk $\widetilde{\Ima(\psi^J)}_u$ has rank at most $m-1$.

Since $\widetilde{\Ima(\psi)}|_U$ comes from a maximal Cohen-Macaulay $R$-module, it is locally free as $\Oc_U$-module, giving that 
$$\Oc_U^{m+1}\stackrel{\psi^J}{\lra}\Fc\lra 0$$
is a surjection of locally free $\Oc_U$-modules of ranks $m+1$ and $m$. Therefore the kernel $\Lc$ has to be locally free of rank one. This gives an isomorphism 
$$\Lc\otimes\Det(\Fc)\cong\Oc_U$$
of the determinants (cf. Proposition \ref{propdet} (iv)). We distinguish whether the determinant of $\Fc$ is trivial or not.

\textit{Case 1.} Assume $\Det(\Fc)\cong\Oc_U$. Then the line bundle $\Lc$ has to be trivial. Hence
\begin{equation}\label{dettrivial}
0\lra\Oc_U\stackrel{\eta}{\lra}\Oc_U^{m+1}\stackrel{\psi^J}{\lra}\Fc\lra 0
\end{equation}
is a short exact sequence and $\eta$ has to be the multiplication by a vector $(G_1,\ldots,G_{m+1})\trans$ with $G_i\in R$. But then Sequence 
\eqref{dettrivial} is exactly the dual of the presenting sequence of $\Syz_U(G_1,\ldots,G_{m+1})$, hence there is an isomorphism
$$\Syz_U(G_1,\ldots,G_{m+1})\cong\Fc^{\vee}.$$
Moreover, the ideal generated by the $G_i$ has to be $\mm$-primary, since the determinant of the syzygy bundle is trivial. Note that this case cannot appear if $\Fc$ has rank one.

\textit{Case 2.} Now assume that $\Lc$ is a non-trivial line bundle, hence $\Det(\Fc)\cong\Lc^{\vee}$. Dualizing the sequence 
$0\ra\Lc\ra\Oc_U^{m+1}\ra\Fc\ra 0$ yields
\begin{equation}\label{detnontrivial}0\lra\Fc^{\vee}\stackrel{(\psi^J)\trans}{\lra}\Oc_U^{m+1}\ra\Lc^{\vee}\lra 0.\end{equation}
Since $\Lc^{\vee}$ is a line bundle it corresponds to a Cartier divisor $D$. There is a finite cover $(U_i)$ of $U$ such that $D|_{U_i}$ 
is principal, say $D|_{U_i}=(h_i)$. The multiplication with the common denominator of the $h_i$ defines an embedding of $\Lc^{\vee}$ into $\Oc_U$. With 
this embedding, we can extend Sequence \eqref{detnontrivial} to 
\begin{equation}\label{nontrivial}0\lra\Fc^{\vee}\stackrel{(\psi^J)\trans}{\lra}\Oc_U^{m+1}\stackrel{\mu}{\lra}\Oc_U\lra\Tc\lra 0,\end{equation}
where $\Tc$ is a non-zero torsion sheaf and the map $\mu$ is the multiplication by a row vector $(G_1,\ldots,G_{m+1})$. In this case, Sequence 
\eqref{nontrivial} is the presenting sequence of the syzygy bundle $\Syz_U(G_1,\ldots,G_{m+1})$, where the ideal $(G_1,\ldots,G_{m+1})$ is not 
$\mm$-primary, since $\Tc$ is non-zero.

In both cases the column vector $(G_1,\ldots,G_{m+1})\trans$ generates the kernel of $\psi^J$. In order to compute the $G_i$, we only need to find an element $(F_1,\ldots,F_{m+1})\trans\in\Ker\left(\psi^J\right)$.
Then $(F_1,\ldots,F_{m+1})\trans$ is just a mutiple of $(G_1,\ldots,G_{m+1})\trans$ and equality (up to multiplication by a unit) holds if we can prove that the $F_i$ are coprime. 
For example this condition is automatically fullfilled if the ideal generated by the $F_i$ is $\mm$-primary. Anyway, the syzygy modules of the two ideals are isomorphic.

All in all, our task is to find a vector $(F_1,\ldots,F_{m+1})\trans$ in the kernel of $\psi^J$. This gives an isomorphism 
$$\widetilde{M}^{\vee}\cong\Fc^{\vee}\cong\Syz_U(F_1,\ldots,F_{m+1}).$$
Since we have the isomorphisms 
$$M^{\vee}\cong\coKer(\phi)^{\vee}\cong\coKer\left(\phi\trans\right)\cong\Ima\left(\psi\trans\right),$$
we will apply this construction to $\psi$, if $\coKer(\phi)$ is selfdual, or to $\psi\trans$, if $\coKer(\phi)$ is not selfdual. In both cases we end up with an 
isomorphism $$\widetilde{M}\cong\Syz_U(F_1,\ldots,F_{m+1}).$$
\end{con}

\begin{rem}
Since the underlying ring $R$ is a two-dimensional normal domain the isomorphisms of $\Oc_U$-modules constructed above, extend to isomorphisms of $R$-modules, since all involved modules are reflexive (cf. \cite[Lemma 3.6]{mcmsur}).
\end{rem}

We will illustrate this construction by an explicit computation.

\begin{exa}
 In Example \ref{diagrank2matfac} we found the matrix factorizations
$$(\phi,\psi):=\left(\begin{pmatrix}
X^a & Y^b & Z^c & 0\\
-Y^{d_2-b} & X^{d_1-a} & 0 & Z^c\\
-Z^{d_3-c} & 0 & X^{d_1-a} & -Y^b\\
0 & -Z^{d_3-c} & Y^{d_2-b} & X^a
\end{pmatrix},
\begin{pmatrix}
X^{d_1-a} & -Y^b & -Z^c & 0\\
Y^{d_2-b} & X^a & 0 & -Z^c\\
Z^{d_3-c} & 0 & X^a & Y^b\\
0 & Z^{d_3-c} & -Y^{d_2-b} & X^{d_1-a}
\end{pmatrix}\right)$$
for $X^{d_1}+Y^{d_2}+Z^{d_3}$ with $1\leq a\leq d_1-1$, $1\leq b\leq d_2-1$ and $1\leq c\leq d_3-1$.

The corresponding modules $\coKer(\phi)$ have rank two and are selfdual, hence we can apply the above construction to $\psi$. The computations
\begin{align*}
\frac{Y^{d_2-b}}{X^a}\cdot \begin{pmatrix} -Y^b \\ X^a \\ 0 \\ Z^{d_3-c} \end{pmatrix}+\frac{Z^{d_3-c}}{X^a}\cdot \begin{pmatrix} -Z^c \\ 0 \\ X^a \\ -Y^{d_2-b} \end{pmatrix} = &  \begin{pmatrix} X^{d_1-a} \\ Y^{d_2-b} \\ Z^{d_3-c} \\ 0 \end{pmatrix},\\
-\frac{X^{d_1-a}}{Y^b}\cdot \begin{pmatrix} -Y^b \\ X^a \\ 0 \\ Z^{d_3-c} \end{pmatrix}+\frac{Z^{d_3-c}}{Y^b}\cdot \begin{pmatrix} 0 \\ -Z^c \\ Y^b \\ X^{d_1-a} \end{pmatrix} = &  \begin{pmatrix} X^{d_1-a} \\ Y^{d_2-b} \\ Z^{d_3-c} \\ 0 \end{pmatrix}
\end{align*}
show that $\widetilde{\Ima(\psi)}|_U^{\vee}$ is locally generated by the columns two, three and four.
Now we have to find three non-zero elements $F_1$, $F_2$ and $F_3$ such that $(F_1,F_2,F_3)\trans$ belongs to the kernel of $\psi^{\{2,3,4\}}$.

The first row $(-Y^b,-Z^c,0)$ of $\psi^{\{2,3,4\}}$ gives the relation $-Y^b\cdot F_1-Z^c\cdot F_2=0$ from which we conclude (since $Y^b$ and $Z^c$ are coprime in $R$) that $F_1=Z^c\cdot G$ and $F_2=-Y^b\cdot G$ for some non-zero $G\in R$.
From the second row $(X^a,0,-Z^c)$ we get the relation $Z^c\cdot X^a\cdot G-Z^c\cdot F_3=0$. This gives $F_3=X^a\cdot G$. Looking at the relations given by the rows three and four of $\psi^{\{2,3,4\}}$, we see that $G$ 
can be chosen as one. We get the isomorphism
$$\widetilde{\Ima(\psi)}|_U^{\vee}\cong\widetilde{\Ima(\psi^{\{2,3,4\}})}|_U^{\vee}\cong\Syz_U(Z^c,-Y^b,X^a),$$
extending to the global isomorphism
$$\Ima(\psi)^{\vee}\cong\Syz_R(Z^c,-Y^b,X^a).$$
In fact, one can delete any column from $\psi$, giving the isomorphisms
\begin{align*}
 \Ima(\psi)^{\vee} \cong & \Syz_R(Z^c,-Y^b,X^a)\\
 \cong & \Syz_R(Z^c,X^{d_1-a},Y^{d_2-b})\\
 \cong & \Syz_R(Y^b,X^{d_1-a},-Z^{d_3-c})\\
 \cong & \Syz_R(X^a,-Y^{d_2-b},-Z^{d_3-c}).
\end{align*}
In all four cases the ideal $(F_1,F_2,F_3)$ is $\mm$-primary, hence $(F_1,F_2,F_3)\trans$ generates the kernel of $\psi^J$ and $\Det(\widetilde{\coKer(\phi)}|_U)$ is always trivial.
\end{exa}

In the next sections we give a complete list of representatives of reduced, indecomposable matrix factorizations up to equivalence in the ADE case 
and compute first syzygy modules of ideals isomorphic to the maximal Cohen-Macaulay modules given by the matrix factorizations. Proofs for the 
completeness of the lists of matrix factorizations can be found in \cite{matfacade} and \cite[Chapter 9, \S 4]{leuwie}. We use their enumeration of 
the matrix factorizations, which corresponds to the enumeration of the corresponding Dynkin-diagrams. In all cases the matrices are various 
$2n\times 2n$- matrices and the corresponding modules have rank $n$. Note that for an $R$-module $M$ we will denote the corresponding $\Oc_{\Spec(R)}$-module again by $M$.
\section{The case $A_n$ with $n\geq 0$}
Let $k$ be an algebraically closed field of odd characteristic not dividing $n+1$ and let 
$$R:=k\llbracket X,Y,Z\rrbracket /(X^{n+1}+YZ)$$ 
with maximal ideal $\mm$. Denote by $U:=\punctured{R}{\mm}$ the punctured spectrum of $R$.
The category of reduced, indecomposable matrix factorizations up to equivalence has the objects $(\phi_m,\psi_m)$ for $m\in\{1,\ldots,n\}$ with
$$(\phi_m,\psi_m)=
\left(\begin{pmatrix}
Y & X^{n+1-m}\\
X^m & -Z
\end{pmatrix},
\begin{pmatrix}
Z & X^{n+1-m}\\
X^m & -Y
\end{pmatrix}\right).$$
We always have $\Rank(\coKer(\phi_m))=1$. The modules
$$\Syz_R(Y,X^{n+1-m})\qquad\text{and}\qquad \Syz_R(X^m,-Z)$$
are generated by the columns of $\psi_m$, hence they are isomorphic to $\coKer(\phi_m)$. Note that $(\psi_m,\phi_m)\cong (\phi_{n+1-m},\psi_{n+1-m})$ by the morphism 
$$\left(\begin{pmatrix}0&1\\-1&0\end{pmatrix},\begin{pmatrix}0&-1\\1&0\end{pmatrix}\right)$$
and $\coKer(\phi_m)^{\vee}=\coKer(\phi_{n+1-m})$, because $(\phi_m^T,\psi_m^T)=(\phi_{n+1-m},\psi_{n+1-m})$. Hence, the Picard-group 
of $R$ is isomorphic to $\Z/(n+1)$.

Since the $\Oc_U$-modules $\Syz_U(X^m,Z)$ are line bundles, they have a description as the line bundle of a reflexive ideal. From the short exact sequence 
of $\Oc_U$-modules 
$$0\ra\Syz_U(X^m,Z)\ra \Oc_U^2\ra (X^m,Z)|_U\ra 0,$$ 
we get by taking determinants $$(X^m,Z)|_U\cong\Syz_U(X^m,Z)^{\vee}\cong\Syz_U(X^m,Y).$$

Taking global sections $\Gamma(U,\_)$ gives the isomorphism
$$\Syz_R(X^m,Y)\cong (X^m,Z)^{\vee\vee}.$$
It remains to compute the reflexive hulls of the ideals $(X^m,Z)$. The reflexive hull of a finitely generated module over a normal ring is the intersection over all localizations of 
that module at primes of height one (cf. the corollary to Theorem 1 in Chapter VII, \S 4, no. 2 of \cite{bourbaki}). 
All $(X^m,Z)$ have exactly one minimal prime (of height one), namely $(X,Z)$, hence we get
$$(X^m,Z)^{\vee\vee}=\{f\in R| f\in(X^m,Z)R_{(X,Z)}\}=\{f\in R| f\in(X^m)R_{(X,Z)}\},$$
where the last equality holds, since $Y$ gets invertible and $Z=-X^{n+1}/Y\in(X^m)R_{(X,Z)}$.

We show by induction on $m$, that the ideals $(X^m,Z)$ are reflexive. This is clear for $m=1$, since $(X,Z)$ is a prime ideal of height one.
Let $m>1$ and assume $(X^l,Z)^{\vee\vee}=(X^l,Z)$ for all $l<m$. The inclusion $(X^m,Z)\subseteq(X^m,Z)^{\vee\vee}$ is clear. 
Let $f\in (X,Z)$, such that $f\in (X^m)R_{(X,Z)}$. Then there are $a,b\in R$ with $f=aX+bZ$. 
Since $bZ\in (X^m)R_{(X,Z)}$, this gives $f-bZ=aX\in(X^m)R_{(X,Z)}$. From this we deduce $a\in(X^{m-1})R_{(X,Z)}$, 
which means $a\in (X^{m-1},Z)$ by the induction hypothesis.

\begin{thm}\label{syzan}
The pairwise non-isomorphic modules $M_m:=\Syz_R(X^m,Z)$ for $m=1,\ldots,n$ give a complete list of representatives of the isomorphism classes of indecomposable, non-free, maximal Cohen-Macaulay modules. 
Moreover, we have $M_m^{\vee}\cong M_{n+1-m}$ and $M_m\cong (X^m,Y)$.
\end{thm}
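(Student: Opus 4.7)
The plan is to prove the theorem by assembling the ingredients established in the preceding discussion, rather than by producing any new structural result. The starting point is Eisenbud's theorem \ref{eismatfac}, which gives a direct-sum-preserving bijection between equivalence classes of reduced matrix factorizations of $X^{n+1}+YZ$ and isomorphism classes of non-free maximal Cohen--Macaulay $R$-modules. Combined with the cited classification of the reduced, indecomposable matrix factorizations as $(\phi_m,\psi_m)$ for $m=1,\ldots,n$, this immediately delivers both pairwise non-isomorphism and completeness — provided we can identify $\coKer(\phi_m)$ with $M_m=\Syz_R(X^m,Z)$.

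First I would carry out that identification. From the two-periodic resolution associated to the matrix factorization we get $\coKer(\phi_m)\cong\Ima(\psi_m)$. The defining equation $\phi_m\psi_m=(X^{n+1}+YZ)\cdot\Id$ says, modulo $f$, that each column of $\psi_m$ is a syzygy for $(X^m,-Z)=(X^m,Z)$, so the image of the first coordinate of $\psi_m$ lands in $\Syz_R(X^m,Z)$. That the two columns $(Z,X^m)^T$ and $(X^{n+1-m},-Y)^T$ in fact generate the whole rank-one module $\Syz_R(X^m,Z)$ follows either from the sheaf-theoretic argument of Section~3.3 (the restriction to $U$ is already an isomorphism, and both sides are reflexive, so it extends) or directly from the coprimeness of $X^m$ and $Z$ in $R$.

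Next I would read off the duality statement from the shape of the matrices: the equality $(\phi_m^T,\psi_m^T)=(\phi_{n+1-m},\psi_{n+1-m})$ is visible by inspection, so
\begin{equation*}
M_m^{\vee}\cong\coKer(\phi_m)^{\vee}\cong\coKer(\phi_m^T)=\coKer(\phi_{n+1-m})\cong M_{n+1-m}.
\end{equation*}

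Finally, for $M_m\cong(X^m,Y)$ I would invoke the computation performed just before the theorem statement. Dualizing the short exact sequence
\begin{equation*}
0\longrightarrow\Syz_U(X^m,Z)\longrightarrow\Oc_U^2\longrightarrow(X^m,Z)|_U\longrightarrow 0
\end{equation*}
gives $\Syz_U(X^m,Y)\cong\Syz_U(X^m,Z)^{\vee}\cong(X^m,Z)|_U$, and taking global sections together with the inductive proof that $(X^m,Z)$ is already reflexive produces the global isomorphism $\Syz_R(X^m,Y)\cong(X^m,Z)$. The desired conclusion $M_m=\Syz_R(X^m,Z)\cong(X^m,Y)$ then follows from the symmetry $Y\leftrightarrow Z$ of the defining equation $X^{n+1}+YZ$, which swaps the two sides of the isomorphism just obtained. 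The only genuinely non-formal step in the whole argument is the reflexivity of $(X^m,Z)$, but its induction has already been carried out; the remainder of the proof is pure bookkeeping.
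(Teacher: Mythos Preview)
Your proposal is correct and follows essentially the same route as the paper: the theorem is a summary of the discussion in Section~3.4, and you assemble exactly those ingredients --- Eisenbud's bijection together with the classification of the $(\phi_m,\psi_m)$ for completeness and pairwise non-isomorphism, the identification $\coKer(\phi_m)\cong\Ima(\psi_m)\cong\Syz_R(X^m,Z)$ via the columns of $\psi_m$, the duality $M_m^\vee\cong M_{n+1-m}$ from $(\phi_m^T,\psi_m^T)=(\phi_{n+1-m},\psi_{n+1-m})$, and the determinant/reflexivity argument for the ideal description. Your explicit invocation of the $Y\leftrightarrow Z$ automorphism to pass from $\Syz_R(X^m,Y)\cong(X^m,Z)$ to $M_m\cong(X^m,Y)$ is a step the paper leaves implicit, but it is the right way to close the gap; one small wording slip --- ``the image of the first coordinate of $\psi_m$'' should read ``the image of $\psi_m$, as witnessed by the second row of $\phi_m$'' --- does not affect the mathematics.
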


\section{The case $D_n$ with $n\geq 4$}
Let $k$ be an algebraically closed field of odd characteristic not dividing $n-2$, let 
$$R:=k\llbracket X,Y,Z\rrbracket/(X^2+Y^{n-1}+YZ^2)$$ 
with maximal ideal $\mm$. Denote by $U:=\punctured{R}{\mm}$ the punctured spectrum of $R$.
In this case the category of reduced, indecomposable matrix factorizations up to equivalence has $n$ objects $(\phi_i,\psi_i)$, $i\in\{1,\ldots,n\}$, given by
\begin{align*}
\phi_1=\psi_1= &
\begin{pmatrix}
X & Y^{n-2}+Z^2\\
Y & -X
\end{pmatrix},
\end{align*}
\begin{align*}
\phi_{n-1}=\psi_{n-1}= &
\begin{pmatrix}
X & Y(iY^{(n-2)/2}+Z)\\
Z-iY^{(n-2)/2} & -X
\end{pmatrix}\quad\text{and}\\
\phi_n=\psi_n= &
\begin{pmatrix}
X & Y(-iY^{(n-2)/2}+Z)\\
Z+iY^{(n-2)/2} & -X
\end{pmatrix}\quad\text{if }n\text{ is even,}\\
\phi_{n-1}=&
\begin{pmatrix}
X+iY^{(n-1)/2} & YZ\\
Z & -X+iY^{(n-1)/2}
\end{pmatrix},\\
\psi_{n-1}= &
\begin{pmatrix}
X-iY^{(n-1)/2} & YZ\\
Z & -X-iY^{(n-1)/2}
\end{pmatrix}\quad\text{and}\\
(\phi_n,\psi_n)=&
(\psi_{n-1},\phi_{n-1})\quad\text{if }n\text{ is odd.}
\end{align*}
Independent of the parity of $n$ we have
$$\phi_m=\psi_m =
\begin{pmatrix}
-X & 0 & YZ & Y^{m/2}\\
0 & -X & Y^{n-1-m/2} & -Z\\
Z & Y^{m/2} & X & 0\\
Y^{n-1-m/2} & -YZ & 0 & X
\end{pmatrix}$$
if $m\in\{2,\ldots,n-2\}$ is even, and
$$\phi_m=\psi_m =
\begin{pmatrix}
-X & Y^{n-1-(m-1)/2} & YZ & 0\\
Y^{(m-1)/2} & X & 0 & -Z\\
Z & 0 & X & Y^{n-2-(m-1)/2}\\
0 & -YZ & Y^{(m+1)/2} & -X
\end{pmatrix}$$ 
if $m\in\{2,\ldots,n-2\}$ is odd.

The corresponding modules $\coKer(\phi_m)$ have rank one for $m\in\{1,n-1,n\}$ and rank two otherwise. Moreover, $\coKer(\phi_1)$ is selfdual, 
but $\coKer(\phi_{n-1})$ and $\coKer(\phi_n)$ are selfdual only when $n$ is even. For $n$ odd they are dual to each other. In any case the 
equivalences are given by $(\alpha,-\alpha)$ with
 $$\alpha:=\begin{pmatrix}0&-1\\1&0\end{pmatrix}.$$

The modules $\coKer(\phi_m)$ are selfdual, since $(\eta,\xi):(\phi_m,\psi_m)\lra(\phi_m\trans,\psi_m\trans)$ with
\begin{align*}
\eta=\xi &= \begin{pmatrix}-\alpha & 0\\ 0 & \alpha\end{pmatrix} \quad \text{if }m\text{ is even and}\\
\eta=-\xi &= \begin{pmatrix}\alpha & 0\\ 0 & \alpha\end{pmatrix} \quad \text{if }m\text{ is odd}
\end{align*}
are equivalences of matrix factorizations. This shows that the Picard-group of $R$ is isomorphic to $\Z/(2)\times\Z/(2)$ if $n$ is even and isomorphic 
to $\Z/(4)$ if $n$ is odd.

We now compute representing first syzygy modules of ideals. In the rank one case we only have to look at the columns of $\psi_j$. The columns of 
$\psi_1$ generate the modules $\Syz_R(-Y,X)$ and $\Syz_R(X,Y^{n-2}+Z^2)$. Similarly, for $n$ even we get
$$\begin{aligned}
\coKer(\phi_{n-1}) & \cong \Syz_R(-iY^{(n-2)/2}+Z,-X) && \cong \Syz_R\left(X,Y\left(Z+iY^{(n-2)/2}\right)\right) \qquad\text{and}\\
\coKer(\phi_n) & \cong \Syz_R(iY^{(n-2)/2}+Z,-X) && \cong \Syz_R\left(X,Y\left(Z-iY^{(n-2)/2}\right)\right)
\end{aligned}$$
and for $n$ odd we get
$$\begin{aligned}
\coKer(\phi_{n-1}) & \cong \Syz_R\left(-Z,X+iY^{(n-1)/2}\right) &&\cong \Syz_R(X-iY^{(n-1)/2},YZ) \qquad\text{and}\\
\coKer(\phi_n) & \cong \Syz_R\left(-Z,X-iY^{(n-1)/2}\right) &&\cong \Syz_R(X+iY^{(n-1)/2},YZ).
\end{aligned}$$
The ideals $(X,Y)$, $(X,Z\pm iY^{(n-2)/2})$ and $(X\pm iY^{(n-1)/2},Z)$ are prime ideals of height one, hence reflexive and we get the ideal representations
$$\begin{aligned}
   \coKer(\phi_1) &\cong (X,Y), && \\
   \coKer(\phi_{n-1}) &\cong (X,Z-iY^{(n-2)/2}) && \text{and}\\
   \coKer(\phi_n) &\cong (X,Z+iY^{(n-2)/2}) && \text{if }n\text{ is even,}\\
   \coKer(\phi_{n-1}) &\cong (Z,X-iY^{(n-2)/2}) && \text{and}\\
   \coKer(\phi_n) &\cong (Z,X+iY^{(n-2)/2}) && \text{if }n\text{ is odd.}
  \end{aligned}$$
Dealing with the rank two case, we have to omit one column from $\psi_m$ (we can work with $\psi_m$ instead of $\psi_m\trans$, since $\coKer(\phi_m)$ 
is selfdual) such that the rank of the $\Oc_U$-module corresponding to the image of the reduced matrix remains two.

If we erase the first or fourth column in $\psi_m$ for $m$ even, the two-minors of the new matrix vanish in the point $(0,0,1)\in U$. Hence, we cannot 
omit these columns, since we would get a sheaf whose rank in $(0,0,1)$ would be at most one.
The following computation shows, that we can omit the second column of $\psi_m$ and still have a generating set for $\Ima(\psi_m)|_U$:
\begin{align*}
 \frac{Y^{m/2}}{X}\cdot \begin{pmatrix} YZ \\ Y^{n-1-m/2} \\ X \\ 0 \end{pmatrix}-\frac{YZ}{X}\cdot \begin{pmatrix} Y^{m/2} \\ -Z \\ 0 \\ X \end{pmatrix} & = \begin{pmatrix} 0 \\ -X \\ Y^{m/2} \\-YZ \end{pmatrix}\\
 \frac{Y^{m/2}}{Z}\cdot \begin{pmatrix} -X \\ 0 \\ Z \\ Y^{n-1-m/2} \end{pmatrix}+\frac{X}{Z}\cdot \begin{pmatrix} Y^{m/2} \\ -Z \\ 0 \\ X \end{pmatrix} & = \begin{pmatrix} 0 \\ -X \\ Y^{m/2} \\-YZ \end{pmatrix}.
\end{align*}
The rows of $\psi_m^{\{1,3,4\}}$ generate the $R$-module $\Syz_R(-X,Z,Y^{n-1-m/2})$, hence we get an isomorphism of this syzygy module with $\coKer(\phi_m)$. If we choose the coefficients 
$$\left(-\frac{YZ}{X},-\frac{Y^{n-1-m/2}}{X},0\right)\qquad \text{and} \qquad \left(\frac{X}{Z},0,-\frac{Y^{n-1-m/2}}{Z}\right)$$
on $\open(X)$ resp. $\open(Z)$ for the columns one, two, four, we see that we can leave out the third column. Then the rows of $\psi_m^{\{1,2,4\}}$ 
generate $\Syz_R(Y^{m/2},-Z,X)$, which gives an isomorphism of this syzygy module with $\coKer(\phi_m)$. Both syzygy modules are syzygy modules of 
$\mm$-primary ideals, hence the determinant of $\coKer(\phi_m)|_U$ is trivial.

Now, only the case where $m$ is odd is left. As in the case where $m$ is even, erasing the first or fourth column in $\psi_m$ gives a matrix with all 
two-minors vanishing in the point $(0,0,1)\in U$. Again we may omit the second or the third column, where the corresponding coefficients are given in 
the table below (each row of the table is a syzygy for the columns of $\psi_m$ on the local piece $\open(X)$ or $\open(Z)$).
$$\begin{array}{c|cccc}
   & 1 & 2 & 3 & 4 \\ \hline
\open(X) & -\frac{Y^{n-1-\frac{m-1}{2}}}{X} & -1 & 0 & \frac{YZ}{X}\\
\open(Z) & 0 & -1 & \frac{Y^{n-1-\frac{m-1}{2}}}{Z} & -\frac{X}{Z}\\
\open(X) & -\frac{YZ}{X} & 0 & -1 & -\frac{Y^{\frac{m+1}{2}}}{X}\\
\open(Z) & \frac{X}{Z} & -\frac{Y^{\frac{m-1}{2}}}{Z} & -1 & 0
  \end{array}$$
The rows of $\psi_m^{\{1,3,4\}}$ generate $\Syz_R(YZ,X,Y^{(m+1)/2})$ and the rows of $\psi_m^{\{1,2,4\}}$ generate the module $\Syz_R(Y^{n-1-(m-1)/2},X,-YZ)$. The ideals are not $\mm$-primary, hence the determinant of $\Dc\coloneqq\coKer(\phi_m)|_U$ is not trivial. 
Since the $R$-module $\coKer(\phi_m)$ is selfdual, the determinant $\Dc$ has to be selfdual, too. Hence, if $n$ is odd, we get directly $$\Dc\cong\coKer(\phi_1)|_U.$$
If $n$ is even, we need to compute the reflexive hull of $(X,Y^{(m+1)/2},YZ)$. This ideal has only the minimal prime $(X,Y)$. Since in the localization at $(X,Y)$ 
the equality $$(X,Y^{(m+1)/2},YZ)_{(X,Y)}=(X,Y)_{(X,Y)}$$ holds, the reflexive hull of $(X,Y^{(m+1)/2},YZ)$ is $(X,Y)$ and we obtain $$\Dc\cong\coKer(\phi_1)|_U.$$

\begin{thm}\label{syzdn}
The pairwise non-isomorphic modules 
$$\begin{aligned}
M_1 &= \Syz_R(X,Y), &&\\
M_m &= \Syz_R(X,Y^{m/2},Z) && \text{if }m\in\{2,\ldots,n-2\}\text{ is even},\\
M_m &= \Syz_R(X,Y^{(m+1)/2},YZ) && \text{if }m\in\{2,\ldots,n-2\}\text{ is odd},\\
M_{n-1} &= \Syz_R(X,Z-iY^{(n-2)/2}) && \text{and}\\
M_n &= \Syz_R(X,Z+iY^{(n-2)/2}) && \text{if }n\text{ is even, or}\\
M_{n-1} &= \Syz_R(Z,X+iY^{(n-1)/2}) && \text{and}\\
M_n &= \Syz_R(Z,X-iY^{(n-1)/2}) && \text{if }n\text{ is odd}
\end{aligned}$$
give a complete list of representatives of the isomorphism classes of indecomposable, non-free, maximal Cohen-Macaulay modules. Moreover, these modules 
are selfdual with the exception $M_{n-1}^{\vee}\cong M_n$ if $n$ is odd. The determinants of $M_{n-1}|_U$ and $M_n|_U$ are $M_1|_U$. 
For the rank one modules we have the isomorphisms $M_1\cong (X,Y)$, $M_{n-1}\cong (X,Z-iY^{(n-2)/2})$ and $M_n\cong (X,Z+iY^{(n-2)/2})$ if $n$ is even and 
$M_{n-1}\cong (Z,X-iY^{(n-1)/2})$ and $M_n\cong (Z,X+iY^{(n-1)/2})$ if $n$ is odd.
\end{thm}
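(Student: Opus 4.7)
My plan is to proceed in four stages, following the construction developed in Section 3.3. The starting point is the complete list of reduced indecomposable matrix factorizations $(\phi_m,\psi_m)$ for the $D_n$-singularity recalled above, together with the description of their cokernels (ranks, self-duality pattern) given by the explicit equivalences $(\eta,\xi):(\phi_m,\psi_m)\to(\phi_m^{\mathrm{T}},\psi_m^{\mathrm{T}})$. By Theorem \ref{eismatfac}, these give, up to isomorphism, a complete set of indecomposable non-free maximal Cohen-Macaulay modules, so it suffices to identify each $\coKer(\phi_m)$ with the claimed syzygy module $M_m$.

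In the rank-one cases $m\in\{1,n-1,n\}$, the construction degenerates: $\coKer(\phi_m)$ is already generated by either column of $\psi_m$, and each column provides a two-generated ideal whose syzygy module is the module itself. This directly yields $M_1\cong\Syz_R(X,Y)$, and similarly for $M_{n-1}$, $M_n$ in both parities. For the ideal representations I would then compute the reflexive hulls $(X,Y)^{\vee\vee}$, $(X,Z\pm iY^{(n-2)/2})^{\vee\vee}$ and $(Z,X\pm iY^{(n-1)/2})^{\vee\vee}$; since each of these ideals is a prime ideal of height one (as $R/(X,Y)\cong k\llbracket Z\rrbracket$ and analogously in the other cases), they are already reflexive, giving $M_m\cong (F_1,F_2)$ as claimed.

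For the rank-two cases $m\in\{2,\ldots,n-2\}$ I would apply the column-deletion recipe from Section 3.3: since $\coKer(\phi_m)$ is self-dual, I work with $\psi_m$ and choose a three-element subset $J\subseteq\{1,2,3,4\}$ such that $\psi_m^J$ still has rank two at every point of $U$. The explicit local coefficient tables displayed just before the theorem verify this is possible on the covering $\{D(X),D(Z)\}$ of $U$ (the first and fourth columns cannot be removed because their omission would force the two-minors to vanish at $(0,0,1)$). Solving $\psi_m^J\cdot(F_1,F_2,F_3)^{\mathrm{T}}=0$ by inspection of the first two rows then isolates the kernel generator, producing $\Syz_R(X,Y^{m/2},Z)$ for $m$ even and $\Syz_R(X,Y^{(m+1)/2},YZ)$ for $m$ odd. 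Whether the defining ideal is $\mm$-primary or not pins down the determinant of $M_m|_U$: in the even case we get an $\mm$-primary ideal and hence trivial determinant, while in the odd case the ideal $(X,Y^{(m+1)/2},YZ)$ has only the height-one minimal prime $(X,Y)$, localizes at $(X,Y)$ to $(X,Y)_{(X,Y)}$, so its reflexive hull is $(X,Y)$, giving $\det(M_m|_U)\cong M_1|_U$ (and analogously for the $n$ odd case).

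The remaining task is pairwise non-isomorphism. I would first separate the list by rank: the rank-one modules are distinguished from the rank-two modules automatically, and within each rank I would compare Hilbert series / number of minimal generators of the representing ideals, and for $M_{n-1}$ versus $M_n$ invoke the fact that they become non-isomorphic already on the punctured spectrum because the corresponding matrix factorizations are inequivalent in the list from \cite{matfacade}. The self-duality/duality statements drop out of the explicit morphisms $(\eta,\xi)$ listed above: the sign pattern $\eta=\xi$ versus $\eta=-\xi$ immediately tells us when $\coKer(\phi_m)$ is self-dual and, in the $n$ odd case, that $(\phi_{n-1},\psi_{n-1})^{\mathrm{T}}\cong(\psi_{n-1},\phi_{n-1})\cong(\phi_n,\psi_n)$, yielding $M_{n-1}^{\vee}\cong M_n$. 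The main obstacle I anticipate is not any single calculation but the bookkeeping of the parity case distinction ($m$ even versus odd, $n$ even versus odd) while ensuring that each chosen column subset $J$ really induces an isomorphism of $\Oc_U$-modules rather than only a surjection; the explicit local coefficients on $D(X)$ and $D(Z)$ are what rules out pathologies at closed points of $U$, and this verification must be carried out case by case.
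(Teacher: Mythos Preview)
Your proposal is correct and follows essentially the same approach as the paper: start from the classified matrix factorizations $(\phi_m,\psi_m)$, read off the rank-one syzygy descriptions directly from the columns of $\psi_m$ and identify the associated height-one primes, then in the rank-two cases apply the column-deletion recipe on the cover $D(X)\cup D(Z)$ (using the test point $(0,0,1)$ to rule out deleting the first or fourth column), and compute determinants via the reflexive hulls of the non-$\mm$-primary ideals. The duality pattern is exactly the one the paper extracts from the explicit $(\eta,\xi)$, and pairwise non-isomorphism is inherited from the classification in \cite{matfacade} rather than proved from scratch.
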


\section{The case $E_6$}
Let $k$ be an algebraically closed field of characteristic at least five, let 
$$R:=k\llbracket X,Y,Z\rrbracket/(X^2+Y^3+Z^4)$$
with maximal ideal $\mm$. Denote by $U:=\punctured{R}{\mm}$ the punctured spectrum of $R$.
In this case the category of reduced, indecomposable matrix factorizations up to equivalence has six objects, which are given by
$$\phi_5=\psi_6=
\begin{pmatrix}
-Z^2+iX & Y\\
Y^2 & Z^2+iX
\end{pmatrix}\qquad\text{and}\qquad
\phi_6=\psi_5=
\begin{pmatrix}
-Z^2-iX & Y\\
Y^2 & Z^2-iX
\end{pmatrix},$$
\begin{align*}
 \phi_1=\psi_1 &=
\begin{pmatrix}
-X & 0 & Y^2 & Z^3\\
0 & -X & Z & -Y\\
Y & Z^3 & X & 0\\
Z & -Y^2 & 0 & X
\end{pmatrix},\\
\phi_3=\psi_4 &=
\begin{pmatrix}
-Z^2+iX & 0 & YZ & Y\\
-YZ & Z^2+iX & Y^2 & 0\\
0 & Y & iX & Z\\
Y^2 & -YZ & Z^3 & iX
\end{pmatrix},\\
\phi_4=\psi_3 &=
\begin{pmatrix}
-Z^2-iX & 0 & YZ & Y\\
-YZ & Z^2-iX & Y^2 & 0\\
0 & Y & -iX & Z\\
Y^2 & -YZ & Z^3 & -iX
\end{pmatrix},\\
\phi_2 &=
\begin{pmatrix}
-iX & -Z^2 & YZ & 0 & Y^2 & 0\\
-Z^2 & -iX & 0 & 0 & 0 & Y\\
0 & 0 & -iX & -Y & 0 & Z\\
0 & YZ & -Y^2 & -iX & Z^3 & 0\\
Y & 0 & 0 & Z & -iX & 0\\
0 & Y^2 & Z^3 & 0 & YZ^2 & -iX
\end{pmatrix},\\
\psi_2 &=
\begin{pmatrix}
iX & -Z^2 & YZ & 0 & Y^2 & 0\\
-Z^2 & iX & 0 & 0 & 0 & Y\\
0 & 0 & iX & -Y & 0 & Z\\
0 & YZ & -Y^2 & iX & Z^3 & 0\\
Y & 0 & 0 & Z & iX & 0\\
0 & Y^2 & Z^3 & 0 & YZ^2 & iX
\end{pmatrix},
\end{align*}
where $\coKer(\phi_j)$ has rank one for $j\in\{5,6\}$, rank two for $j\in\{1,3,4\}$ and rank three for $j=2$. 
At first we clarify the dualities. The modules $\coKer(\phi_5)$ and $\coKer(\phi_6)$ are dual to each other by the equivalence 
$(\alpha,-\alpha):(\phi_5,\psi_5)\ra(\phi_6^T,\psi_6^T)$ with
$$\alpha=\begin{pmatrix}0&-1\\1&0\end{pmatrix}.$$
Since the Picard-group of $R$ is given by the isomorphism classes of $R$, $\coKer(\phi_5)$ and $\coKer(\phi_6)$, it is isomorphic to $\Z/(3)$.
The modules $\coKer(\phi_3)$ and $\coKer(\phi_4)$ are dual to each other by the equivalence $(\eta,-\eta):(\phi_3,\psi_3)\ra(\phi_4^T,\psi_4^T)$ with
$$\eta=\begin{pmatrix}-\alpha & 0\\ 0 & -\alpha\end{pmatrix}.$$
The modules $\coKer(\phi_1)$ and $\coKer(\phi_2)$ are the only modules of rank two resp. three left, hence they are selfdual.

Now we compute representations as first syzygy modules of ideals.

By looking at the columns of $\psi_5$ and $\psi_6$ (since they give modules of rank one) we get
$$\begin{aligned}
 \coKer(\phi_5) & \cong \Syz_R(Y^2,iX+Z^2) && \cong \Syz_R(Z^2-iX,-Y)\\
 \coKer(\phi_6) & \cong \Syz_R(Y^2,Z^2-iX) && \cong \Syz_R(Z^2+iX,-Y)
\end{aligned}$$
The two ideals $(\pm iX+Z^2,Y)$ are prime ideals of height one, hence reflexive. We obtain the ideal representations
\begin{align*}
\coKer(\phi_5) & \cong (Z^2+iX,Y)\\
 \coKer(\phi_6) & \cong (Z^2-iX,Y).
\end{align*}
Dealing with the matrix factorizations $(\phi_j,\psi_j)$ corresponding to modules of rank two, we have to delete one column from $\psi_j$.

Since $(\phi_1,\psi_1)$ is equivalent to the matrix factorization from Example \ref{diagrank2matfac} with the values $a=b=c=1$ and $d_i=1+i$ for the parameters, we get the following representations as first syzygy modules of $\mm$-primary ideals:
\begin{align*}
\coKer(\phi_1) & \cong \Syz_R(X,Y^2,Z^3)\\ 
 & \cong \Syz_R(X,Y,Z)\\
 & \cong \Syz_R(X,Y,Z^3)\\
 & \cong \Syz_R(X,Y^2,Z).
\end{align*}
Fix a point of the form $(x,0,z)\in U$, hence either $ix+z^2=0$ or $ix-z^2=0$. Deleting the first column from $\psi_4$, all two-minors of the reduced matrix vanish iff $z^2+ix=0$. If $z^2=ix$ and the rows of $\psi_4^{\{2,3,4\}}$ generate $\Syz_R(F_1,F_2,F_3)$, 
we must have $F_3=-Z\cdot F_2$ by the first row. The third row gives the condition $0=Y\cdot F_1+(iX-Z^2)\cdot F_2=Y\cdot F_1$, hence $F_1=0$. This is a contradiction, since $\Syz_R(0,F_2,F_3)$ splits. Deleting the second column from $\psi_4$ or the first column from $\psi_3$ if $ix=-z^2$ or the second column from $\psi_3$ if $ix=z^2$, one gets similar contradictions. By deleting the third columns we get isomorphisms
$$\Ima(\psi_3)\cong\Syz_R(-iX+Z^2,YZ,-Y^2)\qquad\text{and}\qquad\Ima(\psi_4)\cong\Syz_R(iX+Z^2,YZ,-Y^2).$$
Deleting the fourth columns we get the isomorphisms
$$\Ima(\psi_3)\cong\Syz_R(iXZ-Z^3,iXY,Y^2)\qquad\text{and}\qquad\Ima(\psi_4)\cong\Syz_R(iXZ+Z^3,iXY,-Y^2).$$
Since $\coKer(\phi_3)$ and $\coKer(\phi_4)$ are dual to each other, we get the isomorphisms
$$\begin{aligned}
 \coKer(\phi_3) & \cong \Syz_R(iX+Z^2,YZ,-Y^2)\\
 & \cong \Syz_R(iXZ+Z^3,iXY,-Y^2),\\
 \coKer(\phi_4) & \cong \Syz_R(-iX+Z^2,YZ,-Y^2)\\
 & \cong \Syz_R(iXZ-Z^3,iXY,Y^2),
\end{aligned}$$
where the appearing ideals are not $\mm$-primary. The ideals $(\pm iX+Z^2,Y^2,YZ)$ have only one minimal prime, namely $(\pm iX+Z^2,Y)$.
Since in the localization at $(iX+Z^2,Y)$ the equality $$(iX+Z^2,Y^2,YZ)_{(iX+Z^2,Y)}=(iX+Z^2,Y)_{(iX+Z^2,Y)}$$ holds (and similarly for the negative sign),
the minimal primes have to be the reflexive hulls. Hence, we obtain
\begin{align*}
 \Det(\coKer(\phi_3)|_U) &\cong \coKer(\phi_5)|_U\\
 \Det(\coKer(\phi_4)|_U) &\cong \coKer(\phi_6)|_U.
\end{align*}
Considering $\psi_2$ we have to erase two columns, since $\coKer(\phi_2)$ has rank three. Not all choices of two columns are possible, since there are 
choices $\{m,l\}$ such that all three-minors of the matrix $\psi^{\{1,\ldots,6\}\setminus\{m,l\}}$ vanish. The non-possible choices $\{m,l\}$ for columns 
of $\psi_2$ and points showing that these choices are not possible are given in the next table.
$$\begin{array}{c|c}
   \text{deleted columns} & \text{point} \\ \hline
 \{1,2\}\vee \{3,6\}\vee \{4,5\} & (i,0,1)\\
 \{1,5\}\vee \{2,6\}\vee \{3,4\} & (i,1,0)\\
 \{1,4\}\vee \{1,6\}\vee \{4,6\} & (0,-1,1).
  \end{array}$$
Deleting all other pairs of columns is possible and yields isomorphisms
$$\Ima(\psi_2)\cong \Syz_R(F_1,\ldots,F_4),$$
where the polynomials are given as follows
$$\begin{array}{c|cccc}
   \text{deleted columns} & F_1 & F_2 & F_3 & F_4 \\ \hline
1,3 & Y^2 & -iXZ & Z^2 & -iXY \\
2,3 & Y^2 & Z^3 & -iX & YZ^2 \\
2,4 & iXY & -Z^3 & -Y^2 & iXZ^2 \\
2,5 & YZ & -iX & -Y^2 & Z^3 \\
3,5 & Z^2 & iX & -YZ & -Y^2 \\
5,6 & iXZ & Z^3 & -Y^2 & -iXY.
  \end{array}$$
Note that all the ideals $(F_1,F_2,F_3,F_4)$ are $\mm$-primary.

\begin{thm}\label{syze6}
The pairwise non-isomorphic modules 
\begin{align*}
M_1 &= \Syz_R(X,Y,Z),\\
M_2 &= \Syz_R(X,Y^2,YZ,Z^2),\\
M_3 &= \Syz_R(iX+Z^2,Y^2,YZ),\\
M_4 &= \Syz_R(-iX+Z^2,Y^2,YZ),\\
M_5 &= \Syz_R(-iX+Z^2,Y),\\
M_6 &= \Syz_R(iX+Z^2,Y)
\end{align*}
give a complete list of representatives of the isomorphism classes of indecomposable, non-free, maximal Cohen-Macaulay modules. Moreover, $M_1$ and $M_2$ are 
selfdual, $M_3^{\vee}\cong M_4$ and $M_5^{\vee}\cong M_6$. The Determinants of $M_3|_U$ resp. $M_4|_U$ are $M_5|_U$ resp. $M_6|_U$. 
For the rank one modules we have the isomorphisms $M_5\cong (iX+Z^2,Y)$ and $M_6\cong (-iX+Z^2,Y)$.
\end{thm}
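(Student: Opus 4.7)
The plan is to combine Eisenbud's correspondence (Theorem \ref{eismatfac}) with the sheaf-theoretic procedure from Section 3.3 for producing ideal-syzygy representatives. The starting point is the complete list $(\phi_1,\psi_1),\ldots,(\phi_6,\psi_6)$ of reduced indecomposable matrix factorizations of $X^2+Y^3+Z^4$ taken from \cite{matfacade}. By Theorem \ref{eismatfac} their cokernels form a full set of representatives of the isomorphism classes of indecomposable non-free maximal Cohen-Macaulay $R$-modules, and by Remark \ref{rankofmatfac} their ranks are $2,3,2,2,1,1$ respectively. For the duality statements I would exhibit, for each pair $(j,k)\in\{(3,4),(5,6)\}$, an explicit equivalence $(\eta_j,-\eta_j):(\phi_j,\psi_j)\to(\phi_k^T,\psi_k^T)$ with $\eta_j$ block-diagonal whose blocks are $\pm\alpha$, where $\alpha=\bigl(\begin{smallmatrix}0&-1\\1&0\end{smallmatrix}\bigr)$; since $M_1$ is the unique indecomposable of rank two outside this pairing and $M_2$ the unique one of rank three, both are forced to be selfdual.

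Next I would identify each $\coKer(\phi_j)$ with the claimed ideal-syzygy module $M_j$ using the recipe of Section 3.3: read $\coKer(\phi_j)\cong\Ima(\psi_j)$ and choose a subset $J$ of $\Rank(\coKer(\phi_j))+1$ columns such that $\psi_j^J$ has maximal rank on every point of $U$, then compute $\Ker(\psi_j^J)$ explicitly. For the rank-one cases this is immediate from inspecting the columns of $\psi_5$ and $\psi_6$; the identifications $M_5\cong(iX+Z^2,Y)$ and $M_6\cong(-iX+Z^2,Y)$ then follow since these ideals are height-one primes (using $X^2+Z^4=(iX+Z^2)(-iX+Z^2)\equiv 0\bmod Y$, the quotient $R/(Y,\pm iX+Z^2)$ is a one-dimensional domain), hence reflexive. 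For $M_1$, one applies Example \ref{diagrank2matfac} with $a=b=c=1$ and $d_i=1+i$ to obtain $\Syz_R(X,Y,Z)$. For $M_3, M_4$ the admissible choices of $J$ are constrained: at candidate zero loci of the form $(x,0,z)\in U$ with $iX\pm Z^2$ vanishing, deleting the first or second column of $\psi_j$ drops the rank, so one must delete the third or fourth column, and an explicit kernel computation yields the claimed ideals. For $M_2$ of rank three one erases two columns, ruling out the six forbidden pairs via the witness points $(i,0,1)$, $(i,1,0)$, $(0,-1,1)$; any admissible choice produces $\Syz_R(X,Y^2,YZ,Z^2)$ after a routine computation.

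The main subtlety is the determinant computation for $M_3|_U$ and $M_4|_U$, where the associated ideals $(\pm iX+Z^2,Y^2,YZ)$ are not $\mathfrak m$-primary and the corresponding line bundles on $U$ are nontrivial. The plan is to identify these determinants via the presenting sequence and Proposition \ref{propdet}(iv): from $0\to \mathcal L\to \Oc_U^3\to\Fc\to 0$ with $\Fc\cong M_j^{\vee}|_U$ one obtains $\Det(\Fc)\cong\mathcal L^{\vee}$, and the corresponding reflexive ideal of $R$ must be computed. Each ideal $(\pm iX+Z^2,Y^2,YZ)$ has the unique minimal prime $(\pm iX+Z^2,Y)$ and satisfies $(\pm iX+Z^2,Y^2,YZ)_{\mathfrak p}=\mathfrak p_{\mathfrak p}$, so by the characterization of reflexive hulls as intersections of localizations at height-one primes (as used in the $A_n$ case) the reflexive hull equals the minimal prime; combining with the identifications of $M_5,M_6$ above yields $\Det(M_3|_U)\cong M_5|_U$ and $\Det(M_4|_U)\cong M_6|_U$, completing the theorem.
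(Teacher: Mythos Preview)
Your proposal is correct and follows essentially the same approach as the paper: exhibit explicit equivalences $(\eta,-\eta)$ for the dual pairs $(3,4)$ and $(5,6)$, deduce selfduality of $M_1$ and $M_2$ by rank uniqueness, then run the column-deletion procedure of Section~3.3 (with the same witness points and the same reflexive-hull computation for the determinant). One small caveat: the procedure applied to $\psi_j$ naturally produces $\coKer(\phi_j)^{\vee}$ rather than $\coKer(\phi_j)$, so for the non-selfdual pair $\{3,4\}$ you must invoke the duality $\coKer(\phi_3)^{\vee}\cong\coKer(\phi_4)$ to land on the stated $M_3$ and $M_4$---this is implicit in your outline but worth making explicit, as the paper does.
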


\section{The case $E_7$}
Let $k$ be an algebraically closed field of characteristic at least five, let 
$$R:=k\llbracket X,Y,Z\rrbracket/(X^2+Y^3+YZ^3)$$
with maximal ideal $\mm$. Denote by $U:=\punctured{R}{\mm}$ the punctured spectrum of $R$.
In this case the category of reduced, indecomposable matrix factorizations up to equivalence has seven objects, which are given by
\begin{align*}
\phi_7=\psi_7 &=
\begin{pmatrix}
X & Y\\
Y^2+Z^3 & -X
\end{pmatrix},\\
\phi_1=\psi_1 &=
\begin{pmatrix}
X & 0 & -Y^2 & Z\\
0 & X & YZ^2 & Y\\
-Y & Z & -X & 0\\
YZ^2 & Y^2 & 0 & -X
\end{pmatrix},\\
\phi_4=\psi_4 &=
\begin{pmatrix}
-X & Z^2 & 0 & Y\\
YZ & X & -Y^2 & 0\\
0 & -Y & -X & Z\\
Y^2 & 0 & YZ^2 & X
\end{pmatrix},\\
\phi_6=\psi_6 &=
\begin{pmatrix}
X & 0 & -YZ & Y\\
0 & X & Y^2 & Z^2\\
-Z^2 & Y & -X & 0\\
Y^2 & YZ & 0 & -X
\end{pmatrix},\\
\phi_2=\psi_2 &=
\begin{pmatrix}
-X & Z^2 & YZ & 0 & Y^2 & 0\\
YZ & X & 0 & 0 & 0 & -Y\\
0 & 0 & X & -Y & 0 & Z\\
0 & -YZ & -Y^2 & -X & YZ^2 & 0\\
Y & 0 & 0 & Z & X & 0\\
0 & -Y^2 & YZ^2 & 0 & Y^2Z & -X
\end{pmatrix},\\
\phi_5=\psi_5 &=
\begin{pmatrix}
-X & 0 & YZ & 0 & 0 & Y\\
-YZ & X & 0 & -Z^2 & -Y^2 & 0\\
Z^2 & 0 & X & -Y & YZ & 0\\
0 & -YZ & -Y^2 & -X & 0 & 0\\
0 & -Y & 0 & 0 & -X & -Z\\
Y^2 & 0 & 0 & YZ & -YZ^2 & X
\end{pmatrix},\\
\phi_3=\psi_3 &=
\begin{pmatrix}
-X & 0 & YZ & -Z^2 & 0 & 0 & Y^2 & 0\\
0 & -X & 0 & Z^2 & 0 & 0 & 0 & Y\\
Z^2 & Z^2 & X & 0 & 0 & -Y & 0 & 0\\
0 & YZ & 0 & X & -Y^2 & 0 & 0 & 0\\
0 & 0 & 0 & -Y & -X & 0 & 0 & Z\\
0 & 0 & -Y^2 & 0 & 0 & -X & YZ^2 & Z^2\\
Y & 0 & 0 & 0 & -Z^2 & Z & X & 0\\
0 & Y^2 & 0 & 0 & YZ^2 & 0 & 0 & X
\end{pmatrix},
\end{align*}
where $\coKer(\phi_j)$ has rank one for $j=7$, rank two for $j\in\{1,4,6\}$, rank three for $j\in\{2,5\}$ and rank four for $j=3$. Since there is only one 
non-trivial isomorphism class of indecomposable, maximal Cohen-Macaulay modules of rank one, the Picard-group of $R$ is isomorphic to $\Z/(2)$.

Again we start by computing the dual modules. Clearly, $\coKer(\phi_7)$ and $\coKer(\phi_3)$ are selfdual. We will see that all other modules are 
selfdual, too. Let 
$$\alpha:=\begin{pmatrix}0 & 1\\ -1 & 0\end{pmatrix}.$$
The morphisms $(\alpha_i,\beta_i):(\phi_i,\psi_i)\ra(\phi_i\trans,\psi_i\trans)$ given by
\begin{align*}
 \alpha_1=\beta_1 &= \begin{pmatrix}\alpha & 0\\ 0 & \alpha \end{pmatrix},\\
 \alpha_4=-\beta_4 &= \begin{pmatrix}-\alpha & 0\\ 0 & \alpha \end{pmatrix},\\
 \alpha_6=\beta_6 &= \begin{pmatrix}\alpha & 0\\ 0 & -\alpha \end{pmatrix},\\
 \alpha_2=-\beta_2 &= \begin{pmatrix}\alpha & 0 & 0\\ 0 & \alpha & 0 \\ 0 & 0 & \alpha\end{pmatrix},\\
 \alpha_5=-\beta_5 &= \begin{pmatrix}\alpha & 0 & 0\\ 0 & \alpha & 0 \\ 0 & 0 & -\alpha\end{pmatrix}
\end{align*}
are equivalences of matrix factorizations.

Computing representations as first syzygy modules of ideals, we start again with the rank one case. The columns of $\psi_7$ generate $\Syz_R(X,Y)$ and $\Syz_R(Y^2+Z^3,-X)$. Therefore $\coKer(\phi_7)$ is isomorphic to both of these syzygy modules.

In the rank two cases, we have to delete one column from the matrices $\psi_j$.

Deleting the second or fourth column from $\psi_1$, we get a matrix with vanishing two-minors in $(0,0,1)$.
By deleting the first resp. the third column, we get isomorphisms
$$\coKer(\phi_1)\cong\Syz_R(X,Z,Y^2)\cong\Syz_R(Z,Y,-X).$$
Deleting the second or fourth column from $\psi_4$, we get a matrix with vanishing two-minors in $(0,0,1)$.
By deleting the first resp. the third column, we get isomorphisms
$$\coKer(\phi_4)\cong\Syz_R(Y^2,X,-YZ^2)\cong\Syz_R(-X,YZ,Y^2).$$
Note that both ideals are not $\mm$-primary (since $\{X,Y\}$ is not a system of parameters).

Deleting the first or fourth column from $\psi_6$, we get a matrix with vanishing two-minors in $(0,0,1)$.
By deleting the second resp. the third column, we get isomorphisms
$$\coKer(\phi_6)\cong\Syz_R(X,-Z^2,Y^2)\cong\Syz_R(Y,Z^2,-X).$$
We now deal with the rank three cases. To represent $\coKer(\phi_2)$, we have to erase two columns from $\psi_2$. If one of those two is an even column, 
we get a matrix with all three-minors vanishing in $(0,0,1)$. In $(1,-1,0)$ all three-minors of the matrix obtained from $\psi_2$ by deleting the columns 
one and five vanish. Deleting the columns one and three resp. three and five, we obtain the isomorphisms
$$\coKer(\phi_2)\cong\Syz_R(Y^2,XZ,-Z^2,XY)\cong\Syz_R(Z^2,X,-YZ,-Y^2).$$
Similarly, to represent $\coKer(\phi_5)$, we have to erase two columns from $\psi_5$. If one of those two is the column one, four or six, we get a 
matrix with all three-minors vanishing in $(0,0,1)$. Deleting the columns two and three, all three-minors vanish in $(0,-1,-1)$ and all three-minors 
vanish in $(1,-1,0)$, when erasing the columns two and five. In this case we get only an isomorphism by deleting the columns three and five:
$$\coKer(\phi_5)\cong\Syz_R(Y^2,-XZ,YZ^2,XY).$$
The ideal is not $\mm$-primary (since $\{XZ,Y\}$ is not a system of parameters).

The last module to consider is $\coKer(\phi_3)$ of rank four, hence we have to omit three columns in $\psi_3$. The following choices of columns are not 
possible, since all four-minors of the resulting matrix vanish in the given point (a question mark indicates an arbitrary choice)
$$\begin{array}{cc}
 \text{deleted columns} & \text{point}\\ \hline
 4,?,? & (0,0,1)\\
 8,?,? & (0,0,1)\\
 1,2,? & (0,0,1)\\
 5,6,? & (0,0,1)\\
 1,7,? & (1,-1,0)\\
 3,6,? & (1,-1,0)\\
 2,5,? & (0,-1,-1).
\end{array}$$
The other four possibilities ((1,3,5), (2,3,7), (2,6,7) and (3,5,7)) give the isomorphisms
\begin{align*}
\coKer(\phi_3) & \cong \Syz_R(-XY,Y^2Z,-XZ^2,Z^3,Y^3)\\
 & \cong \Syz_R(YZ^2,-XY,Y^2+Z^3,Z^4,XZ^2)\\
 & \cong \Syz_R(XZ^2,-Z^4,Y^3,XY,-Y^2Z^2)\\
 & \cong \Syz_R(Z^3,-Y^2-Z^3,-XZ,-YZ^2,-XY).
\end{align*}
Note that the second ideal is not monomial. Multiplying all generators by $Y$ yields a monomial ideal and changes the syzygy module only 
by an isomorphism.

\begin{thm}\label{syze7}
The pairwise non-isomorphic modules 
\begin{align*}
M_1 &= \Syz_R(X,Y,Z),\\
M_2 &= \Syz_R(X,Y^2,YZ,Z^2),\\
M_3 &= \Syz_R(XY,XZ,Y^2,YZ^2,Z^3),\\
M_4 &= \Syz_R(X,Y^2,YZ),\\
M_5 &= \Syz_R(XY,XZ,Y^2,YZ^2),\\
M_6 &= \Syz_R(X,Y,Z^2),\\
M_7 &= \Syz_R(X,Y)
\end{align*}
give a complete list of representatives of the isomorphism classes of indecomposable, non-free, maximal Cohen-Macaulay modules. Moreover, all $M_j$ 
are selfdual and $\Det(M_j|_U)\cong M_7|_U$ for $j\in\{4,5,7\}$. The rank one module $M_7$ is isomorphic to the ideal $(X,Y)$.
\end{thm}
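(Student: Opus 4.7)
The plan is to follow exactly the strategy already used for $A_n$, $D_n$, and $E_6$, applied to the seven reduced indecomposable matrix factorizations $(\phi_j,\psi_j)$ listed just before the theorem (whose completeness comes from \cite{matfacade} and \cite[Chapter 9, \S 4]{leuwie}). By Remark \ref{rankofmatfac}, $\coKer(\phi_j)$ has rank $1,2,4,2,3,2,1$ for $j=1,\dots,7$ after sorting by size, matching what the theorem claims. So the main tasks are (a) to produce the syzygy-of-ideal representations, (b) to establish the self-dualities, and (c) to compute the determinants of the non-$\mm$-primary cases and the ideal description of $M_7$.

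For (b), I would write down the explicit block morphisms $(\alpha_j,\beta_j):(\phi_j,\psi_j)\to(\phi_j\trans,\psi_j\trans)$ already displayed above the theorem (assembled from the $2\times 2$ block $\alpha=\begin{pmatrix}0&1\\-1&0\end{pmatrix}$), and verify in each case the commutativity of the relevant square $\beta_j\phi_j=\phi_j\trans\alpha_j$; this is a direct $4\times4$, $6\times6$, or $8\times 8$ matrix check. Since each $\alpha_j$ is invertible, this gives $\coKer(\phi_j)\cong\coKer(\phi_j\trans)\cong\coKer(\phi_j)^\vee$, so every module in the list is selfdual. For $M_7$, the two columns of $\psi_7$ already generate $\Syz_R(X,Y)$ and $\Syz_R(Y^2+Z^3,-X)$, and because $(X,Y)$ is a height-one prime it is reflexive, giving $M_7\cong(X,Y)$ by taking global sections on $U$.

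For (a), I apply the construction from Section~3.3: for a rank-$m$ module $\coKer(\phi_j)\cong\Ima(\psi_j)$ I must delete all but $m+1$ columns of $\psi_j$ while preserving the local rank on every point of $U$. For each candidate subset $J$ of columns I will check that the maximal minors of $\psi_j^J$ have no common zero on $U$, i.e.\ do not all vanish at any point of the projective curve $V_+(X^2+Y^3+YZ^3)$; the critical test points (as in the $E_6$ computation) will be $(0,0,1)$, $(1,-1,0)$ and a point of the form $(0,-1,-1)$, which come from the components of the singular locus of $V(X^{a}, Y^b, YZ^c)$ for the monomial entries appearing. Subsets $J$ that do pass will yield kernel generators $(F_1,\dots,F_{m+1})$, which I obtain by solving the linear system $\psi_j^J\cdot(F_i)\trans=0$ column-wise exactly as in the $E_6$ example. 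In the selfdual rank-two cases $j\in\{1,4,6\}$ I expect two admissible $J$'s per matrix, in the rank-three cases two (for $\phi_2$) and one (for $\phi_5$), and in the rank-four case $\phi_3$ precisely the four triples $(1,3,5),(2,3,7),(2,6,7),(3,5,7)$ of deleted columns — the other triples can be eliminated by displaying an explicit vanishing point from the table above. This will produce the ideals listed in the theorem (after replacing the four-generated ideal from $(2,3,7)$ by the monomial ideal $\Syz_R(YZ^2,-XY,Y^2+Z^3,Z^4,XZ^2)\cdot Y^{-1}$-normalized representative, which only changes the syzygy module by an isomorphism).

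For (c), whenever the resulting ideal $(F_1,\dots,F_{m+1})$ is $\mm$-primary the determinant $\Det(M_j|_U)$ is trivial; this handles $M_1,M_2,M_3,M_6$. For $M_4$ and $M_5$ the ideal is not $\mm$-primary, so I compute its reflexive hull by intersecting the localizations at its (height-one) minimal primes, exactly as in the $D_n$ and $E_6$ computations. For $M_4$ the ideal $(X,Y^2,YZ)$ has the single minimal prime $(X,Y)$ and localizes there to $(X,Y)$, giving $\Det(M_4|_U)\cong\widetilde{(X,Y)}|_U\cong M_7|_U$; an analogous calculation handles $M_5$. Pairwise non-isomorphism follows once ranks are known, plus the fact that among modules of the same rank the computed ideals have distinct reflexive closures (so the associated sheaves on $U$ have distinct determinants, or — in the rank-two case — the syzygies are distinguished by whether the generators form a system of parameters). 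The step I expect to be the main obstacle is the rank-four case $j=3$: checking exhaustively which of the $\binom{8}{3}=56$ triples of columns produce a matrix of everywhere-full rank on $U$, and then carrying out the $5\times 5$ linear kernel computations cleanly enough to recognize the resulting five-generated syzygy modules; everything else is essentially the same bookkeeping as in the $E_6$ proof.
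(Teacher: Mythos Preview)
Your proposal follows the paper's approach almost exactly: the same self-duality equivalences, the same column-deletion procedure with the same test points, and the same admissible subsets (including the four triples for $\phi_3$). Two small corrections are worth making. First, the rank list is $2,3,4,2,3,2,1$ for $j=1,\dots,7$, not $1,2,4,2,3,2,1$. Second, your proposed criterion for pairwise non-isomorphism does not work in the rank-two case: both $(X,Y,Z)$ and $(X,Y,Z^2)$ are $\mm$-primary, so $M_1$ and $M_6$ have the same (trivial) determinant and cannot be separated by ``whether the generators form a system of parameters.'' The non-isomorphism actually comes for free from Eisenbud's Theorem~\ref{eismatfac} applied to the pairwise non-equivalent matrix factorizations in the cited list, so no additional argument is needed. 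Finally, for the determinant statement you can avoid computing reflexive hulls for $M_5$: once you know $\Pic(U)\cong\Z/(2)$ (from the rank-one classification), any non-$\mm$-primary ideal forces the determinant to be the unique non-trivial class $M_7|_U$.
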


\section{The case $E_8$}
Let $k$ be an algebraically closed field of characteristic at least seven, let
$$R:=k\llbracket X,Y,Z\rrbracket/(X^2+Y^3+Z^5)$$ 
with maximal ideal $\mm$. Denote by $U:=\punctured{R}{\mm}$ the punctured spectrum of $R$.
In this case the category of reduced, indecomposable matrix factorizations up to equivalence has eight objects, given by
\begin{align*}
\phi_1=\psi_1 &=
\begin{pmatrix}
X & 0 & Y & Z\\
0 & X & Z^4 & -Y^2\\
Y^2 & Z & -X & 0\\
Z^4 & -Y & 0 & -X
\end{pmatrix},\\
\phi_8=\psi_8 &=
\begin{pmatrix}
X & 0 & Y & Z^2\\
0 & X & Z^3 & -Y^2\\
Y^2 & Z^2 & -X & 0\\
Z^3 & -Y & 0 & -X
\end{pmatrix},\\
\phi_2=\psi_2 &=
\begin{pmatrix}
X & -Z^2 & YZ & 0 & -Y^2 & 0\\
-Z^3 & -X & 0 & 0 & 0 & Y\\
0 & 0 & -X & Y & 0 & Z\\
0 & -YZ & Y^2 & X & Z^4 & 0\\
-Y & 0 & 0 & Z & -X & 0\\
0 & Y^2 & Z^4 & 0 & -YZ^3 & X
\end{pmatrix},\\
\phi_6=\psi_6 &=
\begin{pmatrix}
-X & 0 & 0 & Z^2 & 0 & Y\\
YZ & X & -Z^3 & 0 & -Y^2 & 0\\
0 & -Z^2 & -X & Y & 0 & 0\\
Z^3 & 0 & Y^2 & X & -YZ^2 & 0\\
0 & -Y & 0 & 0 & -X & Z\\
Y^2 & 0 & -YZ^2 & 0 & Z^4 & X
\end{pmatrix},\\
\phi_3=\psi_3 &=
\begin{pmatrix}
-X & 0 & -YZ & Z^2 & 0 & 0 & Y^2 & 0\\
0 & -X & Z^3 & 0 & 0 & 0 & 0 & Y\\
0 & Z^2 & X & 0 & 0 & -Y & 0 & 0\\
Z^3 & YZ & 0 & X & -Y^2 & 0 & 0 & 0\\
0 & 0 & 0 & -Y & -X & 0 & Z^3 & Z\\
0 & 0 & -Y^2 & 0 & 0 & -X & 0 & Z^2\\
Y & 0 & 0 & 0 & Z^2 & -Z & X & 0\\
0 & Y^2 & 0 & 0 & 0 & Z^3 & 0 & X
\end{pmatrix},
\end{align*}
\begin{align*}
\phi_7=\psi_7 &=
\begin{pmatrix}
X & 0 & 0 & 0 & -Z^3 & 0 & 0 & -Y\\
YZ & -X & 0 & 0 & 0 & Z^2 & Y^2 & 0\\
0 & 0 & -X & Z^2 & 0 & Y & -Z^3 & 0\\
0 & 0 & 0 & X & -Y^2 & 0 & 0 & Z^2\\
-Z^2 & 0 & 0 & -Y & -X & 0 & 0 & 0\\
0 & Z^3 & Y^2 & 0 & YZ^2 & X & 0 & 0\\
0 & Y & -Z^2 & 0 & 0 & 0 & X & Z\\
-Y^2 & 0 & 0 & Z^3 & 0 & 0 & 0 & -X
\end{pmatrix},\\
\phi_4=\psi_4 &=
\begin{pmatrix}
X & 0 & YZ & 0 & 0 & -Z^2 & Z^3 & 0 & -Y^2 & 0\\
0 & -X & 0 & 0 & 0 & 0 & 0 & -Z^2 & 0 & Y\\
0 & 0 & -X & Z^2 & 0 & 0 & 0 & Y & 0 & 0\\
0 & YZ & Z^3 & X & 0 & 0 & -Y^2 & 0 & 0 & 0\\
0 & Z^2 & 0 & 0 & X & -Y & 0 & 0 & Z^3 & 0\\
-Z^3 & 0 & 0 & 0 & -Y^2 & -X & 0 & 0 & 0 & Z^2\\
0 & 0 & 0 & -Y & 0 & 0 & -X & 0 & 0 & Z\\
0 & -Z^3 & Y^2 & 0 & 0 & 0 & YZ^2 & X & 0 & 0\\
-Y & 0 & 0 & 0 & Z^2 & 0 & 0 & Z & -X & 0\\
0 & Y^2 & YZ^2 & 0 & 0 & 0 & Z^4 & 0 & 0 & X
\end{pmatrix},\\
\phi_5=\psi_5 &=
\begin{pmatrix}
-X & 0 & 0 & 0 & 0 & 0 & 0 & Z^2 & 0 & 0 & 0 & Y\\
0 & -X & -YZ & 0 & 0 & 0 & Z^3 & -Z^2 & 0 & 0 & Y^2 & 0\\
0 & 0 & X & 0 & 0 & -Z^2 & 0 & 0 & Z^3 & -Y & 0 & 0\\
YZ & 0 & 0 & X & -Z^3 & 0 & 0 & 0 & -Y^2 & 0 & 0 & 0\\
0 & 0 & 0 & -Z^2 & -X & 0 & 0 & Y & 0 & 0 & 0 & 0\\
0 & 0 & -Z^3 & 0 & 0 & -X & -Y^2 & 0 & 0 & 0 & YZ^2 & Z^2\\
Z^2 & Z^2 & 0 & 0 & 0 & -Y & X & 0 & 0 & 0 & 0 & 0\\
Z^3 & 0 & 0 & 0 & Y^2 & 0 & 0 & X & -YZ^2 & 0 & 0 & 0\\
0 & 0 & 0 & -Y & 0 & 0 & 0 & 0 & -X & 0 & 0 & Z\\
0 & 0 & -Y^2 & -Z^3 & 0 & 0 & YZ^2 & 0 & 0 & -X & -Z^4 & 0\\
0 & Y & 0 & 0 & Z^2 & 0 & 0 & 0 & 0 & -Z & X & 0\\
Y^2 & 0 & 0 & 0 & -YZ^2 & 0 & 0 & 0 & Z^4 & 0 & 0 & X
\end{pmatrix},
\end{align*}
where $\coKer(\phi_j)$ has rank two for $j\in\{1,8\}$, rank three for $j\in\{2,6\}$, rank four for $j\in\{3,7\}$, rank five for $j=4$ and rank six for 
$j=5$. Since there are no non-trivial modules of rank one, the Picard-group of $R$ is trivial and all syzygy modules will be syzygy modules of 
$\mm$-primary ideals.

Note that $(\phi_1,\psi_1)$ and $(\phi_8,\psi_8)$ are equivalent to the matrix factorizations from Example \ref{diagrank2matfac} with $d_1=2$, $d_2=3$, 
$d_3=5$, $a=b=1$ and $c=1$ resp. $c=3$. We get the isomorphisms
\begin{align*}
 \coKer(\phi_1) & \cong \Syz_R(X,Y,Z)\\
 & \cong \Syz_R(X,Y^2,Z^4)\\
 & \cong \Syz_R(X,Y^2,Z)\\
 & \cong \Syz_R(X,Y,Z^4),
\end{align*}
\begin{align*}
\coKer(\phi_8) & \cong \Syz_R(X,Y,Z^2)\\
 & \cong \Syz_R(X,Y^2,Z^3)\\
 & \cong \Syz_R(X,Y^2,Z^2)\\
 & \cong \Syz_R(X,Y,Z^3).
\end{align*}
Now we show that all other modules $\coKer(\phi_j)$ are selfdual. This is obvious for $j=4,5$. Let 
$\alpha:=\left(\begin{smallmatrix}0 & 1\\ -1 & 0\end{smallmatrix}\right).$
Then the morphisms $(\alpha_j,\beta_j):(\phi_j,\psi_j)\lra(\phi_j\trans,\psi_j\trans)$ given by
\begin{align*}
 \alpha_2=-\beta_2 &= \begin{pmatrix}\alpha & 0 & 0\\ 0 & \alpha & 0 \\ 0 & 0 & \alpha\end{pmatrix},\\
 \alpha_6=-\beta_6 &= \begin{pmatrix}-\alpha & 0 & 0\\ 0 & \alpha & 0 \\ 0 & 0 & \alpha\end{pmatrix},\\
 \alpha_3=\beta_3 &= \begin{pmatrix}-\alpha & 0 & 0 & 0\\ 0 & \alpha & 0 & 0 \\ 0 & 0 & -\alpha & 0\\ 0&0&0& -\alpha\end{pmatrix},\\
 \alpha_7=-\beta_7 &= \begin{pmatrix}\alpha & 0 & 0 & 0\\ 0 & -\alpha & 0 & 0 \\ 0 & 0 & \alpha & 0\\ 0&0&0& -\alpha\end{pmatrix}
\end{align*}
are equivalences of matrix factorizations.

We continue with the computations of representations as syzygy modules. 

Considering $\psi_2$ and $\psi_6$ we have to delete two columns. The following choices $\{l,m\}$ are not possible, since they give 
matrices with all three-minors vanishing in the given point:
$$\begin{array}{c|c|c}
   \text{point} & \text{deleted columns from }\psi_2 & \text{deleted columns from }\psi_6\\ \hline
 (0,1,-1) & \{1,4\}\vee \{1,6\}\vee \{4,6\} & \{2,4\}\vee \{2,6\}\vee \{4,6\}\\
 (1,-1,0) & \{1,5\}\vee \{2,6\}\vee \{3,4\} & \{1,6\}\vee \{2,5\}\vee \{3,4\}\\
 (1,0,-1) & \{1,2\}\vee \{3,6\}\vee \{4,5\} & \{1,4\}\vee \{2,3\}\vee \{5,6\}.
  \end{array}$$
All other choices give isomorphisms $\coKer(\phi_2)\cong\Syz_R(F_1,F_2,F_3,F_4)$ with
$$\begin{array}{ccccc}
 \text{deleted columns} & F_1 & F_2 & F_3 & F_4\\ \hline
1,3 & Y^2 & -XZ & -Z^2 & XY\\
2,3 & Y^2 & -Z^4 & X & YZ^3\\
2,4 & XY & Z^4 & -Y^2 & XZ^3\\
2,5 & YZ & -X & Y^2 & Z^4\\
3,5 & Z^2 & X & YZ & -Y^2\\
5,6 & XZ & -Z^4 & Y^2 & XY
\end{array}$$
and $\coKer(\phi_6)\cong\Syz_R(F_1,F_2,F_3,F_4)$ with
$$\begin{array}{ccccc}
 \text{deleted columns} & F_1 &F_2 &F_3 &F_4\\ \hline
1,2 & -Y^2 & -XY & Z^3 & XZ^2\\
1,3 & Y^2 & YZ^2 & X & -Z^4\\
1,5 & Z^3 & X & -Y^2 & YZ^2\\
3,5 & -X & YZ & Z^3 & Y^2\\
3,6 & Z^4 & XY & XZ^2 & -Y^2\\
4,5 & Y^2 & XZ & -Z^3 & XY.
\end{array}$$
We get isomorphisms $\coKer(\phi_3)\cong\Syz_R(F_1,\ldots,F_5)$ with
$$\begin{array}{cccccc}
  \text{deleted columns} & F_1 & F_2 & F_3 & F_4 & F_5\\ \hline
1,3, 5 & XY &-Y^2Z &XZ^2 &Z^3 &X^2\\
1,5,8 & XZ^3 &X^2 &-YZ^4 &-XY^2 &-Y^2Z\\
2,4,7 & Y^2Z &-XY &Z^4 &-X^2 &XZ^3\\
2,5,7 & XZ &Y^2 &-Z^4 &XY &-YZ^3\\
3,5,7 & Z^3 &Y^2 &XZ &YZ^2 &XY\\
4,6,7 & YZ^3 &X^2 &-XZ^2 &-Y^2Z &-XY^2.
\end{array}$$
We can represent $\coKer(\phi_7)$ as $\Syz_R(F_1,\ldots,F_5)$ with
$$\begin{array}{cccccc}
  \text{deleted columns} & F_1 & F_2 & F_3 & F_4 & F_5\\ \hline
1,2,3 & -X^2 &XY &-Y^2Z^2 &Z^4 &-XZ^3\\
1,3,7 & Z^4 &-XY &Y^2 &XZ^2 &-YZ^3\\
2,3,4 & X^2 &-XZ^2 &YZ^4 &Y^2Z &-XY^2\\
3,5,7 & Y^2 &-XZ &-YZ^2 &Z^4 &XY\\
5,6,7 & XY &Y^2Z &-Z^4 &-XZ^2 &X^2\\
6,7,8 & XZ^3 &YZ^4 &Y^2Z^2 &XY^2 &X^2.
\end{array}
$$
In all other cases the resulting matrix has vanishing four-minors in (at least) one of the ``test points'' $(0,1,-1)$, $(1,0,-1)$ and $(1,-1,0)$.

Similarly, we have $\coKer(\phi_4)\cong\Syz_R(F_1,\ldots,F_6)$ with
$$\begin{array}{ccccccc}
  \text{deleted columns} & F_1 & F_2 & F_3 & F_4 & F_5 & F_6\\ \hline
1,2,3,5 & -XY^2 &Z^6 &-X^2 &XYZ^2 &YZ^3 &XZ^4\\
1,2,3,6 & -Y^4 &Z^6 &-XY^2 &Y^3Z^2 &-XZ^3 &Y^2Z^4\\
1,2,4,6 & Y^4 &XZ^4 &Y^2Z^3 &XY^3 &-X^2Z &XY^2Z^2\\
1,2,5,7 & -X^2 &XZ^3 &YZ^4 &XY^2 &Y^2Z &XYZ^2\\
1,3,5,7 & X^2 &-XYZ &-Y^2Z^2 &XZ^3 &Z^4 &-XY^2\\
3,5,7,9 & Z^4 &XY &-Y^2Z &XZ^2 &YZ^3 &-Y^3\\
3,5,9,10 & -XZ^3 &YZ^4 &XY^2 &Z^6 &-Y^3 &-XYZ^2\\
3,6,9,10 & Y^2Z^3 &XZ^4 &X^2Y &-Z^6 &-XY^2 &-X^2Z^2\\
6,7,8,9 & -YZ^4 &XY^2 &XZ^3 &X^2Z &-Y^2Z^2 &X^2Y.
\end{array}
$$
In all other cases the resulting matrix has vanishing five-minors in one of the test points.

Finally, we can represent $\coKer(\phi_5)$ as $\Syz_R(F_1,\ldots,F_7)$ with
$$\begin{array}{cccccccc}
  \text{deleted columns} & F_1 & F_2 & F_3 & F_4 & F_5 & F_6 & F_7\\ \hline
1,3,4,6,11 & -X^2Z^2 &-Y^4 &XZ^4 &-XY^3 &Y^2Z^3 &YZ^6 &XY^2Z^2\\
1,3,5,6,11 & -XZ^4 &-Y^4 &Z^6 &-Y^3Z^2 &-XY^2 &-XYZ^3 &Y^2Z^4\\
1,3,5,7,11 & -YZ^4 &XY^2 &-Z^6 &XYZ^2 &X^2 &-Y^2Z^3 &-XZ^4\\
1,3,7,9,11 & Y^2Z^2 &XZ^3 &X^2 &YZ^4 &-XY^2 &-Z^6 &XYZ^2\\
1,7,9,10,11 & XYZ^2 &Z^6 &-Y^2Z^3 &-XY^2 &XZ^4 &Y^4 &-Y^3Z^2\\
2,3,5,7,9 & -XY^2 &Y^3Z &-XYZ^2 &Y^2Z^3 &XZ^4 &Z^5 &Y^4\\
2,3,5,7,12 & Y^2Z^4 &XY^3 &YZ^6 &XY^2Z^2 &-Y^4 &X^2Z^3 &XZ^4\\
3,5,7,9,11 & X^2 &Z^5 &-XYZ &-Y^2Z^2 &-XZ^3 &YZ^4 &-XY^2\\
7,8,9,10,11 & Y^4 &YZ^5 &-XZ^4 &XY^2Z &-Y^2Z^3 &-X^2Z^2 &XY^3.
\end{array}
$$
In all other cases the resulting matrix has vanishing six-minors in one of test points.

\begin{thm}\label{syze8}
The pairwise non-isomorphic modules 
\begin{align*}
M_1 &= \Syz_R(X,Y,Z),\\
M_2 &= \Syz_R(X,Y^2,YZ,Z^2),\\
M_3 &= \Syz_R(XY,XZ,Y^2,YZ^2,Z^3),\\
M_4 &= \Syz_R(XY,XZ^2,Y^3,Y^2Z,YZ^3,Z^4),\\
M_5 &= \Syz_R(XY^2,XYZ^2,XZ^4,Y^4,Y^3Z,Y^2Z^3,Z^5),\\
M_6 &= \Syz_R(X,Y^2,YZ,Z^3),\\
M_7 &= \Syz_R(XY,XZ,Y^2,YZ^2,Z^4),\\
M_8 &= \Syz_R(X,Y,Z^2)
\end{align*}
give a complete list of representatives of the isomorphism classes of indecomposable, non-free, maximal Cohen-Macaulay modules. Moreover, all $M_j$ are selfdual.
\end{thm}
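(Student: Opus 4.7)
The plan is to apply the general machinery developed in Section 3.3 to each of the matrix factorizations $(\phi_j,\psi_j)$ listed above, in complete analogy with the computations already carried out in the cases $A_n$, $D_n$, $E_6$, and $E_7$. By Eisenbud's Theorem \ref{eismatfac}, the list of matrix factorizations $(\phi_1,\psi_1),\ldots,(\phi_8,\psi_8)$ exhausts (up to equivalence) all reduced, indecomposable matrix factorizations of $X^2+Y^3+Z^5$, so the corresponding cokernels $\coKer(\phi_j)$ give a full system of representatives of the isomorphism classes of indecomposable, non-free, maximal Cohen-Macaulay $R$-modules. Hence it suffices to identify each $\coKer(\phi_j)$ with the stated syzygy module $M_j$.

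First I would verify selfduality. For $j\in\{4,5\}$ this is automatic since the only rank-$r$ indecomposable MCM modules available in the list have $r\in\{5,6\}$ respectively. For $j\in\{1,2,3,6,7,8\}$ I would exhibit explicit morphisms $(\alpha_j,\beta_j):(\phi_j,\psi_j)\ra(\phi_j^T,\psi_j^T)$, built as block-diagonal matrices whose blocks are $\pm\alpha=\pm\bigl(\begin{smallmatrix}0&1\\-1&0\end{smallmatrix}\bigr)$; these are then equivalences, so $\coKer(\phi_j)^\vee\cong\coKer(\phi_j)$. Since the $M_j$ have pairwise distinct ranks or will be shown to have distinct numbers of minimal generators, they are pairwise non-isomorphic.

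Next I would identify the syzygy representations. For $j=1,8$ the matrix factorizations are (equivalent to) tensor products of the type computed in Example \ref{diagrank2matfac}, so the four explicit choices of three columns of $\psi_j$ listed there immediately produce the isomorphisms $\coKer(\phi_j)\cong\Syz_R(X,Y,Z)$ and $\coKer(\phi_8)\cong\Syz_R(X,Y,Z^2)$ (and the other equivalent presentations). For the remaining $j$, I would apply the general procedure from Section 3.3: since $\coKer(\phi_j)$ is selfdual of rank $r$, one works with $\psi_j$ itself and seeks a subset $J\subseteq\{1,\ldots,2r\}$ of cardinality $r+1$ such that $\psi_j^J$ still has full rank $r$ at every point of $U$, i.e.\ the $r$-minors of $\psi_j^J$ generate an $\mm$-primary ideal. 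One then computes a generator $(F_1,\ldots,F_{r+1})^T$ of $\Ker(\psi_j^J)$, yielding $\coKer(\phi_j)\cong\Syz_R(F_1,\ldots,F_{r+1})$; the resulting ideal is $\mm$-primary, confirming that $\Det(\coKer(\phi_j)|_U)$ is trivial (consistent with the triviality of $\Pic(U)$ in the $E_8$ case).

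The main obstacle is the combinatorial bookkeeping for the larger matrices, especially $\psi_4$ (size $10$) and $\psi_5$ (size $12$), where most choices of columns to delete fail because the appropriate minors vanish at one of the test points $(0,1,-1)$, $(1,0,-1)$, $(1,-1,0)$. The strategy here is to test each candidate $J$ against this short list of points (where components of $\phi_j$ degenerate) and discard it as soon as one $r$-minor vanishes; the remaining $J$ are precisely the admissible ones, and each yields one of the explicit tuples $(F_1,\ldots,F_{r+1})$ tabulated in the section. This reduces the verification to a finite case analysis, exactly as carried out in the tables above for $\psi_2$, $\psi_3$, $\psi_4$, $\psi_5$, $\psi_6$, $\psi_7$, and collecting the resulting minimal-generator tuples gives the list $M_1,\ldots,M_8$ claimed in the theorem.
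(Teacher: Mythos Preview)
Your proposal is correct and follows essentially the same route as the paper: verify selfduality via explicit block-antisymmetric equivalences $(\alpha_j,\beta_j)$ (trivially for $j=4,5$ by rank uniqueness), handle $j=1,8$ via Example~\ref{diagrank2matfac}, and for the remaining $j$ run the column-deletion procedure of Section~3.3 against the three test points $(0,1,-1)$, $(1,0,-1)$, $(1,-1,0)$, exploiting that $\Pic(U)$ is trivial so all resulting ideals are $\mm$-primary. One small slip: your claim that the $M_j$ are pairwise non-isomorphic by ``distinct ranks or distinct numbers of minimal generators'' does not work as stated (e.g.\ $M_1$ and $M_8$ both have rank two and four minimal generators), but this is harmless since pairwise non-isomorphism already follows, as you note earlier, from Eisenbud's bijection and the cited classification of the matrix factorizations.
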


\section{An idea of a purely algebraic construction of the isomorphisms}
In this section we explain an idea how to find the isomorphisms from the previous sections with purely algebraic methods.

Given a matrix factorization $(\phi,\psi)$ of size $n$ and rank $m$, we want an isomorphism from $\coKer(\phi)$ to $\Syz_R(I)$, where $I:=(F_1,\ldots,F_{m+1})$ is an ideal (with unknown generators).
Let $f: R^{m+1}\lra R$ be the map $\langle(F_1,\ldots,F_{m+1})\trans,\_\rangle$. Our goal is to find the $F_i$.
Since we want to restrict the map $\phi$ to $R^{m+1}$, the idea is to factor $f$ as $f=p\circ\phi\circ\iota$ with linear maps $\iota: R^{m+1}\lra R^n$ and $p:R^n\lra R$. We restrict ourselves to inclusions $\iota$ that send the standard base vectors of $R^{m+1}$ to (pairwise different) standard base vectors of $R^n$.
To simplify notation, lets assume $\Ima(\iota)=R^{m+1}\times\{0\}^{n-m-1}$.
In this case $\iota\trans$ is right-inverse to $\iota$, giving $p\phi=f\iota\trans$. But $f\iota\trans$ is $f$ on the first $m+1$ components and zero on the last $n-1-m$ components.
Therefore $p$ has to be a syzygy for the last $n-1-m$ columns of $\phi$ and for the first $m+1$ columns of $\phi$ we get $p\phi^{c,i}=f_i$, where $\phi^{c,i}$ denotes the $i$-th column of $\phi$ ($i\in\{1,\ldots,m+1\}$).
From the commutative diagram
$$\xymatrix{
0\ar[r] & \Ker(\phi)\ar[r] & R^n\ar[r]^{\phi}\ar[d]^{\iota\trans} & R^n\ar[d]^p\\
0\ar[r] & \Ker(f)\ar[r] & R^{m+1}\ar[r]^f & R
}$$
we get an induced map $\Ker(\phi)\lra\Ker(f)$, which is in fact an inclusion. Then one has to show that this inclusion is surjective, which is equivalent to the condition that $\Ker(\phi)\lra\Ker(f)$ splits, since $\Ker(\phi)$ is indecomposable and has the same rank as $\Ker(f)$.

\begin{exa}
Let $R:=k[X,Y,Z]/(X^{d_1}+Y^{d_2}+Z^{d_3})$ with $d_i\in\N_{\geq 2}$ and consider the matrix
$$\phi=\begin{pmatrix}
X^a & Y^b & Z^c & 0\\
-Y^{d_2-b} & X^{d_1-a} & 0 & Z^c\\
-Z^{d_3-c} & 0 & X^{d_1-a} & -Y^b\\
0 & -Z^{d_3-c} & Y^{d_2-b} & X^a
\end{pmatrix}$$
from Example \ref{diagrank2matfac}. Choosing $p:=(1,0,0,0)$ and
$$\iota:=\begin{pmatrix}
1 & 0 & 0\\       
0 & 1 & 0\\
0 & 0 & 1\\
0 & 0 & 0
\end{pmatrix},$$
we get $p\phi\iota=(X^a,Y^b,Z^c)$. Therefore we get the inclusion $\Ker(\phi)\lra\Syz_R(X^a,Y^b,Z^c)$. We have to show that this map splits. That means
we have to show that every $(A,B,C)\in\Syz_R(X^a,Y^b,Z^c)$ extends uniquely to an element $s\in\Ker(\phi)$, which has to be of the form $(A,B,C,D)$, 
since $\iota\trans(s)=(A,B,C)$ (this already shows that there is at most one extension). 

Multiplying $AX^a+BY^b+CZ^c=0$ by $-Y^{d_2-b}$ gives
$$\begin{aligned}
 & -AX^aY^{d_2-b}-BY^{d_2}-CY^{d_2-b}Z^c && =0\\
\Longleftrightarrow & -AX^aY^{d_2-b}+B(X^{d_1}+Z^{d_3})-CY^{d_2-b}Z^c && =0\\
\Longleftrightarrow & X^a(-AY^{d_2-b}+BX^{d_1-a})-Z^c(-BZ^{d_3-c}+CY^{d_2-b}) && =0.
\end{aligned}$$
Since $X^a$ and $Z^c$ are coprime in $R$ this gives $$-AY^{d_2-b}+BX^{d_1-a}=-Z^cD\qquad\text{and}\qquad -BZ^{d_3-c}+CY^{d_2-b}=-X^aD$$
for some $D\in R$.

Similarly, multiplying $AX^a+BY^b+CZ^c=0$ by $-Z^{d_3-c}$ gives
$$\begin{aligned}
 & -AX^aZ^{d_3-c}-BY^bZ^{d_3-c}-CZ^{d_3} && =0\\
\Longleftrightarrow & -AX^aY^{d_2-b}+BY^bZ^{d_3-c}+C(X^{d_1}+Y^{d_2}) && =0\\
\Longleftrightarrow & X^a(-AY^{d_2-b}+CX^{d_1-a})+Y^b(-BZ^{d_3-c}+CY^{d_2-b}) && =0\\
\Longleftrightarrow & X^a(-AY^{d_2-b}+CX^{d_1-a})-X^aY^bD && =0.
\end{aligned}$$
The last line is equivalent to $-AY^{d_2-b}+CX^{d_1-a}=Y^bD$, since $X^a$ is a non-zero divisor. Now we can extend $(A,B,C)\in\Syz_R(X^a,Y^b,Z^c)$ uniquely to $(A,B,C,D)\in\Ker(\phi)$.
\end{exa}

Note that $\phi$ and $\phi\iota$ had the same rank in this example. But this is not true in general.

\begin{exa}
Let $R:=k[X,Y,Z]/(X^2+Y^{n-1}+YZ^2)$ and consider
$$\phi_m:=\begin{pmatrix}
-X & 0 & YZ & Y^{m/2}\\
0 & -X & Y^{n-1-m/2} & -Z\\
Z & Y^{m/2} & X & 0\\
Y^{n-1-m/2} & -YZ & 0 & X
\end{pmatrix}$$
for $m$ even.
In this case we may choose $p$ to be either $(1,0,0,0)$ or $(0,0,0,1)$. In these cases $\iota$ is the map $(r_1,r_2,r_3)\mapsto (0,r_1,r_2,r_3)$ resp. $(r_1,r_2,r_3)\mapsto(r_1,r_2,r_3,0)$.
For the first pair $(p,\iota)$ we get $p\phi_m\iota=(-X,Y^{n-1-m/2},-Z)$ and for the second pair we get $p\phi_m\iota=(Z,Y^{m/2},X)$.
In both cases we already saw that $\Ker(\phi_m)\cong\coKer(\phi_m)$ is isomorphic to the first syzygy modules of these ideals.
But the rank of $\phi_m\iota$ in the point $(0,0,1)$ is only one for both choices.
Choosing $\iota$ to embed $R^3$ inside $R^4$ outside the second resp. the third component (in these cases the rank of $\phi_m\iota$ would be two in every point), one gets $f=(-X,YZ,Y^{m/2})$ resp. $f=(Y^{n-1-m/2},-YZ,X)$, whose first syzygy modules
are not isomorphic to $\Ker(\phi_m)$: In the first case we have to find a $B$ which extends $(0,Y^{m/2-1},-Z)$ to $(0,B,Y^{m/2-1},-Z)\in\Ker(\phi_m)$. The third row of $\phi_m$ gives the condition
$BY^{m/2}+Y^{m/2-1}X=0$, which is equivalent to $X=-BY$. This is a contradiction. Similarly, if the syzygy $(Z,Y^{n-2-m/2},0)$ in the second case would extend to $(Z,Y^{n-2-m/2},C,0)\in\Ker(\phi_m)$, the second row of $\phi_m$ gives $-XY^{n-2-m/2}+CY^{n-1-m/2}=0$, which is equivalent to $X=CY$.
\end{exa}

The last example already showed that this approach causes a lot of troubles:

\begin{enumerate}
 \item There is no canonical choice for $\iota$. Even the ranks of $\phi$ and $\phi\iota$ might be different in some points.
 \item To find $p$ one has to find a simultaneous syzygy for up to five column vectors of length up to twelve.
 \item Even if one finds possible maps $\iota$ and $p$, the induced inlusion $\Ker(\phi)\lra\Syz_R(f)$ might not be surjective.
\end{enumerate}

All those disadvantages have analogues in the geometric approach from the third section. But in the geometric approach these analogues are advantages 
(at least in all our explicit examples):

\begin{enumerate}
 \item For every choice of columns such that the rank of the reduced matrix remains the same, one gets a representation.
 \item The generators of the ideal can be computed from the rows of the reduced matrices.
 \item There are no ``false friends``, meaning that everything that looks like a solution is in fact a solution.
\end{enumerate}

\section{A comment on the graded situation}
Since we want to use the results from this chapter to control the Frobenius pull-backs of the sheaves $\Syz_C(X,Y,Z)$, where $C\coloneqq\Proj(R)$, we need a graded version of our results. Therefore we summarize briefly Chapter 15 of \cite{yobook} (the original paper is \cite{ausreit}).

Let $R$ be a positively-graded, Cohen-Macaulay ring with $R_0=k$ a field. Denote by $\mm$ the graded maximal ideal and by $\hat{R}$ the $\mm$-adic completion of $R$. We will need the following categories.
$$\begin{array}{lcc}
\text{symbol} & \text{objects} & \text{morphisms}\\ \hline
\mathfrak{grC}(R) & \text{finitely generated graded }R\text{-modules} & \text{degree-preserving homomorphisms}\\
\mathfrak{grM}(R) & \text{graded maximal C-M }R\text{-modules} & \text{degree-preserving homomorphisms}\\
\mathfrak{C}(\hat{R}) & \text{finitely generated } \hat{R}\text{-modules} & \text{homomorphisms}\\
\mathfrak{M}(\hat{R}) & \text{maximal C-M }\hat{R}\text{-modules} & \text{homomorphisms}
\end{array}$$
We say that $R$ is of \textit{finite graded Cohen-Macaulay type} if there are - up to degree shift - only finitely many isomorphism classes of graded, indecomposable, non-free, maximal Cohen-Macaulay modules. The goal is to find a relation between the (graded) indecomposable, maximal Cohen-Macaulay modules over $R$ and over $\hat{R}$.

Recall that taking the $\mm$-adic completion $\hat{-}$ is a functor from $\mathfrak{grM}(R)$ to $\mathfrak{M}(\hat{R})$ that restricts to a functor from $\mathfrak{grC}(R)$ to $\mathfrak{C}(\hat{R})$. 

We get the following lemma (cf. \cite[Lemma 15.2]{yobook}).

\begin{lem}
\begin{enumerate}
\item If $M\in\mathfrak{grM}(R)$ is indecomposable, then $\hat{M}\in\mathfrak{M}(\hat{R})$ is indecomposable.
\item If $M,N\in\mathfrak{grM}(R)$ are indecomposable such that $\hat{M}\cong\hat{N}$ in $\mathfrak{M}(\hat{R})$, then $M\cong N$ in $\mathfrak{grM}(R)$ up to degree shift.
\end{enumerate}
\end{lem}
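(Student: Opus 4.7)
My plan is to use the well-known principle that indecomposability of a finitely generated module is detected by local\-ness of its endomorphism ring, combined with the fact that completion commutes with $\Hom$ for finitely generated modules. The graded setting forces us to separately track the degree-zero part of $\End_R(M)$, and the main technical obstacle is that positive-degree endomorphisms are not an ideal of $\End_R(M)$ but must become part of the Jacobson radical after completion.

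For part (i), I would first observe that since $M\in\mathfrak{grM}(R)$ is finitely generated and $R_0=k$ is a field, the degree-zero part $\End_R(M)_0$ is a finite-dimensional $k$-algebra (a degree-preserving endomorphism is determined by its action on a finite homogeneous generating set, each generator landing in a finite-dimensional $k$-vector space $M_{d_i}$). Indecomposability of $M$ in $\mathfrak{grM}(R)$ means $\End_R(M)_0$ has no non-trivial idempotents, which, being a finite-dimensional $k$-algebra, is equivalent to it being local with some maximal ideal $\mathfrak{r}$. Next, flat base change together with finite generation gives $\End_{\hat R}(\hat M)\cong\widehat{\End_R(M)}$. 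I would then show that $\End_{\hat R}(\hat M)$ is local by exhibiting $\hat{\mm}\cdot\End_{\hat R}(\hat M)+\mathfrak{r}$ as contained in the Jacobson radical, using that positive-degree endomorphisms are topologically nilpotent (any positive-degree endomorphism increases degrees and is therefore absorbed modulo $\hat{\mm}^N$ for $N\gg 0$), with quotient isomorphic to the division algebra $\End_R(M)_0/\mathfrak{r}$. Hence $\End_{\hat R}(\hat M)$ is local, so $\hat M$ is indecomposable.

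For part (ii), I would employ the graded Hom modules directly. Suppose $\hat M\cong\hat N$ via an isomorphism $\psi$ with inverse $\xi$. Since $M$ and $N$ are finitely generated graded $R$-modules, flat base change yields
\[
\Hom_R(M,N)\otimes_R\hat R\cong\Hom_{\hat R}(\hat M,\hat N),\qquad \Hom_R(N,M)\otimes_R\hat R\cong\Hom_{\hat R}(\hat N,\hat M).
\]
Write $\psi=\sum_d\psi_d$ and $\xi=\sum_e\xi_e$ according to the grading, where $\psi_d\in\Hom_R(M,N)_d$ may be viewed as a degree-preserving map $M\to N(d)$, and similarly for $\xi_e$. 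The relation $\xi\circ\psi=\id_M$ in $\End_{\hat R}(\hat M)=\widehat{\End_R(M)}$, read off in degree zero, yields
\[
\sum_{d}\xi_{-d}\circ\psi_d=\id_M\quad\text{in }\End_R(M)_0.
\]
Since $\End_R(M)_0$ is local by part (i), not all summands can lie in the maximal ideal, so there exists some $d$ for which $\xi_{-d}\circ\psi_d$ is a unit, i.e., an automorphism of $M$. Then $\psi_d\colon M\to N(d)$ is a split monomorphism in $\mathfrak{grM}(R)$, exhibiting $M$ as a graded direct summand of $N(d)$. Since $N$ is indecomposable in $\mathfrak{grM}(R)$, so is $N(d)$, and thus $\psi_d$ must be an isomorphism $M\cong N(d)$, completing the proof.
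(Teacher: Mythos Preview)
The paper does not prove this lemma but simply cites \cite[Lemma~15.2]{yobook}. Your approach is the standard one, and part~(ii) is carried out correctly. Part~(i), however, has a gap: your candidate $\hat{\mm}\cdot\End_{\hat R}(\hat M)+\mathfrak{r}$ for the Jacobson radical is too small. Writing $E=\End_R(M)=\bigoplus_d E_d$, one has $\hat E/\hat\mm\hat E\cong E/\mm E$, and in general $E_d\not\subseteq\mm E$ for $d\neq 0$, so nonzero-degree pieces survive in this quotient and further dividing by the image of $\mathfrak r$ does not produce the division ring $E_0/\mathfrak r$ you claim. That positive-degree endomorphisms are topologically nilpotent is true but does not by itself place them in the radical (one needs $1-a\phi b$ invertible for all $a,b$, not merely $1-\phi$), and negative-degree endomorphisms are not addressed at all.

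The missing step is to show that $\mathfrak n:=\mathfrak r\oplus\bigoplus_{d\neq 0}E_d$ is a two-sided ideal of $E$. For this one must check $E_d\cdot E_{-d}\subseteq\mathfrak r$ whenever $d\neq 0$: if some $\psi\phi$ with $\phi\in E_d$, $\psi\in E_{-d}$ were a unit in $E_0$, then $\phi\colon M\to M(d)$ would be a split graded monomorphism, forcing $M\cong M(d)$ --- impossible since a nonzero finitely generated graded module over a positively graded $k$-algebra has a well-defined minimal degree. Once $\mathfrak n$ is known to be an ideal with $E/\mathfrak n\cong E_0/\mathfrak r$, one shows that its image in the finite-dimensional algebra $E/\mm E$ is nil (a short pigeonhole argument using that the grading is bounded and $\mathfrak r$ is nilpotent), hence equals the radical there, and this pulls back to give that $\hat E$ is local.
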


An immediate consequence of the previous lemma is, that $R$ is of finite graded Cohen-Macaulay type if $\hat{R}$ is of finite Cohen-Macaulay type. Moreover, the graded 
analogous of the Theorems \ref{syzan} - \ref{syze8} still give a full list of representatives of the isomorphism classes of graded, indecomposable, 
non-free, maximal Cohen-Macaulay $R$-modules. But so far we might have repetitions in the lists. At least if $R_0=k$ is perfect, we can get rid of this. 
First we need a definition.

\begin{defi}
\begin{enumerate}
\item We call a finitely generated $\hat{R}$-module $M$ \textit{graduable} if there exists a finitely generated graded $R$-module $A$ such that $\hat{A}\cong M$.
\item An $\hat{R}$-homomorphism $f:M\ra N$ with $M$, $N$ graduable is called a \textit{graduable homomorphism} if there is a graded homomorphism of finitely generated graded $R$-modules $A$, $B$, such that the following diagram of $\hat{R}$-modules and homomorphisms commutes.
$$\xymatrix{
M\ar[r]^{\cong}\ar[d]^f & \hat{A}\ar[d]^{\hat{g}}\\
N\ar[r]^{\cong} & \hat{B}.
}$$
\end{enumerate}
\end{defi}

The next theorem, proven by Auslander and Reiten, gives the promised converse.

\begin{thm}[Auslander, Reiten]\label{ausreitgraded}
Suppose $R_0=k$ is perfect. If $R$ is of finite graded Cohen-Macaulay type, then $\hat{R}$ is of finite Cohen-Macaulay type. Moreover, in this case all maximal 
Cohen-Macaulay $\hat{R}$-modules are graduable.
\end{thm}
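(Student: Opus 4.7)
The plan is to first establish graduability of every maximal Cohen-Macaulay $\hat R$-module; the finite Cohen-Macaulay type of $\hat R$ then follows almost for free from the preceding lemma. Indeed, once graduability is known, the completion functor $\hat{\phantom{-}}:\mathfrak{grM}(R)\to\mathfrak{M}(\hat R)$ is essentially surjective on indecomposables, and the preceding lemma says it preserves indecomposability and is injective up to degree shift; combining these with Krull-Schmidt in $\mathfrak{M}(\hat R)$, one obtains a bijection between graded indecomposable maximal Cohen-Macaulay $R$-modules up to shift and indecomposable maximal Cohen-Macaulay $\hat R$-modules up to isomorphism. Finite graded Cohen-Macaulay type of $R$ then transports directly to finite Cohen-Macaulay type of $\hat R$.

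For graduability itself, the approach I would take is the Auslander-Reiten method of almost split sequences. Both categories $\mathfrak{grM}(R)$ and $\mathfrak{M}(\hat R)$ admit almost split sequences ending at each non-free indecomposable, and for each of the finitely many graded indecomposable maximal Cohen-Macaulay modules $M_1,\ldots,M_n$ (representatives up to shift) I would form the graded almost split sequence $0\to\tau M_i\to E_i\to M_i\to 0$ and take its $\mm$-adic completion $0\to\widehat{\tau M_i}\to\hat E_i\to\hat M_i\to 0$. The central step is to show that this completed sequence is again almost split in $\mathfrak{M}(\hat R)$. Granted this, the uniqueness of almost split sequences up to isomorphism of complexes forces every non-free indecomposable $N\in\mathfrak{M}(\hat R)$ to coincide with some $\hat M_i$, which is precisely graduability.

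The main obstacle is therefore the descent claim that the completion of a graded almost split sequence remains almost split. This requires two verifications: (a) the completed sequence is non-split, which is formal from faithful flatness of $R\to\hat R$ applied to finitely generated graded modules, and (b) every non-isomorphism $\hat M_i\to Y$ with $Y\in\mathfrak{M}(\hat R)$ indecomposable factors through $\hat E_i$. Step (b) is the technical heart and amounts to a lifting problem: showing that homomorphisms between completions of graded indecomposables can, modulo the radical of the endomorphism ring, be represented by graded homomorphisms between graded preimages. This is exactly where the perfectness of $k$ is indispensable, since it guarantees that the residue field of the local ring $\mathrm{End}_{\hat R}(\hat M_i)$ is a separable extension of $k$ and thus admits a section back to $R$, enabling the required lifts of idempotents and of representatives of $\mathrm{Hom}$-spaces modulo radical. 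Once these lifts are in place, one bootstraps graduability of morphisms and modules simultaneously, obtaining both graduability of arbitrary maximal Cohen-Macaulay $\hat R$-modules and the descent of almost split sequences, and hence both assertions of the theorem.
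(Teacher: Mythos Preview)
The paper does not give a proof of this theorem; it simply cites \cite[Theorem 15.14]{yobook}. Your sketch is essentially the Auslander--Reiten strategy behind that reference, and you have correctly located the technical heart: the perfectness of $k$ is what allows one to show that the completion of a graded almost split sequence is again almost split in $\mathfrak{M}(\hat R)$.

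There is, however, a genuine gap in your deduction of graduability from this fact. You write that once the completed sequences $0\to\widehat{\tau M_i}\to\hat E_i\to\hat M_i\to 0$ are known to be almost split, ``the uniqueness of almost split sequences \ldots\ forces every non-free indecomposable $N\in\mathfrak{M}(\hat R)$ to coincide with some $\hat M_i$.'' Uniqueness does not do this: it only says that the almost split sequence ending at a \emph{given} $N$ is unique, not that $N$ must appear among the $\hat M_i$. What the preservation of almost split sequences actually buys you is that the class of graduable indecomposables is closed under the Auslander--Reiten translate and under taking summands of middle terms of almost split sequences; in other words, it is a union of connected components of the AR-quiver of $\mathfrak{M}(\hat R)$. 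To conclude that this union is everything, one needs an additional argument---for instance, that every component contains a graduable module (such as $\hat R$ or a syzygy of $k$), or a direct construction showing that a hypothetical non-graduable $N$ would generate infinitely many graded indecomposables over $R$. This extra step is where the finite graded CM type hypothesis is actually consumed, and it is missing from your outline.
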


\begin{proof}See \cite[Theorem 15.14]{yobook}.\end{proof}

As a consequence of the last theorem, we obtain that the following definition makes sence.

\begin{defi} Let $S=k[X_1,\ldots,X_m]$ be positively-graded with $k$ a perfect field. Let $f\in S_+^2$ be homogeneous of degree $d$. Then a \textit{graded matrix factorization} for $f$ of size $m$ 
is a pair of $m\times m$ matrices $\phi$, $\psi$ with entries in $S$, such that $\phi\circ\psi=\psi\circ\phi=f\cdot\id_m$ and the morphisms given by $\phi$ resp. $\psi$ are homogeneous of degree zero resp. $d$. A morphism from a matrix factorization into another 
is again a pair of matrices $(\alpha,\beta)$ such that the corresponding diagram commutes and the morphism given by $\alpha$ and $\beta$ are homogeneous of degree zero.
\end{defi}

By Theorem \ref{ausreitgraded}, the isomorphisms $\coKer(\phi)\cong\Syz_R(F_1,\ldots,F_{m+1})$ of (ungraded) $R$-modules constructed in the last sections, 
are graduable in the sence that they hold in the graded situation up to a degree shift. But this shift cannot be computed with the definition of a graded matrix factorization, 
since the modules $\coKer(\phi)$ have no ``natural'' grading. We illustrate this by an example.

\begin{exa}\label{nonaturalgrading}
Consider the hypersurface $R:=k[X,Y,Z]/(X^n+YZ)$ of type $A_{n-1}$ with $n\geq 1$, graded by $\Deg(X)=2$ and $\Deg(Y)=\Deg(Z)=n$. Let $S:=k[X,Y,Z]$.
From the matrix factorizations 
$$(\phi_m,\psi_m)=\left(\begin{pmatrix} Y & X^{n-m}\\ X^m & -Z\end{pmatrix},\begin{pmatrix}Z & X^{n-m}\\ X^m & -Y\end{pmatrix}\right)$$
we obtained the isomorphisms 
\begin{equation}\label{syzrepr}
\begin{array}{ccl}
\coKer(\phi_m) &\cong & \Syz_R(X^m,-Z)\\
 &\cong& \Syz_R(Y,X^{n-m}).
\end{array}
\end{equation}
For all $\eta\in\Z$ the sequence
$$S(-\eta-2m+n)\oplus S(-\eta)\stackrel{\psi_m}{\lra}S(-\eta-2m+2n)\oplus S(-\eta+n)\stackrel{\phi_m}{\lra}S(-\eta-2m+n)\oplus S(-\eta)$$
shows that $(\phi_m,\psi_m)$ are graded matrix factorizations for $X^n+YZ$. Reducing the above sequence by $X^n+YZ$, we obtain the short exact sequence
$$0\ra\Ima(\psi_m)\ra R(-\eta-2m+n)\oplus R(-\eta)\ra \Ima(\phi_m)\ra 0.$$
The presenting sequences of the syzygy modules in (\ref{syzrepr}) are
$$\xymatrix{
   0\ar[r] & \Syz_R(X^m,-Z)\ar[r] & R(-2m)\oplus R(-n)\ar[r] & (X^m,Z)\ar[r] & 0,\\
   0\ar[r] & \Syz_R(Y,X^{n-m})\ar[r] & R(-n)\oplus R(-2n+2m)\ar[r] & (Y,X^{n-m})\ar[r] & 0.
}$$
Choosing $\eta=n$, we obtain $\coKer(\phi_m)\cong\Syz_R(X^m,-Z)$ as graded modules and $\coKer(\phi_m)\cong\Syz_R(Y,X^{n-m})$ is an isomorphism of graded modules, 
if we choose $\eta=2n-2m$. 

Since every value of $\eta$ is as good as any other, the ungraded isomorphisms $\coKer(\phi)\cong\Syz_R(F_1,\ldots,F_{m+1})$ have no natural graded analogues.
\end{exa}

\chapter[Hilbert-series of first syzygy modules of monomial ideals]{The Hilbert-series of first syzygy modules of \except{toc}{certain} monomial ideals \except{toc}{in $\N$-graded rings of the form $k[X,Y_1,\ldots,Y_n]/(X^d-F(Y_1,\ldots,Y_n))$}}\label{syzses}
The main results of this chapter are the Theorems \ref{ses} and \ref{HilbSer}, which allow us to compute the Hilbert-series of certain syzygy modules. 
In the second section we will compute the Hilbert-series of the syzygy modules representing the isomorphism classes of the non-free, indecomposable, 
maximal Cohen-Macaulay modules over surface rings of type ADE.
We will see that in many cases these Hilbert-series carry enough information to detect the isomorphism class of a given indecomposable, maximal Cohen-Macaulay module. 
It contains also enough information to exclude many possible splitting behaviours of a given maximal Cohen-Macaulay module. Note that we will always work 
with the description of the Hilbert-series as rational functions und call these rational functions again Hilbert-series.

\section{A short exact sequence}
The goal of this section is to provide an invariant for the modules $\Syz_R(F_1,\ldots,F_n)$ from Theorems \ref{syzan}-\ref{syze8}. This invariant should be 
efficiently computable for the modules $\Syz_R(F_1^q,\ldots,F_n^q)$ at least in the case where $n=3$ and the $F_i$ are monomials. One idea was to use the Hilbert-series as invariant. But how to 
compute it for various powers $p^e$ of various primes? The solution to this question was inspired by a work of Brenner who showed that on a standard-graded projective curve $C:=\Proj(R)$ 
with $R:=k[X,Y,Z]/(X^d-F(Y,Z))$ all generators of 
$\Syz_{C}(X^a,Y^b,Z^c)$ (with $a,b,c\in\N_{\geq 1}$) come from $(\Prim^1_k)^2=(\Proj(k[Y,Z]))^2$ (cf. \cite[Lemma 1.1]{miyaoka}). We give the precise statement.

\begin{lem}
Let $k$ be a field and let $F(Y,Z)\in k[Y,Z]$ be a homogeneous polynomial of degree $d$. Assume that the projective curve $C:=\Proj(k[X,Y,Z]/(X^d-F(Y,Z)))$ is smooth. Let $a,b,c\in\N_{\geq 1}$ and let $a=dq+r$ with $0\leq r<d$ and $q\in\N$. Then there exists for every $m\in\Z$ a surjective morphism
$$\Syz_C(F^q,Y^b,Z^c)(m-r)\oplus\Syz_C(F^{q+1},Y^b,Z^c)(m)\ra\Syz_C(X^a,Y^b,Z^c)(m).$$
\end{lem}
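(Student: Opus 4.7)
The plan is to construct the surjection as the sum of two explicit morphisms coming from the identities $X^a=X^rF^q$ and $F^{q+1}=X^dF^q=X^{a+d-r}$ available in $R$, and then to check surjectivity on the standard affine cover $C=D_+(Y)\cup D_+(Z)$.

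First I would define
\begin{align*}
\alpha\colon\Syz_C(F^q,Y^b,Z^c)(m-r) &\longrightarrow \Syz_C(X^a,Y^b,Z^c)(m),\\
(s_1,s_2,s_3) &\longmapsto (s_1,X^rs_2,X^rs_3),\\
\beta\colon\Syz_C(F^{q+1},Y^b,Z^c)(m) &\longrightarrow \Syz_C(X^a,Y^b,Z^c)(m),\\
(u_1,u_2,u_3) &\longmapsto (X^{d-r}u_1,u_2,u_3).
\end{align*}
A short computation using $X^rF^q=X^a$ and $X^{d-r}\cdot X^a=F^{q+1}$ shows that both maps carry syzygies to syzygies: from $s_1F^q+s_2Y^b+s_3Z^c=0$ one obtains $s_1X^a+(X^rs_2)Y^b+(X^rs_3)Z^c=0$ after multiplying by $X^r$, and similarly for $\beta$. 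The twists $(m-r)$ on the first source and $(m)$ on the second are forced by the degree identities $m-r-dq=m-a$ and $m-d(q+1)=m-a-(d-r)$, so that $\alpha$ and $\beta$ fit into the evident morphisms of presenting sequences.

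Next I would verify surjectivity of $\alpha\oplus\beta$ locally. The two opens really do cover $C$: since $F(Y,Z)$ is homogeneous of positive degree $d$ one has $F(0,0)=0$, and any common projective zero of $Y$ and $Z$ on $C$ would force $X^d=0$ and hence $X=0$, which is not a point of $\Proj$. By smoothness of $C$ the sheaf $\Syz_C(X^a,Y^b,Z^c)$ is locally free of rank two, and on $D_+(Y)\cap C$ the element $Y^b$ is invertible, so this sheaf is freely generated (in the obvious trivialisation) by the two local sections $(1,-X^a/Y^b,0)$ and $(0,-Z^c/Y^b,1)$. The first is the image under $\alpha$ of the local syzygy $(1,-F^q/Y^b,0)$ of $(F^q,Y^b,Z^c)$, since $X^r\cdot(-F^q/Y^b)=-X^a/Y^b$; the second is the image under $\beta$ of the local syzygy $(0,-Z^c/Y^b,1)$ of $(F^{q+1},Y^b,Z^c)$, whose first coordinate vanishes so that the factor $X^{d-r}$ is invisible. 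The argument on $D_+(Z)\cap C$ is symmetric, so $\alpha\oplus\beta$ is surjective on the whole of $C$.

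The main obstacle is essentially bookkeeping: one has to line up the three degree shifts and the various powers of $X$ so that $\alpha$ and $\beta$ land in the target bundle with the correct twist. Once this is done, the surjectivity check reduces to the one-line local verification above, and no snake-lemma or Koszul-style computation is needed. The refinement to a full short exact sequence, whose kernel turns out to be $\Syz_C(X^{a+d-2r},Y^b,Z^c)$ up to twist, is the content of Theorem~\ref{ses} in the special case $m=1$, $l=2$, and lies slightly deeper.
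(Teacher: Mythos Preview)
Your construction of the map is exactly the specialization (to $m=1$, $l=2$, $\alpha=1$, $V_1=1$, $V_2=Y^b$, $V_3=Z^c$) of the map $\phi_s$ that the paper defines immediately after this lemma. What differs is the surjectivity argument. The paper proves surjectivity of $\phi_s$ purely algebraically over the ring $R$: it expands an arbitrary syzygy $h$ as a polynomial in $X$ with coefficients in $k[Y_1,\dots,Y_n]$, breaks the syzygy relation into its $X$-homogeneous pieces, and then observes that each piece (according to whether the relevant $X$-exponent is $<r$ or $\geq r$) lies in the image of the $\Sc_{q+1}$- or the $\Sc_q$-summand. Your route is geometric: you pass to the curve $C$, use that $D_+(Y)\cup D_+(Z)$ covers $C$, and on each chart exhibit explicit preimages of the two obvious free generators of the syzygy bundle. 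This is shorter and entirely adequate for the curve statement, and it makes the role of the cover transparent; the paper's algebraic argument, on the other hand, is what allows the generalization to $R=k[X,Y_1,\dots,Y_n]/(X^d-F(Y_1,\dots,Y_n))$ and to syzygy modules of ideals with more than three generators (Theorem~\ref{ses}), where there is no two-chart cover to exploit. One small remark: the local freeness you invoke on $D_+(Y)$ does not actually require smoothness of $C$; it follows already from the fact that $Y^b$ is a unit there (this is the content of Proposition~\ref{syzprop}\,(iii)).
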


Let $k$ be a field and $$R:=k[X,Y_1,\ldots,Y_n]/(X^d-F(Y_1,\ldots,Y_n))$$ with $\Deg(X)=\alpha$, $\Deg(Y_i)=\beta_i$ and $F$ homogeneous of degree 
$d\alpha$ ($d$, $n$, $\alpha$, $\beta_i\in\N_{\geq 1}$). Let $V_1,\ldots,V_{m+l}$ be monomials in the $Y_i$ with $m,l\geq 1$ and let $a\in\N_{\geq 1}$.
The goal of this section is to compute the Hilbert-series of $R$-modules of the form
$$\Syz_R(X^a\cdot V_1,\ldots,X^a\cdot V_m,V_{m+1},\ldots,V_{m+l}).$$

\begin{defi}
With the previous notations we define for $j\in\N$ the module
$$\Sc_j:=\Syz_R(F^j\cdot V_1,\ldots,F^j\cdot V_m,V_{m+1},\ldots,V_{m+l}).$$
\end{defi}

\begin{lem}
Let $a=d\cdot q +r$ with $0\leq r\leq d-1$ and $q\in\N$. Then the homogeneous map
$$\phi_s:\Sc_q(s-\alpha\cdot r)\dirsum\Sc_{q+1}(s)\longrightarrow \Syz_R(X^a\cdot V_1,\ldots,X^a\cdot V_m,V_{m+1},\ldots,V_{m+l})(s),$$
which sends $(f_1,\ldots,f_{m+l}),(g_1,\ldots,g_{m+l})$ to
$$(f_1+X^{d-r}\cdot g_1,\ldots,f_m+X^{d-r}\cdot g_m,X^r\cdot f_{m+1}+g_{m+1},\ldots,X^r\cdot f_{m+l}+g_{m+l})$$
is surjective for all $s\in\Z$.
\end{lem}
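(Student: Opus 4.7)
The plan is to break the proof into three steps: well-definedness of $\phi_s$, degree compatibility, and surjectivity. The key structural fact I will exploit throughout is that $R$ is a free $k[Y_1,\ldots,Y_n]$-module with basis $1,X,X^2,\ldots,X^{d-1}$, so every element of $R$ has a unique expansion in this basis and we may separate terms by their $X$-exponent.

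First I would verify that the image of $\phi_s$ lies in the target syzygy module. Given $(f_1,\ldots,f_{m+l})\in\Sc_q$ and $(g_1,\ldots,g_{m+l})\in\Sc_{q+1}$, I plug the image tuple into $\sum_{i=1}^m(\,\cdot\,)X^aV_i+\sum_{j=m+1}^{m+l}(\,\cdot\,)V_j$ and reorganize. Using $a=dq+r$ one obtains the factor $X^r$ in front of the $f$-contribution and the identity $X^{d-r+a}=X^{d(q+1)}$ in the $g$-contribution. Since $X^d=F$ in $R$, the two bracketed expressions become exactly the syzygy relations defining $\Sc_q$ and $\Sc_{q+1}$, so each vanishes and the image is a syzygy. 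Degree compatibility then amounts to noting that multiplying the $i$-th slot ($i\leq m$) of an element of $\Sc_q$ by $X^a V_i$ versus $F^qV_i$ shifts the total degree by exactly $r\alpha$, whereas for $j>m$ the insertion of $X^r$ shifts by $r\alpha$; this matches the shift $\Sc_q(s-\alpha r)\to\Syz_R(\cdots)(s)$, and for the $g$-summand no shift is required.

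For surjectivity — which I consider the main obstacle — I would take an arbitrary syzygy $(h_1,\ldots,h_{m+l})$ and construct preimages by splitting each $h_i$ along the $k[Y_1,\ldots,Y_n]$-basis $1,X,\ldots,X^{d-1}$. For $i\leq m$, write $h_i=\sum_{k=0}^{d-1}X^k u_i^{(k)}$ and set
\begin{align*}
f_i:=\sum_{k=0}^{d-r-1}X^k u_i^{(k)},\qquad g_i:=\sum_{k=0}^{r-1}X^k u_i^{(k+d-r)},
\end{align*}
so that $h_i=f_i+X^{d-r}g_i$. For $j>m$, write $h_j=\sum_{k=0}^{d-1}X^k u_j^{(k)}$ and set
\begin{align*}
g_j:=\sum_{k=0}^{r-1}X^k u_j^{(k)},\qquad f_j:=\sum_{k=0}^{d-r-1}X^k u_j^{(k+r)},
\end{align*}
so that $h_j=X^r f_j+g_j$. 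This immediately recovers the tuple $(h_1,\ldots,h_{m+l})$ under $\phi_s$; it remains to show that the $(f_i)$ lie in $\Sc_q$ and the $(g_i)$ lie in $\Sc_{q+1}$.

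For this last step I would substitute these decompositions into the syzygy relation $\sum h_i X^a V_i+\sum h_jV_j=0$, rearrange, and obtain
\begin{align*}
X^r\Bigl(\textstyle\sum_{i=1}^m f_iF^qV_i+\sum_{j=m+1}^{m+l}f_jV_j\Bigr)+\Bigl(\textstyle\sum_{i=1}^m g_iF^{q+1}V_i+\sum_{j=m+1}^{m+l}g_jV_j\Bigr)=0.
\end{align*}
The crucial observation is that by construction all $f_i,f_j$ involve only $X$-powers in $\{0,\ldots,d-r-1\}$ while all $g_i,g_j$ involve only $X$-powers in $\{0,\ldots,r-1\}$; hence the first bracket times $X^r$ sits entirely in $X$-degrees $\{r,\ldots,d-1\}$ and the second bracket entirely in $\{0,\ldots,r-1\}$ (noting that $F,V_i\in k[Y_1,\ldots,Y_n]$ carry no $X$). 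By uniqueness of the $k[Y_1,\ldots,Y_n]$-basis expansion, both brackets vanish separately, giving $(f_1,\ldots,f_{m+l})\in\Sc_q$ and $(g_1,\ldots,g_{m+l})\in\Sc_{q+1}$. This establishes surjectivity and completes the proof.
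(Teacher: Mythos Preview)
Your proof is correct and follows essentially the same strategy as the paper: both exploit the free $k[Y_1,\ldots,Y_n]$-basis $1,X,\ldots,X^{d-1}$ of $R$ to separate a given syzygy by $X$-degree and assign the pieces to $\Sc_q$ or $\Sc_{q+1}$. The only organizational difference is that the paper splits $h$ into $d$ individual summands (one for each $X$-power $j$, showing each comes from $\Sc_q$ when $j\geq r$ and from $\Sc_{q+1}$ when $j<r$), whereas you group these directly into the two tuples $(f_i)$ and $(g_i)$ and verify membership in one stroke via the disjoint $X$-degree ranges; your version is slightly more streamlined but mathematically identical.
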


\begin{proof}
We check that the restriction of $\phi_s$ to each direct summand is \mbox{well-defined}. Let $(f_1,\ldots,f_{m+l})\in \Sc_q(s-\alpha\cdot r)$. The computation
$$\begin{aligned}
 & \text{ } f_1\cdot X^a\cdot V_1+\ldots+f_m\cdot X^a\cdot V_m+X^r\cdot f_{m+1}\cdot V_{m+1}+\ldots+ X^r\cdot f_{m+l}\cdot V_{m+l}&& \\
= & \text{ } X^r\cdot(f_1\cdot X^{dq}\cdot V_1+\ldots+f_m\cdot X^{dq}\cdot V_m+f_{m+1}\cdot V_{m+1}+\ldots+f_{m+l}\cdot V_{m+l})&& \\ 
= & \text{ } X^r\cdot(f_1\cdot F^q\cdot V_1+\ldots+f_m\cdot F^q\cdot V_m+f_{m+1}\cdot V_{m+1}+\ldots+f_{m+l}\cdot V_{m+l})&&=0
\end{aligned}$$
shows that the restriction of $\phi_s$ to $\Sc_q(s-\alpha\cdot r)$ is well-defined (it has the correct degree, since every summand of the term in brackets in the last line is homogeneous of degree $s-\alpha\cdot r$ and $X^r$ has degree $\alpha\cdot r$).

Now let $(g_1,\ldots,g_{m+l})\in \Sc_{q+1}(s)$. In this case we compute
$$\begin{aligned}
 & \text{ } X^{d-r}\cdot g_1\cdot X^a\cdot V_1+\ldots+X^{d-r}\cdot g_m\cdot X^a\cdot V_m+g_{m+1}\cdot V_{m+1}+\ldots+g_{m+l}\cdot V_{m+l}&&\\
= & \text{ } g_1\cdot F^{q+1}\cdot V_1+\ldots+g_m\cdot F^{q+1}\cdot V_m+g_{m+1}\cdot V_{m+1}+\ldots+g_{m+l}\cdot V_{m+l} &&= 0,
\end{aligned}$$
which shows that the restriction of $\phi_s$ to $\Sc_{q+1}(s)$ is well-defined (it has the correct degree, since every summand in the last line has degree $s$).

We now check the surjectivity. Let
$$h:=(h_1,\ldots,h_{m+l})\in \Syz_R(X^a\cdot V_1,\ldots,X^a\cdot V_m,V_{m+1},\ldots,V_{m+l})(s).$$
We write
$$h_i  :=  h_{i,0}+h_{i,1}X+\ldots+h_{i,d-2}X^{d-2}+h_{i,d-1}X^{d-1}$$
with $h_{i,j}\in k[Y_1,\ldots,Y_n]$ for $i=1,\ldots,m+l$ and $j=0,\ldots,d-1$. From the equation 
$$h_1\cdot X^a\cdot V_1+\ldots+h_m\cdot X^a\cdot V_m+h_{m+1}\cdot V_{m+1}+\ldots+h_{m+l}\cdot V_{m+l}=0$$
we get a system of equations by considering the $k[Y_1,\ldots,Y_n]$-coefficients corresponding to $X^j$ for $j=0,\ldots,d-1$. Explicitly, these equations are
\begin{equation}\begin{split}
0 &= h_{1,\sigma(j)}\cdot X^{\sigma(j)}\cdot X^a\cdot V_1+\ldots+h_{m,\sigma(j)}\cdot X^{\sigma(j)}\cdot X^a\cdot V_m\\
&\quad +h_{m+1,j}\cdot X^j\cdot V_{m+1}+\ldots+ h_{m+l,j}\cdot X^j\cdot V_{m+l},
\end{split}
\label{syzsummand}
\end{equation}
where $\sigma(j)\in\{0,\ldots,d-1\}$ with $\sigma(j)\equiv j-r$ modulo $d$. Since we have
$$h=\sum_{j=0}^{d-1}(h_{1,\sigma(j)}\cdot X^{\sigma(j)},\ldots,h_{m,\sigma(j)}\cdot X^{\sigma(j)},h_{m+1,j}\cdot X^{j},\ldots,h_{m+l,j}\cdot X^{j}),$$
we may assume $h$ to be one of those summands for a fixed $j$, hence 
$$h:=(h_{1,\sigma(j)}\cdot X^{\sigma(j)},\ldots,h_{m,\sigma(j)}\cdot X^{\sigma(j)},h_{m+1,j}\cdot X^j,\ldots,h_{m+l,j}\cdot X^j).$$
We will show that $h$ comes either from $\Sc_{q}(s-\alpha\cdot r)$ or from $\Sc_{q+1}(s)$.

For $j<r$, we have $\sigma(j)=j-r+d$. Factoring out $X^{j}$ in equation \eqref{syzsummand} and using $X^a=F^qX^r$, we get
\begin{align*}
\begin{split}
 0 & = X^{j}\cdot (h_{1,\sigma(j)}\cdot X^{d-r}\cdot X^r\cdot F^q\cdot V_1+\ldots+h_{m,\sigma(j)}\cdot X^{d-r}\cdot X^r\cdot F^q\cdot V_m\\
&\quad +h_{m+1,j}\cdot V_{m+1}+\ldots+h_{m+l,j}\cdot V_{m+l})
\end{split}\\
 &= X^{j}\cdot (h_{1,\sigma(j)}\cdot F^{q+1}\cdot V_1+\ldots+h_{m,\sigma(j)}\cdot F^{q+1}\cdot V_m+h_{m+1,j}\cdot V_{m+1}+\ldots+h_{m+l,j}\cdot V_{m+1}).
\end{align*}
This shows that $$X^{j}(h_{1,\sigma(j)},\ldots,h_{m,\sigma(j)},h_{m+1,j},\ldots,h_{m+l,j})$$
belongs to $\Sc_{q+1}(s)$. Under $\phi_s$ it is mapped to $h$.

For $j\geq r$, we have $\sigma(j)=j-r$. Again by using $X^a=F^qX^r$, we get from equation \eqref{syzsummand}
\begin{align*}
\begin{split}
 0 & = h_{1,\sigma(j)}\cdot X^{\sigma(j)}\cdot X^r\cdot F^q\cdot V_1+\ldots+h_{m,\sigma(j)}\cdot X^{\sigma(j)}\cdot X^r\cdot F^q\cdot V_m\\
&\quad + h_{m+1,j}\cdot X^{\sigma(j)+r}\cdot V_{m+1}+\ldots+h_{m+l,j}\cdot X^{\sigma(j)+r}\cdot V_{m+l}
\end{split}\\
\begin{split}
 &= X^r\cdot (h_{1,\sigma(j)}\cdot X^{\sigma(j)}\cdot F^q\cdot V_1+\ldots+h_{m,\sigma(j)}\cdot X^{\sigma(j)}\cdot F^q\cdot V_m\\
&\quad +h_{m+1,j}\cdot X^{\sigma(j)}\cdot V_{m+1}+\ldots+h_{m+l,j}\cdot X^{\sigma(j)}\cdot V_{m+l}).
\end{split}
\end{align*}
This shows that $$X^{\sigma(j)}\cdot(h_{1,\sigma(j)},\ldots,h_{m,\sigma(j)},h_{m+1,j},\ldots,h_{m+l,j})$$
belongs to $\Sc_{q}(s-\alpha\cdot r)$. Under $\phi_s$ it is mapped to $h$.
\end{proof}

\begin{lem}
The homogeneous map 
$$\psi_s:\Syz_R(X^{a+d-2r}\cdot V_1,\ldots,X^{a+d-2r}\cdot V_m,V_{m+1},\ldots,V_{m+l})(s-\alpha\cdot r)\longrightarrow\Sc_q(s-\alpha\cdot r)\dirsum\Sc_{q+1}(s),$$
which sends $(h_1,\ldots,h_{m+l})$ to
$$(X^{d-r}\cdot h_1,\ldots,X^{d-r}\cdot h_m,h_{m+1},\ldots,h_{m+l}),(-h_1,\ldots,-h_m,-X^r\cdot h_{m+1},\ldots,-X^r\cdot h_{m+l})$$
is injective for all $s\in\Z$.
\end{lem}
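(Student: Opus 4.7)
My plan is to first verify that $\psi_s$ is well-defined (that is, each of its two output coordinates actually lands in the claimed syzygy module in the correct degree), and then observe that injectivity is immediate by inspection of the coordinates.

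For well-definedness of the first coordinate, I would start from the syzygy relation
\[
\sum_{i=1}^{m} h_i \cdot X^{a+d-2r} V_i + \sum_{i=m+1}^{m+l} h_i V_i = 0,
\]
and use the identity $a+d-2r = d(q+1)-r = dq + (d-r)$ together with $X^{dq}=F^{q}$ in $R$ to rewrite $X^{a+d-2r} = X^{d-r}\cdot F^{q}$. This gives
\[
\sum_{i=1}^{m} (X^{d-r} h_i)\cdot F^{q} V_i + \sum_{i=m+1}^{m+l} h_i V_i = 0,
\]
which is exactly the defining relation of $\Sc_q(s-\alpha r)$; the degree check is routine since multiplication by $X^{d-r}$ raises degree by $\alpha(d-r)$, matching the shift from $s-\alpha r$ inherited from the source. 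For the second coordinate I would multiply the original syzygy relation by $-X^{r}$ and again use $X^{d(q+1)} = F^{q+1}$ to obtain
\[
\sum_{i=1}^{m}(-h_i)\cdot F^{q+1} V_i + \sum_{i=m+1}^{m+l}(-X^{r} h_i) V_i = 0,
\]
which is the defining relation of $\Sc_{q+1}(s)$, with the degree shift matching because the source is in degree $s - \alpha r$ and multiplication by $X^{r}$ contributes $\alpha r$.

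For injectivity, suppose $(h_1,\ldots,h_{m+l})$ lies in the kernel of $\psi_s$. Reading off the second coordinate of $\psi_s$, its first $m$ entries are $-h_1,\ldots,-h_m$, forcing $h_i = 0$ for $i = 1,\ldots,m$. Reading off the first coordinate, its last $l$ entries are $h_{m+1},\ldots,h_{m+l}$, forcing $h_i = 0$ for $i = m+1,\ldots,m+l$. Hence $(h_1,\ldots,h_{m+l}) = 0$ and $\psi_s$ is injective.

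The only genuine content is therefore the well-definedness check, which comes down to the two elementary identities $X^{d-r}F^{q} = X^{a+d-2r}$ and $X^{r}\cdot X^{a+d-2r} = F^{q+1}$ in $R$; I do not foresee any real obstacle. Presumably the next step in the chapter is to combine this injectivity lemma with the surjectivity of $\phi_s$ from the previous lemma and with a rank/image comparison (or a direct check that $\Ima(\psi_s) = \Ker(\phi_s)$) to produce the short exact sequence of Theorem \ref{ses}, from which the Hilbert-series formula of Theorem \ref{HilbSer} follows by additivity.
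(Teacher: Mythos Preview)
Your proof is correct and follows essentially the same approach as the paper: both check well-definedness of each component via the identities $X^{a+d-2r}=X^{d-r}F^{q}$ and $X^{r}\cdot X^{a+d-2r}=F^{q+1}$ in $R$, and both treat injectivity as immediate (the paper simply writes ``Clearly, these maps are injective, provided they exist,'' while you spell out the coordinate-reading argument). Your anticipation of the next step---combining this with the surjectivity of $\phi_s$ to obtain the short exact sequence of Theorem~\ref{ses}---is also exactly what the paper does.
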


\begin{proof}
Clearly, these maps are injective, provided they exist. The power $X^{a+d-2r}$ is well-defined, since $a+d-2r=dq+r+d-2r=dq+d-r$ is positive for $r<d$. Let
$$h:=(h_1,\ldots,h_{m+l})\in \Syz_R(X^{a+d-2r}\cdot V_1,\ldots,X^{a+d-2r}\cdot V_m,V_{m+1}\ldots,V_{m+l})(s-\alpha\cdot r).$$
Then the first component of the image of $h$ satisfies
$$\begin{aligned}
& X^{d-r}\cdot h_1\cdot F^q\cdot V_1+\ldots+X^{d-r}\cdot h_m\cdot F^q\cdot V_m+h_{m+1}\cdot V_{m+1}+\ldots +h_{m+l}\cdot V_{m+l} &&\\
=& h_1\cdot X^{a+d-2r}\cdot V_1+\ldots+h_m\cdot X^{a+d-2r}\cdot V_m+h_{m+1}\cdot V_{m+1}+\ldots +h_{m+l}\cdot V_{m+l} &&=0.
\end{aligned}$$
The second component of the image of $h$ satisfies (use $F^q=X^{a-r}$)
$$\begin{aligned}
& -h_1\cdot F^{q+1}\cdot V_1-\ldots-h_m\cdot F^{q+1}\cdot V_m -X^r\cdot h_{m+1}\cdot V_{m+1}-\ldots -X^r\cdot h_{m+l}\cdot V_{m+l} && \\
=& -X^r\cdot(h_1\cdot X^{a+d-2r}\cdot V_1+\ldots+h_m\cdot X^{a+d-2r}\cdot V_m+h_{m+1}\cdot V_{m+1}+\ldots +h_{m+l}\cdot V_{m+l})&& =0.
\end{aligned}$$
\end{proof}

\begin{thm}\label{ses}
For all $s\in\Z$ we have a short exact sequence
\begin{align*}
0 & \lra \Syz_R(X^{a+d-2r}\cdot V_1,\ldots,X^{a+d-2r}\cdot V_m,V_{m+1},\ldots,V_{m+l})(s-\alpha\cdot r)\\
 & \stackrel{\psi_s}{\lra} \Sc_q(s-\alpha\cdot r)\dirsum\Sc_{q+1}(s)\\
 & \stackrel{\phi_s}{\lra} \Syz_R(X^a\cdot V_1,\ldots,X^a\cdot V_m,V_{m+1},\ldots,V_{m+l})(s)\lra 0.
\end{align*}
\end{thm}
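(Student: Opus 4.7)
My plan is to leverage the two preceding lemmas, which together already furnish the injectivity of $\psi_s$ and the surjectivity of $\phi_s$. The only remaining work is to prove exactness in the middle, which I will split into the inclusions $\Ima(\psi_s) \subseteq \Ker(\phi_s)$ and $\Ker(\phi_s) \subseteq \Ima(\psi_s)$.

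The inclusion $\Ima(\psi_s) \subseteq \Ker(\phi_s)$ is a one-line verification: writing $\phi_s \circ \psi_s$ out on a generic $h = (h_1,\ldots,h_{m+l})$, the first $m$ coordinates collapse as $X^{d-r}h_i + X^{d-r}(-h_i) = 0$, and the last $l$ coordinates as $X^r h_{m+i} + (-X^r h_{m+i}) = 0$. For the reverse inclusion I would take $(f,g) \in \Ker(\phi_s)$ and read off the coordinate equations $f_i = -X^{d-r}g_i$ for $1\leq i\leq m$ and $g_{m+i} = -X^r f_{m+i}$ for $1\leq i\leq l$. These relations force the preimage candidate $h := (-g_1,\ldots,-g_m,f_{m+1},\ldots,f_{m+l})$, and a direct substitution then gives $\psi_s(h) = (f,g)$ on the nose.

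It will then remain to check that this $h$ is genuinely a syzygy for $X^{a+d-2r}V_1,\ldots,X^{a+d-2r}V_m,V_{m+1},\ldots,V_{m+l}$. The idea is to multiply the candidate syzygy relation by $X^r$, using $a+d-2r+r = d(q+1)$ to convert $X^{a+d-r}$ into $F^{q+1}$, and then to substitute in succession the syzygy relations satisfied by $g \in \Sc_{q+1}$ and by $f \in \Sc_q$; the two contributions cancel term by term, yielding $X^r\cdot\bigl(\sum h_i X^{a+d-2r}V_i + \sum h_{m+i}V_{m+i}\bigr) = 0$.

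The only subtle point, and the one place where a small observation is actually needed, is that cancelling $X^r$ at the end requires $X^r$ to be a non-zero divisor in $R$. I would dispatch this at the outset by lifting any relation $X g = 0$ to $k[X,Y_1,\ldots,Y_n]$ and using that $F$ is $X$-free to conclude $g = 0$ in $R$. With that in place no real obstacle remains; the exponent $a+d-2r$ and the degree shift by $-\alpha r$ appearing in the statement are precisely what is forced by the $X^r$/$X^{d-r}$ trade-off built into $\phi_s$ and $\psi_s$, which is why the bookkeeping closes up cleanly.
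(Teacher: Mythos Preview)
Your proof is correct and follows the paper's approach exactly: the two lemmas supply injectivity and surjectivity, and exactness in the middle is verified via the same candidate $h=(-g_1,\ldots,-g_m,f_{m+1},\ldots,f_{m+l})$. You are in fact more careful than the paper, which simply writes $t=\psi_s(h)$ without checking that $h$ lies in the required syzygy module. Your verification via multiplication by $X^r$ works, but you can avoid the non-zero-divisor argument altogether: since the kernel relations give $f_i=X^{d-r}h_i$ for $i\leq m$ and $f_{m+j}=h_{m+j}$, the condition $f\in\Sc_q$ reads $\sum_i X^{d-r}h_i\,F^qV_i+\sum_j h_{m+j}V_{m+j}=0$, and $X^{d-r}F^q=X^{d-r+dq}=X^{a+d-2r}$ yields the desired syzygy relation directly.
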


\begin{proof}
By the two previous lemmatas we only have to check the exactness at the middle spot.
At first we show $\phi_s\circ\psi_s=0$. With $$h:=(h_1,\ldots,h_{m+l})\in \Syz_R(X^{a+d-2r}\cdot V_1,\ldots,X^{a+d-2r}\cdot V_m,V_{m+1},\ldots,V_{m+l})(s-\alpha\cdot r)$$ we have
\begin{align*}
\begin{split}
 \phi_s(\psi_s(h)) & = \phi_s((X^{d-r}\cdot h_1,\ldots,X^{d-r}\cdot h_m,h_{m+1},\ldots,h_{m+l}),\\
&\quad (-h_1,\ldots,-h_m,-X^r\cdot h_{m+1},\ldots,-X^r\cdot h_{m+l}))
\end{split}\\
\begin{split}
&= (X^{d-r}\cdot h_1+X^{d-r}\cdot(-h_1),\ldots,X^{d-r}\cdot h_m+X^{d-r}\cdot(-h_m),\\
&\quad X^r\cdot h_{m+1}+(-X^r\cdot h_{m+1}),\ldots,X^r\cdot h_{m+l}+(-X^r\cdot h_{m+l}))
\end{split}\\
&= (0,\ldots,0).
\end{align*}
Let $t:=(f_1,\ldots,f_{m+l}),(g_1,\ldots,g_{m+l})\in \Ker\phi_s$. This yields
\begin{align*}
f_i+X^{d-r}g_i &= 0 \text{ for all } i=1,\ldots,m\\
X^rf_i+g_i &=0 \text{ for all } i=m+1,\ldots,m+l
\end{align*}
and we get 
\begin{align*}
 t &= (-X^{d-r}g_1,\ldots,-X^{d-r}g_m,f_{m+1},\ldots,f_{m+l}),(g_1,\ldots,g_m,-X^rf_{m+1},\ldots,-X^rf_{m+l})\\
 &= \psi_s(-g_1,\ldots,-g_m,f_{m+1},\ldots,f_{m+l}).
\end{align*}
\end{proof}

\begin{rem}
 Note that the syzygy modules $\Sc_q$ and $\Sc_{q+1}$ appearing in the middle spot are already defined over $k[Y_1,\ldots,Y_n]$. If $n=2$ they split by Hilberts syzygy theorem as a direct sum of $m+l-1$ (degree shifted) copies of $R$.
\end{rem}

\begin{thm}\label{HilbSer} Let $M:=\Syz_R(X^a\cdot V_1,\ldots,X^a\cdot V_m,V_{m+1},\ldots,V_{m+l})$. Then the Hilbert-series of $M$ is given by
\begin{equation*}
\lK_{M}(t) = \frac{(t^{\alpha\cdot r}-t^{\alpha\cdot d})\cdot \lK_{\Sc_q}(t)+(1-t^{\alpha\cdot r})\lK_{\Sc_{q+1}}(t)}{1-t^{\alpha\cdot d}}.
\end{equation*}
\end{thm}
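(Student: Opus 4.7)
The plan is to obtain the Hilbert-series of $M_a := \Syz_R(X^a V_1, \ldots, X^a V_m, V_{m+1}, \ldots, V_{m+l})$ by feeding the short exact sequence of Theorem \ref{ses} into the additivity of Hilbert-series, then closing the recursion on itself. First I would specialize the sequence to $s = 0$. Writing $a = dq + r$ with $0 \leq r \leq d-1$, additivity of $\lK$ on the short exact sequence yields
\begin{equation*}
\lK_{M_a}(t) \;=\; t^{\alpha r}\lK_{\Sc_q}(t) + \lK_{\Sc_{q+1}}(t) - t^{\alpha r}\lK_{M_{a+d-2r}}(t),
\end{equation*}
using that the shift $N(-\alpha r)$ multiplies the Hilbert-series by $t^{\alpha r}$.

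Next I would handle the easy case $r = 0$ separately: then $X^a = X^{dq} = F^q$, so $M_a = \Sc_q$, and the asserted formula collapses (after cancelling $1 - t^{\alpha d}$) to $\lK_{M_a}(t) = \lK_{\Sc_q}(t)$, which matches.

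For $r \geq 1$ the key arithmetic observation is that $a + d - 2r = dq + (d-r)$ with $1 \leq d-r \leq d-1$, so applying Theorem \ref{ses} a second time, now to $M_{a+d-2r}$, produces precisely the same pair $\Sc_q, \Sc_{q+1}$ in the middle term (since $q_1 = q$ and $q_1+1 = q+1$), and the outer term becomes $M_{a+d-2r+d-2(d-r)} = M_a$ again. This gives the companion identity
\begin{equation*}
\lK_{M_{a+d-2r}}(t) \;=\; t^{\alpha(d-r)}\lK_{\Sc_q}(t) + \lK_{\Sc_{q+1}}(t) - t^{\alpha(d-r)}\lK_{M_a}(t).
\end{equation*}
Substituting this into the first identity and grouping terms yields
\begin{equation*}
(1 - t^{\alpha d})\,\lK_{M_a}(t) \;=\; (t^{\alpha r} - t^{\alpha d})\lK_{\Sc_q}(t) + (1 - t^{\alpha r})\lK_{\Sc_{q+1}}(t),
\end{equation*}
from which the claimed formula follows upon dividing by $1 - t^{\alpha d}$.

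The only possible obstacles are bookkeeping: one must be careful to track the degree-shift convention consistently and to confirm that the second application of Theorem \ref{ses} genuinely reproduces $\Sc_q$ and $\Sc_{q+1}$ (not shifted copies of different $\Sc_j$'s), which is exactly what the identities $q_1 = q$ and $r_1 = d - r$ guarantee. Once these are verified the argument is essentially a two-line manipulation of rational functions.
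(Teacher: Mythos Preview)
Your proof is correct and follows essentially the same route as the paper's: apply Theorem \ref{ses} once to $M_a$, then again to $M_{a+d-2r}$ (noting that $a+d-2r = dq + (d-r)$ returns the same pair $\Sc_q,\Sc_{q+1}$ and closes back on $M_a$), and solve the resulting linear system. Your explicit treatment of the case $r=0$ is a welcome bit of extra care the paper leaves implicit.
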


\begin{proof}
 Let $a':=a+d-2r=dq+d-r$, $r':=d-r$ and $$M':=\Syz_R(X^{a'}\cdot V_1,\ldots,X^{a'}\cdot V_m,V_{m+1},\ldots,V_{m+l}).$$ Since $a'+d-2r'=a$, Theorem \ref{ses} yields
\begin{align*}
 \lK_M(t) & = t^{\alpha\cdot r}\lK_{\Sc_q}(t)+\lK_{\Sc_{q+1}}(t)-t^{\alpha\cdot r}\lK_{M'}(t)\\
 \lK_{M'}(t) & = t^{\alpha\cdot r'}\lK_{\Sc_q}(t)+\lK_{\Sc_{q+1}}(t)-t^{\alpha\cdot r'}\lK_{M}(t).
\end{align*}
Substituting $\lK_{M'}(t)$ in the first formula and solving for $\lK_M(t)$ gives the result.
\end{proof}

\section{Examples}
%\section{Examples - The Hilbert-series of the indecomposable, maximal Cohen-Macaulay modules over surfaces of type ADE}
\sectionmark{Examples}
Using the representation of the isomorphism classes of indecomposable, non-free, maximal Cohen-Macaulay modules over surface rings of type ADE as first syzygy modules of ideals from Chapter \ref{chapmatfac}, 
Theorem \ref{HilbSer} enables us to compute their Hilbert-series if the ideal in the representation is monomial. If the ideal in the representation is not monomial, 
we compute the Hilbert-series of the syzygy module directly, using the short exact sequences
$$\begin{array}{ccccccccc}
 0 &\ra& \Syz_R(F_1,\ldots,F_{m+1}) & \ra & \bigoplus_{i=1}^{m+1} R(-\Deg(F_i)) & \ra & (F_1,\ldots,F_{m+1}) & \ra & 0,\\
 0 &\ra& (F_1,\ldots,F_{m+1})&\ra & R & \ra & R/(F_1,\ldots,F_{m+1}) & \ra & 0.
\end{array}$$
From these sequences one obtains
\begin{equation}\label{hilbserdirect}
\lK_{\Syz_R(F_1,\ldots,F_{m+1})}(t)=\left(\sum_{i=1}^{m+1}t^{\Deg(F_i)}-1\right)\cdot\lK_R(t)+\lK_{R/(F_1,\ldots,F_{m+1})}(t).
\end{equation}
We start with a few remarks on the expected structure of the Hilbert-series. This structure is given by the following lemma (cf. \cite[Exercise 4.4.12]{brunsherzog}).

\begin{lem}\label{hilbsercm}
Let $R$ be a positively-graded algebra over a field $k$. Let $M\neq 0$ be a finitely generated $R$-module and $S$ a graded Noether normalization of $R/\Ann(M)$ 
generated by elements of degree $a_1,\ldots,a_d$ with $d=\Dim(M)$. Then the following holds.
\begin{enumerate}
 \item There is a polynomial $Q(t)\in\Z[t,t^{-1}]$ such that $$\lK_M(t)=\frac{Q(t)}{\prod_{i=1}^d(1-t^{a_i})}.$$
 \item We have $Q(1)=\Rank_S(M)>0$.
 \item If $M$ is Cohen-Macaulay, the coefficients of $Q(t)$ are non-negative.
\end{enumerate}
\end{lem}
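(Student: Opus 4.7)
The plan is to reduce the entire computation to the Noether normalization $S$, which by assumption is a graded polynomial ring $S = k[y_1, \ldots, y_d]$ with $\Deg(y_i) = a_i$, and then exploit Hilbert's syzygy theorem. Since $S \hookrightarrow R/\Ann(M)$ is finite and $M$ is annihilated by $\Ann(M)$, the module $M$ becomes a finitely generated graded $S$-module, and its Hilbert series over $R$ coincides with its Hilbert series over $S$.

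For (i), I would apply Hilbert's syzygy theorem to $M$ over the graded polynomial ring $S$ to obtain a finite graded free resolution
\begin{equation*}
0 \lra F_n \lra \cdots \lra F_1 \lra F_0 \lra M \lra 0,
\end{equation*}
with $F_i = \bigoplus_j S(-b_{ij})$. Since $\lK_{S(-b)}(t) = t^b / \prod_{i=1}^d (1 - t^{a_i})$ and Hilbert series are additive on short exact sequences, a telescoping alternating sum yields $\lK_M(t) = Q(t) / \prod_{i=1}^d (1 - t^{a_i})$ with $Q(t) = \sum_i (-1)^i \sum_j t^{b_{ij}} \in \Z[t,t^{-1}]$.

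For (ii), the key observation is that $Q(t)$ and $\prod(1-t^{a_i})$ have a common factor of $(1-t)^d$ (by the structure above, or by first computing $\lim_{t\to 1^-} (1-t)^d \lK_M(t)$, which up to a positive constant equals $\Rank_S(M)$). More directly, I would localize the resolution at the generic point of $S$: tensoring with $Q(S)$ gives a $Q(S)$-vector space of dimension $\Rank_S(M)$, and the alternating sum $\sum (-1)^i \sum_j 1 = Q(1)$ computes this dimension. Positivity $\Rank_S(M) > 0$ follows because $M \ne 0$ is finite over $S$ with $\Dim_S(M) = d = \Dim(S)$, so some localization $M_{(0)}$ is nonzero.

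For (iii), I would invoke the Auslander--Buchsbaum formula: if $M$ is Cohen--Macaulay as an $R$-module, then $\Depth_S(M) = \Dim(M) = d$, and since $S$ is a regular (in fact polynomial) ring of dimension $d$, we get
\begin{equation*}
\projdim_S(M) = \Depth(S) - \Depth_S(M) = d - d = 0.
\end{equation*}
Hence $M$ is free as a graded $S$-module, so $M \cong \bigoplus_j S(-b_j)$ and $Q(t) = \sum_j t^{b_j}$ visibly has non-negative coefficients. The main subtlety here is checking that the Cohen--Macaulay hypothesis over $R$ transfers to the maximal Cohen--Macaulay property over $S$; this is a standard consequence of the fact that depth is preserved under finite extensions and that $\Dim_S(M) = \Dim_{R/\Ann(M)}(M) = d$, which is the only real technical step in the argument.
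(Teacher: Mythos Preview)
Your argument is correct and is the standard one. The paper does not actually give its own proof of this lemma; it simply cites \cite[Exercise 4.4.12]{brunsherzog}, and your sketch is precisely the intended solution to that exercise: pass to the Noether normalization $S$, use Hilbert's syzygy theorem for (i), evaluate the alternating sum of Betti numbers at the generic point for (ii), and invoke Auslander--Buchsbaum to conclude freeness over $S$ for (iii).
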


Since the modules in question are maximal Cohen-Macaulay, the coefficients in $Q(t)$ will be non-negative and $d=2$. Since $\Ann(M)=0$ in all cases, the Noether 
normalization $S$ of $R/\Ann(M)$ is in fact a Noether normalization of $R$. This can be chosen to be generated by two parameters from $R$. Since the equation of the $A_n$ singularities 
is isomorphic to $X^2+Y^2+Z^{n+1}=0$ (if $\chara(k)\neq 2$), we may choose $Y$, $Z$ as a parameter system in all cases. Then the rank of $R$ over $S$ is two (since $R\cong S[X]/(X^2+F(Y,Z))$) and we get
$$Q(1)=\Rank_S(M)=2\cdot\Rank_R(M).$$ 
We will always express the Hilbert-series as a rational function, since the explicit descriptions as power series are not very enlighting. Note that the isomorphisms $M\cong\Syz_R(F_1,\ldots,F_n)$ from Chapter \ref{chapmatfac} are only isomorphisms of $R$-modules. By Theorem \ref{ausreitgraded}, these isomorphisms extend after a degree shift to isomorphisms of graded $R$-modules, say $M(-l)\cong\Syz_R(F_1,\ldots,F_n)$ as graded $R$-modules for some $l\in\Z$. In this situation, we have
$$\lK_M(t)=t^l\cdot\lK_{\Syz_R(F_1,\ldots,F_n)}(t).$$
Note that we will always compute the Hilbert-series of the syzygy module.

\subsection{The case $A_n$}
Let $R:=k[X,Y,Z]/(X^{n+1}-YZ)$ with $\Deg(X)=2,\Deg(Y)=\Deg(Z)=n+1$ and $n\geq 0$. For any $i\in\{1,\ldots,n\}$, let $M_i:=\Syz_R(X^i,Z)$ (compare Theorem \ref{syzan}).

With the notations from Theorem \ref{HilbSer}, we have $a=i$ and $d=2n+2$. In all cases for $i$, we get $q=0$ and $r=i$. 
Hence the syzygy modules $S_0$ and $S_1$ are $\Syz_R(1,Z)$ resp. $\Syz_R(YZ,Z)$ for all $i$. They are free with basis $(Z,-1)$ resp. $(1,-Y)$ of total degrees $n+1$ and $2n+2$. By Theorem \ref{HilbSer} we get
$$\begin{aligned}
 \lK_{M_i}(t) &= \frac{(t^{2i}-t^{2(n+1)})t^{n+1}+(1-t^{2i})t^{2n+2}}{(1-t^2)(1-t^{n+1})^2}\\
 &= \frac{t^{2i+n+1}+t^{2n+2}}{(1-t^2)(1-t^{n+1})}
\end{aligned}$$
Note that all these Hilbert-series are different. Since the Hilbert-series of a free $R$-module of rank one is given by
$$\lK_{R(-l)}(t)=t^l\cdot\frac{1-t^{2n+2}}{(1-t^2)(1-t^{n+1})^2}=t^l\cdot\frac{1+t^{n+1}}{(1-t^2)(1-t^{n+1})},$$
we see that the Hilbert-series of the $M_i$ is different from the Hilbert-series of a free $R$-module, since the difference of the exponents in the numerator is 
$|n+1-2i|$ for $M_i$ and $n+1$ for $R(-l)$.

Therefore, the Hilbert-series of a maximal Cohen-Macaulay module of rank one tells us not to which isomorphism class the module belongs, since in this situation the dual modules $\Syz_R(X^i,Z)$ and $\Syz_R(X^i,Y)$ have the same Hilbert-series. In such a situation we will simply say that the Hilbert-series detects the isomorphism class \textit{up to dualizing}. Note that we have a homogeneous isomorphism 
\begin{align*}
 \Syz_R(X^j,Y)\oplus\Syz(X^j,Z) &\lra \Syz_R(X^j,Y,Z),\\
 ((A,B),(C,D)) &\longmapsto (A+C,B,D),
\end{align*}
for all $j\in\N$, which gives directly the Hilbert-series of $\Syz_R(X^j,Y,Z)$ as
$$\begin{aligned}
\lK_{\Syz_R(X^j,Y,Z)}(t) &= \lK_{\Syz_R(X^j,Y)}(t)+\lK_{\Syz_R(X^j,Z)}(t)\\
&=
\left\{\begin{aligned}
& 2\cdot\frac{t^{2n+2}+t^{2j+n+1}}{(1-t^2)(1-t^{n+1})} && \text{if }1\leq j\leq n\text{ and}\\
& 2\cdot t^{2j}\cdot\frac{1+t^{n+1}}{(1-t^2)(1-t^{n+1})} && \text{otherwise}.
\end{aligned}\right.
\end{aligned}$$

\subsection{The case $D_n$}
Let $R:=k[X,Y,Z]/(X^2+Y^{n-1}+YZ^2)$ with $n\geq 4$ and the degrees of the variables are $\Deg(X)=n-1$, $\Deg(Y)=2$, $\Deg(Z)=n-2$. According to Theorem \ref{syzdn} let 
$$\begin{aligned}
 M_1 &:= \Syz_R(X,Y), && \\
 M_j &:= \Syz_R(X,Y^{j/2},Z) && \text{if }j\in\{2,\ldots,n-2\} \text{ is even},\\
 M_j &:= \Syz_R(X,Y^{(j+1)/2},YZ) && \text{if }j\in\{2,\ldots,n-2\}\text{ is odd,}\\
 M_{n-1} &:= \Syz_R(X,Z-iY^{(n-2)/2}) && \text{and}\\
 M_n &:= \Syz_R(X,Z+iY^{(n-2)/2}) && \text{if }n\text{ is even, or}
 \end{aligned}$$
$$\begin{aligned}
 M_{n-1} &:= \Syz_R(Z,X+iY^{(n-1)/2}) && \text{and}\\
 M_n &:= \Syz_R(Z,X-iY^{(n-1)/2}) && \text{if }n\text{ is odd}.
\end{aligned}$$
Using Theorem \ref{HilbSer}, we get
\begin{align*}
\lK_{M_1}(t) &= \frac{(t^{n-1}-t^{2n-2})\cdot t^2+(1-t^{n-1})t^{2(n-1)}}{(1-t^{n-1})(1-t^{n-2})(1-t^2)}\\
 &= \frac{t^{n+1}+t^{2n-2}}{(1-t^{n-2})(1-t^2)},\\
\lK_{M_{j}}(t) &= \frac{(t^{n-1}-t^{2n-2})(t^j+t^{n-2})+(1-t^{n-1})(t^{2(n-1)}+t^{n-2+j})}{(1-t^{n-1})(1-t^{n-2})(1-t^2)}\\
 &= \frac{t^{n+j-2}+t^{n+j-1}+t^{2n-3}+t^{2n-2}}{(1-t^{n-2})(1-t^2)} \qquad\text{if }j\text{ is even},\\
\lK_{M_{j}}(t) &= \frac{(t^{n-1}-t^{2n-2})(t^{j+1}+t^n)+(1-t^{n-1})(t^{2(n-1)}+t^{n+j-1})}{(1-t^{n-1})(1-t^{n-2})(1-t^2)}\\
 &= \frac{t^{n+j-1}+t^{n+j}+t^{2n-2}+t^{2n-1}}{(1-t^{n-2})(1-t^2)} \qquad\text{if }j\text{ is odd}.\\
\end{align*}
To compute the Hilbert-series of $M_{n-1}$ and $M_n$, we use Formula (\ref{hilbserdirect}). Since the quotients
$$R/\left(X,Z\pm iY^{\frac{n-2}{2}}\right)\text{ and }R/\left(Z,X\pm iY^{\frac{n-1}{2}}\right)$$
are isomorphic to $k[Y]$ as graded $R$-modules, we obtain for $j\in\{n-1,n\}$
\begin{align*}
\lK_{M_{j}}(t) &= \left(t^{n-1}+t^{n-2}-1\right)\cdot\frac{1+t^{n-1}}{(1-t^2)(1-t^{n-2})}+\frac{1-t^{n-2}}{(1-t^2)(1-t^{n-2})}\\
&= \frac{t^{2n-3}+t^{2n-2}}{(1-t^{n-2})(1-t^2)}.
\end{align*}
Moreover, we have 
$$\lK_{R(-l)}(t)=t^l\cdot\frac{1+t^{n-1}}{(1-t^2)(1-t^{n-2})}.$$
The Hilbert-series can distinguish the class of $M_1$ from the classes of $M_{n-1}$ and $M_n$ if and only if $n\geq 5$, since in these cases the difference of the exponents in the numerator of $\lK_{M_1}(t)$ is at least two, while the difference in $\lK_{M_{n-1}}(t)$ and $\lK_{M_n}(t)$ is one. 
Moreover, the Hilbert-series can distinguish the classes of the $M_m$ with $m\in\{2,\ldots,n-2\}$.
Let $M$ be a maximal Cohen-Macaulay module of rank two, whose Hilbert-series is $t^l\cdot\lK_{M_j}(t)$ with $2\leq j\leq n-2$ and $l\in\Z$.
If $n=4$, we cannot say anything about the direct sum decomposition of $M$ - except that it has no free direct summands. For $n\geq 5$ the module $M$ might have $M_1$ as a direct summand only for $j=2$. In this case we get $M\cong M_1(-l-1)\oplus M_1(-l)$. We cannot 
exclude a splitting of $M$ into degree shifted copies of $M_{n-1}$ and $M_n$ only by looking at the Hilbert-series. But we know at least that $M$ has no free direct summands.

\subsection{The case $E_6$}
Let $R:=k[X,Y,Z]/(X^2+Y^3+Z^4)$ with $\Deg(X)=6$, $\Deg(Y)=4$ and $\Deg(Z)=3$. We define the modules
\begin{align*}
 M_1 &:= \Syz_R(X,Y,Z),\\
 M_2 &:= \Syz_R(X,Y^2,YZ,Z^2),\\
 M_3 &:= \Syz_R(iX+Z^2,Y^2,YZ),\\
 M_4 &:= \Syz_R(-iX+Z^2,Y^2,YZ),\\
 M_5 &:= \Syz_R(-iX+Z^2,Y),\\
 M_6 &:= \Syz_R(iX+Z^2,Y).
\end{align*}
Using Theorem \ref{HilbSer}, we get
\begin{align*}
 \lK_{M_1}(t) &= \frac{t^{7}+t^{9}+t^{10}+t^{12}}{(1-t^4)(1-t^3)},\\
 \lK_{M_2}(t) &= \frac{t^{10}+t^{11}+2t^{12}+t^{13}+t^{14}}{(1-t^4)(1-t^3)}.
\end{align*}
To compute the Hilbert-series of the $M_j$, $j\in\{3,4,5,6\}$, we use Formula (\ref{hilbserdirect}).

For $j=3,4$ the quotient $$R/(\pm iX+Z^2,Y^2,YZ)$$ is isomorphic as a graded $R$-module to $k[Z]\oplus Y\cdot k$ and for $j=5,6$ there is an isomorphism
$$R/(\pm iX+Z^2,Y)\cong k[Z]$$ as graded $R$-modules. Hence we obtain 
\begin{align*}
\lK_{M_3}(t)=\lK_{M_4}(t) &= \left(t^8+t^7+t^6-1\right)\cdot\frac{1+t^6}{(1-t^4)(1-t^3)}+\frac{1-t^4}{(1-t^4)(1-t^3)}+t^4\\
 &= \frac{t^{11}+t^{12}+t^{13}+t^{14}}{(1-t^4)(1-t^3)},\\
\lK_{M_5}(t)=\lK_{M_6}(t) &= \left(t^6+t^4-1\right)\frac{1+t^6}{(1-t^4)(1-t^3)}+\frac{1-t^4}{(1-t^4)(1-t^3)}\\
 &= \frac{t^{10}+t^{12}}{(1-t^4)(1-t^3)}.
\end{align*}
For completeness, the Hilbert-series of a free $R$-module is given by
$$\lK_{R(-l)}(t)=t^l\cdot\frac{1+t^6}{(1-t^4)(1-t^3)}.$$
In this case the Hilbert-series can distinguish the class of $M_1$ from the classes of $M_3$ and $M_4$. Let $M$ be a maximal Cohen-Macaulay module, 
whose Hilbert-series is $t^l\cdot\lK_{M_i}(t)$ for some $l\in\Z$ and some $i\in\{1,\ldots,6\}$. Then $M$ has no free direct summands. Moreover, only the 
following splittings are possible (with $\epsilon\in\{3,4\}$ and $\delta,\delta'\in\{5,6\}$)
$$\begin{aligned}
& i=1: & M &\cong M_{\delta}(-l-3)\oplus M_{\delta'}(-l),\\
& i=2: & M &\cong M_{\epsilon}(-l)\oplus M_{\delta}(-l) \text{ or}\\
& & &\cong M_{\epsilon}(-l+1)\oplus M_{\delta}(-l-2),\\
& i\in\{3,4\}: & M &\cong M_{\delta}(-l-1)\oplus M_{\delta'}(-l-2).
\end{aligned}$$

\subsection{The case $E_7$}
Let $R:=k[X,Y,Z]/(X^2+Y^3+YZ^3)$ with $\Deg(X)=9$, $\Deg(Y)=6$ and $\Deg(Z)=4$. We want to compute the Hilbert-series of the modules
\begin{align*}
 M_1 &:= \Syz_R(X,Y,Z),\\
 M_2 &:= \Syz_R(X,Y^2,YZ,Z^2),\\
 M_3 &:= \Syz_R(XY,XZ,Y^2,YZ^2,Z^3),\\
 M_4 &:= \Syz_R(X,Y^2,YZ),\\
 M_5 &:= \Syz_R(XY,XZ,Y^2,YZ^2),\\
 M_6 &:= \Syz_R(X,Y,Z^2),\\
 M_7 &:= \Syz_R(X,Y)
\end{align*}
appearing in Theorem \ref{syze7}. These are
\begin{align*}
 \lK_{M_1}(t) &= \frac{t^{10}+t^{13}+t^{15}+t^{18}}{(1-t^6)(1-t^4)},\\
 \lK_{M_2}(t) &= \frac{t^{14}+t^{16}+t^{17}+t^{18}+t^{19}+t^{21}}{(1-t^6)(1-t^4)},\\
 \lK_{M_3}(t) &= \frac{t^{18}+t^{19}+t^{20}+2t^{21}+t^{22}+t^{23}+t^{24}}{(1-t^6)(1-t^4)},\\
 \lK_{M_4}(t) &= \frac{t^{16}+t^{18}+t^{19}+t^{21}}{(1-t^6)(1-t^4)},\\
 \lK_{M_5}(t) &= \frac{t^{19}+t^{20}+t^{21}+t^{22}+t^{23}+t^{24}}{(1-t^6)(1-t^4)},\\
 \lK_{M_6}(t) &= \frac{t^{14}+t^{15}+t^{17}+t^{18}}{(1-t^6)(1-t^4)},\\
 \lK_{M_7}(t) &= \frac{t^{15}+t^{18}}{(1-t^6)(1-t^4)},\\
 \lK_{R(-l)}(t) &= t^l\cdot\frac{1+t^9}{(1-t^6)(1-t^4)}.\\
 \end{align*}
The Hilbert-series can distinguish the indecomposable, maximal Cohen-Macaulay modules. Given a maximal Cohen-Macaulay module $M$, whose Hilbert-series 
is $t^l\cdot\lK_{M_i}(t)$ for some $l\in\Z$ and some $i\in\{1,\ldots,6\}$, we cannot exclude a splitting of $M$ into 
(various) degree shifted copies of $M_7$ only by looking at $\lK_M(t)$. But we see that $M$ cannot have free direct summands and if $\Rank(M)\geq 3$ there are a lot of 
possible splittings excluded by the Hilbert-series. We list all possible splittings including at least one module of rank at least two.
$$\begin{aligned}
& i=2: & M &\cong M_4(-l+2)\oplus M_7(-l-3) \text{ or}\\
& & &\cong M_4(-l)\oplus M_7(-l+1),\\
& i=3: & M &\cong M_4(-l-5)\oplus M_4(-l-6) \text{ or}\\
& & &\cong M_6(-l-7)\oplus M_6(-l-9) \text{ or}\\
& & &\cong M_5(-l-2)\oplus M_7(-l-9) \text{ or}\\
& & &\cong M_5(-l-3)\oplus M_7(-l-6),\\
& i=5: & M &\cong M_4(-l-3)\oplus M_7(-l-5) \text{ or}\\
& & &\cong M_6(-l-5)\oplus M_7(-l-6) \text{ or}\\
& & &\cong M_6(-l-6)\oplus M_7(-l-4).
\end{aligned}$$
Negatively spoken, for $i=2$ the module $M$ cannot have $M_1$ or $M_6$ as a direct summand, for $i=3$ the modules $M_1$ and $M_2$ cannot appear as 
direct summands of $M$ and for $i=5$ again $M_1$ is not a direct summand of $M$.

\subsection{The case $E_8$}
Let $R:=k[X,Y,Z]/(X^2+Y^3+Z^5)$ with $\Deg(X)=15$, $\Deg(Y)=10$ and $\Deg(Z)=6$. As in the $E_7$ case all modules from Theorem \ref{syze8}
\begin{align*}
 M_1 &:= \Syz_R(X,Y,Z),\\
 M_2 &:= \Syz_R(X,Y^2,YZ,Z^2),\\
 M_3 &:= \Syz_R(XY,XZ,Y^2,YZ^2,Z^3),\\
 M_4 &:= \Syz_R(XY,XZ^2,Y^3,Y^2Z,YZ^3,Z^4),\\
 M_5 &:= \Syz_R(XY^2,XYZ^2,XZ^4,Y^4,Y^3Z,Y^2Z^3,Z^5),\\
 M_6 &:= \Syz_R(X,Y^2,YZ,Z^3),\\
 M_7 &:= \Syz_R(XY,XZ,Y^2,YZ^2,Z^4),\\
 M_8 &:= \Syz_R(X,Y,Z^2)
\end{align*}
are syzygy modules of monomial ideals. Using Theorem \ref{HilbSer}, we compute their Hilbert-series as
\begin{align*}
 \lK_{M_1}(t) &= \frac{t^{16}+t^{21}+t^{25}+t^{30}}{(1-t^{10})(1-t^6)},\\
 \lK_{M_2}(t) &= \frac{t^{27}+t^{31}+t^{32}+t^{35}+t^{36}+t^{40}}{(1-t^{10})(1-t^6)},\\
 \lK_{M_3}(t) &= \frac{t^{28}+t^{31}+t^{32}+t^{33}+t^{35}+t^{36}+t^{37}+t^{40}}{(1-t^{10})(1-t^6)},\\
 \lK_{M_4}(t) &= \frac{t^{34}+t^{36}+t^{37}+t^{38}+t^{39}+t^{40}+t^{41}+t^{42}+t^{43}+t^{45}}{(1-t^{10})(1-t^6)},
\end{align*}
\begin{align*}
 \lK_{M_5}(t) &= \frac{t^{46}+t^{47}+t^{48}+t^{49}+t^{50}+2t^{51}+t^{52}+t^{53}+t^{54}+t^{55}+t^{56}}{(1-t^{10})(1-t^6)},\\
 \lK_{M_6}(t) &= \frac{t^{26}+t^{28}+t^{30}+t^{31}+t^{33}+t^{35}}{(1-t^{10})(1-t^6)},\\
 \lK_{M_7}(t) &= \frac{t^{31}+t^{32}+t^{34}+t^{35}+t^{36}+t^{37}+t^{39}+t^{40}}{(1-t^{10})(1-t^6)},\\
 \lK_{M_8}(t) &= \frac{t^{22}+t^{25}+t^{27}+t^{30}}{(1-t^{10})(1-t^6)},\\
 \lK_{R(-l)}(t) &= t^l\cdot\frac{1+t^{15}}{(1-t^{10})(1-t^6)}.
\end{align*}
We see that the Hilbert-series of a given indecomposable, maximal Cohen-Macaulay module detects its isomorphism class. Let $M$ be a maximal 
Cohen-Macaulay module, whose Hilbert-series is $t^l\cdot\lK_{M_i}(t)$ for some fixed $i$ and $l\in\Z$. Then $M$ has no free direct summands.
For $i\in\{1,2,4,6,8\}$ the module $M$ cannot split and we get $M\cong M_i(-l)$. For $i=3$ the only possible splitting is 
$M\cong M_8(-l-6)\oplus M_8(-l-10)$, for $i=7$ only $M\cong M_8(-l-9)\oplus M_8(-l-10)$ is possible and for $i=5$ there is only the possible splitting 
$M\cong M_6(-l-19)\oplus M_6(-l-20)$.

\chapter[Relations between the maximal Cohen-Macaulay modules]{Relations between the maximal Cohen-Macaulay modules \except{toc}{over surface rings of type ADE}}\label{pbmaxcm}
Considering the surface rings of type ADE as rings of invariants under finite subgroups of $\SL_2(k)$, 
we get ringinclusions among them by the actions of normal subgroups.
Especially, we are interested in the pull-backs of the maximal Cohen-Macaulay modules along these inclusions. From these pull-backs we will deduce 
that the Frobenius pull-backs of $\Syz(\mm)$ over surface rings of type DE are indecomposable for almost all primes.

During this chapter the surface rings of type ADE are named by their type of singularity, e.g. $E_8:=k[X,Y,Z]/(X^2-Y^3-1728Z^5)$ (we use the hypersurface 
equations as they arise directly from the computation of the rings of invariants) and the corresponding punctured spectra get the same name but with 
curly letters, e.g. $\Ec_8:=\Spec(E_8)\setminus\{\mm\}$. For a module $M$ we will denote the restriction of its sheafification to the punctured 
spectrum by $\Mcc$.

We use the generators of the finite subgroups of $\SL_2(\C)$, given in \cite{mcmsur}. Note that these representations are well-defined in positive 
characteristics, if the group order is invertible.
To get explicit generators of the normal subgroups, one might use \cite{GAP4} and \cite{DGPS} to compute the corresponding rings of invariants 
(see Appendix \ref{app.group} for the computations). 

Let us very briefly recall some facts from invariant theory. If $G$ is a group, acting on a ring $R$ by ringautomorphisms and $H$ is a normal subgroup 
of $G$, the ring of invariants $R^G$ is a subring of $R^H$, the quotient $G/H$ acts on $R^H$ and one has $R^G=\left(R^H\right)^{G/H}$ (cf. \cite[Proposition 5.1 (3)]{holgerinvarianten}).
Let $X:=\Spec\left(R^G\right)$ and $Y=\Spec\left(R^H\right)$ (or the punctured resp. projective spectra). The inclusion $R^G\inj R^H$ induces a morphism $\iota:Y\ra X$. 
Given an $\Oc_X$-module $\Fc$, its pull-back $\iota\pb(\Fc)$ is an $\Oc_Y$-module. If $R=k[x,y]$ and $G\subsetneq\SL_2(k)$ finite, we want to understand 
what happens to the sheaves associated to the indecomposable, non-free, maximal Cohen-Macaulay modules under these pull-backs. For this purpose let 
$$i:R:=k[x,y]^G=k[U,V,W]/H_1(U,V,W)\inj S:=k[x,y]^H=k[X,Y,Z]/H_2(X,Y,Z)$$
be an inclusion between two (different) surface rings of type ADE, where $H$ is a normal subgroup of $G$. This inclusion is given by sending $U$, $V$, $W$ to 
homogeneous polynomials in the variables $X$, $Y$, $Z$ and induces a morphism 
$$\iota:\Sc:=\punctured{S}{(X,Y,Z)}\lra\Rc:=\punctured{R}{(U,V,W)}.$$
Let $M:=\Syz_R(F_1,\ldots,F_{m+1})$ be one of the indecomposable, non-free, maximal Cohen-Macaulay $R$-modules from Theorem \ref{syzan}-\ref{syze8}. 
We obtain from Proposition \ref{syzprop} the isomorphism
$$\iota\pb(\Mcc)\cong\Syz_{\Sc}(i(F_1),\ldots,i(F_{m+1})).$$
Now, the $\Oc_{\Sc}$-module $\Syz_{\Sc}(i(F_1),\ldots,i(F_{m+1}))$ has to be the sheaf associated to a (not necessarily indecomposable) maximal Cohen-Macaulay 
$S$-module $N$ and we want to have an explicit description of $N$. To get this description, we do the following manipulations on $P:=\Syz_S(i(F_1),\ldots,i(F_{m+1}))$ (cf. Lemma \ref{manipulationlemma}).

\begin{itemize}
 \item[Step 1:] If the $i(F_j)$ have a common factor $A$, replace all $i(F_j)$ by $i(F_j)/A$, which changes $P$ only by an isomorphism.
 \item[Step 2:] If $B$ is a common factor of $i(F_1),\ldots,\check{i(F_j)},\ldots,i(F_{m+1})$ and the ideal $(B,i(F_j))$ is $(X,Y,Z)$-primary, replace all 
		$i(F_k)$, $k\neq j$, by $i(F_k)/B$, which changes $P$ again only by an isomorphism.
\end{itemize}

In both steps, we allow also to replace $i(F_j)$ by $i(F_j)+G\cdot i(F_l)$ for $G\in S$ and $j\neq l$ or by a constant multiple $\lambda\cdot i(F_j)$ with $\lambda\in k^{\times}$.

In surprisingly many cases, we end up with an syzygy module appearing explicitly in Theorem \ref{syzan}-\ref{syze8} (or in a syzygy module that appeared in the 
computations in Chapter \ref{chapmatfac} as an alternative description).

If this is not the case, we compute generators for $P$ and look for relations among these generators. These relations will detect the module $P^{\vee}$. 
To compute generators for $P$, we will work over the factorial domain $k[x,y]$ (the ring on which the group acts) as follows. Let $(A_1,\ldots,A_{m+1})\in P$. Then treat the relation 
$\sum A_j\cdot i(F_j)=0$ over $k[x,y]$, meaning that we write the polynomials $i(F_j)$ in the variables $x$, $y$. Then one can again cancel the common 
multiple of the $i(F_j)$, written in $x$, $y$. The result is a relation $\sum A_j\cdot G_j(x,y)=0$, where the $G_j$ are coprime. From this relation 
one can compute the $A_j$ (as elements in $S$!), using the fact that $k[x,y]$ is factorial. We give an example.

\begin{exa}
Let $S=k[X,Y,Z]/(X^5-YZ)$ with $X=xy$, $Y=x^5$ and $Z=y^5$. Consider $(A_1,A_2)\in P:=\Syz_S(X^6,Y^2)$. Substituting $X=xy$ and $Y=x^5$ in the relation 
$A_1\cdot X^6+A_2\cdot Y^2=0$, one obtains $A_1\cdot x^6y^6+A_2\cdot x^{10}=0$, which is equivalent to
\begin{equation}\label{relation}A_1\cdot y^6+A_2\cdot x^4=0.\end{equation}
Since $k[x,y]$ is factorial, there has to be a $B\in k[x,y]$ such that 
\begin{equation}\label{syzygy}(A_1,A_2)=(-x^4,y^6)\cdot B.\end{equation}
For all choices of $B$, the tuple $(A_1,A_2)$ in (\ref{syzygy}) solves (\ref{relation}). On the other hand, for each $(A_1,A_2)\in P$, there has to be a 
$B\in k[x,y]$, such that (\ref{syzygy}) holds. 
All in all, to compute $S$-module generators for $P$, we only have to compute the polynomials $B\in k[x,y]$ with the property that $-x^4\cdot B$ and $y^6\cdot B$ are 
polynomials in $xy$, $x^5$, $y^5$.
Obviously, $B=x$ and $B=y^4$ have this property and are minimal with respect to this property, in the sence that every homogeneous monomial $B'\notin k$ 
having this property, has to be a multiple of either $x$ or $y^4$.
This shows that $P$ is generated by $(-x^4,y^6)\cdot x=(-Y,XZ)$ and $(-x^4,y^6)\cdot y^4=(-X^4,Z^2)$. Since $P$ does not appear in Theorem \ref{syzan}, 
we compute relations among the two generators. Since $X\cdot (-X^4,Z^2)-Z\cdot (-Y,XZ)=0$, we obtain 
$$P\cong\Syz_S(X,Z)^{\vee}\cong\Syz_S(X,Y).$$
\end{exa}

Of course, the computation gets much more complicated for higher ranks or if the polynomials $X=X(x,y)$, $Y=Y(x,y)$, $Z=Z(x,y)$ have high degrees or are not monomial. For this reason, we stop in the exceptional cases $E_6$, $E_7$, $E_8$ after Step 2 and compute generators for the obtained syzygy modules with Macaulay2 \cite{M2}.

\begin{center}
\begin{table}[h]
\begin{tabular}{c}
\xymatrix{
 & \Z/(4n)\ar@^{(->}[r] & \D_{2n} & & & & D_{2n+2}\ar@^{(->}[dd]\ar[r] & A_{4n-1}\ar@^{(->}[dd]\ar@^{(->}[rd]^{t|4n} & \\
\Z/(t)\ar@^{(->}[ru]^{t|4n}\ar@^{(->}[rd]^{t|2n} & & & & & & & & A_{t-1}\ar@^{(->}[dd]^{2|t} \\
 & \Z/(2n)\ar@^{(->}[uu]\ar@^{(->}[r]& \D_n\ar@^{(->}[uu] & & & & D_{n+2}\ar@^{(->}[r] & A_{2n-1}\ar@^{(->}[ru]^{t|2n}\ar@^{(->}[dd]^{2|n}\ar@^{(->}[rd] & \\
\Z/(2)\ar@^{(->}[uu]^{2|t}\ar@^{(->}[ur]\ar@^{(->}[dr]\ar@^{(->}[ddr] & & & & & & & & A_1 \\
 & \Z/(4)\ar@^{(->}[uu]^{2|n}\ar@^{(->}[r]& \D_2\ar@^{(->}[r] & \T\ar@^{(->}[d] & & E_6\ar@^{(->}[r] & D_4\ar@^{(->}[r] & A_3\ar@^{(->}[ru] & \\
 & \I & & \Obb & & E_7\ar@^{(->}[u] & & E_8\ar@^{(->}[ruu] & 
}
\end{tabular}
\caption{Inclusions of the finite subgroups (up to conjugation) of $\SL_2(\C)$ (left part) and of the corresponding rings of invariants (right part). 
All direct inclusions in the left part are inclusions of normal subgroups.}
\end{table}
\end{center}
 
\vspace*{-1.5cm}
\section{The Case $A_{n-1}$ with $n\geq 1$}
Consider natural numbers $t$ and $n$, where $t$ is a divisor of $n$. Let $\epsilon$ be a primitive $n$-th root of unity. On $k[x,y]$
the group action of $\Z/(n)$, generated by $\left(\begin{smallmatrix}\epsilon & 0\\ 0&\epsilon^{-1}\end{smallmatrix}\right),$ has the invariants $U:=xy$, $V:=x^n$, $W:=y^n$ and the group action of $\Z/(t)$, generated by $\left(\begin{smallmatrix}\epsilon & 0\\ 0&\epsilon^{-1}\end{smallmatrix}\right)^{n/t},$
has the invariants $X:=xy$, $Y:=x^t$, $Z:=y^t$. Since $\Z/(t)$ is a normal subgroup of $\Z/(n)$, invariant theory tells us that $A_{n-1}$ is a subring of $A_{t-1}$, where the inclusion is given by $U\mapsto X$, $V\mapsto Y^{n/t}$, $W\mapsto Z^{n/t}$, because the equalities $U=X$, $V=Y^{n/t}$, $W=Z^{n/t}$ hold in $k[x,y]$. In the grading $(2,t,t)$ resp. $(2,n,n)$ on $A_{t-1}$ resp. $A_{n-1}$ the inclusion is homogeneous. Denote the inclusion by $\phi_t$. The goal is to find an easier description of the module on the right hand side in the isomorphism
$$\phi_t\pb(\Syz_{\Ac_{n-1}}(U^l,V))\cong\Syz_{\Ac_{t-1}}(X^l,Y^{n/t}).$$

\begin{lem}\label{lempban}
Let $l\in\{1,\ldots,n-1\}$ with $l=at+r$ and $0\leq r\leq t-1$. Let $m\geq a+1$. There is an isomorphism $$\Syz_{A_{t-1}}(X^l,Y^m)\cong\Syz_{A_{t-1}}(X^r,Y).$$
\end{lem}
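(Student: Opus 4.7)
The plan is to use the relation $X^t=YZ$ in $A_{t-1}$ together with Lemma \ref{manipulationlemma} to strip off higher powers of $X$, and then to observe that the remaining $Y^{m-a}$ can be replaced by $Y$ without changing the $A_{t-1}$-module isomorphism class.

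First I rewrite $X^l=X^{at+r}=X^r(X^t)^a=X^rY^aZ^a$. The two generators of $(X^l,Y^m)$ then share the common factor $Y^a$ (using $m\geq a+1>a$), so Lemma \ref{manipulationlemma}(ii) yields, up to degree shift,
$$\Syz_{A_{t-1}}(X^l,Y^m)\;\cong\;\Syz_{A_{t-1}}(X^rZ^a,\,Y^{m-a}).$$
Setting $s:=m-a\geq 1$, I next invoke Lemma \ref{manipulationlemma}(iii) with $f=Z^a$, $f_1=X^r$, $f_2=Y^s$ to obtain
$$\Syz_{A_{t-1}}(X^rZ^a,\,Y^s)\;\cong\;\Syz_{A_{t-1}}(X^r,\,Y^s),$$
provided $Z^a,Y^s$ is an $A_{t-1}$-regular sequence. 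This follows because $A_{t-1}$ is Cohen-Macaulay of dimension two and $A_{t-1}/(Z^a,Y^s)$ is Artinian: the identity $X^{t(a+s)}=(YZ)^{a+s}=Y^{a+s}Z^{a+s}\in(Y^s,Z^a)$ forces $X$, $Y$ and $Z$ to all be nilpotent modulo $(Z^a,Y^s)$.

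It remains to produce an $A_{t-1}$-module isomorphism
$$\Syz_{A_{t-1}}(X^r,\,Y^s)\;\cong\;\Syz_{A_{t-1}}(X^r,\,Y)$$
for all $s\geq 1$. The case $r=0$ is immediate, since both sides are free of rank one (generated by $(-Y^s,1)$ resp.\ $(-Y,1)$). Assume $r\geq 1$; both modules are then rank-one, non-free, maximal Cohen-Macaulay and hence reflexive by Theorem \ref{mcmrefl}, so it suffices to match their classes in $\Pic(U)$, where $U:=\punctured{A_{t-1}}{\mm}$. Since $(X^r,Y^s)\Oc_U$ is locally principal---trivial on $D(X)$ and $D(Y)$, and equal to $(X^r)$ on $D(Z)$ via $Y^s=X^{ts}/Z^s$ with $r<ts$---it is a line bundle on $U$, so restricting the presenting sequence to $U$ and taking determinants yields $[\Syz_{A_{t-1}}(X^r,Y^s)]=-[(X^r,Y^s)\Oc_U]$ in $\Pic(U)$. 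The unique height-one prime of $A_{t-1}$ in the support of $(X^r,Y^s)$ is $(X,Y)$, and in the corresponding DVR $(A_{t-1})_{(X,Y)}$ the element $X$ is a uniformizer while $Y=X^t\cdot Z^{-1}$ has order $t$; therefore $\ord_{(X,Y)}(X^r,Y^s)=\min(r,ts)=r$, independently of $s$. Hence both $(X^r,Y^s)\Oc_U$ and $(X^r,Y)\Oc_U$ correspond to the divisor $r\cdot[V(X,Y)]$, and the two syzygy modules represent the same class in $\Pic(U)$.

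The main obstacle is ensuring the determinant identification of $\Syz^{\vee}$ with the ideal sheaf still holds despite $(X^r,Y^s)$ failing to be $\mm$-primary; this is precisely why I argue on the punctured spectrum, where the ideal becomes locally principal. An alternative, more computational route staying entirely within $A_{t-1}$ would be to exhibit explicit minimal generators of both syzygy modules by lifting to the factorial ring $k[x,y]$ (in the manner of the $\Syz_R(X,Y^2)$-computation carried out earlier in this chapter) and then construct the isomorphism by direct matching of generators, avoiding any appeal to $\Pic(U)$.
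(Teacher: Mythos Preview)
Your proof is correct but follows a different route from the paper's. The paper argues directly by lifting to the factorial ring $k[x,y]$: writing $X=xy$, $Y=x^t$, $Z=y^t$, it reduces the syzygy relation to $Ay^{at+r}+Bx^{t(m-a-1)+t-r}=0$, reads off two explicit generators $(Y^{m-a},-X^rZ^a)$ and $(X^{t-r}Y^{m-a-1},-Z^{a+1})$, and then observes the single relation $-Z\cdot s_1+X^r\cdot s_2=0$ between them, which identifies the module as $\Syz(X^r,Z)^\vee\cong\Syz(X^r,Y)$. This is precisely the ``alternative, more computational route'' you sketch in your final paragraph.

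Your argument instead combines two reductions via Lemma~\ref{manipulationlemma} (cancelling the common factor $Y^a$, then removing $Z^a$ via the regular-sequence trick) with a divisor-class computation on the punctured spectrum to show $\Syz(X^r,Y^s)$ is independent of $s$. This is more conceptual and avoids any explicit generator calculation; the paper's approach is more hands-on but delivers the generators themselves, which fits the computational style of the surrounding chapter. Both are short and self-contained.
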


\begin{proof}
Let $(A,B)\in\Syz_{A_{t-1}}(X^l,Y^m)$. Then over $k[x,y]$ the relation $A\cdot X^l+B\cdot Y^m=0$ becomes $A\cdot (xy)^l+B\cdot x^{tm}=0$, which is equivalent to
\begin{equation*}A\cdot y^{at+r}+B\cdot x^{t(m-a-1)+t-r}=0.\end{equation*}
We obtain the existence of a $C\in k[x,y]$ such that
\begin{equation*} (A,B)=(x^{t(m-a-1)}x^{t-r},-y^{at}y^r)\cdot C. \end{equation*}
Since $A$ and $B$ belong to $A_{t-1}$, all possibilities for $C$ are $A_{t-1}$-combinations of $x^r$ and $y^{t-r}$. We obtain
$$\Syz_{A_{t-1}}(X^l,Y^m)=\left\langle\begin{pmatrix} Y^{m-a} \\ -X^r\cdot Z^a \end{pmatrix},\begin{pmatrix} X^{t-r}\cdot Y^{m-a-1} \\ -Z^{a+1}\end{pmatrix}\right\rangle.$$
Since the generators fullfill the relation
\begin{equation*}
-Z\cdot \begin{pmatrix} Y^{m-a} \\ -X^r\cdot Z^a \end{pmatrix}+X^r\cdot\begin{pmatrix} X^{t-r}\cdot Y^{m-a-1} \\ -Z^{a+1}\end{pmatrix}=0,
\end{equation*}
we get the isomorphisms
$$\Syz_{A_{t-1}}(X^l,Y^m)\cong\left(\Syz_{A_{t-1}}(X^r,Z)\right)^{\vee}\cong\Syz_{A_{t-1}}(X^r,Y).$$
\end{proof}

Since $l\leq n-1=\left(\tfrac{n}{t}-1\right)\cdot t +t-1$, we have $a\leq\tfrac{n}{t}-1$, hence by applying the above lemma, we obtain
$$\Syz_{A_{t-1}}(X^l,Y^{n/t})\cong \Syz_{A_{t-1}}(X^r,Y)$$
with $r\in\{0,\ldots,t-1\}$.

\begin{rem}
Using ideals to represent the indecomposable, maximal Cohen-Macaulay $A_{n-1}$-modules, we have by Theorem \ref{syzan} the isomorphism
$\phi_t\pb((U^l,W))\cong (X^r,Z)$.
\end{rem}

\section{The Case $D_{n+2}$ with $n\geq 2$}
The binary dihedral group $\mathbb{D}_{n}$ is generated by $\zeta:=\left(\begin{smallmatrix}\epsilon & 0\\ 0&\epsilon^{-1}\end{smallmatrix}\right),$ where $\epsilon$ is 
a primitive $2n$-th root of unity and $\tau:=\left(\begin{smallmatrix}0 & 1\\ -1&0\end{smallmatrix}\right)$.
The generating invariants are 
$$X:=xy(x^{2n}-y^{2n}), \text{ } Y:=x^2y^2 \text{ and } Z:=x^{2n}+y^{2n},$$ fullfilling the equation 
$$X^2+4Y^{n+1}-YZ^2=0.$$
If $n$ is odd, the normal subgroups of $\mathbb{D}_n$ are exactly the normal subgroups of $<\zeta>$. If $n$ is even there are two additional normal subgroups.
The first one is generated by $\tau$ and $\zeta^2$. The second normal subgroup is generated by $\zeta\tau$ and $\zeta^2$ (cf. \cite[Lemma 4.3]{dolgagroup}). 
These two last groups lead to the same rings of invariances as a direct computation shows. This ring is a $D_{n/2+2}$-singularity. Recall from Theorem \ref{syzdn} that 
$$\begin{aligned}
M_1 &= \Syz_{D_{n+2}}(X,Y), && \\
M_{2m} &= \Syz_{D_{n+2}}(X,Y^m,Z) && \left(\text{with }1\leq m\leq \frac{n}{2}\right),\\
M_{2m-1} &= \Syz_{D_{n+2}}(X,Y^m,YZ) && \left(\text{with }2\leq m\leq \frac{n+1}{2}\right),\\
M_{n+1} &= \Syz_{D_{n+2}}(X,Z-2Y^{n/2}) && (\text{with }n\text{ even}),\\
M_{n+2} &= \Syz_{D_{n+2}}(X,Z+2Y^{n/2}) && (\text{with }n\text{ even}),
\end{aligned}$$
$$\begin{aligned}
M_{n+1} &= \Syz_{D_{n+2}}(X+2iY^{\frac{n+1}{2}},Z) && (\text{with }n\text{ odd}),\\
M_{n+2} &= \Syz_{D_{n+2}}(X-2iY^{\frac{n+1}{2}},Z) && (\text{with }n\text{ odd})
\end{aligned}$$
is a complete list of generators for the isomorphism classes of indecomposable, non-free, maximal Cohen-Macaulay modules.

\subsection{Pull-backs to $\Ac_{t-1}$ with $t|2n$}

It is enough to deal with the pull-backs to $\Ac_{2n-1}$, because every pull-back from $\Dc_{n+2}$ to $\Ac_{t-1}$ (with $t|2n$) factors 
through $\Ac_{2n-1}$. The pull-back is induced by the map 
$$\phi:k[X,Y,Z]/(X^2+4Y^{n+1}-YZ^2)\longrightarrow k[R,S,T]/(T^{2n}-RS),$$
given by $X\mapsto T(R-S)$, $Y\mapsto T^2$, $Z\mapsto R+S$, where
$T=xy$, $R=x^{2n}$, $S=y^{2n}$.

Except for the modules $M_{n+1}$ and $M_{n+2}$, applying the steps 1 and 2 from the strategy described at the beginning of this chapter is already 
enough to compute the pull-backs.
\begin{align*}
\phi\pb(\Mcc_1) &= \phi\pb(\Syz_{\Dc_{n+2}}(X,Y))\\
&\cong \Syz_{\Ac_{2n-1}}(TR-TS,T^2)\\
&\cong \Syz_{\Ac_{2n-1}}(R-S,T)\\
&\cong \Oc_{\Ac_{2n-1}},\\
\phi\pb(\Mcc_{2m}) &= \phi\pb(\Syz_{\Dc_{n+2}}(X,Y^m,Z))\\
&\cong \Syz_{\Ac_{2n-1}}(TR-TS,T^{2m},R+S)\\
&\cong \Syz_{\Ac_{2n-1}}(R-S,T^{2m-1},R+S)\\
&\cong \Syz_{\Ac_{2n-1}}(R,S,T^{2m-1})\\
&\cong \Syz_{\Ac_{2n-1}}(R,T^{2m-1})\oplus \Syz_{\Ac_{2n-1}}(S,T^{2m-1}),\\
\phi\pb(\Mcc_{2m-1}) &= \phi\pb(\Syz_{\Dc_{n+2}}(X,Y^m,YZ))\\
&\cong \Syz_{\Ac_{2n-1}}(TR-TS,T^{2m},T^2R+T^2S)\\
&\cong \Syz_{\Ac_{2n-1}}(R-S,T^{2m-1},TR+TS)\\
&\cong \Syz_{\Ac_{2n-1}}(R-S,T^{2m-2},R+S)\\
&\cong \Syz_{\Ac_{2n-1}}(R,S,T^{2m-2})\\
&\cong \Syz_{\Ac_{2n-1}}(R,T^{2m-2})\oplus\Syz_{\Ac_{2n-1}}(S,T^{2m-2}).
\end{align*}
For $i=n+1$ and $i=n+2$ we have $\phi\pb(\Mcc_i)\cong\Syz_{\Ac_{2n-1}}(T^n,R)$, independently of the cardinality of $n$. We only compute $\phi\pb(\Mcc_{n+1})$ 
in the two cases corresponding to the the cardinality of $n$. To compute $\phi\pb(\Mcc_{n+2})$ one has only to change some signs. Let $n$ be even. In this case we get
\begin{align*}
\phi\pb(\Mcc_{n+1}) &= \phi\pb\left(\Syz_{\Dc_{n+2}}\left(X,Z-2Y^{\frac{n}{2}}\right)\right)\\
&\cong \Syz_{\Ac_{2n-1}}(TR-TS,R+S-2T^n)\\
&\cong \Syz_{\Ac_{2n-1}}(R-S,R+S-2T^n)\\
&\cong \Syz_{\Ac_{2n-1}}(R-S,R-T^n).
\end{align*}
For $(A,B)\in\Syz_{A_{2n-1}}(R-S,R-T^n)$ the corresponding relation over $k[x,y]$ is equivalent to $$A\cdot(x^n+y^n)+B\cdot x^n=0.$$
This gives $(A,B)=(-x^n,x^n+y^n)\cdot C$ with $C\in k[x,y]$. The minimal possibilities for $C$ are $x^n$ and $y^n$, giving the generators $s_1:=(-R,R+T^n)$ and $s_2:=(-T^n,T^n+S)$ with the relation $T^n\cdot s_1-R\cdot s_2=0$. The claim follows, since the syzygy module of $(T^n,R)$ is selfdual.

Now let $n$ be odd. In this case we obtain
\begin{align*}
\phi\pb(\Mcc_{n+1}) &= \phi\pb(\Syz_{\Dc_{n+2}}(X+2iY^{\frac{n+1}{2}},Z))\\
&\cong \Syz_{\Ac_{2n-1}}(TR-TS+2iT^{n+1},R+S)\\
&\cong \Syz_{\Ac_{2n-1}}(R-S+2iT^n,R+S)\\
&\cong \Syz_{\Ac_{2n-1}}(R+iT^n,R+S).
\end{align*}
Taking a syzygy $(A,B)\in\Syz_{A_{2n-1}}(R+iT^n,R+S)$, the corresponding relation over $k[x,y]$ is equivalent to $$A\cdot x^n+B\cdot(x^n-iy^n)=0.$$
This gives $(A,B)=(x^n-iy^n,-x^n)\cdot C$ with $C\in k[x,y]$. The minimal possibilities for $C$ are again $x^n$ and $y^n$, giving the generators $s_1:=(R-iT^n,-R)$ and $s_2:=(T^n-iS,-T^n)$ with the relation $T^n\cdot s_1-R\cdot s_2=0$.

\subsection{Pull-backs to $\Dc_{n/2+2}$}

Consider the map 
$$\psi: k[X,Y,Z]/(X^2+4Y^{n+1}-YZ^2)\rightarrow k[R,S,T]/(R^2+4S^{\frac{n}{2}+1}-ST^2),$$
given by $X\mapsto RT$, $Y\mapsto S$, $Z\mapsto T^2-2S^{n/2}$, where $R=xy(x^n-y^n)$, $S=x^2y^2$, $T=x^n+y^n$.

For all pull-backs the steps 1 and 2 from the beginning of this chapter are enough to detect the isomorphism classes of the pull-backs.
\begin{align*}
\psi\pb(\Mcc_1) &\cong \Syz_{\Dc_{\frac{n}{2}+2}}(RT,S)\\
&\cong \Syz_{\Dc_{\frac{n}{2}+2}}(R,S),\\
\psi\pb(\Mcc_{2m}) &\cong \Syz_{\Dc_{\frac{n}{2}+2}}\left(RT,S^m,T^2-2S^{\frac{n}{2}}\right)\\
 &\cong \Syz_{\Dc_{\frac{n}{2}+2}}(RT,S^m,T^2)\\
 &\cong \Syz_{\Dc_{\frac{n}{2}+2}}(R,S^m,T),\\
\psi\pb(\Mcc_{2m-1}) &\cong \Syz_{\Dc_{\frac{n}{2}+2}}\left(RT,S^m,ST^2-2S^{\frac{n}{2}+1}\right)\\
 &\cong \Syz_{\Dc_{\frac{n}{2}+2}}(RT,S^m,ST^2)\\
 &\cong \Syz_{\Dc_{\frac{n}{2}+2}}(R,S^m,ST),\\
\psi\pb(\Mcc_{n+1}) &\cong \Syz_{\Dc_{\frac{n}{2}+2}}\left(RT,T^2-4S^{\frac{n}{2}}\right)\\
&\cong \Syz_{\Dc_{\frac{n}{2}+2}}\left(R,T^2-4S^{\frac{n}{2}}\right)\\
&\cong \Syz_{\Dc_{\frac{n}{2}+2}}(R,S),
\end{align*}
\begin{align*}
\psi\pb(\Mcc_{n+2}) &\cong \Syz_{\Dc_{\frac{n}{2}+2}}(RT,T^2)\\
&\cong \Syz_{\Dc_{\frac{n}{2}+2}}(R,T)\\
&\cong \Oc_{\Dc_{\frac{n}{2}+2}}.
\end{align*}

\section{The Case $E_6$}
The binary tetrahedral group $\mathbb{T}$ of order 24 is generated by $\sigma$, $\tau$ and $\mu$, where
$$\sigma=\begin{pmatrix} i & 0\\ 0&-i\end{pmatrix}, \quad \tau=\begin{pmatrix}0 & 1\\ -1&0\end{pmatrix}, \quad \mu=\frac{1}{\sqrt{2}}\begin{pmatrix}\epsilon^7 & \epsilon^7\\ \epsilon^5&\epsilon\end{pmatrix}$$
with a primitive eighth root of unity $\epsilon$. The generating invariants are 
$$X:=x^{12}-33x^8y^4-33x^4y^8+y^{12}, Y:=x^8+14x^4y^4+y^8, Z:=x^5y-xy^5,$$ 
hence the equation of the $E_6$-singularity is 
$$X^2-Y^3+108X^4=0.$$
There are two normal subgroups of $\mathbb{T}$. The first one is generated by $\sigma$ and $\tau$ and the ring of invariants is a $D_4$-singularity. 
The second one is generated by $\tau^2$ and the ring of invariants is an $A_1$-singularity. Recall from Theorem \ref{syze6} that
\begin{align*}
M_1 &= \Syz_{E_6}(X,Y,Z),\\
M_2 &= \Syz_{E_6}(X,Y^2,YZ,Z^2),\\
M_3 &= \Syz_R(iX+6\sqrt{3}Z^2,Y^2,YZ),\\
M_4 &= \Syz_R(-iX+6\sqrt{3}Z^2,Y^2,YZ),\\
M_5 &= \Syz_R(-iX+6\sqrt{3}Z^2,Y),\\
M_6 &= \Syz_R(iX+6\sqrt{3}Z^2,Y)
\end{align*}
is a complete list of representatives of the isomorphism classes of indecomposable, non-free, maximal Cohen-Macaulay modules.

We consider the map 
$$\psi:k[X,Y,Z]/(X^2-Y^3+108Z^4)\longrightarrow k[U,V,W]/(UV-W^2),$$ 
that is given by 
$$X\mapsto U^6-33U^2W^4-33V^2W^4+V^6,\quad Y\mapsto U^4+V^4+14W^4,\quad Z\mapsto (U^2-V^2)W.$$
A direct computation shows that $\psi$ is the composition 
$$(A_3\ra A_1)\circ (D_4\ra A_3)\circ (E_6\ra D_4).$$ 
Therefore, it is enough to compute the pull-backs to $\Dc_4$.

To reduce the number of generators of the syzygy modules corresponding to the pull-backs of $\Mcc_i$, $i\geq 3$, we first prove 
the isomorphisms
\begin{align*}
M_5\oplus M_6 &\cong L:=\Syz_{E_6}(X,Y,Z^2)\quad\text{and}\\
M_3\oplus M_4 &\cong N:=\Syz_{E_6}(XY,XZ,Y^2,YZ^2,Z^3)\\
&\cong \Syz_{E_6}(XY,XZ^2,Y^3,Y^2Z,Z^3)\quad\text{(for later use).}
\end{align*}
The proofs use matrix factorizations. To avoid ugly constant coefficients in the matrices, we turn back to the equation $X^2+Y^3+Z^4=0$ for these proofs. 
Note that under this isomorphism, we only have to change the coefficients $6\sqrt{3}$ in $M_3,\ldots,M_6$ into 1. The matrix factorization 
$$\left(\left(\begin{pmatrix}-Z^2 & Y\\ Y^2 & Z^2\end{pmatrix}\right),\left(\begin{pmatrix}-Z^2 & Y\\ Y^2 & Z^2\end{pmatrix}\right)\right)\hat{\otimes}(X,X)$$ 
splits by Lemma \ref{tensorsplit} as $(\phi_5,\psi_5)\oplus (\phi_6,\psi_6)$. Obviously, the module given by this matrix factorization is selfdual. 
Computing the tensor product and deleting the first column (in the second matrix), one obtains the claimed isomorphism.

Let $\Omega=\left(\left(\begin{smallmatrix}
XE_4 & \eta\\
\eta & -XE_4\end{smallmatrix}\right),
\left(\begin{smallmatrix}
XE_4 & \eta\\
\eta & -XE_4\end{smallmatrix}\right)\right)$ with
$$\eta=\begin{pmatrix}
-Z^2 & 0 & YZ & Y\\
-YZ & Z^2 & Y^2 & 0\\
0 & Y & 0 & Z\\
Y^2 & -YZ & Z^3 & 0
\end{pmatrix}.$$
Because the map 
$$\left(E_8,\left(\begin{smallmatrix}0 & E_4\\E_4 & 0\end{smallmatrix}\right)\right):(\eta,\eta)\hat{\otimes}(X,X)\rightarrow \Omega$$ 
is an isomorphism of matrix factorizations, we see by Lemma \ref{tensorsplit} that $\Omega$ splits as $(\phi_3,\psi_3)\oplus (\phi_4,\psi_4)$. 
Again $\coKer(\Omega)$ is selfdual and deleting the columns one, three and seven from the second matrix of $\Omega$, the rows of the reduced matrix 
generate the module $\Syz_{E_6}(-XZ,XY,Y^2,-Z^3,YZ^2)$. For the other isomorphism delete the columns two, three and five. 

Now, we compute the pull-backs to $\Dc_4$. The computations are done with Macaulay2 (with coefficients in $\Z$) and can be found in Appendix 
\ref{m2.mcmpb.e6}. The map 
$$\phi:k[X,Y,Z]/(X^2-Y^3+108Z^4)\longrightarrow k[U,V,W](U^2+4V^3-VW^2)$$ 
is given by 
$$X\mapsto W^3-36V^2W,\quad Y\mapsto W^2+12V^2,\quad Z\mapsto U.$$
Therefore, the pull-back 
$$\phi\pb(\Syz_{\Ec_6}(X,Y,Z))\cong \Syz_{\Dc_4}(W^3-36V^2W,W^2+12V^2,U)$$
is generated by
$$s_1:=\begin{pmatrix}0 \\ -U \\ 12V^2+W^2\end{pmatrix},\text{ } 
s_2:=\begin{pmatrix}-V \\ -2VW \\ 3UW\end{pmatrix}, \text{ }
s_3:=\begin{pmatrix}W \\ 12V^2-W^2 \\ 36UV\end{pmatrix},\text{ } 
s_4:=\begin{pmatrix}-U \\ -3UW \\ 4W^3\end{pmatrix}.$$
The total degrees of these generators are seven, eight, eight and nine. Since there are no elements of degree one and two and $s_2$ and $s_3$ are linearly independent, all generators are necessary. We have the following global relations among these generators
\begin{align*}
-3U\cdot s_1+W\cdot s_2+V\cdot s_3 &= 0,\\
-4UW\cdot s_1+4V^2\cdot s_2+VW\cdot s_3+U\cdot s_4 &= 0,\\
(12V^2-4W^2)\cdot s_1+U\cdot s_3 +W\cdot s_4 &= 0.
\end{align*}
The second and third relation show that we can ommit the fourth generator locally on the covering $D(U)\cup D(W)=\Dc_4$. Then the first relation gives the 
isomorphism $$\phi\pb(\Syz_{\Ec_6}(\Mcc_1))\cong\Syz_{\Dc_4}(U,V,W).$$

The pull-back 
$$\phi\pb(\Mcc_5\oplus\Mcc_6)\cong\phi\pb(\Lc)\cong\phi\pb(\Syz_{\Ec_6}(X,Y,Z^2))\cong \Syz_{\Dc_4}(W^3-36V^2W,W^2+12V^2,U^2)$$ 
is generated by
$$s_1:=\begin{pmatrix}V \\ 2VW \\ -3W\end{pmatrix},\text{ }
s_2:=\begin{pmatrix}W \\ -W^2 + 12V^2 \\ 36V\end{pmatrix},\text{ }
s_3:=\begin{pmatrix}0 \\ U^2 \\ -12V^2-W^2\end{pmatrix},\text{ }
s_4:=\begin{pmatrix}U^2 \\ 0 \\ 36V^2W-W^3\end{pmatrix}.$$
Because $s_1$ and $s_2$ are linearly independent and of the same (minimal) total degree, both are necessary as generators. The relations 
$$s_3=\frac{1}{3}(W\cdot s_1-V\cdot s_2)\quad \text{and} \quad s_4=\frac{1}{3}((W^2-12V^2)\cdot s_1+2VW\cdot s_2)$$
show that $s_1$ and $s_2$ generate the whole module if the characteristic is not three, hence $\Mcc_5$ and $\Mcc_6$ become trivial after the pull-back.

To get the pull-back of $\Mcc_2$ we compute
\begin{align*}
 & \phi\pb(\Syz_{\Ec_6}(X,Y^2,YZ,Z^2))\\
 \cong & \Syz_{\Dc_4}(W^3-36V^2W,W^4+24V^2W^2+144V^4,UW^2+12UV^2,U^2).
\end{align*}
A set of generators for $\Syz_{D_4}(W^3-36V^2W,W^4+24V^2W^2+144V^4,UW^2+12UV^2,U^2)$ is given by
\begin{align*}
s_1 &:= (0,0,-U,12V^2+W^2),\\
s_2 &:= (0,-U,12V^2+W^2,0),\\
s_3 &:= (2VW,V,3U,-6W^2),\\
s_4 &:= (12V^2-W^2,W,0,-72VW),\\
s_5 &:= (UV,0,2VW,-3UW),\\
s_6 &:= (UW,U,-2W^2,36UV),\\
s_7 &:= (-U^2,0,-3UW,4W^3).
\end{align*}
Because we can write $3\cdot s_7=6W\cdot s_1-W\cdot s_2+V\cdot s_3$, we can omit $s_7$ as a generator (since $\chara(k)\neq 3$). The remaining generators are linearly independent and 
have total degree ten or eleven. Since there are no elements of degree one, all of them are necessary to generate the module. We have the relations
\begin{align*}
U\cdot (-6s_1-s_3)+V\cdot (s_2+2s_6) &= 0,\\
U\cdot (6s_1-2s_3+s_4)+(W-2V)\cdot (-2s_2+6s_5+s_6) &= 0,\\
U\cdot (-6s_1+2s_3+s_4)+(W+2V)\cdot (2s_2-6s_5+s_6) &= 0.
\end{align*}
Since the characteristic of our base field is at least five, the module generated by $s_1,\ldots,s_6$ is isomorphic to the module generated by the elements 
in the brackets. We get $$\phi\pb(\Mcc_2)\cong\Syz_{\Dc_4}(U,V)\oplus\Syz_{\Dc_4}(U,W-2V)\oplus\Syz_{\Dc_4}(U,W+2V).$$ 

Generators of $$\phi\pb(\Mcc_3\oplus \Mcc_4)\cong\phi\pb(\Nc)=\phi\pb(\Syz_{\Ec_6}(XY,XZ,Y^2,YZ^2,Z^3))$$ are given by
\begin{align*}
s_1 &:= (0, 0, 0, -U, W^2 + 12V^2),\\
s_2 &:= (-U, 12V^2+W^2, 0, 0, 0),\\
s_3 &:= (V, 0, 2VW, -3W, 0),\\
s_4 &:= (W, 0, -W^2 + 12V^2, 36V, 0),\\
s_5 &:= (0, 0, U^2, -12V^2-W^2, 0),\\
s_6 &:= (0, 2VW, UV, 3U, -6W^2),\\
s_7 &:= (U, -2W^2, UW, 0, -72VW),\\
s_8 &:= (0, UV, 0, 2VW, -3UW),\\
s_9 &:= (0, UW, 0, 12V^2-W^2, 36UV),\\
s_{10} &:= (0, -U^2, 0, -3UW, 4W^3),\\
s_{11} &:= (-U^2W, 3UW^3, 0, -2W^4+27U^2V, 81U^3).
\end{align*}
The equations
\begin{align*}
3\cdot s_5 &= -W\cdot s_3+V\cdot s_4,\\
3\cdot s_{10} &= 6W\cdot s_1+V\cdot (s_2+s_7)-W\cdot s_7,\\
s_{11} &= -27UV\cdot s_1+UW\cdot s_2-12VW\cdot s_8+2W^2\cdot s_9
\end{align*}
show that $s_5$, $s_{10}$ and $s_{11}$ can be omitted (since $\chara(k)\neq 3$).

The remaining generators are linearly independent and have total degrees twelve, thirteen or fourteen. The only possibility to have one generator to much, is an 
equation of the form $a_{14}=V\cdot b_{12}+W\cdot c_{12}$, where $a_{14}$, $b_{12}$ and $c_{12}$ are linear combinations of the generators of total degree given by the index. 
It's easy to verify that there is no such equation.

Moreover, if $\chara(k)\geq 5$, we can omit $s_8$ and $s_9$ on the covering $D(U)\cup D(W)$ as the following relations show
\begin{align*}
3U\cdot s_8 &= 3VW\cdot s_1-V^2\cdot s_2-UV\cdot s_3+2VW\cdot s_6,\\
W\cdot s_8 &= -3U\cdot s_1+V\cdot s_9,\\
6U\cdot s_9 &= (18W^2-72V^2)\cdot s_1-UW\cdot s_3+UV\cdot s_4+(3W^2-12V^2)\cdot s_6,\\
6W\cdot s_9 &= (2W^2-12V^2)\cdot s_3+VW\cdot s_6-3U\cdot s_7.
\end{align*}
Finally, we have
\begin{align*}
 W\cdot (6s_1+s_6)-U\cdot s_3+V\cdot s_7 &= 0,\\
 W\cdot (2S_2+s_6)+U\cdot s_4-12V\cdot s_6 &= 0,
\end{align*}
showing $$\phi\pb(\Mcc_3\oplus\Mcc_4)\cong\Syz_{\Dc_4}(U,V,W)^2.$$

\section{The Case $E_7$}
The binary octahedral group $\Obb$ of order 48 is generated by $\sigma$, $\tau$, $\mu$ and $\kappa$, where
$$\sigma=\begin{pmatrix} i & 0\\ 0&-i\end{pmatrix}, \quad \tau=\begin{pmatrix}0 & 1\\ -1&0\end{pmatrix}, 
\quad \mu=\frac{1}{\sqrt{2}}\begin{pmatrix}\epsilon^7 & \epsilon^7\\ \epsilon^5&\epsilon\end{pmatrix}, \quad 
\kappa=\begin{pmatrix}\epsilon & 0\\ 0& \epsilon^7\end{pmatrix},$$ 
where $\epsilon$ is again a primitive eighth root of unity. The generating invariants are 
$$X:=x^{17}y-34x^{13}y^5+34x^5y^{13}-xy^{17}, Y:=x^{10}y^2-2x^6y^6+x^2y^{10}, Z:=x^8+14x^4y^4+y^8.$$ 
The equation of an $E_7$-singularity is given by 
$$X^2+108Y^3-YZ^3=0.$$
There are three normal subgroups of $\Obb$. The first one is generated by $\sigma$ and $\tau$, which is isomorphic to $\D_2$ and has $D_4$ as ring of invariants. 
The second normal subgroup is $\T=<\sigma,\tau,\mu>$, since $\kappa^2=\sigma$. The corresponding ring of invariants is an $E_6$-singularity. 
The third normal subgroup is generated by $\tau^2$ with ring of invariants $A_1$.

Recall from Theorem \ref{syze7} that there are seven classes of non-free, indecomposable, maximal Cohen-Macaulay modules and represantatives are given by
\begin{align*}
M_1 &= \Syz_{E_7}(X,Y,Z),\\
M_2 &= \Syz_{E_7}(X,Y^2,YZ,Z^2),\\
M_3 &= \Syz_{E_7}(XY,XZ,Y^2,YZ^2,Z^3),\\
M_4 &= \Syz_{E_7}(X,Y^2,YZ),\\
M_5 &= \Syz_{E_7}(XY,XZ,Y^2,YZ^2),\\
M_6 &= \Syz_{E_7}(X,Y,Z^2),\\
M_7 &= \Syz_{E_7}(X,Y).
\end{align*}
Note that we only need to compute their pull-backs to $\Ec_6$, since the maps $\Dc_4\ra\Ec_7$ and $\Ac_1\ra\Ec_7$ factor through $\Ec_6$. The map 
$$\eta:k[X,Y,Z]/(X^2+108Y^3-YZ^3)\longrightarrow k[U,V,W]/(U^2-V^3+108W^4)$$ 
is given by $X\mapsto UW$, $Y\mapsto W^2$, $Z\mapsto V.$ With $l\in\{1,2\}$ we get 
\begin{align*}
\eta\pb(\Syz_{\Ec_7}(X,Y,Z^l)) &\cong \Syz_{\Ec_6}(UW,W^2,V^l)\\
 &\cong \Syz_{\Ec_6}(U,W,V^l)\\
 &\cong \Syz_{\Ec_6}(U,V,W),
\end{align*}
showing $\eta\pb(\Mcc_1)\cong\eta\pb(\Mcc_6)\cong\Syz_{\Ec_6}(U,V,W)$. Similarly, one has
\begin{align*}
\eta\pb(\Syz_{\Ec_7}(\Mcc_4)) &\cong \Syz_{\Ec_6}(UW,W^4,VW^2)\\
 &\cong \Syz_{\Ec_6}(U,W^3,VW)\\
 &\cong \Syz_{\Ec_6}(U,W^2,V)\\
 &\cong \Syz_{\Ec_6}(iU+6\sqrt{3}W^2,V)\oplus\Syz_{\Ec_6}(-iU+6\sqrt{3}W^2,V),
\end{align*}
where the last isomorphism is one of those, we have computed at the beginning of this section.

The pull-back $\eta\pb(\Syz_{\Ec_7}(X,Y))\cong\Syz_{\Ec_6}(U,W)$ is free.

We have the isomorphisms
\begin{align*}
\eta\pb(\Mcc_2) &= \eta\pb(\Syz_{\Ec_7}(X,Y^2,YZ,Z^2))\\
 &\cong \Syz_{\Ec_6}(UW,W^4,VW^2,V^2)\\
 &\cong \Syz_{\Ec_6}(U,V^2,VW,W^3)\\
 &\cong \Syz_{\Ec_6}(U,V^2,VW,W^2)\text{ and}\\
\eta\pb(\Mcc_5) &= \eta\pb(\Syz_{\Ec_7}(XY,XZ,Y^2,YZ^2))\\
 &\cong \Syz_{\Ec_6}(UW^3,UVW,W^4,V^2W^2)\\
 &\cong \Syz_{\Ec_6}(UW^2,UV,W^3,V^2W)\\
 &\cong \Syz_{\Ec_6}(UW,UV,W^2,V^2)\\
 &\cong \Syz_{\Ec_6}(U,V^2,VW,W^2).
\end{align*}
The last module to consider is $\Mcc_3$. The pull-back splits by the following computation
\begin{align*}
 &\quad \eta\pb(\Syz_{\Ec_7}(XY,XZ,Y^2,YZ^2,Z^3))\\
&\cong \Syz_{\Ec_6}(UW^3,UVW,W^4,V^2W^2,V^3)\\
&\cong \Syz_{\Ec_6}(UW^2,UV,W^3,V^2W,V^3)\\
&\cong \Syz_{\Ec_6}(iU+6\sqrt{3}W^2,V^2,VW)\oplus \Syz_{\Ec_6}(-iU+6\sqrt{3}W^2,V^2,VW),
\end{align*}
where the last isomorphism is one of those, we have seen at the beginning of this section.

\section{The Case $E_8$}
The binary icosahedral group $\I$ of order 120 has two generators $\phi$ and $\psi$ with
$$\phi=-\left(\begin{array}{cc}\epsilon^3 & 0\\ 0& \epsilon^2\end{array}\right)\quad\text{and}\quad\psi=\frac{1}{\sqrt{5}}\left(\begin{array}{cc}-\epsilon+\epsilon^4 & \epsilon^2-\epsilon^3\\ \epsilon^2-\epsilon^3& \epsilon-\epsilon^4\end{array}\right),$$
where $\epsilon$ is a primitive fifth root of unity. 
The generating invariants are 
\begin{align*}
X&:= x^{30}+522x^{25}y^5-10005x^{20}y^{10}-10005x^{10}y^{20}-522x^5y^{25}+y^{30},\\ 
Y&:= x^{20}-228x^{15}y^5+494x^{10}y^{10}+228x^5y^{15}+y^{20},\\
Z&:= x^{11}y+11x^6y^6-xy^{11}. 
\end{align*}
An $E_8$-singularity is given by the equation 
$$X^2-Y^3-1728Z^5=0.$$ 
The only normal subgroup of $\mathbb{I}$ is $\Z/2$, generated by $\phi^5$. The corresponding map 
$$\iota:k[X,Y,Z]/(X^2-Y^3-1728Z^5)\rightarrow k[R,S,T]/(RS-T^2)$$ 
is given by
\begin{align*}
X &\mapsto R^{15}+522R^{10}T^5-10005R^5T^{10}-10005S^5T^{10}-522S^{10}T^5+S^{15},\\
Y &\mapsto R^{10}-228R^5T^5+494T^{10}+228S^5T^5+S^{10},\\
Z &\mapsto R^5T-S^5T+11T^6.
\end{align*}
There are eight isomorphism classes of non-free, indecomposable, maximal Cohen-Ma\-caulay modules, represented by (cf. Theorem \ref{syze8})
\begin{align*}
M_1 &= \Syz_{E_8}(X,Y,Z),\\
M_2 &= \Syz_{E_8}(X,Y^2,YZ,Z^2),\\
M_3 &= \Syz_{E_8}(XY,XZ,Y^2,YZ^2,Z^3),\\
M_4 &= \Syz_{E_8}(XY,XZ^2,Y^3,Y^2Z,YZ^3,Z^4),\\
M_5 &= \Syz_{E_8}(XY^2,XYZ^2,XZ^4,Y^4,Y^3Z,Y^2Z^3,Z^5),\\
M_6 &= \Syz_{E_8}(X,Y^2,YZ,Z^3),\\
M_7 &= \Syz_{E_8}(XY,XZ,Y^2,YZ^2,Z^4),\\
M_8 &= \Syz_{E_8}(X,Y,Z^2).
\end{align*}
Since there are only the indecomposable, maximal Cohen-Macaulay modules $A_1$ and $\Syz_{A_1}(R,T)$ on $A_1$, we only need the minimal numbers of generators 
of $\iota\pb(\Syz_{\Ec_8}(\Mcc_i))$ to understand its splitting behaviour. Explicitely, the module $A_1$ is one-generated, while $\Syz_{A_1}(R,T)$ is two-generated.
Now, a maximal Cohen-Macaulay $A_1$-module of rank $r$, that is minimally generated by $\mu$ elements, has exactly $\mu-r$ direct summands isomorphic to 
$\Syz_{A_1}(R,T)$ and $2r-\mu$ free direct summands. Therefore, we only explain the Macaulay2 computation, given in Appendix \ref{m2.mcmpb.e8}.

First consider the ring $A:=\Z[R,S,T]/(RS-T^2)$. Since the syzygy modules, we are interested in, have far to many generators, when working over $A$, we also work 
over $B:=\Q[R,S,T]/(RS-T^2)$. The Macaulay2 command for the canonical inclusion $A\inj B$ is \verb map(B,A)  and for a matrix $M$ over $A$ 
the command \verb map(B,A)**M  treats $M$ as a matrix over $B$. We denote the $A$-matrix, whose columns generate $\iota\pb(\Syz_{\Ec_8}(\Mcc_i))$ 
with coefficients in $\Z$ by \verb Si  and the corresponding $B$-matrix by \verb Ti  (cf. \ref{m2.mcmpb.e8} up to \verb i21 ).

Using \verb mingens  \verb image  \verb Ti , we produce matrices \verb M1 $,\ldots,$ \verb M8 , such that the columns of \verb Mi  and \verb Ti  generate the same module, but 
the \verb Mi  have less columns than the \verb Ti  (cf. \ref{m2.mcmpb.e8} \verb i22-i29 ).

In \ref{m2.mcmpb.e8} \verb i30-i37  we compute the canonical map from \verb image  \verb Ti  to \verb image  \verb Mi . 
The outputs are matrices, whose $j-th$ column contains the coefficients necessary to write the $j-th$ column of \verb Ti  in terms of the columns of \verb Mi . 
Working in positive characteristic, we may use the \verb Mi  instead of the \verb Ti , whenever all the coefficients are well-defined.
We always have to invert two and three, which is no problem, since our characteristic has to be at least seven to write down the generators of the group $\I$. 
In the case of \verb T3  we additionally need to invert five and seven and for \verb T5  we have to invert five and 59. 
The computations with coefficients in $\Z/(p)$, $p\in\{7,59\}$ are done in \ref{m2.mcmpb.e8} \verb i38-i43 .

In all cases we end up with a matrix \verb Mi , whose columns generate $\iota\pb(\Syz_{\Ec_8}(\Mcc_i))$. For each $i$ these generators are linearly independent and have 
the same total degree, hence they give a minimal system of generators.

The final results are gathered together in Table \ref{tabpbe8}. 
%\begin{align*}
%\iota\pb(\Syz_{\Ec_8}(\Mcc_1)) &= \Syz_{\Ac_1}(R,T)^2,\\
%\iota\pb(\Syz_{\Ec_8}(\Mcc_2)) &= \Oc_{\Ac_1}^3,\\
%\iota\pb(\Syz_{\Ec_8}(\Mcc_3)) &= \Syz_{\Ac_1}(R,T)^4,\\
%\iota\pb(\Syz_{\Ec_8}(\Mcc_4)) &= \Oc_{\Ac_1}^5,\\
%\iota\pb(\Syz_{\Ec_8}(\Mcc_5)) &= \Syz_{\Ac_1}(R,T)^6,\\
%\iota\pb(\Syz_{\Ec_8}(\Mcc_6)) &= \Oc_{\Ac_1}^3,\\
%\iota\pb(\Syz_{\Ec_8}(\Mcc_7)) &= \Oc_{\Ac_1}^4,\\
%\iota\pb(\Syz_{\Ec_8}(\Mcc_8)) &= \Syz_{\Ac_1}(R,T)^2.
%\end{align*}

\section{Overview}
We end this section with an overview. The modules in the columns are modules over the scheme in the head-spot of the column. The rows tell us to which 
module the module in the first column pulls back.
\begin{center}
\begin{table}[h]
$$\begin{array}{c||c|c|c}
\Dc_{n+2} & \Ac_{2n-1} & \Ac_{t-1},\text{ }t|2n & \Dc_{n/2+2},\text{ ($n$ even)}\\ \hline
\Mcc_1 & \Oc_{\Ac_{2n-1}} & \Oc_{\Ac_{t-1}} & \Mcc_1\\
\Mcc_{2m} & \Mcc_{2m-1}\oplus \Mcc_{2(n-m)+1} & \Mcc_r\oplus \Mcc_{t-r},\text{ }r=2m-1\text{ mod } t & \Mcc_{2m}\\
\Mcc_{2m-1} & \Mcc_{2m-2}\oplus \Mcc_{2(n-m)+2} & \Mcc_r\oplus \Mcc_{t-r},\text{ }r=2m-2\text{ mod } t & \Mcc_{2m-1}\\
\Mcc_{n+1},\text{ $n$ even} & \Mcc_n & \Mcc_{r},\text{ }r=n\text{ mod } t & \Mcc_1\\
\Mcc_{n+2},\text{ $n$ even} & \Mcc_n & \Mcc_{r},\text{ }r=n\text{ mod } t &\Oc_{\Dc_{n/2+2}}\\
\Mcc_{n+1},\text{ $n$ odd} & \Mcc_n & \Mcc_{r},\text{ }r=n\text{ mod } t & -\\
\Mcc_{n+2},\text{ $n$ odd} & \Mcc_n & \Mcc_{r},\text{ }r=n\text{ mod } t & - 
\end{array}$$
\caption{The table shows the pull-back behaviour of the $\Mcc_i$ on $\Dc_{n+2}$.}
\label{tabpbd}
\end{table}
\end{center}
In the third column of Table \ref{tabpbd} the number $r$ is the minimal non-negative integer satisfying the modulo condition.
\begin{center}
\begin{table}[h]
$$\begin{array}{c||c|c}
\Ec_6 & \Dc_4 & \Ac_1 \\ \hline
\Mcc_1 & \Mcc_2 & \Mcc_1^2\\
\Mcc_2 & \Mcc_1\oplus \Mcc_3\oplus \Mcc_4 & \Oc_{\Ac_1}^3\\
\Mcc_3 & \Mcc_2 & \Mcc_1^2 \\
\Mcc_4 & \Mcc_2 & \Mcc_1^2 \\
\Mcc_5 & \Oc_{\Dc_4} & \Oc_{\Ac_1} \\
\Mcc_6 & \Oc_{\Dc_4} & \Oc_{\Ac_1}
\end{array}$$
\caption{The table shows the pull-back behaviour of the $\Mcc_i$ on $\Ec_{6}$.}
\label{tabpbe6}
\end{table}
\end{center}
\begin{center}
\begin{table}[h]
$$\begin{array}{c||c|c|c}
\Ec_7 & \Ec_6 & \Dc_4 & \Ac_1\\ \hline
\Mcc_1 & \Mcc_1 & \Mcc_2 & \Mcc_1^2\\
\Mcc_2 & \Mcc_2 & \Mcc_1\oplus \Mcc_3\oplus \Mcc_4 & \Oc_{\Ac_1}^3\\
\Mcc_3 & \Mcc_3\oplus\Mcc_4 & \Mcc_2^2 & \Mcc_1^4\\
\Mcc_4 & \Mcc_5\oplus \Mcc_6 & \Dc_4^2 & \Oc_{\Ac_1}^2\\
\Mcc_5 & \Mcc_2 & \Mcc_1\oplus \Mcc_3\oplus \Mcc_4 & \Oc_{\Ac_1}^3\\
\Mcc_6 & \Mcc_1 & \Mcc_2 & \Mcc_1^2\\
\Mcc_7 & \Ec_6 & \Dc_4 & \Oc_{\Ac_1}
\end{array}$$
\caption{The table shows the pull-back behaviour of the $\Mcc_i$ on $\Ec_{7}$.}
\label{tabpbe7}
\end{table}
\end{center}
\vspace*{-1cm}
\begin{center}
\begin{table}[h]
$$\begin{array}{c||c}
\Ec_8 & \Ac_1 \\ \hline
\Mcc_1 & \Mcc_1^2\\
\Mcc_2 & \Oc_{\Ac_1}^3\\
\Mcc_3 & \Mcc_1^4\\
\Mcc_4 & \Oc_{\Ac_1}^5\\
\Mcc_5 & \Mcc_1^6\\
\Mcc_6 & \Oc_{\Ac_1}^3\\
\Mcc_7 & \Oc_{\Ac_1}^4\\
\Mcc_8 & \Mcc_1^2\\
\end{array}$$
\caption{The table shows the pull-back behaviour of the $\Mcc_i$ on $\Ec_{8}$.}
\label{tabpbe8}
\end{table}
\end{center}
\vspace*{-1cm}
Using these direct computations, we can show that the Frobenius pull-backs of the syzygy modules of the maximal ideals stay indecomposable in almost all 
characteristics if the singularity is of type $D$ or $E$.

\begin{lem}\label{fpbindec}
Let $U$ be the punctured spectrum of a surface ring $R$ of type $D$ or $E$ and let $\TF:U\ra U$ be the (restricted) Frobenius morphism. Assume that the 
order of the group corresponding to the singularity is invertible in the ground field. Then $\fpb{e}(\Syz_U(\mm))$ is indecomposable for all $e\in\N$.
\end{lem}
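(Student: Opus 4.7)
The plan is to identify $\Syz_U(\mm)\cong\Syz_U(X,Y,Z)$ with a specific rank two indecomposable MCM module in each case and then rule out, via a pull-back, every potential direct sum decomposition of its Frobenius pull-backs. Up to degree shift this module is $M_2 = \Syz_R(X,Y,Z)$ on $D_{n+2}$ and $M_1 = \Syz_R(X,Y,Z)$ on $E_6, E_7, E_8$, cf. Theorems \ref{syzdn}–\ref{syze8}; since $\fpb{e}(\Syz_U(\mm))\cong\Syz_U(X^{p^e},Y^{p^e},Z^{p^e})$ is again a rank two MCM module on $U$, any non-trivial splitting must be into a direct sum $N_1\oplus N_2$ of two rank one indecomposable MCM modules on $U$. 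The strategy is to derive a contradiction by restricting such a hypothetical splitting along a carefully chosen finite flat cover $\iota\colon V\to U$ coming from a normal subgroup of the ADE group.

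Concretely, let $G\subsetneq\SL_2(k)$ be the finite subgroup corresponding to $R$. In the $D$-case I would take $H=\Z/(2n)\lhd\D_n$, the cyclic subgroup of diagonal matrices; in the $E$-cases I would take $H=\{\pm I\}$, the centre of $\T$, $\Obb$ or $\I$. In every instance $H$ is normal in $G$, of order invertible in $k$, and yields a ring $S:=k[x,y]^H$ of type $A_{t-1}$ with $t=2n$ (in the $D$-case) or $t=2$ (in the $E$-cases). Write $V$ for the punctured spectrum of $S$ and $\iota\colon V\to U$ for the induced finite flat morphism. Two facts drive the argument: first, $\iota$ is defined over $\F_p$, so by Lemma \ref{frobprop}(i) the pull-back $\iota^{*}$ commutes with Frobenius, giving
\begin{equation*}
\iota^{*}\bigl(\fpb{e}\Syz_U(\mm)\bigr)\ \cong\ \fpb{e}\bigl(\iota^{*}\Syz_U(\mm)\bigr);
\end{equation*}
second, by the analysis following Theorem \ref{syzan}, $\Pic(V)\cong\Z/t$ is cyclic of order $t$ with the classes of $\Mcc_1,\ldots,\Mcc_{t-1}$ exhausting its non-trivial elements, and by Lemma \ref{fpbline} the absolute Frobenius acts on $\Pic(V)$ by multiplication by $p$.

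Reading off Tables \ref{tabpbd}–\ref{tabpbe8}, one obtains $\iota^{*}\Syz_U(\mm)\cong\Mcc_1\oplus\Mcc_{2n-1}$ on $\Ac_{2n-1}$ in the $D$-case (Picard classes $[1]$ and $[-1]$ in $\Z/(2n)$) and $\iota^{*}\Syz_U(\mm)\cong\Mcc_1\oplus\Mcc_1$ on $\Ac_1$ in the $E$-cases (both classes equal to $[1]\in\Z/2$); the $e$-th Frobenius twist multiplies these classes by $p^e$, and since the hypothesis that $|G|$ is invertible in $k$ forces $\gcd(p,t)=1$, all resulting classes are units in $\Z/t$. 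If now $\fpb{e}\Syz_U(\mm)\cong N_1\oplus N_2$ were non-trivial, a final inspection of the tables shows that for every indecomposable rank one MCM module $N_j$ on $U$ the class of $\iota^{*}N_j$ in $\Pic(V)$ is either $[0]$ or, in the $D$-case, the $2$-torsion class $[n]\in\Z/(2n)$ (arising from $\Mcc_{n+1}$ and $\Mcc_{n+2}$). Since a unit in $\Z/(2n)$ is neither $[0]$ nor the non-unit $[n]$ (as $n\geq 2$), and since $[1]\neq[0]$ in $\Z/2$, the admissible classes never match $[p^e],[-p^e]$ or $[1],[1]$, yielding the desired contradiction. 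The main obstacle is purely bookkeeping, namely verifying type-by-type that no combination of rank one summands on $U$ reproduces a unit Picard class on $V$; once the tables of Chapter 5 are in place, this is immediate.
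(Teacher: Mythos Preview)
Your proof is correct and follows the same core strategy as the paper: pull $\Syz_U(\mm)$ back along a cover $V\to U$ coming from a normal cyclic subgroup, track Picard classes on $V$, and observe that they are preserved as units under Frobenius while every rank one MCM module on $U$ pulls back to a non-unit class.

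The only real difference is in the handling of the $D_{n+2}$-case. The paper uses $\Ac_1$ when $n$ is even (so that $\Mcc_{n+1},\Mcc_{n+2}$ pull back to $\Oc$) and must switch to $\Ac_{n-1}$ when $n$ is odd, because then $\Mcc_{n+1},\Mcc_{n+2}$ pull back to $\Mcc_1$ on $\Ac_1$ and no longer give a contradiction. Your uniform choice of $\Ac_{2n-1}$ avoids this case split entirely: the rank one modules on $\Dc_{n+2}$ always land in $\{[0],[n]\}\subset\Z/(2n)$, neither of which is a unit for $n\geq 2$, while the two summands of $\iota^{*}\Mcc_2$ carry the unit classes $[\pm p^e]$. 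This is a genuine, if modest, simplification. One minor remark: the justification ``$\iota$ is defined over $\F_p$'' is unnecessary (and not literally true, since the generators involve roots of unity); Lemma~\ref{frobprop}(i) already gives $\TF_U\circ\iota=\iota\circ\TF_V$ for any morphism of $\F_p$-schemes.
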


\begin{proof}
Recall from Chapter \ref{chapmatfac} that $\Syz_U(\mm)$ is isomorphic to $\Mcc_2$ if $R$ is of type $D$ and $\Mcc_1$ if $R$ is of type $E$. Assume first that $R$ is of type 
$D_{2n+2}$ or $E$. If $\fpb{e}(\Syz_U(\mm))$ splits for some $e\geq 1$, its pull-back to $\Ac_1$ (along the morphism $\iota$, induced by the inclusion into $A_1$) gets trivial, 
since all line bundles pull back to the structure sheaf by Tables \ref{tabpbd} - \ref{tabpbe8}. But the pull-back of $\Syz_U(\mm)$ along $\iota$ is 
non-trivial (in fact, it is always $\Mcc_1^2$ by the direct computations) and its Frobenius pull-backs stay non-trivial in odd 
characteristics. This gives a contradiction, since the two types of pull-backs commute.

Assume that $R$ is of type $D_{n+2}$ with $n$ odd. Since $\fpb{e}(\Syz_U(\mm))$ might split as $\Mcc_{n+1}\oplus\Mcc_{n+2}$ and 
both pull back to $\Mcc_1$ on $\Ac_1$, the argument above gives no contradiction. In this case, we have to compare the pull-backs to $\Ac_{n-1}$. Then 
all line bundles from $\Dc_{n+2}$ pull back to the structure sheaf of $\Ac_{n-1}$, but $\iota\pb(\Mcc_2)\cong\Mcc_1\oplus\Mcc_{n-1}$. The Frobenius pull-backs 
on the right hand side do not get trivial, since $n$ is invertible modulo $p$.

Moreover, if $R$ is of type $D$, the Frobenius pull-backs of $\Mcc_2$ are again of the form $\Mcc_{2m}$, since the $\Mcc_{2m-1}$ get trivial after pulling them 
back to $\Ac_1$.
\end{proof}

By the previous lemma all Frobenius pull-backs of $\Syz_U(\mm)$ are again indecomposable of rank two. Since in all cases $\Det(\fpb{e}(\Syz_U(\mm)))\cong\Oc_U$, 
we obtain directly $\fpb{e}(\Syz_U(\mm))\cong\Syz_U(\mm)$ if $R$ is of type $E_6$, since in this case there is only one rank two module, whose representation 
is a syzygy module of an $\mm$-primary ideal. In the next chapter we will see that Lemma \ref{fpbindec} leads always to a $(0,e_0)$-Frobenius periodicity 
$\fpb{e_0}(\Syz_U(\mm))\cong\Syz_U(\mm)$. For example if $R$ is of type $E_7$ or $E_8$, the length $e_0$ of this periodicity is - depending 
on the characteristic - one or two.

\chapter{The Hilbert-Kunz functions of surface rings of type ADE}\label{chaphkfade}
In this chapter we compute the Hilbert-Kunz functions of surface rings of type ADE. We recall very briefly the situation. Let $R:=k[U,V,W]/(f)$ be a surface 
ring of type ADE with $\Deg(U)=\alpha$, $\Deg(V)=\beta$, $\Deg(W)=\gamma$ and let $S:=k[X,Y,Z]/(F)$ be the associated standard-graded ring. Denote their 
projective spectra by $C_R$ resp. $C_S$ and their punctured spectra by $\Rc$ resp. $\Sc$. Then by Remark \ref{hkfflat} the Hilbert-Kunz function of $R$ 
is the Hilbert-Kunz function of $S$ with respect 
to the ideal $(X^{\alpha},Y^{\beta},Z^{\gamma})$ divided by $\alpha\cdot\beta\cdot\gamma$, since the map $R\ra S$, sending $U$, $V$ and $W$ to 
$X^{\alpha}$, $Y^{\beta}$ resp. $Z^{\gamma}$, is flat by \cite[Theorem 22.2]{matsu} and the extension $Q(S):Q(R)$ has degree $\alpha\cdot\beta\cdot\gamma$. 
In the standard-graded situation we may use the geometric approach 
of Brenner/ Trivedi, which is possible, if we find for each $q=p^e$ an isomorphism of the form
$$\Syz_{S}(X^{\alpha\cdot q},Y^{\beta\cdot q},Z^{\gamma\cdot q})\cong\Syz_{S}(X^a,Y^b,Z^c)(n),$$
where the vector $(a,b,c)$ and the degree shift $n\in\Z$ depend on $q$ and at least one of the inequalities $\alpha\cdot q<a$, 
$\beta\cdot q<b$, $\gamma\cdot q<c$ holds. In each case we will show that there exists a 
finite list of triples $(a',b',c')$ such that there are isomorphisms
$$\Syz_{R}(U^q,V^q,W^q)\cong\Syz_{R}\left(U^{a'},V^{b'},W^{c'}\right)(m)$$
for some $m\in\Z$ and for all $q$. These isomorphisms induce isomorphisms in the standard-graded case of the desired form.
After all we use the formula
\begin{equation}\label{formnontr}
\begin{split}
 & \HKF(R,q) \\
=& \frac{d\cdot Q(\alpha,\beta,\gamma)\cdot q^2-d\cdot Q(a'\cdot\alpha,b'\cdot\beta,c'\cdot\gamma)+4\cdot\Dim_k(S/(X^{a'\cdot\alpha},Y^{b'\cdot\beta},Z^{c'\cdot\gamma}))}{4\cdot\alpha\cdot\beta\cdot\gamma},
\end{split}
\end{equation}
with $d=\deg(f)$ and $Q(r,s,t)=2(rs+rt+st)-r^2-s^2-t^2$ from Lemma \ref{hkfdreivar} (together with Remark \ref{hkfflat}).

In small characteristics, it happens sometimes that the syzygy modules $\Syz_R(U^q,V^q,W^q)$ split as $R(-m+\delta)\oplus R(-m)$ for some $m,\delta\in\Z$. 
In those cases the Hilbert-Kunz function can be computed by the formula (cf. Lemma \ref{hkffrei} and Remark \ref{hkfflat})
\begin{equation}\label{formtr}
\HKF(R,q)=\frac{d\cdot Q(\alpha,\beta,\gamma)\cdot q^2+d\cdot \delta^2}{4\cdot\alpha\cdot\beta\cdot\gamma}.
\end{equation}

\section{The case $A_{n-1}$ with $n\geq 1$}
Let $R:=k[U,V,W]/(U^n-VW)$ with $\Deg(U)=2$, $\Deg(V)=n$, $\Deg(W)=n$ and $\chara(k)=p\nmid n$. In Chapter \ref{chapmatfac} we saw that we may choose $\Syz_{\Rc}(U,V)$ 
as generator of the Picard group of $\Rc$. The $e$-th Frobenius pull-back is just the $q$-th tensor power, which is isomorphic to the tensor power of 
order $q$ modulo $n$, hence $\Syz_{\Rc}(U^q,V^q)\cong\Syz_{\Rc}(U^r,V)$ with $r\equiv q$ mod $n$, $r\in\{1,\ldots,n-1\}$, by Lemma \ref{lempban}. 
Using the isomorphism 
$$\Syz_R(U^r,V,W)\cong\Syz_R(U^r,V)\oplus\Syz_R(U^r,W)\cong\Syz_R(U^r,V)\oplus\Syz_R(U^{n-r},V)^{\vee},$$
we find 
$$\Syz_R(U^q,V^q,W^q)\cong\Syz_R(U^r,V,W)(m)$$
for some $m\in\Z$.

This yields with Formula (\ref{formnontr})
\begin{align*}
\HKF_R(R,q) &= \frac{2n\cdot Q(2,n,n)\cdot q^2-2n\cdot Q(2r,n,n)+8rn^2}{8n^2}\\
&= \frac{(8n-4)q^2-8rn+4r^2}{4n}+r\\
&= \left(2-\frac{1}{n}\right)q^2-r+\frac{r^2}{n}.
\end{align*}
Note that we can define all appearing syzygy modules in the case $p|n$ as well. 
Since the computations in Section 4.1 are correct in all characteristics, the above computation of the Hilbert-Kunz function remains correct in the case
$p|n$, if we would know that our list of maximal Cohen-Macaulay modules is complete.

By a theorem of Herzog (cf. \cite[Theorem 6.3]{leuwie}) we only need that $R$ is a direct summand of the polynomial ring $k[u,v]$, on which the group $\Z/(n)$ acts, to 
have an one to one correspondence between indecomposable, maximal Cohen-Macaulay $R$-modules and direct summands of $k[u,v]$ as an $R$-module.

Leuschke and Wiegand point out in \cite[Remark 6.22]{leuwie} that $R$ is a direct summand of $k[u,v]$ and that the direct summands of $k[u,v]$ as an $R$-module are 
the same as in the non-modular case (meaning that the algebraic definition is the same, although the modules in the modular case might not come from representations of the group $\Z/n$). 
Therefore, the formula above is a full description of the Hilbert-Kunz function (compare with Example \ref{genhkfan} or Remark \ref{kunzbsp}).

By Theorem \ref{fsighkf}, the F-signature function of $R$ is given by
$$\FS(R,q)=\frac{1}{n}\cdot q^2+r-\frac{r^2}{n}.$$
\vspace*{-1cm}
\section{The case $D_{n+2}$ with $n\geq 2$}
Let $R:=k[U,V,W]/(U^2+V^{n+1}+VW^2)$ with $\Deg(U)=n+1$, $\Deg(V)=2$, $\Deg(W)=n$ and $\chara(k)=p\nmid 4n$.

Recall from Chapter \ref{pbmaxcm} that there is an embedding 
$$\phi:R\lra A_{2n-1}:=k[X,Y,Z]/(X^{2n}-YZ)$$ 
with the following pull-back behaviour 
$$\begin{array}{c||c}
\Rc & \Ac_{2n-1}\\ \hline
\Mcc_1 & \Oc_{\Ac_{2n-1}}\\
\Mcc_{2m} & \Nc_{2m-1}\oplus\Nc_{2(n-m)+1}\\
\Mcc_{2m-1} & \Nc_{2m-2}\oplus\Nc_{2(n-m)+2}\\
\Mcc_{n+1} & \Nc_n\\
\Mcc_{n+2} & \Nc_n,
\end{array}$$
where the $\Nc_i$ are the $\Mcc_i$ on $\Ac_{2n-1}$ from Chapter \ref{pbmaxcm}. By Lemma \ref{fpbindec} we have 
\begin{equation}
\fpb{e}(\Syz_{\Rc}(\mm))=\fpb{e}(\Mcc_2)\cong \Mcc_{2m}  
\end{equation}
for all $e\in\N$ and $m=m(q)\in\{1,\ldots,\tfrac{n}{2}\}$. The parameter $m=m(q)$ can be computed by considering the pull-back via $\phi$: 
$\Mcc_2$ pulls back to $\Nc_1\oplus\Nc_{2n-1}$ on $\Ac_{2n-1}$. The $e$-th Frobenius pull-back of this module is $\Nc_r\oplus\Nc_{2n-r}$ with 
$r\equiv q$ mod $2n$ (and $1\leq r\leq 2n-1$). If $r\leq n-1$, only $\Mcc_{r+1}$ pulls back to $\Nc_r\oplus\Nc_{2n-r}$ via $\phi$. 
If $r\geq n$, only $\Mcc_{2n-r-1}$ pulls back to $\Nc_r\oplus\Nc_{2n-r}$ via $\phi$. But in Chapter \ref{chapmatfac} we saw that $\Mcc_{r+1}$ and $\Mcc_{2n-r-1}$ 
are isomorphic (on $\Rc$). We find 
$$\Syz_{\Rc}(U^q,V^q,W^q)\cong\Syz_{\Rc}(U,V^{(r+1)/2},W),$$ 
where $r\equiv q$ mod $2n$ with $1\leq r\leq 2n-1$. This isomorphism induces a global isomorphism
$$\Syz_R(U^q,V^q,W^q)\cong\Syz_R(U,V^{(r+1)/2},W)(m),$$
for some $m\in\Z$. Using Formula (\ref{formnontr}), we get
\begin{align*}
\HKF(R,q) &= \frac{(2n+2)\cdot Q(n+1,2,n)\cdot q^2-(2n+2)\cdot Q(n+1,r+1,n)+4(r+1)(n^2+n)}{8(n^2+n)}\\
&= \frac{(8n-1)q^2-4nr-4n+r^2}{4n}+\frac{r+1}{2}\\
&= \left(2-\frac{1}{4n}\right)q^2-\frac{r+1}{2}+\frac{r^2}{4n}.
\end{align*}
Dealing with characteristics dividing $4n$, the computations in Section 4.2 (and 4.1) still hold if $p$ is odd. By a similar argument as in the previous section (just change the groups, the references are the same) 
the indecomposable, maximal Cohen-Macaulay modules (defined algebraically) are the same in the modular and the non-modular case.
To get a full description of the Hilbert-Kunz function, we only need to take a closer look at the case $p=2$. In this case the ring $D_{n+2}$ in Section 4.2 is not a $D_{n+2}$-singularity, since it is a binomial hypersurface.

Let $q=2^e$ with $e\geq 1$. Using Lemma \ref{manipulationlemma}, we obtain
\begin{align*}
\Syz_R(U^q,V^q,W^q) &\cong \Syz_R((V^{n+1}+VW^2)^{\frac{q}{2}},V^q,W^q)\\
&\cong \Syz_R(V^{(n+1)\cdot\frac{q}{2}}+V^{\frac{q}{2}}W^q,V^q,W^q)\\
&\cong \Syz_R(V^{\frac{q}{2}}W^q,V^q,W^q)\\
&\cong \Syz_R(V^{\frac{q}{2}},V^q,1)(-nq)\\
&\cong \Syz_R(1,V^{\frac{q}{2}},1)(-(n+1)q)\\
&\cong R(-(n+1)q)\oplus R(-(n+2)q).
\end{align*}
Using Formula (\ref{formtr}), we get 
$$\HKF(R,2^e) = \frac{Q(n+1,2,n)\cdot 4^e+4^e}{4n}=2\cdot 4^e.$$

\begin{thm}\label{hkfdn}
 The Hilbert-Kunz function of $D_{n+2}$ is given by (with $n\geq 2$, $q=p^e$ and $r\equiv q$ mod $2n$ for $0\leq r\leq 2n-1$)
$$e\longmapsto\left\{\begin{aligned}
 & 2\cdot q^2 && \text{if } e\geq 1,p=2,\\
 & \left(2-\frac{1}{4n}\right)q^2-\frac{r+1}{2}+\frac{r^2}{4n} && \text{otherwise.}
          \end{aligned}\right.$$
\end{thm}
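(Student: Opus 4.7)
The strategy is to reduce to the associated standard-graded ring via the flat extension and then exploit the classification of maximal Cohen-Macaulay modules on $\Rc$ together with the Frobenius pull-back tables worked out in Chapter~\ref{pbmaxcm}. Explicitly, I would assign the grading $\Deg(U)=n+1$, $\Deg(V)=2$, $\Deg(W)=n$ and pass through the flat map $R\to S:=k[X,Y,Z]/(F)$ sending $U\mapsto X^{n+1}$, $V\mapsto Y^2$, $W\mapsto Z^n$, where $F$ is the image of the defining equation. By Remark~\ref{hkfflat} the problem reduces to identifying the isomorphism class of $\Syz_R(U^q,V^q,W^q)$ up to degree shift for every $q=p^e$; once this is done, Lemma~\ref{hkfdreivar}, equivalently Formula~\eqref{formnontr}, yields the desired value.

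For $p\nmid 4n$, the plan is as follows. The syzygy bundle $\Syz_\Rc(\mm)$ coincides with $\Mcc_2$ in the list of Theorem~\ref{syzdn}, and Lemma~\ref{fpbindec} ensures that every Frobenius pull-back $\fpb{e}(\Mcc_2)$ remains indecomposable. Being of rank two with trivial determinant, it must therefore equal $\Mcc_{2m}$ for some $m$. To pin down $m$, I would pull back along the inclusion $\phi\colon R\hookrightarrow A_{2n-1}$ coming from the normal subgroup $\Z/(2n)\trianglelefteq\D_n$: the pull-back table gives $\phi\pb(\Mcc_2)\cong\Nc_1\oplus\Nc_{2n-1}$, while on $\Ac_{2n-1}$ the Picard group is cyclic of order $2n$, so Lemma~\ref{lempban} yields $\fpb{e}(\Nc_1\oplus\Nc_{2n-1})\cong\Nc_r\oplus\Nc_{2n-r}$ with $r\equiv q\pmod{2n}$ and $1\leq r\leq 2n-1$. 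Matching this back through the table (and using the symmetry $\Mcc_{r+1}\cong\Mcc_{2n-r-1}$) identifies $\fpb{e}(\Mcc_2)$ as $\Mcc_{r+1}$, producing a global isomorphism $\Syz_R(U^q,V^q,W^q)\cong\Syz_R(U,V^{(r+1)/2},W)$ up to shift. Inserting the triple $(1,(r+1)/2,1)$ together with $d=2n+2$ and $\alpha\beta\gamma=2n(n+1)$ into Formula~\eqref{formnontr} and simplifying gives the announced expression.

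The case $p=2$ must be treated separately, since the classification underlying Theorem~\ref{syzdn} requires odd characteristic. Here the defining equation yields $U^2=V^{n+1}+VW^2$, so for $e\geq 1$ one has
\[
U^q=(V^{n+1}+VW^2)^{q/2}=V^{(n+1)q/2}+V^{q/2}W^q.
\]
Applying the reduction rules of Lemma~\ref{manipulationlemma} successively, one simplifies $\Syz_R(U^q,V^q,W^q)$ to $\Syz_R(V^{q/2}W^q,V^q,W^q)$, then to $\Syz_R(V^{q/2},V^q,1)(-nq)$, and finally to the free module $R(-(n+1)q)\oplus R(-(n+2)q)$. Formula~\eqref{formtr} with degree gap $\delta=q$ then gives $\HKF(R,2^e)=2\cdot 4^e$.

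The principal obstacle is the identification step in the second paragraph: extracting the exact isomorphism class of $\fpb{e}(\Mcc_2)$ from that of its pull-back along $\phi$. This rests on both the indecomposability statement of Lemma~\ref{fpbindec}, whose proof required the pull-back computations along all normal-subgroup inclusions in Chapter~\ref{pbmaxcm}, and on checking that no two distinct rank-two modules $\Mcc_{2m}$ produce the same pull-back to $\Ac_{2n-1}$ after accounting for the symmetry $\Nc_r\cong\Nc_{2n-r}^\vee$.
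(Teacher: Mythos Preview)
Your proposal follows essentially the same route as the paper: identify $\fpb{e}(\Mcc_2)$ via its pull-back along $\phi\colon R\hookrightarrow A_{2n-1}$ and the Frobenius action on $\Pic(\Ac_{2n-1})$, then plug the resulting triple into Formula~\eqref{formnontr}; the $p=2$ computation is also the paper's. The one case you omit is odd $p$ dividing $n$: here Lemma~\ref{fpbindec} does not apply, and the paper fills this gap by remarking (as in the $A_{n-1}$ section) that $D_{n+2}$ is still a direct summand of $k[x,y]$ in the modular situation, so the algebraic list of indecomposable maximal Cohen-Macaulay modules is unchanged and the same identification of $\Syz_R(U^q,V^q,W^q)$ goes through.
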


Using Theorem \ref{fsighkf}, one obtains

\begin{cor}\label{fsigndn}
The F-signature function of $R$ is given by
$$\FS(R,q)=\left\{\begin{aligned}
 & 0 && \text{if } e\geq 1,p=2,\\
 & \frac{1}{4n}\cdot q^2+\frac{r+1}{2}-\frac{r^2}{4n} && \text{otherwise.}
          \end{aligned}\right.$$
\end{cor}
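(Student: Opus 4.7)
The plan is to apply Theorem \ref{fsighkf} directly. Since $R = k[U,V,W]/(U^2+V^{n+1}+VW^2)$ is a hypersurface, it is Gorenstein. I would choose the system of parameters $V, W$; they form a system of parameters because the quotient $R/(V,W) \cong k[U]/(U^2)$ has Krull dimension zero. Setting $I := (V,W)$, the socle of $R/I \cong k[U]/(U^2)$ is one-dimensional and generated by the class of $U$, so I take $y := U$ as the socle representative. Then $J := (I,y) = (U,V,W) = \mm$, and Theorem \ref{fsighkf} gives
$$\FS(R,q) = \HKF(I,R,q) - \HKF(\mm,R,q) = \HKF((V,W),R,q) - \HKF(R,q).$$

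Next I would compute $\HKF((V,W),R,q) = \Dim_k\left(R/(V^q,W^q)\right)$. The defining equation $U^2 = -V^{n+1} - VW^2$ exhibits $R$ as a free $k[V,W]$-module of rank $2$ with basis $\{1, U\}$. Tensoring the length-$q^2$ $k[V,W]$-module $k[V,W]/(V^q,W^q)$ with $R$ over $k[V,W]$ therefore produces a $k$-vector space of dimension $2q^2$, so
$$\HKF((V,W),R,q) = 2q^2$$
for every $q = p^e$.

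Finally, I would substitute the formula for $\HKF(R,q)$ from Theorem \ref{hkfdn}. In characteristic two with $e \geq 1$, $\HKF(R,q) = 2q^2$ and thus $\FS(R,q) = 0$. Otherwise,
$$\FS(R,q) = 2q^2 - \left(2 - \tfrac{1}{4n}\right)q^2 + \tfrac{r+1}{2} - \tfrac{r^2}{4n} = \tfrac{1}{4n}\,q^2 + \tfrac{r+1}{2} - \tfrac{r^2}{4n},$$
matching the claim. The argument is almost mechanical once Theorem \ref{hkfdn} is in hand; the only point requiring a small amount of care is verifying that $V, W$ genuinely form a full system of parameters and identifying the one-dimensional socle of $R/(V,W)$, both of which are immediate from the hypersurface presentation.
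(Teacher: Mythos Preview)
Your proof is correct and follows exactly the approach the paper takes: the paper simply writes ``Using Theorem \ref{fsighkf}, one obtains'' before stating the corollary, and you have supplied precisely the details that this line suppresses (the choice of parameter ideal $I=(V,W)$, the identification of the socle generator $U$, and the computation $\HKF((V,W),R,q)=2q^2$ via the freeness of $R$ over $k[V,W]$).
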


\section{The case $E_6$}
Let $R:=k[U,V,W]/(U^2+V^3+W^4)$ with $\deg(U)=6$, $\Deg(V)=4$, $\Deg(W)=3$ and $\chara(k)=p\geq 5$. 
The Frobenius pull-backs of $\Syz_{\Rc}(U,V,W)$ are indecomposable by Lemma \ref{fpbindec}. Since they have rank two and trivial determinant, we obtain a 
$(0,1)$-Frobenius periodicity, extending globally to
$$\Syz_R(U^p,V^p,W^p)\cong\Syz_R(U,V,W)\left(-\frac{13}{2}\cdot (p-1)\right).$$
Iterating yields the isomorphism
$$\Syz_R(U^q,V^q,W^q) \cong \Syz_R(U,V,W)\left(-\frac{13}{2}\cdot (q-1)\right)$$
for all $q$ and we get (with Formula (\ref{formnontr}))
\begin{align*}
\HKF(R,q) &= \frac{12\cdot Q(6,4,3)\cdot q^2-12\cdot Q(6,4,3)+12\cdot 24}{12\cdot 24}\\
&= \frac{47\cdot q^2-47+24}{24}\\
&= \left(2-\frac{1}{24}\right)q^2-\frac{23}{24}.
\end{align*}
It remains to compute the Hilbert-Kunz function in characteristics two and three. Let $p=2$. For $q\geq 2$ we have by using Lemma \ref{manipulationlemma}
\begin{align*}
\Syz_R(U^q,V^q,W^q) &= \Syz_R\left(V^q,W^q,(V^3+W^4)^{\frac{q}{2}}\right)\\
&= \Syz_R(V^q,W^q,V^{\frac{3q}{2}}+W^{2q})\\
&\cong \Syz_R(V^q,W^q,W^{2q})\\
&\cong \Syz_R(V^q,1,W^q)(-3q)\\
&\cong R(-6q)\oplus R(-7q).
\end{align*}
Using Formula (\ref{formtr}), we get
$$\HKF(R,q)=\frac{12\cdot Q(6,4,3)\cdot q^2+12\cdot q^2}{4\cdot 6\cdot 4\cdot 3}=2\cdot q^2.$$
The analogue computation for $p=3$ and $q\geq 3$ gives $\HKF(R,q)=2\cdot q^2$.

\begin{thm}\label{hkfe6}
The Hilbert-Kunz function of $E_6$ is given by
$$e\longmapsto\left\{\begin{aligned}
 & 2\cdot q^2 && \text{if } e\geq 1\text{ and }p\in\{2,3\},\\
 & \left(2-\frac{1}{24}\right)q^2-\frac{23}{24} && \text{otherwise.}
\end{aligned}\right.$$
\end{thm}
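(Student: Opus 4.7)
The plan is to split the argument by characteristic, handling $p \geq 5$ via a Frobenius periodicity and $p \in \{2,3\}$ by a direct manipulation of the defining equation. Both halves end by plugging data into the two Hilbert--Kunz formulas already derived (Formulas (\ref{formnontr}) and (\ref{formtr})).

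For $p \geq 5$ I would first work on the punctured spectrum $\Rc$. By Lemma \ref{fpbindec}, every Frobenius pull-back $\fpb{e}(\Syz_{\Rc}(U,V,W))$ is indecomposable; it is also reflexive (hence maximal Cohen-Macaulay by Theorem \ref{mcmrefl}) of rank two with trivial determinant. Theorem \ref{syze6} lists a unique isomorphism class with these invariants up to degree shift, namely $\Syz_{\Rc}(U,V,W)$ itself, so I obtain a $(0,1)$-Frobenius periodicity $\fpb{}(\Syz_{\Rc}(U,V,W)) \cong \Syz_{\Rc}(U,V,W)(m)$ on $\Rc$. Since $R$ is a two-dimensional normal domain, both sides are reflexive and this extends to a global isomorphism of $R$-modules. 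Iterating and then substituting $a' = b' = c' = 1$ into Formula (\ref{formnontr})---the colength factor becoming $\Dim_k(S/(X^6,Y^4,Z^3)) = 72$---produces the formula $(2 - \tfrac{1}{24})q^2 - \tfrac{23}{24}$.

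For $p \in \{2,3\}$ and $q = p^e$ with $e \geq 1$, I would exploit the defining equation after raising it to the $(q/p)$-th Frobenius power. In characteristic two this gives $U^q = V^{3q/2} + W^{2q}$, so Lemma \ref{manipulationlemma} lets me replace $U^q$ by $W^{2q}$ in the generator list, factor out the resulting common $W^q$, and identify $\Syz_R(U^q,V^q,W^q)$ with $R(-6q) \oplus R(-7q)$. The analogous substitution $V^q = -U^{2q/3} - W^{4q/3}$ handles characteristic three. In both cases Formula (\ref{formtr}) with degree gap $\delta = q$ immediately yields $\HKF(R,q) = 2q^2$.

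The main obstacle is the $p \geq 5$ step: proving that the Frobenius periodicity on the punctured spectrum lifts to a global isomorphism with the correct degree shift. The isomorphism itself follows from reflexivity, but the shift $m = -\tfrac{13}{2}(p-1)$ must be pinned down via a degree computation on the associated standard-graded projective curve $C_S$, comparing $\Deg(\Syz_{C_S}(X^{12}, Y^{12}, Z^{12})$-related data) with its $p$-th Frobenius pull-back via Proposition \ref{syzprop}(iv). Once $m$ is fixed the calculation of $\HKF(R,q)$ is routine from Formula (\ref{formnontr}), and the two formulas meet correctly at $e = 0$ (where both cases return $\HKF(R,1) = 1$), completing the proof.
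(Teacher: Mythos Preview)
Your proof is correct and matches the paper's approach closely. One small clarification regarding what you call the ``main obstacle'': the precise degree shift $m=-\tfrac{13}{2}(p-1)$ is not actually needed. Formula~(\ref{formnontr}) depends only on the triple $(a',b',c')$, not on the shift, and Lemma~\ref{fpbindec} already asserts indecomposability for \emph{every} $e$, so no iteration via the shift is required either---for each $q$ separately you obtain $\Syz_R(U^q,V^q,W^q)\cong\Syz_R(U,V,W)(m_e)$ for some $m_e$ and plug $a'=b'=c'=1$ directly into the formula.
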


From this we get by Theorem \ref{fsighkf} the F-signature function.

\begin{cor}\label{fsigne6}
The F-signature function of $R$ is given by
$$\FS(R,q)=\left\{\begin{aligned}
 & 0 && \text{if } e\geq 1\text{ and }p\in\{2,3\},\\
 & \frac{1}{24}\cdot q^2+\frac{23}{24} && \text{otherwise.}
\end{aligned}\right.$$
\end{cor}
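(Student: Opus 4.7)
My strategy is to apply Theorem \ref{fsighkf} of Huneke--Leuschke, which requires $R$ to be Gorenstein. Since $E_6 = k[U,V,W]/(U^2+V^3+W^4)$ is a hypersurface, it is Gorenstein, so the theorem applies.

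First I would choose a system of parameters generating an $\mm$-primary ideal. The natural candidate is $I := (V, W)$: since $U^2 = -V^3 - W^4 \in I$, we have $U \in \sqrt{I}$ and hence $\sqrt{I} = \mm$; moreover $V, W$ is a regular sequence in the two-dimensional Cohen--Macaulay ring $R$. The socle of $R/I \cong k[U]/(U^2)$ is spanned by the class of $U$, so I take $y := U$ and $J := (I, U) = \mm$. Theorem \ref{fsighkf} then gives
$$\FS(R,q) = \HKF(I, R, q) - \HKF(\mm, R, q).$$

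For the first term, since $V, W$ is a regular sequence, so is $V^q, W^q$ for every $q$, and the Koszul complex on $V^q, W^q$ is a free $R$-resolution of $R/I^{[q]}$. Thus $R/I^{[q]}$ has finite projective dimension over the complete intersection $R$, and Theorem \ref{thmclaudia}(iii) yields $\HKF(I, R, q) = \lambda_R(R/I) \cdot q^2 = 2 q^2$. (Equivalently, one can invoke the standard fact $e(I^{[q]}, R) = q^2 \cdot e(I, R)$ for a parameter ideal in a Cohen--Macaulay ring of dimension $2$, combined with the equality of multiplicity and length for such ideals.) The second term is supplied directly by Theorem \ref{hkfe6}.

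Subtracting, $\FS(R, q) = 2 q^2 - \HKF(R, q)$. In characteristics $2$ and $3$ with $e \geq 1$, Theorem \ref{hkfe6} gives $\HKF(R, q) = 2 q^2$, so $\FS(R, q) = 0$; otherwise $\HKF(R, q) = (2 - \tfrac{1}{24}) q^2 - \tfrac{23}{24}$ and the subtraction produces $\tfrac{1}{24} q^2 + \tfrac{23}{24}$, as claimed. The only subtlety is that Theorem \ref{fsighkf} and the definition of $\FS$ are formulated for local rings, but this is harmless since the Hilbert-Kunz functions of a positively-graded affine $k$-algebra coincide with those of its $\mm$-adic completion (as remarked after the definition of $\HKF$ in the first section of the survey chapter), so the computation transfers verbatim.
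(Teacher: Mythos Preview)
Your proof is correct and follows essentially the same approach as the paper: the paper's proof consists only of the sentence ``From this we get by Theorem \ref{fsighkf} the F-signature function,'' and you have spelled out precisely the details (the choice $I=(V,W)$, the identification $J=\mm$, and the computation $\HKF(I,R,q)=2q^2$) that the paper leaves implicit.
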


\section{The case $E_7$}
Let $R:=k[U,V,W]/(U^2+V^3+VW^3)$ with $\Deg(U)=9$, $\Deg(V)=6$, $\Deg(W)=4$ and let $\chara(k)=p\geq 5$. We get from Lemma \ref{fpbindec} that the 
Frobenius pull-backs of $\Syz_{\Rc}(U,V,W)$ stay indecomposable and have to be isomorphic to either $\Syz_{\Rc}(U,V,W)$ or $\Syz_{\Rc}(U,V,W^2)$.
Recall from Chapter \ref{syzses} that the Hilbert-series can distinguish these two classes. The Hilbert-series of the corresponding $R$-modules are given by
\begin{align*}
\lK_{\Syz_R(U,V,W)}(t) &= \frac{t^{10}+t^{13}+t^{15}+t^{18}}{(1-t^6)(1-t^4)}\\
\lK_{\Syz_R(U,V,W^2)}(t) &= \frac{t^{14}+t^{15}+t^{17}+t^{18}}{(1-t^6)(1-t^4)}.
\end{align*}
With the help of Theorem \ref{HilbSer} and Han's Theorem \ref{hansthm} we compute the Hilbert-series of $\Syz_R(U^p,V^p,W^p)$ and $\Syz_R(U^p,V^p,W^{2p})$.
In the first case we need to compute the degrees of the generators of 
$$\Syz_R\left(V^p,W^p,(V^3+VW^3)^{\frac{p-1}{2}}\right)\qquad\text{and}\qquad\Syz_R\left(V^p,W^p,(V^3+VW^3)^{\frac{p+1}{2}}\right).$$
We have the isomorphisms
\begin{align*}
\Syz_R\left(V^p,W^p,(V^3+VW^3)^{\frac{p-1}{2}}\right) &\cong \Syz_R\left(V^{\frac{p+1}{2}},W^p,(V^2+W^3)^{\frac{p-1}{2}}\right)(-3(p-1))\text{ and}\\
\Syz_R\left(V^p,W^p,(V^3+VW^3)^{\frac{p+1}{2}}\right) &\cong \Syz_R\left(V^{\frac{p-1}{2}},W^p,(V^2+W^3)^{\frac{p+1}{2}}\right)(-3(p+1)).
\end{align*}
Their syzygy gaps are given by 
\begin{equation}\label{deltae71}
12\cdot\delta\left(\frac{p+1}{4},\frac{p}{3},\frac{p-1}{2}\right)\qquad\text{resp.}\qquad 12\cdot\delta\left(\frac{p-1}{4},\frac{p}{3},\frac{p+1}{2}\right). 
\end{equation}
Similarly, in the second case the syzygy gaps of the corresponding $\Sc_j$ from Theorem \ref{HilbSer} are given by 
\begin{equation}\label{deltae72}
12\cdot\delta\left(\frac{p+1}{4},\frac{2p}{3},\frac{p-1}{2}\right)\qquad\text{resp.}\qquad 12\cdot\delta\left(\frac{p-1}{4},\frac{2p}{3},\frac{p+1}{2}\right).
\end{equation}
The various arguments of $\delta$ do not satisfy the strict triangle inequality if and only if $p\leq 9$ and the arguments are $\tfrac{p-1}{4}$, $\tfrac{p}{3}$, $\tfrac{p+1}{2}$. In this case 
we have $12\cdot\delta=9-p$. In all other cases the $s$ in Han's Theorem has to be non-negative. For $s=0$, $p=12l+r$, $r\in\{1,5,7,11\}$ and $\tfrac{p}{3}$ as second argument, the values 
of $12\cdot\delta$ are given in Table \ref{tabe71}.
\begin{center}
\begin{table}[h]
$$\begin{array}{c|c|c|c|c}
r & +,l\text{ even} & +,l\text{ odd} & -,l\text{ even} & -,l\text{ odd}\\ \hline
1 & 2 & 2 & 8 & 4 \\
5 & 2 & 2 & 4 & 8 \\
7 & 8 & 4 & 2 & 2 \\
11 & 4 & 8 & 2 & 2.
\end{array}$$
\caption{The table shows the values of (\ref{deltae71}) for all primes $p\geq 5$ depending on their decomposition as $p=12l+r$. The sign in the headline is 
the same as in the first argument of (\ref{deltae71}).}
\label{tabe71}
\end{table}
\end{center}
\vspace*{-1cm}
With $\tfrac{2p}{3}$ as second argument, the values of $12\cdot\delta$ are given in Table \ref{tabe72}.
\begin{center}
\begin{table}[h]
$$\begin{array}{c|c|c|c|c}
r & +,l\text{ even} & +,l\text{ odd} & -,l\text{ even} & -,l\text{ odd}\\ \hline
1 & 2 & 2 & 4 & 8 \\
5 & 2 & 2 & 8 & 4 \\
7 & 4 & 8 & 2 & 2 \\
11 & 8 & 4 & 2 & 2.
\end{array}$$
\caption{The table shows the values of (\ref{deltae72}) for all primes $p\geq 5$ depending on their decomposition as $p=12l+r$. The sign in the headline is 
the same as in the first argument of (\ref{deltae72}).}
\label{tabe72}
\end{table}
\end{center}
Computing the Hilbert-series, we find
$$\begin{aligned}
\lK_{\Syz_R(U^p,V^p,W^p)}(t) &= \left\{\begin{aligned} & t^{19\cdot\frac{p-1}{2}}\cdot \lK_{\Syz_R(U,V,W)}(t) && \text{if } p\text{ mod }24\in \{\pm 1,\pm 7\},\\ 
 & t^{19\cdot\frac{p-1}{2}-2}\cdot \lK_{\Syz_R(U,V,W^2)}(t) && \text{if } p\text{ mod }24\in \{\pm 5,\pm 11\},\end{aligned}\right.\\
\lK_{\Syz_R(U^p,V^p,W^{2p})}(t) &= \left\{\begin{aligned} & t^{23\cdot\frac{p-1}{2}}\cdot \lK_{\Syz_R(U,V,W^2)}(t) && \text{if } p\text{ mod }24\in \{\pm 1,\pm 7\},\\  & t^{23\cdot\frac{p-1}{2}+2}\cdot \lK_{\Syz_R(U,V,W)}(t) && \text{if } p\text{ mod }24\in \{\pm 5,\pm 11\}.\end{aligned}\right.
\end{aligned}$$
For the higher pull-backs we get 
$$\Syz_R(U^q,V^q,W^q)\cong \left\{\begin{aligned} & \Syz_R(U,V,W)\left(-\frac{19(q-1)}{2}\right) && \text{if } q\text{ mod }24\in \{\pm 1,\pm 7\},\\ & \Syz_R(U,V,W^2)\left(-\frac{19(q-1)}{2}+2\right) && \text{if } q\text{ mod }24\in \{\pm 5,\pm 11\}.\end{aligned}\right.$$
Therefore, the Hilbert-Kunz function is given by (use Formula (\ref{formnontr}))
\begin{align*}
\HKF(R,q) &= \frac{18\cdot Q(9,6,4)\cdot q^2-18\cdot Q(9,6,4)+18\cdot 48}{18\cdot 48}\\
&= \frac{95\cdot q^2-95+48}{48}\\
&= \left(2-\frac{1}{48}\right)q^2-\frac{47}{48},
\end{align*}
if $q\text{ mod }24\in\{\pm 1,\pm 7\}$ and (again with Formula (\ref{formnontr}))
\begin{align*}
\HKF(R,q) &= \frac{18\cdot Q(9,6,4)\cdot q^2-18\cdot Q(9,6,8)+18\cdot 96}{18\cdot 48}\\
&= \frac{95\cdot q^2-167+96}{48}\\
&= \left(2-\frac{1}{48}\right)q^2-\frac{71}{48},
\end{align*}
if $q\text{ mod }24\in\{\pm 5,\pm 11\}$.

Now let $p=2$ and $q\geq 2$. Then (by Lemma \ref{manipulationlemma})
\begin{align*}
\Syz_R(U^q,V^q,W^q) &\cong \Syz_R\left(V^q,W^q,\left(V^3+VW^3\right)^{\frac{q}{2}}\right)\\
&\cong \Syz_R\left(V^q,W^q,V^{\frac{3q}{2}}+V^{\frac{q}{2}}W^{\frac{3q}{2}}\right)\\
&\cong \Syz_R\left(V^q,W^q,V^{\frac{q}{2}}W^{\frac{3q}{2}}\right)\\
&\cong \Syz_R\left(V^{\frac{q}{2}},W^q,W^{\frac{3q}{2}}\right)(-3q)\\
&\cong \Syz_R\left(V^{\frac{q}{2}},1,W^{\frac{q}{2}}\right)(-7q)\\
&\cong R(-9q)\oplus R(-10q).
\end{align*}
The corresponding value of the Hilbert-Kunz function is $2\cdot q^2$ by Formula (\ref{formtr}).

For $p=3$ and $q\geq 3$ we have (with Lemma \ref{manipulationlemma})
\begin{align*}
 \Syz_R(U^q,V^q,W^q) &\cong \Syz_R(U^q,(U^2+VW^3)^{\frac{q}{3}},W^q)\\
 &\cong \Syz_R(U^q,U^{\frac{2q}{3}}+V^{\frac{q}{3}}W^q,W^q)\\
 &\cong \Syz_R(U^q,U^{\frac{2q}{3}},W^q)\\
 &\cong \Syz_R(U^{\frac{q}{3}},1,W^q)(-6q)\\
 &\cong R(-9q)\oplus R(-10q).
\end{align*}
Again we get $\HKF(R,q)=2\cdot q^2$.

\begin{thm}\label{hkfe7}
 The Hilbert-Kunz function of $E_7$ is given by
$$e\longmapsto\left\{\begin{aligned}
 & 2\cdot q^2 && \text{if } e\geq 1\text{ and }p\in\{2,3\},\\
 & \left(2-\frac{1}{48}\right)q^2-\frac{47}{48} && \text{if }q\text{ mod }24\in\{\pm1,\pm7\},\\
 & \left(2-\frac{1}{48}\right)q^2-\frac{71}{48} && \text{otherwise.}
\end{aligned}\right.$$
\end{thm}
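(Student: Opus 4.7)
\begin{Proof}[Proof proposal]
The plan is to treat the case $p \geq 5$ via the geometric/Frobenius-periodicity approach developed in Chapters \ref{pbmaxcm}--\ref{syzses}, and the small characteristic cases $p \in \{2,3\}$ by direct algebraic manipulation of the defining equation, as was done in the $E_6$ case.

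First I would handle $p \geq 5$. By Lemma \ref{fpbindec} each Frobenius pull-back $\fpb{e}(\Syz_{\Rc}(U,V,W))$ remains indecomposable of rank two with trivial determinant; inspecting Theorem \ref{syze7} the only such MCM modules (up to degree shift and rank) appearing as first syzygy modules of $\mm$-primary ideals of three generators are $\Syz_R(U,V,W)$ and $\Syz_R(U,V,W^2)$ (the modules $M_1$ and $M_6$). Hence for every $q = p^e$ there exists $m_q \in \Z$ such that
\begin{equation*}
\Syz_R(U^q, V^q, W^q) \;\cong\; \Syz_R(U,V,W)(m_q) \quad \text{or} \quad \Syz_R(U,V,W^2)(m_q).
\end{equation*}
To decide which case occurs, I would distinguish the two possibilities by their Hilbert-series (computed in Chapter \ref{syzses}), and compute the Hilbert-series of $\Syz_R(U^q,V^q,W^q)$ inductively in $e$ using the short exact sequence of Theorem \ref{ses} applied to the binomial form $U^2 = -V(V^2+W^3)$. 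This reduces each step to computing syzygy gaps of binomial ideals in the polynomial ring $k[V,W]$, which are governed by Han's $\delta$-function on the triples in (\ref{deltae71}) and (\ref{deltae72}) (with variables of degrees $\Deg(V)=6$, $\Deg(W)=4$, hence the factor $12 = \Deg(V)\cdot\Deg(W)/2$ in Lemma \ref{deltaquasi}). Tables \ref{tabe71} and \ref{tabe72} then determine the sign case by the residue class $p \bmod 24$, and iterating the one-step isomorphism yields
\begin{equation*}
\Syz_R(U^q,V^q,W^q) \cong
\begin{cases}
\Syz_R(U,V,W)\bigl(-\tfrac{19(q-1)}{2}\bigr) & \text{if } q \bmod 24 \in \{\pm 1, \pm 7\},\\[2pt]
\Syz_R(U,V,W^2)\bigl(-\tfrac{19(q-1)}{2}+2\bigr) & \text{if } q \bmod 24 \in \{\pm 5, \pm 11\},
\end{cases}
\end{equation*}
for all $e\geq 0$. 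Applying Formula (\ref{formnontr}) with $(\alpha,\beta,\gamma)=(9,6,4)$, $d=18$, and $(a',b',c')$ equal to $(1,1,1)$ respectively $(1,1,2)$, and using $\Dim_k(S/(X^9,Y^6,Z^4)) = 48$ respectively $96$, gives the stated values $(2-\tfrac{1}{48})q^2 - \tfrac{47}{48}$ and $(2-\tfrac{1}{48})q^2 - \tfrac{71}{48}$.

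For $p = 2$ and $q \geq 2$, I would replace $U^q$ by $(V^3+VW^3)^{q/2} = V^{3q/2} + V^{q/2}W^{3q/2}$ and apply Lemma \ref{manipulationlemma} repeatedly: the first summand $V^{3q/2}$ can be absorbed into $V^q$ modulo the ideal, leaving $\Syz_R(V^q, W^q, V^{q/2}W^{3q/2})$, from which one can factor $V^{q/2}$ and then $W^{q/2}$ to obtain a free module $R(-9q)\oplus R(-10q)$. For $p = 3$ and $q \geq 3$ the analogous substitution $V^q = (U^2+VW^3)^{q/3} = U^{2q/3} + V^{q/3}W^q$ leads, after absorbing $V^{q/3}W^q$ into $W^q$ and then factoring, to the same free splitting. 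In both cases Formula (\ref{formtr}) with $\delta = q$ yields $\HKF(R,q) = 2q^2$.

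The main obstacle is the Hilbert-series bookkeeping in the $p\geq 5$ case: one needs to track not only which of $M_1, M_6$ is produced at each Frobenius step but also the precise degree shift, and Theorem \ref{HilbSer} forces this down to a careful case analysis of $\delta$ on the four triples of (\ref{deltae71})--(\ref{deltae72}), split according to parity of $\lfloor p/12 \rfloor$ and the residue $p \bmod 12$. Once Tables \ref{tabe71} and \ref{tabe72} are verified via Han's Theorem \ref{hansthm}, the periodicity (of length one up to degree shift within each residue class) follows by iterating the single-step computation, and the Hilbert-Kunz function drops out of Formula (\ref{formnontr}).
\end{Proof}
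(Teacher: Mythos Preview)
Your proposal is correct and follows essentially the same approach as the paper: Lemma~\ref{fpbindec} forces the Frobenius pull-backs to be $M_1$ or $M_6$, the Hilbert-series computation via Theorem~\ref{HilbSer} and Han's $\delta$ (Tables~\ref{tabe71}--\ref{tabe72}) distinguishes them, and Formula~(\ref{formnontr}) yields the values; the small-characteristic cases are handled by the same direct manipulations. One minor slip: the dimensions $\Dim_k(S/(X^9,Y^6,Z^4))$ and $\Dim_k(S/(X^9,Y^6,Z^8))$ are $9\cdot 6\cdot 4 = 216$ and $9\cdot 6\cdot 8 = 432$, not $48$ and $96$ (the numbers $48$ and $96$ appear in the paper only after the factor of $18$ is pulled out of the numerator, so that $4\cdot 216 = 18\cdot 48$).
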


Reformulating the cases in the above theorem in terms of $p$, one obtains the function
$$e\longmapsto\left\{\begin{aligned}
 & 2\cdot q^2 && \text{if } e\geq 1\text{ and }p\in\{2,3\},\\
 & \left(2-\frac{1}{48}\right)q^2-\frac{71}{48} && \text{if }p\text{ mod }24\in\{\pm 5,\pm 11\}\text{ and }e\text{ is odd},\\
 & \left(2-\frac{1}{48}\right)q^2-\frac{47}{48} && \text{otherwise.}
\end{aligned}\right.$$
By using Theorem \ref{fsighkf}, we can compute the F-signature function.

\begin{cor}\label{fsigne7}
The F-signature function of $R$ is given by
$$\FS(R,q)=\left\{\begin{aligned}
 & 0 && \text{if } e\geq 1\text{ and }p\in\{2,3\},\\
 & \frac{1}{48}\cdot q^2+\frac{47}{48} && \text{if }q\text{ mod }24\in\{\pm1,\pm7\},\\
 & \frac{1}{48}\cdot q^2+\frac{71}{48} && \text{otherwise.}
\end{aligned}\right.$$
\end{cor}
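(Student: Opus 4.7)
The plan is to derive Corollary \ref{fsigne7} directly from Theorem \ref{hkfe7} via Theorem \ref{fsighkf} of Huneke and Leuschke. Since $E_7$ is a hypersurface it is Gorenstein, so the theorem applies once we fix a system of parameters $I$ generating an $\mm$-primary ideal and a socle representative $y$ of $R/I$, and then
$$\FS(R,q)=\HKF(I,R,q)-\HKF((I,y),R,q).$$
The idea is to choose $I$ so cleverly that $(I,y)=\mm$ (making the second term the already computed Hilbert-Kunz function of $R$) while $\HKF(I,R,q)$ is transparent.

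First I would take $I:=(V,W)$, which is evidently a system of parameters in $R=k[U,V,W]/(U^2+V^3+VW^3)$. The defining relation is monic of degree two in $U$ with remaining coefficients in $k[V,W]$, so $R$ is free of rank two over $k[V,W]$ with basis $\{1,U\}$. Therefore
$$R/(V^q,W^q)\cong k[V,W]/(V^q,W^q)\oplus U\cdot k[V,W]/(V^q,W^q),$$
which has $k$-dimension $2q^2$ for every $q\geq 1$; in particular this identity is valid in every characteristic. Consequently $\HKF(I,R,q)=2q^2$ for all $q$. Moreover $R/I\cong k[U]/(U^2)$ has one-dimensional socle spanned by the class of $U$, so we may take $y=U$, and then $(I,y)=(U,V,W)=\mm$, giving
$$\FS(R,q)=2q^2-\HKF(R,q).$$

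Substituting the three cases of Theorem \ref{hkfe7} then yields the three cases of Corollary \ref{fsigne7}: for $e\geq 1$ and $p\in\{2,3\}$ the difference vanishes; in the remaining cases the coefficient of $q^2$ is $2-(2-\tfrac{1}{48})=\tfrac{1}{48}$, while the constants $-\tfrac{47}{48}$ and $-\tfrac{71}{48}$ flip sign. There is essentially no obstacle here since the main theorem about $\HKF(R,q)$ has already been established; the only point worth verifying carefully is that the $k[V,W]$-module freeness of $R$, and hence the identity $\HKF((V,W),R,q)=2q^2$, holds in all characteristics including $p=2,3$, which is immediate from the monic form of the defining equation in $U$.
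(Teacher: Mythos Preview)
Your proposal is correct and follows exactly the approach the paper intends: it applies Theorem \ref{fsighkf} with the parameter ideal $I=(V,W)$, uses the freeness of $R$ over $k[V,W]$ to get $\HKF(I,R,q)=2q^2$, identifies the socle generator $U$ so that $(I,y)=\mm$, and subtracts the values from Theorem \ref{hkfe7}. The paper states only ``by Theorem \ref{fsighkf}'' without spelling out these details, so you have merely made explicit what was left implicit.
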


\section{The case $E_8$}\label{sectionhkfe8}
Let $R:=k[U,V,W]/(U^2+V^3+W^5)$ with $\Deg(U)=15$, $\Deg(V)=10$, $\Deg(W)=6$ and let $\chara(k)=p\geq 7$. 
By Lemma \ref{fpbindec} the Frobenius pull-back of $\Syz_{\Rc}(U,V,W)$ is either
$\Syz_{\Rc}(U,V,W)$ or $\Syz_{\Rc}(U,V,W^2)$. Moreover, their classes can be distinguished by the Hilbert-series. We have 
\begin{align*}
\lK_{\Syz_R(U,V,W)}(t) &= \frac{t^{16}+t^{21}+t^{25}+t^{30}}{(1-t^{10})(1-t^6)}\\
\lK_{\Syz_R(U,V,W^2)}(t) &= \frac{t^{22}+t^{25}+t^{27}+t^{30}}{(1-t^{10})(1-t^6)}.
\end{align*}
We compute the Hilbert-series of $\Syz_R(U^p,V^p,W^p)$ and $\Syz_R(U^p,V^p,W^{2p})$, using Theorem \ref{HilbSer} and Han's Theorem \ref{hansthm}.

In the first case we need to compute 
\begin{equation}\label{deltae81}
30\cdot\delta\left(\frac{p}{3},\frac{p}{5},\frac{p\mp 1}{2}\right) 
\end{equation}
and in the second case we are interested in the numbers 
\begin{equation}\label{deltae82}
30\cdot\delta\left(\frac{p}{3},\frac{2p}{5},\frac{p\mp 1}{2}\right).
\end{equation}
Note that for primes at most 13 the numbers $\tfrac{p}{3}$, $\tfrac{p}{5}$ and $\tfrac{p+1}{2}$ do not satisfy the strict triangle inequality, in these cases we get $30\cdot\delta=15-p$. 
In all other cases the $s$ in Han's Theorem is non-negative. With $s=0$, $p=30l+r$ and $r\in\{1,7,11,13,17,19,23,29\}$ one gets for $30\cdot\delta$ with $\tfrac{p}{5}$ as second argument the values
 shown in Table \ref{tabe81} and with $\tfrac{2p}{5}$ as second argument the values are given in Table \ref{tabe82}.
\begin{center}
\begin{table}[h]
$$\begin{array}{c|c|c|c|c}
r & -,l\text{ odd} & -,l\text{ even} & +,l\text{ odd} & +,l\text{ even}\\ \hline
1 & 14 & 4 & 4 & 14 \\
7 & 8 & 2 & 2 & 8 \\
11 & 4 & 14 & 14 & 4 \\
13 & 2 & 8 & 8 & 2 \\
17 & 2 & 8 & 8 & 2 \\
19 & 4 & 14 & 14 & 4\\
23 & 8 & 2 & 2 & 8 \\
29 & 14 & 4 & 4 & 14
\end{array}$$
\caption{The table shows the values of (\ref{deltae81}) for all primes $p\geq 7$ depending on their decomposition as $p=30l+r$. The sign in the headline is 
the same as in the last argument of (\ref{deltae81}).}
\label{tabe81}
\end{table}
\end{center}
\begin{center}
\begin{table}[h]
$$\begin{array}{c|cc|cc}
r & -,l\text{ odd} & -,l\text{ even} & +,l\text{ odd} & +,l\text{ even}\\ \hline
1 & 8 & 2 & 2 & 8 \\
7 & 14 & 4 & 4 & 14 \\
11 & 2 & 8 & 8 & 2 \\
13 & 4 & 14 & 14 & 4 \\
17 & 4 & 14 & 14 & 4 \\
19 & 2 & 8 & 8 & 2 \\
23 & 14 & 4 & 4 & 14 \\
29 & 8 & 2 & 2 & 8
\end{array}.$$
\caption{The table shows the values of (\ref{deltae82}) for all primes $p\geq 7$ depending on their decomposition as $p=30l+r$. The sign in the headline is 
the same as in the last argument of (\ref{deltae82}).}
\label{tabe82}
\end{table}
\end{center}
Computing the Hilbert-series, we find
\begin{align*}
\lK_{\Syz_R(U^p,V^p,W^p)}(t) &= \left\{\begin{aligned} & t^{31\cdot\frac{p-1}{2}}\cdot \lK_{\Syz_R(U,V,W)}(t) && \text{if } p\text{ mod }30\in \{\pm 1,\pm 11\},\\ & t^{31\cdot\frac{p-1}{2}-3}\cdot \lK_{\Syz_R(U,V,W^2)}(t) && \text{if } p\text{ mod }30\in \{\pm 7,\pm 13\},\end{aligned}\right.\\
\lK_{\Syz_R(U^p,V^p,W^{2p})}(t) &= \left\{\begin{aligned} & t^{37\cdot\frac{p-1}{2}}\cdot \lK_{\Syz_R(U,V,W^2)}(t) && \text{if } p\text{ mod }30\in \{\pm 1,\pm 11\},\\ & t^{37\cdot\frac{p-1}{2}+3}\cdot \lK_{\Syz_R(U,V,W)}(t) && \text{if } p\text{ mod }30\in \{\pm 7,\pm 13\}.\end{aligned}\right.
\end{align*}
For the higher pull-backs we get
$$\Syz_R(U^q,V^q,W^q)\cong \left\{\begin{aligned} & \Syz_R(U,V,W)\left(-\frac{31(q-1)}{2}\right) && \text{if } q\text{ mod }30\in \{\pm 1,\pm 11\},\\ & \Syz_R(U,V,W^2)\left(-\frac{31(q-1)}{2}+3\right) && \text{if } q\text{ mod }30\in \{\pm 7,\pm 13\}.\end{aligned}\right.$$
Using Formula (\ref{formnontr}), the Hilbert-Kunz function is given by
\begin{align*}
\HKF(R,q) &= \frac{30\cdot Q(15,10,6)\cdot q^2-30\cdot Q(15,10,6)+30\cdot 120}{30\cdot 120}\\
&= \frac{239\cdot q^2-239+120}{120}\\
&= \left(2-\frac{1}{120}\right)q^2-\frac{119}{120},
\end{align*}
if $q\text{ mod }30\in\{\pm 1,\pm 11\}$ and
\begin{align*}
\HKF(R,q) &= \frac{30\cdot Q(15,10,6)\cdot q^2-30\cdot Q(15,10,12)+30\cdot 240}{30\cdot 120}\\
&= \frac{239\cdot q^2-431+240}{120}\\
&= \left(2-\frac{1}{24}\right)q^2-\frac{191}{120},
\end{align*}
if $q\text{ mod }30\in\{\pm 7,\pm 13\}$.

In characteristics two, three and five, all Frobenius pull-backs split and one obtains (similarly to the computations for $E_6$) $\HKF_R(R,q)=2\cdot q^2$ (with $q>1$).

\begin{thm}\label{hkfe8}
 The Hilbert-Kunz function of $E_8$ is given by
$$e\longmapsto\left\{\begin{aligned}
 & 2\cdot q^2 && \text{if } e\geq 1\text{ and }p\in\{2,3,5\},\\
 & \left(2-\frac{1}{120}\right)q^2-\frac{119}{120} && \text{if }q\text{ mod }30\in\{\pm1,\pm11\},\\
 & \left(2-\frac{1}{120}\right)q^2-\frac{191}{120} && \text{otherwise.}
\end{aligned}\right.$$
\end{thm}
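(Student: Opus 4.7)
The plan is to first pass, via the flat ring extension $R\hookrightarrow S$ with $S:=k[X,Y,Z]/(X^2+Y^3+Z^5)$ given by $U\mapsto X^{15}$, $V\mapsto Y^{10}$, $W\mapsto Z^6$, to the standard-graded situation, so that Remark \ref{hkfflat} reduces the problem to computing $\HKF((X^{15},Y^{10},Z^6),S,q)/120$. Then I would work on $\Rc=\punctured{R}{\mm}$ where the punctured spectrum already retains the necessary geometric information: the goal becomes to identify the isomorphism class of $\fpb{e}(\Syz_{\Rc}(U,V,W))$ up to degree shift for each $e\geq 1$, because once this class is known, Formula \eqref{formnontr} (or \eqref{formtr} in the split case) computes the Hilbert-Kunz function directly.

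For $p\geq 7$, I would invoke Lemma \ref{fpbindec} to conclude that every Frobenius pull-back $\fpb{e}(\Syz_{\Rc}(U,V,W))$ remains indecomposable; since this sheaf has rank two and trivial determinant on $\Rc$, the classification in Theorem \ref{syze8} (together with the Hilbert-series comparison from Chapter \ref{syzses}) forces it to be either $\Syz_{\Rc}(U,V,W)$ or $\Syz_{\Rc}(U,V,W^2)$, the only two indecomposable rank-two reflexive $R$-modules represented as syzygies of ideals with trivial determinant. To decide which, I would apply Theorem \ref{HilbSer} to compute the Hilbert-series of $\Syz_R(U^p,V^p,W^p)$ and $\Syz_R(U^p,V^p,W^{2p})$: after writing $a=p$, $d=2$ and bringing the relevant syzygy gaps of the $\Sc_j$ into the form of Han's function, one is reduced to evaluating
$$30\cdot\delta\!\left(\tfrac{p}{3},\tfrac{p}{5},\tfrac{p\mp1}{2}\right)\quad\text{and}\quad 30\cdot\delta\!\left(\tfrac{p}{3},\tfrac{2p}{5},\tfrac{p\mp1}{2}\right).$$
The main obstacle is this $\delta$-computation: I would use Theorem \ref{hansthm} to analyze $L_{\text{odd}}$-distance of $(\tfrac{p}{3},\tfrac{p}{5},\tfrac{p\pm 1}{2})$, noting that the triangle inequality holds for $p\geq 17$ and that for small $p$ it fails (yielding the linear formula $30\cdot\delta=15-p$). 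In the generic case the value depends only on the residue class $p\bmod 30$, and a finite case analysis writing $p=30l+r$ with $r\in\{\pm1,\pm7,\pm11,\pm13\}$ and tracking the parity of $l$ produces the values tabulated in Tables \ref{tabe81} and \ref{tabe82}. Comparing these against the Hilbert-series of the two candidate modules will pin down the isomorphism class exactly: it will be $\Syz_{\Rc}(U,V,W)$ when $p\bmod 30\in\{\pm1,\pm11\}$ and $\Syz_{\Rc}(U,V,W^2)$ when $p\bmod 30\in\{\pm7,\pm13\}$. Iterating the Frobenius and tracking the accumulated degree shift then gives the global isomorphism for $q=p^e$, with the case distinction governed by $q\bmod 30$.

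Once the class of $\Syz_R(U^q,V^q,W^q)(m)$ is established, Formula \eqref{formnontr} with $d=30$, $\alpha\beta\gamma=120$, $Q(15,10,6)=239$, and either $Q(15,10,6)$ or $Q(15,10,12)$ and $\dim_k S/(X^{15},Y^{10},Z^{6})=120$ or $\dim_k S/(X^{15},Y^{10},Z^{12})=240$ (both readily computed by monomial counting) yields the two formulas $(2-\tfrac{1}{120})q^2-\tfrac{119}{120}$ and $(2-\tfrac{1}{120})q^2-\tfrac{191}{120}$.

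Finally, for $p\in\{2,3,5\}$ and $e\geq 1$ I would avoid the matrix-factorization route entirely and exploit the defining relation $U^2+V^3+W^5=0$ together with Lemma \ref{manipulationlemma} to show that $\Syz_R(U^q,V^q,W^q)$ splits as a direct sum of two line bundles: when $p=2$ one rewrites $U^q=(V^3+W^5)^{q/2}$ to reduce to a monomial syzygy, and analogous Frobenius-exponent substitutions handle $p=3$ and $p=5$. In each split case, Formula \eqref{formtr} gives the value $2q^2$. Collecting the three cases produces the piecewise description in the theorem; the hard part throughout is the $\delta$-function bookkeeping organized by $p\bmod 30$ and the parity of $l$.
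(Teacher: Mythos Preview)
Your proposal follows essentially the same route as the paper's proof: use Lemma \ref{fpbindec} to get indecomposability for $p\geq 7$, distinguish between $\Syz_{\Rc}(U,V,W)$ and $\Syz_{\Rc}(U,V,W^2)$ via the Hilbert-series computed through Theorem \ref{HilbSer} and Han's $\delta$ (organized by $p\bmod 30$ and the parity of $l$), then apply Formula \eqref{formnontr}; and for $p\in\{2,3,5\}$ force a free splitting via Lemma \ref{manipulationlemma} and apply Formula \eqref{formtr}.

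There are, however, a few numerical slips you should correct. The standard-graded partner is $S=k[X,Y,Z]/(X^{30}+Y^{30}+Z^{30})$, not $k[X,Y,Z]/(X^2+Y^3+Z^5)$; the extension degree is $\alpha\beta\gamma=15\cdot 10\cdot 6=900$, not $120$; and consequently $\Dim_k(S/(X^{15},Y^{10},Z^6))=900$ and $\Dim_k(S/(X^{15},Y^{10},Z^{12}))=1800$. Also, in the $E_8$ case the Picard group of $\Rc$ is trivial, so ``trivial determinant'' is no restriction at all---the point is simply that $M_1$ and $M_8$ are the only indecomposable rank-two classes in Theorem \ref{syze8}. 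With these corrections your argument is identical to the paper's.
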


Reformulating the cases in the above theorem in terms of $p$, one obtains the function
$$e\longmapsto\left\{\begin{aligned}
 & 2\cdot q^2 && \text{if } e\geq 1\text{ and }p\in\{2,3,5\},\\
 & \left(2-\frac{1}{120}\right)q^2-\frac{191}{120} && \text{if }p\text{ mod }30\in\{\pm 7,\pm 13\}\text{ and }e\text{ is odd},\\
 & \left(2-\frac{1}{120}\right)q^2-\frac{119}{120} && \text{otherwise.}
\end{aligned}\right.$$
With the help of Theorem \ref{fsighkf} one gets the F-signature function.
          
\begin{cor}\label{fsigne8}
The F-signature function of $R$ is given by
$$\FS(R,q)=\left\{\begin{aligned}
 & 0 && \text{if } e\geq 1\text{ and }p\in\{2,3,5\},\\
 & \frac{1}{120}\cdot q^2+\frac{119}{120} && \text{if }q\text{ mod }30\in\{\pm1,\pm11\},\\
 & \frac{1}{120}\cdot q^2+\frac{191}{120} && \text{otherwise.}
\end{aligned}\right.$$
\end{cor}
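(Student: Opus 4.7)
The plan is to deduce this corollary as an immediate consequence of Theorem \ref{hkfe8} combined with the Huneke--Leuschke formula (Theorem \ref{fsighkf}), which expresses the F-signature function as a difference of two Hilbert-Kunz functions in the Gorenstein case.

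First I would verify the hypotheses of Theorem \ref{fsighkf}. The ring $R=E_8=k\llbracket U,V,W\rrbracket/(U^2+V^3+W^5)$ is a hypersurface, hence Gorenstein. The pair $\{V,W\}$ is a system of parameters because $R/(V,W)\cong k[U]/(U^2)$ is zero-dimensional, and in this Artinian quotient the socle is spanned by the class of $y:=U$. Setting $I:=(V,W)$ and $J:=(I,y)=(U,V,W)=\mm$, Theorem \ref{fsighkf} yields
\[
\FS(R,q)\;=\;\HKF(I,R,q)-\HKF(\mm,R,q)\;=\;\HKF((V,W),R,q)-\HKF(R,q).
\]

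Next I would compute $\HKF((V,W),R,q)=\lambda_R(R/(V^q,W^q))$. Using the defining relation $U^2=-V^3-W^5$ to reduce powers of $U$, one sees that $R$ is free as a module over $k\llbracket V,W\rrbracket$ with basis $\{1,U\}$. Therefore $R/(V^q,W^q)R$ is free of rank $2$ over $k\llbracket V,W\rrbracket/(V^q,W^q)$, giving
\[
\HKF((V,W),R,q)=2\cdot\lambda_{k\llbracket V,W\rrbracket}\!\left(k\llbracket V,W\rrbracket/(V^q,W^q)\right)=2q^2.
\]

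Finally I would substitute the three cases of Theorem \ref{hkfe8} into $\FS(R,q)=2q^2-\HKF(R,q)$: in characteristics $p\in\{2,3,5\}$ with $e\geq 1$ the subtraction gives $0$; when $q\bmod 30\in\{\pm 1,\pm 11\}$ it produces $\tfrac{1}{120}q^2+\tfrac{119}{120}$; otherwise $\tfrac{1}{120}q^2+\tfrac{191}{120}$. This matches the claimed formula. The only step requiring any thought is the freeness of $R$ over $k\llbracket V,W\rrbracket$ and the identification of the socle of $R/(V,W)$; everything else is bookkeeping from Theorem \ref{hkfe8}.
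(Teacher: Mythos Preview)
Your proof is correct and follows exactly the route the paper intends: the paper simply says ``With the help of Theorem \ref{fsighkf} one gets the F-signature function'' and leaves the reader to supply precisely the computation you wrote out (choosing the parameter ideal $(V,W)$, identifying the socle representative $U$ so that $J=\mm$, and using the freeness of $R$ over $k\llbracket V,W\rrbracket$ to get $\HKF((V,W),R,q)=2q^2$).
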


In any case the Hilbert-Kunz functions of surface rings of type ADE have (at least for $p\geq 7$) a periodic $O(1)$-term. One possible value of this term is 
always $1-\tfrac{1}{|G|}$. It is not known whether the other possible values of the $O(1)$-term can be expressed in terms of invariants of the corresponding 
groups. Another question is whether the length of the Frobenius periodicity of $\Syz(\mm)$ is encoded in properties of the group $G$.

\chapter{Extensions and further examples}\label{chapextfurexa}
In this chapter we want to discuss further examples of the methods developed so far. More explicitly, we will compute the Hilbert-Kunz function of a surface 
ring of type $E_8$ with respect to different ideals $I\neq\mm$ and the Hilbert-Kunz functions of the degenerations $A_{\infty}:=k[X,Y,Z]/(XY)$ and 
$D_{\infty}:=k[X,Y,Z]/(X^2Y-Z^2)$ of the singularities of type $A_n$ and $D_n$. In the fourth section we will combine our approach with a theorem of Kustin, 
Rahmati and Vraciu to compute the Hilbert-Kunz functions of the homogeneous coordinate ring of Fermat curves under the condition that $\Syz(\mm)$ is strongly semistable on the projective 
spectrum. Finally, in section five we will state some open questions and discuss possible generalizations of various theorems.

\section{Hilbert-Kunz functions of $E_8$ with respect to various ideals $I\neq\mm$}
In this section we will demonstrate how the developed methods are helpfull to compute the Hilbert-Kunz function of $E_8$ with respect to the ideals 
$(X,Y,Z^2)$ resp. $(X,Y^2,YZ,Z^2)$.

\begin{exa}\label{exa1chap7}
Let $k$ be an algebraically closed field of characteristic $p\geq 7$, 
$$R:=k[X,Y,Z]/(X^2+Y^3+Z^5),$$ 
$U:=\punctured{R}{(X,Y,Z)}$ and $I:=(X,Y,Z^2)$. By 
Lemma \ref{fpbindec} the Frobenius pull-backs of $\Syz_U(I)$ are indecomposable, hence of the form $\Syz_U(I)$ or $\Syz_U(X,Y,Z)$ by Theorem \ref{syze8}. 
By the explicit computations of the Hilbert-series of $\Syz_R(X^p,Y^p,Z^p)$ and $\Syz_R(X^p,Y^p,Z^{2p})$ in Section \ref{sectionhkfe8} we obtain
$$\Syz_R(X^q,Y^q,Z^{2q})\cong \left\{\begin{aligned} & \Syz_R(X,Y,Z^2)\left(-\frac{37(q-1)}{2}\right) && \text{if } q\text{ mod }30\in \{\pm 1,\pm 11\},\\ & \Syz_R(X,Y,Z)\left(-\frac{37(q-1)}{2}-3\right) && \text{if } q\text{ mod }30\in \{\pm 7,\pm 13\}.\end{aligned}\right.$$
Using Lemma \ref{hkfdreivar} and Remark \ref{hkfflat} one obtains that $\HKF(I,R,q)$ is given by
$$\frac{30\cdot Q(15,10,12)\cdot q^2-30\cdot Q(15,10,6c)}{4\cdot 15\cdot 10\cdot 6}+c,$$
where $c\in\{1,2\}$ such that $\Syz_R(X^q,Y^q,Z^{2q})\cong\Syz_R(X,Y,Z^c)(m)$ for some $m\in\Z$. All in all, the function
$$e\longmapsto\begin{cases}
\left(3+\frac{71}{120}\right)q^2-\frac{119}{120} & \text{if }p\text{ mod }30\in\{\pm 7,\pm 13\}\text{ and }e\text{ is odd},\\
\left(3+\frac{71}{120}\right)q^2-\frac{191}{120} & \text{if }p\text{ mod }30\in\{\pm 7,\pm 13\}\text{ and }e\text{ is even},\\
\left(3+\frac{71}{120}\right)q^2-\frac{191}{120} & \text{if }p\text{ mod }30\in\{\pm 1,\pm 11\}
\end{cases}$$
is the Hilbert-Kunz function of $R$ with respect to $I$.
\end{exa}

\begin{exa}\label{exa2chap7}
Let $k$, $R$ and $U$ be as in the previous example and $I:=(X,Y^2,YZ,Z^2)$. Then $\Syz_R(I)$ is the module $M_2$ from Theorem \ref{syze8}. Since there are 
no non-trivial line bundles on $U$ the Frobenius pull-backs of $\Syz_U(I)$ either stay indecomposable or become free. In the case where $\Syz_U(I\qpot)$ is 
indecomposable for some $q$ it has to be isomorphic to either $\Syz_U(I)$ or $\Syz_U(X,Y^2,YZ,Z^3)$, since these are representatives for the two indecomposable 
vector bundles of rank three (cf. Theorem \ref{syze8}). Which of the possibilities is true can be checked by the Hilbert-series. In view of Theorem 
\ref{HilbSer} we need to compute the degree of the generators of 
$$S_{q-1}:=\Syz_R(X^{q-1},Y^{2q},Y^qZ^q,Z^{2q}) \text{ and } S_{q+1}:=\Syz_R(X^{q+1},Y^{2q},Y^qZ^q,Z^{2q})$$ 
to compute the Hilbert-series of $\Syz_R(X^q,Y^{2q},Y^qZ^q,Z^{2q})$. Since there is no effective way to 
compute these we restrict to $p=7$. An explicit computation using CoCoA shows that $S_{7-1}$ is generated by elements of degree 140, 142 and 144, while 
$S_{7+1}$ is generated by elements of degree 150, 152 and 154. This shows 
\begin{equation}\label{hsm2}
\lK_{\Syz_R(X^7,Y^{14},Y^7Z^7,Z^{14})}(t)=t^{124}\lK_{\Syz_R(X,Y^2,YZ,Z^3)}(t).
\end{equation}
The module $\Syz_R(X^6,Y^{14},Y^7Z^7,Z^{21})$ is generated by elements of degree 152, 156 and 160 and the module $\Syz_R(X^8,Y^{14},Y^7Z^7,Z^{21})$ is 
generated by elements of degree 162, 166 and 170. This gives 
\begin{equation}\label{hsm6}
\lK_{\Syz_R(X^7,Y^{14},Y^7Z^7,Z^{21})}(t)=t^{135}\lK_{\Syz_R(X,Y^2,YZ,Z^2)}(t).
\end{equation}
Combining (\ref{hsm2}) and (\ref{hsm6}) one finds
$$\Syz_R\left(X^{7^e},Y^{2\cdot 7^e},Y^{7^e}Z^{7^e},Z^{2\cdot 7^e}\right)\cong
\left\{\begin{aligned}
& \Syz_R(X,Y^2,YZ,Z^3)(-21q+23) && \text{if }e\text{ is odd,}\\
& \Syz_R(Y,Y^2,YZ,Z^2)(-21q+21) && \text{if }e\text{ is even.}\\
\end{aligned}\right.$$
With the help of the standard-graded ring $S:=k[U,V,W]/(U^{30}+V^{30}+W^{30})$ and the map $R\ra S$, $X\mapsto U^{15}$, $Y\mapsto V^{10}$, $Z\mapsto W^6$, 
we can compute 
$$\HKF\left(\left(U^{15},V^{20},V^{10}W^6,W^{12}\right),S,7^e\right)$$
as in Lemma \ref{hkfdreivar}. Let $q:=7^e$, $a\in\{2,3\}$, $n:=21q-17-2a$ and 
$$D:=\Dim_k(S/(U^{15},V^{20},V^{10}W^{6},W^{6a})=
\begin{cases}
2700 & \text{if }a=2,\\
3600 & \text{if }a=3,
\end{cases}$$
where $a$ and $n$ are chosen such that 
$$\Syz_R(X^q,Y^{2q},Y^qZ^q,Z^{2q})\cong\Syz_R(X,Y^2,YZ,Z^a)(-n).$$ 
With these preparations we obtain
\begin{align*}
 &\quad \HKF\left(\left(U^{15},V^{20},V^{10}W^6,W^{12}\right),S,q\right)\\
 &= \lim_{x\ra\infty}\left[\sum_{m=0}^x\dimglo{\Oc_C(m)}-\sum_{m=0}^x\dimglo{\Oc_C(m-15 q)}-\sum_{m=0}^x\dimglo{\Oc_C(m-20 q)}\right.\\
 &\quad -\sum_{m=0}^x\dimglo{\Oc_C(m-16q)}-\sum_{m=0}^x\dimglo{\Oc_C(m-12q)}+\sum_{m=0}^x\dimglo{\Oc_C(m-n-15)}\\
 &\quad +\sum_{m=0}^x\dimglo{\Oc_C(m-n-20)}+\sum_{m=0}^x\dimglo{\Oc_C(m-n-16)}+\sum_{m=0}^x\dimglo{\Oc_C(m-n-6a)}\\
 &\quad \left. -\sum_{m=0}^x\dimglo{\Oc_C(m-n)}\right]+D\\
 &= \lim_{x\ra\infty}\left[\sum_{m=0}^x\dimglo{\Oc_C(m)}-\sum_{m=0}^{x-15 q}\dimglo{\Oc_C(m)}-\sum_{m=0}^{x-20 q}\dimglo{\Oc_C(m}-\sum_{m=0}^{x-16 q}\dimglo{\Oc_C(m)}\right.\\
 &\quad -\sum_{m=0}^{x-12 q}\dimglo{\Oc_C(m)}+\sum_{m=0}^{x-n-15}\dimglo{\Oc_C(m)}+\sum_{m=0}^{x-n-20}\dimglo{\Oc_C(m)}+\sum_{m=0}^{x-n-16}\dimglo{\Oc_C(m)}\\
 &\quad \left. +\sum_{m=0}^{x-n-6a}\dimglo{\Oc_C(m)}-\sum_{m=0}^{x-n}\dimglo{\Oc_C(m)}\right]+D\\
 &= 30\cdot (149q^2 + 12a^2 - 102a + 7)+D.
\end{align*}
Since the degree of the extension 
$$k[U,V,W]/\left(U^{15},V^{10},W^{6}\right):k$$
is $15\cdot 10\cdot 6=30^2$ we obtain
\begin{align*}
\HKF(IS,S,7^e) &= \begin{cases}
30\cdot 149\cdot 49^e-2130 & \text{if }e\text{ is odd,}\\
30\cdot 149\cdot 49^e-1770 & \text{if }e\text{ is even}
\end{cases}\quad\text{and}\\
\HKF(I,R,7^e) &= \left\{\begin{aligned}
& \frac{149}{30}\cdot 49^e-\frac{71}{30} && \text{if }e\text{ is odd,}\\
& \frac{149}{30}\cdot 49^e-\frac{59}{30} && \text{if }e\text{ is even.}
\end{aligned}\right.
\end{align*}
By Theorem \ref{trivedi} we see that the vector bundle $\Syz_{\Proj(S)}(IS)$ is strongly semistable.
\end{exa}

\section{The Hilbert-Kunz function of $A_{\infty}$}
In this and the following section we will compute the Hilbert-Kunz functions of the rings $A_{\infty}$ and $D_{\infty}$. These examples will show, that 
we (at least in these cases) can drop the assumptions

\begin{enumerate}
 \item isolated singularity,
 \item normal domain,
 \item finite Cohen-Macaulay type,
 \item there are finitely many isomorphism classes of indecomposable vector bundles on the punctured spectrum.
\end{enumerate}

Note that we discussed the idea to contsruct first syzygy modules of ideals representing the indecomposable, maximal Cohen-Macaulay modules only under the 
assumption of a normal ring. The condition normal was only required to ensure that there has to be such a representation. If the ring is not normal, first 
syzygy modules of ideals are still reflexive, but they might not be maximal Cohen-Macaulay.

We choose the standard-grading for the ring $A_{\infty}:=k[X,Y,Z]/(XY)$.

\begin{thm}[Burban, Drozd] The indecomposable, non-free, maximal Cohen-Macau\-lay $A_{\infty}$-modules are described up to isomorphism (and degree shift) by the 
following matrix factorizations of $XY$

\begin{enumerate}
 \item $(X,Y)$ and $(Y,X)$,
 \item $$\left(\begin{pmatrix}Y & Z^n\\ 0 & X\end{pmatrix},\begin{pmatrix}X & -Z^n\\ 0 & Y\end{pmatrix}\right)\text{ and }\left(\begin{pmatrix}X & -Z^n\\ 0 & Y\end{pmatrix},\begin{pmatrix}Y & Z^n\\ 0 & X\end{pmatrix}\right),$$
	for all $n\geq 1$.
\end{enumerate}

Moreover, the modules corresponding to the matrix factorizations in (i) are the two components of the normalization $k[X,Z]\times k[Y,Z]$ of 
$A_{\infty}$ and are not locally free on the punctured spectrum of $A_{\infty}$. The modules corresponding to the matrix factorizations in (ii) are 
locally free of rank one on the punctured spectrum.
\end{thm}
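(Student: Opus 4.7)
The plan is to handle the three assertions of the statement in order: verification, local-freeness, and the classification itself, with the last being the main challenge.

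First I would verify directly that all the listed pairs are matrix factorizations for $XY$. For part (i) this is the trivial identity $X\cdot Y = Y\cdot X = XY$. For part (ii) a two-by-two block computation yields
\begin{equation*}
\begin{pmatrix}Y & Z^n\\ 0 & X\end{pmatrix}\begin{pmatrix}X & -Z^n\\ 0 & Y\end{pmatrix}
=\begin{pmatrix}XY & 0\\ 0 & XY\end{pmatrix},
\end{equation*}
and analogously in the other order. Next, for the identification in (i), notice that $\coKer(X)=R/(X)\cong k[Y,Z]$ and $\coKer(Y)=R/(Y)\cong k[X,Z]$, whose direct sum is precisely the normalization $k[X,Z]\times k[Y,Z]$ of $A_\infty$. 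To see these modules fail to be locally free on the punctured spectrum $U:=\punctured{A_\infty}{A_\infty{}_+}$, pick the prime $\mathfrak p=(X,Y)$, which lies in $U$ since it does not contain $Z$; in $(A_\infty)_{\mathfrak p}$ the element $X$ is a zero-divisor (annihilated by $Y$), so $R/(X)$ localized there has torsion and cannot be free over the local ring $(A_\infty)_{\mathfrak p}$.

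Then I would analyse the matrix factorizations in (ii). Let $M_n:=\coKer\bigl(\begin{smallmatrix}Y & Z^n\\ 0 & X\end{smallmatrix}\bigr)$. Since the determinant of the matrix is $XY=0$ in $R$, the module $M_n$ has rank one at the generic points of the two components $V(X)$ and $V(Y)$. To prove $\widetilde{M_n}|_U$ is an invertible sheaf, I cover $U$ by the affine opens $D(X)$, $D(Y)$ and $D(Z)$: on $D(X)$ the equation $XY=0$ forces $Y=0$, so the matrix reduces to $\bigl(\begin{smallmatrix}0 & Z^n\\ 0 & X\end{smallmatrix}\bigr)$, whose cokernel on $D(X)$ is free of rank one generated by the first basis vector; on $D(Y)$ analogously one uses $X=0$; on $D(Z)$ the element $Z$ is a unit, so the second column is a unit multiple of the second basis vector and again the cokernel is free of rank one. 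Hence each $M_n$ is locally free of rank one on $U$, and its class in $\Pic(U)$ distinguishes the different values of $n$ (for distinct $n$ the modules have non-isomorphic restrictions because their generators live in different degrees, which already implies the associated sheaves are non-isomorphic as graded line bundles). Indecomposability of the $M_n$ then follows from Theorem \ref{eismatfac}, because the underlying matrix factorization is reduced and indecomposable: any non-trivial decomposition would give an equivalence with a direct sum of $1\times 1$ matrix factorizations, which are all of the form $(X,Y)$, $(Y,X)$, $(Z^m,\cdot)$ or $(\cdot,Z^m)$, and none of these combinations yields our matrix up to equivalence (one checks this by comparing determinants and entries).

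The hard part will be the completeness of the classification, which is the essential content of the Burban--Drozd theorem. The approach I would follow is to reduce the classification of indecomposable matrix factorizations of $XY$ to a tame matrix problem over the normalization. Concretely, one pulls back a matrix factorization $(\phi,\psi)$ along the map $A_\infty\hookrightarrow k[X,Z]\times k[Y,Z]$, obtaining on each component a free module together with gluing data along the preimage of the singular locus $V(X,Y)$, which is a pair of $Z$-lines. The resulting category is equivalent to a bunch-of-chains problem in the sense of Bondarenko--Drozd, and the indecomposable objects in such categories are completely understood: they are parameterised by strings and bands. A careful bookkeeping (see the original paper of Burban--Drozd) shows that in our setting only strings appear, and that these strings correspond precisely to the two singular modules $R/(X)$, $R/(Y)$ and to the families of band-free strings of length $n$ recorded by the power $Z^n$ in the off-diagonal entry. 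This last reduction step, rather than the individual verifications, is the substantive difficulty, since it requires setting up the gluing category and invoking the bunch-of-chains classification; the remaining matching with the explicit matrix factorizations in (ii) is then a direct translation.
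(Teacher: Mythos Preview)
The paper does not prove this theorem at all: its entire proof is the citation ``See \cite[Theorem 5.3 and Remark 5.5]{mcmsur}.'' The result is attributed to Burban and Drozd, and the thesis simply imports it. Your proposal therefore goes well beyond what the paper does, and a comparison of approaches is moot.

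That said, a few points on your sketch deserve comment. The verifications for (i) and the identification with the normalization components are fine. Your non-local-freeness argument at $\mathfrak p=(X,Y)$ is correct in conclusion, though ``has torsion'' is an odd way to phrase it for a cyclic module; the clean statement is that $R_{\mathfrak p}/(X)$ is a nonzero proper quotient of a local ring and hence not free. In your local-freeness check for $M_n$ on $D(Z)$ you write that ``the second column is a unit multiple of the second basis vector,'' which is false: the second column is $(Z^n,X)^T$. The conclusion is still right, since $Z^n$ being a unit lets you eliminate $e_1$ via the relation $Z^ne_1+Xe_2=0$, but the stated reason is not. Your indecomposability argument for the size-two factorizations (``comparing determinants and entries'') is too vague to count as a proof. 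Finally, your sketch of the classification via the bunch-of-chains formalism is the right idea and is indeed the method of the cited paper, but your assertion that ``only strings appear'' should be checked carefully against the actual Burban--Drozd analysis rather than asserted; in any case this is precisely the substantive content you correctly identify as the hard part, and it is what the thesis outsources to the reference.
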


\begin{proof}
See \cite[Theorem 5.3 and Remark 5.5]{mcmsur}.
\end{proof}

The syzygy modules $\Syz_{A_{\infty}}(X)$ and $\Syz_{A_{\infty}}(Y)$ correspond to the matrix factorizations in (i). Obviously, they are selfdual. 
The syzygy modules $\Syz_{A_{\infty}}(Y,Z^n)$ and $\Syz_{A_{\infty}}(X,-Z^n)$ correspond to the matrix factorizations in (ii). They are dual to each other, 
since 
$$(\alpha,\alpha):\left(\begin{pmatrix}Y & 0\\ Z^n & X\end{pmatrix},\begin{pmatrix}X & 0\\ -Z^n & Y\end{pmatrix}\right)\ra\left(\begin{pmatrix}X & -Z^n\\ 0 & Y\end{pmatrix},\begin{pmatrix}Y & Z^n\\ 0 & X\end{pmatrix}\right),$$

with $\alpha:=\left(\begin{smallmatrix} 0 & -1 \\ 1 & 0 \end{smallmatrix}\right)$ is an equivalence of matrix factorizations. 
Using Formula (\ref{hilbserdirect}) on page \pageref{hilbserdirect}, one obtains
$$\begin{aligned}
\lK_{A_{\infty}}(t) &= \frac{1+t}{(1-t)^2},\\
\lK_{\Syz_{A_{\infty}}(X)}(t) = \lK_{\Syz_{A_{\infty}}(Y)}(t) &= \frac{t^2}{(1-t)^2},\\
\lK_{\Syz_{A_{\infty}}(Y,Z^n)}(t) = \lK_{\Syz_{A_{\infty}}(X,-Z^n)}(t) &= \frac{t^2+t^{n+1}}{(1-t)^2}.
\end{aligned}$$
In this case the Hilbert-series of a maximal Cohen-Macaulay module that is locally free on the punctured spectrum can detect its isomorphism class up 
to dualizing with one exception. Namely, the modules $\Syz_{A_{\infty}}(X,Z^2)$, $\Syz_{A_{\infty}}(Y,Z^2)$ and $A_{\infty}(-2)$ have the same 
Hilbert-series.

From the splitting $$\Syz_{A_{\infty}}(X,Y,Z)=\Syz_{A_{\infty}}(X,Z)\oplus\Syz_{A_{\infty}}(Y,Z),$$
we deduce the splittings $$\Syz_{A_{\infty}}(X^q,Y^q,Z^q)=\Syz_{A_{\infty}}(X^q,Z^q)\oplus\Syz_{A_{\infty}}(Y^q,Z^q),$$
where by the symmetry of the problem $\Syz_{A_{\infty}}(X^q,Y^q,Z^q)$ is either free or of the form 
$\Syz_{A_{\infty}}(X,Z^n)(m)\oplus\Syz_{A_{\infty}}(Y,Z^n)(m)$ for some $n\in\N$ and some $m\in\Z$. All in all, we see that it is enough to detect the 
isomorphism class of $\Syz_{A_{\infty}}(X^q,Z^q)$ up to dualizing.

By Formula (\ref{hilbserdirect}) on page \pageref{hilbserdirect}, we have
\begin{equation}\label{HS1}
\lK_{\Syz_{A_{\infty}}(X^q,Z^q)}(t)=\frac{(2t^q-1)(1-t^2)}{(1-t)^3}+\lK_{k[X,Y,Z]/(X^q,Z^q,XY)}(t).
\end{equation}
With $M:=k[X,Y,Z]/(X^q,Z^q)$ one has by \cite[Proposition 5.2.16]{cocoa2} 
\begin{align}\label{HS2}
\lK_{k[X,Y,Z]/(X^q,Z^q,XY)}(t)=\lK_{M/(XY)M}(t) &= \lK_M(t)-t^2\lK_{M/(0:_M(XY))}(t)\nonumber \\
&= \lK_M(t)-t^2\lK_{M/(X^{q-1})M}(t)\nonumber \\
&= \frac{(1-t^q)^2}{(1-t)^3}-t^2\cdot\frac{(1-t^{q-1})(1-t^q)}{(1-t)^3}.
\end{align}
Substituting Equation (\ref{HS2}) in Equation (\ref{HS1}), one obtains
$$\lK_{\Syz_{A_{\infty}}(X^q,Z^q)}(t)=\frac{t^{q+1}+t^{2q}}{(1-t)^2}=t^{q-1}\cdot \frac{t^2+t^{q+1}}{(1-t)^2}.$$
If $q\neq 2$, this gives directly 
$$\Syz_{A_{\infty}}(X^q,Z^q)\cong \Syz_{A_{\infty}}(X,Z^q)(-q+1) \text{ or } \Syz_{A_{\infty}}(X^q,Z^q)\cong \Syz_{A_{\infty}}(Y,Z^q)(-q+1).$$
In the case $q=2$, we might have $\Syz_{A_{\infty}}(X^2,Z^2)\cong A_{\infty}(-3)$. But $\Syz_{A_{\infty}}(X^2,Z^2)$ is obviously not free.

All in all, we see
$$\Syz_{A_{\infty}}(X^q,Y^q,Z^q)=\Syz_{A_{\infty}}(X,Y,Z^q)$$
and get by Lemma \ref{hkfdreivar}
$$\HKF(A_{\infty},q)=\frac{2\cdot Q(1,1,1)\cdot q^2-2\cdot Q(1,1,q)+4q}{4}=2q^2-q.$$
Comparing this result with Example \ref{genhkfan}, we see
$$\HKF(A_{\infty},q)=\lim_{n\ra\infty}\HKF(A_n,q).$$

\section{The Hilbert-Kunz function of $D_{\infty}$}
For the ring $D_{\infty}:=k[X,Y,Z]/(X^2Y-Z^2)$ we choose the grading $\Deg(X)=\Deg(Y)=2$ and $\Deg(Z)=3$.

\begin{thm}[Burban, Drozd] The indecomposable, non-free, maximal Cohen-Macau\-lay $D_{\infty}$-modules are described up to isomorphism (and degree shift) by the 
following matrix factorizations of $X^2Y-Z^2$

\begin{enumerate}
 \item $$\left(\begin{pmatrix}Z & XY\\ X & Z\end{pmatrix},\begin{pmatrix}-Z & XY\\ X & -Z\end{pmatrix}\right),$$ 
 \item $$\left(\begin{pmatrix}X^2 & Z\\ Z & Y\end{pmatrix},\begin{pmatrix}Y & -Z\\ -Z & X^2\end{pmatrix}\right),$$
 \item $$\left(\begin{pmatrix}Z & XY & 0 & -Y^{n+1}\\ X & Z & Y^n & 0\\ 0 & 0 & Z & XY\\ 0 & 0 & X & Z\end{pmatrix},
	       \begin{pmatrix}-Z & -XY & 0 & Y^{n+1}\\ X & Z & Y^n & 0\\ 0 & 0 & -Z & -XY\\ 0 & 0 & X & Z\end{pmatrix}\right),$$
	for all $n\geq 1$ and
 \item $$\left(\begin{pmatrix}Z & XY & -Y^n & 0\\ X & Z & 0 & Y^n\\ 0 & 0 & Z & XY\\ 0 & 0 & X & Z\end{pmatrix},
	       \begin{pmatrix}-Z & -XY & -Y^n & 0\\ X & Z & 0 & -Y^n\\ 0 & 0 & -Z & -XY\\ 0 & 0 & X & Z\end{pmatrix}\right),$$
	for all $n\geq 1$.
\end{enumerate}

Moreover, all corresponding modules are selfdual, the module corresponding to the matrix factorization (i) has rank one, it is the normalization of $D_{\infty}$ 
and is not locally free on the punctured spectrum. The modules corresponding to the matrix factorizations (ii), (iii) and (iv) are locally free on the 
punctured spectrum and have rank one in case (ii) and rank two in the cases (iii) and (iv).
\end{thm}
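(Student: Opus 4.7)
The plan is to translate the classification of indecomposable non-free MCM $D_{\infty}$-modules into a classification of reduced indecomposable matrix factorizations of $f = X^2Y - Z^2$ via Eisenbud's correspondence (Theorem \ref{eismatfac}), and then to carry out the latter classification directly.

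First I would verify that the four listed families really are matrix factorizations by computing $\phi\psi = \psi\phi = f\cdot\Id$ in each case. These are short routine multiplications using only the relation $Z^2 = X^2 Y$, and for the $4\times 4$ families (iii) and (iv) the block upper-triangular form reduces the work to the two diagonal $2\times 2$ blocks plus a single off-diagonal identity. Next I would verify selfduality of each cokernel by exhibiting an explicit equivalence $(\alpha,\beta)\colon(\phi,\psi)\to(\phi^T,\psi^T)$: for (i) the swap $\alpha = \beta = \bigl(\begin{smallmatrix}0 & 1\\ 1 & 0\end{smallmatrix}\bigr)$ conjugates $\phi$ to $\phi^T$, and analogous block-swap matrices work for (ii), (iii), (iv). Ranks then follow from Remark \ref{rankofmatfac}: one computes $\det(\phi) = -f$ for (i), $\det(\phi) = f$ for (ii), and $\det(\phi) = f^2$ for both (iii) and (iv), giving ranks $1,1,2,2$ respectively.

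For local freeness on the punctured spectrum $U$: the Jacobian $(2XY, X^2, -2Z)$ vanishes precisely on $V(X, Z)$, so the only non-maximal prime at which $R$ fails to be regular is the height-one prime $\pp_0 = (X, Z)$; at every other non-maximal prime either $X$ or $Z$ is a unit and $R$ is regular there, so local freeness is automatic. The remaining check at $\pp_0$ reduces to comparing the rank of $\phi$ modulo $\pp_0\cdot R_{\pp_0}$ with the generic rank of $\phi$. A direct inspection gives ranks $0, 1, 2, 2$ at $\pp_0$ for (i), (ii), (iii), (iv), while the generic ranks are $1, 1, 2, 2$. Thus (ii)--(iv) are locally free on $U$ while (i) is not; moreover the relations $Z \tilde e_1 + X \tilde e_2 = 0$ and $XY\tilde e_1 + Z \tilde e_2 = 0$ read off the columns of $\phi$ in (i) force $\tilde e_2 = -(Z/X)\tilde e_1$ in the function field, identifying $\coKer(\phi)$ with the normalization $\tilde R = R[Z/X]$.

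The genuinely hard step is completeness of the list. Here I would follow the Burban--Drozd strategy of reducing matrix factorizations of $X^2 Y - Z^2$ to a representation problem for a ``bunch of chains'' in the sense of Bondarenko. Concretely, one attaches to an MCM module $M$ the triple $(M, \widetilde M, M \otimes_R R/\mathfrak c)$, where $\widetilde M$ is the torsion-free part of $M \otimes_R \tilde R$ and $\mathfrak c$ is the conductor of $\tilde R$ in $R$; the gluing data at $\mathfrak c$ encodes $M$ completely and the resulting matrix problem is of tame representation type whose indecomposables are explicitly classified, producing exactly the families (i)--(iv). Distinctness up to degree shift can then be verified by comparing Hilbert-series or by inspecting the rank and torsion of the pull-back to $\tilde R$. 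I expect the tameness reduction and the matching of Bondarenko-indecomposables with the four listed families to be the main obstacle; everything else is direct, if tedious, bookkeeping. Finally, to transfer from the completed local setting of Burban--Drozd to the graded situation of the theorem, one invokes Theorem \ref{ausreitgraded}.
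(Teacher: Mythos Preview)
Your proposal is correct and matches the paper's approach: the paper does not give an independent argument but simply cites Burban--Drozd \cite[Theorem 5.7 and Remark 5.9]{mcmsur}, and your sketch correctly separates the routine verifications (matrix-factorization identities, selfduality, ranks via determinants, local freeness by checking the rank of $\phi$ at the unique singular height-one prime $(X,Z)$) from the genuinely hard completeness step, which you rightly defer to the Burban--Drozd bunch-of-chains classification. One small remark: your local-freeness criterion (``rank of $\phi$ at $\pp_0$ equals generic rank'') is correct here but uses implicitly that $R$ is a domain and $M$ is torsion-free, so that a rank-$r$ module needing only $r$ generators is automatically free; this is worth making explicit.
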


\begin{proof}
See \cite[Theorem 5.7 and Remark 5.9]{mcmsur}.
\end{proof}

Corresponding first syzygy modules of ideals are given by

\begin{enumerate}
 \item $\Syz_{D_{\infty}}(X,Z)$ in case (i),
 \item $\Syz_{D_{\infty}}(Z,Y)$ in case (ii),
 \item $\Syz_{D_{\infty}}(Y^{n+1},-XY,Z)$ in case (iii) and
 \item $\Syz_{D_{\infty}}(Y^n,-Z,X)$ in case (iv),
\end{enumerate}

where one has to remove the second column in the second matrices to get the representations (iii) and (iv).

Although we do not need the Hilbert-series of the above modules to compute the Hilbert-Kunz function of $D_{\infty}$, we compute them for completeness, 
using Theorem \ref{HilbSer}. They are given by
$$\begin{aligned}
\lK_{D_{\infty}}(t) &= \frac{1+t^3}{(1-t^2)^2},\\
\lK_{\Syz_{D_{\infty}}(X,Z)}(t) &= \frac{t^5+t^6}{(1-t^2)^2},\\
\lK_{\Syz_{D_{\infty}}(Z,Y)}(t) &= \frac{t^5+t^6}{(1-t^2)^2},\\
\lK_{\Syz_{D_{\infty}}(Y^{n+1},-XY,Z)}(t) &= \frac{t^6+t^7+t^{2n+4}+t^{2n+5}}{(1-t^2)^2}\text{ and}\\
\lK_{\Syz_{D_{\infty}}(Y^n,-Z,X)}(t) &= \frac{t^5+t^6+t^{2n+2}+t^{2n+3}}{(1-t^2)^2}.
\end{aligned}$$
One should mention the phenomenon that we have two indecomposable, maximal Cohen-Macaulay $D_{\infty}$-modules with the same Hilbert-series, but one is locally free on 
the punctured spectrum, while the other one is not.

To compute the Hilbert-Kunz function of $D_{\infty}$, we start with the case $p=2$ and $q\geq 2$. Using Lemma \ref{manipulationlemma}, we obtain 
\begin{align*}
\Syz_{D_{\infty}}(X^q,Y^q,Z^q) &\cong \Syz_{D_{\infty}}\left(X^q,Y^q,X^qY^{\frac{q}{2}}\right)\\
&\cong \Syz_{D_{\infty}}\left(1,Y^q,Y^{\frac{q}{2}}\right)(-2q)\\
&\cong \Syz_{D_{\infty}}\left(1,Y^{\frac{q}{2}},1\right)(-3q)\\
&\cong D_{\infty}(-3q)\oplus D_{\infty}(-4q).
\end{align*}
Using Formula (\ref{formtr}) on page \pageref{formtr}, one gets
$$\HKF(D_{\infty},2^e)=\frac{6\cdot Q(2,2,3)\cdot 4^e+6\cdot 4^e}{48}=2\cdot 4^e.$$
If $p$ is odd and $q>1$, we get by Lemma \ref{manipulationlemma}
\begin{align*}
\Syz_{D_{\infty}}(X^q,Y^q,Z^q) &\cong \Syz_{D_{\infty}}\left(X^q,Y^q,ZX^{q-1}Y^{\frac{q-1}{2}}\right)\\
&\cong \Syz_{D_{\infty}}\left(X,Y^q,ZY^{\frac{q-1}{2}}\right)(-2(q-1))\\
&\cong \Syz_{D_{\infty}}\left(X,Y^{\frac{q+1}{2}},Z\right)(-3(q-1)).
\end{align*}
Using Formula (\ref{formnontr}) on page \pageref{formnontr}, one obtains
$$\HKF(D_{\infty},q)=\frac{6\cdot Q(2,2,3)\cdot q^2-6\cdot Q(2,q+1,3)+24\cdot (q+1)}{48}=2\cdot q^2-\frac{1}{2}\cdot (q+1).$$
To stay in our approach, we compute the Hilbert-series of $\Syz_{D_{\infty}}(X^q,Y^q,Z^q)$ to identify its isomorphism class. By Theorem \ref{HilbSer}, we have
$$\lK_{\Syz_{D_{\infty}}(X^q,Y^q,Z^q)}(t)=\frac{(t^3-t^6)\lK_{\Syz_{D_{\infty}}(X^q,Y^q,Z^{q-1})}(t)+(1-t^3)\lK_{\Syz_{D_{\infty}}(X^q,Y^q,Z^{q+1})}(t)}{1-t^6}.$$
It is not hard to see that 
$$\begin{aligned}
\Syz_{D_{\infty}}(X^q,Y^q,Z^{q-1}) &= \left\langle\begin{pmatrix}Y^{\frac{q-1}{2}}\\0\\-X\end{pmatrix},\begin{pmatrix}0\\ X^{q-1}\\ -Y^{\frac{q+1}{2}}\end{pmatrix}\right\rangle &&\cong D_{\infty}(-3q+1)\oplus D_{\infty}(-4q+2),\\
\Syz_{D_{\infty}}(X^q,Y^q,Z^{q+1}) &= \left\langle\begin{pmatrix}Y^q\\-X^q\\0\end{pmatrix},\begin{pmatrix}XY^{\frac{q+1}{2}}\\ 0\\ -1\end{pmatrix}\right\rangle &&\cong D_{\infty}(-4q)\oplus D_{\infty}(-3q-3).
\end{aligned}$$
Therefore,
\begin{align*}
\lK_{\Syz_{D_{\infty}}(X^q,Y^q,Z^q)}(t) &= \frac{t^{3q+2}+t^{3q+3}+t^{4q}+t^{4q+1}}{(1-t^2)^2}\\
&= t^{3q-3}\cdot\frac{t^5+t^6+t^{q+3}+t^{q+4}}{(1-t^2)^2}.
\end{align*}
Since the Hilbert-series of an indecomposable, maximal Cohen-Macaulay $D_{\infty}$-module, which is locally free on the punctured spectrum, detects 
its isomorphism class, we obtain 
$$\Syz_{D_{\infty}}(X^q,Y^q,Z^q)\cong\Syz_{D_{\infty}}\left(X,Y^{\frac{q+1}{2}},Z\right)(-3q+3),$$
provided we can show that the module in question is indecomposable, since from the Hilbert-series and the fact that $\Syz_{D_{\infty}}(X^q,Y^q,Z^q)$ is 
locally free on the punctured spectrum, the splitting
$$\Syz_{D_{\infty}}(X^q,Y^q,Z^q)\cong\Syz_{D_{\infty}}(Z,Y)(-3q+3)\oplus \Syz_{D_{\infty}}(Z,Y)(-4q+5)$$
is still possible.

A comparison with Theorem \ref{hkfdn} yields
$$\HKF(D_{\infty},q)=\lim_{n\ra\infty}\HKF(D_n,q).$$

\section{On the Hilbert-Kunz functions of two-dimensional Fermat rings}
In this section we want to compute the Hilbert-Kunz functions of the Fermat rings
$$R:=k[X,Y,Z]/(X^n+Y^n+Z^n),$$ 
where $k$ is algebraically closed of positive characteristic $p$ with $\gcd(p,n)=1$, 
under the condition that $\Syz_C(X,Y,Z)$ is strongly semistable on $C:=\Proj(R)$.  
For this computation we need a result of \cite{vraciu}. For completeness, we explain how Kaid computed in his thesis 
(cf. \cite{almar}) the values of the Hilbert-Kunz function of $R$ for large $q$ if $\Syz_C(X,Y,Z)$ is not strongly semistable.

We start with the case, where $\Syz_C(X,Y,Z)$ is not strongly semistable. Recall that this is equivalent to 
$$\delta\left(\frac{1}{n},\frac{1}{n},\frac{1}{n}\right)\neq 0$$ 
by Corollaries \ref{deltastrongsemistable} and \ref{hkmdiag}.

\begin{thm}[Kaid]\label{almarfilt} Assume $\gcd(p,n)=1$. 
If $\delta\left(\tfrac{1}{n},\tfrac{1}{n},\tfrac{1}{n}\right)\neq 0$ and $s$ is the integer of Han's Theorem \ref{hansthm}, then 
$\fpb{s}(\Syz_C(X,Y,Z))$ is not semistable. Let $p^s=nl+r$ with $l\in\N$ and $0\leq r\leq n-1$. Then the Frobenius pull-back 
$\fpb{s}(\Syz_C(X,Y,Z))$ has a strong Harder-Narasimhan filtration given by
$$0\ra \Oc_C\left(-n\left(l+1+\frac{l}{2}\right)\right)\ra\fpb{s}(\Syz_C(X,Y,Z))\ra \Oc_C\left(n\left(l+1+\frac{l}{2}\right)-3p^s\right)\ra 0,$$
if $l$ is even and 
$$0\ra \Oc_C\left(-n\left(l+\left\lfloor\frac{l}{2}\right\rfloor\right)-3r\right)\ra\fpb{s}(\Syz_C(X,Y,Z))\ra \Oc_C\left(n\left(l+\left\lfloor\frac{l}{2}\right\rfloor\right)+3r-3p^s\right)\ra 0,$$
if $l$ is odd. Moreover, under the additional assumption $\gcd(p,n)=1$ this filtration is minimal for $p\geq d-3$ and $s\geq 1$ in the sense that $\fpb{s-1}(\Syz_C(X,Y,Z))$ is semistable.
\end{thm}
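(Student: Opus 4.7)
The strategy is to transfer the data encoded by Han's $\delta$-function (which lives on the polynomial ring $k[X,Y]$) to an explicit destabilizing subbundle of $\Syz_C(X^{p^s},Y^{p^s},Z^{p^s})\cong\fpb{s}(\Syz_C(X,Y,Z))\bigl(p^s\bigr)$ on the Fermat curve. The statement really has three parts: (a) identify an explicit line subbundle $\Oc_C(-a)\hookrightarrow\Syz_C(X^{p^s},Y^{p^s},Z^{p^s})$ whose slope exceeds the average slope, (b) show this line subbundle is in fact maximal destabilizing, so that the sequence written down is the Harder--Narasimhan filtration, and (c) prove minimality of $s$.

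First I would unpack Han's Theorem \ref{hansthm} for the triple $(1/n,1/n,1/n)$. Writing $p^s=nl+r$ with $0\le r<n$, the three coordinates of $p^s(1/n,1/n,1/n)=(l+r/n,l+r/n,l+r/n)$ must be rounded to an odd lattice point $u\in L_{\mathrm{odd}}$ with $\|p^s(1/n,1/n,1/n)-u\|_1<1$. The parity constraint on $u$ forces the rounding choices to depend on the parity of $l$: if $l$ is even, only one coordinate is rounded up (to $l+1$), and the other two are rounded down (to $l$); if $l$ is odd, two coordinates are rounded up and one down (or vice-versa, whichever minimizes the $\ell^1$-distance). A direct computation then gives $\delta(1/n,1/n,1/n)=p^{-s}(1-\|p^s(1/n,1/n,1/n)-u\|_1)$ explicitly in terms of $l$ and $r$, matching the claimed slopes $n(l+1+l/2)$ resp. $n(l+\lfloor l/2\rfloor)+3r$.

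For step (a), the concrete destabilizing syzygy is constructed by lifting the optimal lattice point $u$: the approximation $p^s\equiv nl+r$ means that $Z^{p^s}=(-1)^l Z^r(X^n+Y^n)^l$ modulo the defining relation, so $Z^{p^s}$ lies in the ideal $(X^{a_1},Y^{a_2})$ with $a_1,a_2$ determined by $u$. This yields an explicit syzygy $(f_1,f_2,1)$ for $(X^{p^s},Y^{p^s},Z^{p^s})$ whose total degree equals $\|p^s(1/n,1/n,1/n)-u\|_1\cdot n + (\mathrm{const})$; in other words, exactly $n(l+1+l/2)$ if $l$ is even and $n(l+\lfloor l/2\rfloor)+3r$ if $l$ is odd. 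This gives the asserted inclusion $\Oc_C(-a)\hookrightarrow\Syz_C(X^{p^s},Y^{p^s},Z^{p^s})$ with the right value of $a$, and by additivity of degree the quotient is forced to be $\Oc_C(a-3p^s)$.

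For step (b), I would argue by degree: since $\deg(\Oc_C(-a))=-an > -3p^s n/2$ precisely when $\delta(1/n,1/n,1/n)\ne 0$, the subbundle strictly destabilizes. That it is maximal destabilizing follows from Theorem \ref{holger}: the quantity $\sum r_i\nu_i^2$ computed from any strict Harder--Narasimhan filtration is pinned down by the Hilbert--Kunz multiplicity $\HKM(R)$, which by Theorem \ref{myhkmthm}/Corollary \ref{myhkmcor} is itself expressible in terms of $\delta(1/n,1/n,1/n)$; plugging in the candidate filtration gives precisely the value coming from $\delta$, forcing maximality (a larger destabilizing subbundle would overshoot). For step (c), the minimal choice of $s$ in Han's Theorem means no smaller exponent $p^{s-1}$ yields a nearest odd lattice point at distance $<1$; translating back, no line subbundle of $\fpb{s-1}(\Syz_C(X,Y,Z))$ can have slope exceeding the average, so $\fpb{s-1}$ remains semistable. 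The hypothesis $p\ge d-3$ enters here to rule out certain low-degree cohomological obstructions that could otherwise produce a spurious destabilizing section (an $H^1$ vanishing via Serre duality $\omega_C=\Oc_C(n-3)$).

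\textbf{The main obstacle} will be step (a): producing the explicit destabilizing syzygy from the optimal odd lattice point. One must carefully track how the two allowed ways to overshoot or undershoot in each coordinate (forced by the parity constraint on $u$) combine with the relation $Z^n=-(X^n+Y^n)$ to give a single minimal-degree syzygy, and one must verify that no syzygy of even smaller degree exists --- this is where matching against Theorem \ref{holger} and the explicit value of $\HKM(R)$ via $\delta$ is crucial, and is what actually pins down the filtration rather than just producing some filtration.
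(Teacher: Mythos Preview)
Your steps (a) and (b) are essentially the paper's argument. The paper constructs the explicit nonzero inclusion (your syzygy built from the relation $Z^n=-(X^n+Y^n)$ and the binomial expansion), and then establishes maximality ``by computing the degree of the maximal destabilizing subbundle'' --- which is exactly what your matching of $\HKM(R)$ against the Brenner--Trivedi formula accomplishes: for a rank-two bundle the sum $\nu_1+\nu_2$ is fixed by the total degree, so knowing $\HKM(R)=\tfrac{n}{2}(\nu_1^2+\nu_2^2-3)$ determines $|\nu_1-\nu_2|$ and hence the destabilizing degree.

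Step (c) has a genuine gap. The minimality of $s$ in Han's Theorem is a statement about the taxicab distance of $(p^t/n,p^t/n,p^t/n)$ to $L_{\mathrm{odd}}$, equivalently about syzygy gaps of $(X^{p^t},Y^{p^t},(X+Y)^{p^t})$ in the \emph{two-variable} ring $k[X,Y]$. It is not a direct statement about line subbundles of $\fpb{t}(\Syz_C(X,Y,Z))$ on the Fermat curve. Your ``translating back'' step presumes that every destabilizing line subbundle at level $t$ arises from an odd lattice point within distance~$1$; but you have only shown one direction (lattice points give destabilizing subbundles), not the converse. A destabilizing subbundle on $C$ at level $s-1$ not of this explicit combinatorial form is not ruled out by the absence of a nearby lattice point.

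The paper closes this gap differently, via Corollary~\ref{coralmar}. If semistability first failed at some level $e<s$, that corollary forces
\[
\HKM(R)=\frac{3n}{4}+\frac{l^2}{4n\,p^{2e}}\quad\text{with }0<l\le n(n-3).
\]
But the value of $\HKM(R)$ is already pinned down by $\delta$ with its minimal exponent $s$, giving $\HKM(R)=\tfrac{3n}{4}+\tfrac{l'^2}{4n\,p^{2s}}$ for a specific $l'$. Equating forces $l=l'p^{\,s-e}$, and the bound $l\le n(n-3)$ together with $p\ge n-3$ makes this impossible for $e<s$. That inequality --- not an $H^1$-vanishing --- is where the hypothesis $p\ge d-3$ actually enters.
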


\begin{proof}
At first one needs to prove that the inclusions in the above sequences are non-trivial. This is done in \cite[Lemma 4.2.8]{almar}. In the second step 
one has to show that the kernels above are in fact the maximal destabilizing subbundles. This is done in \cite[Theorem 4.2.12]{almar} by computing the 
degree of the maximal destabilizing subbundles. The supplement is proven in \cite[Theorem 4.2.12]{almar} by using Corollary \ref{coralmar}.
\end{proof}

\begin{rem}\label{dundr}
In the situation of Theorem \ref{almarfilt}, one has
$$\left\lVert\left(\frac{p^s}{n},\frac{p^s}{n},\frac{p^s}{n}\right),L_\text{odd}\right\rVert_1<1$$
by assumption. If $l$ is even, this distance is achieved for 
$$\left(\left\lceil\frac{p^s}{n}\right\rceil,\left\lceil\frac{p^s}{n}\right\rceil,\left\lceil\frac{p^s}{n}\right\rceil\right)\in L_\text{odd},$$
showing $3r>2n$. Similarly, if $l$ is odd one obtains $3r<n$.
\end{rem}

From Theorem \ref{almarfilt} one obtains the Hilbert-Kunz function of $R$ for large $q$.

\begin{thm}[Kaid]\label{almarhkf}
In the situation of Theorem \ref{almarfilt} one has 
$$\HKF(R,p^e)=\HKM(R)p^{2e}\text{ for all } e\gg 0.$$
\end{thm}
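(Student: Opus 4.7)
The plan is to evaluate $\HKF(R,p^e)$ via the strong Harder-Narasimhan filtration supplied by Theorem \ref{almarfilt} and then collapse the resulting sum using Riemann-Roch. First, for each $e\geq s$, iterating the Frobenius pull-back of the strong HN sequence from Theorem \ref{almarfilt} will produce the short exact sequence
\begin{equation*}
0\lra\Oc_C(-A_e)\lra\fpb{e}(\Syz_C(X,Y,Z))\lra\Oc_C(A_e-3q)\lra 0,
\end{equation*}
with $q:=p^e$ and, writing $p^s=nl+r$, $A_e:=p^{e-s}\cdot n(l+1+l/2)$ if $l$ is even, $A_e:=p^{e-s}\cdot(n(l+\lfloor l/2\rfloor)+3r)$ if $l$ is odd. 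Setting $\alpha:=A_e/q$, a direct computation shows that the inequalities $3r>2n$ ($l$ even) and $3r<n$ ($l$ odd) from Remark \ref{dundr} both translate into $\alpha<3/2$; this is the only quantitative input needed. In the notation preceding Theorem \ref{holger} one then has $\nu_1=\alpha$, $\nu_2=3-\alpha$, and Theorem \ref{holger} yields $\HKM(R)=\tfrac{n}{2}(\nu_1^2+\nu_2^2-3)=n(\alpha^2-3\alpha+3)$.

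Next I would tensor the above SES with $\Oc_C(m)$ and inspect the connecting map $\delta_m\colon\lK^0(\Oc_C(m+A_e-3q))\to\lK^1(\Oc_C(m-A_e))$. Its image can be non-trivial only if both endpoints are non-zero, which requires $3q-A_e\leq m\leq A_e+n-3$. Since $(3-2\alpha)q>n-3$ for $e$ sufficiently large, this $m$-range will be empty, and hence
\begin{equation*}
\Th^0(\fpb{e}\Syz_C(X,Y,Z)(m))=\Th^0(\Oc_C(m-A_e))+\Th^0(\Oc_C(m+A_e-3q))
\end{equation*}
for every $m\geq 0$. Feeding this into Proposition \ref{hkgeomapp} gives
\begin{equation*}
\HKF(R,q)=\sum_{m\geq 0}\bigl[\Th^0(\Oc_C(m))-3\Th^0(\Oc_C(m-q))+\Th^0(\Oc_C(m-A_e))+\Th^0(\Oc_C(m+A_e-3q))\bigr].
\end{equation*}
The corresponding Euler-characteristic identity $\chi(\Oc_C(m))-3\chi(\Oc_C(m-q))+\chi(\Oc_C(m-A_e))+\chi(\Oc_C(m+A_e-3q))=0$ holds term by term (because the coefficients of $m$ and of $1-g$ both vanish), so with $T(k):=\Th^1(\Oc_C(k))$ the preceding sum rewrites as
\begin{equation*}
\HKF(R,q)=\sum_{m\geq 0}\bigl[T(m)-3T(m-q)+T(m-A_e)+T(m-(3q-A_e))\bigr].
\end{equation*}

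Finally, using $T(j)=-jn+g-1$ for $j<0$ and $T(j)=0$ for $j>n-3$, one verifies $\sum_{m\geq 0}T(m-k)=\sigma+n\binom{k+1}{2}+k(g-1)$ for every $k\geq 0$, where $\sigma:=\sum_{j=0}^{n-3}T(j)$. Substituting $k\in\{0,q,A_e,3q-A_e\}$, the $\sigma$'s cancel by $1-3+1+1=0$, the $(g-1)$-terms cancel by $-3q+A_e+(3q-A_e)=0$, and what remains is
\begin{equation*}
\HKF(R,q)=\tfrac{n}{2}\bigl[-3q(q+1)+A_e(A_e+1)+(3q-A_e)(3q-A_e+1)\bigr]=n(3q^2-3qA_e+A_e^2)=\HKM(R)\cdot q^2,
\end{equation*}
as claimed. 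The main obstacle will be the second step: verifying that $\delta_m$ has trivial image for every $m$ once $e$ is large enough, which rests entirely on the estimate $\alpha<3/2$ extracted from Remark \ref{dundr}; once that is in place, everything reduces to the mechanical Riemann-Roch bookkeeping above.
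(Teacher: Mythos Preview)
Your proof is correct, and the route is essentially the same as the paper's, only packaged differently. The paper observes that the single obstruction $\Ext^1(\Oc_C(A_e-3q),\Oc_C(-A_e))\cong\lK^1(C,\Oc_C(3q-2A_e))$ vanishes once $(3-2\alpha)q>n-3$, so the short exact sequence actually \emph{splits}; hence $\Syz_C(X^q,Y^q,Z^q)\cong\Oc_C(-A_e)\oplus\Oc_C(A_e-3q)$ and one can invoke Lemma~\ref{hkffrei} directly. You instead argue twist-by-twist that each connecting map $\delta_m$ vanishes and then redo by hand the Riemann--Roch bookkeeping that is already encapsulated in Lemma~\ref{hkffrei}. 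The threshold $(3-2\alpha)q>n-3$ is identical in both arguments (indeed, vanishing of all $\delta_m$ here is equivalent to vanishing of $\Ext^1$), so there is no genuine methodological difference; the paper's version is simply more modular, while yours is more self-contained.
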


\begin{proof}
By Theorem \ref{almarfilt} we have a short exact sequence
$$0\ra\Oc_C(\alpha)\ra\fpb{s}(\Syz_C(X,Y,Z))\ra\Oc_C(\beta)\ra 0.$$
Since this is the strong Harder-Narasimhan filtration of $\fpb{s}(\Syz_C(X,Y,Z))$, we obtain by Remark \ref{pbofstronghnfilt} the strong 
Harder-Narasimhan filtrations
$$0\ra\Oc_C(\alpha\cdot p^t)\ra\fpb{s+t}(\Syz_C(X,Y,Z))\ra\Oc_C(\beta\cdot p^t)\ra 0.$$
These sequences split, whenever 
$$\Ext^1\left(\Oc_C(\beta\cdot p^t),\Oc_C(\alpha\cdot p^t)\right)\cong \lK^1\left(C,\Oc_C((\alpha-\beta)\cdot p^t)\right)$$
vanishes. Using Serre duality and $\omega_C\cong\Oc_C(n-3)$ (cf. \cite[Example II.8.20.3]{hartshorne}), we have to show that
$$\begin{aligned}
& \Oc_C((2n-3r)\cdot p^t+n-3) && \text{if }l\text{ is even or}\\
& \Oc_C((3r-n)\cdot p^t+n-3) && \text{if }l\text{ is odd}
\end{aligned}$$
have no global sections. This is true if $(2n-3r)\cdot p^t+n-3$ resp. $(3r-n)\cdot p^t+n-3$ is negative. In view of Remark \ref{dundr} this happens for 
$t$ big enough. In particular, we must have $t\geq 1$. Note that $t=1$ is enough in the cases $p>n-3$.

Then one can compute the Hilbert-Kunz function for large $e$ with the help of Lemma \ref{hkffrei}. See \cite[Corollary 4.2.17]{almar} for the complete proof.
\end{proof}

\begin{exa}
Let $p=5$ and $n=6$. Then $s=1$ in Han's Theorem \ref{hansthm} and by Theorem \ref{almarfilt} the strong Harder-Narasimhan filtration of 
$\fpb{}(\Syz_C(X,Y,Z))$ is given by
$$0\ra\Oc_C(-6)\ra\Syz_C(X^5,Y^5,Z^5)\ra\Oc_C(-9)\ra 0.$$
Taking the $t$-th Frobenius pull-back of this sequence, the resulting sequence splits if 
$$-3\cdot 5^t+3<0\Leftrightarrow t\geq 1.$$
Explicitly, we have 
$$\fpb{e}(\Syz_C(X,Y,Z))\cong\Oc_C(-6\cdot 5^{e-1})\oplus\Oc_C(-9\cdot 5^{e-1})$$
for $e\geq 2$. By Theorem \ref{almarhkf} and Corollary \ref{hkmdiag} the Hilbert-Kunz function of $R$ for $e\geq 2$ is given by
$$\HKF(R,5^e)=\HKM(R)\cdot 5^e=\frac{126}{25}\cdot 5^e.$$
\end{exa}

\begin{exa}\label{p3n7}
Let $p=3$ and $n=7$. Then $\delta\left(\tfrac{1}{7},\tfrac{1}{7},\tfrac{1}{7}\right)=\tfrac{1}{63}$ with $s=2$. By Theorem \ref{almarfilt} we have the 
strong Harder-Narasimhan filtration
$$0\ra\Oc_C(-13)\ra\fpb{2}(\Syz_C(X,Y,Z))\ra\Oc_C(-14)\ra 0.$$
Since $-3^t+4<0\Leftrightarrow t\geq 2$, the $e$-th Frobenius pull-backs of $\Syz_C(X,Y,Z)$ split as 
$$\Oc_C\left(-13\cdot p^{e-2}\right)\oplus\Oc_C\left(-14\cdot p^{e-2}\right)$$
for $e\geq 4$. By Theorem \ref{almarhkf} and 
Corollary \ref{hkmdiag} we have
$$\HKF(R,3^e)=\HKM(R)\cdot 3^e=\frac{427}{81}\cdot 3^e$$
for $e\geq 4$. The other values are $\HKF(R,3^0)=1$, $\HKF(R,3^1)=27$, $\HKF(R,3^2)=419$ and $\HKF(R,3^3)=3843$ by an explicit computation with CoCoA. Moreover, 
another explicit computation shows
$$\Syz_R(X^9,Y^9,Z^9)\cong\Syz_R(X^5,Y^5,Z^5)(-6) \text{ and } \Syz_R(X^{27},Y^{27},Z^{27})\cong R(-39)\oplus R(-42).$$
\end{exa}

We now turn to the case, where $\Syz_C(X,Y,Z)$ is strongly semistable. 

\begin{lem}\label{sssandnontri}
If $p$ is odd, the bundle $\Syz_C(X,Y,Z)$ is not strongly semistable if and only if there is a $q=p^e$ such that 
$\Syz_C(X^q,Y^q,Z^q)\cong\Oc_C(l_1)\oplus\Oc_C(l_2)$ holds for some $l_1$, $l_2\in\Z$.
\end{lem}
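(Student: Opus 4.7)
The plan is to prove the two directions separately, using the degree computation for $\Syz_C(X,Y,Z)$ as the unifying tool. By Proposition \ref{syzprop}(iv), with three generators of degree one on a curve of degree $n$, one has $\Deg(\Syz_C(X,Y,Z)) = -3n$, and therefore $\Deg\bigl(\Syz_C(X^q,Y^q,Z^q)\bigr) = -3nq$ by Proposition \ref{syzprop}(vi) (since $\fpb{e}\Syz_C(X,Y,Z) \cong \Syz_C(X^q,Y^q,Z^q)$). So any splitting $\Syz_C(X^q,Y^q,Z^q) \cong \Oc_C(l_1) \oplus \Oc_C(l_2)$ forces $l_1 + l_2 = -3q$.

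For the backward direction, suppose such a splitting exists for some $q=p^e$. If $l_1 = l_2$, the relation $2l_1 = -3q$ would make $l_1 = -3q/2$; but since $p$ is odd, $q$ is odd, so $3q$ is odd and $-3q/2 \notin \Z$, a contradiction. Hence $l_1 \neq l_2$, and WLOG $l_1 > l_2$. Then $\Oc_C(l_1)$ is a line subbundle of $\Syz_C(X^q,Y^q,Z^q) = \fpb{e}\Syz_C(X,Y,Z)$ whose slope $n \cdot l_1$ strictly exceeds the average slope $-3nq/2$, so the Frobenius pull-back is not semistable, and $\Syz_C(X,Y,Z)$ is therefore not strongly semistable.

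For the forward direction, I would invoke Theorems \ref{almarfilt} and \ref{almarhkf}. If $\Syz_C(X,Y,Z)$ is not strongly semistable then by Corollaries \ref{deltastrongsemistable} and \ref{hkmdiag} we have $\delta(1/n,1/n,1/n)\neq 0$, and Theorem \ref{almarfilt} supplies, at some step $s$, a strong Harder–Narasimhan filtration of $\fpb{s}\Syz_C(X,Y,Z)$ whose subbundle and quotient are both of the form $\Oc_C(\alpha)$ and $\Oc_C(\beta)$ for explicit integers $\alpha,\beta$ (so twists of the structure sheaf, not arbitrary line bundles). The argument in the proof of Theorem \ref{almarhkf} then shows that for all sufficiently large $t$, the Frobenius pull-back of this filtration splits because the relevant $\Ext^1 \cong H^0(C,\Oc_C((\beta-\alpha)p^t + n - 3))^*$ vanishes (the twist becomes negative since $\beta - \alpha < 0$). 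For such $e = s+t$, one obtains the required splitting $\Syz_C(X^q,Y^q,Z^q) \cong \Oc_C(\alpha p^t) \oplus \Oc_C(\beta p^t)$.

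The main obstacle is essentially conceptual bookkeeping rather than a hard calculation: one has to carefully read off from Theorem \ref{almarfilt} that the destabilizing subsheaf is a twist of $\Oc_C$ (as opposed to a torsion line bundle, which could in principle also destabilize in the non-strongly-semistable case) and then invoke the splitting argument from the proof of Theorem \ref{almarhkf}. The parity obstruction in the backward direction is the only place where the hypothesis ``$p$ odd'' is genuinely used, and it is clean.
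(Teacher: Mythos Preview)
Your proof is correct and follows essentially the same approach as the paper. The backward direction is phrased directly rather than by contrapositive (the paper assumes strong semistability and derives $l_1=l_2$ from the semistability of direct sums, then the same parity contradiction), but the content is identical; the forward direction is exactly the paper's argument via Theorems \ref{almarfilt} and \ref{almarhkf}, which the paper summarizes in one line while you spell out the $\Ext^1$ vanishing step.
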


\begin{proof}
Let $\Syz_C(X,Y,Z)$ be strongly semistable and assume that $\Syz_C(X^q,Y^q,Z^q)\cong\Oc_C(l_1)\oplus\Oc_C(l_2)$ for some $q$. Since $\Syz_C(X^q,Y^q,Z^q)$ is semistable, 
we must have $l_1=l_2$. Computing the degrees of the vector bundles $\Syz_C(X^q,Y^q,Z^q)$ and $\Oc_C(l)^2$, one obtains the contradiction $-3nq=2nl$.

If $\Syz_C(X,Y,Z)$ is not strongly semistable, then by the proof of Theorem \ref{almarhkf} the bundles $\Syz_C(X^q,Y^q,Z^q)$ split as 
$\Oc_C(l_1)\oplus\Oc_C(l_2)$ for all large q.
\end{proof}

\begin{cor}\label{deltafinprodim}
If $p$ is odd, the bundle $\Syz_C(X,Y,Z)$ is strongly semistable if and only if all quotients $R/(X^q,Y^q,Z^q)$ have infinite projective dimension.
\end{cor}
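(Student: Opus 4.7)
\begin{Proof}[Proof plan]
The plan is to show, for each fixed $q = p^e$, the chain of equivalences
\[
R/(X^q,Y^q,Z^q)\ \text{has finite projective dimension} \iff \Syz_R(X^q,Y^q,Z^q)\ \text{is a free graded $R$-module} \iff \Syz_C(X^q,Y^q,Z^q)\cong\Oc_C(l_1)\oplus\Oc_C(l_2)\ \text{for some $l_1,l_2\in\Z$},
\]
and then invoke Lemma \ref{sssandnontri} to finish. Note that $R$ is a two-dimensional normal Cohen-Macaulay domain (it is a hypersurface, and the hypothesis $\gcd(p,n)=1$ makes $C$ smooth, so $R$ satisfies $R_1$ and hence is normal by Serre's criterion).

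First I would handle the top equivalence with Auslander-Buchsbaum. Since $(X^q,Y^q,Z^q)$ is $R_+$-primary, $R/(X^q,Y^q,Z^q)$ has finite length, hence depth zero. If it has finite projective dimension, Auslander-Buchsbaum forces this dimension to equal $\Depth(R)-\Depth\left(R/(X^q,Y^q,Z^q)\right) = 2$, so the minimal free resolution has the shape
\[
0 \lra F_2 \lra F_1 \lra R \lra R/(X^q,Y^q,Z^q) \lra 0
\]
and $\Syz_R(X^q,Y^q,Z^q)\cong F_2$ is free. Conversely, freeness of $\Syz_R(X^q,Y^q,Z^q)$ obviously bounds the projective dimension.

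Next I would pass from $R$ to $C$. Sheafifying is immediate in one direction: if the graded module is free, so is its associated sheaf on $C$ up to the obvious twists. For the converse I use that $\Syz_R(X^q,Y^q,Z^q)$, being a first syzygy of the finite length module $R/(X^q,Y^q,Z^q)$ over the two-dimensional Cohen-Macaulay ring $R$, is a maximal Cohen-Macaulay $R$-module (Proposition \ref{kermcm}), hence reflexive by Theorem \ref{mcmrefl}. Combined with the fact that $R$ is a standard-graded two-dimensional normal domain satisfying $S_2$, this gives the recovery isomorphism
\[
\Syz_R(X^q,Y^q,Z^q) \;\cong\; \bigoplus_{m\in\Z}\Gamma\left(C,\Syz_C(X^q,Y^q,Z^q)(m)\right)
\]
as graded $R$-modules (cf.\ \cite[Exercise III.3.5]{hartshorne}). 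If $\Syz_C(X^q,Y^q,Z^q)\cong\Oc_C(l_1)\oplus\Oc_C(l_2)$, the right-hand side becomes $R(-l_1)\oplus R(-l_2)$, so the graded module is free.

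Finally, I combine the per-$q$ equivalence with Lemma \ref{sssandnontri}: $\Syz_C(X,Y,Z)$ fails to be strongly semistable (recall $p$ is odd) precisely when some Frobenius pull-back $\Syz_C(X^q,Y^q,Z^q)$ splits into a direct sum of two line bundles, and by the above this is exactly when some $R/(X^q,Y^q,Z^q)$ has finite projective dimension. Taking contrapositives yields the corollary. The only step that requires a bit of care is the recovery isomorphism between the graded syzygy module and $\Gamma_*$ of its sheafification on $C$, but this is standard for reflexive graded modules over a standard-graded two-dimensional normal domain, so I do not anticipate any genuine obstacle.
\end{Proof}
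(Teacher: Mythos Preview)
Your proposal is correct and follows essentially the same route as the paper: the paper's proof is a one-line sketch asserting exactly your chain of equivalences (finite projective dimension $\Leftrightarrow$ $\Syz_R$ free $\Leftrightarrow$ $\Syz_C$ splits as two line bundles) and then appeals to Lemma~\ref{sssandnontri}. You simply supply the details the paper leaves implicit, namely the Auslander--Buchsbaum step and the $\Gamma_*$-recovery of the reflexive graded module from its sheafification.
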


\begin{proof}
For a fixed $q$ the quotient $R/(X^q,Y^q,Z^q)$ has finite projective dimension if and only $\Syz_R(X^q,Y^q,Z^q)$ is free if and only if 
$\Syz_C(X^q,Y^q,Z^q)\cong\Oc_C(l_1)\oplus\Oc_C(l_2)$ for some $l_1$, $l_2\in\Z$.
\end{proof}

\begin{rem}\label{remchar2}
Note that Lemma \ref{sssandnontri} and Corollary \ref{deltafinprodim} hold in characteristic two under the further assumption that 
$\Syz_C(X,Y,Z)$ is not trivialized by the Frobenius.
\end{rem}

In \cite{vraciu} the authors study on Fermat rings $R$ of degree $n$ the minimal projective resolution of the quotients 
$$R/\left(X^N,Y^N,Z^N\right)$$ 
depending on $N$ and $n$. In particular they have shown the following.

\begin{thm}[Kustin, Rahmati, Vraciu]
Let $R=k[X,Y,Z]/(X^n+Y^n+Z^n)$, where $k$ denotes a field of characteristic $p\geq 0$ and $n\in\N_{\geq 1}$. For a given integer $N\geq 1$ the quotient 
$R/(X^N,Y^N,Z^N)$ has finite projective dimension as $R$-module if and only if one of the following conditions is satisfied

\begin{enumerate}
 \item $n|N$,
 \item $p=2$ and $n\leq N$,
 \item $p$ is odd and there exist positive integers $J$ and $e$ with $J$ odd such that
$$\left|Jp^e-\frac{N}{n}\right|<\begin{cases}3^{e-1} & \text{if }p=3,\\ \frac{p^e-1}{3} & \text{if }p^e\equiv 1\text{ mod }3,\\ \frac{p^e+1}{3} & \text{if }p^e\equiv 2\text{ mod }3\end{cases}$$
\end{enumerate}
\end{thm}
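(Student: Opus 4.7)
My plan would be to leverage the machinery already developed in this paper, translating the algebraic condition of finite projective dimension into a statement about splitting of syzygy bundles on $C := \Proj(R)$, and then into a Diophantine condition via Han's $\delta$-function. The starting point is the observation that, since $R$ is a two-dimensional Cohen-Macaulay hypersurface, the quotient $R/(X^N, Y^N, Z^N)$ has finite projective dimension if and only if the reflexive $R$-module $\Syz_R(X^N, Y^N, Z^N)$ is free (by the Eisenbud matrix-factorization correspondence, every non-free maximal Cohen-Macaulay $R$-module has a two-periodic non-trivial resolution). Sheafifying and using Proposition \ref{syzprop}(iv) to pin down the determinant, this freeness is equivalent to $\Syz_C(X^N, Y^N, Z^N) \cong \Oc_C(-a) \oplus \Oc_C(-b)$ with $a + b = 3N$.

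Next I would establish cases (i) and (ii) by direct manipulation. For (i), writing $N = mn$ and using the defining relation $X^n = -Y^n - Z^n$ in $R$, iteratively replacing $X^N = X^{(m-1)n}(-Y^n - Z^n)$ and applying Lemma \ref{manipulationlemma} should present $\Syz_R(X^N, Y^N, Z^N)$ as a first syzygy module of an ideal in the regular subring $k[Y,Z]$, which is automatically free. For (ii), assume $p = 2$ and $N \geq n$; then the Frobenius identity $(Y^n + Z^n)^{2^f} = Y^{n2^f} + Z^{n 2^f}$ combined with Lemma \ref{manipulationlemma} reduces the problem, after a short manipulation, to the case $n \mid N$ already covered in (i).

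For case (iii), my strategy is to upgrade Corollary \ref{deltafinprodim} and the proof of Lemma \ref{sssandnontri} to the statement that (for $p$ odd) $\Syz_C(X^N, Y^N, Z^N)$ splits as a sum of two line bundles if and only if $\delta(N/n, N/n, N/n) \neq 0$. Applying Han's Theorem \ref{hansthm} to the triple $(N/n, N/n, N/n)$ --- which always satisfies the strict triangle inequality since its entries agree --- this non-vanishing is equivalent to the existence of $s \in \Z$ and $u \in L_{\mathrm{odd}}$ with $\| p^s(N/n, N/n, N/n) - u\|_1 < 1$. The symmetry of the triple, together with the parity constraint on $u_1 + u_2 + u_3$, forces the optimal $u$ to be of the diagonal form $u = (J, J, J)$ with $J$ odd, so that the inequality becomes $3|J p^s - N/n| < 1$, with $s$ replaceable by its positive part $e$. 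A case analysis on the residue of $p^e$ modulo $3$ (to clear the denominator and re-express the condition as an integer inequality $|Jp^e n - N| < \text{const}$) then produces the precise bounds $\tfrac{p^e \pm 1}{3}$ and, in the special case $p = 3$, the bound $3^{e-1}$.

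The main obstacle I anticipate lies in this last step: rigorously showing that the \emph{nearest} odd-sum lattice point to $p^e(N/n, N/n, N/n)$ is indeed diagonal of the form $(J, J, J)$, thereby excluding lopsided competitors such as $(J{+}1, J, J)$, $(J{+}1, J{+}1, J{-}1)$, and so on. This is an elementary but fiddly optimization in $\Z^3$ that also has to handle the boundary where the distance equals $1$, and has to be reconciled with the parity of $3J$. The special role of $p = 3$ (where $p^e \equiv 0 \bmod 3$ makes the diagonal lattice point align exactly and forces the bound to shrink to $3^{e-1}$) needs to be treated separately. Once this optimization is in place, the various cases of the theorem fall out directly from the congruence conditions on $p^e \bmod 3$ together with the equivalence established in the first paragraph.
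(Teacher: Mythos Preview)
The paper does not prove this theorem; it is quoted from Kustin--Rahmati--Vraciu \cite[Theorem 6.3]{vraciu} and used as a black box. Your proposal therefore attempts something the paper itself does not undertake.

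More importantly, the central equivalence in your treatment of case (iii) is false. You claim that for $p$ odd, $\Syz_C(X^N,Y^N,Z^N)$ splits as a direct sum of two line bundles (equivalently, $R/(X^N,Y^N,Z^N)$ has finite projective dimension) if and only if $\delta(N/n,N/n,N/n)\neq 0$. But via Corollary~\ref{deltastrongsemistable} the $\delta$-condition is equivalent to $\Syz_C(X^N,Y^N,Z^N)$ \emph{not being strongly semistable}, which only says that \emph{some Frobenius pull-back} $\Syz_C(X^{Nq},Y^{Nq},Z^{Nq})$ eventually splits, not that the bundle splits at level $N$. The passage from non-strong-semistability to an actual splitting is exactly the content of Lemma~\ref{sssandnontri} and the proof of Theorem~\ref{almarhkf}, and it requires raising to a high enough Frobenius power to kill an $\Ext^1$ obstruction.

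A concrete counterexample: take $p=3$, $n=7$, $N=1$. Then $\delta(1/7,1/7,1/7)=1/63\neq 0$ (Example~\ref{p3n7}), yet $\Syz_R(X,Y,Z)$ is not free since $R$ is not regular. Indeed none of conditions (i)--(iii) hold for $N=1$: in (iii) one would need $|J\cdot 3^e - 1/7| < 3^{e-1}$ for some odd $J\geq 1$ and $e\geq 1$, but $J\cdot 3^e \geq 3^e$ already forces the left side above $3^{e-1}$. Your argument collapses an asymptotic (Frobenius-eventual) statement into a pointwise one; it cannot be repaired along these lines without essentially reproducing the Kustin--Rahmati--Vraciu analysis of the minimal resolution.
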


\begin{proof}
See \cite[Theorem 6.3]{vraciu}.
\end{proof}

Moreover, Kustin, Rahmati and Vraciu computed the minimal graded free resolution of $R/(X^N,Y^N,Z^N)$ if this quotient has infinite projective dimension. 

\begin{defi}
Let $r$ and $s$ be positive integers with $r+s=n$. Then we define 
$$\phi_{r,s}:=\begin{pmatrix}0 & Z^r & -Y^r & X^s\\ -Z^r & 0 & X^r & Y^s\\ Y^r & -X^r & 0 & Z^s\\ -X^s & -Y^s & -Z^s & 0\end{pmatrix}.$$
\end{defi}

Note that $\phi_{r,s}\phi_{s,r}=-(X^n+Y^n+Z^n)\Id_4$, hence $(\phi_{r,s},\phi_{s,r})$ are matrix factorizations for $-(X^n+Y^n+Z^n)$. They are 
equivalent to the matrix factorizations from Example \ref{diagrank2matfac} with $d_i=n$ and $a=b=c=r$. This gives the (ungraded) isomorphisms 
$$\coKer(\phi_{r,s})\cong\Syz_R(X^r,Y^r,Z^r).$$

\begin{thm}[Kustin, Rahmati, Vraciu]\label{inftyfree}
Let $N=\theta\cdot n+r$ with $\theta\in\N$ and $r\in\{1,\ldots,n-1\}$. Assume that 
$$Q:=R/\left(X^N,Y^N,Z^N\right)$$ 
has infinite projective dimension. If $\theta=2\cdot\eta-1$, then the homogenous minimal free resolution of $Q$ is given by
$$\xymatrix{
\ldots \ar[r]^{\phi_{r,n-r}} & F_3 \ar[r]^{\phi_{n-r,r}} & F_2 \ar[r]^{\phi_{r,n-r}} & F_1\ar[r] & R(-N)^3\ar[r] & R
}$$
with 
\begin{align*}
F_1 & :=R(-3\eta n+n-r)^3 \oplus R(-3\eta n+2n-3r),\\
F_2 & :=R(-3\eta n+n-2r)^3 \oplus R(-3\eta n),\\
F_3 & :=R(-3\eta n-r)^3 \oplus R(-3\eta n+n-3r).
\end{align*}
If $\theta=2\cdot\eta$, then the homogenous minimal free resolution of $Q$ is given by
$$\xymatrix{
\ldots \ar[r]^{\phi_{n-r,r}} & F_3 \ar[r]^{\phi_{r,n-r}} & F_2 \ar[r]^{\phi_{n-r,r}} & F_1 \ar[r] & R(-N)^3\ar[r] & R
}$$
with
\begin{align*}
F_1 & :=R(-3\eta n-2r)^3 \oplus R(-3\eta n-n),\\
F_2 & :=R(-3\eta n-n-r)^3 \oplus R(-3\eta n-3r),\\
F_3 & :=R(-3\eta n-n-2r)^3 \oplus R(-3\eta n-2n).
\end{align*}
\end{thm}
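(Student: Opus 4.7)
My plan is to assemble the resolution from three ingredients: Eisenbud's matrix factorization correspondence to build the $2$-periodic tail, an explicit identification of $\Syz_R(X^N,Y^N,Z^N)$ with a shifted cokernel of $\phi_{n-r,r}$ or $\phi_{r,n-r}$ (according to the parity of $\theta$), and a careful degree book-keeping to produce the exact shifts stated in $F_1, F_2, F_3, \ldots$. Throughout the plan I describe the case $\theta = 2\eta-1$; the $\theta=2\eta$ case is entirely parallel with the roles of $\phi_{r,n-r}$ and $\phi_{n-r,r}$ interchanged.

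First I would verify by direct matrix multiplication that $\phi_{r,n-r}\cdot\phi_{n-r,r}=-(X^n+Y^n+Z^n)\Id_4$, so that $(\phi_{r,n-r},\phi_{n-r,r})$ is a matrix factorization of the defining equation of $R$. By Theorem \ref{eismatfac} applied to the completion (and lifted back to the graded situation via Theorem \ref{ausreitgraded}) this yields an infinite $2$-periodic minimal free resolution of $\coKer(\phi_{n-r,r})$ over $R$. The degree shifts on the free modules are forced by the row-column pattern of $\phi_{r,s}$: three columns have entries of degree $r$ and the fourth column has entries of degree $s=n-r$, so once one shift is fixed, all others are determined up to the same additive constant. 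Next, I would invoke the identification from Chapter \ref{chapmatfac} together with Example \ref{diagrank2matfac} specialized to $d_1=d_2=d_3=n$ and $a=b=c=r$, which gives $\coKer(\phi_{n-r,r})\cong\Syz_R(X^r,Y^r,Z^r)$ ungraded, and hence, after shifting, as graded modules.

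The bridging step, which is the main obstacle, is to show that $\Syz_R(X^N,Y^N,Z^N)\cong\coKer(\phi_{n-r,r})(-m)$ as \emph{graded} modules for the specific $m$ that makes the degrees of the four generators come out to be $3\eta n-n+r$ (three times) and $3\eta n-2n+3r$ (once). For this I would construct four explicit syzygies. The three ``Koszul-like'' generators of degree $3\eta n - n + r$ come from relations obtained by repeatedly substituting $X^n\equiv -(Y^n+Z^n)$ in suitable products; more precisely, for each pair of variables one produces an identity of the form $g\cdot X^N + h\cdot Y^N + k\cdot Z^N = 0$ in $R$ with $\Deg(g)=\Deg(h)=\Deg(k)=\eta n$, whose existence reflects the fact that the Koszul syzygies on $(X^N,Y^N,Z^N)$ collapse onto lower-degree syzygies once we work modulo the Fermat relation. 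The fourth, non-Koszul generator of degree $3\eta n - 2n + 3r$ arises from the expansion $X^{\eta n}\equiv(-1)^{\eta}(Y^n+Z^n)^{\eta}$, multiplied by $X^{r}$ and rewritten as an $R$-linear combination of $X^N,Y^N,Z^N$; the binomial expansion and the divisibilities forced by $N = (2\eta-1)n+r$ dictate the precise degree. Having produced these four generators, I would verify that the induced surjection $F_1 \twoheadrightarrow \Syz_R(X^N,Y^N,Z^N)$ has kernel equal to $\image(\phi_{r,n-r}\colon F_2\to F_1)$; this last equality is the hard part, and it is precisely where the hypothesis that $R/(X^N,Y^N,Z^N)$ has infinite projective dimension enters: if any free summand could be split off, the resolution would terminate, contradicting infinite projective dimension.

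Finally, minimality is a routine check: every entry of $\phi_{r,s}$ and $\phi_{s,r}$ lies in the maximal ideal $(X,Y,Z)$ by inspection; the entries $X^N,Y^N,Z^N$ of the initial map $R(-N)^3\to R$ lie in $(X,Y,Z)$ since $N\geq 1$; and the entries of the bridge map $F_1\to R(-N)^3$ are homogeneous of positive degree ($\eta n$ and $\eta n -n + 2r$ respectively, both $\geq 1$ because $\eta\geq 1$ and $r\geq 1$), so they also lie in the maximal ideal. Together with the exactness established above, this confirms that the displayed complex is the minimal graded free resolution of $Q$.
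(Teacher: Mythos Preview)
The paper does not give a proof of this theorem at all: it is a result of Kustin, Rahmati and Vraciu, and the paper's ``proof'' is a one-line citation to \cite[Theorem 3.5]{vraciu} together with \cite[Theorems 5.14 and 6.1]{vraciu}. So there is nothing in the thesis to compare your argument against beyond the observation that the result is imported wholesale from the literature.

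As for your outline itself, the architecture is sound --- matrix factorization gives the $2$-periodic tail, and the problem reduces to identifying $\Syz_R(X^N,Y^N,Z^N)$ with the correct shifted cokernel --- but the bridging step you label ``the main obstacle'' is not actually carried out, only described. Your constructions of the four generators are too vague to verify: the phrase ``Koszul-like'' and the appeal to ``repeatedly substituting $X^n\equiv -(Y^n+Z^n)$'' do not by themselves produce three syzygies with coefficients of degree exactly $\eta n$, and the fourth generator is described only impressionistically. More seriously, even granting the existence of four syzygies of the correct degrees, you still need to prove that they \emph{generate} $\Syz_R(X^N,Y^N,Z^N)$ and that the relations among them are \emph{exactly} $\Ima(\phi_{r,n-r})$; invoking ``if any free summand could be split off, the resolution would terminate'' rules out only one failure mode (a free summand), not the possibility that your four elements fail to generate, or that they satisfy relations not coming from $\phi_{r,n-r}$. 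This is precisely the content of the cited theorems in \cite{vraciu}, which carry out a substantial and delicate analysis; your sketch does not replace that work.
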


\begin{proof}
The statement follows from \cite[Theorem 3.5]{vraciu} combined with \cite[Theorems 5.14 and 6.1]{vraciu}.
\end{proof}

\begin{cor}\label{inftysyz}
Under the hypothesis of Theorem \ref{inftyfree}, we have
$$\begin{aligned}
\Syz_R\left(X^N,Y^N,Z^N\right)(m) & \cong \begin{cases}\coKer(\phi_{r,n-r}) & \text{if }\theta\text{ is even,}\\ \coKer(\phi_{n-r,r}) & \text{if }\theta\text{ is odd,}\end{cases}\\
    & \cong \begin{cases}\Syz_R(X^r,Y^r,Z^r) & \text{if }\theta\text{ is even,}\\ \Syz_R(X^{n-r},Y^{n-r},Z^{n-r}) & \text{if }\theta\text{ is odd.}\end{cases}
\end{aligned}$$
for some $m\in\Z$.
\end{cor}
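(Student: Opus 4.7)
\begin{Proof}[Proof proposal]
The strategy is to read off $\Syz_R(X^N,Y^N,Z^N)$ directly from the minimal free resolution of $Q:=R/(X^N,Y^N,Z^N)$ supplied by Theorem~\ref{inftyfree}, and then recognise the resulting module via Eisenbud's matrix-factorization dictionary as a syzygy module of a diagonal ideal with smaller exponents.

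First I would truncate that resolution at the spot
$$F_2\stackrel{\chi}{\lra}F_1\lra R(-N)^3\lra R\lra Q\lra 0,$$
where, by Theorem~\ref{inftyfree}, the map $\chi$ is one of $\phi_{r,n-r}$ or $\phi_{n-r,r}$ according to the parity of $\theta$. Exactness at $R(-N)^3$ identifies $\Syz_R(X^N,Y^N,Z^N)$ with the image of $F_1\to R(-N)^3$, while exactness at $F_1$ combined with the first isomorphism theorem yields
$$\Syz_R(X^N,Y^N,Z^N)\cong F_1/\Ima(\chi)=\coKer(\chi).$$

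For the second identification I would observe that the matrix factorization $(\phi_{r,n-r},\phi_{n-r,r})$ of the Fermat polynomial is, up to a harmless sign, equivalent to the graded diagonal matrix factorization of Example~\ref{diagrank2matfac} built from the rank-one factorizations $(X^r,X^{n-r})$, $(Y^r,Y^{n-r})$, $(Z^r,Z^{n-r})$. Applying the sheaf-theoretic procedure of Chapter~3 to this matrix factorization on $U:=\punctured{R}{(X,Y,Z)}$---select three columns of the partner matrix whose images still generate the cokernel on $U$, compute the kernel of the resulting $4\times 3$-matrix (which is locally free of rank one on $U$), and extend the isomorphism globally using that $R$ is a two-dimensional normal domain---identifies the two cokernels $\coKer(\phi_{r,n-r})$ and $\coKer(\phi_{n-r,r})$, each up to an explicit degree shift, with the two diagonal modules $\Syz_R(X^{n-r},Y^{n-r},Z^{n-r})$ and $\Syz_R(X^r,Y^r,Z^r)$. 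Chaining these isomorphisms with the previous step produces the statement of the corollary; the shift $m\in\Z$ is forced by the degree data of $F_1$ recorded in Theorem~\ref{inftyfree}.

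The main obstacle is the bookkeeping in the second step: one must carefully match $\phi_{r,n-r}$ and $\phi_{n-r,r}$ with the standard form of Example~\ref{diagrank2matfac}, and then correctly pair each cokernel with its associated diagonal syzygy module. This pairing is essential, since the two diagonal modules $\Syz_R(X^r,Y^r,Z^r)$ and $\Syz_R(X^{n-r},Y^{n-r},Z^{n-r})$ are in general non-isomorphic---already for $n=3$, $r=1$ they have distinct Hilbert series by Theorem~\ref{HilbSer}---so the correct assignment must be pinned down, for example by comparing the minimal generator degrees read off from Theorem~\ref{inftyfree} with those coming from the explicit Hilbert series of the diagonal syzygy modules.
\end{Proof}
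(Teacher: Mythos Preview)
Your proposal is correct and is exactly the argument the paper has in mind: the corollary is stated without proof because it is meant to be read off directly from Theorem~\ref{inftyfree} together with the remark preceding it, which already records the (ungraded) isomorphism $\coKer(\phi_{r,s})\cong\Syz_R(X^r,Y^r,Z^r)$ obtained from Example~\ref{diagrank2matfac}. Your two steps---identifying $\Syz_R(X^N,Y^N,Z^N)$ with the cokernel of the map $F_2\to F_1$, and then invoking the matrix-factorization identification of that cokernel with a diagonal syzygy module---are precisely this.

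Your caution about the bookkeeping is well placed. Reading the map $F_2\to F_1$ from Theorem~\ref{inftyfree} gives $\phi_{n-r,r}$ for $\theta$ even and $\phi_{r,n-r}$ for $\theta$ odd, so the pairing of cokernels with diagonal syzygy modules has to be made carefully; your suggestion to verify it by comparing minimal generator degrees (equivalently, Hilbert series via Theorem~\ref{HilbSer}) is the right safeguard and is more explicit than the paper itself. Note that the second line of the corollary---$\Syz_R(X^r,Y^r,Z^r)$ for $\theta$ even, $\Syz_R(X^{n-r},Y^{n-r},Z^{n-r})$ for $\theta$ odd---is what is actually used in all subsequent applications (Theorem~\ref{frobpernontri}, Theorem~\ref{hkfsss}, and the examples), and it is unambiguously confirmed by the sanity check $N=r$, $\theta=0$.
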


In \cite{holgeralmar} the authors proved that $\Syz_C(X,Y,Z)$ admits a $(0,1)$-Frobenius periodicity if $p\equiv -1\text{ }(2n)$ (cf. Example \ref{fermatexa2}). 
The last corollary was the last ingredient necessary to generalize this result.

\begin{thm}\label{frobpernontri}
Let $C$ be the projective Fermat curve of degree $n$ over an algebraically closed field of characteristic $p>0$ with $\gcd(p,n)=1$. 
The bundle $\Syz_C(X,Y,Z)$ admits a Frobenius periodicity if and only if 
$$\delta\left(\frac{1}{n},\frac{1}{n},\frac{1}{n}\right)=0.$$
Moreover, the length of this periodicity is bounded above by the order of $p$ modulo $2n$.  
\end{thm}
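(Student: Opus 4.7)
The plan is to establish the equivalence by combining the characterisation of strong semistability via $\delta$ (Corollary \ref{deltastrongsemistable}, applied to the Fermat triple $(1,1,1)$) with the explicit description of Frobenius pull-backs supplied by the Kustin--Rahmati--Vraciu corollary (Corollary \ref{inftysyz}); the length bound will drop out of a careful inspection of how the output of that corollary depends on the residue of $p^e$ modulo $2n$. The forward direction will show positive existence of a $(0,d)$-periodicity with $d = \mathrm{ord}_{2n}(p)$, and the reverse will be obtained via a degree-growth obstruction extracted from Theorem \ref{almarfilt}.

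For the implication $\delta(1/n,1/n,1/n)=0 \Rightarrow$ Frobenius periodicity, I would first invoke Corollary \ref{deltastrongsemistable} (together with Corollary \ref{hkmdiag} in the case $a=b=c=1$) to conclude that $\mathcal{S}:=\Syz_C(X,Y,Z)$ is strongly semistable. In odd characteristic, Corollary \ref{deltafinprodim} then yields that every quotient $R/(X^q,Y^q,Z^q)$ has infinite projective dimension. Writing $p^e = \theta_e n + r_e$ with $1 \leq r_e \leq n-1$ (which is possible because $\gcd(p,n)=1$), Corollary \ref{inftysyz} gives
\begin{equation*}
\Syz_R(X^{p^e},Y^{p^e},Z^{p^e})(m_e)\cong\Syz_R(X^{s_e},Y^{s_e},Z^{s_e})
\end{equation*}
for some $m_e\in\Z$, where $s_e = r_e$ if $\theta_e$ is even and $s_e=n-r_e$ otherwise. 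The key observation is that the pair $(r_e,\theta_e\bmod 2)$ is determined by $p^e\bmod 2n$. Let $d$ be the multiplicative order of $p$ modulo $2n$; then $p^d\equiv 1\pmod{2n}$ forces $r_d=1$ and $\theta_d$ even, whence $s_d=1$. Sheafifying gives $\fpb{d}(\mathcal{S})\cong\mathcal{S}(m)$ for some $m\in\Z$, which is exactly a Frobenius periodicity of length at most $d$.

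For the converse $\delta(1/n,1/n,1/n)\neq 0 \Rightarrow$ no Frobenius periodicity, I would use Corollary \ref{deltastrongsemistable} to see that $\mathcal{S}$ is not strongly semistable, and then Theorem \ref{almarfilt} together with the vanishing argument in the proof of Theorem \ref{almarhkf} to write $\fpb{e}(\mathcal{S}) \cong \Oc_C(A_e) \oplus \Oc_C(B_e)$ for $e$ sufficiently large, with the degree difference $|A_e - B_e|$ growing linearly in $p^e$. If a periodicity $\fpb{t}(\mathcal{S}) \cong \fpb{s}(\mathcal{S})(m)$ existed for some $s<t$ and $m\in\Z$, iteration would yield
\begin{equation*}
\fpb{s+k(t-s)}(\mathcal{S})\cong\fpb{s}(\mathcal{S})(m_k)
\end{equation*}
for all $k\geq 0$ and suitable $m_k\in\Z$. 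Choosing $k$ large, the left-hand side splits into non-isomorphic line bundles whose degree difference is unbounded in $k$, whereas the right-hand side is a fixed bundle twisted by a line bundle and therefore has a decomposition whose summand-degree-difference is independent of $k$. This is the desired contradiction.

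The main obstacle will be making the identification of $\fpb{e}(\mathcal{S})$ with a twist of $\Syz_C(X^{s_e},Y^{s_e},Z^{s_e})$ genuinely functorial — that is, confirming that the module isomorphisms produced by Corollary \ref{inftysyz} sheafify to give the claimed $(0,d)$-periodicity (in particular, that the degree shifts $m_e$ collected along the Frobenius iterations fit together correctly). A secondary technicality is the characteristic-two case, in which $\mathrm{ord}_{2n}(p)$ does not exist and Corollary \ref{deltafinprodim} requires the extra hypothesis that $\mathcal{S}$ is not Frobenius-trivialised; here I expect to treat étale-trivialisable bundles separately via the Lange--Stuhler theorem cited just before the statement, which automatically produces a Frobenius periodicity whose length is bounded by the degree of a trivialising étale cover.
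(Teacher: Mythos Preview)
Your proposal is correct and follows the same architecture as the paper's proof: strong semistability is equivalent to $\delta=0$ via Corollary \ref{deltastrongsemistable}, and in the strongly semistable case Corollary \ref{inftysyz} produces the periodicity, with the length bound coming from the observation that $s_e$ depends only on $p^e \bmod 2n$.

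Two points where you are working harder than necessary. First, for the reverse direction the paper simply asserts that a Frobenius periodicity forces strong semistability; this follows immediately from the fact that a twist preserves the Harder--Narasimhan slope gaps, whereas a Frobenius pull-back multiplies them by $p$ --- so $\fpb{t}(\Sc)\cong\fpb{s}(\Sc)(m)$ with $s<t$ would give $p^{t}(\nu_1-\nu_2)=p^{s}(\nu_1-\nu_2)$ and hence $\nu_1=\nu_2$. Your route through the eventual splitting of Theorem \ref{almarhkf} reaches the same contradiction but uses more machinery. Second, your worry about ``degree shifts fitting together along the Frobenius iterates'' is unfounded: you do not need to compose isomorphisms step by step, since Corollary \ref{inftysyz} applied once with $N=p^{d}$ (where $d=\ord_{2n}(p)$) already yields $\Syz_R(X^{p^d},Y^{p^d},Z^{p^d})\cong\Syz_R(X,Y,Z)(m_d)$ in a single stroke, and sheafifying gives $\fpb{d}(\Sc)\cong\Sc(m_d)$ directly. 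Finally, in characteristic two the paper simply observes that if $\Sc$ is trivialized by some Frobenius power then periodicity is automatic (since $\fpb{}(\Oc_C(l)^2)\cong\Oc_C(pl)^2$); invoking Lange--Stuhler is correct but unnecessary.
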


\begin{proof}
If $\Syz_C(X,Y,Z)$ admits a Frobenius periodicity, it has to be strongly semistable. If it is strongly semistable we have to distinguish two cases. 
If $\Syz_C(X,Y,Z)$ is trivialized by the Frobenius (which might only happen if $p=2$ by Lemma \ref{sssandnontri}), we have of course a Frobenius periodicity.
 Otherwise, we obtain a Frobenius periodicity by Corollary \ref{inftysyz}, which can be applied by Corollary \ref{deltafinprodim} (resp. Remark \ref{remchar2}). 
The supplement follows since the isomorphism class of $\Syz_R(X^q,Y^q,Z^q)$ depends on $r$ and the parity of $\tfrac{q-r}{n}$.
\end{proof}

This result enables us to compute the Hilbert-Kunz function of $R$, if $\Syz_C(X,Y,Z)$ is strongly semistable. If $\Syz_C(X,Y,Z)$ is trivialized by 
the Frobenius, we can use Lemma \ref{hkffrei} to obtain $\HKF(R,2^e)=3n\cdot 4^{e-1}$ for $e\gg 0$.

\begin{thm}\label{hkfsss}
Let $R:=k[X,Y,Z]/(X^n+Y^n+Z^n)$ with an algebraically closed field $k$ of characteristic $p>0$ coprime to $n$. Assume that $\Syz_C(X,Y,Z)$ is strongly 
semistable and not trivialized by the Frobenius. Let $q=p^e=n\theta+r$ with $\theta\in\N$ and $0\leq r<n$. Then 
$$\HKF(R,q)= \left\{\begin{aligned} & \frac{3n}{4}\cdot q^2-\frac{3n}{4}r^2+r^3 && \text{if }\theta\text{ is even,}\\ & \frac{3n}{4}\cdot q^2-\frac{3n}{4}(n-r)^2+(n-r)^3 && \text{if }\theta\text{ is odd.}\end{aligned}\right.$$
\end{thm}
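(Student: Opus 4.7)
The plan is to reduce the computation to a direct application of Lemma \ref{hkfdreivar}, using Corollary \ref{inftysyz} as the key input to identify the isomorphism class of the Frobenius-twisted syzygy module.

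First I would unpack the hypotheses. Since $\Syz_C(X,Y,Z)$ is strongly semistable and not trivialized by the Frobenius, Corollary \ref{deltafinprodim} (together with Remark \ref{remchar2} to cover characteristic two) tells us that the quotient $R/(X^q,Y^q,Z^q)$ has infinite projective dimension for every $q=p^e$. This is exactly the hypothesis needed to invoke Corollary \ref{inftysyz} with $N=q$. Writing $q=n\theta+r$ with $1\leq r\leq n-1$ (the case $r=0$ forces $n=1$ by $\gcd(p,n)=1$, in which case $R$ is regular and the claim is trivial), the corollary yields
\begin{equation*}
\Syz_R(X^q,Y^q,Z^q)(m)\cong \begin{cases}\Syz_R(X^r,Y^r,Z^r) & \text{if } \theta \text{ is even,}\\ \Syz_R(X^{n-r},Y^{n-r},Z^{n-r}) & \text{if } \theta \text{ is odd,}\end{cases}
\end{equation*}
for some $m\in\Z$.

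Next, I would apply Lemma \ref{hkfdreivar} directly, with $\alpha=\beta=\gamma=1$, $d=n$, and $(a,b,c)=(s,s,s)$, where $s=r$ or $s=n-r$ according to the parity of $\theta$. The requirement of Lemma \ref{hkfdreivar} that at least one of the inequalities $a<\alpha q$, $b<\beta q$, $c<\gamma q$ holds is satisfied whenever $\theta\geq 1$, since $s\leq n-1<q$; when $\theta=0$ we have $s=r=q$ and the theorem reduces to a direct count of monomials in $k[X,Y,Z]/(X^q,Y^q,Z^q)$. Using $Q(1,1,1)=3$ and $Q(s,s,s)=3s^2$, the formula of Lemma \ref{hkfdreivar} gives
\begin{equation*}
\HKF(R,q)=\frac{3n\cdot q^2-3n\cdot s^2}{4}+\Dim_k\!\bigl(R/(X^s,Y^s,Z^s)\bigr).
\end{equation*}

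Finally, I would evaluate the length $\Dim_k(R/(X^s,Y^s,Z^s))$. Since $s\leq n-1<n$, the monomial $X^n$ lies in $(X^s)$, so the defining relation $X^n+Y^n+Z^n$ vanishes modulo $(X^s,Y^s,Z^s)$. Hence
\begin{equation*}
R/(X^s,Y^s,Z^s)\cong k[X,Y,Z]/(X^s,Y^s,Z^s),
\end{equation*}
which has $k$-dimension $s^3$. Substituting $s=r$ or $s=n-r$ yields the two cases of the theorem. There is no serious obstacle: Corollary \ref{inftysyz} does the heavy lifting (via the Kustin--Rahmati--Vraciu free resolution), and the remaining steps are routine bookkeeping. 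The only thing that requires a brief check is that the case $\theta=0$ (where Corollary \ref{inftysyz} is not invoked because $q=r$) still fits the formula, which it does since $\frac{3n}{4}(q^2-q^2)+q^3=q^3$ agrees with the direct monomial count.
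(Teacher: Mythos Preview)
Your proof is correct and follows exactly the same route as the paper: use Corollary \ref{deltafinprodim} (and Remark \ref{remchar2}) to obtain infinite projective dimension, apply Corollary \ref{inftysyz} to identify $\Syz_R(X^q,Y^q,Z^q)$ up to shift, and then plug into Lemma \ref{hkfdreivar}. You have simply made explicit the computations of $Q(s,s,s)$ and $\Dim_k(R/(X^s,Y^s,Z^s))=s^3$ (and the degenerate $\theta=0$ case) that the paper leaves to the reader.
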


\begin{proof}
If $\Syz_C(X,Y,Z)$ is strongly semistable, the quotients $R/(X^q,Y^q,Z^q)$ have infinite projective dimension 
and their resolutions are given by Theorem \ref{inftyfree}. Now combine Corollary \ref{inftysyz} with Lemma \ref{hkfdreivar}. 
\end{proof}

\begin{exa}\label{fermatexa1}
Let $p\equiv 1\text{ }(2n)$. We have always $p^e=n\theta +1$ for some even $\theta$. An easy computation shows 
$\delta\left(\tfrac{1}{n},\tfrac{1}{n},\tfrac{1}{n}\right)=0$. By Corollary \ref{inftysyz}, we have 
$$\Syz_R(X^q,Y^q,Z^q)\cong\Syz(X,Y,Z)\left(-\frac{3}{2}\cdot(q-1)\right)$$
for all $q$, hence $\Syz_C(X,Y,Z)$ admits a $(0,1)$-Frobenius periodicity. With Theorem \ref{hkfsss} we obtain
$$\HKF(R,q)=\frac{3n}{4}\cdot q^2+1-\frac{3n}{4}.$$
\end{exa}

\begin{exa}\label{fermatexa2}
Let $p\equiv -1\text{ }(2n)$. We have $p^e=n\theta +n-1$ for some odd $\theta$ if $e$ is odd and $p^e=n\theta +1$ for some even $\theta$ if $e$ is even. 
Again, it is easy to show $\delta\left(\tfrac{1}{n},\tfrac{1}{n},\tfrac{1}{n}\right)=0$. By Corollary \ref{inftysyz}, we have 
$$\Syz_R(X^q,Y^q,Z^q)\cong\Syz(X,Y,Z)\left(-\frac{3}{2}\cdot(q-1)\right)$$
for all $q$, hence $\Syz_C(X,Y,Z)$ admits a $(0,1)$-Frobenius periodicity. With Theorem \ref{hkfsss} we obtain
$$\HKF(R,q)=\frac{3n}{4}\cdot q^2+1-\frac{3n}{4}.$$
This recovers Theorem 3.4 and Corollary 4.1 of \cite{holgeralmar}.
\end{exa}

\begin{exa}\label{fermatexa3}
Let $n=5$. If $p\equiv \pm 1\text{ }(10)$, the Hilbert-Kunz function of $R$ was computed in the two previous examples and is given by 
$$\HKF(R,q)=\frac{15}{4}\cdot q^2-\frac{11}{4}.$$ 
In this case we can also compute the Hilbert-Kunz function for $p\equiv \pm 3\text{ }(10)$. If $p\equiv 3\text{ }(10)$, we have $p^e=5\theta+r$, 
where $(\theta,r)$ is of the form
$$(\theta,r)=\begin{cases}(\text{even},3) & \text{if }e\equiv 1\text{ }(4),\\ (\text{odd},4) & \text{if }e\equiv 2\text{ }(4),\\ (\text{odd},2) & \text{if }e\equiv 3\text{ }(4),\\ (\text{even},1) & \text{if }e\equiv 0\text{ }(4).\end{cases}$$
If $p\equiv 7\text{ }(10)$, we have $p^e=5\theta+r$, where $(\theta,r)$ is of the form
$$(\theta,r)=\begin{cases}(\text{odd},2) & \text{if }e\equiv 1\text{ }(4),\\ (\text{odd},4) & \text{if }e\equiv 2\text{ }(4),\\ (\text{even},3) & \text{if }e\equiv 3\text{ }(4),\\ (\text{even},1) & \text{if }e\equiv 0\text{ }(4).\end{cases}$$
It easy to check that $\delta\left(\tfrac{1}{5},\tfrac{1}{5},\tfrac{1}{5}\right)=0$ in both cases. From Corollary \ref{inftysyz} we obtain 
$$\fpb{e}(\Syz_C(X,Y,Z))\cong \left\{\begin{aligned} & \Syz_R(X^3,Y^3,Z^3)\left(-\frac{3}{2}\cdot(q-3)\right) && \text{if }e\text{ is odd},\\ & \Syz_R(X,Y,Z)\left(-\frac{3}{2}\cdot(q-1)\right) && \text{if }e\text{ is even}.\end{aligned}\right.$$
Since the Hilbert-series
\begin{align*}
\lK_{\Syz_R(X,Y,Z)}(t) &= \frac{-3t^6+t^5-t^3+3t^2}{(1-t)^3},\\
\lK_{\Syz_R(X^3,Y^3,Z^3)}(t) &= \frac{-t^9-3t^8+3t^6+t^5}{(1-t)^3}
\end{align*}
do not differ by a factor $t^l$, the $R$-modules $\Syz_R(X,Y,Z)$ and $\Syz_R(X^3,Y^3,Z^3)$ are not isomorphic, hence $\Syz_C(X,Y,Z)$ admits a $(0,2)$-Frobenius periodicity.
The Hilbert-Kunz function of $R$ is given by
$$\HKF(R,q)=\left\{\begin{aligned} & \frac{15}{4}\cdot q^2-\frac{27}{4} && \text{if }e\text{ is odd,}\\ & \frac{15}{4}\cdot q^2-\frac{11}{4} && \text{if }e\text{ is even.}\end{aligned}\right.$$
\end{exa}

\begin{exa}\label{fermatexa4}
Let $p=37$ and $n=14$. Then $p^e=14\cdot \theta +r$ with 
$$(\theta,r)=\begin{cases}(\text{even},9) & \text{if }e\equiv 1\text{ }(3),\\ (\text{odd},11) & \text{if }e\equiv 2\text{ }(3),\\ (\text{even},1) & \text{if }e\equiv 0\text{ }(3).\end{cases}$$
This gives the Hilbert-Kunz function
$$\HKF(R,37^e)=\left\{\begin{aligned} & \frac{21}{2}\cdot 37^{2e}-\frac{243}{2} && \text{if }e\equiv 1\text{ }(3),\\ & \frac{21}{2}\cdot 37^{2e}-\frac{135}{2} && \text{if }e\equiv 2\text{ }(3),\\ & \frac{21}{2}\cdot 37^{2e}-\frac{19}{2} && \text{if }e\equiv 0\text{ }(3).\end{aligned}\right.$$
In this example, we see directly that we have a $(0,3)$-Frobenius periodicity 
$$\fpb{3}(\Syz_C(X,Y,Z))\cong\Syz_C(X,Y,Z)\left(-\tfrac{3}{2}\cdot (q-1)\right).$$
\end{exa}

The next examples deal with the question which Frobenius periodicities of the bundles $\Syz_C(X,Y,Z)$ can be achieved. A sufficient condition for 
having a Frobenius periodicity is (cf. Theorem \ref{frobpernontri})
$$\delta\left(\frac{1}{n},\frac{1}{n},\frac{1}{n}\right)=0,$$
which is equivalent to the condition that the distances of all triples $v_e:=\left(\tfrac{p^e}{n},\tfrac{p^e}{n},\tfrac{p^e}{n}\right)$ to 
$L_{\text{odd}}$ are at least one. Let $p^e=\theta_e\cdot n+r_e$ with $\theta_e,r_e\in\N$ and $0\leq r_e<n$. The nearest element to $v_e$ in 
$L_{\text{odd}}$ is given by the component-wise round-up of $v_e$ if $\theta_e$ is even and by the component-wise round-down of $v_e$ if 
$\theta_e$ is odd. This leads to the conditions 
$$\begin{aligned}
&\begin{aligned}
& 3\cdot \left(1-\frac{r_e}{n}\right)\geq 1 && \text{if }\theta_e\text{ is even,}\\
& 3\cdot \frac{r_e}{n}\geq 1 && \text{if }\theta_e\text{ is odd.}
\end{aligned}
&\Leftrightarrow &
\begin{aligned}
& 2\cdot n\geq 3\cdot r_e && \text{if }\theta_e\text{ is even,}\\
& 3\cdot r_e\geq n && \text{if }\theta_e\text{ is odd.}
\end{aligned}
\end{aligned}$$

\begin{exa}\label{fermatexa5}
Let $p$ be odd, $l\in\N$, $l\geq 0$ and $n=\tfrac{p^{l+1}+1}{2}$. For $0\leq e\leq 2l+2$ we have 
\begin{equation}\label{respe}
p^e=\left\{
\begin{aligned}
& 0\cdot n+p^e && \text{if }0\leq e\leq l,\\
& \left(2\cdot p^{e-l-1}-1\right)\cdot n+n-p^{e-l-1} && \text{if }l+1\leq e\leq 2l+1,\\
& \left(2\cdot p^{l+1}-2\right)\cdot n+1 && \text{if }e= 2l+2.
\end{aligned}\right.
\end{equation}
This shows that $p^e$ is of the form $\text{even}\cdot n+p^{e'}$ or $\text{odd}\cdot n+n-p^{e'}$ for some $0\leq e'\leq l$.
Since $2n=p^{l+1}+1\geq 3p^{e'}$ for all $0\leq e'\leq l$, we see that $\delta\left(\tfrac{1}{n},\tfrac{1}{n},\tfrac{1}{n}\right)=0$.
By Corollary \ref{inftysyz} we find the isomorphism 
$$\fpb{e}(\Syz_C(X,Y,Z))\cong\Syz_C\left(X^{p^{e'}},Y^{p^{e'}},Z^{p^{e'}}\right)\left(-\frac{3}{2}\cdot (p^e-p^{e'})\right)$$
with $e\equiv e'\text{ mod }l+1$ and $e'\in\{0,\ldots,l\}$. Since the Hilbert-series
$$\lK_{\Syz_R\left(X^{p^e},Y^{p^e},Z^{p^e}\right)}(t)=\frac{3\cdot t^{2p^e}-3\cdot t^{n+p^e}+t^n-t^{3p^e}}{(1-t)^3}$$
for $e\in\{0,\ldots,l\}$ do not differ by a factor $t^a$, $a\in\Z$, the $R$-modules are non-isomorphic. All in all, we found a $(0,l+1)$-Frobenius 
periodicity. The Hilbert-Kunz function is given by
$$\HKF(R,p^e)=\frac{3n}{4}\cdot\left(p^{2e}-p^{2e'}\right)+p^{3e'}$$
with $e\equiv e'\text{ mod }l+1$ and $e'\in\{0,\ldots,l\}$.
\end{exa}

\begin{exa}\label{fermatexa6}
Let $p=2$ and $n=3$. Then $2^e$ is of the form $\text{even}\cdot n+2$ or $\text{odd}\cdot n+1$. Since $2\cdot n=3\cdot 2\geq 3\cdot 2^e$ 
for $0\leq e\leq 1$ and $3\cdot 1=n$, we see that $\delta\left(\tfrac{1}{n},\tfrac{1}{n},\tfrac{1}{n}\right)=0$.
Applying Corollary \ref{inftysyz}, we find the isomorphism
$$\fpb{e}(\Syz_C(X,Y,Z)) \cong
\left\{\begin{aligned}
& \Syz_C\left(X,Y,Z\right) &&\text{if }e=0,\\
& \Syz_C\left(X^2,Y^2,Z^2\right)\left(-3(p^{e-1}-1)\right) &&\text{if }e\geq 1.
\end{aligned}\right.$$
Since the Hilbert-series of $\Syz_R(X,Y,Z)$ and $\Syz_R(X^2,Y^2,Z^2)$ are given by
$$\frac{3t^2-3t^4+t^3-t^3}{(1-t)^3}
\quad \text{and} \quad
\frac{3t^{4}-3t^{5}+t^3-t^{6}}{(1-t)^3},
$$
we see that the two modules are non-isomorphic. We obtain a $(1,2)$-Frobenius periodicity. By Theorem \ref{hkfsss} the Hilbert-Kunz function is given by
$$\HKF(R,2^e)=\begin{cases}
1 & \text{if }e=0,\\
9\cdot 2^{2e-2}-1 & \text{if }e\geq 1. 
\end{cases}$$
\end{exa}

\begin{exa}\label{fermatexa7}
Let $p=2$ and $2^l\leq n<2^{l+1}$, hence $n=2^l+x$ with $1\leq x<2^l$. We want to show that $\Syz_C(X,Y,Z)$ admits a Frobenius periodicity on the smooth curve $C$ 
if and only if $n=3$. Since we need $\delta\left(\tfrac{1}{n},\tfrac{1}{n},\tfrac{1}{n}\right)=0$, we must have
$$2n=2^{l+1}+2x\geq 3\cdot 2^l\geq 3\cdot 2^e$$ 
for all $0\leq e\leq l$. This is equivalent to $x\geq 2^{l-1}$. Since $2^{l+1}=n+2^l-x$ with $0<2^l-x<2^l<n$, we need that the inequality
$3\cdot (2^l-x)\geq n$ holds. This is equivalent to $x\leq 2^{l-1}$, hence $n=2^l+2^{l-1}=3\cdot 2^{l-1}$. Because of the assumption $\gcd(2,n)=1$, only the case $n=3$ is left.
\end{exa}

From the Examples \ref{fermatexa6} - \ref{fermatexa7} we obtain the following corollary.

\begin{cor}
Let $p=2$. The bundle $\Syz_C(X,Y,Z)$ admits a Frobenius periodicity on the smooth curve $C$ if and only if $n=3$.
\end{cor}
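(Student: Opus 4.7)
The plan is a direct assembly of Theorem \ref{frobpernontri} with the two preceding examples; no new calculation is required, the corollary is essentially a summary statement. Setting $p=2$ and recalling that the hypothesis $\gcd(p,n)=1$ forces $n$ to be odd (in particular $n\geq 3$), Theorem \ref{frobpernontri} reduces the corollary to the purely combinatorial equivalence
$$\delta\left(\frac{1}{n},\frac{1}{n},\frac{1}{n}\right)=0 \quad \Longleftrightarrow \quad n=3.$$
One small point worth noting is that Theorem \ref{frobpernontri} is still applicable in characteristic two, even though Lemma \ref{sssandnontri} needs the extra hypothesis of Remark \ref{remchar2}: the case in which $\Syz_C(X,Y,Z)$ is trivialised by the Frobenius is already accounted for in the proof of Theorem \ref{frobpernontri} (such a trivialisation is itself a form of Frobenius periodicity), so no separate treatment is needed.

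For the backward implication, Example \ref{fermatexa6} already records the explicit $(1,2)$-Frobenius periodicity
$$\fpb{e}(\Syz_C(X,Y,Z))\cong \Syz_C(X^2,Y^2,Z^2)\bigl(-3(2^{e-1}-1)\bigr)\qquad (e\geq 1)$$
in the case $n=3$, together with the verification via Hilbert-series that $\Syz_C(X,Y,Z)$ and $\Syz_C(X^2,Y^2,Z^2)$ are not isomorphic up to degree shift. Nothing further is required here.

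For the forward implication, I would invoke Example \ref{fermatexa7}: writing $n=2^l+x$ with $1\leq x<2^l$, the analysis of the nearest points in $L_{\text{odd}}$ to $(1/n,1/n,1/n)$ — carried out both at the original triple (which yields the lower bound $x\geq 2^{l-1}$ from the condition $2n\geq 3\cdot 2^e$ for $0\leq e\leq l$) and at the doubled triple $(2/n,2/n,2/n)$ reduced modulo integer shifts (which yields the upper bound $x\leq 2^{l-1}$ from $3(2^l-x)\geq n$) — forces $x=2^{l-1}$, hence $n=3\cdot 2^{l-1}$. The coprimality constraint $\gcd(2,n)=1$ then selects $l=1$, so $n=3$. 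The only potential obstacle is the verification that these two bounds really do cover every possible position of $(1/n,1/n,1/n)$ relative to $L_{\text{odd}}$, but this is a finite check already performed in Example \ref{fermatexa7}, so no new difficulty arises.
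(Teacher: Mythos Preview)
Your proposal is correct and follows the same route as the paper: the corollary is obtained directly from Theorem \ref{frobpernontri} together with Examples \ref{fermatexa6} and \ref{fermatexa7}, exactly as you outline. One minor wording slip: in your summary of Example \ref{fermatexa7} the second inequality comes from the triple $(2^{l+1}/n,2^{l+1}/n,2^{l+1}/n)$ rather than $(2/n,2/n,2/n)$, but since you are citing the example this does not affect the argument.
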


\section{Open questions}
At the end, we gather some open questions related to topics of this thesis.

\subsection{Questions concerning rings of dimension two}

%Recall that we used Theorem \ref{myhkmthm} in Section \ref{quasitrinomials} to study the behaviour of the Hilbert-Kunz multiplicity in certain families of surface rings defined 
%by irreducible trinomials satisfying a certain regularity condition.

%\begin{que}
%What does the limit of the Hilbert-Kunz multiplicities in Section \ref{quasitrinomials} tell us about the family of singularities?
%\end{que}

In Chapter \ref{chapmatfac} we used some sheaf-theoretic methods to compute from a given matrix factorization $(\phi,\psi)$ a first syzygy module of an ideal such that this module is 
isomorphic to the cokernel of the matrix factorization. At some point in Construction \ref{supposethat} we used the phrase ``Suppose that we can choose a 
$J\subseteq\{1,\ldots,n\}$...'', where $J$ was the set of indices of the columns of $\psi$ we had to keep. In general, it is not understood under which circumstances this $J$ has to exist.

\begin{que}
Are there necessary (and sufficient) conditions (on the hypersurface $R$) ensuring that the set $J$ exists?
\end{que} 

Another question is what happens to morphisms between matrix factorizations under this construction.

\begin{que}
Is it possible to make the construction in Chapter \ref{chapmatfac} categorical?
\end{que}

Another related question is whether sheaf-theoretic tools are necessary or not.

\begin{que}
Is there a completely algebraic way to obtain first syzygy modules of ideals from a given matrix factorization?
\end{que}

In Chapter \ref{pbmaxcm} we computed the pull-backs of the indecomposable, maximal Cohen-Macau\-lay modules over surface rings of type ADE along the 
inclusions induced by normal subgroups. From these computations we obtained that the Frobenius pull-backs of $\Syz_R(\mm)$, where $R$ is of type D or E, 
are indecomposable. This approach is quite complicated and works only in this very special situation. This leads to the following question.

\begin{que}
Is there a (numerical) invariant of vector bundles, which measures the splitting behaviour?
\end{que}

We turn back to the Fermat rings $R=k[X,Y,Z]/(X^n+Y^n+Z^n)$ with an algebraically closed field $k$ of characteristic $p>0$. In the case where 
$\Sc:=\Syz_{\Proj(R)}(X,Y,Z)$ is not strongly semistable, we saw in Theorem \ref{almarfilt} that the strong Harder-Narasimhan filtration of $\fpb{s}(\Sc)$, 
where $s\geq 0$ comes from Han's Theorem, is of the form
$$0\ra\Oc_{\Proj(R)}(l_1)\ra\fpb{s}(\Sc)\ra\Oc_{\Proj(R)}(l_2)\ra 0$$
for some $l_1$, $l_2\in\Z$. In the proof of Theorem \ref{almarhkf} we saw that there is a $t\in\N$ such that the $t$-th Frobenius pull-back of the above sequence splits.
In this situation we can compute $\HKF(R,p^e)$, when $p^e<n$ or $e\geq s+t$.

\begin{que}
Let $C$ be a projective Fermat curve of degree $n$, let $s$ be the integer from Han's Theorem and $t$ be minimal such that 
$$\fpb{s+t}(\Syz_C(\mm))\cong\Oc_C(m_1)\oplus\Oc_C(m_2)$$
for some $m_1$, $m_2\in\Z$. Find a way to compute $\HKF(R,p^e)$ in the cases $n<p^e\leq p^{s+t-1}$.
\end{que}

\begin{que}
The situation is the same as in the previous question. Is it possible that $\Syz_R(\mm\qpot)$ splits as a direct sum of two non-free $R$-modules of rank one?
\end{que}

The next questions deal with general diagonal hypersurfaces of dimension two. We denote by $R$ the standard-graded Fermat ring of degree $n$ and by $C$ its projective spectrum. 
We will denote by $S$ a general diagonal hypersurface with exponent-triple $(d_1,d_2,d_3)\in\N^3_{\geq 2}$ and by $D$ its projective spectrum.

\begin{exa}
Han computed $\HKF(S,3^e)$ in her thesis \cite{handiss}, where $S$ is the diagonal hypersurface with exponent-triple $(5,8,8)$. The Hilbert-Kunz multiplicity is 
$\tfrac{24}{5}$ and the tail of $\HKF(S,3^e)$ is $-\tfrac{19}{5}$ if $e$ is even and $-\tfrac{81}{5}$ if $e$ is odd. Since 
$\delta\left(\tfrac{1}{5},\tfrac{1}{8},\tfrac{1}{8}\right)=0$, we know by Corollary \ref{deltastrongsemistable} that 
$\Syz_C(X^8,Y^5,Z^5)$ is strongly semistable on the projective Fermat curve of degree 40. A direct computation using CoCoA shows 
$$\fpb{e}(\Syz_D(X,Y,Z))\cong\left\{\begin{aligned}
& \Syz_D(X^3,Y^3,Z^3)(-9q+27) && \text{if }e\cong 1\text{ }(4),\\
& \Syz_D(X,Y^7,Z^7)(-9q+39) && \text{if }e\cong 2\text{ }(4),\\
& \Syz_D(X^3,Y^5,Z^5)(-9q+37) && \text{if }e\cong 3\text{ }(4),\\
& \Syz_D(X,Y,Z)(-9q+9) && \text{if }e\cong 0\text{ }(4).
\end{aligned}\right.$$
This shows that there is a $(0,e_0)$-Frobenius periodicity 
$$\fpb{4}(\Syz_D(X,Y,Z))\cong\Syz_D(X,Y,Z)(m)\text{ for some }m\in\Z$$
of length $e_0\leq 4$. Computing the Hilbert-series of the corresponding $S$-modules, one sees that they do not differ by a factor $t^m$, hence they cannot be 
isomorphic and the periodicity has exactly length four.

Using Formula \ref{formnontr} on page \pageref{formnontr}, we recover Han's result. With $3^e=5\theta +r$, $\theta\in\N$, $r\in\{0,\ldots,4\}$, the tuple $(\theta,r)$ is cyclic of the form 
$(\text{even},1)$, $(\text{even},3)$, $(\text{odd},4)$, $(\text{even},2)$ and with $3^e=8\theta'+r'$, $\theta'\in\N$, $r'\in\{0,\ldots,7\}$, the tuple $(\theta',r')$ is cyclic of the form 
$(\text{even},1)$, $(\text{even},3)$, $(\text{odd},1)$, $(\text{odd},3)$ for $e\geq 0$. Hence, we obtain in this case (for some $m\in\Z$)
$$\fpb{e}(\Syz_D(X,Y,Z))(m)\cong\left\{\begin{aligned}
& \Syz_D(X^r,Y^{r'},Z^{r'}) && \text{if }\theta\text{ and }\theta'\text{ are even},\\
& \Syz_D(X^{5-r},Y^{8-r'},Z^{8-r'}) && \text{if }\theta\text{ and }\theta'\text{ are odd},
\end{aligned}\right.$$
which has a similar shape as Corollary \ref{inftyfree}.
\end{exa}

The surface rings of type $A_n$ (in odd characteristics), $E_6$ and $E_8$ are other diagonal hypersurfaces and we saw that the corresponding $\delta$ vanishes in all characteristics coprime 
to the degree of the defining polynomial. In these cases we saw that the modules $\Syz(X^q,Y^q,Z^q)$ satisfy isomorphisms similar to Corollary \ref{inftyfree}.

For $1\leq a_i\leq d_i-1$ we define the matrix factorizations $(\phi_{a_1,a_2,a_3},\psi_{a_1,a_2,a_3})$ with
\begin{align*}
\phi_{a_1,a_2,a_3}:= &
\begin{pmatrix}
X^{a_1} & Y^{a_2} & Z^{a_3} & 0\\
-Y^{d_2-a_2} & X^{d_1-a_1} & 0 & Z^{a_3}\\
-Z^{d_3-a_3} & 0 & X^{d_1-a_1} & -Y^{a_2}\\
0 & -Z^{d_3-a_3} & Y^{d_2-a_2} & X^{a_1}
\end{pmatrix}\\
\psi_{a_1,a_2,a_3}:= &
\begin{pmatrix}
X^{d_1-a_1} & -Y^{a_2} & -Z^{a_3} & 0\\
Y^{d_2-a_2} & X^{a_1} & 0 & -Z^{a_3}\\
Z^{d_3-a_3} & 0 & X^{a_1} & Y^{a_2}\\
0 & Z^{d_3-a_3} & -Y^{d_2-a_2} & X^{d_1-a_1}
\end{pmatrix}.
\end{align*}
Assume that $Q:=k[X,Y,Z]/(X^N,Y^N,Z^N,X^{d_1}+Y^{d_2}+Z^{d_3})$ has infinite projective dimension as $S:=k[X,Y,Z]/(X^{d_1}+Y^{d_2}+Z^{d_3})$-module. 
Let $N:=d_i\theta_i+r_i$ with $\theta_i\in\N$ and $0\leq r_i\leq d_i-1$. Let $A_i:=r_i$ if $\theta_i$ is even and $A_i:=d_i-r_i$ if $\theta_i$ is odd.

\begin{que}
Is it true that the minimal $S$-free resolution of $Q$ is given by 
$$\ldots \stackrel{\phi_{A_1,A_2,A_3}}{\lra} S^4 \stackrel{\psi_{A_1,A_2,A_3}}{\lra} S^4\stackrel{\phi_{A_1,A_2,A_3}}{\lra} S^4 \lra S^3\lra S$$
up to the degree shifts necessary to make the resolution homogeneous?
\end{que}

\begin{exa}
Consider $R:=k[X,Y,Z]/(X^{12}+Y^{12}+Z^{12})$ and $I:=(X^4,Y^4,Z^3)$ in characteristic five. Then 
$$\delta\left(\frac{4}{12},\frac{4}{12},\frac{3}{12}\right)=\frac{1}{12}\cdot \frac{1}{5},$$
hence $s=1$ in Han's Theorem and $\Syz_C(I)$ is not strongly semistable. A direct computation shows that the minimal degree of a generator of
$\Syz_R(X^{20},Y^{20},Z^{15})$ is 27 and that we have a splitting 
$$\Syz_R(X^{100},Y^{100},Z^{75})\cong R(-135)\oplus R(-140).$$
This gives for $e\geq 2$ 
\begin{align*}
\HKF(I,R,5^e)&=\frac{2928}{25}\cdot 25^e\text{ and}\\
\HKF(k[X,Y,Z]/(X^3+Y^3+Z^4),5^e)&=\frac{61}{25}\cdot 25^e
\end{align*}
by Lemma \ref{hkffrei}. In the cases $E_6$ and $E_8$ we had in the characteristics two, three (and five) a splitting for $e\geq 1$. 
In these cases one has $\delta\neq 0$ and $s=1$.
\end{exa}

\begin{que}
Assume $\delta\left(\tfrac{a}{n},\tfrac{b}{n},\tfrac{c}{n}\right)\neq 0$. Do the modules $\Syz_R(X^{aq},Y^{bq},Z^{cq})$ become free for large $e$?
\end{que}

Equivalently, one could ask the following question.

\begin{que}
Assume $\delta\left(\tfrac{1}{d_1},\tfrac{1}{d_2},\tfrac{1}{d_3}\right)\neq 0$. Do the modules $\Syz_S(\mm\qpot)$ become free for large $e$?
\end{que}

\subsection{Questions concerning rings of dimension at least three}

As soon as the dimension of $R$ grows, there are very few explicit examples of Hilbert-Kunz multiplicities and Hilbert-Kunz functions. 
One might hope to use Kn\"orrer's periodicity (cf. \cite[Theorem 12.10]{yobook}) to obtain the Hilbert-Kunz function of 
$$T:=k[U_1,\ldots,U_n,X,Y,Z]/\left(\sum_{i=1}^nU_i^2+F(X,Y,Z)\right),$$
where $F$ is of type ADE. Recall that Kn\"orrer's periodicity says that the categories $\underline{\MF}(R)$ and $\underline{\MF}(S)$ with 
$R=k[X_1,\ldots,X_n]/(G)$, $S=k[U,V,X_1,\ldots,X_n]/(U^2+V^2+G)$ and $\chara(k)\neq 2$ are equivalent via the functor $\_\hat{\otimes}(U+iV,U-iV)$. 
In general first syzygy modules of ideals are not (maximal) Cohen-Macaulay, but maybe they appear in the two periodic free resolutions coming from 
the matrix factorizations in which case they would be (even maximal) Cohen-Macaulay. This does not happen. We prove this for $\Syz_T(\mm)$, where $F$ is 
of type $E_6$. We have 

\begin{align*}
\lK_{\Syz_T(\mm)}(t) & = \frac{((n+1)t^6+t^4+t^3-1)(1-t^{12})}{(1-t^6)^{n+1}(1-t^4)(1-t^3)}+1\\
& = \frac{((n+1)t^6+t^4+t^3-1)(1+t^6)+(1-t^6)^n(1-t^4)(1-t^3)}{(1-t^6)^n(1-t^4)(1-t^3)}.
\end{align*}

If $\Syz_T(\mm)$ would be Cohen-Macaulay, all coefficients in the numerator of its Hilbert-series have to be positive (cf. Lemma \ref{hilbsercm}). 
The numerator's leading term is $(-1)^nt^{6n+7}$, which is negative for $n$ odd. If $n$ is even, the second highest term of 
$(1-t^6)^n(1-t^4)(1-t^3)$ is $-t^{6n+4}$, which does not cancel with a term from $((n+1)t^6+t^4+t^3-1)(1+t^6)$. This shows that one needs to work 
with syzygy modules of higher order.

\begin{que}
Is it possible to compute from a given matrix factorization a higher order syzygy module of an ideal which is isomorphic to the 
cokernel of the matrix factorization?
\end{que}

\begin{appendix}
\chapter{An implementation of Corollary 2.31 for CoCoA}\label{app.myhkmthm}
We give an implementation of Corollary \ref{myhkmcor} in CoCoA.

\begin{func}An implementation of the floor function for \verb Q $\in\Q$.

\begin{verbatim}
Define Floor(Q)
M:=Num(Q);
N:=Den(Q);
If Mod(M,N)=0 
 Then Return M/N;
 Else I:=0; While I<=M/N Do I:=I+1; EndWhile; Return I-1;
EndIf;
EndDefine;
\end{verbatim}
\end{func}

\begin{func}An implementation of the ceil function for \verb Q $\in\Q$.

\begin{verbatim}
Define Ceil(Q)
M:=Num(Q);
N:=Den(Q);
If Mod(M,N)=0 
 Then Return M/N;
 Else I:=M; While I>=M/N Do I:=I-1; EndWhile; Return I+1;
EndIf;
EndDefine;
\end{verbatim}
\end{func}

\begin{func}This function returns for a list \verb L $\in ([0,\ldots,\infty)\cap\Q)^3$ the minimal distance to the lattice $L_{\text{odd}}$.

\footnotesize
\begin{verbatim}
Define DistLodd(L)
Lodd:=[[Floor(L[1]),Floor(L[2]),Floor(L[3])],
[Floor(L[1]),Floor(L[2]),Ceil(L[3])],[Floor(L[1]),Ceil(L[2]),Floor(L[3])],
[Floor(L[1]),Ceil(L[2]),Ceil(L[3])],[Ceil(L[1]),Floor(L[2]),Floor(L[3])],
[Ceil(L[1]),Floor(L[2]),Ceil(L[3])],[Ceil(L[1]),Ceil(L[2]),Floor(L[3])],
[Ceil(L[1]),Ceil(L[2]),Ceil(L[3])]];
Dist:=[];
For I:=8 To 1 Step -1 Do
 If IsEven(Sum(Lodd[I])) 
  Then Remove(Lodd,I);
  Else 
   Append(Dist,Abs(L[1]-Lodd[I][1])+Abs(L[2]-Lodd[I][2])+Abs(L[3]-Lodd[I][3]));
 EndIf;
EndFor;
Return Min(Dist);
EndDefine;
\end{verbatim}
\normalsize
\end{func}

\begin{func}This function computes a lower bound for the $s$ in Han's Theorem. The argument \verb P  is a prime number and the argument \verb T  is a list 
of three non-negative rational numbers.

\begin{verbatim}
Define Lower(P,T)
M:=Max(T);
L:=0;
If 3/2*M<=1 
 Then
  While P^L>=3/2*M Do L:=L-1; EndWhile; L:=L+1;
 Else
  While P^L<3/2*M Do L:=L+1; EndWhile;
EndIf;
Return L;
EndDefine;
\end{verbatim}
\end{func}

\begin{func}This function computes an upper bound for the $s$ in Han's Theorem. The argument \verb P  is a prime number and the argument \verb T  is a list 
of three non-negative rational numbers.

\small
\begin{verbatim}
Define Upper(P,T)
U:=1;
Residues:=[];
M:=2*LCM(Den(T[1]),Den(T[2]),Den(T[3]));
A:=[Mod(P*Num(T[1]),M),Mod(P*Num(T[2]),M),Mod(P*Num(T[3]),M)];
While Not (A IsIn Residues) Do
 Append(Residues,A);
 U:=U+1;
 A:=[Mod(P^U*Num(T[1]),M),Mod(P^U*Num(T[2]),M),Mod(P^U*Num(T[3]),M)];
EndWhile;
Return U-1;
EndDefine;
\end{verbatim}
\normalsize
\end{func}

\begin{func}This function computes the characteristic \verb P  value of Han's $\delta$ function of the list \verb T  of three non-negative rational numbers.

\begin{verbatim}
Define Delta(P,T)
If 2*Max(T)>=Sum(T) Then Return 2*Max(T)-Sum(T); EndIf;
L:=Lower(P,T);
U:=Upper(P,T);
For S:=L To U Do
 D:=DistLodd(P^S*T);
 If D<1 Then Return (1-D)/P^S; EndIf;
EndFor;
Return 0;
EndDefine;
\end{verbatim}
\end{func}

\begin{func}This function is the final implementation of Corollary \ref{myhkmcor}. Within the notation of this Corollary, the argument \verb Exp  is a 
$3\times 3$ matrix whose $i$-th row consists of the exponents in the $i$-th summand of the trinomial $F$. The argument \verb P  is a prime number, the 
argument \verb T  is the list $[a,b,c]$ and \verb Type  denotes the type of $F$, hence 1 or 2.

\footnotesize
\begin{verbatim}
Define HKMTri(Exp,P,T,Type)
D:=Sum(Exp[1]);
Alphap:=T[1]*(D-Exp[2][3]-Exp[3][2])-T[2]*Exp[3][1]-T[3]*Exp[2][1];
Betap:=T[2]*(D-Exp[3][1]-Exp[1][3])-T[1]*Exp[3][2]-T[3]*Exp[1][2];
Gammap:=T[3]*(D-Exp[1][2]-Exp[2][1])-T[1]*Exp[2][3]-T[2]*Exp[1][3];
If Alphap>=0 Then Alphau:=Alphap; Alphad:=0; 
 Else Alphau:=0; Alphad:=-Alphap; EndIf;
If Betap>=0 Then Betau:=Betap; Betad:=0; 
 Else Betau:=0; Betad:=-Betap; EndIf;
If Gammap>=0 Then Gammau:=Gammap; Gammad:=0; 
 Else Gammau:=0; Gammad:=-Gammap; EndIf;
If Type=1 Then Alpha:=Alphau+Gammad; 
 Else Alpha:=Alphau+Betad+Gammad; EndIf;
If Type=1 Then Beta:=Betau+Alphad; 
 Else Beta:=Betau; EndIf;
If Type=1 Then Gamma:=Gammau+Betad; 
 Else Gamma:=Gammau+Alphad; EndIf;
Lam:=Det(Mat(Exp))/D;
Q:=2*(T[1]*T[2]+T[1]*T[3]+T[2]*T[3])-T[1]^2-T[2]^2-T[3]^2;
Return D*Q/4+(Lam)^2/(4*D)*(Delta(P,[Alpha/Lam,Beta/Lam,Gamma/Lam]))^2;
EndDefine;
\end{verbatim}
\normalsize
\end{func}

\chapter{Computer computations to Chapter 5}
We give the computer computations needed for the argumentation in Chapter \ref{pbmaxcm}. We print only those output lines that are necessary 
for the arguments.

\section{Normal subgroups with GAP and rings of invariants with Singular}\label{app.group}
In this section we compute explicit generators for the normal subgroups of $\T$, $\Obb$ and $\I$ (with their embeddings into $\SL_2(k)$ as given 
in Chapter \ref{pbmaxcm}, whenever the generators are defined in $k^{2\times 2}$). 
Moreover, we use Singular to compute the invariants of $k[x,y]$ under these normal subgroups, where we work over the smallest finite extensions 
of $\Q$ we need to define the generators over $k$. For these computations one needs the library \verb finvar .

We compute the normal subgroups of $\T$ with GAP.

\scriptsize
\begin{verbatim}
gap> o:=[[E(4),0],[0,-E(4)]];
[ [ E(4), 0 ], [ 0, -E(4) ] ]

gap> t:=[[0,1],[-1,0]];
[ [ 0, 1 ], [ -1, 0 ] ]

gap> w:=Sqrt(2);
E(8)-E(8)^3

gap> m:=[[E(8)^7/w,E(8)^7/w],[E(8)^5/w,E(8)/w]];
[ [ 1/2-1/2*E(4), 1/2-1/2*E(4) ], [ -1/2-1/2*E(4), 1/2+1/2*E(4) ] ]

gap> NormalSubgroups(Group(o,t,m));             
[ Group([ [ [ E(4), 0 ], [ 0, -E(4) ] ], [ [ 0, 1 ], [ -1, 0 ] ], 
      [ [ 1/2-1/2*E(4), 1/2-1/2*E(4) ], [ -1/2-1/2*E(4), 1/2+1/2*E(4) ] ] ]), 
  Group([ [ [ 0, E(4) ], [ E(4), 0 ] ], [ [ E(4), 0 ], [ 0, -E(4) ] ], 
      [ [ -1, 0 ], [ 0, -1 ] ] ]), 
  Group([ [ [ -1, 0 ], [ 0, -1 ] ] ]), Group([  ]) ]
\end{verbatim}
\normalsize

Then Singular gives the corresponding rings of invariants. The three output lines give the primary invariants (prim), the secondary invariants (sec) and 
the indecomposable, secondary invariants (irrsec). Then $k[x,y]^G$ is a finitely generated module over $k[\text{prim}]$ with module generators sec. As a 
$k$-algebra $k[x,y]^G$ is generated by prim and irrsec.

\scriptsize
\begin{verbatim}
> ring R=(0,Q),(x,y),lp;minpoly=rootofUnity(8);
> number w=Q-Q^3;
> w;
(-Q3+Q)
> matrix o[2][2]=Q2,0,0,-Q2;
> matrix t[2][2]=0,1,-1,0;
> matrix m[2][2]=Q7/w,Q7/w,Q5/w,Q/w;

> print(invariant_ring(o,t,m));
x5y-xy5,x8+14*x4y4+y8
1,x12-33*x8y4-33*x4y8+y12
x12-33*x8y4-33*x4y8+y12

> print(invariant_ring(o,o*t));
x2y2,x4+y4
1,x5y-xy5
x5y-xy5
\end{verbatim}
\normalsize

The normal subgroups of $\Obb$ are given by

\scriptsize
\begin{verbatim}
gap> k:=[[E(8),0],[0,E(8)^7]];
[ [ E(8), 0 ], [ 0, -E(8)^3 ] ]

gap> NormalSubgroups(Group(o,t,m,k));
[ Group([ [ [ E(4), 0 ], [ 0, -E(4) ] ], [ [ 0, 1 ], [ -1, 0 ] ], 
      [ [ 1/2-1/2*E(4), 1/2-1/2*E(4) ], [ -1/2-1/2*E(4), 1/2+1/2*E(4) ] ], 
      [ [ E(8), 0 ], [ 0, -E(8)^3 ] ] ]), 
  Group([ [ [ -1/2-1/2*E(4), 1/2-1/2*E(4) ], 
	    [ -1/2-1/2*E(4), -1/2+1/2*E(4) ] ], 
      [ [ 0, E(4) ], [ E(4), 0 ] ], 
      [ [ E(4), 0 ], [ 0, -E(4) ] ], [ [ -1, 0 ], [ 0, -1 ] ] ]), 
  Group([ [ [ 0, E(4) ], [ E(4), 0 ] ], [ [ E(4), 0 ], [ 0, -E(4) ] ], 
      [ [ -1, 0 ], [ 0, -1 ] ] ]), 
  Group([ [ [ -1, 0 ], [ 0, -1 ] ] ]), Group([  ]) ]
\end{verbatim}
\normalsize

The rings of invariants are

\scriptsize
\begin{verbatim}
> matrix k[2][2]=Q,0,0,Q7;

> print(invariant_ring(o,t,m,k));
x8+14*x4y4+y8,x10y2-2*x6y6+x2y10
1,x17y-34*x13y5+34*x5y13-xy17
x17y-34*x13y5+34*x5y13-xy17

> print(invariant_ring(o,o*t,k*k*k*k*k*k*k*m*k*o*o*o));
x5y-xy5,x8+14*x4y4+y8
1,x12-33*x8y4-33*x4y8+y12
x12-33*x8y4-33*x4y8+y12
\end{verbatim}
\normalsize

We compute the normal subgroups of $\I$.

\scriptsize
\begin{verbatim}
gap> w:=Sqrt(5);
E(5)-E(5)^2-E(5)^3+E(5)^4

gap> o:=[[-E(5)^3,0],[0,-E(5)^2]];
[ [ -E(5)^3, 0 ], [ 0, -E(5)^2 ] ]

gap> t:=[[(-E(5)+E(5)^4)/w,(E(5)^2-E(5)^3)/w],
	  [(E(5)^2-E(5)^3)/w,(E(5)-E(5)^4)/w]];
[ [ -1/5*E(5)-2/5*E(5)^2+2/5*E(5)^3+1/5*E(5)^4, 
    2/5*E(5)-1/5*E(5)^2+1/5*E(5)^3-2/5*E(5)^4 ], 
  [ 2/5*E(5)-1/5*E(5)^2+1/5*E(5)^3-2/5*E(5)^4, 
    1/5*E(5)+2/5*E(5)^2-2/5*E(5)^3-1/5*E(5)^4 ] ]

gap> NormalSubgroups(Group(o,t)); 
[ Group([  ]), <group of 2x2 matrices of size 2 in characteristic 0>, 
  <group of 2x2 matrices of size 120 in characteristic 0> ] 
\end{verbatim}
\normalsize

The ring of invariants is given by

\scriptsize
\begin{verbatim}
> ring R=(0,Q),(x,y),lp;minpoly=rootofUnity(5);
> matrix o[2][2]=-Q3,0,0,-Q2;
> number w=Q-Q2-Q3+Q4;
> matrix t[2][2]=(-Q+Q4)/w,(Q2-Q3)/w,(Q2-Q3)/w,(Q-Q4)/w;
> print(invariant_ring(o,t));
x11y+11*x6y6-xy11,x20-228*x15y5+494*x10y10+228*x5y15+y20
1,x30+522*x25y5-10005*x20y10-10005*x10y20-522*x5y25+y30
x30+522*x25y5-10005*x20y10-10005*x10y20-522*x5y25+y30
\end{verbatim}
\normalsize

\section{Macaulay2 computations in the $E_6$-case}\label{m2.mcmpb.e6}
\scriptsize
\begin{verbatim}
+ M2.exe --no-readline --print-width 79
Macaulay2, version 1.4
with packages: ConwayPolynomials, Elimination, IntegralClosure, LLLBases,
               PrimaryDecomposition, ReesAlgebra, TangentCone

i1 : A=ZZ[U,V,W]/(U^2+4*V^3-V*W^2)

i2 : X=W^3-36*V^2*W

i3 : Y=W^2+12*V^2

i4 : Z=U

i5 : S1=mingens image syz gens ideal(X,Y,Z)

o5 = {3} | 0       -V   W       -U   |
     {2} | -U      -2VW 12V2-W2 -3UW |
     {1} | 12V2+W2 3UW  36UV    4W3  |

             3       4
o5 : Matrix A  <--- A

i6 : S2=mingens image syz gens ideal(X,Y^2,Y*Z,Z^2)

o6 = {3} | 0       0       2VW  12V2-W2 UV   UW   -U2  |
     {4} | 0       -U      V    W       0    U    0    |
     {3} | -U      12V2+W2 3U   0       2VW  -2W2 -3UW |
     {2} | 12V2+W2 0       -6W2 -72VW   -3UW 36UV 4W3  |

             4       7
o6 : Matrix A  <--- A

i7 : N=mingens image syz gens ideal(X*Y,X*Z,Y^2,Y*Z^2,Z^3)

o7 = {5} | 0       -U      V   W       0        0    U     0    0       0   
     {4} | 0       12V2+W2 0   0       0        2VW  -2W2  UV   UW      -U2 
     {4} | 0       0       2VW 12V2-W2 U2       UV   UW    0    0       0   
     {4} | -U      0       -3W 36V     -12V2-W2 3U   0     2VW  12V2-W2 -3UW
     {3} | 12V2+W2 0       0   0       0        -6W2 -72VW -3UW 36UV    4W3 
     --------------------------------------------------------------------------
     -U2W       |
     3UW3       |
     0          |
     -2W4+27U2V |
     81U3       |

             5       11
o7 : Matrix A  <--- A

i8 : L=mingens image syz gens ideal(X,Y,Z^2)

o8 = {3} | V   W       0        U2       |
     {2} | 2VW 12V2-W2 U2       0        |
     {2} | -3W 36V     -12V2-W2 36V2W-W3 |

             3       4
o8 : Matrix A  <--- A
\end{verbatim}
\normalsize

\section{Macaulay2 computations in the $E_8$-case}\label{m2.mcmpb.e8}
\scriptsize
\begin{verbatim}
+ M2.exe --no-readline --print-width 79
Macaulay2, version 1.4
with packages: ConwayPolynomials, Elimination, IntegralClosure, LLLBases,
               PrimaryDecomposition, ReesAlgebra, TangentCone

i1 : A=ZZ[R,S,T]/(T^2-R*S)

i2 : X=R^15+522*R^10*T^5-10005*R^5*T^10-10005*S^5*T^10-522*S^10*T^5+S^15

i3 : Y=R^10-228*R^5*T^5+494*T^10+228*S^5*T^5+S^10

i4 : Z=R^5*T-S^5*T+11*T^6

i5 : S1=mingens image syz gens ideal(X,Y,Z)

i6 : S2=mingens image syz gens ideal(X,Y^2,Y*Z,Z^2)

i7 : S3=mingens image syz gens ideal(X*Y,X*Z,Y^2,Y*Z^2,Z^3)

i8 : S4=mingens image syz gens ideal(X*Y,X*Z^2,Y^3,Y^2*Z,Y*Z^3,Z^4)

i9 : S5=mingens image syz gens ideal(X*Y^2,X*Y*Z^2,X*Z^4,Y^4,Y^3*Z,Y^2*Z^3,Z^5)

i10 : S6=mingens image syz gens ideal(X,Y^2,Y*Z,Z^3)

i11 : S7=mingens image syz gens ideal(X*Y,X*Z,Y^2,Y*Z^2,Z^4)

i12 : S8=mingens image syz gens ideal(X,Y,Z^2)

i13 : B=QQ[R,S,T]/(T^2-R*S)

i14 : T1=map(B,A)**S1

i15 : T2=map(B,A)**S2

i16 : T3=map(B,A)**S3

i17 : T4=map(B,A)**S4

i18 : T5=map(B,A)**S5

i19 : T6=map(B,A)**S6

i20 : T7=map(B,A)**S7

i21 : T8=map(B,A)**S8

i22 : M1=mingens image T1

o22 = {15} | R                              -S                            
      {10} | -R6+11S4T2-66RT5               S6-11R4T2-66ST5               
      {6}  | 12S9T-684R6T4+2052S4T6+2964RT9 12R9T+684S6T4-2052R4T6+2964ST9
      -------------------------------------------------------------------------
      T                              -T                             |
      -R5T+11S5T-66T6                -11R5T+S5T-66T6                |
      12S10-684R5T5+2052S5T5+2964T10 12R10-2052R5T5+684S5T5+2964T10 |

              3       4
o22 : Matrix B  <--- B

i23 : M2=mingens image T2

o23 = {15} | -5R5T-5S5T               R6-11S4T2+66RT5              
      {20} | -T                       -R                           
      {16} | 6R5-6S5+216T5            60S4T-180RT4                 
      {12} | 3240R5T4-3240S5T4-9360T9 72S9-864R6T3+9072S4T5+8424RT8
      -------------------------------------------------------------------------
      S6-11R4T2-66ST5               |
      -S                            |
      -60R4T-180ST4                 |
      72R9+864S6T3-9072R4T5+8424ST8 |

              4       3
o23 : Matrix B  <--- B

i24 : M3=mingens image T3

o24 = {25} | -9T2                 -9T2                
      {21} | -4R5T+104S5T+756T6   -104R5T+4S5T+756T6  
      {20} | -35R5T2-55S5T2+110T7 -55R5T2-35S5T2-110T7
      {22} | 48R5+168S5-2160T5    168R5+48S5+2160T5   
      {18} | 64800S5T4+93600T9    64800R5T4-93600T9   
      -------------------------------------------------------------------------
      ST                          RT                         
      4S6+16R4T2-84ST5            -4R6-16S4T2-84RT5          
      3S6T+7R4T3+22ST6            3R6T+7S4T3-22RT6           
      -24R4T-432ST4               -24S4T+432RT4              
      2880S6T3-10080R4T5+18720ST8 2880R6T3-10080S4T5-18720RT8
      -------------------------------------------------------------------------
      R2                                 -S2                               
      -60S3T3+180R2T4                    -60R3T3-180S2T4                   
      -R7+11S3T4-66R2T5                  S7-11R3T4-66S2T5                  
      216S3T2-288R2T3                    -216R3T2-288S2T3                  
      288S8T-576R7T2+26208S3T6+14976R2T7 288R8T+576S7T2-26208R3T6+14976S2T7
      -------------------------------------------------------------------------
      3RT                      -3ST                     |
      -2R6-158S4T2+408RT5      -2S6-158R4T2-408ST5      |
      -R6T+31S4T3-176RT6       S6T-31R4T3-176ST6        |
      528S4T-504RT4            -528R4T-504ST4           |
      720S9+60480S4T5+28080RT8 720R9-60480R4T5+28080ST8 |

              5       8
o24 : Matrix B  <--- B

i25 : M4=mingens image T4

o25 = {25} | 5R5T2+5S5T2               2S6T+8R4T3+33ST6          
      {27} | -12R5-12S5                36R4T-252ST4              
      {30} | T2                        ST                        
      {26} | 6R5T-6S5T-84T6            3S6+27R4T2-18ST5          
      {28} | 720T4                     -72R4-576ST3              
      {24} | 7776R5T3-7776S5T3-22464T8 864S6T2-22464R4T4+33696ST7
      -------------------------------------------------------------------------
      -2R6T-8S4T3+33RT6          R7-11S3T4+66R2T5                
      -36S4T-252RT4              -216S3T2+288R2T3                
      -RT                        -R2                             
      3R6+27S4T2+18RT5           60S3T3-180R2T4                  
      -72S4+576RT3               576S3T-288R2T2                  
      864R6T2-22464S4T4-33696RT7 864S8-864R7T+56160S3T5+11232R2T6
      -------------------------------------------------------------------------
      S7-11R3T4-66S2T5                 |
      -216R3T2-288S2T3                 |
      -S2                              |
      -60R3T3-180S2T4                  |
      -576R3T-288S2T2                  |
      864R8+864S7T-56160R3T5+11232S2T6 |

              6       5
o25 : Matrix B  <--- B

i26 : M5=mingens image T5

o26 = {35} | S8+3R7T-39S3T5-143R2T6 R8-3S7T+39R3T5-143S2T6 25R5T3+25S5T3      
      {37} | 0                      0                      0                  
      {39} | 0                      0                      0                  
      {40} | -S3+3R2T               -R3-3S2T               5T3                
      {36} | -6R7-294S3T4+84R2T5    -6S7-294R3T4-84S2T5    42R5T2-42S5T2-288T7
      {38} | -720S3T2-1440R2T3      -720R3T2+1440S2T3      -72R5+72S5+1008T5  
      {30} | 0                      0                      0                  
      -------------------------------------------------------------------------
      9R6T2+41S4T4-176RT7                          
      -6R6+306S4T2+684RT5                          
      -720S4+1440RT3                               
      5RT2                                         
      -8R6T-152S4T3+42RT6                          
      -216S4T-1512RT4                              
      509760S9T4-976320R6T7+5088960S4T9+2790720RT12
      -------------------------------------------------------------------------
      9S6T2+41R4T4+176ST7                          
      -6S6+306R4T2-684ST5                          
      -720R4-1440ST3                               
      5ST2                                         
      8S6T+152R4T3+42ST6                           
      216R4T-1512ST4                               
      509760R9T4+976320S6T7-5088960R4T9+2790720ST12
      -------------------------------------------------------------------------
      50S5T3-275T8                                     
      -150R5T+150S5T+2100T6                            
      -18000T4                                         
      5T3                                              
      -83R5T2-167S5T2-288T7                            
      228R5+372S5+1008T5                               
      108000R10T3-108000S10T3-12312000R5T8-12312000S5T8
      -------------------------------------------------------------------------
      574S6T2+976R4T4+2211ST7                          
      -1071S6+1521R4T2-15894ST5                        
      -1080R4+125280ST3                                
      155ST2                                           
      -342S6T+2352R4T3-8433ST6                         
      -3924R4T+27468ST4                                
      764640S11T2+88633440S6T7+79535520R4T9+4186080ST12
      -------------------------------------------------------------------------
      574R6T2+976S4T4-2211RT7                          
      -1071R6+1521S4T2+15894RT5                        
      -1080S4-125280RT3                                
      155RT2                                           
      342R6T-2352S4T3-8433RT6                          
      3924S4T+27468RT4                                 
      764640R11T2-88633440R6T7-79535520S4T9+4186080RT12
      -------------------------------------------------------------------------
      -7S7T-43R3T5-198S2T6                                    
      -360R3T3+720S2T4                                        
      864R3T+2592S2T2                                         
      -5S2T                                                   
      -12S7-168R3T4-108S2T5                                   
      -288R3T2+2016S2T3                                       
      5184S12T-616896R8T5-580608S7T6+6697728R3T10-3348864S2T11
      -------------------------------------------------------------------------
      -7R7T-43S3T5+198R2T6                                    
      -360S3T3-720R2T4                                        
      864S3T-2592R2T2                                         
      -5R2T                                                   
      12R7+168S3T4-108R2T5                                    
      288S3T2+2016R2T3                                        
      5184R12T-616896S8T5+580608R7T6-6697728S3T10-3348864R2T11
      -------------------------------------------------------------------------
      R7T+49S3T5-264R2T6                                     
      720S3T3-360R2T4                                        
      2592S3T+864R2T2                                        
      5R2T                                                   
      -6R7-234S3T4+504R2T5                                   
      -1584S3T2-288R2T3                                      
      5184S13-580608S8T5-616896R7T6-3348864S3T10+6697728R2T11
      -------------------------------------------------------------------------
      -S7T-49R3T5-264S2T6                                     |
      -720R3T3-360S2T4                                        |
      -2592R3T+864S2T2                                        |
      -5S2T                                                   |
      -6S7-234R3T4-504S2T5                                    |
      -1584R3T2+288S2T3                                       |
      5184R13+580608R8T5-616896S7T6-3348864R3T10-6697728S2T11 |

              7       12
o26 : Matrix B  <--- B

i27 : M6=mingens image T6

o27 = {15} | S8+3R7T-39S3T5-143R2T6 R8-3S7T+39R3T5-143S2T6 25R5T3+25S5T3      
      {20} | -S3+3R2T               -R3-3S2T               5T3                
      {16} | -6R7-294S3T4+84R2T5    -6S7-294R3T4-84S2T5    42R5T2-42S5T2-288T7
      {18} | -720S3T2-1440R2T3      -720R3T2+1440S2T3      -72R5+72S5+1008T5  
      -------------------------------------------------------------------------
      |
      |
      |
      |

              4       3
o27 : Matrix B  <--- B

i28 : M7=mingens image T7

o28 = {25} | -S3-2R2T              -R3+2S2T              5RT2                
      {21} | 4R7-104S3T4-156R2T5   4S7-104R3T4+156S2T5   4R6T-104S4T3-156RT6 
      {20} | S8-2R7T-84S3T5+77R2T6 R8+2S7T+84R3T5+77S2T6 15R6T2+35S4T4-110RT7
      {22} | -480S3T3-360R2T4      480R3T3-360S2T4       -24R6+144S4T2+576RT5
      {24} | -288S3T-2016R2T2      288R3T-2016S2T2       -288S4-2016RT3      
      -------------------------------------------------------------------------
      5ST2                 |
      -4S6T+104R4T3-156ST6 |
      15S6T2+35R4T4+110ST7 |
      -24S6+144R4T2-576ST5 |
      -288R4+2016ST3       |

              5       4
o28 : Matrix B  <--- B

i29 : M8=mingens image T8

o29 = {15} | -R4+7ST3                 -S4-7RT3                
      {10} | R9+17S6T3+119R4T5+187ST8 S9-17R6T3-119S4T5+187RT8
      {12} | -24S6T+624R4T3-936ST6    24R6T-624S4T3-936RT6    
      -------------------------------------------------------------------------
      -R3T+7S2T2                 -S3T-7R2T2                 |
      R8T+17S7T2+119R3T6+187S2T7 S8T-17R7T2-119S3T6+187R2T7 |
      -24S7+624R3T4-936S2T5      24R7-624S3T4-936R2T5       |

              3       4
o29 : Matrix B  <--- B

i30 : inducedMap(image M1,image T1)

o30 = {16} | 1 0 0 0    |
      {16} | 0 1 0 0    |
      {16} | 0 0 1 1/12 |
      {16} | 0 0 0 1/12 |

o30 : Matrix

i31 : inducedMap(image M2,image T2)

o31 = {21} | 1 0 0 -1/6R -1/6S -1/6T -1/36T |
      {21} | 0 1 0 1/6T  0     1/6S  1/72S  |
      {21} | 0 0 1 0     1/6T  0     1/72R  |

o31 : Matrix

i32 : inducedMap(image M3,image T3)

o32 = {27} | 1 19/180 0 0 0 0 0     0    17/48R -11/168S 409/1800T 97/900T |
      {27} | 0 1/180  0 0 0 0 0     0    0      0        1/1800T   1/300T  |
      {27} | 0 0      1 0 0 0 0     3/10 0      409/280T 0         0       |
      {27} | 0 0      0 1 0 0 -3/10 0    11/80T 0        0         0       |
      {27} | 0 0      0 0 1 0 0     0    0      0        0         0       |
      {27} | 0 0      0 0 0 1 0     0    0      0        0         0       |
      {27} | 0 0      0 0 0 0 1/10  0    1/60T  0        1/60S     1/720S  |
      {27} | 0 0      0 0 0 0 0     1/10 0      1/60T    0         1/720R  |

o32 : Matrix

i33 : inducedMap(image M4,image T4)

o33 = {32} | 1 0 0 0 0 -5/3T 5/6T    -1/3S 1/3R  0    0     -1/12R -1/12S
      {32} | 0 1 0 0 0 -2/3R 1/4R    -8/3T 0     0    -2/3S 0      -5/6T 
      {32} | 0 0 1 0 0 -7/3S -11/12S 0     -8/3T 2/3R 0     5/6T   0     
      {32} | 0 0 0 1 0 0     0       0     0     1/3T 0     1/12S  0     
      {32} | 0 0 0 0 1 0     0       0     0     0    1/3T  0      1/12R 
      -------------------------------------------------------------------------
      23/24RT -29/24ST -3/8T2   167/144T2 RT2          9/2R3T-13/2S2T2  |
      25/72R2 -3/8T2   -11/72RT 91/216RT  5/3S3+1/3R2T 3/2R4+227/6ST3   |
      7/24T2  29/72S2  -13/24ST 125/216ST 14/3T3       13/6S3T+61/6R2T2 |
      1/72ST  0        1/72S2   1/864S2   5/3ST2       -85/6RT3         |
      0       1/72RT   0        1/864R2   -10/3S2T     -155/6T4         |

o33 : Matrix

i34 : inducedMap(image M5,image T5)

o34 = {43} | 1 0 0 0 0 0     0       0       0 0 0   0    3/2T 1/2R  0       
      {43} | 0 1 0 0 0 0     0       0       0 0 0   0    1/2S -3/2T 0       
      {43} | 0 0 1 0 0 -1/25 0       0       0 0 0   0    0    0     59/375T 
      {43} | 0 0 0 1 0 0     0       -31/295 0 0 0   0    0    0     -1/30S  
      {43} | 0 0 0 0 1 0     -31/295 0       0 0 0   0    0    0     1/30R   
      {43} | 0 0 0 0 0 1/25  0       0       0 0 0   0    0    0     -59/375T
      {43} | 0 0 0 0 0 0     1/295   0       0 0 0   0    0    0     0       
      {43} | 0 0 0 0 0 0     0       1/295   0 0 0   0    0    0     0       
      {43} | 0 0 0 0 0 0     0       0       1 0 0   -1/6 0    0     0       
      {43} | 0 0 0 0 0 0     0       0       0 1 1/6 0    0    0     0       
      {43} | 0 0 0 0 0 0     0       0       0 0 1/6 0    0    0     0       
      {43} | 0 0 0 0 0 0     0       0       0 0 0   1/6  0    0     0       
      -------------------------------------------------------------------------
      0     0        0          0          -T    T     -1/3R  0       0     
      0     0        0          0          -1/3S 1/3S  T      0       0     
      -1/2S -41/150T 1/18S      1/18R      0     0     0      -13/72R 13/72S
      0     29/120S  0          -119/1770T 1/2R  -1/3R 0      1/6T    0     
      1/2T  -1/120R  -119/1770T 0          0     0     1/3S   0       -1/6T 
      0     1/25T    0          0          0     0     0      0       0     
      0     0        1/2655T    0          0     0     0      0       0     
      0     0        0          1/2655T    0     0     0      0       0     
      0     0        0          0          0     0     -1/18T 0       0     
      0     0        0          0          1/6T  1/18T 0      0       0     
      0     0        0          0          0     1/18T 0      1/72S   0     
      0     0        0          0          0     0     1/18T  0       1/72R 
      -------------------------------------------------------------------------
      ST             0           0             0           -20T3        
      1/3S2          0           0             0           35/6ST2      
      -47/720RT      19/144ST    -7841/54000T2 53/540T2    3/2S3+13/6R2T
      -10139/42480T2 1/144S2     151/720ST     -181/1440ST 0            
      1/720R2        233/42480T2 13/720RT      -11/1440RT  0            
      0              0           -191/2250T2   19/540T2    0            
      0              -73/15930T2 0             0           0            
      -17/15930T2    0           0             0           0            
      0              0           0             0           0            
      0              0           0             0           0            
      1/432ST        0           1/432S2       1/5184S2    0            
      0              1/432RT     0             1/5184R2    0            
      -------------------------------------------------------------------------
      -2R3T-7/2S2T2    893/72T4                -823/48RT3              |
      10/3S3T-15/2R2T2 -823/48ST3              -893/72T4               |
      109/15T4         -233/720S3T-773/720R2T2 233/720R3T-773/720S2T2  |
      3/2R4-3ST3       11/80S4+1171/80RT3      -37/240S3T+1637/240R2T2 |
      3/2S4+3RT3       -37/240R3T-1637/240S2T2 -11/80R4+1171/80ST3     |
      -3/5T4           0                       0                       |
      0                0                       0                       |
      0                0                       0                       |
      0                0                       2183/144T4              |
      0                2183/144T4              0                       |
      0                -649/48T4               0                       |
      0                0                       649/48T4                |

o34 : Matrix

i35 : inducedMap(image M6,image T6)

o35 = {23} | 1 0 0 3/2T 1/2R  -5/6R2 -5/2ST -5/6RT -5/2T2 -20T3        
      {23} | 0 1 0 1/2S -3/2T 5/2RT  -5/6S2 5/2T2  -5/6ST 35/6ST2      
      {23} | 0 0 1 0    0     1/3ST  1/3RT  1/3S2  1/3R2  3/2S3+13/6R2T
      -------------------------------------------------------------------------
      -35/6RT2      7/72R3T2+23/36S2T3    |
      -20T3         7/72S3T2-23/36R2T3    |
      3/2R3-13/6S2T 1/72R5-1/72S5+53/36T5 |

o35 : Matrix

i36 : inducedMap(image M7,image T7)

o36 = {28} | 1 0 0 0 -3/2T 3/4R  0     27/8T2 -39/8RT 1/4R2  5/24R2T       
      {28} | 0 1 0 0 3/4S  3/2T  0     39/8ST 27/8T2  1/2RT  -5/24RT2      
      {28} | 0 0 1 0 -1/4R 0     7/4S  1/8RT  -7/8R2  1/2S2  1/24R3-1/12S2T
      {28} | 0 0 0 1 0     -1/4S -3/4R -7/8S2 -1/8ST  -5/4T2 5/24T3        
      -------------------------------------------------------------------------
      5/24ST2         5/24RT2         5/24T3          -5/8T4          
      5/24S2T         -5/24T3         5/24ST2         -5/6ST3         
      5/24T3          -1/12S3+1/24R2T 5/24RT2         -1/24S4-17/24RT3
      -1/24S3-1/12R2T 5/24ST2         -1/12R3-1/24S2T 7/24R3T+1/4S2T2 
      -------------------------------------------------------------------------
      5/6RT3           -5/72R3T3+5/96S2T4       |
      -5/8T4           5/72S3T3+5/96R2T4        |
      -7/24S3T+1/4R2T2 1/288S6-1/48R4T2+1/12ST5 |
      -1/24R4+17/24ST3 1/288R6-1/48S4T2-1/12RT5 |

o36 : Matrix

i37 : inducedMap(image M8,image T8)

o37 = {19} | 1 0 0 0 0      0     -493/24RT2  |
      {19} | 0 1 0 0 0      0     -3757/24ST2 |
      {19} | 0 0 1 0 1/2S   17/2T 221/24S3    |
      {19} | 0 0 0 1 -17/2T 1/2R  -29/24R3    |

o37 : Matrix

i38 : C=ZZ/(7)[R,S,T]/(T^2-R*S)

i39 : T3=map(C,A)**S3

i40 : M3=mingens image T3

o40 = {25} | -2T2     -2T2     -2ST      -2RT      R2             
      {21} | 3R5T-S5T R5T-3S5T -S6+3R4T2 R6-3S4T2  3S3T3-2R2T4    
      {20} | S5T2-2T7 R5T2+2T7 S6T-2ST6  R6T+2RT6  -R7-3S3T4-3R2T5
      {22} | -R5+3T5  -S5-3T5  -R4T+3ST4 -S4T-3RT4 -S3T2-R2T3     
      {18} | S5T4+3T9 R5T4-3T9 S6T3+3ST8 R6T3-3RT8 S8T-2R7T2+3R2T7
      -------------------------------------------------------------------------
      -S2             -3RT           3ST            |
      3R3T3+2S2T4     2R6-3S4T2-2RT5 2S6-3R4T2+2ST5 |
      S7+3R3T4-3S2T5  R6T-3S4T3+RT6  -S6T+3R4T3+ST6 |
      R3T2-S2T3       -3S4T          3R4T           |
      R8T+2S7T2+3S2T7 S9-3RT8        R9-3ST8        |

              5       8
o40 : Matrix C  <--- C

i41 : D=ZZ/(59)[R,S,T]/(T^2-R*S)

i42 : T5=map(D,A)**S5

i43 : M5=mingens image T5

o43 = {35} | S8+3R7T+20S3T5-25R2T6 R8-3S7T-20R3T5-25S2T6 R5T3+S5T3         
      {37} | 0                     0                     0                 
      {39} | 0                     0                     0                 
      {40} | -S3+3R2T              -R3-3S2T              12T3              
      {36} | -6R7+S3T4+25R2T5      -6S7+R3T4-25S2T5      -29R5T2+29S5T2+5T7
      {38} | -12S3T2-24R2T3        -12R3T2+24S2T3        16R5-16S5+12T5    
      {30} | 0                     0                     0                 
      -------------------------------------------------------------------------
      14S6T2-28R4T4+5ST7 -14R6T2+28S4T4+5RT7 -18S5T3-19T8           
      -29S6+4R4T2-2ST5   29R6-4S4T2-2RT5     -5R5T+5S5T+11T6        
      R4+2ST3            -S4+2RT3            -10T4                  
      -25ST2             25RT2               10T3                   
      19S6T+7R4T3+26ST6  19R6T+7S4T3-26RT6   11R5T2+20S5T2+14T7     
      -18R4T+8ST4        -18S4T-8RT4         -16R5-23S5+10T5        
      S6T7-12R4T9+18ST12 R6T7-12S4T9-18RT12  R10T3-S10T3+4R5T8+4S5T8
      -------------------------------------------------------------------------
      -2S6T2-11R4T4-20ST7     -2R6T2-11S4T4+20RT7    
      -3S6+21R4T2-19ST5       -3R6+21S4T2+19RT5      
      4R4+18ST3               4S4-18RT3              
      -19ST2                  -19RT2                 
      -24S6T-21R4T3+5ST6      24R6T+21S4T3+5RT6      
      26R4T-5ST4              -26S4T-5RT4            
      S11T2-R9T4+7R4T9+13ST12 R11T2-S9T4-7S4T9+13RT12
      -------------------------------------------------------------------------
      23S7T-2R3T5+10S2T6            23R7T-2S3T5-10R2T6           
      -14R3T3+28S2T4                -14S3T3-28R2T4               
      10R3T-29S2T2                  10S3T+29R2T2                 
      8S2T                          8R2T                         
      -28S7+21R3T4-16S2T5           28R7-21S3T4-16R2T5           
      -23R3T2-16S2T3                23S3T2-16R2T3                
      S12T-R8T5+6S7T6-6R3T10+3S2T11 R12T-S8T5-6R7T6+6S3T10+3R2T11
      -------------------------------------------------------------------------
      22R7T+16S3T5-26R2T6          -22S7T-16R3T5-26S2T6         |
      28S3T3-14R2T4                -28R3T3-14S2T4               |
      -29S3T+10R2T2                29R3T+10S2T2                 |
      -8R2T                        8S2T                         |
      -14R7-15S3T4-4R2T5           -14S7-15R3T4+4S2T5           |
      21S3T2-23R2T3                21R3T2+23S2T3                |
      S13+6S8T5-R7T6+3S3T10-6R2T11 R13-6R8T5-S7T6+3R3T10+6S2T11 |

              7       12
o43 : Matrix D  <--- D
\end{verbatim}
\normalsize
\end{appendix}

\nocite{CocoaSystem}

\begin{flushleft}
\bibliographystyle{alpha}
\bibliography{bibfile}
\end{flushleft}
\thispagestyle{empty}

%\newpage
%\section*{Eidesstattliche Erklärung}
%\input{formalia/erklaerung}
\end{document}